\documentclass[11pt,twoside]{amsbook}

\usepackage[margin=1in]{geometry}
\usepackage{mathtools,amssymb}
\usepackage{amsfonts}
\usepackage{amsmath}
\usepackage{amssymb}
\usepackage{mathrsfs}
\usepackage{multicol}
\usepackage{indentfirst}
\usepackage{graphics}
\usepackage{stmaryrd}
\usepackage{cite}
\usepackage{color}
\usepackage{caption}
\usepackage{esint}
\usepackage{graphicx}
\usepackage{epstopdf}
\usepackage{subfigure}
\usepackage{bm}
\usepackage{enumerate}
\usepackage{ulem}

\theoremstyle{plain}
\newtheorem{theorem}{Theorem}[section]
\newtheorem{lemma}[theorem]{Lemma}
\newtheorem{corollary}[theorem]{Corollary}
\newtheorem{example}[theorem]{Example}

\newtheorem{remark}[theorem]{Remark}

\newtheorem{proposition}[theorem]{Proposition}
\newtheorem{question}[theorem]{Question}
\numberwithin{equation}{section}
\allowdisplaybreaks
\renewcommand{\thesection}{\thechapter.\arabic{section}}

\usepackage{chngcntr}

\counterwithout{equation}{section}
\counterwithin{equation}{chapter}

\counterwithout{theorem}{section}
\counterwithin{theorem}{chapter}

\title{Qualitative Properties of the Principal Floquet Bundle and Their Applications}

 \author{Yun Li\footnote{Partially supported by the Postdoctoral Fellowship Program of China Postdoctoral Science Foundation under Grant Number  GZB20250712, by the China Postdoctoral Science Foundation under Grant Number 2026M793368, and by National Natural Science Foundation of China (12601331).}}
 \address{School of Mathematics and Statistics, Lanzhou University, Lanzhou 730000, China}
 \email{liyun17@lzu.edu.cn}

 \author{Yuan Lou\footnote{Partially supported by National Natural Science Foundation of China (12261160366, 12250710674, 12426601).}}
 \address{School of Mathematical Science, CMA-Shanghai, Shanghai Jiao Tong University, Shanghai, 200240, China}
 \email{yuanlou@sjtu.edu.cn}

 \author{Zhi-Cheng Wang\footnote{Corresponding author. Partially supported by National Natural Science Foundation of China (12471164, 12071193).}}
 \address{School of Mathematics and Statistics, Lanzhou University, Lanzhou 730000, China}
 \email{wangzhch@lzu.edu.cn}

\begin{document}
\frontmatter
\maketitle
\tableofcontents

\chapter*{Abstract}
Floquet bundle theory serves as a natural extension of the eigenvalue theory for elliptic and periodic parabolic operators to the spectral theory of non-autonomous, non-periodic parabolic operators, which provides a useful spectral tool for studying threshold dynamics in more general non-autonomous systems. However, current understanding and applications of the normalized principal Floquet bundle remain rather limited compared with the eigenvalue theory for elliptic or periodic parabolic problems. In this work, we first extend several fundamental properties of the principal eigenvalue to the normalized principal Floquet bundle of non-autonomous, non-periodic parabolic operators, and obtain a series of parallel results. Furthermore, we focus on the asymptotic behavior of the principal Floquet exponent with respect to the generalized frequency and diffusion rate under various typical parameter limits (including both independent and coupled parameter regimes) for the operator under homogeneous Neumann boundary conditions. The main tools are a parabolic comparison principle developed in this work for long-time limit processes, which is used to derive bounds for the principal Floquet exponent via suitably constructed sub- and super-solutions, supplemented by Harnack's inequality and long-time averaging techniques. Finally, we employ the normalized principal Floquet bundle to define two critical numbers (as extensions of the basic reproduction number) for a non-autonomous spatially diffusive SIS epidemic model, examine the extinction and weak persistence of the disease, and investigate the limiting behavior of the basic critical numbers with respect to the generalized frequency and diffusion rate.

\medskip
\noindent \textbf{Keywords}: Principal Floquet bundle; Qualitative property; Asymptotic behavior; SIS epidemic model.

\noindent \textbf{MSC (2020)}: Primary: 35P20; 35P05; 35P15; 35K57; 35Q92; 92D40.

\mainmatter

\chapter{Introduction}\label{chp1}
The spectral theory of linear evolution problems is often required as a crucial tool to consider the stability or invasion of ecological systems modeled by nonlinear equations. This theory is well-developed for several classes of evolution equations, such as time-independent problems \cite{Berestycki1994The, Hamel2011Rearrangement, Chen2012Effects} and temporally periodic parabolic problems \cite{Peng2015Effects, Liu2021AsymptoticsI, Liu2021AsymptoticsII}. The well-known Krein-Rutman theorem guarantees that the elliptic or time-periodic parabolic eigenvalue problem admits a unique simple principal eigenvalue. In studying the dynamics of population model, the principal eigenvalue together with the corresponding principal eigenfunction is frequently used to determine the locally linear stability or instability of the trivial solution as well as other equilibria. However, without independence or periodicity assumptions on $t$ of the coefficients, or when the associated operators lack compactness, the Krein-Rutman theorem is no longer applicable. Consequently, considerable effort has been devoted to extending Krein-Rutman type results to general time-dependent parabolic problems. We briefly review several relevant contributions.

Shen and Vickers studied the principal Lyapunov exponent $\lambda_L:=\limsup_{t-s\to\infty}\frac{\ln\| \Phi(t,s)\|}{t-s}$ of the evolution operator $\Phi(t,s)$ generated by a non-autonomous evolution equation \cite{Shen2007Spectral}. They also introduced the concepts of principal spectrum and forward uniform persistence to analyze nonlinear parabolic boundary value problems of Kolmogorov type \cite{Mierczynski2011Persistence}. Recently, Berestycki et al. \cite{Berestycki2025Generalized} introduced new generalized principal eigenvalues for linear parabolic operators with heterogeneous coefficients in space and time, defined on bounded spatial domains and unbounded time intervals. These eigenvalues were shown to play a key role in understanding the long-time behavior and entire solutions of heterogeneous Fisher-KPP type equations. More generally, the principal eigenvalue, principal Lyapunov exponent, principal spectrum and generalized principal eigenvalue are all the quantities which play the role of the Floquet exponent in Floquet theory, and the Floquet bundles extend the concepts of eigenvalues and eigenfunctions of elliptic and time-periodic parabolic problems in a natural way. Under mild regularity assumptions on the coefficients and domain, H\'{u}ska and Pol\'{a}\v{c}ik \cite{Huska2004The} established the existence of a principal Floquet bundle that is exponentially separated from a complementary invariant bundle. The properties they obtained, such as boundedness, decomposition, exponential separation, and positivity of the principal Floquet bundle (detailed in Proposition \ref{proposition3.3}), are natural extensions of Krein-Rutman type results to non-periodic parabolic problems. These properties have served as key ingredients in proving the uniqueness of positive entire solutions for parabolic equations on unbounded time intervals. In their recent work \cite{Berestycki2025Generalized}, Berestycki et al. also clarified the relations among the Lyapunov exponent, generalized principal eigenvalues, and Floquet bundles. For further details on these spectral-theoretic quantities, we refer to \cite{Mierczynski2008Time, Mierczynski2008Spectral, Mierczynski2013Spectral,Chow1993Floquet, Chow1995Floquet, Mierczynski1995Globally, Polacik1993Exponential, Huska2006Harnack, Huska2006Exponential, Shen2007Almost} and the references therein.

In the present paper, we would like to investigate some qualitative properties of the Floquet bundle, which are natural generalizations to the principal eigenvalue of elliptic or time-periodic parabolic eigenvalue problems. Let $\Omega\subset\mathbb{R}^N$, $N\ge 1$ be a bounded smooth domain, $c(x,t)$ be a function in $C^{\delta,\delta/2}(\overline{\Omega}\times\mathbb{R})$, $\delta\in(0,1)$, and $A(x)=(a_{ij}(x))_{N\times N}\in C^{2+\delta}(\overline{\Omega};\mathbb{R}^{N^2})$ be a symmetric matrix-valued function satisfying uniform ellipticity condition, namely, there exists a positive constant $\Lambda$ such that
 \[\frac{1}{\Lambda}|\xi|^2\le \xi\cdot (A(x)\xi)\le \Lambda |\xi|^2,\quad~\forall~x\in\overline{\Omega},~\xi\in\mathbb{R}^N.\]
The pair $(H,\varphi)\in C(\mathbb{R})\times (C^{2,1}(\Omega\times\mathbb{R})\cap C^{1,1}(\overline{\Omega}\times\mathbb{R}))$ is called the {\bf normalized principal Floquet bundle} if it satisfies
\begin{equation}\label{eq1.1}
  \left\{
  \begin{aligned}
    &\omega\partial_t\varphi-d\operatorname{div}\left(A(x)\nabla\varphi\right)+c(x,t)\varphi=H(t)\varphi,&&(x,t)\in\Omega\times\mathbb{R},\\
    &\mathbf{n}\cdot (A(x)\nabla\varphi)=0,&&(x,t)\in\partial\Omega\times\mathbb{R},\\
    &\varphi(x,t)>0,&&(x,t)\in\overline{\Omega}\times\mathbb{R},\\
    &\int_{\Omega}\varphi(x,t)\,\mathrm{d}x=1,&&t\in\mathbb{R}
  \end{aligned}
  \right.
\end{equation}
and $\psi\in C^{2,1}(\Omega\times\mathbb{R})\cap C^{1,1}(\overline{\Omega}\times\mathbb{R})$ is called the {\bf normalized adjoint principal Floquet bundle} if it satisfies
\begin{equation}\label{eq1.2}
  \left\{
  \begin{aligned}
    &-\omega\partial_t\psi-d\operatorname{div}\left(A(x)\nabla\psi\right)+c(x,t)\psi=H(t)\psi,&&(x,t)\in\Omega\times\mathbb{R},\\
    &\mathbf{n}\cdot(A(x)\nabla\psi)=0,&&(x,t)\in\partial\Omega\times\mathbb{R},\\
    &\psi(x,t)>0,&&(x,t)\in\overline{\Omega}\times\mathbb{R},\\
    &\int_{\Omega}\varphi(x,t)\psi(x,t)\,\mathrm{d}x=1,&&t\in\mathbb{R},
  \end{aligned}
  \right.
\end{equation}
where
 \[\operatorname{div}\left(A(x)\nabla\varphi\right)=\sum^{N}_{i,j=1}\partial_{x_i}(a_{ij}(x) \partial_{x_j}\varphi),\qquad (x,t)\in\Omega\times\mathbb{R},\]
$\mathbf{n}=(n_1,n_2,\cdots,n_N)(x)$ is outward unit normal vector at $x\in\partial\Omega$, and
 \[\mathbf{n}\cdot(A(x)\nabla\varphi)=\sum^{N}_{i,j=1}n_i(x)a_{ij}(x)\partial_{x_j}\varphi,\qquad (x,t)\in\partial\Omega\times\mathbb{R}.\]
We also refer to the triplet $(H,\varphi,\psi)$ as the normalized principal Floquet bundle when no confusion can arise. It has been shown in \cite{Cantrell2021On,Lam2022Introduction} that $(H, \varphi,\psi)\in C^{\delta/2}(\mathbb{R})\times C^{2+\delta,1+\delta/2}(\overline{\Omega}\times\mathbb{R}) \times C^{2+\delta,1+\delta/2}(\overline{\Omega}\times\mathbb{R})$ exists uniquely. In equations \eqref{eq1.1} and \eqref{eq1.2}, the constant $\omega>0$ represents the rate of environmental variation (how fast the environment changes), and is referred to as the \textbf{generalized frequency} (hereafter simply \textbf{frequency}). The constant $d>0$ is the diffusion rate, and the function $c$ is the potential function. To indicate the dependence on parameters $\omega$, $d$ or $c$, we also represent by $(H(\cdot;\omega,d),\varphi(\cdot,\cdot;\omega,d),\psi(\cdot,\cdot;\omega,d))$, $(H(\cdot;c),\varphi(\cdot,\cdot;c),\psi(\cdot,\cdot;c))$, etc. the normalized principal Floquet bundle.

As mentioned above, H\'{u}ska, Pol\'{a}\v{c}ik et al. (see, \cite{Polacik2005On, Huska2004The,Huska2006Harnack,Huska2006Exponential}) weakened the smoothness assumption on coefficients and obtained the continuous dependence of the principal Floquet bundle on coefficients. Recently, Cantrell and Lam \cite{Cantrell2021On} obtained the smooth dependence of the principal Floquet bundle on coefficients by assuming that the coefficients belong to H\"{o}lder space $C^{\delta,\delta/2}(\overline{\Omega}\times\mathbb{R})$. Later on, Lam and Lou \cite{Lam2024The} studied the asymptotic behavior of the normalized principal Floquet bundle as the diffusion rate $d$ converges to infinity. One may refer to \cite{Huska2007Harnack,Huska2008Exponential,Lam2023A} and literature therein for more theoretical results and applications about the theory of the normalized principal Floquet bundle.

A special situation is that the function $c$ is independent of spatial argument $x$, which means that $c$ depends on $t$ only. This situation is trivial and one easily verifies that $(c(\cdot),1/|\Omega|)$ is the normalized principal Floquet bundle; see Lemma \ref{lemma3.5} for details. The environment is said to be spatially heterogeneous in an average sense in time if
\begin{equation}\label{H1}
\tag{H1}
\liminf\limits_{T\to\infty}\fint^{T}_{0}\fint_{\Omega}\left|c(x,s)- \fint_{\Omega}c(x,s)\,\mathrm{d}x\right|^2\,\mathrm{d}x\mathrm{d}s>0.
\end{equation}
Hereafter, the notations ``$\fint^{T}_{0}$" and ``$\fint_\Omega$" represent the integral averages of the integrand with respect to the time argument $t$ over $[0,T]$ and with respect to the space argument $x$ over $\Omega$.

Define
\[\underline{\lambda}_{H}(\omega,d)=\liminf_{T\to\infty}\fint^{T}_{0}H(s;\omega,d)\,\mathrm{d}s\quad\text{and}\quad \overline{\lambda}_{H}(\omega,d)=\limsup_{T\to\infty}\fint^{T}_{0}H(s;\omega,d)\,\mathrm{d}s.\]
To highlight the dependence of these two limits on the potential function $c$, we shall write them as $\underline{\lambda}_{H}(c)$ and $\overline{\lambda}_{H}(c)$. For autonomous or time-periodic problems,
\[\underline{\lambda}_{H}(\omega,d)=\overline{\lambda}_{H}(\omega,d)= \text{the principal eigenvalue of the associated problem}.\]
The dependence of this principal eigenvalue on various parameters is well understood, with a vast literature available; see \cite{Hess1991Periodic,Hutson2001The,Lou2006Evolution, Liu2019Monotonicity,Bai2020Asymptotic,Liu2022Classifying,Bai2023Dynamics,Li2026Effects,Li2026Optimization} and references therein. In contrast, the more general and practically relevant nonautonomous setting has received considerably less attention. Note that the identity $\underline{\lambda}_{H}(\omega,d)=\overline{\lambda}_{H}(\omega,d)$ typically breaks down in this general framework. Yet, these two quantities remain fundamentally linked to the system's dynamics, acting as thresholds that govern the long-term behavior of the associated nonlinear evolution; see \cite{Cantrell2021On,Lam2023A,Lam2024The} and references therein. The dynamical behavior within the interval $[\underline{\lambda}_{H}(\omega,d),\overline{\lambda}_{H}(\omega,d)]$ remains elusive yet highly intriguing. Specifically,

\medskip

\noindent\textbf{Open problem.} {\it How do these thresholds translate into biological interpretations in the context of biomathematics? Moreover, what biological implications does the interval length $L(\omega,d) = \overline{\lambda}_{H}(\omega,d)-\underline{\lambda}_{H}(\omega,d)$ hold?}

\medskip

\noindent While these issues have received little attention in the autonomous or time-periodic framework, we are convinced that they are crucial and merit particular emphasis in the study of nonautonomous, non-periodic problems. The present work is devoted to investigating how $\underline{\lambda}_{H}(\omega,d)$, $\overline{\lambda}_{H}(\omega,d)$, and the intermediate values therein depend on the parameters $(\omega,d)$. This motivates the following natural and pressing questions.

\medskip

\noindent\textbf{Question.} {\it What is the impact of the parameters $(\omega,d)$ on $\underline{\lambda}_{H}(\omega,d)$ and $\overline{\lambda}_{H}(\omega,d)$, and how does this impact translate into the system's dynamical behavior in terms of frequency and diffusion effects? How does the asymptotic behavior of the pair $(\underline{\lambda}_{H}(\omega,d),\overline{\lambda}_{H}(\omega,d))$ compare with that of the principal eigenvalue?}

\medskip

Suppose $\delta\in(0,1)$ throughout this paper. Let $\mathcal{C}$ be the set defined as
\[\mathcal{C}:=\left\{c\in C^{\delta,\delta/2}(\overline{\Omega}\times\mathbb{R})~\big|~\|c(\cdot,\cdot)\|_\infty<\infty,~\sup_{t\in\mathbb{R}} [c(\cdot,t)]_{\delta;\Omega}<\infty\right\},\]
where $\|\cdot\|_\infty$ denotes the supremum norm and $[\cdot]_{\delta;\Omega}$ denotes the H\"{o}lder seminorm. Indeed, since one can replace $c$ by $c-\fint_{\Omega}c(x,\cdot)\,\mathrm{d}x$ and $H$ by $H-\fint_{\Omega}c(x,\cdot)\,\mathrm{d}x$ simultaneously, it is sufficient to assume that $\|c(\cdot,\cdot)-\fint_{\Omega}c(x,\cdot)\,\mathrm{d}x\|_{\infty}<\infty$ in $\mathcal{C}$, and our conclusions about the normalized principal Floquet bundle $H$ are valid for $\tilde{H}:=H-\fint_{\Omega}c(x,\cdot)\,\mathrm{d}x$. We prepare some properties of $c\in\mathcal{C}$.
\begin{proposition}\label{proposition1.1}
  Let $c\in\mathcal{C}$ be a function and $\{T_n\}_{n\in\mathbb{N}^+}$ be a sequence satisfying $T_n\to\infty$ as $n\to\infty$, and define $\hat{c}_n(x):=\fint^{T_n}_{0}c(x,s)\,\mathrm{d}s$, $x\in\overline{\Omega}$, $n\in\mathbb{N}^+$.
  Then the following assertions hold.
  \begin{enumerate}[{\rm (i)}]
    \item $\{\hat{c}_n\}_{n\in\mathbb{N}^+}$ admits a convergent subsequence $\{\hat{c}_{n_j}\}_{j\in\mathbb{N}^+}$ in the sense that $\hat{c}_{n_j}(\cdot)\to \hat{c}(\cdot)$ in $C^{\delta'}(\overline{\Omega})$ as $j\to\infty$ for some $\delta'\in(0,\delta)$.
    \item The functions $\hat{c}^\infty(x):=\limsup_{n\to\infty}\hat{c}_n(x)$ and $\hat{c}_\infty(x):=\liminf_{n\to\infty}\hat{c}_n(x)$ are continuous on $\overline{\Omega}$. {\rm (}Note that the functions $\hat{c}^\infty(x)$ and $\hat{c}_\infty(x)$ are defined pointwise in $x\in\overline{\Omega}$ here.{\rm )}
    \item Similarly, the limiting functions $\limsup\limits_{T\to\infty}\fint^{T}_{0}c(x,s)\,\mathrm{d}s$ and $\liminf\limits_{T\to\infty}\fint^{T}_{0}c(x,s)\,\mathrm{d}s$ are also continuous on $\overline{\Omega}$.
  \end{enumerate}
\end{proposition}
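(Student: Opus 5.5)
The whole proposition rests on one observation: the time averages $\hat{c}_n$ are uniformly bounded in $C^{\delta}(\overline{\Omega})$. Set $M:=\|c\|_\infty$ and $K:=\sup_{t\in\mathbb{R}}[c(\cdot,t)]_{\delta;\Omega}$, both finite since $c\in\mathcal{C}$. Averaging in time gives $|\hat{c}_n(x)|\le M$ for all $x\in\overline{\Omega}$. Moreover, for $x,y\in\overline{\Omega}$,
\[
|\hat{c}_n(x)-\hat{c}_n(y)|\le \fint_0^{T_n}|c(x,s)-c(y,s)|\,\mathrm{d}s\le K|x-y|^{\delta}.
\]
Hence $\|\hat{c}_n\|_{C^{\delta}(\overline{\Omega})}\le M+K$ for every $n$. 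The same two bounds hold for $x\mapsto \fint_0^T c(x,s)\,\mathrm{d}s$ for every $T>0$, uniformly in $T$. The point is that the constant $K$ does not depend on $n$ or $T$.

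For (i), I would use the compact embedding $C^{\delta}(\overline{\Omega})\hookrightarrow C^{\delta'}(\overline{\Omega})$ for $\delta'\in(0,\delta)$, valid because $\overline{\Omega}$ is compact. Concretely, the uniform bound and equicontinuity let Arzel\`a--Ascoli produce a subsequence $\hat{c}_{n_j}\to\hat{c}$ uniformly. The limit inherits $[\hat{c}]_{\delta}\le K$ by passing to the limit pointwise in the difference quotient. Convergence in $C^{\delta'}$ then follows from the interpolation estimate
\[
[f]_{\delta'}\le 2\|f\|_\infty^{1-\delta'/\delta}[f]_{\delta}^{\delta'/\delta},
\]
applied to $f=\hat{c}_{n_j}-\hat{c}$, whose $\delta$-seminorm is at most $2K$ and whose sup norm tends to $0$.

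For (ii), I would show directly that $\hat{c}^\infty$ and $\hat{c}_\infty$ are $\delta$-H\"older with constant $K$; the subsequence from (i) is not needed. For each $n$ we have $\hat{c}_n(x)\le \hat{c}_n(y)+K|x-y|^{\delta}$. Taking $\limsup_{n\to\infty}$ on both sides gives $\hat{c}^\infty(x)\le \hat{c}^\infty(y)+K|x-y|^\delta$, and swapping $x$ and $y$ yields
\[
|\hat{c}^\infty(x)-\hat{c}^\infty(y)|\le K|x-y|^\delta.
\]
Both $\hat{c}^\infty$ and $\hat{c}_\infty$ are finite, since they lie in $[-M,M]$. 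The same argument with $\liminf$ handles $\hat{c}_\infty$.

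Part (iii) is the same argument with the continuous parameter $T\to\infty$ in place of the sequence $T_n$: the estimate $\fint_0^T c(x,s)\,\mathrm{d}s\le \fint_0^T c(y,s)\,\mathrm{d}s+K|x-y|^\delta$ holds for all $T>0$, and taking $\limsup_{T\to\infty}$ or $\liminf_{T\to\infty}$ preserves it. No step presents a genuine obstacle. The only point needing care is that the H\"older constant must be uniform in time, and that is exactly what the condition $\sup_{t}[c(\cdot,t)]_{\delta;\Omega}<\infty$ in the definition of $\mathcal{C}$ supplies.
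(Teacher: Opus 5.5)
Your proof is correct, and it follows the same route the paper relies on. The paper does not write out a separate proof of this proposition, but the ingredients it invokes elsewhere are exactly the time-uniform spatial H\"older bound \eqref{eq2.1} and the Arzel\`a--Ascoli theorem (see the proof of Lemma~\ref{lemma2.1} and Remark~\ref{remark2.5}). Your interpolation estimate, which upgrades uniform convergence to $C^{\delta'}$ convergence for every $\delta'\in(0,\delta)$, and your passage of the inequality $\hat{c}_n(x)\le\hat{c}_n(y)+K|x-y|^{\delta}$ through $\limsup$ and $\liminf$ in (ii)--(iii) supply details the paper leaves implicit.
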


In the discussion of this paper, it is necessary to clearly distinguish between two modes of convergence for the time-averaged function $\hat{c}_T(\cdot)=\fint^{T}_{0}c(\cdot,s)\,\mathrm{d}s$ as $T\to\infty$ (or along a sequence $\{T_n\}_{n\in\mathbb{N}^+}$): the first is pointwise convergence on $\overline{\Omega}$, e.g., $\liminf_{T\to\infty}\hat{c}_T(x)$ for $x\in\overline{\Omega}$; the second is the convergence of the family $\{\hat{c}_{T_n}\}_{n\in\mathbb{N}^+}$ as a whole to a certain function. Although the specific meaning will be made clear at each occurrence in the text, we emphasize this distinction here and hope to draw the reader's attention to it.

\begin{remark}\label{remark1.2}{\rm
To better understand the concept of the normalized principal Floquet bundle, we make some comparisons between the normalized principal Floquet bundle and the principal eigenvalue and its associated eigenfunction. If the function $c$ is independent of $t$, then $H(t)$ is equal to the principal eigenvalue of the corresponding elliptic problem for all $t\in\mathbb{R}$. This fact is not true for the time-periodic case. We take $A=\mathrm{diag}(1,1,\cdots,1)$ and $\omega=1$ for simplicity. Without loss of generality, let $c(\cdot,t)$ be a $1$-periodic function in $t$ and $(\lambda,w)$ be the principal eigenpair of $\partial_t-d\Delta+c$ in $\Omega$ subject to homogeneous Neumann boundary condition. The normalized principal Floquet bundle is (see the proof of Lemma \ref{lemma3.5} for the computational details)
\[H(t)=\lambda-\frac{\mathrm{d}}{\mathrm{d}t}\ln\int_{\Omega}w(x,t)\,\mathrm{d}x,\qquad t\in\mathbb{R}.\]
This indicates that $H(t)$ is not the direct extension of the principal eigenvalue in the pointwise sense in $t$. However, one can easily check that
 \[\lambda=\lim\limits_{T\to\infty}\fint^{T}_{0}H(s)\,\mathrm{d}s.\]
This motivates us to study the qualitative properties of normalized principal Floquet bundle via the time average in the sense that $\fint^{T}_{0}H(s)\,\mathrm{d}s$ as $T\to\infty$ for non-periodic case. Any subsequential limit of $\fint^{T}_{0}H(s)\,\mathrm{d}s$ along a sequence $T_n\to\infty$ is called a \textbf{normalized principal Floquet exponent}. Subsequently, for brevity, we shall call it simply the \textbf{principal Floquet exponent}. The set of such limits is called the principal spectrum. Consequently, our main contributions can be understood as a generalization to the non-autonomous framework of the qualitative properties of the principal eigenvalue, which are well-established in the classical elliptic and periodic parabolic settings.
}
\end{remark}

\section{Main Results}\label{sec1.1}
First of all, we state some qualitative estimates for the principal Floquet bundle as follows. Let $c\in\mathcal{C}$ be given, denote by $\hat{c}_T(\cdot):=\fint^{T}_{0}c(\cdot,s)\,\mathrm{d}s$ and define the set $\mathscr{C}$ of functions as
 \[\mathscr{C}:=\left\{\hat{c}\in C (\overline{\Omega})~|~\text{there exists }T_n\to\infty\text{ such that }\|\hat{c}_{T_n}-\hat{c}\|_{C(\overline{\Omega})}\to 0\right\}.\]
It is easy to check that $\mathscr{C}$ is a compact (see Lemma \ref{lemma2.1}), uniformly bounded metric space induced by norm $\|\cdot\|_\infty$. Here, the family $\mathscr{C}$ is said to be uniformly bounded provided that $\sup_{\hat{c}\in\mathscr{C}}\|\hat{c}\|_\infty<\infty$. Denote by $\mu^{\infty}(d,m)$ the principal eigenvalue of
  \begin{equation}\label{eq1.3}
    \left\{
    \begin{aligned}
      &-d\operatorname{div}(A(x)\nabla\hat{\phi})+m(x)\hat{\phi}=\mu\hat{\phi},&&x\in\Omega,\\
      &\mathbf{n}\cdot(A(x)\nabla\hat{\phi})=0, &&x\in\partial\Omega.
    \end{aligned}
    \right.
  \end{equation}

The basic results presented below for the normalized principal Floquet bundle generalize prior results on the principal eigenvalue; they are also essential for the subsequent analysis of asymptotic behavior.
\begin{theorem}\label{theorem1.3}
Let $c\in\mathcal{C}$ be a given function, and let $H$ denote the normalized principal Floquet bundle of \eqref{eq1.1}. The following statements hold.
  \begin{enumerate}[{\rm (i)}]
    \item  One has
     \[\underline{\lambda}_{H}(\omega,d)\le \inf\limits_{\hat{c}\in\mathscr{C}}\mu^{\infty}(d,\hat{c})~\text{ and }~ \overline{\lambda}_{H}(\omega,d)\le \sup\limits_{\hat{c}\in\mathscr{C}}\mu^{\infty}(d,\hat{c}).\]
     Furthermore, the following inequalities are satisfied,
       \[\liminf_{T\to\infty} \fint^{T}_{0}\min_{x\in\overline{\Omega}}c(x,s)\,\mathrm{d}s\le \underline{\lambda}_{H}(\omega,d)\le \liminf_{T\to\infty} \fint^{T}_{0}\fint_\Omega c(x,s) \,\mathrm{d}x\mathrm{d}s\]
      and
       \[\limsup_{T\to\infty} \fint^{T}_{0}\min_{x\in\overline{\Omega}}c(x,s)\,\mathrm{d}s \le \overline{\lambda}_{H}(\omega,d) \le\limsup_{T\to\infty}\fint^{T}_{0}\fint_{\Omega}c(x,s)\,\mathrm{d}x\mathrm{d}s.\]
    \item Suppose that there is a $t_0\in\mathbb{R}$ such that $c_1(x,t)\ge c_2(x,t)$ for all $(x,t)\in\Omega\times[t_0,\infty)$, where $c_1,c_2\in \mathcal{C}$. Then $H(\cdot;c)$ is monotonically increasing with respect to $c$ in the sense that
         \[\liminf\limits_{T\to\infty}\fint^{T}_{0}[H(s;c_1)-H(s;c_2)]\,\mathrm{d}s\ge 0.\]
        Consequently,
         \[\underline{\lambda}_{H}(c_1)\ge \underline{\lambda}_{H}(c_2)\quad
        \text{and}
         \quad\overline{\lambda}_{H}(c_1)\ge \overline{\lambda}_{H}(c_2).\]
        The three inequalities above become strict if $\liminf\limits_{T\to\infty}\fint^{T}_{0} \int_{\Omega}[c_1(x,s)-c_2(x,s)] \,\mathrm{d}x\mathrm{d}s>0$.
    \item  Assume that $c_1,c_2\in \mathcal{C}$ satisfy $\lim\limits_{t\to\infty} \left\|c_1(\cdot,t)-c_2(\cdot,t)\right\|_\infty=0$. Then
  \[\lim\limits_{T\to\infty}\fint^{T}_{0}[H(s;c_1)-H(s;c_2)]\,\mathrm{d}s= 0.\]
        Consequently,
        \[\underline{\lambda}_{H}(c_1)=\underline{\lambda}_{H}(c_2)\quad\text{and}
         \quad\overline{\lambda}_{H}(c_1)=\overline{\lambda}_{H}(c_2).\]

  \end{enumerate}
\end{theorem}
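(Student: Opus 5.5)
The plan rests on three integral identities obtained by testing \eqref{eq1.1} and \eqref{eq1.2} against suitable functions. All of them need uniform positive bounds on the bundle. By the Harnack-type properties of the principal Floquet bundle collected in Proposition \ref{proposition3.3}, together with the normalizations $\int_\Omega\varphi=1$ and $\int_\Omega\varphi\psi=1$, I expect constants $0<m\le M$ with $m\le \varphi(x,t),\psi(x,t)\le M$ on $\overline{\Omega}\times\mathbb{R}$. These constants may depend on $\omega,d,c$, which is harmless since those are fixed in each statement. The bound for $\varphi$ should follow from Harnack plus $\int_\Omega\varphi=1$. The bound for $\psi$ needs a separate Harnack argument for the backward equation combined with $\int_\Omega \varphi\psi=1$. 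I regard establishing this uniform two-sided bound, especially for $\psi$, as the main technical obstacle; everything else is bookkeeping with $O(1/T)$ error terms.

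\emph{Step 1 (bounds in (i) involving $\min c$ and $\fint_\Omega c$).} Integrating \eqref{eq1.1} over $\Omega$, the divergence term vanishes by the Neumann condition and $\frac{\mathrm d}{\mathrm dt}\int_\Omega\varphi=0$. Hence
\[H(t)=\int_\Omega c(x,t)\varphi(x,t)\,\mathrm dx\ \ge\ \min_{\overline\Omega}c(\cdot,t).\]
Averaging this gives both lower bounds. For the upper bounds, divide \eqref{eq1.1} by $\varphi$ and integrate over $\Omega$. Integration by parts gives $-\int_\Omega \varphi^{-1}\operatorname{div}(A\nabla\varphi)=-\int_\Omega A\nabla\ln\varphi\cdot\nabla\ln\varphi\le 0$, so
\[|\Omega|H(t)\le \omega\frac{\mathrm d}{\mathrm dt}\int_\Omega\ln\varphi\,\mathrm dx+\int_\Omega c(x,t)\,\mathrm dx.\]
Averaging over $[0,T]$ and using $|\ln\varphi|\le C$ makes the derivative term $O(1/T)$. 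This yields the upper bounds by $\liminf$ and $\limsup$ of $\fint_0^T\fint_\Omega c$.

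\emph{Step 2 (eigenvalue bounds in (i)).} Fix $\hat c\in\mathscr C$ with $\hat c_{T_n}\to\hat c$. Let $\hat\phi>0$ be the principal eigenfunction of \eqref{eq1.3} with $m=\hat c$, normalized by $\int_\Omega\hat\phi^2=1$. Multiply \eqref{eq1.1} by $\hat\phi^2/\varphi$ and integrate. Picone's inequality $\int_\Omega A\nabla\varphi\cdot\nabla(\hat\phi^2/\varphi)\le\int_\Omega A\nabla\hat\phi\cdot\nabla\hat\phi$ then gives
\[H(t)\le \omega\frac{\mathrm d}{\mathrm dt}\int_\Omega\hat\phi^2\ln\varphi+d\int_\Omega A\nabla\hat\phi\cdot\nabla\hat\phi+\int_\Omega c(x,t)\hat\phi^2 .\]
Averaging over $[0,T_n]$ and letting $n\to\infty$ gives $\liminf_n\fint_0^{T_n}H\le\mu^\infty(d,\hat c)$. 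Therefore $\underline\lambda_H\le\mu^\infty(d,\hat c)$ for every $\hat c\in\mathscr C$. For $\overline\lambda_H$, choose $T_n$ realizing the $\limsup$. Extract a subsequence along which $\hat c_{T_n}$ converges (Proposition \ref{proposition1.1}(i)); its limit lies in $\mathscr C$. This gives $\overline\lambda_H\le\sup_{\mathscr C}\mu^\infty$.

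\emph{Step 3 ((ii) and (iii)).} Let $(H_i,\varphi_i,\psi_i)$ be the bundle for $c_i$ and set $I(t)=\int_\Omega\varphi_2\psi_1\,\mathrm dx$. Use the equation for $\varphi_2$ and the adjoint equation for $\psi_1$. After integration by parts (both satisfy the same conormal condition, so the diffusion terms cancel) one gets
\[\omega\frac{\mathrm d}{\mathrm dt}\ln I(t)=H_2(t)-H_1(t)+\frac{\int_\Omega(c_1-c_2)\varphi_2\psi_1\,\mathrm dx}{I(t)}.\]
Since $m^2|\Omega|\le I\le M^2|\Omega|$, integrating over $[0,T]$ and dividing by $T$ gives
\[\fint_0^T(H_1-H_2)=\fint_0^T\frac{\int_\Omega(c_1-c_2)\varphi_2\psi_1}{I}+O(1/T).\]
For (ii), the integrand is nonnegative for $t\ge t_0$, and the piece on $[0,t_0]$ contributes $O(1/T)$. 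This gives the $\liminf\ge0$, and hence both comparisons of $\underline\lambda_H$ and $\overline\lambda_H$, because $\liminf(a+b)\ge\liminf a+\liminf b$ and $\limsup(a+b)\ge \limsup a+\liminf b$. For strictness, bound the weight $\varphi_2\psi_1/I\ge m^2/(M^2|\Omega|)$, so the average is at least a positive multiple of $\fint_0^T\int_\Omega(c_1-c_2)$. For (iii), the integrand is bounded by $\|c_1(\cdot,t)-c_2(\cdot,t)\|_\infty\to0$, so its Cesàro average tends to $0$, which gives the limit and the equalities.
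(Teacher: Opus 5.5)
Your argument is correct. Steps 1 and 3 are the same as the paper's Lemmas \ref{lemma3.5} and \ref{lemma3.6}, and as the duality identity \eqref{eq3.2} behind Lemmas \ref{lemma3.7} and \ref{lemma3.9}. You pair $\varphi_2$ with $\psi_1$ where the paper pairs $\varphi_1$ with $\psi_2$, which makes no difference. The two-sided bound you single out as the main obstacle is not actually an obstacle: it is exactly Proposition \ref{proposition3.3}(ii), which the paper states for both $\varphi$ and $\psi$.

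The genuine difference is in Step 2.
\begin{itemize}
\item The paper (Lemma \ref{lemma3.12}) forms the time-log-average $u_n=\exp\bigl(\fint_0^{T_n}\ln\varphi\,\mathrm{d}s\bigr)$. It then uses the matrix Jensen inequality of Lemma \ref{lemma3.11} to make $u_n$ a positive supersolution of the elliptic problem with potential $\hat c_n$, up to an $O(1/T_n)$ error. Finally it extracts a convergent subsequence of $u_n$ and concludes with an elliptic comparison principle for the principal eigenvalue.
\item You test \eqref{eq1.1} against $\hat\phi^2/\varphi$ with a time-independent $\hat\phi$ and invoke Picone's inequality. The convexity exploited is the same, namely completing a square in the quadratic form of $A$, but you apply it in space rather than in time. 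As a result you need no regularity or compactness for $u_n$ and no comparison theorem.
\end{itemize}

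Your route is also more quantitative. The Picone inequality holds for every $\hat\phi\in H^1(\Omega)$ with $\int_\Omega\hat\phi^2\,\mathrm{d}x=1$. The endpoint term is bounded by $2\omega\|\ln\varphi\|_\infty/T$ independently of $\hat\phi$. Minimizing over $\hat\phi$ therefore gives
\[
\fint_0^T H(s)\,\mathrm{d}s\le\mu^\infty(d,\hat c_T)+\frac{2\omega\|\ln\varphi\|_\infty}{T}\quad\text{for every } T>0.
\]
Both eigenvalue bounds then follow from continuity of $\mu^\infty$ in the potential (Lemma \ref{lemma2.17}), together with the subsequence extraction you already describe. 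The same computation on a window $[t,t+\tau]$ also reproduces \eqref{eq7.6}, which the paper uses later.

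What the paper's construction gives in exchange is an explicit positive elliptic supersolution. That is pointwise information about the time-averaged bundle, rather than only integrated information.
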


These results generalize the theory of the principal eigenvalue for periodic parabolic problems. Specifically, the analogue of Theorem \ref{theorem1.3}(i) is contained in \cite[Theorem 1.2]{Liu2022Classifying}, and those of Theorem \ref{theorem1.3}(ii)-(iii) in \cite[Lemmas 15.5 and 15.7]{Hess1991Periodic}.

\begin{remark}{\rm
  The estimate established in Theorem \ref{theorem1.3}(i) is optimal (sharp) in the sense of parameter limits involving $\omega$ and $d$. This optimality is manifested in the following three limiting regimes.
\begin{enumerate}[{\rm (a)}]
  \item The upper bounds $\inf\limits_{\hat{c}\in\mathscr{C}}\mu^\infty(d,\hat{c})$ and $\sup\limits_{\hat{c}\in\mathscr{C}}\mu^\infty(d,\hat{c})$ can be achieved as the frequency $\omega\to\infty$ while $d>0$ is fixed; see Theorem \ref{theorem1.6}(ii).
  \item The lower bounds $\liminf_{T\to\infty} \fint^{T}_{0}\min_{x\in\overline{\Omega}}c(x,s)\,\mathrm{d}s$ and $\limsup_{T\to\infty} \fint^{T}_{0}\min_{x\in\overline{\Omega}}c(x,s)\,\mathrm{d}s$ can be achieved in the limit $\left(d, \frac{\omega}{d}\right) \to (0,0)$; see Theorem \ref{theorem1.10}(i).
  \item The upper bounds $\liminf_{T\to\infty} \fint^{T}_{0}\fint_\Omega c(x,s) \,\mathrm{d}x\mathrm{d}s$ and $\limsup_{T\to\infty} \fint^{T}_{0}\fint_\Omega c(x,s) \,\mathrm{d}x\mathrm{d}s$ can be achieved as the diffusion rate $d\to\infty$; see Theorem \ref{theorem1.9}(i).
\end{enumerate}
}
\end{remark}

\subsection{Asymptotic behavior: Single limits}
We now proceed to present the asymptotic behavior of the principal Floquet exponent under various typical parameter limits. For any given constant $T>0$ and function $c\in\mathcal{C}\cap C^{2,1}(\overline{\Omega}\times\mathbb{R})$, we define
\[\mathcal{M}(x,t;T):=t\fint^{T}_{0}c(x,s)\,\mathrm{d}s-\int^{t}_{0}c(x,s)\,\mathrm{d}s, \qquad (x,t)\in\overline{\Omega}\times[0,T].\]
Assume that
\begin{equation}\label{H2}
\tag{H2}
\limsup\limits_{T\to\infty}\sup\limits_{(x,t)\in\Omega\times[0,T]}(|\operatorname{div}(A(x)\nabla\mathcal{M}(x,t;T))|+|\nabla \mathcal{M}(x,t;T)\cdot (A(x)\nabla \mathcal{M}(x,t;T))|)<\infty.
\end{equation}
The validity of hypothesis \eqref{H2} follows from its verifiability for finite sums of periodic functions (with possibly different periods), as shown in Lemma \ref{lemma8.1}.

\medskip
We first examine the limiting behavior of the normalized principal Floquet exponent in the regime where $d\to0$ and $\omega>0$ remains constant. To stress the dependence on the diffusion parameter, we employ the notation $H(\cdot;d)$.
\begin{theorem}\label{theorem1.5}
Let $H(\cdot;d)$ be the normalized principal Floquet bundle of \eqref{eq1.1}. Then the following assertions are valid.
\begin{enumerate}[{\rm (i)}]
  \item {\rm (}The case $d\to0${\rm )} Assume that $c\in\mathcal{C}\cap C^{2,1}(\overline{\Omega}\times\mathbb{R})$ satisfies \eqref{H2}. One has
        \[\lim\limits_{d\to0}\underline{\lambda}_{H}(d)= \min\limits_{x\in\overline{\Omega}}\liminf\limits_{T\to\infty} \fint^{T}_{0}c(x,s)\,\mathrm{d}s= \inf_{\hat{c}\in\mathscr{C}}\min_{x\in\overline{\Omega}}\hat{c}(x)\]
      and
        \[\lim\limits_{d\to0}\overline{\lambda}_{H}(d)= \limsup\limits_{T\to\infty}\min\limits_{x\in\overline{\Omega}} \fint^{T}_{0}c(x,s)\,\mathrm{d}s= \sup_{\hat{c}\in\mathscr{C}}\min_{x\in\overline{\Omega}}\hat{c}(x).\]
  \item {\rm (}The case $d\to\infty${\rm )} Assume that the function $c\in C^{\delta,\delta/2}(\overline{\Omega}\times\mathbb{R})$ satisfies assumption \eqref{H1} and $\left\|c(\cdot,\cdot)-\fint_\Omega c(x,\cdot)\,\mathrm{d}x\right\|_{C^{\delta,\delta/2}(\overline{\Omega} \times\mathbb{R} )}<\infty$. One has
        \[ \lim\limits_{d\to\infty} \underline{\lambda}_{H}(d) = \liminf_{T\to\infty}\fint^{T}_{0}\fint_\Omega c(x,s)\,\mathrm{d}x\mathrm{d}s\quad
     \text{and}
        \quad\lim\limits_{d\to\infty} \overline{\lambda}_{H}(d) = \limsup_{T\to\infty}\fint^{T}_{0}\fint_\Omega c(x,s)\,\mathrm{d}x\mathrm{d}s.\]
\end{enumerate}

\end{theorem}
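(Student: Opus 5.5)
Throughout, the lower and upper bounds for the principal Floquet exponent come from one device: the identity obtained by integrating \eqref{eq1.1} over $\Omega$,
\[
H(t)=\int_\Omega c(x,t)\varphi(x,t)\,\mathrm{d}x ,
\]
which follows from the Neumann condition and the normalization $\int_\Omega\varphi=1$. This is combined with the comparison principle for long-time limits developed in the paper: if a positive $\overline{w}$ satisfies $\omega\partial_t\overline{w}-d\operatorname{div}(A\nabla\overline{w})+c\overline{w}\ge \beta(t)\overline{w}$ on $[0,T]$, then for every time-averaged limit, $\fint_0^T H\,\mathrm{d}s\ge\fint_0^T\beta\,\mathrm{d}s-\frac{C}{T}$. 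The constant $C$ depends only on $\sup\ln\overline{w}-\inf\ln\overline{w}$ and on the Harnack constant of $\varphi$. The reverse statement holds for subsolutions. This estimate is obtained by writing $\varphi=e^{\int (H)/\omega}\times$ (solution of the $t$-dependent-free equation) and comparing with $\overline{w}e^{\int\beta/\omega}$.

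\textbf{Part (i), $d\to0$.} The lower bound $\liminf_{d\to 0}\underline{\lambda}_H(d)\ge\inf_{\hat c\in\mathscr C}\min\hat c$ is handled with the test function $\overline{w}=e^{\mathcal M(x,t;T)/\omega}$. A direct calculation gives
\[
\omega\partial_t\overline{w}+c\overline{w}=\hat c_T(x)\overline{w}
\qquad\text{and}\qquad
d\operatorname{div}(A\nabla\overline{w})=\frac{d}{\omega}\Bigl(\operatorname{div}(A\nabla\mathcal M)+\tfrac1\omega\nabla\mathcal M\cdot A\nabla\mathcal M\Bigr)\overline{w}.
\]
By \eqref{H2} the diffusion term is $O(d)$ uniformly in $T$, so $\overline{w}$ is a supersolution with $\beta=\min\hat c_T-Cd$.

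There is one snag: $\mathcal M$ may violate the Neumann condition. I will fix this either by first replacing $c$ by a function agreeing with it away from a thin boundary layer and invoking Theorem~\ref{theorem1.3}(ii), or by multiplying $\overline{w}$ by a boundary corrector $1+\epsilon\eta(x)$, where $\eta$ has $\mathbf n\cdot A\nabla\eta$ large; the cost of the corrector is $O(\epsilon)$.

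The oscillation of $\mathcal M(\cdot,t;T)$ over $[0,T]$ can grow with $T$, so the error $C/T$ is not automatically harmless. I handle this by restarting on blocks $[kT_0,(k+1)T_0]$ with $\mathcal M$ recomputed on each block. The error is then $O(T_0)/T_0$ per block, which is small once $T_0$ is large. Since block averages of $c$ over windows of length $T_0$ need not lie in $\mathscr C$, I will take $T=T_n$ along sequences and use the compactness of $\mathscr C$ (Lemma~\ref{lemma2.1}) together with Proposition~\ref{proposition1.1}. Taking limits then yields $\underline{\lambda}_H\ge\inf_{\mathscr C}\min\hat c-Cd$, and similarly $\overline{\lambda}_H\ge\sup_{\mathscr C}\min\hat c-Cd$.

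For the upper bounds I use a subsolution localized near a minimum point $x_0$ of $\hat c$: take $\underline w=e^{\mathcal M/\omega}\theta_\epsilon(x)$, where $\theta_\epsilon$ is a principal Dirichlet eigenfunction on $B_\epsilon(x_0)$, extended by zero. This gives $\beta=\hat c_T(x_0)+o_\epsilon(1)+C_\epsilon d$. The comparison is carried out against $\varphi$ restricted to the ball, using Harnack's inequality for $\varphi$. Sending first $d\to0$ and then $\epsilon\to0$ gives the matching upper bounds.

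Finally, the equalities between the $\min\liminf$ formulas and the $\mathscr C$ formulas are a consequence of Proposition~\ref{proposition1.1} and a compactness and equicontinuity argument.

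\textbf{Part (ii), $d\to\infty$.} Write $\varphi=\frac1{|\Omega|}+\tilde\varphi$ with $\int_\Omega\tilde\varphi=0$. Then
\[
H(t)=\fint_\Omega c(x,t)\,\mathrm{d}x+\int_\Omega\Bigl(c-\fint_\Omega c\Bigr)\tilde\varphi\,\mathrm{d}x .
\]
Poincar\'e's inequality combined with an energy estimate (multiply the equation for $\tilde\varphi$ by $\tilde\varphi$) gives $\sup_t\|\tilde\varphi(\cdot,t)\|_{L^2}\le C/d$. The bound uses the uniform H\"older norm of $c-\fint c$ and parabolic regularity, which keep $\varphi$ bounded uniformly in $t$. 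Hence $H(t)-\fint_\Omega c\to0$ uniformly in $t$, and both claimed limits follow. Hypothesis \eqref{H1} is not actually needed for the limit itself.

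\textbf{Main obstacle.} I expect the hardest step to be the boundary and oscillation control of $\mathcal M$ in part (i). Making the Neumann correction and the unbounded growth of $\mathcal M$ compatible with the long-time comparison principle is where the proof is most delicate.
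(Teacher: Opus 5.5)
Your part (ii) is correct, and it takes a different route from the paper, which does not prove (ii) but imports it from Lam--Lou \cite{Lam2024The}. Your argument has three ingredients: the energy estimate for $\tilde\varphi=\varphi-|\Omega|^{-1}$, Poincar\'e's inequality, and the boundedness of $\varphi$ on $\overline{\Omega}\times\mathbb{R}$ for each fixed $d$, which rules out backward growth of $\|\tilde\varphi(\cdot,t)\|_{L^2}$. Together they give $\bigl|H(t)-\fint_\Omega c(x,t)\,\mathrm{d}x\bigr|\le C/d$ uniformly in $t$ and $\omega$, using only $\sup|c-\fint_\Omega c\,\mathrm{d}x|$. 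You are right that \eqref{H1} plays no role in the limit; the paper carries the Lam--Lou hypotheses because that work also yields pointwise estimates and monotonicity in $d$. The problems are all in part (i).

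\textbf{Upper bound in (i).} The function $\underline{w}=e^{\mathcal{M}/\omega}\theta_\epsilon$ is not a subsolution. Let $\nu_\epsilon$ denote the Dirichlet eigenvalue of $\theta_\epsilon$. Then
\[
\frac{\omega\partial_t\underline{w}-d\operatorname{div}(A\nabla\underline{w})+c\,\underline{w}}{\underline{w}}
=\hat{c}_T+d\nu_\epsilon-\frac{2d}{\omega}\,\frac{\nabla\theta_\epsilon\cdot A\nabla\mathcal{M}}{\theta_\epsilon}-\frac{d}{\omega}\operatorname{div}(A\nabla\mathcal{M})-\frac{d}{\omega^2}\nabla\mathcal{M}\cdot A\nabla\mathcal{M}.
\]
Near $\partial B_\epsilon$ we have $\theta_\epsilon\to0$ but $\nabla\theta_\epsilon\neq0$, so the cross term is unbounded there with indefinite sign. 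Hence no $\beta$ of the form you state works.

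One repair is to use $\theta_\epsilon^2$ instead. Since $-\operatorname{div}(A\nabla\theta_\epsilon^2)/\theta_\epsilon^2=2\nu_\epsilon-2\,\nabla\theta_\epsilon\cdot A\nabla\theta_\epsilon/\theta_\epsilon^2$, the extra negative term absorbs the cross term by Young's inequality. This gives $\beta=\max_{B_\epsilon}\hat{c}_T+2d\nu_\epsilon+Cd$, after which the Dirichlet comparison on the ball goes through.

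The paper avoids localization altogether. By Lemma~\ref{lemma3.11}, $\exp\bigl(\fint_0^T\ln\varphi\,\mathrm{d}s\bigr)$ is a Neumann supersolution of the elliptic problem with potential $\hat{c}_T$ at level $\fint_0^T H\,\mathrm{d}s-O(1/T)$. This yields $\underline{\lambda}_H\le\inf_{\mathscr{C}}\mu^\infty(d,\hat{c})$ and $\overline{\lambda}_H\le\sup_{\mathscr{C}}\mu^\infty(d,\hat{c})$ (Lemma~\ref{lemma3.12}, Corollary~\ref{corollary3.13}), and Lemma~\ref{lemma2.18} finishes. That route does not need \eqref{H2}.

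\textbf{Lower bound in (i).} Your supersolution is the paper's: Lemma~\ref{lemma4.5} uses $\exp[\frac{1}{\omega}(\mathcal{M}-\kappa\phi^*)]$ with $\phi^*$ the Dirichlet principal eigenfunction (Lemma~\ref{lemma2.16}). However, the obstacle you single out is illusory, and your remedy does not work as stated.

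Since $\mathcal{M}(x,0;T)=\mathcal{M}(x,T;T)=0$, a time-independent corrector gives $\overline{w}(\cdot,0)=\overline{w}(\cdot,T)$. Compare $\varphi e^{-\int_0^tH/\omega}$ with $k\,\overline{w}\,e^{-\int_0^t\beta/\omega}$ and read off at $t=T$. Only these endpoint values and the Harnack constant of $\varphi$ enter, never the interior size of $\mathcal{M}$. This is why Lemma~\ref{lemma2.13} requires only $\overline{\varphi}(\cdot,0)\ge\overline{\varphi}(\cdot,T)$, and it gives directly $\fint_0^T H\,\mathrm{d}s\ge\min_x\hat{c}_T-Cd-C_d\omega/T$.

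The block restart, by contrast, yields the average of blockwise minima $\frac{1}{K}\sum_k\min_x\fint_{kT_0}^{(k+1)T_0}c(x,s)\,\mathrm{d}s$. In general this is only $\le\min_x\hat{c}_{KT_0}$, and compactness of $\mathscr{C}$ does not reverse the inequality. Under \eqref{H2} it can be rescued, because $|\nabla\mathcal{M}|\le C$ forces $\hat{c}_T(x)-\hat{c}_T(y)$ to stabilize at rate $O(1/T)$, but you never invoke this.

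Also, the corrector costs $O(d)$, not $O(\epsilon)$. To dominate $\omega^{-1}\mathbf{n}\cdot A\nabla\mathcal{M}$ it needs an $O(1)$ normal derivative. This is harmless only because the diffusion term carries the factor $d$.
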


In the theorem stated above, the primary contribution of this work lies in establishing the limiting behavior in the small diffusion regime; and this result may be viewed as a generalization of the asymptotic analysis of the principal eigenvalue in periodic parabolic problems, with relevant results available in \cite[Lemma 2.4]{Hutson2001The}, \cite[Theorem 1.1]{Bai2020Asymptotic} and the references therein. By contrast, the large diffusion limit is a direct consequence of the results by Lam and Lou \cite[Proposition 2.6, Corollary 2.8]{Lam2024The}, and they yield additionally not only pointwise estimates for $H(t)$ but also its monotonicity in the fast diffusion parameter $d>>1$.

A natural question is to consider the effect of varying the temporal frequency $\omega$. The following theorem examines two limiting regimes: slow temporal frequency ($\omega\to0$) and fast temporal frequency ($\omega\to\infty$). The asymptotic behavior of the normalized principal Floquet exponent is related to two auxiliary  elliptic eigenvalue problems. One of them is \eqref{eq1.3}, and the other is that for each $t\in\mathbb{R}$,
\begin{equation}\label{eq1.4}
  \left\{
  \begin{aligned}
    &-d\operatorname{div}(A(x)\nabla\phi^{0})+c(x,t)\phi^{0}=\mu^0(t)\phi^{0},&&x\in\Omega,\\
    &\mathbf{n}\cdot(A(x)\nabla\phi^{0})=0,&&x\in\partial\Omega,\\
    &\|\phi^{0}(\cdot,t)\|_\infty=1.
  \end{aligned}
  \right.
\end{equation}

\begin{theorem}\label{theorem1.6}
Let $H(\cdot;\omega)$ be the normalized principal Floquet bundle of \eqref{eq1.1} with potential $c$.
\begin{enumerate}[{\upshape (i)}]
  \item {\rm (}The case $\omega\to0${\rm )} Assume $c\in \mathcal{C}\cap C^{0,1}(\overline{\Omega} \times\mathbb{R})$ satisfies $\sup_{t\in\mathbb{R}}\|\partial_t c(\cdot,t)\|_\infty<\infty$. Denote by $\mu^{0}(t)$, $t\in\mathbb{R}$ the principal eigenvalue of \eqref{eq1.4} with $c$. Then
       \[\lim_{\omega\to0}\underline{\lambda}_{H}(\omega)= \liminf_{T\to\infty}\fint^{T}_{0}\mu^0(s)\,\mathrm{d}s \quad\text{and}\quad\lim_{\omega\to0}\overline{\lambda}_{H}(\omega)= \limsup_{T\to\infty}\fint^{T}_{0}\mu^0(s)\,\mathrm{d}s.\]
  \item {\rm (}The case $\omega\to\infty${\rm )} Assume that $c\in \mathcal{C}\cap C^{2,1}(\overline{\Omega}\times\mathbb{R})$ satisfies assumption \eqref{H2}. Denote by $\mu^\infty$ the principal eigenvalue of \eqref{eq1.3}. Then
       \[\lim_{\omega\to\infty}\underline{\lambda}_{H}(\omega)= \inf_{\hat{c}\in\mathscr{C}}\mu^\infty(\hat{c})\quad\text{and}
       \quad\lim_{\omega\to\infty}\overline{\lambda}_{H}(\omega)= \sup_{\hat{c}\in\mathscr{C}}\mu^\infty(\hat{c}).\]
\end{enumerate}
\end{theorem}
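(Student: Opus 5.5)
Both limits will be handled by the same two steps: a parabolic comparison over long time windows, and a change of variables (for $\omega\to\infty$). The basic tool is the identity obtained by integrating \eqref{eq1.1} against a positive test function. If $\underline{u}$ is a positive sub-solution of $\omega\partial_t u-d\operatorname{div}(A\nabla u)+(c-\Lambda(t))u\le 0$ with the Neumann condition, then comparison with $\varphi\exp(-\frac1\omega\int_0^t H)$ bounds the growth of $\underline u$ by that of $\varphi$. More precisely, pairing with the adjoint bundle $\psi$ gives
\[
\frac{d}{dt}\int_\Omega \underline u\,\psi\,\mathrm{d}x\le \frac1\omega\,(\Lambda(t)-H(t))\int_\Omega\underline u\,\psi\,\mathrm{d}x .
\]
Integrating over $[0,T]$ and dividing by $T$, the boundary terms $\frac{\omega}{T}\ln\frac{\int\underline u(T)\psi(T)}{\int\underline u(0)\psi(0)}$ vanish as $T\to\infty$, provided $\underline u$ stays between two positive constants. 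Here we use the uniform boundedness of $\psi$ from Proposition~\ref{proposition3.3}. This yields $\fint_0^T H\le\fint_0^T\Lambda+o(1)$, and super-solutions give the reverse inequality. So in each regime the task is to build $\underline u,\overline u$ with prescribed $\Lambda$ and with $\inf\underline u$ bounded below uniformly in $T$.

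\emph{(i) $\omega\to0$.} Take $u(x,t)=\phi^0(x,t)$ and $\Lambda(t)=\mu^0(t)\pm\varepsilon(\omega)$. Since $\partial_tc$ is bounded, perturbation theory for the simple principal eigenvalue of \eqref{eq1.4} shows that $\mu^0$ is Lipschitz and $\partial_t\phi^0$ is bounded uniformly in $t$. Harnack's inequality gives $\inf\phi^0\ge \kappa>0$ uniformly in $t$, because $c\in\mathcal C$. Then
\[
\omega\partial_t\phi^0-d\operatorname{div}(A\nabla\phi^0)+(c-\mu^0)\phi^0=\omega\partial_t\phi^0,
\]
and $|\omega\partial_t\phi^0|\le C\omega\le (C\omega/\kappa)\phi^0$. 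Hence $\phi^0$ is a sub-solution with $\Lambda=\mu^0+C\omega/\kappa$ and a super-solution with $\Lambda=\mu^0-C\omega/\kappa$. Consequently $\big|\fint_0^TH-\fint_0^T\mu^0\big|\le C\omega/\kappa+o_T(1)$. Taking $\liminf/\limsup$ in $T$ and then letting $\omega\to0$ gives (i).

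\emph{(ii) $\omega\to\infty$.} The upper bounds $\underline\lambda_H\le\inf_{\mathscr C}\mu^\infty$ and $\overline\lambda_H\le\sup_{\mathscr C}\mu^\infty$ hold for every $\omega$ by Theorem~\ref{theorem1.3}(i). It therefore suffices to show
\[
\liminf_{\omega\to\infty}\fint_0^{T_n}H\ge\mu^\infty(\hat c)-\varepsilon \quad\text{along every } T_n \text{ with } \hat c_{T_n}\to\hat c ,
\]
together with the matching statement for the $\limsup$ (choosing $T_n$ realizing $\sup$ or $\inf$ over the compact set $\mathscr C$ from Lemma~\ref{lemma2.1}). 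For this I use the WKB-type super-solution
\[
\overline u(x,t)=\hat\phi_{T}(x)\,\exp\!\Big(\tfrac1\omega\mathcal M(x,t;T)\Big),
\]
where $\hat\phi_T$ is the principal eigenfunction of \eqref{eq1.3} with $m=\hat c_T$. Using $\partial_t\mathcal M=\hat c_T-c$, the factor $\omega\partial_t$ exactly cancels the oscillatory part of $c$:
\[
\omega\partial_t\overline u-d\operatorname{div}(A\nabla\overline u)+(c-\mu^\infty(\hat c_T))\overline u=O\!\Big(\tfrac1\omega\Big)\overline u .
\]
The error consists of terms involving $\nabla\mathcal M$, $\operatorname{div}(A\nabla\mathcal M)$ and $|\nabla\mathcal M|_A^2/\omega^2$. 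These are uniformly bounded by \eqref{H2}, while $\nabla\hat\phi_T/\hat\phi_T$ is bounded by elliptic Harnack and Schauder estimates, uniformly over $\mathscr C$. The Neumann condition, however, is not preserved, since $\mathbf n\cdot A\nabla\mathcal M$ need not vanish. I would handle this by adding a small boundary corrector, e.g. multiplying by $1+\frac1\omega\eta(x)\cdot(\cdot)$ with $\eta$ built from a function whose conormal derivative is $1$, or else by restricting to $c$ and using the $O(1/\omega)$ size of the flux.

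Applying the comparison on the window $[0,T_n]$ uses $\mathcal M(\cdot,0)=\mathcal M(\cdot,T_n)=0$. The key point is that the boundary term $\frac{\omega}{T_n}\ln(\cdots)$ is then controlled exactly at $t=T_n$, even though $\mathcal M$ may be large in between, which is precisely why one works on windows rather than globally. This gives
\[
\fint_0^{T_n}H\ge\mu^\infty(\hat c_{T_n})-\frac C\omega-\frac{C\omega}{T_n}.
\]
Letting $n\to\infty$ first, continuity of $\mu^\infty$ in $m$ yields $\ge\mu^\infty(\hat c)-C/\omega$. For $\underline\lambda_H$ I would use that every sequence $T\to\infty$ has a subsequence along which $\hat c_T$ converges in $\mathscr C$, giving $\underline\lambda_H\ge\inf_{\mathscr C}\mu^\infty-C/\omega$.

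The main obstacle is this window comparison. Because $\psi$ enters the pairing, the argument needs two-sided bounds on $\overline u$ at the endpoints and on $\psi$ uniformly in $\omega$. Uniformity in $\omega$ of the bounds on $\psi$ and of the ratio $\psi(T)/\psi(0)$ over long windows is delicate; I would rather compare directly $\overline u$ with $\varphi\,e^{-\frac1\omega\int H}$ via the parabolic maximum principle (the comparison principle developed for long-time limits) and then integrate over $\Omega$ using $\int\varphi=1$. That route only needs the bound $\sup\varphi(\cdot,0)/\inf\overline u(\cdot,0)$, and this bound holds by the Harnack estimates of Proposition~\ref{proposition3.3}.
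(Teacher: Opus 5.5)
Your proposal is correct and is essentially the paper's proof. It uses the same sub/super-solutions: $\phi^0$ with the error $\omega\partial_t\phi^0=O(\omega)\phi^0$ for $\omega\to0$, and $\hat\phi_{T_n}e^{\mathcal{M}(\cdot,\cdot;T_n)/\omega}$ on the window $[0,T_n]$ with $\mathcal{M}(\cdot,0;T_n)=\mathcal{M}(\cdot,T_n;T_n)=0$ for $\omega\to\infty$. It also uses the same upper bounds from Theorem~\ref{theorem1.3}(i) and the same compactness of $\mathscr{C}$. The only difference is the closing comparison. The paper uses Lemma~\ref{lemma2.13}, while your $\psi$-pairing and your direct maximum-principle argument both work equally well. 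Your worry about $\omega$-uniform bounds on $\psi$ is moot, since $T\to\infty$ is taken at fixed $\omega$.

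The boundary corrector you sketch is realized in the paper as the extra factor $e^{-\kappa\phi^*/\omega}$. Here $\phi^*$ is the Dirichlet eigenfunction of \eqref{eq2.6}, and $\kappa$ comes from Lemma~\ref{lemma2.16}, chosen so that $-\kappa\,\mathbf{n}\cdot(A\nabla\phi^*)$ dominates $\sup_{T,t}|\mathbf{n}\cdot(A\nabla\mathcal{M})|$, which is finite by \eqref{H2}. Your fallback of merely using the $O(1/\omega)$ size of the flux would not suffice. A wrong-sign conormal flux of any size breaks the comparison, so the corrector is genuinely needed.
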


When the potential function c is time-periodic, the above results reduce to the limiting results with respect to the frequency for the principal eigenvalue of periodic parabolic problems; specifically, one may refer to \cite{Liu2019Monotonicity}, where they also investigated the monotonicity of the principal eigenvalue with respect to the frequency. For the regime of small frequencies ($\omega \to 0$), the following asymptotic result on finite intervals for the principal Floquet bundle holds.
\begin{theorem}\label{theorem1.7}
Let $c\in \mathcal{C}\cap C^{0,1}(\overline{\Omega}\times\mathbb{R})$ be a function satisfying $\sup_{t\in\mathbb{R}}\|\partial_tc(\cdot,t)\|_\infty<\infty$, $H(\cdot;\omega)$ be the normalized principal Floquet bundle of \eqref{eq1.1} with potential $c$, and $\mu^{0}(t)$ be the principal eigenvalue of \eqref{eq1.4} with potential $c$ for each $t\in\mathbb{R}$. For any $-\infty<T_1<T_2<\infty$, there holds
  \[ \lim_{\omega\to0}\fint^{T_2}_{T_1}H(s;\omega)\,\mathrm{d}s= \fint^{T_2}_{T_1}\mu^0(s)\,\mathrm{d}s.\]
\end{theorem}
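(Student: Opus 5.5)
The plan is to compare $\varphi(\cdot,\cdot;\omega)$ with the frozen-time elliptic eigenfunction. We use the integral identity for $H$: integrating the first equation in \eqref{eq1.1} over $\Omega$ and using $\int_\Omega\varphi=1$ gives
\[H(t)=\int_\Omega c(x,t)\varphi(x,t)\,\mathrm{d}x,\]
since $\omega\frac{\mathrm{d}}{\mathrm{d}t}\int_\Omega\varphi=0$ and the Neumann condition kills the divergence term. Hence it suffices to show that, on the fixed window $[T_1,T_2]$, the profile $\varphi(\cdot,t;\omega)$ approaches $\hat\phi^0(\cdot,t):=\phi^0(\cdot,t)/\int_\Omega\phi^0(x,t)\,\mathrm{d}x$. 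This limit would give $\int_\Omega c\varphi\to\int_\Omega c\hat\phi^0=\mu^0(t)$ for a.e. $t$ (the last identity by integrating \eqref{eq1.4}), and dominated convergence with $|H|\le\|c\|_\infty$ then finishes.

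For the convergence, set $\tau=t/\omega$ and $\tilde\varphi(x,\tau)=\varphi(x,\omega\tau)$. Then $\tilde\varphi$ solves $\partial_\tau\tilde\varphi-d\operatorname{div}(A\nabla\tilde\varphi)+c(x,\omega\tau)\tilde\varphi=H(\omega\tau)\tilde\varphi$ with a coefficient whose $\tau$-derivative is $O(\omega)$. A time $t\in[T_1,T_2]$ corresponds to $\tau=t/\omega$, which is preceded by a $\tau$-interval of length $L/\omega$ over which the coefficient varies by at most $L\sup\|\partial_tc\|_\infty$. We will use the exponential separation of the principal Floquet bundle from Proposition \ref{proposition3.3}, with constants uniform over coefficients in a bounded set of $\mathcal C$. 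Applied to the frozen (autonomous) operator with $c(\cdot,t)$, this says that normalized positive solutions started at time $\tau_0$ converge to $\hat\phi^0(\cdot,t)$ at rate $Ce^{-\gamma(\tau-\tau_0)}$. Taking $\tau_0=\tau-L/\omega^{1/2}$, say, the perturbation from freezing is $O(\omega^{1/2})$ over that window. Continuous dependence on coefficients over the finite window, via a Gronwall or comparison estimate for the normalized problem, then shows $\|\tilde\varphi(\cdot,\tau)-\hat\phi^0(\cdot,t)\|_\infty\le C e^{-\gamma\omega^{-1/2}}+C\omega^{1/2}$. The Harnack inequality of Húska–Poláčik bounds $\tilde\varphi$ above and below uniformly, which controls the initial data.

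A cleaner alternative, if the above is technically heavy, is sub/supersolutions. Take $\underline u=(1-\varepsilon)\phi^0(x,t)e^{-\frac1\omega\int^t(\mu^0-\eta)}$ and the analogous $\overline u$. They are sub/supersolutions of $\omega\partial_t-d\operatorname{div}(A\nabla\cdot)+c$ up to an error $\omega\partial_t\phi^0=O(\omega)$, because $\partial_t\phi^0$ is bounded by the Lipschitz assumption on $c$ and simplicity of $\mu^0$. Comparison with $\varphi e^{-\frac1\omega\int H}$ yields
\[\Big|\int_{T_1}^{T_2}H-\int_{T_1}^{T_2}\mu^0\Big|\le \eta(T_2-T_1)+C\omega,\]
where the $C\omega$ comes from $\omega\log$ of the Harnack ratio constants. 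Letting $\omega\to0$ and then $\eta\to0$ gives the claim. I would actually pursue this second route first, since it needs only $\int_{T_1}^{T_2}H$.

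The main obstacle is the uniform-in-$\omega$ Harnack control: we need $\sup_x\varphi/\inf_x\varphi\le K$ independent of $\omega$ at the endpoints $T_1,T_2$, so that the $\omega\log K$ term vanishes. In the rescaled variable $\tau$ the equation has uniformly bounded coefficients on unit intervals, so the parabolic Harnack inequality applied to $\tilde\varphi$ (after absorbing $H$, which is bounded) should give such a $K$, uniformly for $\omega\le1$. The regularity bound $\|\partial_t\phi^0\|_\infty\le C$ needs care but follows from differentiating \eqref{eq1.4} in $t$ and the spectral gap.
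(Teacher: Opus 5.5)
Your second route is correct and is essentially the paper's proof (Lemma~\ref{lemma5.4}). There too $\varphi$ and $\phi^0$ are each multiplied by an exponential weight $\exp\bigl[\frac{1}{\omega}\int_0^t(\cdots)\,\mathrm{d}s\bigr]$ and compared. The $\omega\,\partial_t\ln\phi^0$ error is absorbed via Lemma~\ref{lemma2.22}, and the crux is exactly the $\omega$-independent Harnack bound \eqref{eq5.2} you single out. The only difference is the closing step: the paper secures the endpoint conditions by taking $\omega<T\varepsilon/(2C)$ and applies the periodic-type comparison Lemma~\ref{lemma2.12}, whereas you use the ordinary initial-value comparison and take logarithms at $T_2$.

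Two small corrections. First, $\phi^0e^{-\frac{1}{\omega}\int^t(\mu^0-\eta)}$ is the \emph{super}solution; the subsolution needs $\mu^0+\eta$. Second, your first route is unnecessary and its stated rate does not follow. Over a $\tau$-window of length $L\omega^{-1/2}$ the frozen coefficient is off by $O(\omega^{1/2})$. A Gronwall bound amplifies this by $e^{CL\omega^{-1/2}}$, and a multiplicative comparison gives only an error of order $L^2$; you would need a window length $\ell\to\infty$ with $\omega\ell^2\to0$.
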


We now contrast the results of Theorems \ref{theorem1.5} and \ref{theorem1.6}. Theorem \ref{theorem1.5} provides the limits of the principal Floquet exponent as $d\to 0$ for a fixed $\omega>0$, and these limits do not depend on the frequency. Theorem \ref{theorem1.6}, on the other hand, gives the limits as $\omega\to 0$ or $\omega\to\infty$ for a fixed $d>0$, and these limits evidently depend on the diffusion rate via the principal eigenvalues $\mu^0$ and $\mu^\infty$. This reveals a noticeable difference. It naturally leads to the question of understanding more thoroughly how these limiting values (and consequently the principal Floquet exponent itself) relate to the diffusion rate $d>0$, encompassing properties such as monotonicity in $d$ and its asymptotic behavior as $d\to 0$ or $d\to\infty$.

\begin{corollary}\label{corollary1.8}
Let $H(\cdot;\omega,d)$ be the normalized principal Floquet bundle of \eqref{eq1.1} with potential $c\in \mathcal{C}$; denote by $\mu^{0}(t;d)$ the principal eigenvalue of \eqref{eq1.4} with potential $c(\cdot,t)$, $t\in\mathbb{R}$.
\begin{enumerate}[{\rm (i)}]
  \item If $c\in C^{0,1}(\overline{\Omega}\times\mathbb{R})$ fulfills $\sup_{t\in\mathbb{R}}\|\partial_tc(\cdot,t)\|_\infty<\infty$, then
      \[\lim_{\omega\to0}\underline{\lambda}_{H}(\omega,d)\quad\text{and}\quad \lim_{\omega\to0}\overline{\lambda}_{H}(\omega,d)\]
      are continuous and increasing in $d>0$, and satisfy
      \begin{align*}
         & \lim_{d\to 0}\lim_{\omega\to0}\underline{\lambda}_{H}(\omega,d) =\liminf_{T\to\infty}\fint^{T}_{0}\min_{x\in\overline{\Omega}}c(x,s)\,\mathrm{d}s, \\
         & \lim_{d\to 0}\lim_{\omega\to0}\overline{\lambda}_{H}(\omega,d) =\limsup_{T\to\infty}\fint^{T}_{0} \min_{x\in\overline{\Omega}}c(x,s)\,\mathrm{d}s, \\
         & \lim_{d\to \infty}\lim_{\omega\to0}\underline{\lambda}_{H}(\omega,d)=\liminf_{T\to\infty}\fint^{T}_{0}\int_\Omega c(x,s)\,\mathrm{d}x\mathrm{d}s, \\
         &\lim_{d\to \infty}\lim_{\omega\to0}\overline{\lambda}_{H}(\omega,d)=\limsup_{T\to\infty}\fint^{T}_{0}\int_\Omega c(x,s)\,\mathrm{d}x\mathrm{d}s.
      \end{align*}
  \item Assume in addition that $c\in C^{2,1}(\overline{\Omega}\times\mathbb{R})$ and that $c$ satisfies assumption \eqref{H2}. Then the limits
      \[\lim_{\omega\to\infty}\underline{\lambda}_{H}(\omega,d)\quad\text{and}\quad \lim_{\omega\to\infty}\overline{\lambda}_{H}(\omega,d) \]
      are continuous and nondecreasing in $d>0$, and
\begin{align*}
   &\lim_{d\to0}\lim_{\omega\to\infty}\underline{\lambda}_{H}(\omega,d) =\min_{x\in\overline{\Omega}} \liminf_{T\to\infty}\fint^{T}_{0}c(x,s)\,\mathrm{d}s, \\
   &\lim_{d\to0}\lim_{\omega\to\infty}\overline{\lambda}_{H}(\omega,d) =\limsup_{T\to\infty}\min_{x\in\overline{\Omega}}\fint^{T}_{0}c(x,s)\,\mathrm{d}s, \\
   &\lim_{d\to\infty}\lim_{\omega\to\infty}\underline{\lambda}_{H}(\omega,d) =\liminf_{T\to\infty}\fint^{T}_{0} \fint_{\Omega}c(x,s)\,\mathrm{d}x\mathrm{d}s,\\
   &\lim_{d\to\infty}\lim_{\omega\to\infty}\overline{\lambda}_{H}(\omega,d) =\limsup_{T\to\infty}\fint^{T}_{0} \fint_{\Omega}c(x,s)\,\mathrm{d}x\mathrm{d}s.
\end{align*}
\end{enumerate}
\end{corollary}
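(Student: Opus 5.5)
Part (i) reduces, by Theorem \ref{theorem1.6}(i), to the behaviour in $d$ of
\[L^-(d):=\liminf_{T\to\infty}\fint^{T}_{0}\mu^0(s;d)\,\mathrm{d}s \quad\text{and}\quad L^+(d):=\limsup_{T\to\infty}\fint^{T}_{0}\mu^0(s;d)\,\mathrm{d}s.\]
Part (ii) reduces, by Theorem \ref{theorem1.6}(ii), to the behaviour of $\inf_{\hat c\in\mathscr{C}}\mu^\infty(d,\hat c)$ and $\sup_{\hat c\in\mathscr{C}}\mu^\infty(d,\hat c)$. So everything becomes a statement about elliptic principal eigenvalues, for which I use four classical facts.
\begin{enumerate}[{\rm (a)}]
  \item The variational characterization
  \[\mu(d,m)=\inf_{\phi\ne0}\frac{d\int_\Omega\nabla\phi\cdot A\nabla\phi+\int_\Omega m\phi^2}{\int_\Omega\phi^2}.\]
  It shows that $\mu$ is nondecreasing and concave in $d$, and that $|\mu(d,m_1)-\mu(d,m_2)|\le\|m_1-m_2\|_\infty$.
  \item The bounds $\min m\le\mu(d,m)\le\fint_\Omega m$, obtained by taking $\phi\equiv1$.
  \item $\mu(d,m)\to\min_{\overline\Omega}m$ as $d\to0$ (Berestycki, Hamel and others).
  \item $\mu(d,m)\to\fint_\Omega m$ as $d\to\infty$.
\end{enumerate}
The convergences in (c) and (d) are uniform over any family of $m$ that is uniformly bounded and equicontinuous.

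For (i), fix $0<d_1<d_2$. Monotonicity of $\mu^0(t;\cdot)$ for each $t$ passes to the time averages, so $L^\pm$ are nondecreasing. For continuity, note that $d\mapsto\mu^0(t;d)$ is concave and nondecreasing, and by (b) it is bounded uniformly in $t$ because $c\in\mathcal C$. A concave function on an interval $[a,b]\subset(0,\infty)$ with values in a fixed bounded set is Lipschitz on compact subintervals with a constant depending only on the bound. Hence $\{\mu^0(t;\cdot)\}_t$ is equi-Lipschitz on compact subsets of $(0,\infty)$, and this passes to the averages and to their $\liminf$ and $\limsup$.

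For the limit $d\to0$, the family $\{c(\cdot,t)\}_t$ is uniformly bounded and equi-Hölder since $c\in\mathcal C$. So (c) holds uniformly in $t$: $\sup_t|\mu^0(t;d)-\min_x c(x,t)|\to0$. Averaging in time then gives the two stated limits. The limit $d\to\infty$ is handled in the same way using (d) uniformly in $t$. Here I read the displayed $\int_\Omega$ as the average $\fint_\Omega$, consistent with Theorem \ref{theorem1.3}(i).

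For (ii), the set $\mathscr C$ is compact in $C(\overline\Omega)$ by Lemma \ref{lemma2.1}, and its elements are limits of $\hat c_T$, which are equi-Hölder. So $\mathscr C$ is uniformly bounded and equicontinuous. Monotonicity and continuity of $\inf_{\mathscr C}\mu^\infty(d,\cdot)$ and $\sup_{\mathscr C}\mu^\infty(d,\cdot)$ in $d$ follow from the same equi-Lipschitz argument as in (i).

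By (c), uniformly on $\mathscr C$,
\[\lim_{d\to0}\inf_{\mathscr C}\mu^\infty=\inf_{\hat c\in\mathscr C}\min_x\hat c \quad\text{and}\quad \lim_{d\to0}\sup_{\mathscr C}\mu^\infty=\sup_{\hat c\in\mathscr C}\min_x\hat c.\]
Theorem \ref{theorem1.5}(i) (or a direct compactness argument) identifies these values with $\min_x\liminf_T\fint_0^T c$ and $\limsup_T\min_x\fint_0^Tc$ respectively. By (d), as $d\to\infty$ the quantities tend to $\inf_{\mathscr C}\fint_\Omega\hat c$ and $\sup_{\mathscr C}\fint_\Omega\hat c$. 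These equal $\liminf_T\fint_0^T\fint_\Omega c$ and $\limsup_T\fint_0^T\fint_\Omega c$, because $\hat c\mapsto\fint_\Omega\hat c$ is continuous and every subsequential limit of $\fint_0^{T}\fint_\Omega c$ is realized through some element of $\mathscr C$ (Proposition \ref{proposition1.1}(i)).

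The main obstacle is the identification
\[\inf_{\hat c\in\mathscr C}\min_x\hat c=\min_x\liminf_{T\to\infty}\fint_0^Tc\]
in (ii). The inequality $\ge$ is immediate. For $\le$, I would pick $x_0$ attaining the outer minimum and a sequence $T_n$ realizing the $\liminf$ at $x_0$, then extract a $\mathscr C$-limit along a subsequence using Proposition \ref{proposition1.1}(i). The counterpart for $\sup$ is similar, with a sequence realizing $\limsup_T\min_x$. The other delicate point is the uniformity of the small-diffusion limit (c) over the compact family, which I would obtain by combining the pointwise convergence with the $\|\cdot\|_\infty$-Lipschitz dependence in (a) and a finite-net argument.
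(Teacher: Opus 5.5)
Your proposal is correct and follows the paper's route. Theorem~\ref{theorem1.6} reduces everything to the elliptic quantities $\liminf_{T\to\infty}\fint_0^T\mu^0(s;d)\,\mathrm{d}s$, $\limsup_{T\to\infty}\fint_0^T\mu^0(s;d)\,\mathrm{d}s$, $\inf_{\hat c\in\mathscr{C}}\mu^\infty(d,\hat c)$ and $\sup_{\hat c\in\mathscr{C}}\mu^\infty(d,\hat c)$. Their monotonicity, continuity and limits in $d$ are exactly the content of Lemmas~\ref{lemma5.9} and \ref{lemma5.10}, with the identifications supplied by Lemmas~\ref{lemma2.3}, \ref{lemma2.4} and \ref{lemma2.7}.

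Two of your steps differ in detail but not in strategy. You use concavity of $d\mapsto\mu(d,m)$ to obtain equi-Lipschitz continuity in $d$, where the paper uses energy bounds. You use a finite-net argument for uniformity over $\mathscr{C}$ or over $\{c(\cdot,t)\}_t$, where the paper uses Dini's theorem and direct test-function estimates. Both are tidy substitutes rather than a different route.
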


\subsection{Asymptotic behavior: Double limits} The foregoing analysis shows that the limiting processes for the principal Floquet exponent with respect to the frequency and the diffusion cannot be interchanged arbitrarily. We therefore conjecture that the coupling effect of frequency and diffusion likely plays a significant role in shaping its asymptotic behavior and investigate its asymptotic behavior when both parameters vary simultaneously. The next theorem considers two joint limiting processes: one where $d\to\infty$ and $\omega/d$ tends to a finite or infinite limit, and another where $(\omega,d)\to(\infty,0)$. These results reveal how the interplay between diffusion and temporal frequency shapes the normalized principal Floquet exponent.
\begin{theorem}\label{theorem1.9}
Let $H(\cdot;\omega,d)$ be the normalized principal Floquet bundle of \eqref{eq1.1} with potential $c\in\mathcal{C}\cap C^{2,1}(\overline{\Omega}\times\mathbb{R})$.
\begin{enumerate}[{\upshape (i)}]
  \item Let $\vartheta\in[0,\infty]$ be given. Assume that $\sup_{t\in\mathbb{R}}\|\partial_t c(\cdot,t)\|_{\infty}<\infty$. Suppose further that $c$ satisfies assumption \eqref{H2} if $\frac{\omega}{d}\to\infty$. Then
       \[ \lim\limits_{\left(d,\frac{\omega}{d}\right)\to(\infty,\vartheta)} \underline{\lambda}_{H}(\omega,d) = \liminf_{T\to\infty}\fint^{T}_{0}\fint_\Omega c(x,s)\,\mathrm{d}x\mathrm{d}s\]
      and
       \[\lim\limits_{\left(d,\frac{\omega}{d}\right)\to(\infty,\vartheta)} \overline{\lambda}_{H}(\omega,d) = \limsup_{T\to\infty}\fint^{T}_{0}\fint_\Omega c(x,s)\,\mathrm{d}x\mathrm{d}s.\]
  \item Suppose that $c$ satisfies \eqref{H2}. Then
       \[\lim_{(\omega,d)\to(\infty,0)}\underline{\lambda}_{H}(\omega,d)= \min_{x\in\overline{\Omega}}\liminf_{T\to\infty}\fint^{T}_{0}c(x,s)\,\mathrm{d}s= \inf_{\hat{c}\in\mathscr{C}}\min_{x\in\overline{\Omega}}\hat{c}(x)\]
      and
       \[\lim_{(\omega,d)\to(\infty,0)}\overline{\lambda}_{H}(\omega,d)= \limsup_{T\to\infty}\min_{x\in\overline{\Omega}}\fint^{T}_{0}c(x,s)\,\mathrm{d}s= \sup_{\hat{c}\in\mathscr{C}}\min_{x\in\overline{\Omega}}\hat{c}(x).\]
\end{enumerate}
\end{theorem}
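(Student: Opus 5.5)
Both parts rest on the two-sided bounds of Theorem \ref{theorem1.3}(i). For a matching bound we will compare, via the long-time parabolic comparison principle of the paper, the integrated identity for $H$ with suitable sub/super-solutions. The starting point is that integrating \eqref{eq1.1} over $\Omega$ and using $\int_\Omega\varphi=1$ and the Neumann condition gives $H(t)=\int_\Omega c(x,t)\varphi(x,t)\,\mathrm{d}x$. It follows that $\fint_0^T H\,\mathrm{d}s$ is controlled by how $\varphi$ is distributed in $\Omega$.

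\emph{Part (i).} The upper bounds $\underline{\lambda}_H\le\liminf_T\fint_0^T\fint_\Omega c$ and $\overline{\lambda}_H\le\limsup_T\fint_0^T\fint_\Omega c$ already hold for all $(\omega,d)$ by Theorem \ref{theorem1.3}(i). For the matching lower bound, write $c=\bar c(t)+\tilde c(x,t)$ with $\bar c=\fint_\Omega c$. Then $H-\bar c=\int_\Omega\tilde c\,(\varphi-1/|\Omega|)\,\mathrm{d}x$, and it suffices to show $\sup_t\|\varphi(\cdot,t)-1/|\Omega|\|_{L^1}\to0$ along the joint limit.

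Set $\tau=dt$. In this variable the equation becomes $(\omega/d)\partial_\tau\varphi-\operatorname{div}(A\nabla\varphi)+d^{-1}(c-H)\varphi=0$. When $\omega/d\to\vartheta<\infty$ we will use an energy estimate. Multiplying by $\varphi-1/|\Omega|$, applying the Poincaré inequality, and using the uniform boundedness of $\varphi$ (Harnack, uniform in the parameters after the rescaling) gives a dissipation rate of order $d$ in $t$ against a forcing of order $\|\tilde c\|_\infty$. This yields $\|\varphi-1/|\Omega|\|_{L^2}=O(1/d)$ uniformly in $t$, and the conclusion follows. The case $\vartheta=\infty$ (i.e.\ $\omega\gg d$) is the delicate one. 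There we will instead follow the $\omega\to\infty$ construction of Theorem \ref{theorem1.6}(ii): take the test function $e^{\mathcal{M}(x,t;T)/\omega}\hat\phi(x)$, where $\hat\phi$ is the principal eigenfunction of \eqref{eq1.3} with $m=\hat c_{T}$. Hypothesis \eqref{H2} makes the error terms $O(d/\omega+d^2/\omega^2)$. This gives $\fint_0^TH\ge\mu^\infty(d,\hat c_{T})-o(1)$, and $\mu^\infty(d,\hat c)\to\fint_\Omega\hat c$ as $d\to\infty$ uniformly on the compact set $\mathscr{C}$ (Lemma \ref{lemma2.1}). Finally, the time derivative $\partial_tc$ bound will be used to control the errors uniformly in $\vartheta$ in the intermediate case.

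\emph{Part (ii).} The lower bound comes from Theorem \ref{theorem1.3}(i) together with the sub-solution argument. For the upper bound, apply the same transformation $\varphi\sim e^{\mathcal{M}/\omega}\hat\phi$ with $\hat\phi$ the principal eigenfunction for $(d,\hat c_{T_n})$. Hypothesis \eqref{H2} bounds the extra terms $d\,\mathrm{div}(A\nabla\mathcal{M})/\omega$ and $d\,\nabla\mathcal{M}\cdot A\nabla\mathcal{M}/\omega^2$, which vanish as $\omega\to\infty$ with $d\to0$. Combined with the comparison principle over long intervals, this gives
\[\fint_0^{T_n}H\,\mathrm{d}s=\mu^\infty(d,\hat c_{T_n})+o(1)\]
uniformly in the relevant $T_n$. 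It then remains to show $\mu^\infty(d,\hat c)\to\min_{\overline\Omega}\hat c$ as $d\to0$, uniformly over $\hat c\in\mathscr{C}$. The elliptic small-diffusion limit gives this pointwise, and uniformity follows from compactness of $\mathscr{C}$ together with the Lipschitz dependence $|\mu^\infty(d,m_1)-\mu^\infty(d,m_2)|\le\|m_1-m_2\|_\infty$. Taking $\liminf$ and $\limsup$ over $T$ and using Proposition \ref{proposition1.1} yields $\inf_{\mathscr{C}}\min\hat c$ and $\sup_{\mathscr{C}}\min\hat c$. The equalities with $\min_x\liminf_T\fint_0^Tc$ and $\limsup_T\min_x\fint_0^Tc$ are then the identifications already used in Theorem \ref{theorem1.5}(i).

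The main obstacle is the uniformity in $T$ of the comparison errors: the $\omega\to\infty$ sub/super-solution errors must be $o(1)$ independently of the averaging window. This is where \eqref{H2} and the long-time comparison principle are essential, especially in part (i) with $\omega/d\to\infty$.
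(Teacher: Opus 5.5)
Part (i) is correct, and your energy estimate takes a genuinely different route from the paper's. The paper builds three separate supersolutions from $\Gamma_0$, $\Gamma_\vartheta$ and $\Gamma_{\infty,T}$ (the last one combined with $\mathcal{M}$ and \eqref{H2}) and feeds each into Lemma~\ref{lemma2.13}. Your estimate is actually stronger than you realize. Set $w=\varphi-1/|\Omega|$. Then $\int_\Omega w\,\mathrm{d}x=0$ and $\int_\Omega(H-c)\varphi w\,\mathrm{d}x=\int_\Omega(H-c)w^2\,\mathrm{d}x-|\Omega|^{-1}\int_\Omega cw\,\mathrm{d}x$, so Poincar\'e--Wirtinger gives
\[
\frac{\omega}{2}\frac{\mathrm{d}}{\mathrm{d}t}\|w\|_{2}^{2}+\Big(\frac{d\lambda_2}{\Lambda}-2\|c\|_\infty\Big)\|w\|_{2}^{2}\le\|c\|_\infty|\Omega|^{-1/2}\|w\|_{2},
\]
where $\lambda_2$ is the first nonzero Neumann eigenvalue of $-\Delta$. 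Dissipation and forcing carry the same factor $1/\omega$, and $\varphi$ is bounded on $\overline{\Omega}\times\mathbb{R}$ for fixed parameters. So you may integrate the inequality for $\|w\|_2$ from $s$ to $t$ and let $s\to-\infty$. This yields $\sup_t\|w(\cdot,t)\|_2\le C/d$ for large $d$, uniformly in $\omega>0$, hence $|H(t)-\fint_\Omega c(x,t)\,\mathrm{d}x|\le C/d$ for every $t$. That proves (i) for all $\vartheta\in[0,\infty]$ at once. It needs no $\partial_tc$ bound, no \eqref{H2}, and no uniform Harnack constant, so your separate $\vartheta=\infty$ branch is unnecessary. (Your rescaling $\tau=dt$ is misstated; the displayed equation is just the original divided by $d$. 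Nothing depends on it.)

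Part (ii), as written, does not establish the lower bound, and you have the directions reversed. Theorem~\ref{theorem1.3}(i) only gives $\underline{\lambda}_H\ge\liminf_T\fint_0^T\min_x c\,\mathrm{d}s$, which is in general strictly below the target $\min_x\liminf_T\fint_0^Tc\,\mathrm{d}s$ (think of a travelling profile $c=\cos(x_1-t)$). What Theorem~\ref{theorem1.3}(i) does supply is the \emph{upper} bound, through $\inf_{\mathscr{C}}\mu^\infty(d,\hat c)$ and $\sup_{\mathscr{C}}\mu^\infty(d,\hat c)$. Your equicontinuity argument for the uniform $d\to0$ limit on the compact set $\mathscr{C}$ then finishes it, and it is a fine substitute for the Dini argument of Lemma~\ref{lemma2.18}.

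The $\mathcal{M}$-construction, paired with the time-shifted $\varphi$ in Lemma~\ref{lemma2.13}, yields a \emph{lower} bound, not an upper one. If you keep the factor $\hat\phi$, the operator applied to $\overline{\varphi}$ also produces the cross term $-\frac{2d}{\omega}\nabla\ln\hat\phi\cdot A\nabla(\mathcal{M}-\kappa\phi^*)$. You do not list this term, and \eqref{H2} does not control it. As $d\to0$, $\nabla\ln\hat\phi$ is of size $d^{-1/2}$, so you would need a gradient bound $\sqrt{d}\,\|\nabla\ln\hat\phi_T\|_\infty\le C$ uniform in $T$. Nothing you cite provides this; Lemma~\ref{lemma2.20} bounds only $|\ln\hat\phi|$.

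The fix is to drop $\hat\phi$, as in Lemma~\ref{lemma6.4}. The function $\overline{\varphi}=\exp[(\mathcal{M}(x,t;T)-\kappa\phi^*)/\omega]$ satisfies the supersolution inequality with constant $\min_x\hat c_T-C(d/\omega+d/\omega^2)$. Combined with Lemma~\ref{lemma2.4}, this is exactly the needed lower bound. The $\kappa\phi^*$ correction from Lemma~\ref{lemma2.16}, which you omit in both parts, is what makes $\mathbf{n}\cdot(A\nabla\overline{\varphi})\ge0$.

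Finally, your claim that $\fint_0^{T_n}H=\mu^\infty(d,\hat c_{T_n})+o(1)$ holds uniformly in $T_n$ is false on the upper side: the error $\frac{\omega}{T}\|\ln\varphi(\cdot,T)-\ln\varphi(\cdot,0)\|_\infty$ is of order $\omega/(T\sqrt{d})$. This is harmless, since $T\to\infty$ is taken first at fixed $(\omega,d)$.
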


We now consider the limiting case $(\omega, d) \to (0,0)$. In this regime, we need to assume that $c\in\mathcal{C}$ satisfies a uniform H\"{o}lder condition, i.e., there exists a constant $L > 0$ such that
\begin{equation}\label{H3}
  L:=\max_{x\in\overline{\Omega}}\sup_{t,s\in\mathbb{R}}\frac{|c(x,t)-c(x,s)|}{|t-s|^{\frac{\delta}{2}}}<\infty.
  \tag{H3}
\end{equation}

\begin{theorem}\label{theorem1.10}
  Let $H(\cdot;\omega,d)$ be the normalized principal Floquet bundle of \eqref{eq1.1} with potential $c\in \mathcal{C}\cap C^{2,1}(\overline{\Omega}\times\mathbb{R})$.
\begin{enumerate}[{\rm (i)}]
  \item Assume that the function $c$ satisfies uniform H\"{o}lder condition \eqref{H3}. Then
      \begin{align*}
        \lim_{\left(d,\frac{\omega}{\sqrt{d}}\right)\to(0,0)}\underline{\lambda}_{H}(\omega,d) =\liminf_{T\to\infty}\fint^{T}_{0}\min_{x\in\overline{\Omega}}c(x,s)\,\mathrm{d}s
        =\inf_{x(\cdot)\in C(\mathbb{R};\overline{\Omega})}\liminf_{T\to\infty} \fint^{T}_{0}c(x(s),s)\,\mathrm{d}s
      \end{align*}
     and
      \begin{align*}
        \lim_{\left(d,\frac{\omega}{\sqrt{d}}\right)\to(0,0)}\overline{\lambda}_{H}(\omega,d) =\limsup_{T\to\infty}\fint^{T}_{0}\min_{x\in\overline{\Omega}}c(x,s)\,\mathrm{d}s =\inf_{x(\cdot)\in C(\mathbb{R};\overline{\Omega})}\limsup_{T\to\infty} \fint^{T}_{0}c(x(s),s)\,\mathrm{d}s.
      \end{align*}
  \item Assume that the function $c$ satisfies assumption \eqref{H2}. Then
       \[\lim\limits_{\left(\omega,\frac{\omega}{\sqrt{d}}\right)\to(0,\infty)} \underline{\lambda}_{H}(\omega,d) = \min\limits_{x\in\overline{\Omega}}\liminf_{T\to\infty}\fint^{T}_{0}c(x,s)\,\mathrm{d}s = \inf_{\hat{c}\in\mathscr{C}}\min_{x\in\overline{\Omega}}\hat{c}(x)\]
      and
       \[\lim\limits_{\left(\omega,\frac{\omega}{\sqrt{d}}\right)\to(0,\infty)} \overline{\lambda}_{H}(\omega,d) = \limsup_{T\to\infty}\min\limits_{x\in\overline{\Omega}}\fint^{T}_{0}c(x,s)\,\mathrm{d}s = \sup_{\hat{c}\in\mathscr{C}}\min_{x\in\overline{\Omega}}\hat{c}(x).\]
\end{enumerate}
\end{theorem}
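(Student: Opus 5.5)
The proof splits into the two regimes, and in each we aim for matching lower and upper bounds on $\fint_0^T H\,\mathrm{d}s$ as $T\to\infty$, uniformly in the parameters near the limit. Throughout we use the identity obtained by integrating \eqref{eq1.1} over $\Omega$:
\[
H(t)=\int_\Omega c(x,t)\varphi(x,t)\,\mathrm{d}x ,
\]
which holds because $\int_\Omega\varphi=1$ and the boundary condition kills the divergence term. This gives the universal lower bound $H(t)\ge\min_{x}c(x,t)$, and hence the lower bounds of Theorem \ref{theorem1.3}(i). Both parts therefore reduce to upper bounds, which we obtain from a comparison principle for long-time limits. We build a positive supersolution $\overline{u}$ of $\omega\partial_t u-d\operatorname{div}(A\nabla u)+(c-\Lambda(t))u\ge 0$ on $[0,T]$ with Neumann data. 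Comparing with $\varphi\, e^{-\frac1\omega\int_0^t (H-\Lambda)}$ and using Harnack's inequality to bound ratios $\max\varphi/\min\varphi$ uniformly, we get $\fint_0^T H\le \fint_0^T\Lambda+O(\ln(\max\overline u/\min\overline u)/(\omega T))+o(1)$.

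\textbf{Part (ii).} Here $\omega\to0$ and $\omega/\sqrt d\to\infty$, so $d/\omega^2\to 0$. We use the exponential ansatz
\[
\overline u(x,t)=\phi(x)\exp\Bigl(\tfrac1\omega\mathcal M(x,t;T)\Bigr),
\]
with $\mathcal M$ as in \eqref{H2} and $\phi$ concentrated near a minimum point of $\hat c_T$. Since $\omega\partial_t\mathcal M/\omega=\hat c_T-c$, the potential is replaced by its time average $\hat c_T(x)$. The diffusion contributes terms of order $\frac d\omega|\operatorname{div}(A\nabla\mathcal M)|+\frac{d}{\omega^2}\nabla\mathcal M\cdot A\nabla\mathcal M$, and both vanish under \eqref{H2} because $d/\omega\le d/\omega^2\to0$. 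Choosing $\phi$ to be a principal eigenfunction of $-d\operatorname{div}(A\nabla)+\hat c_T$ (whose eigenvalue tends to $\min\hat c_T$ as $d\to0$), the bound reduces to $\limsup$ or $\liminf$ of $\min_x\hat c_{T}$ along sequences, which is exactly $\sup$ or $\inf_{\hat c\in\mathscr C}\min\hat c$ by compactness of $\mathscr C$ (Lemma \ref{lemma2.1}). For the lower bound, note the equalities with $\min_x\liminf$ were recorded in Theorem \ref{theorem1.5}(i). We would redo the matching lower bound by a subsolution of the same form. Since the time-averaged problem reduces to Theorem \ref{theorem1.5}(i)/\ref{theorem1.9}(ii), we expect this argument to parallel them.

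\textbf{Part (i).} Here the upper target is $\fint_0^T\min_x c$, so the supersolution must follow a moving minimizer $x^*(t)$ of $c(\cdot,t)$. Since $x^*(t)$ is not regular, we discretize time into blocks of length $\tau$. On each block we use a bump $\phi_k$ of spatial width $\varepsilon$ centered at a minimizer of $c(\cdot,t_k)$; by \eqref{H3} the potential varies by at most $L\tau^{\delta/2}$ on the block, and by the $C^\delta$ bound in $x$ it varies by at most $O(\varepsilon^\delta)$ on the bump. The diffusion cost of the bump is $d/\varepsilon^2$.

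The \textbf{main obstacle} is the transition between consecutive blocks, where the bump jumps location. The cost of this transfer is controlled by the Harnack constant over a unit time step of the rescaled equation. For $\omega\partial_t-d\Delta$, the diffusion length over time $\tau$ is $\sqrt{d\tau/\omega}$, and the log-Harnack cost per jump is roughly $\omega\cdot\operatorname{diam}^2/(d\tau)\cdot\frac1\tau$ per unit time.

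We therefore choose $\tau$ and $\varepsilon$ depending on $(d,\omega)$ so that all four errors vanish: $d/\varepsilon^2\to0$, $\varepsilon\to0$, $\tau\to0$, and $\omega/(\sqrt d\,\tau)\to0$ or a similar Harnack-cost term. This is feasible precisely when $d\to0$ and $\omega/\sqrt d\to0$.

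The identification with $\inf_{x(\cdot)}\liminf\fint c(x(s),s)$ is elementary. The direction $\ge$ holds since $c(x(s),s)\ge\min_x c$. For $\le$, choose a continuous $x(\cdot)$ that approximately tracks minimizers, using uniform continuity of $c$.
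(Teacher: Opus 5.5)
Your comparison principle runs in the wrong direction, and this breaks the structure of part (ii).

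Suppose $\overline u>0$ satisfies $\omega\partial_t\overline u-d\operatorname{div}(A\nabla\overline u)+(c-\Lambda)\overline u\ge0$ with $\mathbf n\cdot(A\nabla\overline u)\ge0$. Comparing it from above with the exact solution $v=\varphi\,e^{-\frac1\omega\int_0^t(H-\Lambda)}$ of the same equation gives $\fint_0^T H\ge\fint_0^T\Lambda-\frac{\omega}{T}\ln K$, where $K$ is a Harnack ratio. That is a \emph{lower} bound on $\fint_0^TH$, and the error carries $\omega/T$, not $1/(\omega T)$. As a check, with $c\equiv c_0$, $\overline u\equiv1$, $\Lambda\equiv c_0-1$, your stated inequality would give $c_0\le c_0-1$. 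Upper bounds need positive \emph{sub}solutions, as in Lemmas~\ref{lemma2.12}--\ref{lemma2.13}.

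In part (i) this is mostly a relabeling, since a bump sitting on a minimizer is naturally a subsolution. In part (ii) it is not. The universal bound $H(t)\ge\min_xc(x,t)$ only gives $\liminf_T\fint_0^TH\ge\liminf_T\fint_0^T\min_xc$. This is in general strictly below the target $\min_x\liminf_T\fint_0^Tc$: for $c=\cos(2\pi x)\cos t$ on $(0,1)$ it gives $-2/\pi$ versus $0$. So (ii) does not reduce to an upper bound; its lower bound is the substantive half.

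That lower bound is exactly what $\phi\,e^{\mathcal M/\omega}$ delivers once read as a \emph{super}solution, provided you fix the boundary. Since $\mathbf n\cdot(A\nabla\mathcal M)$ has no sign on $\partial\Omega$, the Neumann inequality fails as written. The paper (Lemma~\ref{lemma7.5}) uses $\exp\bigl(\frac1{\sqrt d}w+\frac1\omega\mathcal M\bigr)$ with $w=1-\phi^*/M$. There $-\frac1M\mathbf n\cdot(A\nabla\phi^*)$ dominates $\frac{\sqrt d}{\omega}\mathbf n\cdot(A\nabla\mathcal M)$ precisely because $\sqrt d/\omega\to0$. The corrector $e^{-\kappa\phi^*/\omega}$ from Lemma~\ref{lemma2.16} would also work here, since $d/\omega$ and $d/\omega^2$ both tend to $0$. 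If you keep $\phi$ as the eigenfunction for $\hat c_T$, you must also control the omitted cross term $\frac{2d}{\omega}\nabla\ln\phi\cdot A\nabla\mathcal M=O(\sqrt d/\omega)$.

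The upper bound in (ii) is the easy half. Lemma~\ref{lemma3.12} and Corollary~\ref{corollary3.13} bound $\liminf/\limsup\,\fint_0^TH$ by $\inf/\sup_{\mathscr C}\mu^\infty(d,\hat c)$. Since $d=\omega^2(\sqrt d/\omega)^2\to0$ in this regime, Lemma~\ref{lemma2.18} finishes. No ``subsolution of the same form'' is needed.

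In part (i), the transfer between blocks, which you rightly identify as the main obstacle, is left heuristic, and your cost estimate is wrong. The quantity $\omega\,\mathrm{diam}^2/(d\tau^2)$ does not vanish along $\omega=d\to0$, which lies in the regime since $\omega/\sqrt d=\sqrt d\to0$.

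A Gaussian lower bound for the Neumann kernel of $\omega\partial_t-d\operatorname{div}(A\nabla)$ over a window of length $\sigma\tau$ would cost about $\omega L_\Omega^2/(d\sigma\tau)$ per jump. That is about $\frac{L_\Omega^2}{\sigma}\bigl(\frac{\omega}{\sqrt d\,\tau}\bigr)^2$ per unit time, plus $\|c\|_\infty\sigma$. Such a route is plausible, but it needs kernel bounds uniform on $\Omega/\sqrt d$ as $d\to0$, which you have not supplied.

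The ingredient that makes this step rigorous in the paper is the sharp Harnack estimate $\sqrt d\,|\ln\varphi|\le C$, uniform in $\omega$ (Theorem~\ref{theorem3.17}). With it, Lemma~\ref{lemma3.11} shows that $u=\exp\bigl(\fint_t^{t+\tau}\ln\varphi\bigr)$ is a positive elliptic supersolution with potential $\fint_t^{t+\tau}c$. Hence $\fint_t^{t+\tau}H\le\mu\bigl(d,\fint_t^{t+\tau}c\bigr)+\frac{2C\omega}{\sqrt d\,\tau}$. Lemma~\ref{lemma2.23}, which is uniform over blocks, together with \eqref{H3} then replaces $\min_x\fint_t^{t+\tau}c$ by $\fint_t^{t+\tau}\min_xc$. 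Summing over blocks gives a bound uniform in $T$.

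No bump ever has to be transported. The Harnack constant $C/\sqrt d$ is paid once per block, i.e.\ $\omega/(\sqrt d\,\tau)$ per unit time, and that is exactly where $\omega/\sqrt d\to0$ enters.
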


For the case of $\left(d,\frac{\omega}{\sqrt{d}}\right)\to(0,0)$, we present the following local result.
\begin{theorem}\label{theorem1.11}
Let $H(\cdot;\omega,d)$ be the normalized principal Floquet bundle of \eqref{eq1.1} with potential $c\in \mathcal{C}$. For any $T>0$, there holds
 \[\lim_{\left(d,\frac{\omega}{\sqrt{d}}\right)\to(0,0)}\fint^{T}_{0}H(s;\omega,d)\,\mathrm{d}s= \fint^{T}_{0}\min_{x\in\overline{\Omega}}c(x,s)\,\mathrm{d}s,\]
and the above limit holds uniformly in $T>0$.
\end{theorem}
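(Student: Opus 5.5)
The plan is to reduce the claim to a one-sided estimate. Integrating the first equation of \eqref{eq1.1} over $\Omega$ and using the Neumann condition together with $\int_\Omega\varphi(x,t)\,\mathrm{d}x\equiv1$ gives
\[H(t)=\int_\Omega c(x,t)\varphi(x,t)\,\mathrm{d}x\ \ge\ \min_{x\in\overline{\Omega}}c(x,t)\qquad\text{for all }t\in\mathbb{R}.\]
So $\fint_0^T H\ge \fint_0^T\min_{\overline\Omega}c$ for every $T$ and every $(\omega,d)$, with no limit needed. It remains to show that for every $\varepsilon>0$ there is $\delta_0>0$ such that $d<\delta_0$ and $\omega/\sqrt d<\delta_0$ imply $\fint_0^T H(s;\omega,d)\,\mathrm{d}s\le \fint_0^T\min_{\overline\Omega}c+\varepsilon$ for all $T>0$.

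For the upper bound I will use the exponential representation of the bundle. Set $u(x,t)=\varphi(x,t)\exp\bigl(-\frac1\omega\int_0^tH(s)\,\mathrm{d}s\bigr)$. It solves $\omega u_t-d\operatorname{div}(A\nabla u)+cu=0$ with the Neumann condition, and $\int_\Omega u(x,t)\,\mathrm{d}x=\exp(-\frac1\omega\int_0^tH)$. Hence
\[\fint_0^TH(s)\,\mathrm{d}s=-\frac{\omega}{T}\ln\int_\Omega u(x,T)\,\mathrm{d}x,\]
and an upper bound on $\fint_0^T H$ is the same as a lower bound on the mass of $u(\cdot,T)$. I will get that lower bound by comparison (the parabolic comparison principle) with a subsolution of the form
\[\underline u(x,t)=\kappa\,\exp\Bigl(-\frac1\omega\int_0^t\bigl(m(s)+\eta\bigr)\,\mathrm{d}s\Bigr)\zeta(x,t),\qquad m(s)=\min_{\overline\Omega}c(\cdot,s).\]
The steps are as follows.
\begin{enumerate}[(1)]
\item Fix a block length $\tau$ and, on each block $[k\tau,(k+1)\tau)$, a point $x_k$ that is a near-minimizer of $c(\cdot,t)$ for all $t$ in the block. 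The uniform H\"older continuity of $c\in\mathcal C$ in $t$ makes the error $O(\tau^{\delta/2})$.
\item Take $\zeta(\cdot,t)$ to be a bump of radius $r$ centred at $x_k$, for instance a rescaled principal Dirichlet eigenfunction on $B_r(x_k)\cap\Omega$, extended by zero. Inside the support, $c\le m+\operatorname{osc}_{B_r}c$ and $d\operatorname{div}(A\nabla\zeta)\ge -Cdr^{-2}\zeta$, so the subsolution inequality holds with $\eta\approx Cr^\delta+Cd/r^2+C\tau^{\delta/2}$. I will choose $r=d^{1/4}$.
\item Handle the switch from $x_k$ to $x_{k+1}$ at $t=(k+1)\tau$. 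In the slow time $t$, the operator $\omega\partial_t-d\operatorname{div}(A\nabla\cdot)$ has effective diffusivity $d/\omega$. A Gaussian-type lower bound for the Neumann heat kernel, together with the bound $|c|\le C$, shows that after a transition time $\sigma$ the solution started from the bump at $x_k$ dominates $e^{-\Theta}$ times the bump at $x_{k+1}$. The exponent $\Theta$ is roughly $C\omega\sigma^{-1}d^{-1}\operatorname{diam}(\Omega)^2+C\sigma/\omega+C\ln(1/r)$. After multiplying by $\omega$, the per-block cost in $\fint H$ is
\[\frac{\omega\Theta}{\tau}\lesssim \frac{\omega^2}{d\,\sigma\tau}+\frac{\sigma}{\tau}+\frac{\omega\ln(1/r)}{\tau}.\]
With $\sigma=\omega/\sqrt d\cdot\sqrt{\tau}$ or a similar choice, this tends to $0$ as $(d,\omega/\sqrt d)\to(0,0)$ for fixed $\tau$. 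This is exactly where the scaling $\omega/\sqrt d\to0$ enters.
\item Let $\tau\to0$ slowly.
\end{enumerate}

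The main obstacle is the initial comparison $u(\cdot,0)\ge\kappa\zeta(\cdot,0)$ combined with uniformity in $T$. The constant $\kappa$ contributes $-\frac{\omega}{T}\ln\kappa$, which blows up as $T\to0$ unless $\kappa$ is bounded below uniformly in $(\omega,d)$. To handle this I will first use the parabolic Harnack inequality for the Floquet bundle (the bounds in Proposition \ref{proposition3.3}), rescaled to the effective diffusivity $d/\omega$. The goal is a bound $\inf_{B_r(x_0)}\varphi(\cdot,0)\ge e^{-C/\sqrt d}$ or similar, so that $\omega|\ln\kappa|\lesssim \omega/\sqrt d\to0$. Small $T$ (say $T\le\tau$) is then covered by the crude estimate $|\omega\ln\kappa|/T$ together with continuity of $H$ near $t=0$. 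If that fails, I will handle small $T$ separately using only $H(t)=\int c\varphi$ and the fact that $\varphi(\cdot,t)$ concentrates near $\operatorname{argmin}c(\cdot,t)$ on short intervals.
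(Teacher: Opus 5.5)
Your lower bound is the paper's. Your upper bound takes a genuinely different route.

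\textbf{How the paper does it.} In Lemma~\ref{lemma7.2} the paper builds no parabolic subsolution. It sets $u(x)=\exp\bigl(\fint_t^{t+\tau}\ln\varphi(x,s)\,\mathrm{d}s\bigr)$, which by Jensen (Lemma~\ref{lemma3.11}) is a positive Neumann supersolution of the \emph{elliptic} problem with potential $\fint_t^{t+\tau}c$. Combined with the sharp Harnack bound $|\sqrt d\ln\varphi|\le C$ of Theorem~\ref{theorem3.17}, this gives \eqref{eq7.6}: $\fint_t^{t+\tau}H\le\mu\bigl(d,\fint_t^{t+\tau}c\bigr)+\frac{2C\omega}{\sqrt d\,\tau}$. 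Lemma~\ref{lemma2.23}, time-H\"older continuity and summation over windows then finish the proof. The transport cost you compute by hand in step (3) is absorbed into the single estimate $\omega|\ln\varphi|\le C\omega/\sqrt d$, so the paper needs no bump switching, no heat kernels and no initial comparison.

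\textbf{What your route needs.} Your construction is more explicit about why $\omega/\sqrt d$ is the critical scale, but step (3) needs a heat-kernel lower bound whose constants are stable under the dilation $\Omega\mapsto\Omega/\sqrt d$. The Harnack chain in the proof of Theorem~\ref{theorem3.14} (estimate \eqref{eq3.12}) supplies this: it spreads positivity over all of $\Omega$ in $t$-time $O(\omega/\sqrt d)$ at cost $\mathrm{e}^{C/\sqrt d}$. Hence all switches together cost $O\bigl(\omega/(\sqrt d\,\tau)\bigr)$ in $\fint_0^TH$, the same form as the paper's error term. Two further points. The bound $\kappa\ge\mathrm{e}^{-C/\sqrt d}$ you want is exactly Theorem~\ref{theorem3.17}; Proposition~\ref{proposition3.3}(ii) alone does not give that $d$-dependence. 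Also, the uniform-in-$t$ H\"older continuity you use is hypothesis \eqref{H3}, which Lemma~\ref{lemma7.2} assumes explicitly; it is not part of $c\in\mathcal{C}$.

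\textbf{The gap: uniformity as $T\to0$.} Any bound obtained by comparison starting at $t=0$ carries the terms $\frac{\omega}{T}|\ln\kappa|\approx\frac{C\omega}{\sqrt d\,T}$ and $\frac{\omega}{T}\ln\frac{1}{\int_\Omega\zeta}$. For every fixed $(\omega,d)$ these are unbounded as $T\to0$. Your remedy, ``continuity of $H$ near $t=0$'', does not help, because the modulus of continuity of $H(\cdot;\omega,d)$ is not uniform in $(\omega,d)$. Since $\fint_0^TH\to H(0)$ as $T\to0$, uniform convergence over all $T>0$ implies the pointwise statement $H(0;\omega,d)\to\min_{\overline\Omega}c(\cdot,0)$. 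That is a concentration property of $\varphi(\cdot,0)$ near the minimizers of $c(\cdot,0)$, which you only assert.

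The paper's argument does not settle this either. Its final estimate contains $\frac{2C}{\hbar}\bigl(1+\frac1\ell\bigr)\frac{\omega}{\sqrt d}$ with $\ell=\lfloor T/\hbar\rfloor$, which is meaningful only for $T\ge\hbar$. What both arguments actually prove is convergence for each fixed $T$, uniformly on $[T_*,\infty)$ for every $T_*>0$. To cover all $T>0$ you need a genuinely pointwise argument for $H$. Averaged estimates combined with $H\ge\min_{\overline\Omega}c$ do not control $H$ at a single time.
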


Most relevant to our work, Liu and Lou \cite{Liu2022Classifying} offered a thorough classification of the coupled $(\omega,d)\to(0,0)$ limit. Analyzing the Laplacian case, they detailed the eigenvalue's monotonicity and level set topology in the frequency-diffusion plane. Their work inspired us to decompose the limit $(\omega,d)\to(0,0)$ into three regimes: (a) $\bigl(d,\frac{\omega}{\sqrt{d}}\bigr)\to(0,0)$; (b) $\bigl(d,\frac{\omega}{\sqrt{d}}\bigr)\to(0,\vartheta)$ for $\vartheta\in(0,\infty)$; (c) $\bigl(d,\frac{\omega}{\sqrt{d}}\bigr)\to(0,\infty)$. We conjecture that the asymptotic behavior of the normalized principal Floquet bundle in Case (b) is related to a Hamilton-Jacobi equation problem.

\medskip

We conclude by offering some remarks on the main results, the challenges involved, and the methodology adopted in this paper. Compared with the principal eigenvalues in elliptic and periodic parabolic problems, our work investigates the asymptotic behavior of the spectrum for a second-order parabolic operator in a broader context. As noted in Remark \ref{remark1.2}, we compared the principal eigenvalue in periodic parabolic problems with its associated normalized principal Floquet bundle, highlighting both distinctions and links. Motivated by this comparison, we introduced the normalized principal Floquet exponent through long-time average limits, which coincides with the usual principal eigenvalue in the periodic parabolic setting. In general non-autonomous and non-periodic scenarios, such long-time average limits may fail to exist. Consequently, we examine the upper and lower limits of these long-time averages for the principal Floquet bundle, which is reflected in all the main results presented above. More precisely, given that it is only known that $c\in\mathcal{C}$ and that $c$ and $H$ are uniformly bounded in $\overline{\Omega}\times\mathbb{R}$, the limits
\[\lim\limits_{T\to\infty}\fint^{T}_{0}c(\cdot,s)\,\mathrm{d}s \quad\text{and}\quad \lim\limits_{T\to\infty}\fint^{T}_{0}H(s)\,\mathrm{d}s\]
may not exist. For this reason, our analysis will primarily proceed along a sequence $\{T_n\}_{n\in\mathbb{N}^+}$ satisfying $T_n\to\infty$ as $n\to\infty$. The properties of the normalized principal Floquet bundle obtained in the subsequent chapters are, in fact, established for such sequences. We observe that if the limit
\[\lim\limits_{n\to\infty}\fint^{T_n}_{0}c(\cdot,s)\,\mathrm{d}s\quad\text{exists in the pointwise sense on }\overline{\Omega},\]
then by Proposition \ref{proposition1.1}, the limit function is continuous on $\overline{\Omega}$. The set of all limiting values of $\fint^{T}_{0}H(s)\,\mathrm{d}s$ under convergent sequences of $\left\{\fint^{T}_{0} H(s)\,\mathrm{d}s\right\}_{T>0}$ is called the (normalized) principal Floquet exponent. It thus forms a set of constants, also referred to as the principal spectrum.
\section{Almost periodic cases and other extensions}
Assumption \eqref{H2} is a very strong assumption on the potential function, which severely limits the applicability of the main results. Therefore, based on \eqref{H2} and combined with some basic properties of the normalized principal Floquet bundle (for example, Theorem \ref{theorem1.3}(ii)-(iii)), we intend to extend the main results so that they can be applied to a broad class of potential functions, including the uniformly almost periodic case.

Denote by
\[\mathscr{X}_1:=\{c\in \mathcal{C}\cap C^{2,1}(\overline{\Omega}\times\mathbb{R}) ~|~c\text{ satisfies } \eqref{H2}\},~\mathscr{X}_2:=\{\eta\in C^{\delta,\delta/2}(\overline{\Omega}\times\mathbb{R})~|~\lim\limits_{t\to\infty}\|\eta(\cdot,t)\|_\infty=0\}.\]
Let us state the asymptotic behavior of $\fint^{T}_{0}H(s;\omega,d)\,\mathrm{d}s$ with the potential $c\in\mathscr{X}$, where $\mathscr{X}$ is defined as
\begin{equation}\label{eq1.5}
\mathscr{X}:=\{c\in\mathcal{C}~|~c=\tilde{c}+\eta, \text{ where }\tilde{c}\in \mathscr{X}_1,~\eta\in \mathscr{X}_2\}.
\end{equation}
Using Theorem \ref{theorem1.3}(iii) and Theorem \ref{theorem1.5}, we have the following corollary.
\begin{corollary}\label{corollary1.12}
  The conclusions of Theorems {\rm \ref{theorem1.5}, \ref{theorem1.6}(ii), \ref{theorem1.9}(ii)} and {\rm \ref{theorem1.10}(ii)} continue to hold even when hypothesis \eqref{H2} on the potential function $c$ is replaced by ``$c\in \mathscr{X}$".
\end{corollary}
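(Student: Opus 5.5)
The plan is to reduce everything to the case $\eta\equiv0$, which is already covered by the theorems under \eqref{H2}. Write $c=\tilde c+\eta$ with $\tilde c\in\mathscr{X}_1$ and $\eta\in\mathscr{X}_2$. Since $c,\tilde c\in\mathcal{C}$ and $\|c(\cdot,t)-\tilde c(\cdot,t)\|_\infty=\|\eta(\cdot,t)\|_\infty\to0$ as $t\to\infty$, Theorem \ref{theorem1.3}(iii) gives, for every fixed $(\omega,d)$,
\[\lim_{T\to\infty}\fint^{T}_{0}\bigl[H(s;\omega,d;c)-H(s;\omega,d;\tilde c)\bigr]\,\mathrm{d}s=0.\]
Hence $\underline{\lambda}_H(\omega,d;c)=\underline{\lambda}_H(\omega,d;\tilde c)$ and $\overline{\lambda}_H(\omega,d;c)=\overline{\lambda}_H(\omega,d;\tilde c)$ for all $\omega,d>0$. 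Theorem \ref{theorem1.3}(iii) applies for each fixed parameter pair separately, and the constants in it depend on $(\omega,d)$. This does not matter, because we only need the identity of the two exponents pointwise in $(\omega,d)$ before taking any parameter limit.

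Next we check that the limiting quantities on the right-hand sides are the same for $c$ and $\tilde c$. Because $\|\eta(\cdot,t)\|_\infty\to0$, a Cesàro argument gives $\sup_{x}\bigl|\fint^{T}_{0}\eta(x,s)\,\mathrm{d}s\bigr|\le\fint^{T}_{0}\|\eta(\cdot,s)\|_\infty\,\mathrm{d}s\to0$. So $\hat c_T-\hat{\tilde c}_T\to0$ uniformly on $\overline{\Omega}$. It follows that the sets $\mathscr{C}$ built from $c$ and from $\tilde c$ coincide, and that
\[\liminf_{T\to\infty}\fint^{T}_{0}c(x,s)\,\mathrm{d}s=\liminf_{T\to\infty}\fint^{T}_{0}\tilde c(x,s)\,\mathrm{d}s,\]
with the same identity for $\limsup$, for $\min_x\fint^T_0$, and for the spatial averages $\fint_\Omega$. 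Consequently $\inf_{\hat c\in\mathscr{C}}\mu^\infty(d,\hat c)$, $\sup_{\hat c\in\mathscr{C}}\mu^\infty(d,\hat c)$, and the min-type expressions are unchanged. For the $d\to\infty$ part of Theorem \ref{theorem1.5}, we also note that \eqref{H1} for $\tilde c$ and for $c$ are equivalent, since the time average of the $L^2$ discrepancy caused by $\eta$ vanishes. The H\"older-norm hypothesis of Theorem \ref{theorem1.5}(ii) is then applied to $\tilde c$.

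Finally, we apply Theorems \ref{theorem1.5}, \ref{theorem1.6}(ii), \ref{theorem1.9}(ii) and \ref{theorem1.10}(ii) to $\tilde c\in\mathscr{X}_1$, which satisfies \eqref{H2} and the stated regularity. Combining their conclusions with the two identifications above yields the same statements for $c$.

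The main point needing care is that $c$ itself may lack the $C^{2,1}$ regularity and \eqref{H2} demanded in those theorems. The argument avoids this because every use of those theorems is made for $\tilde c$, and $c$ enters only through Theorem \ref{theorem1.3}(iii), which requires only $c\in\mathcal{C}$. One should also confirm that the parameter limits (joint limits in $(\omega,d)$) commute with the identification. They do, because the identification is an exact equality of $\underline{\lambda}_H,\overline{\lambda}_H$ for each parameter pair.
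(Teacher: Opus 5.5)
Your proposal is correct and is essentially the paper's argument. Theorem \ref{theorem1.3}(iii) (Lemma \ref{lemma3.9}) makes $\underline{\lambda}_H$ and $\overline{\lambda}_H$ for $c$ and for $\tilde c$ equal at every fixed $(\omega,d)$, and the theorems requiring \eqref{H2} are then applied to $\tilde c\in\mathscr{X}_1$. Your extra check that $\fint_0^T\eta(\cdot,s)\,\mathrm{d}s\to0$ uniformly, so that $\mathscr{C}$ and all the right-hand-side limits coincide for $c$ and $\tilde c$, supplies a step the paper leaves implicit; for Theorem \ref{theorem1.5}(ii), which never uses \eqref{H2}, you could simply apply it to $c$ directly.
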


Corollary \ref{corollary1.12} extends the applicability of the theorem by decomposing the potential function. Furthermore, the essence of this extension can be understood from the perspective of uniform approximation. In other words, the relevant limiting properties are preserved as long as the potential can be uniformly approximated, to any desired accuracy over long time scales, by a function satisfying the original conditions. This provides a general approach for handling a broader class of non-periodic and non-decaying potentials. In this regard, let us examine an example.
\begin{example}{\rm
Let $\hat{c}\in C^\infty(\overline{\Omega})$ be a nonnegative function satisfying $\|\Delta \hat{c}\|_\infty+\|\nabla\hat{c}\|_\infty>0$. Consider a smooth function $c\in C^\infty(\overline{\Omega}\times\mathbb{R})$ which takes the form
  \[c(x,t)=\frac{\hat{c}(x)}{\sqrt{t+1}}\qquad\text{ on }\overline{\Omega}\times[0,\infty).\]
Let $(H(\cdot;c),\varphi)$ be the normalized principal Floquet bundle of \eqref{eq1.1} with potential $c$. Now we study the auxiliary function
  \[\mathcal{M}(x,t;T)=t\fint^{T}_{0}c(x,s)\,\mathrm{d}s-\int^{t}_{0}c(x,s)\,\mathrm{d}s, \qquad (x,t)\in\overline{\Omega}\times[0,T].\]
By direct computations, we have
   \[ \mathcal{M}(x,t;T)=  2\hat{c}(x)\left[\frac{t}{T}(\sqrt{T+1}-1)-\sqrt{t+1}+1\right].\]
Obviously, it holds
   \[\limsup\limits_{T\to\infty}\sup\limits_{(x,t)\in\Omega\times[0,T]}(|\operatorname{div}(A(x)\nabla\mathcal{M}(x,t;T))|+|\nabla \mathcal{M}(x,t;T)\cdot (A(x)\nabla \mathcal{M}(x,t;T))|)=\infty.\]

Now we turn to compute $\fint^{T}_{0}H(s;c)\,\mathrm{d}s$ as $T\to\infty$. For any $\varepsilon>0$, there is a $t_0>0$ such that
  \[\sup_{(x,t)\in\Omega\times[t_0,\infty)}c(x,t)<\varepsilon.\]
This observation, together with Theorem \ref{theorem1.3}(iii), yields
  \[\liminf\limits_{T\to\infty}\fint^{T}_{0}[H(s;\varepsilon)-H(s;c)]\,\mathrm{d}s\ge 0.\]
In view of the facts that $H(\cdot;c)\ge0$ (see Lemma \ref{lemma3.5}) and $H(\cdot;\varepsilon)=\varepsilon$ (since $H(\cdot;\varepsilon)$ is the normalized principal Floquet bundle with potential $\varepsilon$), we have
  \[0\le\liminf_{T\to\infty}\fint^{T}_{0}H(s;c)\,\mathrm{d}s\le\limsup_{T\to\infty}\fint^{T}_{0}H(s;c)\,\mathrm{d}s\le\varepsilon.\]
By the arbitrariness of $\varepsilon$, one has $\lim_{T\to\infty}\fint^{T}_{0}H(s;c)\,\mathrm{d}s=0$.
}
\end{example}

\medskip
Owing to the fact that the limit $\lim_{T\to\infty}\fint^{T}_{0}c(\cdot,s)\,\mathrm{d}s$ may fail to exist in $C(\overline{\Omega})$, the formulations of the main results in Section \ref{sec1.1} are necessarily somewhat intricate. In the hypothetical situation where the limit does exist, the corresponding statements would, by contrast, be much more concise. While we do not elaborate on the main results under that hypothesis, it is worth noting that a frequently employed class of functions (i.e., the uniformly almost periodic functions) not only enjoys the existence of the long-time average but also possesses, in addition, further favourable properties. A key feature of such functions is that they can be uniformly approximated by finite sums of periodic functions, whose periods need not be equal. Consequently, under suitable regularity conditions, finite sums of periodic functions satisfy hypotheses \eqref{H2} and \eqref{H3}; see Lemma \ref{lemma8.1}.

A function $f(x,t)$ defined on $\overline{\Omega}\times\mathbb{R}$ can be treated as a family $\mathcal{F}=\{f(x)(\cdot)\}_{x\in\overline{\Omega}}$ of functions defined on $\mathbb{R}$. A family $\mathcal{F}$ of almost periodic functions is called a \textbf{uniformly almost periodic family} if it is uniformly bounded, and if given $\epsilon>0$, then $\mathcal{T}(\mathcal{F},\epsilon)=\cap_{f\in\mathcal{F}}\mathcal{T}(f,\epsilon)$ is relatively dense and includes an interval about 0, where $\mathcal{T}(f,\epsilon)$ is $\epsilon-$translation set of $f$; see \cite[Definition 2.1]{Fink2006Almost}. Using Theorem \ref{theorem1.3} and Theorem \ref{theorem1.5}, we can determine the asymptotic behavior of $\fint^{T}_{0}H(s;\omega,d)\,\mathrm{d}s$ for the case that $\{c(x,\cdot)\}_{x\in\overline{\Omega}}$ is a uniformly almost periodic family. It is important to note that the limit
\[\lim_{T\to\infty}\fint^{T}_{0}c(x,s)\,\mathrm{d}s\quad \text{exists}\]
if $\{c(x,\cdot)\}_{x\in\overline{\Omega}}$ is a uniformly almost periodic family; see \cite[Theorem 3.1]{Fink2006Almost}.
\begin{theorem}\label{theorem1.14}
Assume that $c\in\mathcal{C}$ and $\{c(x,\cdot)\}_{x\in\overline{\Omega}}$ is a uniformly almost periodic family and denote by $\hat{c}(\cdot):=\lim_{T\to\infty}\fint^{T}_{0}c(\cdot,s)\,\mathrm{d}s$. Let $(H,\varphi)$ be the normalized principal Floquet bundle of \eqref{eq1.1}. The following statements are valid.
\begin{enumerate}[{\rm (i)}]
  \item For any fixed $\omega>0$, there holds
       \[\lim\limits_{d\to0}\underline{\lambda}_{H}(d)= \lim\limits_{d\to0}\overline{\lambda}_{H}(d)= \min\limits_{x\in\overline{\Omega}}\hat{c}(x);\]
  \item Let $\mu^{0}(t)$ be the principal eigenvalue of \eqref{eq1.4} with potential $c(\cdot,t)$ for each $t\in\mathbb{R}$. Then for any fixed $d>0$, there holds
       \[\lim_{\omega\to0}\underline{\lambda}_{H}(\omega)= \lim_{\omega\to0}\overline{\lambda}_{H}(\omega)= \lim_{T\to\infty}\fint^{T}_{0}\mu^0(s)\,\mathrm{d}s;\]
  \item Let $\mu^\infty(\hat{c})$ be the principal eigenvalue of \eqref{eq1.3} with potential $\hat{c}$. For any fixed $d>0$, there holds
       \[\lim_{\omega\to\infty}\underline{\lambda}_{H}(\omega)= \lim_{\omega\to\infty}\overline{\lambda}_{H}(\omega)=\mu^\infty(\hat{c});\]
  \item Let $\vartheta\in[0,\infty]$ be a given constant. There holds
       \[\lim\limits_{\left(d,\frac{\omega}{d}\right)\to(\infty,\vartheta)} \underline{\lambda}_{H}(\omega,d)= \lim\limits_{\left(d,\frac{\omega}{d}\right)\to(\infty,\vartheta)} \overline{\lambda}_{H}(\omega,d) = \fint_\Omega \hat{c}(x)\,\mathrm{d}x;\]
  \item There holds
       \[\lim_{ (\omega,d)\to(\infty,0)}\underline{\lambda}_{H}(\omega,d)= \lim_{(\omega,d)\to(\infty,0)}\overline{\lambda}_{H}(\omega,d)= \min_{x\in\overline{\Omega}}\hat{c}(x);\]
  \item There holds
       \[\begin{aligned}
        \lim_{\left(d,\frac{\omega}{\sqrt{d}}\right)\to(0,0)}\underline{\lambda}_{H}(\omega,d)
       = \lim_{\left(d,\frac{\omega}{\sqrt{d}}\right)\to(0,0)}\overline{\lambda}_{H}(\omega,d)
       =\lim_{T\to\infty}\fint^{T}_{0}\min_{x\in\overline{\Omega}}c(x,s)\,\mathrm{d}s;
       \end{aligned}\]
  \item There holds
       \[\lim_{\left(d,\frac{\omega}{\sqrt{d}}\right)\to(0,\infty)} \underline{\lambda}_{H}(\omega,d)=\lim_{\left(d, \frac{\omega}{\sqrt{d}}\right) \to(0,\infty)}\overline{\lambda}_{H}(\omega,d)
       =\min_{x\in\overline{\Omega}}\hat{c}(x).\]
\end{enumerate}
\end{theorem}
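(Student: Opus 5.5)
The plan is to reduce everything to the results already stated (Theorems \ref{theorem1.3}, \ref{theorem1.5}, \ref{theorem1.6}, \ref{theorem1.9}, \ref{theorem1.10}, and Corollary \ref{corollary1.12}). The bridge is uniform approximation of almost periodic families by trigonometric polynomials, combined with the monotonicity and stability properties of Theorem \ref{theorem1.3}(ii)--(iii).

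\textbf{Step 1 (approximation and collapse of the spectrum).} Since $\{c(x,\cdot)\}_{x\in\overline{\Omega}}$ is a uniformly almost periodic family, for every $\varepsilon>0$ I will produce $c_\varepsilon\in\mathcal{C}\cap C^{2,1}(\overline{\Omega}\times\mathbb{R})$ with $\|c-c_\varepsilon\|_\infty<\varepsilon$. Here $c_\varepsilon$ is a finite sum of time-periodic functions with smooth spatial coefficients. To build it, first take a Bochner--Fejér polynomial approximation, uniform in $x$ (cf.\ \cite{Fink2006Almost}), then mollify in $x$. By Lemma \ref{lemma8.1}, $c_\varepsilon$ satisfies \eqref{H2} and \eqref{H3}, and its average $\hat c_\varepsilon=\lim_T\fint_0^T c_\varepsilon$ exists, with $\|\hat c_\varepsilon-\hat c\|_\infty\le\varepsilon$. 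For such $c_\varepsilon$ the set $\mathscr{C}$ reduces to the single element $\hat c_\varepsilon$, and for each $x$ the $\liminf$ and $\limsup$ of $\fint_0^T c_\varepsilon(x,s)\,ds$ coincide. The same holds for $\fint_0^T\min_x c_\varepsilon$, because $\min_x c_\varepsilon(x,\cdot)$ is again almost periodic, and likewise for $\fint_0^T\mu^0(s;c_\varepsilon)\,ds$, because $t\mapsto\mu^0(t)$ is Lipschitz in $c(\cdot,t)$ in sup norm. Therefore, in every limiting regime, Theorems \ref{theorem1.5}, \ref{theorem1.6}, \ref{theorem1.9} and \ref{theorem1.10} give the same limit for $\underline\lambda_H(c_\varepsilon)$ and $\overline\lambda_H(c_\varepsilon)$. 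That common value is the expression in (i)--(vii) with $c$ replaced by $c_\varepsilon$.

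\textbf{Step 2 (sandwich).} Theorem \ref{theorem1.3}(ii) applies to $c_\varepsilon-\varepsilon\le c\le c_\varepsilon+\varepsilon$. Adding a constant $\varepsilon$ to the potential shifts $H$ by exactly $\varepsilon$, since $(H+\varepsilon,\varphi)$ is then the bundle. Hence for all $(\omega,d)$,
\[\underline\lambda_H(c_\varepsilon)-\varepsilon\le\underline\lambda_H(c)\le\overline\lambda_H(c)\le\overline\lambda_H(c_\varepsilon)+\varepsilon.\]
Passing to each parameter limit and using Step 1, both $\liminf$ and $\limsup$ of $\underline\lambda_H(c)$ and $\overline\lambda_H(c)$ lie within $\varepsilon$ of the limit value $\Lambda(c_\varepsilon)$ for $c_\varepsilon$.

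\textbf{Step 3 (continuity of the limit functionals).} It remains to show $\Lambda(c_\varepsilon)\to\Lambda(c)$ as $\varepsilon\to0$; then letting $\varepsilon\to0$ finishes each item. This is where the case-by-case checking happens.
\begin{itemize}
\item For $\min\hat c$ in (i), (v), (vii) and $\fint_\Omega\hat c$ in (iv), the discrepancy is at most $\varepsilon$.
\item For $\mu^\infty(\hat c)$ in (iii), the principal eigenvalue is $1$-Lipschitz in the potential in sup norm.
\item For (ii), $|\mu^0(t;c)-\mu^0(t;c_\varepsilon)|\le\varepsilon$, so the time averages differ by at most $\varepsilon$. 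The existence of $\lim_T\fint_0^T\mu^0(s;c)\,ds$ follows because it is a uniform limit of convergent averages, a Cauchy argument with $\varepsilon\to0$.
\item For (vi), the same argument applies to $\min_x c$.
\end{itemize}

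\textbf{Main obstacle.} I expect the hard step to be Step 1: producing $c_\varepsilon$ that is simultaneously uniformly close to $c$ and regular enough ($C^{2,1}$, with $\sup_t\|\partial_t c_\varepsilon\|_\infty<\infty$ and \eqref{H2}/\eqref{H3}) to feed into the earlier theorems. This needs Bochner--Fejér approximation applied uniformly over the compact parameter set $\overline\Omega$, followed by spatial smoothing that keeps \eqref{H2} bounded; trigonometric polynomials have bounded $\mathcal{M}$ with bounded spatial derivatives. An alternative is to bypass this by invoking Corollary \ref{corollary1.12}, but that corollary requires the perturbation to decay in time, whereas here only a uniform $\varepsilon$-perturbation is available, so the sandwich argument of Step 2 is needed in its place.
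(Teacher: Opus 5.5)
Your proposal is correct and follows essentially the paper's own route. The paper builds a uniform trigonometric-polynomial approximant $c_\varepsilon\in\mathcal{C}\cap C^{2,1}(\overline{\Omega}\times\mathbb{R})$, uses Lemma~\ref{lemma8.1} to secure \eqref{H2}--\eqref{H3}, and sandwiches via monotonicity (Theorem~\ref{theorem1.3}(ii), used there as Corollary~\ref{corollary3.8}) with the shift $H(\cdot;c_\varepsilon\pm\varepsilon)=H(\cdot;c_\varepsilon)\pm\varepsilon$, before applying the earlier limit theorems to $c_\varepsilon$ and letting $\varepsilon\to0$. The only real difference is that the paper dispatches (ii) and (vi) by citing Lemmas~\ref{lemma8.2} and~\ref{lemma8.3} (almost periodicity of $\mu^0$ and of $\min_x c(x,\cdot)$), whereas you send every item through the same sandwich and get existence of those averages by uniform approximation, which makes explicit that only the regularity of $c_\varepsilon$, not of $c$, is ever used.
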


\section{Outline of contents}
The rest of this paper is organized as follows.

Chapter \ref{chp2} provides the necessary preparations, including basic properties of the potential function, a parabolic comparison principle developed for long-time limit processes, and several useful results on elliptic eigenvalue problems.

Chapter \ref{chp3} studies the basic properties of the normalized principal Floquet bundle (referred to as the principal Floquet exponent), including its estimates and monotonicity with respect to the potential function. These properties extend the corresponding ones of the principal eigenvalue for elliptic and periodic parabolic operators.

Chapter \ref{chp4} investigates the asymptotic behavior of the principal Floquet exponent as the diffusion rate $d$ tends to zero while the generalized frequency $\omega>0$ is fixed, showing that it converges to the spatial minimum of the long-time average of the potential function $c$.

Chapter \ref{chp5} is devoted to the asymptotics of the principal Floquet exponent with respect to the generalized frequency $\omega$ for a fixed diffusion rate $d>0$. Two limiting regimes are considered: as $\omega\to0$, the limit equals the long-time average of the principal eigenvalue of a family of elliptic problems parameterized by $t\in\mathbb{R}$; as $\omega\to\infty$, the limit reduces to the principal eigenvalue of an elliptic problem with the long-time average of $c$ as its potential. We also study the asymptotic behavior of these limits, which depend obviously on $d>0$.

Chapter \ref{chp6} considers the large-diffusion and large-frequency regimes, which are divided into two parameter limits: $(d,\omega/d)\to(\infty,\vartheta)$ with $\vartheta\in[0,\infty]$, and $(\omega,d)\to(\infty,0)$. In the former, the principal Floquet exponent converges to the spatio-temporal average of $c$; in the latter, it converges to the spatial minimum of the long-time average function of $c$.

Chapter \ref{chp7} examines the small-diffusion coupled with small-frequency regimes, focusing on two limits: $(d,\omega/\sqrt{d})\to(0,0)$ and $(d,\omega/\sqrt{d})\to(0,\infty)$. In the first case the principal Floquet exponent converges to the long-time average of the spatial minimum function of $c$; in the last case it converges to the spatial minimum of the long-time average function of $c$.

Chapter \ref{chp8} deals with the case where the potential function is uniformly almost periodic, and studies the limiting behavior of the corresponding principal Floquet exponent with respect to the frequency and diffusion.

Chapter \ref{chp9}, as an application, considers a non-autonomous, non-periodic spatially diffusive SIS epidemic model. Using the principal Floquet bundle we define two critical quantities $\underline{\mathcal{R}}_0$ and $\overline{\mathcal{R}}_0$, which generalize the notion of the basic reproduction number. We give the threshold-type dynamics of the SIS system via the two critical values and then analyze their limiting behavior with respect to the frequency and the diffusion rate.

\chapter{Preliminaries}\label{chp2}
This chapter presents the necessary preliminary results and foundational tools for the subsequent analysis. We first investigate the convergence and continuity properties of the temporal averages for functions belonging to the space $\mathcal{C}$. A comparison principle for parabolic operators and its variants are then established. Finally, we study several elliptic eigenvalue problems, which will play an important role in the subsequent analysis of the asymptotic behavior of the normalized principal Floquet bundle.

\section{Notation and terminology}
For the reader's convenience, we list the main notations used in this paper as follows.
\begin{itemize}
  \item $\fint^{T}_{0}$, $\fint_\Omega$, etc. denote the integral averages over $[0,T]$, $\Omega$, etc, respectively;
  \item $\lfloor\cdot\rfloor$ stands the floor function (integer part of a real number);
  \item $\|\cdot\|_\infty$ represents the standard supremum norm;
  \item $H$, $H(\cdot)$: denote the normalized principal Floquet bundle of \eqref{eq1.1};
  \item $\mathcal{C}$: the set of functions defined as
      \[\mathcal{C}=\left\{c(x,t)\in C^{\delta,\delta/2}(\overline{\Omega}\times\mathbb{R})~\big|~\|c(\cdot,\cdot)\|_\infty<\infty,\sup_{t\in\mathbb{R}} [c(\cdot,t)]_{\delta;\Omega}<\infty\right\};\]
  \item $\hat{c}_T$, $\hat{c}_\infty$, $\hat{c}_n$: for a given $c\in\mathcal{C}$, $\hat{c}_T$ is the integral average function of $c$ with respect to the time variable over $[0,T]$ (namely $\hat{c}_T:=\fint^{T}_{0}c(\cdot,s)\,\mathrm{d}s$), $\hat{c}_n:=\hat{c}_{T_n}$ for some sequence $\{T_n\}_{n\in\mathbb{N}^+}$, $\hat{c}_\infty$ is the limiting function of $\hat{c}_T$ up to a sequence satisfying $T_n\to\infty$ as $n\to\infty$;
  \item $\mathscr{C}$ and $\mathscr{C}_*$ are the sets of limit functions of long-time averages, namely, for a given $c\in\mathcal{C}$, define
     \[\mathscr{C}:=\left\{\hat{c}\in C (\overline{\Omega})~|~\text{there exists }T_n\to\infty\text{ such that }\|\hat{c}_{T_n}-\hat{c}\|_{C(\overline{\Omega})}\to 0\right\},\]
     where $\hat{c}_T(\cdot)=\fint^{T}_{0}c(\cdot,s)\,\mathrm{d}s$; and for a given sequence $\{T_n\}_{n\in\mathbb{N}^+}$ of $T\to\infty$,
     \[\mathscr{C}_*=\left\{\hat{c}\in C (\overline{\Omega})~|~\left\{\hat{c}_n \right\}_{n\in\mathbb{N}^+}\text{ admits a convergent subsequence converging to }\hat{c}\text{ in } C(\overline{\Omega})\right\},\]
     where $\hat{c}_n(\cdot)=\hat{c}_{T_n}(\cdot)$;
  \item $H(\cdot;d)$, $H(\cdot;\omega)$, $H(\cdot;\omega,d)$, $H(\cdot;c)$, etc. are used to emphasize that the normalized principal Floquet bundle $H$ depends on the diffusion rate $d$ with the frequency $\omega$ fixed, on the frequency $\omega$ with the diffusion rate $d$ fixed, on both the diffusion rate $d$ and frequency $\omega$ simultaneously, on potential $c$, other parameters of interest, respectively;
  \item For function $c\in C^{2,1}(\overline{\Omega}\times\mathbb{R})$ and $T>0$, define
      \[\mathcal{M}(x,t;T)=t\fint^{T}_{0}c(x,s)\,\mathrm{d}s-\int^{t}_{0}c(x,s)\,\mathrm{d}s, \qquad (x,t)\in\overline{\Omega}\times[0,T];\]
  \item In the subsequent chapters, $C>0$ generally denotes a positive constant that does not depend on the parameters under consideration (i.e., $\omega$, $d$, or $(\omega,d)$), but its value may change from one occurrence to another.
\end{itemize}

\section{Some properties of functions in $\mathcal{C}$}\label{sec2.1}
\begin{lemma}\label{lemma2.1}
  Let $c\in\mathcal{C}$ be a given function and let $\left\{T_n\right\}_{n\in\mathbb{N}^+}$ be a given sequence. Then both $\mathscr{C}$ and $\mathscr{C}_*$ are compact sets in $C(\overline{\Omega})$.
\end{lemma}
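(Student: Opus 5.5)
The plan is to show that the family $\{\hat{c}_T\}_{T>0}$ is uniformly bounded and uniformly equicontinuous on $\overline{\Omega}$. Arzelà–Ascoli then gives relative compactness, and a diagonal argument shows that the sets of limit points are closed. For every $T>0$ and $x,y\in\overline{\Omega}$ we have
\[
|\hat{c}_T(x)|\le \|c\|_\infty,\qquad |\hat{c}_T(x)-\hat{c}_T(y)|\le \fint^{T}_{0}|c(x,s)-c(y,s)|\,\mathrm{d}s\le K|x-y|^{\delta},
\]
where $K:=\sup_{t\in\mathbb{R}}[c(\cdot,t)]_{\delta;\Omega}<\infty$ because $c\in\mathcal{C}$. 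Hence the whole family $\{\hat{c}_T\}_{T>0}$ lies in the set
\[
\mathcal{K}:=\{f\in C(\overline{\Omega}):\ \|f\|_\infty\le\|c\|_\infty,\ [f]_{\delta;\Omega}\le K\}.
\]
This set is closed in $C(\overline{\Omega})$, since both the sup bound and the Hölder bound survive uniform limits. By Arzelà–Ascoli it is also compact. Every element of $\mathscr{C}$ or $\mathscr{C}_*$ is a uniform limit of elements of $\mathcal{K}$, so $\mathscr{C},\mathscr{C}_*\subset\mathcal{K}$. It therefore suffices to show that $\mathscr{C}$ and $\mathscr{C}_*$ are closed, because a closed subset of a compact set is compact.

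For $\mathscr{C}$, take $\hat{c}^{(k)}\in\mathscr{C}$ with $\hat{c}^{(k)}\to\hat{c}$ in $C(\overline{\Omega})$. For each $k$, pick a sequence $T^{(k)}_n\to\infty$ with $\hat{c}_{T^{(k)}_n}\to\hat{c}^{(k)}$. Next choose $n_k$ such that $T^{(k)}_{n_k}\ge k$ and $\|\hat{c}_{T^{(k)}_{n_k}}-\hat{c}^{(k)}\|_\infty<1/k$. Setting $S_k:=T^{(k)}_{n_k}$, we get $S_k\to\infty$ and
\[
\|\hat{c}_{S_k}-\hat{c}\|_\infty\le \frac1k+\|\hat{c}^{(k)}-\hat{c}\|_\infty\to0.
\]
Thus $\hat{c}\in\mathscr{C}$. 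For $\mathscr{C}_*$ the argument is the same, with one extra requirement: the diagonal choice must produce a genuine subsequence of $\{\hat{c}_n\}$. So we choose indices $m_k$ strictly increasing, with $\|\hat{c}_{m_k}-\hat{c}^{(k)}\|_\infty<1/k$. This is possible because each $\hat{c}^{(k)}$ is the limit of a subsequence of $\{\hat{c}_n\}$, so there are arbitrarily large indices at which $\hat{c}_n$ is within $1/k$ of $\hat{c}^{(k)}$. Then $\hat{c}_{m_k}\to\hat{c}$, and hence $\hat{c}\in\mathscr{C}_*$.

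No step is a real obstacle. The one point needing care is in the diagonal extraction: the times must tend to infinity for $\mathscr{C}$, and the indices must increase strictly for $\mathscr{C}_*$. Nonemptiness, which is implicit in Proposition \ref{proposition1.1}(i), also follows from the compactness of $\mathcal{K}$, although it is not needed for compactness itself.
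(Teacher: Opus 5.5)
Your proof is correct and follows essentially the same route as the paper. Both arguments use Arzel\`{a}--Ascoli for the uniformly bounded, uniformly H\"{o}lder family $\{\hat{c}_T\}_{T>0}$, then show $\mathscr{C}$ and $\mathscr{C}_*$ are closed by a diagonal extraction (times exceeding $k$ for $\mathscr{C}$, strictly increasing indices for $\mathscr{C}_*$). The only cosmetic difference is that you embed both sets in the explicit compact H\"{o}lder ball $\mathcal{K}$ rather than in the closure of $\{\hat{c}_T\}_{T>0}$, which is equally valid.
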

\begin{proof}
We work in the Banach space $C(\overline{\Omega})$. By the Arzel\`{a}-Ascoli theorem, a family of continuous functions on the compact metric space $\overline{\Omega}$ is relatively compact if and only if it is uniformly bounded and equi-continuous. It is easy to show that the family $\left\{\hat{c}_T\right\}_{T>0}$ is uniformly bounded and equi-continuous, hence relatively compact.

Note that $\mathscr{C}$ consists of all limit points of $\left\{\hat{c}_T\right\}_{T>0}$. It suffices to prove that $\mathscr{C}$ is closed. Suppose $\left\{\hat{c}^{(k)}\right\}_{k\in\mathbb{N}^+} \subset \mathscr{C}$ and $\hat{c}^{(k)} \to \hat{c}$ in $C(\overline{\Omega})$. Please note the distinction from notations used elsewhere in this paper: here $\hat{c}^{(k)}$ does not denote the time-average integral of some $c\in\mathcal{C}$ over $[0,T_k]$, but rather a function in $\mathscr{C}$; the time average of $c$ over $[0,T_k]$ is denoted by $\hat{c}_{T_k}$. For each $k\in\mathbb{N}^+$, because of $\hat{c}^{(k)} \in \mathscr{C}$, there exists $T_k > k$ such that
 \[\left\|\hat{c}_{T_k} - \hat{c}^{(k)}\right\|_\infty < \frac{1}{k}.\]
By the triangle inequality,
 \[\|\hat{c}_{T_k} - \hat{c}\|_\infty \le \left\|\hat{c}_{T_k} - \hat{c}^{(k)}\right\|_\infty + \left\|\hat{c}^{(k)} - \hat{c}\right\|_\infty < \frac{1}{k} + \|\hat{c}^{(k)} - \hat{c}\|_\infty .\]
Letting $k\to\infty$ gives $\|\hat{c}_{T_k} - \hat{c}\|_\infty \to 0$; hence $\hat{c} \in \mathscr{C}$. Thus $\mathscr{C}$ is closed. In a complete metric space the closure of a relatively compact set is compact. Since $\mathscr{C}$ is a closed and contained in the compact closure of the relatively compact set $\left\{\hat{c}_T\right\}_{T>0}$, it is compact.

We now show the compactness of $\mathscr{C}_*$. The subfamily $\left\{\hat{c}_{T_n}\right\}_{n\in\mathbb{N}^+}$ inherits uniform boundedness and equi-continuity from $\left\{\hat{c}_T\right\}_{T>0}$, so it is relatively compact in $C(\overline{\Omega})$. The set $\mathscr{C}_*$ is the collection of all limit points of $\left\{\hat{c}_{T_n}\right\}_{n\in\mathbb{N}^+}$. Again we only need to prove that $\mathscr{C}_*$ is closed. Assume $\left\{\hat{c}^{(k)}\right\}_{k\in\mathbb{N}^+} \subset \mathscr{C}_*$ and $\hat{c}^{(k)} \to \hat{c}$ in $C(\overline{\Omega})$. For each $k\in\mathbb{N}^+$, because of $\hat{c}^{(k)} \in \mathscr{C}_*$, there exists a strictly increasing subsequence $\left\{n_{k,m}\right\}_{m\in\mathbb{N}^+}$ such that
 \[\left\|\hat{c}_{T_{n_{k,m}}} - \hat{c}^{(k)}\right\|_\infty < \frac{1}{m}, \qquad \forall~ m\in\mathbb{N}^+ .\]
Choose a diagonal subsequence: take $n_1 = n_{1,1}$, then $\|\hat{c}_{T_{n_1}} - \hat{c}_1\|_\infty < 1$; choose $m_k$ sufficiently large that $n_{k,m_k}>n_{k-1}$ and $\|\hat{C}_{T_{n_{k,m_k}}}-\hat{c}^{(k)}\|_\infty<1/k$; set $n_k=n_{k,m_k}$. Consequently,
 \[\left\|\hat{c}_{T_{n_k}} - \hat{c}\right\|_\infty \le \left\|\hat{c}_{T_{n_k}} - \hat{c}^{(k)}\right\|_\infty + \left\|\hat{c}^{(k)} - \hat{c}\right\|_\infty < \frac{1}{k} + \left\|\hat{c}^{(k)} - \hat{c}\right\|_\infty .\]
Letting $k\to\infty$ yields $\|\hat{c}_{T_{n_k}} - \hat{c}\|_\infty \to 0$; therefore $\hat{c} \in \mathscr{C}_*$. Hence $\mathscr{C}_*$ is closed. As before, $\mathscr{C}_*$ is a closed and contained in the compact closure of the relatively compact set $\left\{\hat{c}_{T_n}\right\}_{n\in\mathbb{N}^+}$, and thus is compact. This completes the proof.
\end{proof}
\begin{remark}\label{remark2.2}{\rm
It is clear that $\mathscr{C}_* \subset \mathscr{C}$, but the converse is false. If $c(\cdot,t)$ is periodic in $t>0$ or uniformly almost periodic, then the time-average limit exists and is unique, i.e.
 \[\lim_{T \to \infty} \hat{c}_T(x) = \hat{c}(x)\quad\text{uniformly for all }x \in \overline{\Omega}.\]
In this case $\mathscr{C} = \mathscr{C}_* = \left\{\hat{c}\right\}$ is a singleton compact set. The time regularity $C^{\delta/2}$ in the hypothesis $c(x,t) \in C^{\delta,\delta/2}(\overline{\Omega} \times \mathbb{R})$ of the lemma is not actually needed; it suffices to require
 \[\|c\|_{L^\infty(\overline{\Omega} \times \mathbb{R})} < \infty,\qquad   \sup_{t \in \mathbb{R}} [c(\cdot,t)]_{\delta;\Omega} < \infty,\]
for the conclusion to hold. In fact, both limit sets are nonempty and compact by the Arzel\`{a}-Ascoli theorem.
}
\end{remark}

\begin{lemma}\label{lemma2.3}
Let $c\in\mathcal{C}$ and $\left\{T_n\right\}_{n\in\mathbb{N}^+}$ be a sequence satisfying $T_n\to\infty$ as $n\to\infty$. Then
 \[\limsup_{n\to\infty} \min_{x\in\overline{\Omega}}  \fint^{T_n}_{0} c(x,s)\,\mathrm{d}s = \sup_{\hat{c}\in\mathscr{C}_*} \min_{x\in\overline{\Omega}} \hat{c}(x)\quad\text{and} \quad\limsup_{T\to\infty} \min_{x\in\overline{\Omega}}  \fint_0^T c(x,s)\,\mathrm{d}s = \sup_{\hat{c}\in\mathscr{C}} \min_{x\in\overline{\Omega}} \hat{c}(x).\]
\end{lemma}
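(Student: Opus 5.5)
The plan is to prove both identities through the same two inequalities; the second identity then follows from the first argument with $\{T_n\}$ replaced by arbitrary sequences $T\to\infty$. The only tools needed are the compactness from Lemma \ref{lemma2.1} and the fact that the map $g\mapsto\min_{x\in\overline{\Omega}}g(x)$ is $1$-Lipschitz on $C(\overline{\Omega})$:
\[\Bigl|\min_{\overline{\Omega}}g-\min_{\overline{\Omega}}h\Bigr|\le\|g-h\|_\infty.\]

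For the inequality ``$\le$'' in the first identity, I pick a subsequence $\{n_k\}$ realizing the limsup, so that
\[\min_{x\in\overline{\Omega}}\hat{c}_{n_k}(x)\to\limsup_{n\to\infty}\min_{x\in\overline{\Omega}}\hat{c}_n(x).\]
The family $\{\hat{c}_{T_n}\}$ is uniformly bounded and equicontinuous, as shown in the proof of Lemma \ref{lemma2.1}. By Arzel\`{a}-Ascoli I pass to a further subsequence with $\hat{c}_{n_k}\to\hat{c}$ in $C(\overline{\Omega})$, and then $\hat{c}\in\mathscr{C}_*$ by definition. The Lipschitz property gives $\min\hat{c}_{n_k}\to\min\hat{c}$. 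Hence the limsup equals $\min\hat{c}$, which is at most $\sup_{\mathscr{C}_*}\min$.

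For ``$\ge$'', I take any $\hat{c}\in\mathscr{C}_*$ together with a subsequence $\hat{c}_{n_k}\to\hat{c}$ uniformly. The Lipschitz property gives $\min\hat{c}_{n_k}\to\min\hat{c}$. Since a limit along a subsequence cannot exceed the limsup of the full sequence,
\[\min_{\overline{\Omega}}\hat{c}\le\limsup_{n\to\infty}\min_{\overline{\Omega}}\hat{c}_n.\]
Taking the supremum over $\hat{c}\in\mathscr{C}_*$ gives the reverse inequality. The compactness of $\mathscr{C}_*$ from Lemma \ref{lemma2.1} is not even needed here, although it shows that the supremum is attained.

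The second identity uses the same argument with the continuous parameter $T$. For ``$\le$'', I choose $T_k\to\infty$ realizing $\limsup_{T\to\infty}\min\hat{c}_T$ and extract a uniformly convergent subsequence, whose limit lies in $\mathscr{C}$. For ``$\ge$'', every $\hat{c}\in\mathscr{C}$ is a uniform limit of some $\hat{c}_{T_n}$ with $T_n\to\infty$, so $\min\hat{c}\le\limsup_{T\to\infty}\min\hat{c}_T$. I do not expect a real obstacle. The only point needing care is the equicontinuity of $\{\hat{c}_T\}$, which follows from $\sup_t[c(\cdot,t)]_{\delta;\Omega}<\infty$ because averaging preserves that H\"{o}lder bound.
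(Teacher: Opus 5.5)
Your proposal is correct and follows essentially the same route as the paper. Both arguments establish $\sup_{\hat c\in\mathscr{C}_*}\min\hat c\le\limsup$ from the continuity of the minimum under uniform convergence along a subsequence. Both obtain the reverse inequality by choosing a subsequence that realizes the $\limsup$ and extracting a uniformly convergent sub-subsequence via Arzel\`{a}--Ascoli, and both handle the second identity by the same argument with $T\to\infty$.
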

\begin{proof}
We proceed in two steps.

\medskip
\noindent\textbf{Step 1.} Show that
 \[\sup_{\hat{c}\in\mathscr{C}_*} \min_{x\in\overline{\Omega}} \hat{c}(x) \leq \limsup_{n\to\infty} \min_{x\in\overline{\Omega}} \hat{c}_{T_n}(x).\]
Take any $\hat{c} \in \mathscr{C}_*$. By the definition of the limit set $\mathscr{C}_*$, there exists a subsequence $\left\{T_{n'}\right\}_{n'\in\mathbb{N}^+}$ of $\left\{T_{n}\right\}_{n\in\mathbb{N}^+}$ such that
 \[\lim_{n'\to\infty} \|\hat{c}_{T_{n'}}(\cdot) - \hat{c}(\cdot)\|_{\infty} = 0.\]
Since the minimum is continuous under uniform convergence,
 \[\lim_{n'\to\infty} \min_{x\in\overline{\Omega}} \hat{c}_{T_{n'}}(x) = \min_{x\in\overline{\Omega}} \hat{c}(x).\]
Thus $\min_{x\in\overline{\Omega}} \hat{c}(x)$ is a subsequential limit of the family $\left\{\min_{x\in\overline{\Omega}} \hat{c}_{T_n}(x)\right\}_{n\in\mathbb{N}^+}$. By the definition of the limit superior, every subsequential limit does not exceed the limit superior, hence
 \[\min_{x\in\overline{\Omega}} \hat{c}(x) \leq \limsup_{n\to\infty} \min_{x\in\overline{\Omega}} \hat{c}_{T_n}(x).\]
Taking the supremum over all $\hat{c}\in\mathscr{C}_*$ gives the desired inequality.

\medskip
\noindent\textbf{Step 2.} Show that
 \[\limsup_{n\to\infty} \min_{x\in\overline{\Omega}} \hat{c}_{T_n}(x) \leq \sup_{\hat{c}\in\mathscr{C}_*} \min_{x\in\overline{\Omega}} \hat{c}(x).\]
By the definition of the limit superior, we can pick a subsequence of $\left\{T_{n}\right\}_{n\in\mathbb{N}^+}$, denoted still by $\left\{T_{n'}\right\}_{n'\in\mathbb{N}^+}$, with
 \[\lim_{n'\to\infty} \min_{x\in\overline{\Omega}} \hat{c}_{T_{n'}}(x) = \limsup_{n\to\infty} \min_{x\in\overline{\Omega}} \hat{c}_{T_n}(x).\]
The family $\left\{\hat{c}_{T_n}\right\}_{n\in\mathbb{N}^+}$ is easily seen to be uniformly bounded and equi-continuous in $C(\overline{\Omega})$. By the Arzel\`{a}-Ascoli theorem it is relatively compact; consequently the sequence $\left\{\hat{c}_{T_{n'}}\right\}_{n'\in\mathbb{N}^+}$ admits a subsequence $\left\{\hat{c}_{T_{n'_k}}\right\}_{k\in\mathbb{N}^+}$ that converges uniformly on $\overline{\Omega}$. Denote its limit by $\hat{c}$; then $\hat{c} \in \mathscr{C}_*$ by the definition of $\mathscr{C}_*$.
Again using the continuity of the minimum under uniform convergence, we obtain
 \[\limsup_{n\to\infty} \min_{x\in\overline{\Omega}} \hat{c}_{T_n}(x)=\lim_{k\to\infty} \min_{x\in\overline{\Omega}} \hat{c}_{T_{n'_k}}(x) = \min_{x\in\overline{\Omega}} \hat{c}(x)\leq \sup_{\hat{c}'\in\mathscr{C}_*} \min_{x\in\overline{\Omega}} \hat{c}'(x).\]
This is exactly the reverse inequality. Combining the two steps yields the required equality.

Arguing similarly, we can prove the second identity in the lemma. This ends the proof.
\end{proof}

\medskip
In general, the inequality
 \[\limsup_{n\to\infty} \min_{x\in\overline{\Omega}}  \fint^{T_n}_{0} c(x,s)\,\mathrm{d}s \le \min_{x\in\overline{\Omega}}\limsup_{n\to\infty} \fint^{T_n}_{0} c(x,s)\,\mathrm{d}s\]
 \[\left(resp.\qquad \limsup_{T\to\infty} \min_{x\in\overline{\Omega}}  \fint^{T}_{0} c(x,s)\,\mathrm{d}s \le \min_{x\in\overline{\Omega}}\limsup_{T\to\infty} \fint^{T}_{0} c(x,s)\,\mathrm{d}s\right)\]
typically holds, where equality may fail. On the other hand, when $\limsup$ is substituted by $\liminf$, the operations $\min$ and $\liminf$ commute, leading to the following result.

\begin{lemma}\label{lemma2.4}
Let $c\in\mathcal{C}$ and $\left\{T_n\right\}_{n\in\mathbb{N}^+}$ be a sequence satisfying $T_n\to\infty$ as $n\to\infty$. Then
  \[\min_{x\in\overline{\Omega}}\liminf_{n\to\infty}\fint^{T_n}_{0} c(x,s)\,\mathrm{d}s =\liminf_{n\to\infty}\min_{x\in\overline{\Omega}}\fint^{T_n}_{0} c(x,s)\,\mathrm{d}s=\inf_{\hat{c}\in\mathscr{C}_*}\min_{x\in\overline{\Omega}}\hat{c}(x)\]
and
   \[\min_{x\in\overline{\Omega}}\liminf_{T\to\infty}\fint^{T}_{0} c(x,s)\,\mathrm{d}s =\liminf_{T\to\infty}\min_{x\in\overline{\Omega}}\fint^{T}_{0} c(x,s)\,\mathrm{d}s=\inf_{\hat{c}\in\mathscr{C}}\min_{x\in\overline{\Omega}}\hat{c}(x).\]
\end{lemma}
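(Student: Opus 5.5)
We prove the first chain of equalities for a fixed sequence $\{T_n\}_{n\in\mathbb{N}^+}$; the second chain follows in exactly the same way, with $\mathscr{C}$ in place of $\mathscr{C}_*$ and $T\to\infty$ in place of $n\to\infty$. Write $\hat{c}_n=\hat{c}_{T_n}$ and set
\[a:=\min_{x\in\overline{\Omega}}\liminf_{n\to\infty}\hat{c}_n(x),\qquad b:=\liminf_{n\to\infty}\min_{x\in\overline{\Omega}}\hat{c}_n(x),\qquad e:=\inf_{\hat{c}\in\mathscr{C}_*}\min_{x\in\overline{\Omega}}\hat{c}(x).\]
By Proposition \ref{proposition1.1}(ii), the function $\hat{c}_\infty=\liminf_n\hat{c}_n$ is continuous, so the minimum defining $a$ is attained. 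The plan is to prove $b\le a$, $e\le b$ and $a\le e$.

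\textbf{Step 1: $b\le a$.} For every $y\in\overline{\Omega}$ and every $n$ we have $\min_x\hat{c}_n(x)\le\hat{c}_n(y)$. Taking $\liminf_n$ gives $b\le\hat{c}_\infty(y)$, and minimizing over $y$ gives $b\le a$.

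\textbf{Step 2: $e\le b$.} Choose a subsequence along which $\min_x\hat{c}_{n'}(x)\to b$. The family $\{\hat{c}_n\}$ is uniformly bounded and equicontinuous, so by Arzel\`a--Ascoli (as in Lemma \ref{lemma2.1}) a further subsequence converges uniformly to some $\hat{c}\in\mathscr{C}_*$. The minimum is continuous under uniform convergence, so $\min_x\hat{c}=b$, and therefore $e\le b$.

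\textbf{Step 3: $a\le e$.} Take any $\hat{c}\in\mathscr{C}_*$ and let $\hat{c}_{n_k}\to\hat{c}$ uniformly. For each fixed $x$, the value $\hat{c}(x)=\lim_k\hat{c}_{n_k}(x)$ is a subsequential limit of $\{\hat{c}_n(x)\}$, so it is at least $\liminf_n\hat{c}_n(x)=\hat{c}_\infty(x)$. Hence $\hat{c}\ge\hat{c}_\infty$ pointwise on $\overline{\Omega}$, so $\min\hat{c}\ge\min\hat{c}_\infty=a$. Taking the infimum over $\hat{c}\in\mathscr{C}_*$ gives $e\ge a$.

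Combining the three steps yields $a=b=e$.

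No step is a genuine obstacle. The only points needing care are the following. In Step 2, compactness is used to produce an element of $\mathscr{C}_*$ realizing the $\liminf$ of the minima. In Step 3, the inequality $\hat{c}\ge\hat{c}_\infty$ is pointwise and is used for every $x$ at once. This contrasts with the $\limsup$ case discussed before the lemma: there, the point realizing the limsup at one $x$ need not come from the same subsequence as at another $x$, which is why $\min$ and $\limsup$ fail to commute. The continuity of $\hat{c}_\infty$ from Proposition \ref{proposition1.1} ensures that $a$ is a genuine minimum; even without it, the argument goes through verbatim with $\inf$ in place of $\min$.
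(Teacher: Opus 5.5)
Your proof is correct. It reaches the result by a different decomposition from the paper's.

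\textbf{The paper's route.} The paper proves the commutation $\min_x\liminf_n=\liminf_n\min_x$ directly. It takes minimizers $x_k$ of $\hat{c}_{n_k}$ along a subsequence nearly realizing $b$ and extracts $x_k\to x_*$ in $\overline{\Omega}$. It then uses the uniform spatial H\"older bound \eqref{eq2.1} to transfer the near-minimal values to the fixed point $x_*$, which gives $a\le b$. The identification $b=e$ is done separately, by repeating the two-sided argument of Lemma \ref{lemma2.3}.

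\textbf{Your route.} You close the cycle $b\le a\le e\le b$, so the nontrivial inequality $a\le b$ comes out as $a\le e\le b$. The step $a\le e$ uses only the order-theoretic fact that every $\hat{c}\in\mathscr{C}_*$ dominates $\hat{c}_\infty$ pointwise, with no regularity needed. All of the equicontinuity is used in the single Arzel\`a--Ascoli step $e\le b$.

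\textbf{What each buys.} Your argument is shorter and handles the commutation and the identification with the limit set in one pass. It also makes transparent why the analogous statement fails for $\limsup$: there the pointwise inequality $\hat{c}\le\hat{c}^\infty$ points the wrong way. As a by-product, the element of $\mathscr{C}_*$ produced in Step 2 shows that $\inf\hat{c}_\infty$ is attained, without invoking Proposition \ref{proposition1.1}. The paper's argument uses compactness only in $\overline{\Omega}$, rather than in $C(\overline{\Omega})$, and proves the commutation without reference to $\mathscr{C}_*$. Both rest on the same hypothesis, namely the uniform-in-$t$ spatial H\"older bound built into $\mathcal{C}$.
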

\begin{proof}
Since for each $n\in\mathbb{N}^+$,
\[\fint^{T_n}_{0}c(x,s)\,\mathrm{d}s\ge \min\limits_{y\in\overline{\Omega}} \fint^{T_n}_{0}c(y,s)\,\mathrm{d}s,\quad \forall~x\in\overline{\Omega}, \]
we easily obtain
\[\min\limits_{x\in\overline{\Omega}} \liminf_{n\to\infty} \fint^{T_n}_{0}c(x,s)\,\mathrm{d}s\ge \liminf_{n\to\infty}\min\limits_{x\in\overline{\Omega}} \fint^{T_n}_{0}c(x,s)\,\mathrm{d}s.\]
So we need only to show the inverse inequality. Recall that the function $c\in \mathcal{C}$ satisfies
\[\sup_{t\in\mathbb{R}}[c(\cdot,t)]_\delta<\infty,\]
which implies directly that
\begin{equation}\label{eq2.1}
\left|\fint^{T_{n}}_{0}c(x,s)\,\mathrm{d}s-\fint^{T_{n}}_{0}c(y,s)\,\mathrm{d}s\right| =\left|\fint^{T_{n}}_{0}(c(x,s)-c(y,s))\,\mathrm{d}s\right|<C|x-y|^\delta,\quad \forall~x,y\in\overline{\Omega}
\end{equation}
for some constant $C>0$ independent of $n$. For any real number $\varepsilon>0$, there is a subsequence $\left\{T_{n_k}\right\}_{k\in\mathbb{N}^+}$ of $\left\{T_n\right\}_{n\in\mathbb{N}^+}$ such that
\[\min\limits_{x\in\overline{\Omega}} \fint^{T_{n_k}}_{0}c(x,s)\,\mathrm{d}s< \liminf_{n\to\infty}\min\limits_{x\in\overline{\Omega}} \fint^{T_{n}}_{0}c(x,s)\,\mathrm{d}s+\frac{\varepsilon}{2},\qquad\forall~k\in\mathbb{N}^+.\]
On the other hand, let $x_{k}\in\overline{\Omega}$, $k\in\mathbb{N}^+$ be points such that
\[\fint^{T_{n_k}}_{0}c(x_{k},s)\,\mathrm{d}s=\min\limits_{x\in\overline{\Omega}} \fint^{T_{n_k}}_{0}c(x,s)\,\mathrm{d}s,\quad k\in\mathbb{N}^+.\]
Due to the compactness of $\overline{\Omega}$, we may assume that $x_{k}\to x_*\in\overline{\Omega}$ as $k\to\infty$ by choosing a subsequence of $\left\{x_{k}\right\}_{k\in\mathbb{N}^+}$ if necessary.
So \eqref{eq2.1} implies that
\[ \left|\fint^{T_{n_k}}_{0}c(x_*,s)\,\mathrm{d}s-\fint^{T_{n_k}}_{0}c(x_{k},s)\,\mathrm{d}s\right|\le C|x_*-x_{k}|^\delta,\quad\forall~k\ge 1.\]
As a result, there is an integer $K_0>0$ such that
\[\fint^{T_{n_k}}_{0}c(x_*,s)\,\mathrm{d}s<\fint^{T_{n_k}}_{0}c(x_{k},s)\,\mathrm{d}s+\frac{\varepsilon}{2},\quad\forall~k>K_0.\]
In summary, one obtains that
\begin{align*}
        \min\limits_{x\in\overline{\Omega}}\liminf_{n\to\infty}\fint^{T_{n}}_{0} c(x,s)\,\mathrm{d}s
  \le & \liminf_{n\to\infty}\fint^{T_{n}}_{0} c(x_*,s)\,\mathrm{d}s \\
  \le & \liminf_{k\to\infty}\fint^{T_{n_k}}_{0} c(x_*,s)\,\mathrm{d}s \\
  \le & \liminf_{k\to\infty}\left(\fint^{T_{n_k}}_{0}c(x_{k},s)\,\mathrm{d}s+\frac{\varepsilon}{2}\right) \\
  \le & \liminf_{k\to\infty}\fint^{T_{n_k}}_{0}c(x_k,s)\,\mathrm{d}s+\frac{\varepsilon}{2} \\
  \le & \liminf_{n\to\infty}\min\limits_{x\in\overline{\Omega}} \fint^{T_{n}}_{0}c(x,s)\,\mathrm{d}s+\varepsilon.
\end{align*}
The arbitrariness of $\varepsilon>0$ implies that
\[\min\limits_{x\in\overline{\Omega}} \liminf_{n\to\infty} \fint^{T_n}_{0}c(x,s)\,\mathrm{d}s\le \liminf_{n\to\infty}\min\limits_{x\in\overline{\Omega}} \fint^{T_n}_{0}c(x,s)\,\mathrm{d}s.\]

In a manner analogous to the proof of Lemma \ref{lemma2.3}, one easily establishes that
  \[\liminf_{n\to\infty}\min_{x\in\overline{\Omega}}\fint^{T_n}_{0} c(x,s)\,\mathrm{d}s=\inf_{\hat{c}\in\mathscr{C}_*}\min_{x\in\overline{\Omega}}\hat{c}(x).\]
The last two identities can be proven similarly. This finishes the proof.
\end{proof}

\begin{remark}\label{remark2.5}{\rm
The crucial assumption of uniform H\"{o}lder continuity (w.r.t. spatial argument) in Lemmas \ref{lemma2.3} and \ref{lemma2.4} (i.e., $c \in \mathcal{C}$) ensures the equi-continuity of the families $\left\{c_{T_n}\right\}_{n\in\mathbb{N}^+}$ etc.. Hence, by an application of the Arzel\`{a}-Ascoli theorem, we are guaranteed the existence of a uniformly convergent subsequence. Without this uniform H\"{o}lder continuity, we would only obtain the trivial inequalities
 \[\sup_{\hat{c}\in\mathscr{C}_*} \min_{x\in\overline{\Omega}} \hat{c}(x)=\limsup_{n\to\infty}\min_{x\in\overline{\Omega}} \fint^{T_n}_{0}c(x,s)\,\mathrm{d}s\le \min_{x\in\overline{\Omega}}\limsup_{n\to\infty} \fint^{T_n}_{0}c(x,s)\,\mathrm{d}s \]
and
 \[\sup_{\hat{c}\in\mathscr{C}} \min_{x\in\overline{\Omega}} \hat{c}(x)=\limsup_{T\to\infty}\min_{x\in\overline{\Omega}} \fint^{T}_{0}c(x,s)\,\mathrm{d}s\le \min_{x\in\overline{\Omega}}\limsup_{T\to\infty} \fint^{T}_{0}c(x,s)\,\mathrm{d}s. \]
  }
\end{remark}

As direct corollary of Lemmas \ref{lemma2.3} and \ref{lemma2.4}, we have the following result.
\begin{corollary}\label{corollary2.6}
Let $c\in\mathcal{C}$ and $\left\{T_n\right\}_{n\in\mathbb{N}^+}$ be a sequence satisfying $T_n\to\infty$ as $n\to\infty$. If $\left\{\fint^{T_n}_{0}c(\cdot,s)\,\mathrm{d}s\right\}_{n\in\mathbb{N}^+}$ converges in $C(\overline{\Omega})$, then
  \[\lim_{n\to\infty}\min_{x\in\overline{\Omega}}\fint^{T_n}_{0}c(x,s)\,\mathrm{d}s= \min_{x\in\overline{\Omega}}\lim_{n\to\infty}\fint^{T_n}_{0}c(x,s)\,\mathrm{d}s.\]
\end{corollary}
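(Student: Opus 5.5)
Let $\hat{c}^*:=\lim_{n\to\infty}\hat{c}_{T_n}$ denote the uniform limit, where $\hat{c}_{T_n}=\fint^{T_n}_{0}c(\cdot,s)\,\mathrm{d}s$. The plan is to identify both sides of the claimed identity with $\min_{x\in\overline{\Omega}}\hat{c}^*(x)$.

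First, convergence of $\{\hat{c}_{T_n}\}$ in $C(\overline{\Omega})$ means every subsequence converges uniformly to the same limit $\hat{c}^*$. Hence the limit set $\mathscr{C}_*$ associated with $\{T_n\}$ is the singleton $\{\hat{c}^*\}$.

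Next, apply Lemma \ref{lemma2.3} to get
\[\limsup_{n\to\infty}\min_{x\in\overline{\Omega}}\hat{c}_{T_n}(x)=\sup_{\hat{c}\in\mathscr{C}_*}\min_{x\in\overline{\Omega}}\hat{c}(x)=\min_{x\in\overline{\Omega}}\hat{c}^*(x).\]
Likewise, Lemma \ref{lemma2.4} gives
\[\liminf_{n\to\infty}\min_{x\in\overline{\Omega}}\hat{c}_{T_n}(x)=\inf_{\hat{c}\in\mathscr{C}_*}\min_{x\in\overline{\Omega}}\hat{c}(x)=\min_{x\in\overline{\Omega}}\hat{c}^*(x).\]
The two right-hand sides agree, so $\lim_{n\to\infty}\min_{x\in\overline{\Omega}}\hat{c}_{T_n}(x)$ exists and equals $\min_{x\in\overline{\Omega}}\hat{c}^*(x)$.

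Finally, uniform convergence implies pointwise convergence, so $\lim_{n\to\infty}\hat{c}_{T_n}(x)=\hat{c}^*(x)$ for every $x\in\overline{\Omega}$. The right-hand side of the claimed identity is therefore $\min_{x\in\overline{\Omega}}\hat{c}^*(x)$, and the two sides coincide.

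There is no genuine obstacle. Alternatively, the lemmas can be bypassed entirely using the elementary estimate
\[\Bigl|\min_{x\in\overline{\Omega}}f(x)-\min_{x\in\overline{\Omega}}g(x)\Bigr|\le\|f-g\|_\infty,\]
applied with $f=\hat{c}_{T_n}$ and $g=\hat{c}^*$. The only point to check is that "converges in $C(\overline{\Omega})$" is read as uniform convergence, so that $\mathscr{C}_*$ really is a singleton.
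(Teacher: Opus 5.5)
Your proposal is correct and matches the paper's route. The paper records this as a direct corollary of Lemmas~\ref{lemma2.3} and~\ref{lemma2.4}: since the whole sequence converges uniformly, $\mathscr{C}_*=\{\hat{c}^*\}$, so both $\limsup_{n}$ and $\liminf_{n}$ of $\min_{x\in\overline{\Omega}}\hat{c}_{T_n}(x)$ collapse to $\min_{x\in\overline{\Omega}}\hat{c}^*(x)$. Your alternative via $\bigl|\min f-\min g\bigr|\le\|f-g\|_\infty$ is an even shorter self-contained argument, and it does not need the uniform H\"older equicontinuity that underlies those lemmas.
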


In the following, we present the identities in spatial integral form.
\begin{lemma}\label{lemma2.7}
Let $c\in\mathcal{C}$ and $\left\{T_n\right\}_{n\in\mathbb{N}^+}$ be a sequence satisfying $T_n\to\infty$ as $n\to\infty$. Then
  \[\liminf_{n\to\infty}\fint^{T_n}_{0}\fint_{\Omega} c(x,s)\,\mathrm{d}x\mathrm{d}s =\inf_{\hat{c}\in\mathscr{C}_*}\fint_{\Omega}\hat{c}(x)\,\mathrm{d}x\quad\text{and}\quad \limsup_{n\to\infty}\fint^{T_n}_{0}\fint_{\Omega} c(x,s)\,\mathrm{d}x\mathrm{d}s =\sup_{\hat{c}\in\mathscr{C}_*}\fint_{\Omega}\hat{c}(x)\,\mathrm{d}x.\]
Furthermore, one has
 \[\liminf_{T\to\infty}\fint^{T}_{0}\fint_{\Omega} c(x,s)\,\mathrm{d}x\mathrm{d}s =\inf_{\hat{c}\in\mathscr{C}}\fint_{\Omega}\hat{c}(x)\,\mathrm{d}x\quad\text{and}\quad \limsup_{T\to\infty}\fint^{T}_{0}\fint_{\Omega} c(x,s)\,\mathrm{d}x\mathrm{d}s =\sup_{\hat{c}\in\mathscr{C}}\fint_{\Omega}\hat{c}(x)\,\mathrm{d}x.\]
\end{lemma}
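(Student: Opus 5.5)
The plan follows the template of Lemma \ref{lemma2.3}, using that the functional $\hat{c}\mapsto\fint_\Omega\hat{c}(x)\,\mathrm{d}x$ is continuous (indeed $1$-Lipschitz) on $C(\overline{\Omega})$ with respect to $\|\cdot\|_\infty$. First, by Fubini's theorem (which applies since $c$ is bounded and continuous) we rewrite
 \[\fint^{T}_{0}\fint_{\Omega} c(x,s)\,\mathrm{d}x\mathrm{d}s=\fint_\Omega\hat{c}_T(x)\,\mathrm{d}x,\qquad T>0,\]
so every statement concerns the real sequence $a_n:=\fint_\Omega\hat{c}_{T_n}(x)\,\mathrm{d}x$, respectively the function $a(T):=\fint_\Omega\hat{c}_T(x)\,\mathrm{d}x$. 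We treat the $\limsup$ identity along $\{T_n\}$; the $\liminf$ identity is symmetric.

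\textbf{Step 1} proves the inequality $\sup_{\hat{c}\in\mathscr{C}_*}\fint_\Omega\hat{c}\le\limsup_{n\to\infty} a_n$. For $\hat{c}\in\mathscr{C}_*$, there is a subsequence with $\hat{c}_{T_{n'}}\to\hat{c}$ uniformly on $\overline{\Omega}$. Then
 \[|a_{n'}-\fint_\Omega\hat{c}\,\mathrm{d}x|\le\|\hat{c}_{T_{n'}}-\hat{c}\|_\infty\to0,\]
so $\fint_\Omega\hat{c}\,\mathrm{d}x$ is a subsequential limit of $\{a_n\}$ and hence does not exceed $\limsup_{n\to\infty} a_n$. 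Taking the supremum over $\hat{c}\in\mathscr{C}_*$ gives the inequality.

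\textbf{Step 2} proves the reverse inequality. We choose a subsequence $a_{n'}\to\limsup_{n\to\infty} a_n$. The family $\{\hat{c}_{T_n}\}_{n\in\mathbb{N}^+}$ is uniformly bounded by $\|c\|_\infty$. It is also equicontinuous, by the estimate \eqref{eq2.1}, which comes from $\sup_{t\in\mathbb{R}}[c(\cdot,t)]_{\delta;\Omega}<\infty$. The Arzel\`{a}-Ascoli theorem therefore yields a further subsequence converging uniformly to some $\hat{c}$, and $\hat{c}\in\mathscr{C}_*$ by the definition of $\mathscr{C}_*$. Continuity of the average then gives
 \[\limsup_{n\to\infty} a_n=\fint_\Omega\hat{c}\,\mathrm{d}x\le\sup_{\hat{c}'\in\mathscr{C}_*}\fint_\Omega\hat{c}'(x)\,\mathrm{d}x.\]
Replacing $\limsup$ by $\liminf$ and $\sup$ by $\inf$ in Steps 1 and 2, with the inequalities reversed, gives the $\liminf$ identity.

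For the continuous-parameter statements we repeat the same argument with $\mathscr{C}$ in place of $\mathscr{C}_*$. We pick $T_k\to\infty$ realizing $\limsup_{T\to\infty} a(T)$ (resp. $\liminf_{T\to\infty} a(T)$), and apply Arzel\`{a}-Ascoli to $\{\hat{c}_{T_k}\}_{k\in\mathbb{N}^+}$ to produce a limit in $\mathscr{C}$. Conversely, each $\hat{c}\in\mathscr{C}$ comes with some $T_n\to\infty$ such that $\hat{c}_{T_n}\to\hat{c}$ uniformly, so $\fint_\Omega\hat{c}\,\mathrm{d}x$ is a sequential limit of $a(T)$ as $T\to\infty$. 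There is no genuine obstacle here. The only point needing care is the compactness in Step 2, which is exactly where the uniform spatial H\"older bound in the definition of $\mathcal{C}$ enters, as noted in Remark \ref{remark2.5}.
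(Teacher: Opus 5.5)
Your proposal is correct and follows essentially the paper's argument. Every $\hat{c}\in\mathscr{C}_*$ (resp. $\mathscr{C}$) yields a subsequential limit of the averages, which gives one inequality. A uniformly convergent subsequence, extracted via Arzel\`{a}--Ascoli (equivalently, the compactness in Lemma \ref{lemma2.1}) along a sequence realizing the $\liminf$/$\limsup$, gives the other. The differences are cosmetic: your $1$-Lipschitz bound for $\hat{c}\mapsto\fint_\Omega\hat{c}\,\mathrm{d}x$ replaces the paper's use of dominated convergence, and testing every element of $\mathscr{C}_*$ avoids the paper's tacit use of the infimum being attained.
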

\begin{proof}
By the compactness of $\mathscr{C}_*$ (see Lemma \ref{lemma2.1}), we choose a subsequence $\left\{T_{n'}\right\}_{n'\in\mathbb{N}^+}$ of $\left\{T_n\right\}_{n\in\mathbb{N}^+}$ such that $\left\{\fint^{T_{n'}}_{0}c(\cdot,s)\,\mathrm{d}s\right\}_{n'\in\mathbb{N}^+}$ converges in $C(\overline{\Omega})$ and
 \[\inf_{\hat{c}\in\mathscr{C}_*}\fint_{\Omega}\hat{c}(x)\,\mathrm{d}x=\fint_{\Omega}\lim_{n'\to\infty} \fint^{T_{n'}}_{0}c(x,s)\,\mathrm{d}s\mathrm{d}x=\lim_{n'\to\infty} \fint^{T_{n'}}_{0}\fint_{\Omega}c(x,s)\,\mathrm{d}s\mathrm{d}x,\]
where the Lebesgue dominated convergence theorem has been used for the second identity. Then it is easy to check that
 \[\liminf_{n\to\infty}\fint^{T_n}_{0}\fint_{\Omega} c(x,s)\,\mathrm{d}x\mathrm{d}s \le\inf_{\hat{c}\in\mathscr{C}_*}\fint_{\Omega}\hat{c}(x)\,\mathrm{d}x.\]
On the other hand, we can also take a subsequence of $\left\{T_n\right\}_{n\in\mathbb{N}^+}$, denoted still by $\left\{T_{n'}\right\}_{n'\in\mathbb{N}^+}$, such that
 \[\liminf_{n\to\infty}\fint^{T_n}_{0}\fint_{\Omega} c(x,s)\,\mathrm{d}x\mathrm{d}s=\lim_{n'\to\infty}\fint^{T_{n'}}_{0}\fint_{\Omega} c(x,s)\,\mathrm{d}x\mathrm{d}s.\]
Without loss of generality, we may assume that $\left\{\fint^{T_{n'}}_{0} c(\cdot,s)\,\mathrm{d}s\right\}_{n'\in\mathbb{N}^+}$ converges to some $\hat{c}'\in\mathscr{C}_*$. Using the Lebesgue dominated convergence theorem again gives
 \[\lim_{n'\to\infty}\fint^{T_{n'}}_{0}\fint_{\Omega} c(x,s)\,\mathrm{d}x\mathrm{d}s=\fint_{\Omega}\lim_{n'\to\infty}\fint^{T_{n'}}_{0} c(x,s)\,\mathrm{d}s\mathrm{d}x=\fint_{\Omega}\hat{c}'(x)\,\mathrm{d}x.\]
Combining with these identities yields
 \[\liminf_{n\to\infty}\fint^{T_n}_{0}\fint_{\Omega} c(x,s)\,\mathrm{d}x\mathrm{d}s=\fint_{\Omega}\hat{c}'(x)\,\mathrm{d}x.\]
As a result, we can obtain
 \[\liminf_{n\to\infty}\fint^{T_n}_{0}\fint_{\Omega} c(x,s)\,\mathrm{d}x\mathrm{d}s \ge\inf_{\hat{c}\in\mathscr{C}_*}\fint_{\Omega}\hat{c}(x)\,\mathrm{d}x.\]
The first identity in the lemma has been proven. Similarly, one can verify the remaining identities in the lemma. The proof is complete.
\end{proof}

\medskip
We now derive a useful result from the uniform H\"{o}lder condition \eqref{H3}.
\begin{lemma}\label{lemma2.8}
If the function $c\in\mathcal{C}$ satisfies the uniform H\"{o}lder condition \eqref{H3}, then the following limits
\[
  \lim_{\Delta t\to 0}\fint^{t+\Delta t}_{t}\min_{x\in\overline{\Omega}}c(x,s)\,\mathrm{d}s =\min_{x\in\overline{\Omega}}c(x,t)\quad
  \text{ and }\quad \lim_{\Delta t\to 0}\fint^{t+\Delta t}_{t}c(x,s)\,\mathrm{d}s= c(x,t),~\forall~x\in\overline{\Omega}
\]
 hold uniformly in $t\in\mathbb{R}$.
\end{lemma}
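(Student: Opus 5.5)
The plan is to derive both limits from a single uniform modulus-of-continuity estimate in time, obtained directly from \eqref{H3}. By \eqref{H3}, for every $x\in\overline{\Omega}$ and all $s,t\in\mathbb{R}$ we have
\[
|c(x,s)-c(x,t)|\le L|s-t|^{\delta/2}.
\]
The constant $L$ is independent of $x$ and $t$. The second limit then follows by averaging: for $\Delta t\neq 0$ (with $\fint^{t+\Delta t}_{t}$ understood as the signed average when $\Delta t<0$),
\[
\left|\fint^{t+\Delta t}_{t}c(x,s)\,\mathrm{d}s-c(x,t)\right|\le \fint^{t+\Delta t}_{t}|c(x,s)-c(x,t)|\,\mathrm{d}s\le L|\Delta t|^{\delta/2}.
\]
The right-hand side tends to $0$ as $\Delta t\to0$, uniformly in $(x,t)\in\overline{\Omega}\times\mathbb{R}$.

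For the first limit, set $m(s):=\min_{x\in\overline{\Omega}}c(x,s)$. The key step is to show that $m$ inherits the same H\"{o}lder bound. Fix $s,t$ and let $x_t$ be a minimizer of $c(\cdot,t)$, which exists because $c(\cdot,t)$ is continuous on the compact set $\overline{\Omega}$. Then
\[
m(s)\le c(x_t,s)\le c(x_t,t)+L|s-t|^{\delta/2}=m(t)+L|s-t|^{\delta/2}.
\]
Exchanging the roles of $s$ and $t$ gives $|m(s)-m(t)|\le L|s-t|^{\delta/2}$. In particular $m$ is continuous, so it is integrable on bounded intervals. Averaging exactly as before yields
\[
\left|\fint^{t+\Delta t}_{t}m(s)\,\mathrm{d}s-m(t)\right|\le L|\Delta t|^{\delta/2},
\]
which gives uniform convergence in $t\in\mathbb{R}$.

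The only point requiring care, and the only possible obstacle, is the transfer of the H\"{o}lder modulus through the minimum. This is handled by the minimizer argument above, which uses only that $L$ in \eqref{H3} is uniform in $x$. Neither membership in $\mathcal{C}$ nor spatial regularity is needed beyond guaranteeing that minimizers exist.
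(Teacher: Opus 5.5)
Your proof is correct and follows essentially the same route as the paper: average the time-H\"{o}lder bound from \eqref{H3} to get the second limit, then use a minimizer argument to show $\min_{x\in\overline{\Omega}}c(x,\cdot)$ is H\"{o}lder continuous with the same constant $L$, and average again. Your version states the final bound as $\le L|s-t|^{\delta/2}$ and notes explicitly how to read the average when $\Delta t<0$; the paper writes a strict inequality in the last step, which is slightly less careful.
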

\begin{proof}
Since $c\in\mathcal{C}$ satisfies the uniform H\"{o}lder condition \eqref{H3},
  \[L=\max_{x\in\overline{\Omega}}\sup_{t,s\in\mathbb{R}}\frac{|c(x, t) - c(x, s)|}{|t - s|^{\frac{\delta}{2}}}<\infty,\]
we have
\[\left|\fint^{t+\Delta t}_{t}c(x,s)\,\mathrm{d}s-c(x,t)\right|\le \left|\fint^{t+\Delta t}_{t}[c(x,s)-c(x,t)]\,\mathrm{d}s\right|\le \frac{L}{1+\frac{\delta}{2}}|\Delta t|^{\frac{\delta}{2}}\le L |\Delta t|^{\frac{\delta}{2}}.\]
This proves that the second limit in the lemma uniformly holds in $t\in\mathbb{R}$. To show the first one, it needs only to show that $c_m(t):=\min_{x\in\overline{\Omega}}c(x,t)$ is also uniformly H\"{o}lder continuous in $t\in\mathbb{R}$. For this purpose, denote by $c_m(t):=c(x(t),t)=\min_{x\in\overline{\Omega}}c(x,t)$, $t\in\mathbb{R}$, where $x(t)$ is the point that $c$ attains its minimum at $t\in\mathbb{R}$. Then for any $t,s\in\mathbb{R}$, one has
\[c_m(t)-c_m(s)= c(x(t),t)-c_m(s)\ge c(x(t),t)-c(x(t),s)\ge -L|t-s|^{\frac{\delta}{2}}\]
and
\[c_m(t)-c_m(s)= c_m(t)-c(x(s),s)\le c(x(s),t)-c(x(s),s)\le L|t-s|^{\frac{\delta}{2}},\]
which follows that
 \[\frac{|c_m(t)-c_m(s)|}{|t-s|^{\frac{\delta}{2}}}<L,\qquad\forall~t,s\in\mathbb{R}.\]
This ends the proof.
\end{proof}

\medskip
Cantrell et al. \cite[Lemma B.1]{Cantrell2021Ideal} established the result below under the assumption of $T$-periodicity. However, it should be noted that a line-by-line examination of their proof reveals that periodicity is not a necessary condition for the equality to hold; it was merely a contextual assumption of the model they studied. Consequently, the result continues to hold without the periodicity assumption. We record the result, omitting the proof.
\begin{lemma}\label{lemma2.9}
Let $T>0$ and $c\in C(\overline{\Omega}\times[0,T])$ be given. Then
  \[\inf_{x(\cdot)\in C([0,T];\overline{\Omega})}\fint^{T}_{0} c(x(s),s)\,\mathrm{d}s=\fint^{T}_{0} \min_{x\in\overline{\Omega}}c(x,s)\,\mathrm{d}s.\]
\end{lemma}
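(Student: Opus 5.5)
We prove the two inequalities separately; the direction ``$\ge$'' is immediate and the direction ``$\le$'' is the real content. For every $x(\cdot)\in C([0,T];\overline{\Omega})$ and every $s\in[0,T]$ we have $c(x(s),s)\ge \min_{x\in\overline{\Omega}}c(x,s)$. The function $s\mapsto \min_{x\in\overline{\Omega}}c(x,s)$ is continuous on $[0,T]$, because $c$ is uniformly continuous on the compact set $\overline{\Omega}\times[0,T]$. Integrating over $[0,T]$ therefore gives
\[\inf_{x(\cdot)}\fint^{T}_{0}c(x(s),s)\,\mathrm{d}s\ \ge\ \fint^{T}_{0}\min_{x\in\overline{\Omega}}c(x,s)\,\mathrm{d}s.\]

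For the reverse inequality, fix $\varepsilon>0$. We construct a continuous path $x_\varepsilon(\cdot)$ whose time average of $c$ exceeds the right-hand side by at most $C\varepsilon$.

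\textbf{Step 1 (piecewise constant near-minimizer).} Let $\rho$ be the modulus of uniform continuity of $c$ on $\overline{\Omega}\times[0,T]$, and pick $h>0$ with $\rho(h)<\varepsilon$. Partition $[0,T]$ into intervals $I_k=[t_{k-1},t_k]$ of length at most $h$. For each $k$ choose $y_k\in\overline{\Omega}$ minimizing $c(\cdot,t_{k-1})$. For every $s\in I_k$ we then get
\[c(y_k,s)\le c(y_k,t_{k-1})+\varepsilon=\min_{x\in\overline{\Omega}} c(x,t_{k-1})+\varepsilon\le \min_{x\in\overline{\Omega}} c(x,s)+2\varepsilon.\]
Hence the step path $\bar x(s)=y_k$ on $I_k$ satisfies $\fint^{T}_{0} c(\bar x(s),s)\,\mathrm{d}s\le \fint^{T}_{0}\min_{x\in\overline{\Omega}} c\,\mathrm{d}s+2\varepsilon$.

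\textbf{Step 2 (making the path continuous).} This is the main obstacle, because $\bar x$ jumps at the points $t_k$. Since $\Omega$ is a bounded smooth domain, $\overline{\Omega}$ is path-connected. So for each $k$ there is a continuous curve $\gamma_k:[0,1]\to\overline{\Omega}$ joining $y_k$ to $y_{k+1}$. Choose $\eta>0$ with $\eta<h/2$ and $2(K-1)\eta\,\|c\|_\infty<\varepsilon T$, where $K$ is the number of subintervals. Modify $\bar x$ on $[t_k-\eta,t_k]$ by traversing $\gamma_k$ reparametrized over that interval, and leave $\bar x$ unchanged elsewhere. The resulting path $x_\varepsilon$ is continuous. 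It differs from $\bar x$ only on a set of total measure at most $(K-1)\eta$, and there the integrand changes by at most $2\|c\|_\infty$. Therefore
\[\fint^{T}_{0}c(x_\varepsilon(s),s)\,\mathrm{d}s\le \fint^{T}_{0}\min_{x\in\overline{\Omega}} c(x,s)\,\mathrm{d}s+3\varepsilon.\]

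Letting $\varepsilon\to0$ yields the inequality ``$\le$'' and completes the proof. No periodicity in $t$ is used anywhere; the argument relies only on compactness of $\overline{\Omega}\times[0,T]$ and path-connectedness of $\overline{\Omega}$. This is consistent with the remark preceding the lemma that the argument of \cite[Lemma B.1]{Cantrell2021Ideal} carries over verbatim.
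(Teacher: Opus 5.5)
Your proof is correct and takes essentially the paper's approach. The paper omits the proof of this lemma and defers to \cite[Lemma B.1]{Cantrell2021Ideal}, but Step~2 of its proof of Lemma~\ref{lemma2.10} uses exactly your construction: on most of each short interval the path stays at a minimizer of $c(\cdot,t_{k-1})$, and in a short final window it travels along a path in $\overline{\Omega}$. The error is controlled by the time modulus of $c$ plus $\|c\|_\infty$ times the total window length, with uniform continuity on the compact set replacing \eqref{H3}. The only detail to tidy is that $\eta$ must be smaller than the shortest subinterval so that each window $[t_k-\eta,t_k]$ lies inside $I_k$; this is automatic for the uniform partition with $K=\lceil T/h\rceil\ge 2$, since then $T/K>h/2$.
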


\medskip
 The following lemma is a generalized version of the result \cite[Lemma B.1]{Cantrell2021Ideal} obtained by Cantrell et al.. We denote by $C(\mathbb{R};\overline{\Omega})$ the set of continuous curves from $\mathbb{R}$ to $\overline{\Omega}$.
\begin{lemma}\label{lemma2.10}
Assume that the function $c\in\mathcal{C}$ satisfies the uniform H\"{o}lder condition \eqref{H3}. Then
  \[\inf_{x(\cdot)\in C(\mathbb{R};\overline{\Omega})}\liminf_{T\to\infty}\fint^{T}_{0} c(x(s),s)\,\mathrm{d}s=\liminf_{T\to\infty}\fint^{T}_{0} \min_{x\in\overline{\Omega}}c(x,s)\,\mathrm{d}s\]
 and
  \[\inf_{x(\cdot)\in C(\mathbb{R};\overline{\Omega})}\limsup_{T\to\infty}\fint^{T}_{0} c(x(s),s)\,\mathrm{d}s=\limsup_{T\to\infty}\fint^{T}_{0} \min_{x\in\overline{\Omega}}c(x,s)\,\mathrm{d}s.\]
\end{lemma}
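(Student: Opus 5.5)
One inequality is trivial. For every continuous curve $x(\cdot)$ we have $c(x(s),s)\ge c_m(s):=\min_{x\in\overline{\Omega}}c(x,s)$ for all $s$. Hence
\[\liminf_{T\to\infty}\fint^{T}_{0}c(x(s),s)\,\mathrm{d}s\ge \liminf_{T\to\infty}\fint^{T}_{0}c_m(s)\,\mathrm{d}s,\]
and the same holds with $\limsup$. Taking the infimum over $x(\cdot)\in C(\mathbb{R};\overline{\Omega})$ gives "$\ge$" in both identities.

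For the reverse inequality, I would build, for a given $\varepsilon>0$, a single continuous curve $x_\varepsilon(\cdot)$ such that
\[c(x_\varepsilon(s),s)\le c_m(s)+\varepsilon\qquad\text{for all } s\ge 0\]
outside a set whose density in $[0,T]$ tends to zero as $T\to\infty$. Since $c$ is bounded, this makes
\[\fint^{T}_{0}c(x_\varepsilon(s),s)\,\mathrm{d}s\le \fint^{T}_{0}c_m(s)\,\mathrm{d}s+\varepsilon+o(1),\]
and both the $\liminf$ and the $\limsup$ statements follow at once, since $\varepsilon$ is arbitrary. The construction runs as follows.
\begin{enumerate}[(1)]
\item Using \eqref{H3}, pick $h>0$ with $Lh^{\delta/2}<\varepsilon/3$, so that $|c(x,t)-c(x,s)|<\varepsilon/3$ whenever $|t-s|\le h$, uniformly in $x$ (as in Lemma \ref{lemma2.8}).
\item On each interval $I_k=[kh,(k+1)h]$, $k\ge 0$, choose a minimizer $y_k$ of $c(\cdot,kh)$. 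Then $c(y_k,s)\le c(y_k,kh)+\varepsilon/3=c_m(kh)+\varepsilon/3\le c_m(s)+2\varepsilon/3$ for $s\in I_k$, where the last step uses that $c_m$ is $L$-Hölder, as shown in Lemma \ref{lemma2.8}.
\item The piecewise constant curve $s\mapsto y_k$ on $I_k$ is not continuous, so I would connect $y_{k}$ to $y_{k+1}$ along a continuous path in $\overline{\Omega}$ during a short transition window $[(k+1)h-\eta,(k+1)h]$ with $\eta\ll h$. This uses that $\Omega$ is a bounded smooth domain, so $\overline{\Omega}$ is path connected (I assume $\Omega$ is connected, as is implicit throughout). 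For $s<0$ define the curve arbitrarily, say constant.
\end{enumerate}

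On each window the integrand is bounded by $2\|c\|_\infty$ above $c_m$, so the windows contribute at most $2\|c\|_\infty\,\eta/h$ to the average. Choosing $\eta=\varepsilon h/(6\|c\|_\infty+1)$ makes this at most $\varepsilon/3$. The total excess is therefore at most $\varepsilon$ uniformly in large $T$, apart from an $O(h/T)$ boundary term coming from the incomplete last interval.

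The only delicate point is step (3): making the curve continuous without destroying the estimate. Because the transition cost is controlled purely by the fraction $\eta/h$ of time spent switching, no uniform bound on path lengths is needed; continuity in time is all that is required. An alternative is to apply Lemma \ref{lemma2.9} on each $[0,T]$, but that gives $T$-dependent curves, which is unsuitable for a $\liminf$/$\limsup$ over one fixed curve. This is why I would use the explicit uniform-in-time construction, which relies on \eqref{H3} through the uniform time-continuity of $c$ and $c_m$.
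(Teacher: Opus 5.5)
Your proof is correct and is essentially the paper's argument. The paper also freezes at a minimizer $x_k$ of $c(\cdot,k\varrho)$ on each cell $[k\varrho,(k+1)\varrho)$, moves continuously to $x_{k+1}$ during the last fraction $\sigma\varrho$ of the cell, and bounds the per-cell excess of the average by $2L\varrho^{\delta/2}+2\|c\|_\infty\sigma$, using the H\"{o}lder continuity of $c$ and $c_m$ from Lemma \ref{lemma2.8}.

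The one slip is in your framing sentence: the transition windows have fixed density $\eta/h$, not density tending to zero. Your explicit bound $2\|c\|_\infty\eta/h\le\varepsilon/3$ already accounts for this, so the argument is unaffected.
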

\begin{proof}
The proof proceeds in two steps.

\medskip
\noindent\textbf{Step 1.} For any continuous trajectory $x:~\mathbb{R}\to\overline{\Omega}$, we have
 \[c(x(t),t)\ge\min_{x\in\overline{\Omega}}c(x,t)=:c_m(t)\quad\forall~t\in\mathbb{R}.\]
For any given $T>0$, integrating the above inequality over $[0,T]$ and then dividing both sides by $T$ give
 \[\fint_{0}^{T}c(x(s),s)\,\mathrm{d}s\ge\fint_{0}^{T}c_m(s)\,\mathrm{d}s.\]
Taking $T\to\infty$ on both sides yields
 \[\liminf_{T\to\infty}\fint_{0}^{T}c(x(s),s)\,\mathrm{d}s\ge\liminf_{T\to\infty}\fint_{0}^{T} c_m(s)\,\mathrm{d}s\]
and
 \[\limsup_{T\to\infty}\fint_{0}^{T}c(x(s),s)\,\mathrm{d}s\ge\limsup_{T\to\infty}\fint_{0}^{T} c_m(s)\,\mathrm{d}s.\]
Since this holds for all continuous $x(\cdot)$, taking the infimum over such trajectories gives
  \[\inf_{x(\cdot)\in C(\mathbb{R};\overline{\Omega})}\liminf_{T\to\infty}\fint^{T}_{0} c(x(s),s)\,\mathrm{d}s\ge\liminf_{T\to\infty}\fint^{T}_{0} \min_{x\in\overline{\Omega}}c(x,s)\,\mathrm{d}s\]
and
  \[\inf_{x(\cdot)\in C(\mathbb{R};\overline{\Omega})}\limsup_{T\to\infty}\fint^{T}_{0} c(x(s),s)\,\mathrm{d}s\ge\limsup_{T\to\infty}\fint^{T}_{0} \min_{x\in\overline{\Omega}}c(x,s)\,\mathrm{d}s.\]
\medskip
\noindent\textbf{Step 2.} For any $\varepsilon>0$, we construct a continuous trajectory $x:~\mathbb{R}\to\overline{\Omega}$ such that
 \[\liminf_{T \to \infty} \fint_0^T c(x(s),s)\,\mathrm{d}s \le \liminf_{T\to\infty}\fint^{T}_{0} \min_{x\in\overline{\Omega}}c(x,s)\,\mathrm{d}s+\varepsilon\]
and
 \[\limsup_{T \to \infty} \fint_0^T c(x(s),s)\,\mathrm{d}s \le \limsup_{T\to\infty}\fint^{T}_{0} \min_{x\in\overline{\Omega}}c(x,s)\,\mathrm{d}s+\varepsilon.\]
Since $c\in \mathcal{C}$ is uniformly bounded and satisfies uniform H\"{o}lder condition \eqref{H3}, the function $c_m(t)=\min_{x \in \overline{\Omega}}c(x,t)$ is also uniformly H\"{o}lder continuous with H\"{o}lder coefficient $L$; see the proof of Lemma \ref{lemma2.8}.
Fix $\varepsilon>0$. Choose $\varrho>0$ and $\sigma\in(0,1)$ such that
\[L\varrho^{\frac{\delta}{2}} + \|c\|_\infty\sigma<\frac{\varepsilon}{2}.\]
Partition the time axis into intervals of length $\varrho$: $I_k=[k\varrho,(k+1)\varrho)$ for $k = 0, 1, 2, \cdots$. For each $k$, pick $x_k\in\overline{\Omega}$ such that
\[c(x_k,k\varrho) = c_m(k\varrho).\]
Since $\Omega$ is path-connected, we construct the continuous curve $x(\cdot)$ as follows: on each $I_k$, let
 \[
  x(t) =\left\{
   \begin{aligned}
    &x_k, &&t \in [k\varrho, (k+1)\varrho - \sigma\varrho), \\
    &\text{a continuous path from }x_k\text{ to } x_{k+1}, &&t \in[(k+1)\varrho-\sigma\varrho, (k+1)\varrho).
   \end{aligned}\right.
 \]
That is, during the first part of each interval, we stay at $x_k$; during the last part, we move continuously to the next point $x_{k+1}$. This yields a globally continuous trajectory. Now we estimate the integral. For a full interval $I_k = [k\varrho, (k+1)\varrho]$, on the stationary part $t\in[k\varrho,(k+1)\varrho-\sigma\varrho)$,
 \[|c(x(t), t) - c_m(t)| \le |c(x_k, t) - c(x_k, k\varrho)| + |c_m(k\varrho) -c_ m(t)| \le 2L\varrho^{\frac{\delta}{2}}.\]
On the moving part $t\in[(k+1)\varrho-\sigma\varrho,(k+1)\varrho]$, we have $|c(x(t), t) - c_m(t)| \le 2\|c\|_\infty$. The length of the moving part is $\sigma\varrho$. Hence, for any $k=0,1,2,\cdots$, one has
 \[\left|\int_{I_k}[c(x(s),s)-c_m(s)]\,\mathrm{d}s \right|\le2L\varrho^{\frac{\delta}{2}}\cdot(1-\sigma)\varrho +2\|c\|_\infty\cdot\sigma\varrho =2\left[L\varrho^{\frac{\delta}{2}}(1-\sigma)+\|c\|_\infty\sigma\right]\varrho.\]
Thus,
 \[\left|\fint_{I_k}c(x(s),s)\,\mathrm{d}s-\fint_{I_k}c_m(s)\,\mathrm{d}s\right|\le 2L\varrho^{\frac{\delta}{2}}(1-\sigma) +2\|c\|_\infty\sigma\le 2L\varrho^{\frac{\delta}{2}} + 2\|c\|_\infty\sigma,\quad k=0,1,2,\cdots.\]
Since $L\varrho^{\frac{\delta}{2}} + \|c\|_\infty\sigma \le\varepsilon/2$, we have
 \begin{equation}\label{eq2.2}
   \fint_{I_k}c(x(s),s)\,\mathrm{d}s\le\fint_{I_k}c_m(s)\,\mathrm{d}s+\varepsilon,\quad k=0,1,2,\cdots.
 \end{equation}

For an arbitrary $T>0$, write $T=\left\lfloor\frac{T}{\varrho}\right\rfloor\varrho+r$ and $0 \le r < \varrho$, where $\lfloor\cdot\rfloor$ stands the floor function (integer part of a positive real number). Then
 \[\int_{0}^{T}c(x(s),s)\,\mathrm{d}s=\sum_{k=0}^{\left\lfloor\frac{T}{\varrho}\right\rfloor-1}\int_{k\varrho}^{(k+1)\varrho}c(x(s),s)\,\mathrm{d}s +\int_{\left\lfloor\frac{T}{\varrho}\right\rfloor\varrho}^{T}c(x(s),s)\,\mathrm{d}s.\]
Using \eqref{eq2.2}, one has
 \[\sum_{k=0}^{\left\lfloor\frac{T}{\varrho}\right\rfloor-1}\int_{k\varrho}^{(k+1)\varrho}c(x(s),s)\,\mathrm{d}s\le\sum_{k=0}^{\left\lfloor\frac{T}{\varrho}\right\rfloor-1} \int_{k\varrho}^{(k+1)\varrho}c_m(s)\,\mathrm{d}s+\varepsilon\left\lfloor\frac{T}{\varrho}\right\rfloor\varrho= \int_{0}^{\left\lfloor\frac{T}{\varrho}\right\rfloor\varrho}c_m(s)\,\mathrm{d}s+\varepsilon\left\lfloor\frac{T}{\varrho}\right\rfloor\varrho.\]
We further have
\[\int_{\left\lfloor\frac{T}{\varrho}\right\rfloor\varrho}^{T}c(x(s),s)\,\mathrm{d}s\le \|c\|_\infty r < \|c\|_\infty\varrho.\]
Combining the two estimates above gives
 \begin{equation}\label{eq2.3}
   \fint_{0}^{T}c(x(s),s)\,\mathrm{d}s\le\frac{1}{T}\int_{0}^{\left\lfloor\frac{T}{\varrho}\right\rfloor \varrho} c_ m(s)\,\mathrm{d}s+\varepsilon\left\lfloor\frac{T}{\varrho}\right\rfloor\frac{\varrho}{T} +\frac{\|c\|_\infty\varrho}{T}.
 \end{equation}
Take $\liminf_{T \to \infty}$ and $\limsup_{T \to \infty}$, respectively. As $T \to \infty$, we have $\left\lfloor\frac{T}{\varrho}\right\rfloor \frac{\varrho }{T} \to 1$ and $\frac{\|c\|_\infty\varrho}{T}\to 0$. Moreover, since
 \[\frac{1}{T}\int_0^{\left\lfloor\frac{T}{\varrho}\right\rfloor\varrho}c_m(s)\,\mathrm{d}s =\fint_{0}^{T}c_m(s)\,\mathrm{d}s-\frac{1}{T}\int^{T}_{\left\lfloor\frac{T}{\varrho}\right\rfloor\varrho}c_m(s)\,\mathrm{d}s \le\fint_{0}^{T}c_m(s)\,\mathrm{d}s+\frac{\rho\|c\|_\infty}{T}\]
by letting $T\to\infty$ we have
 \[\liminf_{T\to\infty}\frac{1}{T}\int_0^{\left\lfloor\frac{T}{\varrho}\right\rfloor\varrho}c_m(s) \,\mathrm{d}s\le\liminf_{T\to\infty}\fint_{0}^{T}c_m(s)\,\mathrm{d}s\]
and
 \[\limsup_{T\to\infty}\frac{1}{T}\int_0^{\left\lfloor\frac{T}{\varrho}\right\rfloor\varrho}c_m(s) \,\mathrm{d}s\le\limsup_{T\to\infty}\fint_{0}^{T}c_m(s)\,\mathrm{d}s.\]
Hence, from \eqref{eq2.3}, one obtains
 \[\liminf_{T \to \infty}\fint_{0}^{T}c(x(s),s)\,\mathrm{d}s\le\liminf_{T\to\infty}\fint_{0}^{T} c_m(s)\,\mathrm{d}s+\varepsilon\]
and
 \[\limsup_{T \to \infty}\fint_{0}^{T}c(x(s),s)\,\mathrm{d}s\le\limsup_{T\to\infty}\fint_{0}^{T} c_m(s)\,\mathrm{d}s+\varepsilon.\]
Since $\varepsilon>0$ is arbitrary, we get
 \[\inf_{x(\cdot)\in C(\overline{\Omega};\mathbb{R})}\liminf_{T\to\infty}\fint^{T}_{0} c(x(s),s)\,\mathrm{d}s\le\liminf_{T\to\infty}\fint^{T}_{0} \min_{x\in\overline{\Omega}}c(x,s) \,\mathrm{d}s\]
and
 \[\inf_{x(\cdot)\in C(\overline{\Omega};\mathbb{R})}\limsup_{T\to\infty}\fint^{T}_{0} c(x(s),s)\,\mathrm{d}s\le\limsup_{T\to\infty}\fint^{T}_{0} \min_{x\in\overline{\Omega}}c(x,s) \,\mathrm{d}s.\]
This proves the lemma.
\end{proof}
\section{Comparison principle}
The parabolic comparison principle plays a central role in our analysis. This section is therefore devoted to developing a comparison principle and its variants suitable for long-time asymptotic regimes. First, we prepare a Hopf-type boundary lemma.
\begin{lemma}\label{lemma2.11}
Let $T>0$ be given, and $u\in C^{2,1}(\Omega\times(0,T])\cap C^{1,0}(\overline{\Omega}\times[0,T])$ satisfy $u(\cdot,0)\ge u(\cdot,T)$ and
  \[\omega\partial_t u-d\operatorname{div}(A(x)\nabla u)+c(x,t)u\ge 0,\quad(x,t)\in\Omega\times(0,T].\]
If $u>0$ in $\Omega\times(0,T]$ and $u=0$ at some point $(x_0,t_0)\in\partial\Omega\times(0,T]$, then
\[\left.\mathbf{n}\cdot (A(x)\nabla u)\right|_{(x_0,t_0)}<0,\]
where $\mathbf{n}$ is the outward unit normal vector at $(x_0,t_0)$.
\end{lemma}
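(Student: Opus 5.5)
The plan is to reduce to the classical parabolic Hopf lemma (as in Protter–Weinberger or Lieberman) for the oblique conormal derivative. The one subtlety is that $c$ need not be nonnegative. A first point to note: the hypothesis $u(\cdot,0)\ge u(\cdot,T)$ plays no role in this local statement. It matters only when the lemma is later used inside a comparison argument on $[0,T]$ with a periodic-type closing condition. Here the conclusion is purely local near $(x_0,t_0)$, which is a boundary minimum point because $u>0$ in $\Omega\times(0,T]$ and $u(x_0,t_0)=0$.

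\textbf{Step 1 (removing the sign of $c$).} Set $K:=\|c\|_\infty+1$ and $v(x,t):=e^{-Kt/\omega}u(x,t)$. Then
\[\omega\partial_t v-d\operatorname{div}(A\nabla v)+(c+K)v=e^{-Kt/\omega}\bigl(\omega\partial_tu-d\operatorname{div}(A\nabla u)+cu\bigr)\ge0 .\]
Here $c+K\ge1>0$, while $v>0$ in $\Omega\times(0,T]$ and $v(x_0,t_0)=0$. Also $\mathbf{n}\cdot(A\nabla v)$ and $\mathbf{n}\cdot(A\nabla u)$ have the same sign at $(x_0,t_0)$. So it suffices to treat $v$. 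Since $v\ge0$ and $c+K>0$, we get $\omega\partial_tv-d\operatorname{div}(A\nabla v)\ge -(c+K)v$; only the sign structure near the zero of $v$ is used below.

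\textbf{Step 2 (barrier in a parabolic half-ball).} Because $\partial\Omega$ is smooth, it satisfies a uniform interior ball condition. Pick a ball $B_R(y)\subset\Omega$ with $\overline{B_R(y)}\cap\partial\Omega=\{x_0\}$, and work in the lower cylinder
\[Q:=\{(x,t):|x-y|<R,\ t_0-\tau<t\le t_0\}\cap\{|x-x_0|<R/2\},\]
with $\tau>0$ small so that $t_0-\tau>0$. Use the standard barrier $w(x,t)=e^{-\alpha(|x-y|^2+\beta(t_0-t))}-e^{-\alpha R^2}$, or the time-independent version $w=e^{-\alpha|x-y|^2}-e^{-\alpha R^2}$, with $\alpha$ large. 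Uniform ellipticity ($\xi\cdot A\xi\ge|\xi|^2/\Lambda$) and boundedness of $\nabla A$ (since $A\in C^{2+\delta}$) give
\[\omega\partial_tw-d\operatorname{div}(A\nabla w)+(c+K)w\le0\quad\text{in }Q\]
for $\alpha$ large. The point is that $w$ vanishes on $|x-y|=R$ and $0\le w\le1$, so the zeroth-order term only helps. On the part of $\partial_pQ$ inside $\Omega$, namely $|x-x_0|=R/2$ or $t=t_0-\tau$, we have $v\ge m>0$ by compactness and positivity. On $|x-y|=R$ we have $w=0\le v$. Choosing $\epsilon=m$ gives $v-\epsilon w\ge0$ on $\partial_pQ$ and $(\omega\partial_t-d\operatorname{div}A\nabla+(c+K))(v-\epsilon w)\ge0$ in $Q$. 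The weak maximum principle, valid because the zeroth-order coefficient is nonnegative, then yields $v\ge\epsilon w$ in $Q$.

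\textbf{Step 3 (conclusion).} Since $v(x_0,t_0)=\epsilon w(x_0,t_0)=0$ and $v\ge\epsilon w$ along the inward direction $y-x_0$, we get
\[\mathbf{n}\cdot(A\nabla v)(x_0,t_0)\le\epsilon\,\mathbf{n}\cdot(A\nabla w)(x_0,t_0)=-2\alpha\epsilon e^{-\alpha R^2}R\,\mathbf{n}\cdot(A\mathbf{n})<0,\]
using $\mathbf{n}=(x_0-y)/R$ and $\mathbf{n}\cdot A\mathbf{n}\ge1/\Lambda$. The vector $A\mathbf{n}$ points strictly outward, and $v\in C^{1,0}$ up to the boundary, so $\nabla v$ exists at $x_0$. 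Comparing along the oblique direction $-A\mathbf{n}$ therefore gives the strict inequality. Undoing the exponential factor completes the proof.

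The main obstacle is the barrier computation in Step 2. One must choose $\alpha$ so that the term $4\alpha^2 d\,(x-y)\cdot A(x-y)$ dominates $2\alpha d(\operatorname{tr}A+|\operatorname{div}A||x-y|)$ together with the $\omega\partial_t$ and $(c+K)$ contributions. This is possible only on the annular region $|x-y|\ge R/2$, which is why $Q$ is cut off near $x_0$. One must also make sure the barrier stays nonpositive for the operator uniformly in $t\in(t_0-\tau,t_0]$, even when $t_0=T$ is the terminal time.
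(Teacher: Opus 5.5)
Your route is a self-contained re-proof of the Hopf lemma adapted to the conormal derivative: an exponential shift, an interior-ball barrier, then comparison along the inward oblique direction $-A\mathbf n$. The paper instead takes the classical parabolic Hopf lemma as given, i.e.\ $|\nabla u(x_0,t_0)|>0$. It then uses that $u(\cdot,t_0)\ge0$ attains its minimum on $\overline\Omega$ at $x_0$, so the tangential gradient vanishes and $\nabla u=-|\nabla u|\,\mathbf n$, and concludes $\mathbf n\cdot A\nabla u=-|\nabla u|\,\mathbf n\cdot A\mathbf n<0$. Your approach is longer but does not need the gradient-is-normal observation, and you are right that $u(\cdot,0)\ge u(\cdot,T)$ plays no role. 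However, Step~2 contains a claim that is false as written.

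\textbf{The bottom face.} You assert $v\ge m>0$ on the face $\{t=t_0-\tau\}$ of $Q$, but that face is not inside $\Omega$ in the sense you need. Its closure $\{|x-y|\le R,\ |x-x_0|\le R/2\}\times\{t_0-\tau\}$ contains the boundary point $(x_0,t_0-\tau)$. Nothing prevents $v(x_0,t)=0$ for all $t$ in an interval ending at $t_0$. With the time-independent barrier $w=e^{-\alpha|x-y|^2}-e^{-\alpha R^2}$, you have $w>0$ on the bottom arbitrarily close to $x_0$. So $v\ge\epsilon w$ there is itself a Hopf-type linear lower bound at $(x_0,t_0-\tau)$, which is exactly what you are trying to prove. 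No choice $\epsilon=m>0$ closes this.

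\textbf{The fix.} Commit to the time-dependent barrier $w=e^{-\alpha(|x-y|^2+\beta(t_0-t))}-e^{-\alpha R^2}$ with $\beta>0$ fixed. Then $w<0$ on $\{|x-y|=R,\ t<t_0\}$, and also on the part of the bottom where $|x-y|^2>R^2-\beta\tau$. Positivity of $v$ is then needed only on two sets:
\begin{itemize}
\item $\{|x-x_0|=R/2,\ |x-y|\le R\}\times[t_0-\tau,t_0]$;
\item $\{|x-y|^2\le R^2-\beta\tau\}\times\{t_0-\tau\}$.
\end{itemize}
Both are compact subsets of $\Omega\times(0,T]$.

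Your justification must change accordingly. The statement ``$w$ vanishes on $|x-y|=R$ and $0\le w\le 1$'' is false for this $w$. Use instead $w\le0$ on that sphere, together with $(c+K)w\le(c+K)e^{-\alpha(|x-y|^2+\beta(t_0-t))}$. The term $\omega\partial_t w$ contributes an extra $\omega\alpha\beta\,e^{-\alpha(\cdots)}$. On $|x-y|\ge R/2$, absorb it into $-4d\alpha^2(x-y)\cdot A(x-y)\,e^{-\alpha(\cdots)}\le -d\alpha^2R^2\Lambda^{-1}e^{-\alpha(\cdots)}$ by taking $\alpha$ large with $\beta$ fixed. Since still $w(x_0,t_0)=0$ and $\nabla w(x_0,t_0)=-2\alpha R e^{-\alpha R^2}\mathbf n$, your Step~3 then goes through unchanged.
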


It suffices to observe that at $(x_0,t_0)$, $\mathbf{n} = -\frac{\nabla u}{|\nabla u|}$ with $|\nabla u| > 0$, and then apply the uniform ellipticity of $A(\cdot)$ to obtain the lemma. The details are omitted.

Motivated by the work of Bai and He \cite[Proposition 2.4]{Bai2020Asymptotic}, we formulate the following comparison principle. Their proof carries over directly, requiring only that the Hopf lemma is replaced by Lemma \ref{lemma2.11}. The proof details are omitted here.
\begin{lemma}\label{lemma2.12}
  Let $T>0$, $\underline{\lambda}$, $\overline{\lambda}$ be given constants, and $\underline{\varphi}$, $\overline{\varphi}\in C^{2,1}(\Omega\times (0,T])\cap C^{1,0}(\overline{\Omega}\times [0,T])$ be given functions. Suppose that $\overline{\varphi}>0$ for all $(x,t)\in\overline{\Omega}\times[0,T]$ and $\underline{\varphi}\ge,\not\equiv0$ on $\overline{\Omega}\times[0,T]$. If the pairs $(\underline{\lambda},\underline{\varphi})$ and $(\overline{\lambda},\overline{\varphi})$ satisfy
  \begin{equation*}
    \left\{
    \begin{aligned}
      &\omega\partial_t\overline{\varphi}-d\operatorname{div}(A(x)\nabla\overline{\varphi})+c(x,t)\overline{\varphi}\ge\overline{\lambda}\overline{\varphi},&&(x,t)\in\Omega\times(0,T],\\
      &\omega\partial_t\underline{\varphi}-d\operatorname{div}(A(x)\nabla\underline{\varphi})+c(x,t)\underline{\varphi}\le\underline{\lambda}\underline{\varphi},&&(x,t)\in\Omega\times(0,T],\\
      &\mathbf{n}\cdot(A(x)\nabla\overline{\varphi})\ge0\ge \mathbf{n}\cdot(A(x)\nabla\underline{\varphi}),&&(x,t)\in\partial\Omega\times(0,T],\\
      &\overline{\varphi}(x,0)\ge\overline{\varphi}(x,T),~\underline{\varphi}(x,0)\le\underline{\varphi}(x,T),&&x\in\overline{\Omega},
    \end{aligned}
    \right.
  \end{equation*}
  then $\overline{\lambda}\le\underline{\lambda}$.
\end{lemma}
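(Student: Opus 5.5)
The plan is to argue by contradiction with a "sliding" or largest-multiple argument, following the strategy of Bai and He. Suppose $\overline{\lambda}>\underline{\lambda}$.

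\emph{Reduction.} First normalize away the eigenvalue gap. Set
\[
w=e^{-\underline{\lambda}t/\omega}\underline{\varphi},\qquad v=e^{-\overline{\lambda}t/\omega}\overline{\varphi}.
\]
Then $w$ is a subsolution and $v$ is a supersolution of $\mathcal{L}u:=\omega\partial_t u-d\operatorname{div}(A\nabla u)+cu=0$. The boundary inequalities are unchanged, and the time relations become
\[
w(\cdot,T)\ge e^{-\underline{\lambda}T/\omega}w(\cdot,0),\qquad v(\cdot,T)\le e^{-\overline{\lambda}T/\omega}v(\cdot,0).
\]

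\emph{Touching multiple.} Since $\overline{\varphi}>0$ on the compact set $\overline{\Omega}\times[0,T]$, the quantity
\[
\tau^*=\max_{\overline{\Omega}\times[0,T]}\underline{\varphi}/\overline{\varphi}
\]
is finite and positive, because $\underline{\varphi}\ge,\not\equiv0$. Put $z=\tau^*\overline{\varphi}-\underline{\varphi}\ge0$. Then
\[
\mathcal{L}z-\overline{\lambda}z\ge(\overline{\lambda}-\underline{\lambda})\underline{\varphi}\ge0,
\]
and $z$ satisfies $\mathbf{n}\cdot(A\nabla z)\ge0$ on the boundary. Also $z$ vanishes at some point $(x_0,t_0)$.

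\emph{Case analysis on the touching point.}

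\textbf{Case $t_0\in(0,T]$.} Apply the strong maximum principle to $z\ge0$ with the zeroth-order term $c-\overline{\lambda}$. An interior zero forces $z\equiv0$ on $\overline{\Omega}\times[0,t_0]$. A boundary zero with $z>0$ inside contradicts Lemma \ref{lemma2.11}, since Hopf would give $\mathbf{n}\cdot(A\nabla z)<0$ while the boundary condition gives $\ge0$. So $z\equiv0$ on $[0,t_0]$, i.e.\ $\underline{\varphi}=\tau^*\overline{\varphi}$ there. Substituting into the inequalities then gives $(\overline{\lambda}-\underline{\lambda})\underline{\varphi}\le0$ on that slab. Hence $\underline{\varphi}\equiv0$ there, and with it $\tau^*=0$, which is a contradiction.

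\textbf{Case $t_0=0$.} Use the time-endpoint conditions:
\[
z(x_0,T)=\tau^*\overline{\varphi}(x_0,T)-\underline{\varphi}(x_0,T)\le\tau^*\overline{\varphi}(x_0,0)-\underline{\varphi}(x_0,0)=0.
\]
So $z(x_0,T)=0$ as well, and we are back in the first case.

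\emph{Expected obstacle.} The main obstacle is the degenerate situation in the first case. We only know $z$ vanishes on an initial slab, and we must still conclude that $\underline{\varphi}$ is not identically zero there in order to reach the contradiction. To handle it, I will argue that when the maximizer can be taken with $t_0=T$ (via the $t_0=0$ reduction), $z\equiv0$ on all of $[0,T]$. This contradicts $\underline{\varphi}\not\equiv0$ together with $\overline{\lambda}>\underline{\lambda}$. If the maximum is attained only at some $t_0<T$ with $\underline{\varphi}\equiv0$ before $t_0$, then $\tau^*\overline{\varphi}=\underline{\varphi}=0$ contradicts $\overline{\varphi}>0$.
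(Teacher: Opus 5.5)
Your argument is correct and is essentially the paper's Bai--He-type argument, as carried out in the proof of Lemma~\ref{lemma2.13}. Your $z=\tau^*\overline{\varphi}-\underline{\varphi}$ is just $\tau^*$ times the paper's $w=\overline{\varphi}-\sigma^*\underline{\varphi}$ with $\sigma^*=1/\tau^*$, and the strong maximum principle, Lemma~\ref{lemma2.11} at lateral boundary zeros, and the endpoint condition moving a zero at $t=0$ to $t=T$ are used in the same way. The exponential reduction is never used, and your ``expected obstacle'' is not a real gap: your slab argument already closes that case, because $\underline{\varphi}=\tau^*\overline{\varphi}>0$ on the slab. It also disappears entirely if you split as $\mathcal{L}z-\underline{\lambda}z\ge\tau^*(\overline{\lambda}-\underline{\lambda})\overline{\varphi}>0$, which rules out any interior zero directly, as the paper does.
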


Now we establish the following comparison result, which plays an essential role in studying the asymptotic behavior of the normalized principal Floquet bundle.
\begin{lemma}\label{lemma2.13}
  Let $\left\{T_n\right\}_{n\in\mathbb{N}^+}$ be a sequence satisfying $T_n\to\infty$ as $n\to\infty$, $\left\{\underline{\lambda}_n\right\}_{n\in\mathbb{N}^+},\left\{\overline{\lambda}_n\right\}_{n\in\mathbb{N}^+}$ be given bounded sequences of constants, and $\left\{\underline{\varphi}_n\right\}_{n\in\mathbb{N}^+},\left\{\overline{\varphi}_n\right\}_{n\in\mathbb{N}^+}\subset C^{2,1}(\Omega\times (0,T_n])\cap C^{1,0}(\overline{\Omega}\times [0,T_n])$ be given sequences of functions. Suppose that for any $n\in\mathbb{N}^+$, $\overline{\varphi}_n>0$ for all $(x,t)\in\overline{\Omega}\times[0,T_n]$ and $\underline{\varphi}_n\ge,\not\equiv0$ on $\overline{\Omega}\times[0,T_n]$. If there exists a positive integer $N_0$ such that for all $n>N_0$, the pairs $(\underline{\lambda}_n,\underline{\varphi}_n)$ and $(\overline{\lambda}_n,\overline{\varphi}_n)$ satisfy
  \begin{equation*}
    \left\{
    \begin{aligned}
      &\omega\partial_t\overline{\varphi}_n-d\operatorname{div}(A(x)\nabla\overline{\varphi}_n)+c(x,t)\overline{\varphi}_n\ge\overline{\lambda}_n\overline{\varphi}_n,&&(x,t)\in\Omega\times(0,T_n],\\
      &\omega\partial_t\underline{\varphi}_n-d\operatorname{div}(A(x)\nabla\underline{\varphi}_n)+c(x,t)\underline{\varphi}_n\le\underline{\lambda}_n\underline{\varphi}_n,&&(x,t)\in\Omega\times(0,T_n],\\
      &\mathbf{n}\cdot(A(x)\nabla\overline{\varphi}_n)\ge0\ge\mathbf{n}\cdot(A(x)\nabla\underline{\varphi}_n),&&(x,t)\in\partial\Omega\times(0,T_n],\\
      &\overline{\varphi}_n(x,0)\ge\overline{\varphi}_n(x,T_n),~\underline{\varphi}_n(x,0)\le\underline{\varphi}_n(x,T_n),&&x\in\overline{\Omega},
    \end{aligned}
    \right.
  \end{equation*}
  then
  \[\liminf_{n\to\infty}\left(\underline{\lambda}_n-\overline{\lambda}_n\right)\ge0,\]
  whence
  \[\liminf_{n\to\infty}\underline{\lambda}_n\ge\liminf_{n\to\infty}\overline{\lambda}_n \quad\text{ and }\quad \limsup_{n\to\infty}\underline{\lambda}_n\ge\limsup_{n\to\infty}\overline{\lambda}_n.\]
\end{lemma}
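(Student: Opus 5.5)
Lemma \ref{lemma2.13} should follow from Lemma \ref{lemma2.12} applied separately for each large index $n$. Fix $n>N_0$ and set $T:=T_n$, $\overline{\lambda}:=\overline{\lambda}_n$, $\underline{\lambda}:=\underline{\lambda}_n$, $\overline{\varphi}:=\overline{\varphi}_n$, $\underline{\varphi}:=\underline{\varphi}_n$. The hypotheses of Lemma \ref{lemma2.12} are then exactly those assumed in Lemma \ref{lemma2.13} for this $n$: the regularity classes agree, $\overline{\varphi}_n>0$ on $\overline{\Omega}\times[0,T_n]$, $\underline{\varphi}_n\ge,\not\equiv0$, the two differential inequalities and the two boundary inequalities hold, and the time-end inequalities $\overline{\varphi}_n(\cdot,0)\ge\overline{\varphi}_n(\cdot,T_n)$ and $\underline{\varphi}_n(\cdot,0)\le\underline{\varphi}_n(\cdot,T_n)$ hold. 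Lemma \ref{lemma2.12} therefore gives
\[\overline{\lambda}_n\le\underline{\lambda}_n\qquad\text{for every } n>N_0 .\]

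The stated conclusions follow from this pointwise inequality. Since $\underline{\lambda}_n-\overline{\lambda}_n\ge0$ for all $n>N_0$, we get $\liminf_{n\to\infty}(\underline{\lambda}_n-\overline{\lambda}_n)\ge0$. The same eventual inequality $\underline{\lambda}_n\ge\overline{\lambda}_n$ gives both $\liminf_n\underline{\lambda}_n\ge\liminf_n\overline{\lambda}_n$ and $\limsup_n\underline{\lambda}_n\ge\limsup_n\overline{\lambda}_n$. Both sides are finite because the sequences are bounded. One can also see this through the identities $\liminf a_n\ge\liminf b_n+\liminf(a_n-b_n)$ and $\limsup a_n\ge\limsup b_n+\liminf(a_n-b_n)$, which use boundedness to avoid indeterminate forms.

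The only real substance is the validity of Lemma \ref{lemma2.12} on an arbitrary interval $[0,T]$ with constants independent of $T$. That lemma is taken as given, via the Bai--He argument with Lemma \ref{lemma2.11} replacing the usual Hopf lemma. So Lemma \ref{lemma2.13} is essentially a bookkeeping corollary. Its point is that in later chapters the sub- and super-solutions need only be constructed for each $T_n$, with $\overline{\lambda}_n$ and $\underline{\lambda}_n$ approximating time averages of $H$ and of the potential. The only thing to check with any care is that no uniformity in $n$ is required beyond the boundedness of the eigenvalue sequences, and it is not.
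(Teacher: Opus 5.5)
Your proposal is correct and follows the paper's argument. The paper argues by contradiction along a subsequence with $\underline{\lambda}_{n_k}<\overline{\lambda}_{n_k}$ and, at each such index, re-runs the $\sigma^*$-touching, strong maximum principle and Lemma \ref{lemma2.11} comparison (that is, the proof of Lemma \ref{lemma2.12}); this is exactly your per-$n$ application of Lemma \ref{lemma2.12}, and the same elementary $\liminf/\limsup$ bookkeeping then finishes both proofs (the two relations you call ``identities'' are inequalities, but that is all you use).
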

\begin{proof}
Suppose to the contrary that there is a constant $\varepsilon>0$ such that
\[\liminf_{n\to\infty}\left(\underline{\lambda}_n-\overline{\lambda}_n\right)<-\varepsilon.\]
Then there exist subsequences $\left\{\underline{\lambda}_{n_k}\right\}_{k\in\mathbb{N}^+}$ of $\left\{\underline{\lambda}_{n}\right\}_{n\in\mathbb{N}^+}$, $\left\{\overline{\lambda}_{n_k}\right\}_{k\in\mathbb{N}^+}$ of $\left\{\overline{\lambda}_{n}\right\}_{n\in\mathbb{N}^+}$ such that
\[\lim_{k\to\infty}\left(\underline{\lambda}_{n_k}-\overline{\lambda}_{n_k}\right)= \liminf_{n\to\infty}\left(\underline{\lambda}_n-\overline{\lambda}_n\right)<-\varepsilon,\]
so there exists an integer $K_\varepsilon>N_0$ such that
\begin{equation}\label{eq2.4}
\underline{\lambda}_{n_k}-\overline{\lambda}_{n_k}<0,\qquad\forall~k>K_\varepsilon.
\end{equation}

For each $k>K_\varepsilon$, we define $w_{k}:=\overline{\varphi}_{n_k}-\sigma^{*}_{k}\underline{\varphi}_{n_k}$ on $\overline{\Omega}\times[0,T_{n_k}]$, where
 \[\sigma^{*}_{k}:=\sup\left\{\sigma\ge0~|~\overline{\varphi}_{n_k}-\sigma\underline{\varphi}_{n_k} >0,~(x,t)\in\overline{\Omega}\times[0,T_{n_k}]\right\}.\]
Obviously, $\sigma^{*}_{k}$ is well-defined and is strictly positive since $\overline{\varphi}_{n_k}>0$ for all $(x,t)\in\overline{\Omega}\times[0,T_{n_k}]$. Select a constant $M=\|c\|_\infty+\sup_{n\in\mathbb{N}^+}|\underline{\lambda}_{n}|+1$. Then one can check that for each $k>K_\varepsilon$, $w_{k}$ satisfies
  \begin{equation*}
    \left\{
    \begin{aligned}
      &\omega\partial_t w_{k}-d\operatorname{div}(A(x)\nabla w_{k})+[c(x,t)+M]w_{k} \\
      &\hskip 3cm \ge[\underline{\lambda}_{n_k}+M]w_{k} +\sigma^{*}_{k}\underbrace{\left(\overline{\lambda}_{n_k}-\underline{\lambda}_{n_k}\right)}_{>0,\text{ by }\eqref{eq2.4}}\overline{\varphi}_{n_k}>0,&&(x,t)\in\Omega\times(0,T_{n_k}],\\
      &\mathbf{n}\cdot(A(x)\nabla w_{k})\ge0,&&(x,t)\in\partial\Omega\times(0,T_{n_k}],\\
      &w_{k}(x,0)\ge w_{k}(x,T_{n_k}),&&x\in\overline{\Omega}.
    \end{aligned}
    \right.
  \end{equation*}
By the strong maximum principle for parabolic equations, we know that for each $k>K_\varepsilon$, $w_k>0$ in $\Omega\times(0,T_{n_k}]$. If $w_k$ vanishes at some boundary point $(x_0,t_0)\in\partial \Omega\times(0,T_{n_k}]$, then Lemma \ref{lemma2.11} implies that $\mathbf{n}\cdot(A(x_0)\nabla w_k(x_0,t_0))<0$, which contradicts the boundary condition satisfied by $w_k$. Consequently, for each $k>K_\varepsilon$, $w_k>0$ in $\overline{\Omega}\times(0,T_{n_k}]$. Using $w_{k}(\cdot,0)\ge w_{k}(\cdot,T_{n_k})$, we conclude $w_k>0$ in $\overline{\Omega}\times[0,T_{n_k}]$ for each $k>K_\varepsilon$. Then it is easy to show that for each $k>K_\varepsilon$, there is a $\delta'_{k}>0$ such that $\overline{\varphi}_{n_k}-(\sigma^{*}_{k}+\delta'_{k})\underline{\varphi}_{n_k}>0$, which is a contradiction to the definition of $\sigma^{*}_{k}$. This shows that
\[\liminf_{n\to\infty}\left(\underline{\lambda}_n-\overline{\lambda}_n\right)\ge0.\]

Recall the definition of $\liminf_{n\to\infty}$,
\begin{equation}\label{eq2.5}
\liminf_{n\to\infty}(\cdot)=\lim_{n'\to\infty}\inf_{n>n'} (\cdot).
\end{equation}
Note first that for any given $n'>0$, there holds
\[\underline{\lambda}_{n}-\overline{\lambda}_{n}\le \underline{\lambda}_{n}-\inf_{n>n'}\overline{\lambda}_{n},\quad\forall~n>n'.\]
Thus, taking the infimum over $n>n'$ yields
\[\inf_{n>n'}\left(\underline{\lambda}_{n}-\overline{\lambda}_{n}\right)\le \inf_{n>n'} \left(\underline{\lambda}_{n}-\inf_{n>n'}\overline{\lambda}_{n}\right)=\inf_{n>n'} \underline{\lambda}_{n}-\inf_{n>n'}\overline{\lambda}_{n}.\]
Letting $n'\to\infty$ and observing $\liminf_{n\to\infty}\left(\underline{\lambda}_n-\overline{\lambda}_n\right)\ge0$, one gets that
\[\liminf_{n\to\infty}\underline{\lambda}_n\ge\liminf_{n\to\infty}\overline{\lambda}_n,\]
where we have used \eqref{eq2.5}.

Observe that $\liminf_{n\to\infty}\left(\underline{\lambda}_n-\overline{\lambda}_n\right)\ge0$ implies
\[\limsup_{n\to\infty}\left(\overline{\lambda}_n-\underline{\lambda}_n\right)\le0.\]
Recall the definition of $\limsup_{n\to\infty}$,
\[
\limsup_{n\to\infty}(\cdot)=\lim_{n'\to\infty}\sup_{n>n'} (\cdot).
\]
Obviously, for any given $n'>0$, there holds
\[\overline{\lambda}_{n}-\underline{\lambda}_{n}\ge \overline{\lambda}_{n}-\sup_{n>n'}\underline{\lambda}_{n},\quad\forall~n>n'.\]
Take the supremum over $n>n'$ to obtain
\[\sup_{n>n'}\left(\overline{\lambda}_{n}-\underline{\lambda}_{n}\right)\ge \sup_{n>n'} \left(\overline{\lambda}_{n}-\sup_{n>n'}\underline{\lambda}_{n}\right)=\sup_{n>n'} \overline{\lambda}_{n}-\sup_{n>n'}\underline{\lambda}_{n}.\]
By sending $n'\to\infty$, we find that
\[\limsup_{n\to\infty}\overline{\lambda}_n\le\limsup_{n\to\infty}\underline{\lambda}_n.\]

This finishes the proof.
\end{proof}

\medskip
It can be assumed that one of the sequences $\left\{\underline{\lambda}_n\right\}_{n\in\mathbb{N}^+}$ and $\left\{\overline{\lambda}_n\right\}_{n\in\mathbb{N}^+}$ converges as $n\to\infty$.
\begin{corollary}\label{corollary2.14}
  Let $\left\{T_n\right\}_{n\in\mathbb{N}^+}$ be a sequence satisfying $T_n\to\infty$ as $n\to\infty$, $\left\{\underline{\lambda}_n\right\}_{n\in\mathbb{N}^+},\left\{\overline{\lambda}_n\right\}_{n\in\mathbb{N}^+}$ be given bounded sequences of constants, and $\left\{\underline{\varphi}_n\right\}_{n\in\mathbb{N}^+},\left\{\overline{\varphi}_n\right\}_{n\in\mathbb{N}^+}$ be given sequences of functions $\underline{\varphi}_n,\overline{\varphi}_n\in C^{2,1}(\Omega\times (0,T_n])\cap C^{1,0}(\overline{\Omega}\times [0,T_n])$ for each $n\in\mathbb{N}^+$. Suppose that for any $n\in\mathbb{N}^+$, $\overline{\varphi}_n>0$ for all $(x,t)\in\overline{\Omega}\times[0,T_n]$ and $\underline{\varphi}_n\ge,\not\equiv0$ on $\overline{\Omega}\times[0,T_n]$, and moreover, the pairs $(\underline{\lambda}_n,\underline{\varphi}_n)$ and $(\overline{\lambda}_n,\overline{\varphi}_n)$ satisfy
  \begin{equation*}
    \left\{
    \begin{aligned}
      &\omega\partial_t\overline{\varphi}_n-d\operatorname{div}(A(x)\nabla\overline{\varphi}_n)+c(x,t)\overline{\varphi}_n\ge\overline{\lambda}_n\overline{\varphi}_n,&&(x,t)\in\Omega\times(0,T_n],\\
      &\omega\partial_t\underline{\varphi}_n-d\operatorname{div}(A(x)\nabla\underline{\varphi}_n)+c(x,t)\underline{\varphi}_n\le\underline{\lambda}_n\underline{\varphi}_n,&&(x,t)\in\Omega\times(0,T_n],\\
      &\mathbf{n}\cdot(A(x)\nabla\overline{\varphi}_n)\ge0\ge\mathbf{n}\cdot(A(x)\nabla\underline{\varphi}_n),&&(x,t)\in\partial\Omega\times(0,T_n],\\
      &\overline{\varphi}_n(x,0)\ge\overline{\varphi}_n(x,T_n),~\underline{\varphi}_n(x,0)\le\underline{\varphi}_n(x,T_n),&&x\in\overline{\Omega}
    \end{aligned}
    \right.
  \end{equation*}
  for all $n>N_0$, where $N_0$ is some positive integer. Then
  \begin{align*}
     & \limsup_{n\to\infty}\underline{\lambda}_n\ge\liminf_{n\to\infty}\underline{\lambda}_n\ge\lim_{n\to\infty}\overline{\lambda}_n,&&\text{ if the limit } \lim_{n\to\infty}\overline{\lambda}_n \text{ exists};\\
     & \lim_{n\to\infty}\underline{\lambda}_n\ge\limsup_{n\to\infty}\overline{\lambda}_n \ge\liminf_{n\to\infty}\overline{\lambda}_n,&&\text{ if the limit } \lim_{n\to\infty}\underline{\lambda}_n \text{ exists}.
  \end{align*}
\end{corollary}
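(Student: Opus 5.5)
Corollary~\ref{corollary2.14} follows from Lemma~\ref{lemma2.13} together with elementary manipulations of $\liminf$ and $\limsup$. The hypotheses of the corollary are exactly those of Lemma~\ref{lemma2.13}: the same bounded sequences of constants, the same positivity of $\overline{\varphi}_n$ and nontriviality of $\underline{\varphi}_n$, and the same differential, boundary and time-end inequalities for $n>N_0$. Invoking the lemma gives at once
\[\liminf_{n\to\infty}\bigl(\underline{\lambda}_n-\overline{\lambda}_n\bigr)\ge0,\qquad \liminf_{n\to\infty}\underline{\lambda}_n\ge\liminf_{n\to\infty}\overline{\lambda}_n,\qquad \limsup_{n\to\infty}\underline{\lambda}_n\ge\limsup_{n\to\infty}\overline{\lambda}_n.\]
Both chains of inequalities in the corollary are then read off from these, using only that a convergent sequence has equal $\liminf$ and $\limsup$.

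\emph{First case: $\lim_{n\to\infty}\overline{\lambda}_n$ exists.} Here
\[\liminf_{n\to\infty}\overline{\lambda}_n=\lim_{n\to\infty}\overline{\lambda}_n .\]
The $\liminf$ inequality therefore becomes $\liminf_{n}\underline{\lambda}_n\ge\lim_n\overline{\lambda}_n$. The remaining inequality $\limsup_n\underline{\lambda}_n\ge\liminf_n\underline{\lambda}_n$ holds for any bounded sequence.

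\emph{Second case: $\lim_{n\to\infty}\underline{\lambda}_n$ exists.} Here
\[\limsup_{n\to\infty}\underline{\lambda}_n=\lim_{n\to\infty}\underline{\lambda}_n .\]
The $\limsup$ inequality therefore becomes $\lim_n\underline{\lambda}_n\ge\limsup_n\overline{\lambda}_n$. Again $\limsup_n\overline{\lambda}_n\ge\liminf_n\overline{\lambda}_n$ is trivial for a bounded sequence.

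There is no genuine obstacle. The one point to check is that Lemma~\ref{lemma2.13} applies verbatim. Its proof used boundedness of $\{\underline{\lambda}_n\}$ only to choose the constant $M$, and boundedness is assumed in the corollary as well. Its conclusion does not rely on either limit existing, so no separate argument is needed in either case.
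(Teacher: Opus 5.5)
Your proposal is correct and takes the same route as the paper. The paper gives no separate proof and treats the corollary as an immediate consequence of Lemma~\ref{lemma2.13}: its hypotheses are verbatim those of the lemma. Each chain of inequalities then follows from $\liminf_n\underline{\lambda}_n\ge\liminf_n\overline{\lambda}_n$ or $\limsup_n\underline{\lambda}_n\ge\limsup_n\overline{\lambda}_n$, once the $\liminf$ and $\limsup$ of the convergent sequence are identified with its limit.
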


By selecting suitable sequence $\left\{T_n\right\}_{n\in\mathbb{N}^+}$ of $T\to\infty$ such that
 \[\lim_{n\to\infty}(\underline{\lambda}_{T_n}-\overline{\lambda}_{T_n})=\liminf_{T\to\infty}(\underline{\lambda}_T-\overline{\lambda}_T) \]
 \[\left(resp.~\lim_{n\to\infty}\underline{\lambda}_{T_n} =\liminf_{T\to\infty}\underline{\lambda}_T\text{ or }\lim_{n\to\infty}\overline{\lambda}_{T_n}=\limsup_{T\to\infty}\overline{\lambda}_T\right),\]
and then applying Lemma \ref{lemma2.13} and Corollary \ref{corollary2.14}, one has the following result.
\begin{corollary}\label{corollary2.15}
  Let $\left\{\underline{\lambda}_T\right\}_{T>0},\left\{\overline{\lambda}_T\right\}_{T>0}$ be two given bounded families of constants, and $\left\{\underline{\varphi}_T\right\}_{T>0}$, $\left\{\overline{\varphi}_T\right\}_{T>0}$ be two given families of functions $\underline{\varphi}_T,\overline{\varphi}_T\in C^{2,1}(\Omega\times (0,T])\cap C^{1,0}(\overline{\Omega}\times [0,T])$ for all $T>0$. Suppose that for any $T>0$, $\overline{\varphi}_T>0$ for all $(x,t)\in\overline{\Omega}\times[0,T]$ and $\underline{\varphi}_T\ge,\not\equiv0$ on $\overline{\Omega}\times[0,T]$. If there exists a positive constant $T'$ such that for any $T>T'$, the pairs $(\underline{\lambda}_T,\underline{\varphi}_T)$ and $(\overline{\lambda}_T,\overline{\varphi}_T)$ satisfy
  \begin{equation*}
    \left\{
    \begin{aligned}
      &\omega\partial_t\overline{\varphi}_T-d\operatorname{div}(A(x)\nabla\overline{\varphi}_T)+c(x,t)\overline{\varphi}_T\ge\overline{\lambda}_T\overline{\varphi}_T,&&(x,t)\in\Omega\times(0,T],\\
      &\omega\partial_t\underline{\varphi}_T-d\operatorname{div}(A(x)\nabla\underline{\varphi}_T)+c(x,t)\underline{\varphi}_T\le\underline{\lambda}_T\underline{\varphi}_T,&&(x,t)\in\Omega\times(0,T],\\
      &\mathbf{n}\cdot(A(x)\nabla\overline{\varphi}_T)\ge0\ge\mathbf{n}\cdot(A(x)\nabla\underline{\varphi}_T),&&(x,t)\in\partial\Omega\times(0,T],\\
      &\overline{\varphi}_T(x,0)\ge\overline{\varphi}_T(x,T),~\underline{\varphi}_T(x,0)\le\underline{\varphi}_T(x,T),&&x\in\overline{\Omega},
    \end{aligned}
    \right.
  \end{equation*}
  then
  \[\liminf_{T\to\infty}(\underline{\lambda}_T-\overline{\lambda}_T)\ge0,\]
  whence
  \[\liminf_{T\to\infty}\underline{\lambda}_T\ge\liminf_{T\to\infty}\overline{\lambda}_T \quad\text{ and }\quad \limsup_{T\to\infty}\underline{\lambda}_T\ge\limsup_{T\to\infty}\overline{\lambda}_T.\]
\end{corollary}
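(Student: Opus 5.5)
The plan is to reduce the statement to Lemma \ref{lemma2.13} and Corollary \ref{corollary2.14} by passing from the continuous parameter $T$ to a suitable sequence $T_n\to\infty$.

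\textbf{First inequality.} Set $\ell:=\liminf_{T\to\infty}(\underline{\lambda}_T-\overline{\lambda}_T)$. This is finite because both families are bounded. By the definition of the lower limit, I choose a sequence $T_n\to\infty$ with $T_n>T'$ for all $n$ and
\[\lim_{n\to\infty}(\underline{\lambda}_{T_n}-\overline{\lambda}_{T_n})=\ell.\]
Put $\underline{\lambda}_n:=\underline{\lambda}_{T_n}$, $\overline{\lambda}_n:=\overline{\lambda}_{T_n}$, $\underline{\varphi}_n:=\underline{\varphi}_{T_n}$ and $\overline{\varphi}_n:=\overline{\varphi}_{T_n}$. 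These sequences inherit boundedness, positivity, regularity, the differential inequalities and the boundary and initial–terminal conditions for every $n$, so the hypotheses of Lemma \ref{lemma2.13} hold with $N_0=1$. That lemma gives
\[\ell=\liminf_{n\to\infty}(\underline{\lambda}_n-\overline{\lambda}_n)\ge 0.\]

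\textbf{Second and third inequalities.} These follow from $\ell\ge0$ by the same elementary inf/sup manipulation used at the end of the proof of Lemma \ref{lemma2.13}, with $n>n'$ replaced by $T>T_0$. For the lower limits, for every $T>T_0$ we have
\[\underline{\lambda}_T-\overline{\lambda}_T\le\underline{\lambda}_T-\inf_{s>T_0}\overline{\lambda}_s.\]
Taking the infimum over $T>T_0$ and letting $T_0\to\infty$ gives $\liminf_{T\to\infty}\underline{\lambda}_T\ge\liminf_{T\to\infty}\overline{\lambda}_T$.

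For the upper limits, the first inequality is equivalent to $\limsup_{T\to\infty}(\overline{\lambda}_T-\underline{\lambda}_T)\le0$. For every $T>T_0$,
\[\overline{\lambda}_T-\underline{\lambda}_T\ge\overline{\lambda}_T-\sup_{s>T_0}\underline{\lambda}_s.\]
Taking the supremum over $T>T_0$ and letting $T_0\to\infty$ gives $\limsup_{T\to\infty}\overline{\lambda}_T\le\limsup_{T\to\infty}\underline{\lambda}_T$.

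Alternatively, one can choose $T_n$ realizing $\liminf_{T\to\infty}\underline{\lambda}_T$ (respectively $\limsup_{T\to\infty}\overline{\lambda}_T$) and apply the second line (respectively the first line) of Corollary \ref{corollary2.14}.

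\textbf{Main point of care.} There is no real analytic obstacle, since the PDE content is entirely contained in Lemma \ref{lemma2.13}. The step needing care is the extraction of a sequence $T_n>T'$ that realizes the lower limit over the continuum parameter $T$. This is possible because the families are bounded, so $\ell$ is finite. One also has to check that every hypothesis of Lemma \ref{lemma2.13} holds along that sequence.
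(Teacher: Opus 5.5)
Your proposal is correct and follows essentially the paper's route: extract a sequence $T_n\to\infty$ with $T_n>T'$ that realizes $\liminf_{T\to\infty}(\underline{\lambda}_T-\overline{\lambda}_T)$ and apply Lemma~\ref{lemma2.13}; the paper then gets the last two inequalities by choosing sequences that realize $\liminf_{T\to\infty}\underline{\lambda}_T$ or $\limsup_{T\to\infty}\overline{\lambda}_T$ and invoking Corollary~\ref{corollary2.14}, which is exactly your alternative. Your primary inf/sup manipulation for those two inequalities is also valid, since the boundedness of both families makes all the limits finite.
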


\section{Several elliptic eigenvalue problems}
In forthcoming chapters, we will construct suitable sub- and super-solutions and using comparison principle to derive the asymptotic behavior of the normalized principal Floquet exponent with respect to $(\omega,d)$. This section is devoted to the study of some elliptic eigenvalue problems, and we state several useful properties. In fact, some of these results have already been proven and widely used; see \cite{Liu2025Asymptotic,Liu2022Classifying,Evans1989A,Fleming1986PDE,Koike1987An}.

Consider first the following elliptic eigenvalue problem
\begin{equation}\label{eq2.6}
  \left\{
  \begin{aligned}
    &-\operatorname{div}(A(x)\nabla\phi)=\nu\phi,&&x\in\Omega,\\
    &\phi=0,&&x\in\partial\Omega,\\
    &\|\phi\|_\infty=1.
  \end{aligned}
  \right.
\end{equation}
It is well-known that problem \eqref{eq2.6} admits a unique positive principal eigenvalue $\nu^*$ which depends only on $A(\cdot)$ and domain $\Omega$. Furthermore, the corresponding positive principal eigenfunction $\phi^*\in C^{2}(\overline{\Omega})$ is also unique and depends only on matrix $A(\cdot)$ and domain $\Omega$; see, for example, \cite{Strauss2007Partial}.

Recall that for $c\in\mathcal{C}\cap C^{2,1}(\overline{\Omega}\times\mathbb{R})$ and $T>0$,
\[\mathcal{M}(x,t;T)=t\fint^{T}_{0}c(x,s)\,\mathrm{d}s-\int^{t}_{0}c(x,s)\,\mathrm{d}s,\qquad (x,t)\in\overline{\Omega}\times[0,T].\]
\begin{lemma}\label{lemma2.16}
Let $c\in\mathcal{C}\cap C^{2,1}(\overline{\Omega}\times\mathbb{R})$ be given. If assumption \eqref{H2} holds, namely,
  \[\limsup\limits_{T\to\infty}\sup\limits_{(x,t)\in\Omega\times[0,T]}(|\operatorname{div}(A(x)\nabla\mathcal{M}(x,t;T))|+|\nabla \mathcal{M}(x,t;T)\cdot (A(x)\nabla \mathcal{M}(x,t;T))|)<\infty,\]
then there exist constants $T_0>0$ and $\kappa>0$ such that for any $T>T_0$, there holds
  \[\mathbf{n}\cdot (A(x)\nabla \mathcal{M}(x,t;T))-\kappa\mathbf{n}\cdot(A(x)\nabla \phi^*(x))\ge 0,\qquad (x,t)\in\partial\Omega\times[0,T],\]
where $\phi^*$ is the principal eigenfunction of \eqref{eq2.6}.
\end{lemma}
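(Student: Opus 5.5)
The plan is to bound the boundary term $\mathbf{n}\cdot(A(x)\nabla\mathcal{M})$ uniformly in $T$ using only the gradient part of \eqref{H2}. We then use the Hopf lemma for $\phi^*$ to make $-\kappa\,\mathbf{n}\cdot(A\nabla\phi^*)$ a strictly positive quantity large enough to dominate it. No second-derivative information on $\mathcal{M}$ is needed for this statement.

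\textbf{Step 1: uniform gradient bound for $\mathcal{M}$.} By \eqref{H2}, there are constants $T_0>0$ and $K>0$ such that for every $T>T_0$,
\[
\sup_{(x,t)\in\Omega\times[0,T]}\nabla\mathcal{M}(x,t;T)\cdot\bigl(A(x)\nabla\mathcal{M}(x,t;T)\bigr)\le K.
\]
Since $c\in C^{2,1}(\overline{\Omega}\times\mathbb{R})$, the function $\nabla\mathcal{M}(\cdot,\cdot;T)=t\fint_0^T\nabla c(\cdot,s)\,\mathrm{d}s-\int_0^t\nabla c(\cdot,s)\,\mathrm{d}s$ is continuous up to $\partial\Omega$. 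Hence the same bound holds on $\overline{\Omega}\times[0,T]$. Uniform ellipticity gives $\frac1\Lambda|\nabla\mathcal{M}|^2\le K$, so $|\nabla\mathcal{M}|\le\sqrt{\Lambda K}$. Moreover $|A(x)\xi|\le\Lambda|\xi|$, since $A$ is symmetric with eigenvalues in $[1/\Lambda,\Lambda]$. Therefore
\[
\bigl|\mathbf{n}\cdot(A(x)\nabla\mathcal{M}(x,t;T))\bigr|\le\Lambda\sqrt{\Lambda K}=:K_1,\qquad (x,t)\in\partial\Omega\times[0,T],\ T>T_0.
\]

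\textbf{Step 2: strict sign of the conormal derivative of $\phi^*$.} The principal eigenfunction $\phi^*$ of \eqref{eq2.6} satisfies $\phi^*>0$ in $\Omega$ and $\phi^*=0$ on $\partial\Omega$. It solves $-\operatorname{div}(A\nabla\phi^*)=\nu^*\phi^*\ge0$, and $\partial\Omega$ is smooth. By the Hopf boundary lemma, $\nabla\phi^*=-|\nabla\phi^*|\mathbf{n}$ with $|\nabla\phi^*|>0$ at every boundary point. Consequently
\[
\mathbf{n}\cdot(A\nabla\phi^*)=-|\nabla\phi^*|\,\mathbf{n}\cdot(A\mathbf{n})\le-\frac{|\nabla\phi^*|}{\Lambda}<0 \quad\text{on }\partial\Omega.
\]
This is the same observation used for Lemma \ref{lemma2.11}. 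Since $\phi^*\in C^2(\overline{\Omega})$ and $\partial\Omega$ is compact, $m_0:=\min_{\partial\Omega}\bigl(-\mathbf{n}\cdot(A\nabla\phi^*)\bigr)>0$, and $m_0$ depends only on $A$ and $\Omega$.

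\textbf{Step 3: choice of $\kappa$.} Set $\kappa:=K_1/m_0$. Then for all $T>T_0$ and $(x,t)\in\partial\Omega\times[0,T]$,
\[
\mathbf{n}\cdot(A\nabla\mathcal{M})-\kappa\,\mathbf{n}\cdot(A\nabla\phi^*)\ge-K_1+\kappa m_0=0,
\]
which is the claim.

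The only delicate point is Step 1: one must make sure that the supremum in \eqref{H2}, which is taken over the open set $\Omega$, controls $\nabla\mathcal{M}$ on $\partial\Omega$, and that the resulting bound is uniform in $T$. Both follow from the continuity of $\nabla c$ up to the boundary and from the fact that the $\limsup$ in \eqref{H2} yields a single threshold $T_0$.
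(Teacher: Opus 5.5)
Your proof is correct and follows the same route as the paper. Hopf's lemma gives $\min_{\partial\Omega}\bigl(-\mathbf{n}\cdot(A\nabla\phi^*)\bigr)>0$, \eqref{H2} gives a $T$-uniform bound on the conormal derivative of $\mathcal{M}$ on $\partial\Omega$ for $T>T_0$, and $\kappa$ is chosen as the ratio of the two bounds. Your Step 1 is in fact more careful than the paper, which sets $\kappa=C/\epsilon+1$ with $C$ the bound in \eqref{H2} on the quadratic form itself. That skips the conversion $|\mathbf{n}\cdot(A\nabla\mathcal{M})|\le\Lambda\sqrt{\Lambda C}$ and the extension of the supremum from $\Omega$ to $\partial\Omega$ by continuity, both of which you make explicit.
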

\begin{proof}
By Lemma \ref{lemma2.11}, we see that the outward normal derivative $\mathbf{n}\cdot(A(x)\nabla\phi^*)$ is strictly negative, namely, there is a constant $\epsilon>0$ such that $\min_{\partial\Omega}(-\mathbf{n}\cdot(A(x)\nabla\phi^*))>\epsilon$. By assumption \eqref{H2}, there are positive constants $T_0$ and $C$ such that
\begin{equation*}
\sup_{(x,t)\in\Omega\times[0,T]}(|\operatorname{div}(A(x)\nabla\mathcal{M}(x,t;T))|+|\nabla \mathcal{M}(x,t;T)\cdot (A(x)\nabla \mathcal{M}(x,t;T))|)<C,\quad\forall~T>T_0.
\end{equation*}
Therefore, one can find a constant $\kappa=C/\epsilon+1$ such that for any $T>T_0$,
\[-\kappa\mathbf{n}\cdot(A(x)\nabla\phi^*)\ge \sup\limits_{(x,t)\in\partial\Omega\times[0,T]} |\mathbf{n}\cdot(A(x)\nabla\mathcal{M}(x,t;T))|, \quad\forall~x\in\partial\Omega,~t\in[0,T],\]
This finishes the proof.
\end{proof}

\medskip
We now turn to consider the Neumann elliptic eigenvalue problem \eqref{eq1.3},
  \begin{equation*}
    \left\{
    \begin{aligned}
      &-d\operatorname{div}(A(x)\nabla\hat{\phi})+m(x)\hat{\phi}=\mu\hat{\phi},&&x\in\Omega,\\
      &\mathbf{n}\cdot(A(x)\nabla\hat{\phi})=0, &&x\in\partial\Omega,
    \end{aligned}
    \right.
  \end{equation*}
where $m\in C(\overline{\Omega})$. The principal eigenvalue $\mu^\infty $ is bounded, namely, $|\mu^\infty |\le\|m\|_\infty$. Using the variational characterization of the principal eigenvalue, the following result has been readily obtained.
\begin{lemma}\label{lemma2.17}
  The principal eigenvalue $\mu^\infty (d,m)$ of \eqref{eq1.3} is continuous in $m$. Moreover, if $m_1\le m_2$ on $\overline{\Omega}$, then
  \[\mu^\infty (d,m_1)\le \mu^\infty (d,m_2),\]
  the equality holds if and only if $m_1\equiv m_2$.
\end{lemma}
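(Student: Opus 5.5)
The plan is to use the Rayleigh quotient characterization of the principal eigenvalue of \eqref{eq1.3}. Since $A$ is symmetric and uniformly elliptic and the boundary condition is the conormal Neumann condition, we have
\[
\mu^\infty(d,m)=\inf_{0\ne u\in H^1(\Omega)}\frac{d\int_\Omega \nabla u\cdot(A(x)\nabla u)\,\mathrm{d}x+\int_\Omega m(x)u^2\,\mathrm{d}x}{\int_\Omega u^2\,\mathrm{d}x}.
\]
The infimum is attained exactly at multiples of the principal eigenfunction $\hat\phi_m>0$ on $\overline{\Omega}$, and $\hat\phi_m$ is positive by the strong maximum principle together with the Hopf-type argument of Lemma \ref{lemma2.11}.

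\textbf{Continuity.} For any test function $u$, the Rayleigh quotients with potentials $m_1$ and $m_2$ differ by at most $\|m_1-m_2\|_\infty$, because
\[
\int_\Omega (m_1-m_2)u^2\,\mathrm{d}x\le \|m_1-m_2\|_\infty\int_\Omega u^2\,\mathrm{d}x .
\]
Taking infima on both sides in both directions gives the Lipschitz estimate
\[
|\mu^\infty(d,m_1)-\mu^\infty(d,m_2)|\le\|m_1-m_2\|_\infty .
\]
This is continuity in $C(\overline{\Omega})$. The same argument with $m_2\equiv0$ gives the bound $|\mu^\infty|\le\|m\|_\infty$ quoted before the statement.

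\textbf{Monotonicity.} If $m_1\le m_2$, then for every $u$ the Rayleigh quotient with $m_1$ is no larger than the one with $m_2$, so taking infima gives $\mu^\infty(d,m_1)\le\mu^\infty(d,m_2)$.

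\textbf{Equality case.} Clearly $m_1\equiv m_2$ gives equality. For the converse, suppose $m_1\le m_2$ and $\mu^\infty(d,m_1)=\mu^\infty(d,m_2)$. Let $\hat\phi_2$ be the principal eigenfunction for $m_2$ and insert it into the Rayleigh quotient for $m_1$:
\[
\mu^\infty(d,m_1)\le \mu^\infty(d,m_2)-\frac{\int_\Omega (m_2-m_1)\hat\phi_2^2\,\mathrm{d}x}{\int_\Omega\hat\phi_2^2\,\mathrm{d}x}.
\]
Equality of the eigenvalues forces $\int_\Omega (m_2-m_1)\hat\phi_2^2\,\mathrm{d}x=0$. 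Since $\hat\phi_2>0$ and $m_2-m_1\ge0$ is continuous, this gives $m_1\equiv m_2$ on $\Omega$, hence on $\overline{\Omega}$ by continuity. Here the statement is read as: under $m_1\le m_2$, equality holds if and only if $m_1\equiv m_2$.

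\textbf{Main obstacle.} The only delicate point is justifying the variational characterization and the positivity of the minimizer for merely continuous $m$ with a general symmetric $A$. I would handle it by the standard argument. Coercivity of the form comes from ellipticity, and the embedding $H^1\hookrightarrow L^2$ is compact, so a minimizer exists. Replacing a minimizer $u$ by $|u|$ does not change the quotient, so we may take it nonnegative. It is then a weak solution of \eqref{eq1.3}, and elliptic regularity together with the strong maximum principle and Hopf's lemma give strict positivity on $\overline{\Omega}$. This also shows the principal eigenvalue is the one with a positive eigenfunction, consistent with \eqref{eq1.3}.
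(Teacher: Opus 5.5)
Your proposal is correct and takes the same route as the paper, which simply asserts the lemma as a consequence of the variational (Rayleigh quotient) characterization of $\mu^\infty(d,m)$. You also supply the details the paper omits: the Lipschitz bound $|\mu^\infty(d,m_1)-\mu^\infty(d,m_2)|\le\|m_1-m_2\|_\infty$ for continuity, and the equality case obtained by testing the $m_1$-quotient with the positive principal eigenfunction for $m_2$.
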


\begin{lemma}\label{lemma2.18}
Let $\mathcal{S}\subset C(\overline{\Omega})$ be a non-empty bounded and compact subset, and let $\mu^\infty (d,m)$ be the principal eigenvalue of \eqref{eq1.3} with potential $m\in\mathcal{S}$. Then $\lim_{d\to 0}\mu^\infty (d,m)=\min_{\overline{\Omega}}m$, and
 \[ \lim_{d\to0}\inf_{{m}\in \mathcal{S}}\mu^{\infty}(d,m) =\inf_{{m}\in \mathcal{S}}\min_{x\in\overline{\Omega}}m(x) \quad\text{and}\quad \lim_{d\to0}\sup_{{m}\in \mathcal{S}}\mu^{\infty}(d,m)=\sup_{{m}\in \mathcal{S}}\min_{x\in\overline{\Omega}}m(x).\]
\end{lemma}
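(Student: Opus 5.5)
The plan has two parts. First, for a single fixed $m\in C(\overline{\Omega})$, prove the pointwise limit $\lim_{d\to0}\mu^\infty(d,m)=\min_{\overline{\Omega}}m$. Then upgrade it to a statement uniform over $m\in\mathcal{S}$, using compactness of $\mathcal{S}$ together with the Lipschitz dependence of $\mu^\infty$ on $m$.

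For the pointwise limit, I use the variational characterization
\[\mu^\infty(d,m)=\inf_{0\neq u\in H^1(\Omega)}\frac{d\int_\Omega\nabla u\cdot(A\nabla u)\,\mathrm{d}x+\int_\Omega m u^2\,\mathrm{d}x}{\int_\Omega u^2\,\mathrm{d}x}.\]
The lower bound $\mu^\infty(d,m)\ge\min_{\overline{\Omega}}m$ is immediate, since the gradient term is nonnegative.

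For the upper bound, fix $\varepsilon>0$ and let $x_0\in\overline{\Omega}$ be a point where $m(x_0)=\min m$. By continuity there is a ball $B$ with $B\cap\Omega$ having positive measure and $m<\min m+\varepsilon$ on $B\cap\Omega$. Choose a nonzero $u\in C^1_c(B)$ and restrict it to $\Omega$. Then
\[\mu^\infty(d,m)\le d\,\Lambda\frac{\int|\nabla u|^2}{\int u^2}+\min m+\varepsilon,\]
so $\limsup_{d\to0}\mu^\infty(d,m)\le\min m+\varepsilon$. Letting $\varepsilon\to0$ gives the pointwise limit.

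For the uniform statement, the key tool is the Lipschitz bound
\[|\mu^\infty(d,m_1)-\mu^\infty(d,m_2)|\le\|m_1-m_2\|_\infty,\]
which holds for every $d$ and follows directly from the Rayleigh quotient. The map $m\mapsto\min_{\overline{\Omega}}m$ is also $1$-Lipschitz in the sup norm. Set $f_d(m):=\mu^\infty(d,m)-\min_{\overline{\Omega}}m\ge0$. Then the family $\{f_d\}_{d>0}$ is equi-Lipschitz (constant $2$) on $\mathcal{S}$ and converges pointwise to $0$ as $d\to0$.

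The main step is to show this convergence is uniform on $\mathcal{S}$. Given $\varepsilon>0$, cover the compact set $\mathcal{S}$ by finitely many $\varepsilon$-balls centered at $m_1,\dots,m_k$. Choose $d_0$ so that $f_d(m_i)<\varepsilon$ for all $i$ whenever $d<d_0$. Equi-Lipschitz continuity then gives $0\le f_d(m)<3\varepsilon$ for every $m\in\mathcal{S}$ and every $d<d_0$.

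Finally, for the infimum and supremum statements, note that $\inf$ and $\sup$ over $\mathcal{S}$ are $1$-Lipschitz with respect to uniform perturbations. Hence
\[\Bigl|\inf_{\mathcal{S}}\mu^\infty(d,\cdot)-\inf_{\mathcal{S}}\min m\Bigr|\le\sup_{\mathcal{S}}f_d\to0,\]
and the same bound holds with $\sup$ in place of $\inf$. This proves both identities.

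I expect no real obstacle. The one point requiring care is the uniformity over $\mathcal{S}$, which is exactly what compactness combined with the equi-Lipschitz bound handles.
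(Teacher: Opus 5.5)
Your proof is correct. Its architecture matches the paper's: a pointwise limit via the Rayleigh quotient, then uniform convergence on $\mathcal{S}$, then exchange of $\inf/\sup$ with the limit. The uniformity step, however, rests on a different key fact.

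\begin{itemize}
\item \textbf{Paper's route.} It gets uniform convergence of $\mu^\infty(d,\cdot)$ to $m\mapsto\min_{\overline{\Omega}}m$ on $\mathcal{S}$ from Dini's theorem. This uses only qualitative continuity of $m\mapsto\mu^\infty(d,m)$, together with monotonicity of $d\mapsto\mu^\infty(d,m)$. Its advantage is that it needs no modulus of continuity in $m$.
\item \textbf{Your route.} You use the bound $|\mu^\infty(d,m_1)-\mu^\infty(d,m_2)|\le\|m_1-m_2\|_\infty$, which is uniform in $d$, plus a finite $\varepsilon$-net. This has three advantages.
\begin{itemize}
\item It never uses monotonicity in $d$, only monotonicity and translation invariance in $m$. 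Those persist for non-self-adjoint operators, where monotonicity in $d$ can fail.
\item It needs only total boundedness of $\mathcal{S}$, not compactness.
\item It makes the final exchange of extrema a one-line consequence of $|\inf F-\inf G|\le\sup|F-G|$. The paper proves that step by a longer $\varepsilon$-argument.
\end{itemize}
\end{itemize}

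In Step~1, your fixed test function is also simpler than the paper's shrinking cut-off with $r=d^{1/4}$. That shrinking is unnecessary once $\varepsilon$ is fixed.

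One small fix is needed in Step~1. If the minimizer $x_0$ lies on $\partial\Omega$, a nonzero $u\in C^1_c(B)$ could vanish on $B\cap\Omega$, making the Rayleigh quotient undefined. You should require $u(x_0)>0$. Alternatively, as the paper does, center the ball inside $\Omega$ at an interior point where $m<\min_{\overline{\Omega}}m+\varepsilon$.
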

\begin{proof}
The proof proceeds in three steps.

\medskip
\noindent\textbf{Step 1.} Small-diffusion limit for a single potential.

Although the proof follows a standard pattern (see, for example, \cite[Lemma 3.1]{Lou2006Evolution}), we include a full account for the sake of completeness. The principal eigenvalue $\mu^\infty(d,m)$ admits the Rayleigh quotient characterization:
 \[\mu^{\infty}(d,m)= \min_{\phi\in H^{1}(\Omega),\phi\neq0}  \frac{d\int_{\Omega}\nabla\phi\cdot (A(x)\nabla\phi)\,\mathrm{d}x +\int_{\Omega}m(x)\phi^{2}\,\mathrm{d}x}{\int_{\Omega}\phi^{2}\,\mathrm{d}x}.\]
For any test function $\phi\neq0$,
 \[\int_{\Omega}m(x)\phi^{2}\,\mathrm{d}x\ge\left(\min_{x\in\overline{\Omega}}{m}(x)\right)\int_{\Omega}\phi^{2}\,\mathrm{d}x,\]
hence
 \[\mu^{\infty}(d,{m})\ge\min_{x\in\overline{\Omega}}m(x),\qquad\forall~d>0.\]
On the other hand, given $\varepsilon>0$, choose a ball $B_{r_0}(x_0)\subset\Omega$ centered at $x_0\in\Omega$ with radius $r_0>0$ such that
 \[m(x)\le\min_{x\in\overline{\Omega}}{m}(x)+\frac{\varepsilon}{2},\quad\forall~x\in\overline{B}_{r_0}(x_0).\]
For any $r\in(0,r_0)$, take a smooth cut-off function $\phi_{r}\in C_{c}^{\infty}(B(x_{0},r))$ satisfying
 \[0\le\phi_{r}\le1\text{ in }B(x_0,r),\quad \phi_{r}\equiv1\text{ in }B(x_{0},r/2),\quad\text{ and } |\nabla\phi_{r}|\le C/r,\]
where the constant $C>0$ depends only on the dimension. For $r$ sufficiently small, $|B(x_{0},r)|\ge |B_1|r^{N}$, where $B_1$ is unit ball in $\mathbb{R}^N$. Substituting $\phi_{r}$ into the Rayleigh quotient and using the uniform ellipticity give
\begin{equation*}
\begin{aligned}
 \mu^{\infty}(d,{m}) \le&\frac{d\Lambda\int_{B_r(x_{0})}|\nabla\phi_{r}|^{2}\,\mathrm{d}x+ \int_{B_r(x_{0})}m(x)\phi_{r}^{2}\,\mathrm{d}x}{\int_{B_r(x_{0})}\phi^{2}_{r}\,\mathrm{d}x}  \\
 \le&\frac{d\Lambda\int_{B_r(x_{0})}|\nabla\phi_{r}|^{2}\,\mathrm{d}x+\left(\min_{x\in\overline{\Omega}} m(x)+ \frac{\varepsilon}{2}\right)\int_{B_r(x_{0})}\phi_{r}^{2}\,\mathrm{d}x}{\int_{B_r(x_{0})}\phi^{2}_{r}\,\mathrm{d}x} \\
 \le&\frac{d\Lambda\,C^{2}|B_r(x_{0})|\,r^{-2}}{|B_{r/2}(x_{0})|}+\min_{x\in\overline{\Omega}} m(x)+ \frac{\varepsilon}{2}  \\
 =&O(dr^{-2})+\min_{x\in\overline{\Omega}} m(x)+ \frac{\varepsilon}{2}.
\end{aligned}
\end{equation*}
Choosing $r=d^{1/4}$ yields $O(dr^{-2})=O(d^{1/2})\to0$ as $d\to0$. Consequently, there exists $d_{\varepsilon}>0$ such that for all $d\in(0,d_{\varepsilon})$,
 \[\mu^{\infty}(d,{m})\le\min_{x\in\overline{\Omega}}{m}(x)+\varepsilon.\]
Combining the lower bound and upper bound, we obtain, for each fixed ${m}\in C(\overline{\Omega})$,
 \[\lim_{d\to0}\mu^{\infty}(d,{m})= \min_{x\in\overline{\Omega}}{m}(x).\]

\medskip
\noindent\textbf{Step 2.} Uniform convergence on the compact set.

We now show that $\mu^{\infty}(d,\cdot)$ converges uniformly on the compact set $\mathcal{S}$ to the function $m\mapsto\min_{x\in\overline{\Omega}}{m}(x)$, i.e.,
 \[\lim_{d\to0}\sup_{{m}\in \mathcal{S}}\left|\,\mu^{\infty}(d,{m})-\min_{x\in\overline{\Omega}}{m}(x)\right|=0. \]
It is easy to verify the hypotheses of Dini's theorem:
\begin{enumerate}[{\rm (a)}]
  \item $\mathcal{S}$ is a compact subset of $C(\overline{\Omega})$, and hence $(S,\|\cdot\|_{\infty})$ is a compact metric space;
  \item For each fixed $d>0$, the map $m\mapsto\mu^{\infty}(d,m)$ is continuous;
  \item The limit function $m\mapsto\min_{x\in\overline{\Omega}}{m}(x)$ is also continuous because of
\[\left|\min_{x\in\overline{\Omega}}{m}_{1}(x)-\min_{x\in\overline{\Omega}}{m}_{2}(x)\right|\le\|{m}_{1}-{m}_{2}\|_{\infty};\]
  \item For each fixed ${m}\in \mathcal{S}$, the function $d\mapsto\mu^{\infty}(d,m)$ is monotonically increasing.
\end{enumerate}
Dini's theorem therefore guarantees that on the compact space $\mathcal{S}$ the monotonically increasing family of continuous functions $\mu^{\infty}(d,\cdot)$ converges uniformly to the continuous limit function $m\mapsto\min_{x\in\overline{\Omega}}{m}(x)$. Hence the uniform convergence holds.

\medskip
\noindent\textbf{Step 3.} Uniform convergence implies interchange of extrema.

Set
 \[E(d):=\sup_{m\in\mathcal{S}}\left|\,\mu^{\infty}(d,{m})-\min_{x\in\overline{\Omega}}{m}(x)\right|.\]
For any $m\in \mathcal{S}$, $\mu^{\infty}(d,m)\le\min_{x\in\overline{\Omega}}m(x)+E(d)$. Taking the supremum over $m\in \mathcal{S}$ gives
 \[\sup_{m\in\mathcal{S}}\mu^{\infty}(d,{m})\le\sup_{m\in \mathcal{S}}\min_{x\in\overline{\Omega}}m(x)+E(d).\]
Letting $d\to0$ we obtain
 \[\limsup_{d\to0}\sup_{m\in \mathcal{S}}\mu^{\infty}(d,m)\le\sup_{m\in \mathcal{S}}\min_{x\in\overline{\Omega}}{m}(x). \]
On the other hand, for any $\varepsilon>0$ choose $m_{\varepsilon}\in \mathcal{S}$ such that
 \[\min_{x\in\overline{\Omega}}{m}_{\varepsilon}(x)\ge\sup_{m\in \mathcal{S}}\min_{x\in\overline{\Omega}}{m}(x)-\varepsilon.\]
Using the lower bound of $\mu^{\infty}(d,m_{\varepsilon})$, we have
 \[\mu^{\infty}(d,{m}_{\varepsilon})\ge\min_{x\in\overline{\Omega}}m_{\varepsilon}(x)-E(d)
\ge\sup_{{m}\in \mathcal{S}}\min_{x\in\overline{\Omega}}{m}(x)-\varepsilon-E(d).\]
Hence
 \[\sup_{m\in\mathcal{S}}\mu^{\infty}(d,{m})\ge\mu^{\infty}(d,m_{\varepsilon})\ge\sup_{m\in \mathcal{S}}\min_{x\in\overline{\Omega}}{m}(x)-\varepsilon-E(d).\]
Letting $d\to0$ yields
 \[\liminf_{d\to0}\sup_{m\in\mathcal{S}}\mu^{\infty}(d,m)\ge\sup_{m\in\mathcal{S}}\min_{x\in\overline{\Omega}}m(x)-\varepsilon.\]
Since $\varepsilon>0$ is arbitrary, we have
 \[\liminf_{d\to0}\sup_{m\in\mathcal{S}}\mu^{\infty}(d,m)\ge\sup_{m\in\mathcal{S}}\min_{x\in\overline{\Omega}}m(x).\]
Combining the lower bound and upper bound we finally obtain
 \[\lim_{d\to0}\sup_{m\in\mathcal{S}}\mu^{\infty}(d,m)=\sup_{m\in\mathcal{S}}\min_{x\in\overline{\Omega}}{m}(x).\]

Analogously, for any $m\in\mathcal{S}$, $\mu^{\infty}(d,{m})\ge\min_{x\in\overline{\Omega}}m(x)-E(d)$. Taking the infimum over $m\in\mathcal{S}$ gives
 \[\inf_{m\in\mathcal{S}}\mu^{\infty}(d,{m})\ge\inf_{m\in\mathcal{S}}\min_{x\in\overline{\Omega}}{m}(x)-E(d).\]
Letting $d\to0$ yields
 \[\liminf_{d\to0}\inf_{m\in\mathcal{S}}\mu^{\infty}(d,{m})\ge\inf_{m\in\mathcal{S}}\min_{x\in\overline{\Omega}}{m}(x). \]
For any $\varepsilon>0$, choose $m_{\varepsilon}\in \mathcal{S}$ such that
 \[\min_{x\in\overline{\Omega}}{m}_{\varepsilon}(x)<\inf_{m\in\mathcal{S}}\min_{x\in\overline{\Omega}}{m}(x)+\varepsilon.\]
Then
 \[\mu^{\infty}(d,{m}_{\varepsilon})\le\min_{x\in\overline{\Omega}}{m}_{\varepsilon}(x)+E(d) <\inf_{m\in\mathcal{S}}\min_{x\in\overline{\Omega}}{m}(x)+\varepsilon+E(d),\]
and therefore
 \[\inf_{m\in\mathcal{S}}\mu^{\infty}(d,{m})\le\mu^{\infty}(d,{m}_{\varepsilon})<\inf_{m\in\mathcal{S}}\min_{x\in\overline{\Omega}}{m}(x)+\varepsilon+E(d).\]
Letting $d\to0$ gives
 \[\limsup_{d\to0}\inf_{m\in\mathcal{S}}\mu^{\infty}(d,{m})\le\inf_{m\in\mathcal{S}}\min_{x\in\overline{\Omega}}{m}(x)+\varepsilon.\]
The arbitrariness of $\varepsilon$ implies
 \[\limsup_{d\to0}\inf_{m\in\mathcal{S}}\mu^{\infty}(d,{m})\le\inf_{m\in\mathcal{S}}\min_{x\in\overline{\Omega}}{m}(x).\]
From the lower bound and upper bound of $\limsup_{d\to0}\inf_{m\in\mathcal{S}}\mu^{\infty}(d,{m})$ we conclude
 \[\lim_{d\to0}\inf_{m\in\mathcal{S}}\mu^{\infty}(d,{m})=\inf_{m\in\mathcal{S}}\min_{x\in\overline{\Omega}}{m}(x).\]
Thus both identities of the theorem are proven.
\end{proof}

\medskip
We now prepare some other properties of the principal eigenpair of \eqref{eq1.3}.
\begin{lemma}\label{lemma2.19}
Let $\left\{m_n\right\}_{n\in\mathbb{N}^+}\subset C(\overline{\Omega})$ be a sequence of functions, and let $(\mu_n,\hat\phi_n)$ be the normalized principal eigenpair of problem \eqref{eq1.3} corresponding to the potential $m_n$, $n\in\mathbb{N}^+$. If $m_n(\cdot)\to m(\cdot)$ in $C(\overline{\Omega})$ as $n\to\infty$, then
 \[\lim_{n\to\infty}\mu_n = \mu^\infty,\]
and for every $0<\alpha<1$,
 \[\lim_{n\to\infty}\big\|\hat\phi_n - \hat\phi\big\|_{C^{1,\alpha}(\overline{\Omega})} = 0,\]
where $(\mu^\infty,\hat\phi)$ is the normalized principal eigenpair of problem \eqref{eq1.3} corresponding to the potential $m$.
\end{lemma}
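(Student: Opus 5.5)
Our plan is to prove Lemma \ref{lemma2.19} in two stages: first the convergence of the eigenvalues, then that of the eigenfunctions. For the eigenvalues we use the Rayleigh quotient characterization recalled in the proof of Lemma \ref{lemma2.18}. For every admissible $\phi$ we have
\[
\left|\int_\Omega m_n\phi^2\,\mathrm{d}x-\int_\Omega m\phi^2\,\mathrm{d}x\right|\le\|m_n-m\|_\infty\int_\Omega\phi^2\,\mathrm{d}x .
\]
Taking the minimum of the quotient then gives $|\mu_n-\mu^\infty|\le\|m_n-m\|_\infty\to0$. This is the same estimate that underlies the continuity statement in Lemma \ref{lemma2.17}. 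We fix the normalization $\|\hat\phi_n\|_\infty=1$ with $\hat\phi_n>0$; the argument is identical for an $L^2$ normalization.

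For the eigenfunctions we argue by compactness and uniqueness. Each $\hat\phi_n$ solves the Neumann problem
\[
-d\operatorname{div}(A\nabla\hat\phi_n)=(\mu_n-m_n)\hat\phi_n ,
\]
whose right-hand side is bounded in $L^\infty(\Omega)$ uniformly in $n$, since $|\mu_n|\le\|m_n\|_\infty$ and $\{m_n\}$ is bounded in $C(\overline{\Omega})$. Because $A\in C^{2+\delta}$ and $\partial\Omega$ is smooth, $L^p$ regularity for the conormal problem gives a uniform bound in $W^{2,p}(\Omega)$ for every $p<\infty$. Choosing $p>N/(1-\beta)$ with $\beta\in(\alpha,1)$, Morrey's embedding yields a uniform bound in $C^{1,\beta}(\overline{\Omega})$. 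The embedding $C^{1,\beta}\hookrightarrow C^{1,\alpha}$ is compact, so every subsequence has a further subsequence converging in $C^{1,\alpha}(\overline{\Omega})$ to some $\phi$.

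Next we identify the limit $\phi$. Passing to the limit in the weak formulation, using $\mu_n\to\mu^\infty$ and $m_n\to m$ uniformly, shows that $\phi$ is a weak, and hence strong, solution of \eqref{eq1.3} with potential $m$ and eigenvalue $\mu^\infty$. Uniform convergence preserves $\phi\ge0$ and $\|\phi\|_\infty=1$, so $\phi\not\equiv0$. The strong maximum principle together with the Hopf-type Lemma \ref{lemma2.11} gives $\phi>0$ on $\overline{\Omega}$. The principal eigenvalue is simple with a unique normalized positive eigenfunction, so $\phi=\hat\phi$. Since every subsequence has a further subsequence with this same limit, the whole sequence converges: $\hat\phi_n\to\hat\phi$ in $C^{1,\alpha}(\overline{\Omega})$.

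The main obstacle is the uniform $C^{1,\beta}$ bound, because the potentials are only continuous and Schauder estimates are therefore unavailable. The $W^{2,p}$ route is the one that works. It requires the conormal boundary operator $\mathbf{n}\cdot(A\nabla\cdot)$ to satisfy the complementing condition, which holds by uniform ellipticity. If one prefers to avoid the $L^2$ term in the global estimate, one can instead apply the estimate to $-d\operatorname{div}(A\nabla u)+u$ with right-hand side $(\mu_n-m_n+1)\hat\phi_n$, which removes the kernel of the pure Neumann operator.
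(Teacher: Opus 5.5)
Your proposal is correct and follows essentially the paper's route: an $L^\infty$ bound on the right-hand side, uniform $W^{2,p}$ estimates, Sobolev embedding and compactness in $C^{1,\alpha}(\overline{\Omega})$, then uniqueness of the limit. Your version is somewhat more self-contained than the paper's. The Rayleigh-quotient bound $|\mu_n-\mu^\infty|\le\|m_n-m\|_\infty$ and the identification of the limit via simplicity replace the paper's appeal to standard continuous-dependence results and $L^2$ convergence. The $\|\cdot\|_\infty$ normalization makes the De Giorgi--Nash--Moser step unnecessary, and passing through $C^{1,\beta}$ with $\beta>\alpha$ makes the compactness step explicit.
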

\begin{proof}
This conclusion is a standard result in the theory of elliptic eigenvalue problems; for completeness we give a brief proof here. Standard continuous dependence results for eigenvalue problems imply that $\mu_n\to\mu^\infty$ and $\hat\phi_n\to\hat\phi$ in $L^2(\Omega)$ as $n\to\infty$. We now show that $\hat\phi_n$ converges uniformly in $C^{1,\alpha}(\overline{\Omega})$ for any $0<\alpha<1$. Since $\left\{m_n\right\}_{n\in\mathbb{N}^+}$ converges to $m$ in $C(\overline{\Omega})$, the sequence $\left\{m_n\right\}$ is uniformly bounded in $L^\infty(\Omega)$. Together with the bound $|\mu_n|\le\|m_n\|_{L^\infty(\Omega)}$ for the principal eigenvalue, there exists a constant $C>0$ such that
 \[\|\mu_n - m_n\|_{L^\infty(\Omega)}\le C\quad\text{for all }n\in\mathbb{N}^+.\]
Consider the elliptic boundary value problem
 \[-d\,\mathrm{div}\big(A(x)\nabla\hat\phi_n\big) = \big(\mu_n - m_n(x)\big)\hat\phi_n,\quad x\in\Omega,\qquad  \mathbf{n}\cdot\big(A(x)\nabla\hat\phi_n\big)=0,\quad x\in\partial\Omega.\]
By the De Giorgi-Nash-Moser estimates, $\hat{\phi}_n$ is uniformly bounded in $L^\infty(\Omega)$ with respect to $n$. Hence the right-hand side of the equation is uniformly bounded in $L^\infty(\Omega)$. For any $1<p<\infty$, the $W^{2,p}$ regularity theory for divergence-form elliptic equations with continuous coefficients yields that $\hat{\phi}_n$ is uniformly bounded in $W^{2,p}(\Omega)$. By the Sobolev's embedding theorem, $\hat{\phi}_n$ is uniformly bounded in $C^{1,\alpha}(\overline{\Omega})$ for every $0<\alpha<1$. Consequently, $\left\{\hat\phi_n\right\}_{n\in\mathbb{N}^+}$ is precompact in $C^{1,\alpha}(\overline{\Omega})$. Together with the $L^2$ convergence of $\hat{\phi}_n$ and the uniqueness of the limit, we obtain
 \[\lim_{n\to\infty}\|\hat\phi_n - \hat\phi\|_{C^{1,\alpha}(\overline{\Omega})}=0\qquad\forall~\alpha\in(0,1).\]
This finishes the proof.
\end{proof}

\begin{lemma}\label{lemma2.20}
 Let $\hat{\phi}$ be the normalized principal eigenfunction of \eqref{eq1.3} with $m\in C^1(\overline{\Omega})$. Then there is a constant $C>0$, depending on $\Omega$, $A$ and $\|c\|_{C^1(\overline{\Omega})}$ but not on $d\in(0,1)$ such that
  \[\max_{\overline{\Omega}}|\ln \hat{\phi}|\le \frac{C}{\sqrt{d}}.\]
\end{lemma}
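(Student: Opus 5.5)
We work with the logarithmic transform $w:=-\sqrt{d}\,\ln\hat{\phi}$ (equivalently $v=\ln\hat\phi$). The normalization of $\hat\phi$ is immaterial up to an additive constant in $\ln\hat\phi$. If $\|\hat\phi\|_\infty=1$, it suffices to bound the oscillation $\max_{\overline{\Omega}}\ln\hat\phi-\min_{\overline{\Omega}}\ln\hat\phi$ by $C/\sqrt{d}$. With an $L^1$-type normalization, the same oscillation bound, together with $\min\hat\phi\le C\le\max\hat\phi$, controls $\max|\ln\hat\phi|$. So the whole task reduces to a gradient estimate for $v$.

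Dividing \eqref{eq1.3} by $\hat\phi$ gives, with $v=\ln\hat\phi$,
\[
-d\operatorname{div}(A\nabla v)-d\,\nabla v\cdot(A\nabla v)+m(x)=\mu^\infty ,\qquad \mathbf{n}\cdot(A\nabla v)=0\ \text{on }\partial\Omega .
\]
Setting $u=\sqrt{d}\,v$ yields
\[
-\sqrt{d}\operatorname{div}(A\nabla u)-\nabla u\cdot(A\nabla u)=\mu^\infty-m(x),
\]
whose right-hand side is bounded by $2\|m\|_\infty$ uniformly in $d$, since $|\mu^\infty|\le\|m\|_\infty$. This is a viscous Hamilton--Jacobi equation with vanishing viscosity $\sqrt d$ and a coercive quadratic Hamiltonian.

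The key step is a Bernstein-type gradient bound $|\nabla u|\le C$ independent of $d\in(0,1)$. To obtain it, I would consider $z=\nabla u\cdot(A\nabla u)$, possibly multiplied by a cutoff or weight, and differentiate the equation. This gives
\[
-\sqrt d\,\operatorname{div}(A\nabla z)-2A\nabla u\cdot\nabla z\le -c_0\sqrt d\,|D^2u|^2+C(|\nabla u|^2+|\nabla u|\,\|\nabla m\|_\infty)+\dots .
\]
At an interior maximum of $z$, the quadratic structure is used via the equation itself: $\sqrt d\,|D^2u|\gtrsim |\operatorname{div}(A\nabla u)|/\sqrt N$, which is roughly $z/\sqrt d$ when $z$ is large. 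Consequently, the good term $\sqrt d\,|D^2u|^2\gtrsim z^2/\sqrt d\ge z^2$ dominates the error terms, which grow like $z$ (their constants involve $\|A\|_{C^2}$ and $\|m\|_{C^1}$). This forces $z\le C$ at the maximum. This is exactly where $m\in C^1$ enters, and it is why the constant depends on $\|m\|_{C^1}$.

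The main obstacle is a maximum of $z$ located on $\partial\Omega$. The Neumann condition $\mathbf{n}\cdot A\nabla u=0$ does not directly give $\mathbf{n}\cdot A\nabla z\le 0$; instead it gives a sign up to a curvature term, as in Lions' treatment of Neumann problems. I would handle this in the standard way: replace $z$ by $z\,e^{K\rho(x)}$, or by $z+K\rho$, where $\rho$ is a smooth function with $\mathbf{n}\cdot A\nabla\rho$ large on $\partial\Omega$ (for instance, built from the distance function, or from $\phi^*$ of \eqref{eq2.6} as in Lemma \ref{lemma2.16}). This makes the boundary derivative of the modified quantity negative, so its maximum is attained in the interior, where the previous argument applies.

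Once $|\nabla u|\le C$ on $\overline{\Omega}$, we have $|\nabla\ln\hat\phi|\le C/\sqrt d$. Since $\Omega$ is a bounded smooth (hence path-connected, with bounded intrinsic diameter) domain, integrating along paths gives $\operatorname{osc}_{\overline\Omega}\ln\hat\phi\le C/\sqrt d$. Combined with the normalization, this yields $\max|\ln\hat\phi|\le C/\sqrt d$.
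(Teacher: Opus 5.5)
Your argument is correct, but it takes a different route from the paper's. The paper never works with the equation for $\ln\hat\phi$. Instead it invokes an elliptic Harnack inequality with an explicit constant, $\sup_\Omega\hat\phi\le e^{C/\sqrt d}\inf_\Omega\hat\phi$. This is the elliptic counterpart of Theorem~\ref{theorem3.14}: rescale $y=x/\sqrt d$ so that the potential stays bounded while the domain has diameter of order $1/\sqrt d$, then chain $O(1/\sqrt d)$ fixed-size Harnack balls, reflecting across $\partial\Omega$ near the boundary. The bound $0\le-\sqrt d\ln\hat\phi\le C$ then follows from $\|\hat\phi\|_\infty=1$.

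The two approaches buy different things.
\begin{itemize}
\item The paper's route uses only $\|m\|_\infty$ (the $C^1$ hypothesis is never needed) and minimal regularity, but it gives only an oscillation bound.
\item Your Bernstein estimate for the viscous Hamilton--Jacobi equation satisfied by $u=\sqrt d\ln\hat\phi$ proves more: the pointwise bound $|\nabla\ln\hat\phi|\le C/\sqrt d$. This is a Lipschitz bound on $\sqrt d\ln\hat\phi$ uniform in $d$, which is the compactness needed for vanishing-viscosity limits of the kind raised at the end of Chapter~\ref{chp7}.
\item The price is genuine use of $\|\nabla m\|_\infty$ (consistent with the stated $C^1$ dependence), $A\in C^2$, a $C^2$ boundary, and extra work at $\partial\Omega$.
\end{itemize}

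When you write it out, three points need care.

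\textbf{Size of the error terms.} They are not all $O(z)$. Because the Hamiltonian $p\cdot A(x)p$ depends on $x$, differentiating produces $u_k\,(\partial_kA)\nabla u\cdot\nabla u=O(z^{3/2})$. In addition, at a maximum of the weighted quantity one has $\nabla z=-Kz\nabla\rho$, so the drift term $2A\nabla u\cdot\nabla z$ contributes another $O(Kz^{3/2})$. Both are dominated by the good term. By the equation, $|D^2u|\gtrsim (z-C\sqrt z-C)/\sqrt d$ (not $\sqrt d\,|D^2u|\gtrsim z/\sqrt d$ as you wrote), so $\sqrt d\,|D^2u|^2\gtrsim (z-C\sqrt z-C)^2$ uniformly for $d\in(0,1)$, and the argument still closes.

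\textbf{Boundary computation and choice of weight.} Since $\mathbf n\cdot A\nabla u=0$, the vector $A\nabla u$ is tangent to $\partial\Omega$. Differentiating the boundary condition along $A\nabla u$ gives $|\mathbf n\cdot A\nabla z|\le Cz$ on $\partial\Omega$. The weight must then be multiplicative, $ze^{K\rho}$, with $\mathbf n\cdot A\nabla\rho\le-1$ on $\partial\Omega$ and $K$ large. A suitable choice is $\rho=\phi^*/\min_{\partial\Omega}(-\mathbf n\cdot A\nabla\phi^*)$; note this is the sign opposite to the one used in Lemma~\ref{lemma2.16}. The additive version $z+K\rho$ does not work, because it would require $K\gtrsim\max z$.

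\textbf{Regularity.} With $m$ only $C^1$, the function $u$ need not be $C^3$, so the maximum-principle computation is not directly justified. Run the estimate for smooth potentials $m_j\to m$ in $C^1$ and pass to the limit using Lemma~\ref{lemma2.19}.
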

\begin{proof}
In fact, by Harnack's inequality for elliptic equation (see, for example, \cite{Gilbarg2015Elliptic}), there is a constant $C >0$ independent of $d>0$ such that $\sup_{\Omega}\hat{\phi}\le e^{\frac{C }{\sqrt{d}}}\inf_{\Omega}\hat{\phi}$. Since
\[0<\inf_{\Omega}\hat{\phi}\le\hat{\phi} \le \sup_{\Omega}\hat{\phi}=1,\quad\forall~x\in\overline{\Omega},\]
we get
\[1\le e^{\frac{C }{\sqrt{d}}} \inf_{\Omega}\hat{\phi}\le e^{\frac{C }{\sqrt{d}}} \hat{\phi} \le e^{\frac{C }{\sqrt{d}}},\quad\forall~x\in\overline{\Omega},\]
or equivalently,
\[0\le \frac{C }{\sqrt{d}}+\ln\hat{\phi}\le \frac{C }{\sqrt{d}},\quad\forall~x\in\overline{\Omega}.\]
Then it is easy to obtain that
\[0\le -\sqrt{d}\ln\hat{\phi}\le C ,\quad\forall~x\in\overline{\Omega}.\]
The proof is complete.
\end{proof}

\medskip
We consider a one-parameter family of elliptic problems. For each $t\in \mathbb{R}$, let $\mu^{0}(t)$ be the principal eigenvalue of problem
\begin{equation}\label{eq2.7}
  \left\{
  \begin{aligned}
    &-d\operatorname{div}(A(x)\nabla\phi^{0})+c(x,t)\phi^{0}=\mu(t)\phi^{0},&&x\in\Omega,\\
    &\mathbf{n}\cdot(A(x)\nabla\phi^{0})=0,&&x\in\partial\Omega,\\
    &\|\phi^{0}(\cdot,t)\|_\infty=1,
  \end{aligned}
  \right.
\end{equation}
where $c\in \mathcal{C}$. In view of the fact that the constant $C$ in Lemma \ref{lemma2.20} depends on the norm of potential $c(\cdot,t)$ and $\mu^0(t)$ is uniformly bounded in $t\in\mathbb{R}$, we can obtain a uniform estimate of $\phi^{0}(\cdot,t)$ with respect to $t\in\mathbb{R}$.
\begin{lemma}\label{lemma2.21}
  Assume that the function $c\in \mathcal{C}\cap C^{1,0}(\overline{\Omega}\times\mathbb{R})$. For each $t\in\mathbb{R}$, let $(\mu^0(t),\phi^{0}(x,t))$ be the principal eigenpair of \eqref{eq2.7}. Then there is a constant $C>0$, depending on $\Omega$, $A$ and $\sup_{t\in\mathbb{R}}\|c(\cdot,t)\|_{C^1(\overline{\Omega})}$ but not on $d\in(0,1)$ such that
  \[\sup_{t\in\mathbb{R}}\max_{x\in\overline{\Omega}}|\ln \phi^{0}(x,t)|\le \frac{C}{\sqrt{d}}.\]
\end{lemma}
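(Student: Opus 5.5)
The plan is to apply, for each fixed $t\in\mathbb{R}$, the same argument as in Lemma \ref{lemma2.20} to the elliptic problem \eqref{eq2.7} with potential $m=c(\cdot,t)$. The only new ingredient is a check that every constant entering that argument is bounded independently of $t$.

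First, fix $t$ and write the equation as
\[
-\operatorname{div}(A(x)\nabla\phi^0)=\frac{\mu^0(t)-c(x,t)}{d}\,\phi^0,\qquad x\in\Omega,
\]
with the Neumann condition $\mathbf{n}\cdot(A(x)\nabla\phi^0)=0$ on $\partial\Omega$. Since the Rayleigh quotient gives $\min_{\overline\Omega}c(\cdot,t)\le\mu^0(t)\le\max_{\overline\Omega}c(\cdot,t)$, we have $|\mu^0(t)|\le\|c\|_\infty$. Hence the zeroth-order coefficient satisfies
\[
\|(\mu^0(t)-c(\cdot,t))/d\|_\infty\le 2\|c\|_\infty/d,
\]
uniformly in $t$.

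Next, I will use the rescaled elliptic Harnack inequality. Cover $\overline\Omega$ by balls (or boundary half-balls, after flattening the boundary and reflecting, which is legitimate because of the conormal condition) of radius $r=\sqrt d$. In such balls the rescaled operator has a zeroth-order term bounded by $2\|c\|_\infty$ and ellipticity constant $\Lambda$. The Harnack constant $C_0$ on each ball therefore depends only on $N$, $\Lambda$, $\|A\|_{C^1}$, $\Omega$ and $\|c\|_\infty$, and not on $t$ or $d$. A Harnack chain joining any two points of $\overline\Omega$ needs at most $K/\sqrt d$ balls, with $K$ depending only on $\Omega$. This gives
\[
\sup_\Omega\phi^0(\cdot,t)\le C_0^{K/\sqrt d}\inf_\Omega\phi^0(\cdot,t)=e^{C/\sqrt d}\inf_\Omega\phi^0(\cdot,t).
\]
The hypothesis $c\in C^{1,0}$ with $\sup_t\|c(\cdot,t)\|_{C^1}<\infty$ is what the statement allows the constant to depend on. If one instead quotes Lemma \ref{lemma2.20} verbatim, its constant depends on $\|m\|_{C^1}$, and the uniform bound $\sup_t\|c(\cdot,t)\|_{C^1(\overline\Omega)}<\infty$ makes that constant uniform in $t$.

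Finally, with the normalization $\|\phi^0(\cdot,t)\|_\infty=1$, we get
\[
e^{-C/\sqrt d}\le\phi^0(x,t)\le 1,
\]
so $|\ln\phi^0(x,t)|\le C/\sqrt d$ for all $(x,t)$. Taking the supremum in $t$ concludes the proof.

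The main obstacle is making sure the Harnack constant near $\partial\Omega$ does not deteriorate with $t$ or with the scaling. This is handled by the standard boundary Harnack argument for conormal problems: locally flatten the boundary, reflect evenly, and rely on the smoothness of $A$ and $\Omega$. None of this involves $t$.
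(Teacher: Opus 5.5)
Your proposal is correct and matches the paper's argument. The paper gives no separate proof: it only notes that the constant in Lemma~\ref{lemma2.20} depends on the size of the potential and that $|\mu^0(t)|\le\|c\|_\infty$ uniformly in $t$, so the Harnack estimate at scale $\sqrt d$ is uniform in $t$; your rescaled Harnack chain with conormal reflection at the boundary fills in the details it omits (the same scheme the paper carries out for the parabolic case in Theorem~\ref{theorem3.14}), and incidentally shows that only $\sup_t\|c(\cdot,t)\|_\infty$, not the $C^1$ norm, is needed.
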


If the diffusion rate $d>0$ is fixed, we have the following estimate about the normalized principal eigenfunction.
\begin{lemma}\label{lemma2.22}
Assume that $c\in \mathcal{C}\cap C^{1,1}(\overline{\Omega}\times\mathbb{R})$ satisfies $\sup_{t\in\mathbb{R}}\|\partial_tc(\cdot,t)\|_\infty<\infty$. For each $t\in\mathbb{R}$, let $(\mu^0(t),\phi^{0}(\cdot,t))$ be the principal eigenpair of \eqref{eq2.7}. Then $\sup_{t\in\mathbb{R}}|\mu^0(t)|<\infty$ and
\[\inf_{t\in\mathbb{R}}\inf_{x\in\Omega}\phi^{0}(x,t)>\frac{1}{C_0}\quad\text{ and }\quad\sup_{(x,t)\in\Omega\times\mathbb{R}}\left|\frac{\partial_t\phi^{0}(x,t)}{\phi^{0}(x,t)}\right|<C_0\]
for some constant $C_0>1$.
\end{lemma}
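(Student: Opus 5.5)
The plan is to prove the three assertions of Lemma \ref{lemma2.22} in order: the bound on $\mu^0$, then a uniform lower bound on $\phi^0$, then the bound on the logarithmic time derivative. Throughout, $d>0$ is fixed, so constants may depend on $d$.

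\textbf{Bound on $\mu^0$.} This is immediate from the Rayleigh quotient used in the proof of Lemma \ref{lemma2.18}. Applied to \eqref{eq2.7} at each fixed $t$, it gives $|\mu^0(t)|\le\|c(\cdot,t)\|_\infty\le\|c\|_\infty$ uniformly in $t$.

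\textbf{Uniform lower bound on $\phi^0$.} I will use the elliptic Harnack inequality up to the boundary for Neumann problems, exactly as in Lemma \ref{lemma2.20}. The function $\phi^0(\cdot,t)$ solves $-d\operatorname{div}(A\nabla\phi^0)=(\mu^0(t)-c(\cdot,t))\phi^0$, and the zeroth-order coefficient is bounded by $2\|c\|_\infty$ uniformly in $t$. Hence the Harnack constant $K$ depends only on $\Omega$, $A$, $d$ and $\|c\|_\infty$, giving $\sup_\Omega\phi^0\le K\inf_\Omega\phi^0$. Since $\sup_\Omega\phi^0=1$, we get $\inf\phi^0\ge 1/K$. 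Alternatively, Lemma \ref{lemma2.21} gives the same conclusion directly with $K=e^{C/\sqrt d}$, since $c\in C^{1,0}$.

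\textbf{Bound on $\partial_t\phi^0/\phi^0$: the main step.} I will avoid the sup-normalization, which is only Lipschitz in $t$, and work with the $L^2$-normalized eigenfunction $\tilde\phi(\cdot,t)=\phi^0/\|\phi^0\|_{L^2}$. Its differentiability in $t$ follows from simplicity of the principal eigenvalue and the implicit function theorem (analytic perturbation theory), using $c\in C^{1,1}$ in $t$. Differentiating the equation gives
\[-d\operatorname{div}(A\nabla\dot{\tilde\phi})+(c-\mu^0)\dot{\tilde\phi}=(\dot\mu^0-\partial_tc)\tilde\phi,\]
with the Neumann boundary condition. The steps are then as follows.
\begin{itemize}
\item Testing with $\tilde\phi$ yields $\dot\mu^0=\int\partial_tc\,\tilde\phi^2$, so $|\dot\mu^0|\le\sup\|\partial_tc\|_\infty$.
\item Normalization gives $\int\tilde\phi\dot{\tilde\phi}=0$, so $\dot{\tilde\phi}$ lies in the orthogonal complement of the principal eigenfunction.
\item On that complement the operator $-d\operatorname{div}(A\nabla\cdot)+c-\mu^0$ is invertible with inverse bounded by $1/\gamma(t)$, where $\gamma(t)$ is the spectral gap between the first and second Neumann eigenvalues. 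This gives an $L^2$ bound on $\dot{\tilde\phi}$, and elliptic $W^{2,p}$ regularity upgrades it to an $L^\infty$ bound.
\end{itemize}

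I expect the main obstacle to be a gap estimate that is uniform in $t$. I would first argue by contradiction using compactness. The family $\{c(\cdot,t)\}_t$ is bounded in $C^1(\overline\Omega)$ and hence precompact in $C(\overline\Omega)$. The second eigenvalue is continuous in the potential, just as the first is by Lemma \ref{lemma2.19}, so the gap is a continuous positive function on the compact closure of this family and therefore has a positive minimum. A quantitative alternative is to bound the gap from below via the Harnack lower bound on $\tilde\phi$ and the ground-state transform.

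\textbf{Conclusion.} Finally, I pass back to $\phi^0$. Write $\phi^0=\tilde\phi/\max_x\tilde\phi$. On one hand, $\partial_t\ln\phi^0=\partial_t\ln\tilde\phi-\partial_t\ln\max_x\tilde\phi$. On the other hand, $\max_x\tilde\phi$ is Lipschitz in $t$ with derivative bounded by $\|\dot{\tilde\phi}\|_\infty/\min\tilde\phi$, almost everywhere. Both terms are therefore bounded by a constant multiple of $\|\dot{\tilde\phi}\|_\infty$ times the Harnack constant, which yields the uniform bound on $|\partial_t\phi^0/\phi^0|$ and completes the proof with a suitable $C_0>1$.
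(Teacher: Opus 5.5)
Your proposal is correct, and its skeleton is the paper's. Both arguments use the Rayleigh-quotient bound $|\mu^0(t)|\le\|c\|_\infty$, the lower bound on $\phi^0$ from the uniform Harnack estimate of Lemma~\ref{lemma2.21}, differentiability of the $L^2$-normalized eigenpair via the implicit function theorem, and the identity $\dot\mu^0=\int_\Omega\partial_tc\,\tilde\phi^2$ obtained by testing the differentiated equation with the eigenfunction.

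Where you depart from the paper, you repair two weak points in its proof.

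First, the paper goes from the bound on $\mu^0_t$ to $\sup|\partial_t\phi^0|<\infty$ by ``standard arguments of regularity''. But $-d\operatorname{div}(A\nabla\cdot)+c(\cdot,t)-\mu^0(t)$ annihilates $\phi^0(\cdot,t)$, so elliptic estimates alone control $\partial_t\phi^0$ only modulo multiples of $\phi^0$. What is actually needed is your orthogonality condition together with a spectral gap bounded below uniformly in $t$. Your compactness argument supplies this; note that $c\in\mathcal{C}$ by itself already makes $\{c(\cdot,t)\}_t$ precompact in $C(\overline\Omega)$. Your ground-state-transform alternative gives an explicit constant.

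Second, the paper asserts that $t\mapsto\|\tilde\phi^0(\cdot,t)\|_{L^\infty(\Omega)}$ is continuously differentiable, and this is false in general. Take $\Omega=(0,1)$, $A=1$ and $c=\sin t\,(x-\tfrac12)$. The eigenfunction is constant at $t=0$, and $\partial_t\tilde\phi^0(x,0)=\frac1d\bigl(\frac{(x-1/2)^3}{6}-\frac{x-1/2}{8}\bigr)$. Hence the maximum has left and right derivatives $-\frac{1}{24d}$ and $\frac{1}{24d}$ at $t=0$, and $\partial_t\phi^0(x,0)$ exists for no $x$.

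So your conclusion is the correct form of the lemma: a uniform Lipschitz bound on $\ln\phi^0$ in $t$, i.e.\ the derivative bound almost everywhere. This also matters downstream. In Chapter~\ref{chp5} these functions enter sub- and super-solutions that must be $C^{2,1}$ in order to apply Lemma~\ref{lemma2.13}. There you should use your $\tilde\phi$ in place of $\phi^0$, since the normalization plays no role in that argument.
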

\begin{proof}
It is obvious that
 \[\sup_{t\in\mathbb{R}}|\mu^0(t)|\le \sup_{(x,t)\in\Omega\times\mathbb{R}}|c(x,t)|.\]

We first clarify the normalization notations and justify the continuous differentiability of the principal eigenpair with respect to $t$. Let $\tilde{\phi}^0(\cdot,t)$ denote the $L^2$-normalized principal eigenfunction: $\|\tilde{\phi}^0(\cdot,t)\|_{L^2(\Omega)} = 1$; let $\phi^0(\cdot,t)$ denote the $L^\infty$-normalized principal eigenfunction in the original lemma: $\|\phi^0(\cdot,t)\|_{L^\infty(\Omega)} = 1$.

For each fixed $t$, the operator
 \[L_t=-d\,\operatorname{div}(A(x)\nabla\cdot)+c(x,t)\]
with Neumann boundary condition has a simple principal eigenvalue $\mu^0(t)$. Indeed, if $\phi_1,\phi_2>0$ were two linearly independent principal eigenfunctions, then the ratio $\phi_1/\phi_2$ would attain its maximum at some interior point, where $\nabla(\phi_1/\phi_2)=0$; substituting into the eigenvalue equation and using the strong maximum principle forces $\phi_1/\phi_2\equiv\text{const}$, a contradiction. Hence $\ker(L_t-\mu^0(t)I)=\operatorname{span}\{\tilde{\phi}^0(\cdot,t)\}$ is one-dimensional.

Define the map
 \[F:\mathbb{R}\times H^1(\Omega)\times\mathbb{R}\longrightarrow H^{-1}(\Omega)\times\mathbb{R},\]
 \[F(t,\phi,\mu)=\bigl(L_t\phi-\mu\phi,\ \|\phi\|_{L^2(\Omega)}^2-1\bigr).\]
Its Fr\'{e}chet derivative with respect to $(\phi,\mu)$ is
 \[D_{(\phi,\mu)}F\cdot(\psi,\nu)=\bigl(L_t\psi-\mu\psi-\nu\phi,\ 2\langle\phi,\psi\rangle_{L^2}\bigr).\]
We verify that this derivative is an isomorphism at $(\phi,\mu)=(\tilde{\phi}^0(t),\mu^0(t))$. Suppose $D_{(\phi,\mu)}F\cdot(\psi,\nu)=(0,0)$. Then
 \[L_t\psi-\mu^0(t)\psi=\nu\tilde{\phi}^0(\cdot,t),\qquad \langle\tilde{\phi}^0(\cdot,t),\psi\rangle_{L^2}=0.\]
Taking the $L^2$-inner product of the first equation with $\tilde{\phi}^0$, the self-adjointness of $L_t$ gives
 \[0=\langle L_t\psi-\mu^0(t)\psi,\tilde{\phi}^0\rangle=\nu\|\tilde{\phi}^0\|_{L^2}^2=\nu,\]
so $\nu=0$. Thus $\psi\in\ker(L_t-\mu^0(t) I)=\operatorname{span}\{\tilde{\phi}^0\}$, and $\langle\tilde{\phi}^0,\psi\rangle=0$ yields $\psi=0$. Therefore $D_{(\phi,\mu)}F$ is injective; since it is a Fredholm operator of index zero, it is bijective, hence an isomorphism.

By the regularity assumption $c\in C^{1,1}(\overline{\Omega}\times\mathbb{R})$ (in particular, $c$ is $C^1$ in $t$), the map $F$ is $C^1$ in all variables. The implicit function theorem then implies that the solution $(\tilde{\phi}^0(\cdot,t),\mu^0(t))$ of $F(t,\phi,\mu)=0$ is continuously differentiable in $t$. Consequently, both $\mu^0_t(t)$ and $\partial_t\tilde{\phi}^0(x,t)$ exist and are continuous on $\overline{\Omega}\times\mathbb{R}$.

We now return to the $L^\infty$-normalized eigenfunction $\phi^0(\cdot,t)$. By definition,
 \[\phi^0(x,t) = \frac{\tilde{\phi}^0(x,t)}{\|\tilde{\phi}^0(\cdot,t)\|_{L^\infty(\Omega)}}.\]
By Lemma \ref{lemma2.21} (the uniform Harnack estimate), $\tilde{\phi}^0(x,t)$ has a uniform positive lower bound over $\overline{\Omega}\times\mathbb{R}$, so the denominator $\|\tilde{\phi}^0(\cdot,t)\|_{L^\infty(\Omega)}$ is uniformly bounded away from zero and is continuously differentiable in $t$. Therefore $\phi^0(x,t)$ is also continuously differentiable in $t$, and $\partial_t\phi^0(x,t)$ exists and is continuous on $\overline{\Omega}\times\mathbb{R}$.

Using Lemma \ref{lemma2.21}, we easily show that for fixed $d>0$ there is a constant $C_0>1$ such that
 \[\inf_{t\in\mathbb{R}}\inf_{x\in\Omega}\phi^{0}(x,t)>\frac{1}{C_0}.\]
We need only to show that there is a positive constant $C'$, such that
 \[\sup_{(x,t)\in\Omega\times\mathbb{R}}\left|\partial_t\phi^{0}(x,t)\right|<C'.\]
Differentiating \eqref{eq2.7} with respect to $t$, we have
\[
\left\{
\begin{aligned}
  &-d\operatorname{div}(A(x)\nabla \partial_t\phi^{0})+(c(x,t)-\mu^0(t)) \partial_t\phi^{0}=\left[\mu^{0}_{t}(t)-c_t(x,t)\right]\phi^{0},&&x\in\Omega,\\
  &\mathbf{n}\cdot(A(x)\nabla\partial_t\phi^{0})=0,&&x\in\partial\Omega,
\end{aligned}
\right.
\]
where $\mu^{0}_{t}(t)=\frac{\mathrm{d}}{\mathrm{d}t}\mu^0(t)$ and $c_t(x,t)=\partial_t c(x,t)$, $x\in\Omega$, $t\in\mathbb{R}$. Multiply it by $\phi^{0}$ and integrate the resulting identity by parts over $\Omega$ to obtain
\[
\mu^{0}_{t}(t)=\frac{\int_{\Omega}c_t(x,t)(\phi^{0}(x,t))^2\,\mathrm{d}x}{\int_{\Omega}(\phi^{0}(x,t))^2\,\mathrm{d}x},
\]
from this we assert that $\mu^{0}_{t}(t)$ is uniformly bounded due to the uniform boundedness of $c_t$ on $\overline{\Omega}\times\mathbb{R}$.

Therefore, $[\mu^{0}_{t}(t)-c_t(x,t)]\phi^{0}$ is uniformly bounded on $\overline{\Omega}\times\mathbb{R}$. By the standard arguments of regularity for elliptic equation and applying Sobolev's embedding theorem, one easily derives the uniform boundedness of $\partial_t\phi^{0}$; namely, there is a positive constant $C'>0$ such that
\[
\sup_{(x,t)\in\Omega\times\mathbb{R}}|\partial_t\phi^{0}(x,t)|<C'.
\]
Consequently, there is a constant, denoted still by $C_0>0$, such that
\[
\sup_{(x,t)\in\Omega\times\mathbb{R}}\left|\frac{\partial_t\phi^{0}(x,t)}{\phi^{0}(x,t)}\right|<C_0.
\]
This finishes the proof.
\end{proof}

\medskip
We consider another one-parameter family of elliptic problems. For given $\varrho\in(0,1)$ and $t\in \mathbb{R}$, let $\mu^\infty (d,t,\varrho)$ be the principal eigenvalue of problem
\begin{equation}\label{eq2.8}
  \left\{
  \begin{aligned}
    &-d\operatorname{div}(A(x)\nabla\phi)+\fint^{t+\varrho}_{t}c(x,s)\,\mathrm{d}s\,\phi=\mu\phi,&&x\in\Omega,\\
    &\mathbf{n}\cdot(A(x)\nabla\phi)=0,&&x\in\partial\Omega,
  \end{aligned}
  \right.
\end{equation}
where $c\in \mathcal{C}$.
\begin{lemma}\label{lemma2.23}
Let $c\in\mathcal{C}$; denote by $\mu^\infty (d,t,\varrho)$ the principal eigenvalue of \eqref{eq2.8}. Then
  $$\lim_{d\to 0}\mu^\infty (d,t,\varrho)=\min_{x\in\overline{\Omega}}\fint^{t+\varrho}_{t}c(x,s)\,\mathrm{d}s\quad\text{holds uniformly in }(t,\varrho)\in\mathbb{R}\times(0,1).$$
\end{lemma}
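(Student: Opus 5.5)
The plan is to reduce the statement to Lemma \ref{lemma2.18} applied to the family of averaged potentials
\[\mathcal{S}:=\Big\{m_{t,\varrho}(\cdot):=\fint^{t+\varrho}_{t}c(\cdot,s)\,\mathrm{d}s~\Big|~(t,\varrho)\in\mathbb{R}\times(0,1)\Big\}\subset C(\overline{\Omega}).\]
Since $c\in\mathcal{C}$, every $m_{t,\varrho}$ satisfies $\|m_{t,\varrho}\|_\infty\le\|c\|_\infty$ and $[m_{t,\varrho}]_{\delta;\Omega}\le\sup_{s}[c(\cdot,s)]_{\delta;\Omega}$. Hence $\mathcal{S}$ is uniformly bounded and equicontinuous, and by Arzel\`{a}--Ascoli its closure $\overline{\mathcal{S}}$ is a bounded compact subset of $C(\overline{\Omega})$.

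The core of the argument is Step 2 of the proof of Lemma \ref{lemma2.18}, applied with $\overline{\mathcal{S}}$ in place of $\mathcal{S}$. The map $m\mapsto\mu^\infty(d,m)$ is continuous for each $d$ (Lemma \ref{lemma2.17}). It is monotone in $d$, since the Rayleigh quotient is nondecreasing in $d$. Its pointwise limit $m\mapsto\min_{\overline{\Omega}}m$ is continuous (Step 1 of Lemma \ref{lemma2.18}). Dini's theorem on the compact space $\overline{\mathcal{S}}$ then gives
\[E(d):=\sup_{m\in\overline{\mathcal{S}}}\Big|\mu^\infty(d,m)-\min_{x\in\overline{\Omega}}m(x)\Big|\to0\quad\text{as }d\to0.\]
Restricting to $m=m_{t,\varrho}$, the supremum over $(t,\varrho)\in\mathbb{R}\times(0,1)$ of $|\mu^\infty(d,t,\varrho)-\min_{\overline{\Omega}}m_{t,\varrho}|$ is bounded by $E(d)$. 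This is exactly the claimed uniform convergence.

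The only delicate point is making sure Dini's hypotheses hold on the closure and not merely on $\mathcal{S}$, since elements of $\overline{\mathcal{S}}$ need not be time averages. I expect this to be harmless, because continuity and monotonicity are proved for arbitrary $m\in C(\overline{\Omega})$.

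There is an alternative that avoids compactness entirely, and I would mention it as a backup. In the cut-off argument of Step 1, the lower bound $\mu^\infty\ge\min m$ is trivial. For the upper bound, equicontinuity gives a radius $r_0$, independent of $(t,\varrho)$, such that $m_{t,\varrho}\le\min m_{t,\varrho}+\varepsilon/2$ on a ball $B_{r_0}$ around a minimizer. Taking $r=\min\{r_0,d^{1/4}\}$ yields $\mu^\infty(d,t,\varrho)\le\min m_{t,\varrho}+\varepsilon/2+O(d^{1/2})$ with constants uniform in $(t,\varrho)$.

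The one care point in this alternative is a minimizer near $\partial\Omega$. Using smoothness of $\partial\Omega$, I would place a ball of comparable radius inside $\Omega$ within distance $\sim r_0$ of the minimizer, so the uniform cone/ball condition handles it. The rest is routine.
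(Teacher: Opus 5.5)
Your proof is correct, but your main argument differs from the paper's; the paper's proof is essentially your backup. The paper works directly with the Rayleigh quotient for $V(x;t,\varrho)=\fint^{t+\varrho}_{t}c(x,s)\,\mathrm{d}s$. The lower bound is trivial. For the upper bound, the uniform spatial H\"older constant $L$ gives a radius $r_\varepsilon=(\varepsilon/L)^{1/\delta}$, independent of $(t,\varrho)$, on which $V\le\min V+\varepsilon$ near a minimizer. The interior sphere property of the smooth boundary then fits a ball $B_{\tilde r_\varepsilon}(\tilde x_\varrho(t))\subset B_{r_\varepsilon}(x_\varrho(t))\cap\Omega$. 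Inserting a rescaled cut-off there gives \eqref{eq2.9}, i.e.\ $\mu^\infty(d,t,\varrho)\le\min V+2^{N+4}\Lambda d/r^2+\varepsilon$.

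Your primary route instead puts all the uniformity into compactness. Arzel\`a--Ascoli makes $\overline{\mathcal{S}}$ compact, and Dini's theorem, exactly as in Step 2 of Lemma \ref{lemma2.18}, upgrades the pointwise limit of Step 1 to uniform convergence. Continuity in $m$, monotonicity in $d$, and the pointwise limit all hold for every $m\in C(\overline{\Omega})$, so passing to the closure is indeed harmless.

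What each route buys:
\begin{itemize}
\item Your route needs only a uniform bound and a uniform modulus of continuity in $x$; H\"older continuity is not essential. It also never has to treat minimizers near $\partial\Omega$ uniformly, because Step 1 for a single $m$ needs just one interior ball.
\item The paper's route is quantitative. It yields an explicit rate in $d$ in terms of $L$, $\delta$, $\Lambda$ and the interior-ball radius. The estimate \eqref{eq2.9} is reused as \eqref{eq7.7} in the proof of Lemma \ref{lemma7.2}, although the qualitative uniform statement you prove would also suffice there.
\end{itemize}

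Note also that neither argument actually uses $\varrho<1$.
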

\begin{proof}
Recall that $c\in\mathcal{C}$. We denote by $L$ the uniform H\"{o}lder constant (w.r.t. spatial argument) of $c$, namely,
 \[L=\sup_{t\in\mathbb{R}} [c(\cdot,t)]_{\delta;\Omega}.\]
If $L=0$, the claim is immediate. From now on, we assume that $L>0$ in this proof. Denote by $V(x;t,\varrho):=\fint^{t+\varrho}_{t}c(x,s)\,\mathrm{d}s$, $t\in\mathbb{R}$, $\varrho\in(0,1)$. Note that the principal eigenvalue $\mu^\infty (d,t,\varrho)$ possesses a variational formulation
  \[\mu^\infty (d,t,\varrho)=\inf_{w\in H^1(\Omega)}\frac{\int_\Omega[d\nabla w\cdot(A(x)\nabla w)+V(x;t,\varrho)w^2]\,\mathrm{d}x}{\int_\Omega w^2\,\mathrm{d}x},\qquad t\in\mathbb{R},~\varrho\in(0,1).\]
From this formulation, one easily obtains that
  $$\mu^\infty (d,t,\varrho)\ge\min_{\overline{\Omega}}\fint^{t+\varrho}_{t}c(x,s)\,\mathrm{d}s,\quad\forall~(t,\varrho)\in\mathbb{R}\times(0,1).$$
Thus, it suffices to derive a uniform upper bound for $\mu^\infty (d,t,\varrho)$. For given $t\in\mathbb{R}$ and $\varrho\in(0,1)$, assume that $V(x;t,\varrho)$ attains its minimum at $x_\varrho(t)$. Let $\eta\in C^{\infty}_{c}(\mathbb{R}^N)$ be a standard cut-off function supported in the unit ball. Namely, $\eta$ satisfies
 \[\eta=0\text{ in }\mathbb{R}^N\setminus B_1,\quad \eta=1\text{ in } B_{1/2},\quad 0\le\eta\le1\text{ and }|\nabla\eta|\le 4\text{ in }\mathbb{R}^N,\]
where $B_1$ and $B_{1/2}$ are balls centered at the origin with radius $1$ and $1/2$ respectively. For any $\varepsilon>0$, define $r_\varepsilon=(\varepsilon/L)^{1/\delta}$, where $L$ is the uniform H\"{o}lder constant of $c(x,s)$ in $x$. Note that $r_\varepsilon$ is independent of both $t$ and $\varrho$. For any $(t,\varrho)\in\mathbb{R}\times(0,1)$, there exists a ball $B_{r_\varepsilon}(x_\varrho(t))$ centered at $x_\varrho(t)\in\overline{\Omega}$ such that
\[V(x;t,\varrho)\leq\min_{x\in\overline{\Omega}}V(x;t,\varrho)+\varepsilon, \quad \forall~x\in B_{r_\varepsilon}(x_\varrho(t))\cap\Omega.\]
This holds for all $t$ and $\varrho$ with the same $r_\varepsilon$, since $\varrho\in(0,1)$ implies that $V$ has the same uniform H\"{o}lder constant $L$ in $x$.
Since the boundary $\partial\Omega$ is smooth (thus, it satisfies the uniform interior sphere property), we find a point $\tilde{x}_{\varrho}(t)\in B_{r_\varepsilon}(x_\varrho(t))\cap\Omega$ and constant $\tilde{r}_\varepsilon\in(0,r_\varepsilon)$ such that
 \[B_{\tilde{r}_\varepsilon}(\tilde{x}_{\varrho}(t))\subset B_{r_\varepsilon}(x_\varrho(t))\cap\Omega.\]
For any $r\in(0,\tilde{r}_\varepsilon)$, we define
\[w(x;t,r)=\eta\left(\frac{x-\tilde{x}_\varrho(t)}{r}\right),\qquad x\in\overline{\Omega}.\]
Obviously, one has
\[|\nabla w|\le \frac{4}{r},\quad\forall~x\in\overline{\Omega}.\]
By the variational formulation for $\mu^\infty (d,t,\varrho)$, we have
 \begin{equation}\label{eq2.9}
   \begin{aligned}
     \mu^\infty (d,t,\varrho)\le & \frac{d\int_\Omega\nabla w\cdot(A(x)\nabla w)\,\mathrm{d}x}{\int_\Omega w^2\,\mathrm{d}x}+\frac{\int_{\Omega}V(x;t,\varrho)w^2\,\mathrm{d}x}{\int_\Omega w^2\,\mathrm{d}x} \\
      \le & \min_{x\in\overline{\Omega}}V(x;t,\varrho)+ \frac{16\Lambda|B_r|}{|B_{r/2}|} \frac{d}{r^2} +\varepsilon \\
      = & \min_{x\in\overline{\Omega}}V(x;t,\varrho)+ 2^{N+4}\Lambda\frac{d}{r^2} +\varepsilon,&&r\in(0,\tilde{r}_\varepsilon).
   \end{aligned}
 \end{equation}
Fix $r\in(0,r_\varepsilon)$ and choose $d_\varepsilon>0$ such that $2^{N+4}\Lambda\frac{d}{r^2}<\varepsilon$. Hence, we obtain a uniform estimate for $\mu^\infty (d,t,\varrho)$:
 \[0\le\mu^\infty(d,t,\varrho)-\min_{x\in\overline{\Omega}}V(x;t,\varrho)\le\varepsilon.\]
This completes the proof.
\end{proof}

\medskip
Similarly, and in fact more straightforwardly, we can prove the following result.
\begin{lemma}\label{lemma2.24}
Let $\mu^0(t;d)$ be the principal eigenvalue of \eqref{eq1.4} with potential $c\in\mathcal{C}$ for each $t$. Then
  $$\lim_{d\to 0}\mu^0 (t;d)=\min_{x\in\overline{\Omega}}c(x,t)\quad\text{holds uniformly in }t\in\mathbb{R}.$$
\end{lemma}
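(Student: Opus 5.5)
The proof will follow Lemma \ref{lemma2.23} almost verbatim, with the averaged potential $V(x;t,\varrho)$ replaced by $c(\cdot,t)$. Lemma \ref{lemma2.23} used only the spatial H\"older bound of $c$, and that bound is uniform in $t$ because $c\in\mathcal{C}$.

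\emph{Lower bound.} For each $t\in\mathbb{R}$, the Rayleigh quotient characterization is
\[
\mu^0(t;d)=\inf_{w\in H^1(\Omega),\,w\neq0}\frac{\int_\Omega\bigl[d\nabla w\cdot(A(x)\nabla w)+c(x,t)w^2\bigr]\,\mathrm{d}x}{\int_\Omega w^2\,\mathrm{d}x}.
\]
Since $\nabla w\cdot(A\nabla w)\ge0$, this gives $\mu^0(t;d)\ge\min_{x\in\overline{\Omega}}c(x,t)$ for every $t\in\mathbb{R}$ and every $d>0$. So only a uniform upper bound remains.

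\emph{Upper bound.} Set $L=\sup_{t}[c(\cdot,t)]_{\delta;\Omega}$. If $L=0$, then $c(\cdot,t)$ is constant in $x$ and $\mu^0(t;d)=c(\cdot,t)$, so the claim is trivial. Otherwise, fix $\varepsilon>0$ and put $r_\varepsilon=(\varepsilon/L)^{1/\delta}$, which does not depend on $t$. For each $t$, pick a minimizer $x(t)$ of $c(\cdot,t)$. Then
\[
c(x,t)\le\min_{\overline{\Omega}}c(\cdot,t)+\varepsilon \quad\text{on } B_{r_\varepsilon}(x(t))\cap\Omega .
\]

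\emph{Main obstacle.} The one genuinely delicate point is that $x(t)$ may lie on $\partial\Omega$, while the test function must have support inside $\Omega$. Smoothness of $\partial\Omega$ gives a uniform interior sphere condition. Hence there is $\tilde r_\varepsilon\in(0,r_\varepsilon)$, independent of $t$, and a point $\tilde x(t)$ such that $B_{\tilde r_\varepsilon}(\tilde x(t))\subset B_{r_\varepsilon}(x(t))\cap\Omega$. Uniformity in $t$ rests entirely on $\tilde r_\varepsilon$ not depending on $t$.

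\emph{Test function.} Take $w=\eta\bigl((x-\tilde x(t))/r\bigr)$ with $r=\tilde r_\varepsilon/2$, where $\eta$ is the cut-off from the proof of Lemma \ref{lemma2.23}. Inserting it into the Rayleigh quotient and using uniform ellipticity, exactly as in \eqref{eq2.9}, yields
\[
\mu^0(t;d)\le\min_{x\in\overline{\Omega}}c(x,t)+2^{N+4}\Lambda\frac{d}{r^2}+\varepsilon .
\]
Since $r$ does not depend on $t$, choose $d_\varepsilon$ with $2^{N+4}\Lambda d_\varepsilon/r^2<\varepsilon$. Then for all $d\in(0,d_\varepsilon)$ and all $t\in\mathbb{R}$,
\[
0\le\mu^0(t;d)-\min_{x\in\overline{\Omega}}c(x,t)\le2\varepsilon ,
\]
which is the asserted uniform convergence.

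Alternatively, the result follows from Lemma \ref{lemma2.23} by letting $\varrho\to0$: its bound holds uniformly in $\varrho\in(0,1)$, and the averaged potential converges to $c(\cdot,t)$ pointwise. However, that route needs control of the convergence in $\varrho$ uniformly in $t$, which is not available without \eqref{H3}. The direct argument above avoids this, so it is the one to use.
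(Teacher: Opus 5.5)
Your proposal is correct and follows the paper's route exactly: the paper proves this lemma by rerunning the argument of Lemma~\ref{lemma2.23} with $V(\cdot;t,\varrho)$ replaced by $c(\cdot,t)$, i.e.\ the Rayleigh-quotient lower bound plus a cut-off test function on a ball $B_{\tilde r_\varepsilon}(\tilde x(t))\subset\Omega$ whose radius is independent of $t$ thanks to the uniform spatial H\"older bound and the uniform interior sphere condition. One minor remark: the alternative route you discard does not actually need \eqref{H3}, because $d_\varepsilon$ in Lemma~\ref{lemma2.23} is independent of $(t,\varrho)$; you may therefore let $\varrho\to0$ separately for each fixed $t$, using only the continuity of $c$ and $|\mu^\infty(d,m_1)-\mu^\infty(d,m_2)|\le\|m_1-m_2\|_\infty$, and the resulting bound is still uniform in $t$.
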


As can be seen from the proof of Lemma \ref{lemma2.23}, the uniform convergence of $\mu^0$ in $t$ is essentially the uniform convergence of the principal eigenvalue of the elliptic problem with respect to the potential function, and this is ultimately guaranteed by the uniform boundedness of the potential. The following lemma for the large-diffusion regime likewise confirms this viewpoint.
\begin{lemma}\label{lemma2.25}
Let $\mathcal{S}\subset C(\overline{\Omega})$ be a non-empty bounded and compact subset, and let $\mu^\infty (d,m)$ be the principal eigenvalue of \eqref{eq1.3} with potential $m\in\mathcal{S}$. Then the limit
  \[\lim_{d\to\infty}\mu^\infty(d, m )=\fint_\Omega m (x)\,\mathrm{d}x\quad\text{ holds uniformly in }m\in\mathcal{S},\]
  where $\mu^\infty(d, m )$ is the principal eigenvalue of \eqref{eq1.3} with potential $ m $.
\end{lemma}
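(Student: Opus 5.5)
The plan mirrors the three-step structure of Lemma \ref{lemma2.18}. First establish pointwise convergence for a single potential, then upgrade it to uniform convergence on $\mathcal{S}$ by a monotonicity/Dini argument, or alternatively by a direct quantitative estimate.

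\textbf{Step 1: one fixed potential.} Fix $m\in\mathcal{S}$ and use the Rayleigh quotient
\[\mu^\infty(d,m)=\min_{\phi\in H^1(\Omega),\,\phi\neq0}\frac{d\int_\Omega\nabla\phi\cdot(A\nabla\phi)\,\mathrm{d}x+\int_\Omega m\phi^2\,\mathrm{d}x}{\int_\Omega\phi^2\,\mathrm{d}x}.\]
The constant test function $\phi\equiv1$ gives the upper bound $\mu^\infty(d,m)\le\fint_\Omega m\,\mathrm{d}x$ for every $d>0$.

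For the lower bound, normalize the principal eigenfunction by $\|\hat\phi_d\|_{L^2}=1$. Write $\hat\phi_d=\bar\phi_d+\tilde\phi_d$, where $\bar\phi_d$ is the spatial average and $\int_\Omega\tilde\phi_d=0$. Since $|\mu^\infty|\le\|m\|_\infty$, the Rayleigh quotient gives
\[d\Lambda^{-1}\|\nabla\hat\phi_d\|_{L^2}^2\le 2\sup_{\mathcal{S}}\|m\|_\infty=:2K.\]
The Neumann Poincaré inequality $\|\tilde\phi_d\|_{L^2}^2\le C_P\|\nabla\hat\phi_d\|_{L^2}^2$ then yields $\|\tilde\phi_d\|_{L^2}^2\le C/d$, with $C$ depending only on $\Omega$, $\Lambda$ and $K$. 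Next, discard the nonnegative gradient term and expand the potential term:
\[\mu^\infty(d,m)\ge\int_\Omega m\hat\phi_d^2\,\mathrm{d}x=\bar\phi_d^2\int_\Omega m\,\mathrm{d}x+2\bar\phi_d\int_\Omega m\tilde\phi_d\,\mathrm{d}x+\int_\Omega m\tilde\phi_d^2\,\mathrm{d}x.\]
Also $|\Omega|\bar\phi_d^2=1-\|\tilde\phi_d\|_{L^2}^2$. Together these give
\[\mu^\infty(d,m)\ge\fint_\Omega m\,\mathrm{d}x-C'\bigl(\|\tilde\phi_d\|_{L^2}+\|\tilde\phi_d\|_{L^2}^2\bigr)\ge\fint_\Omega m\,\mathrm{d}x-C''d^{-1/2}\]
for $d\ge1$.

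\textbf{Step 2: uniformity.} Every constant in Step 1 depends on $m$ only through $\|m\|_\infty\le K$. Hence
\[\sup_{m\in\mathcal{S}}\Bigl|\mu^\infty(d,m)-\fint_\Omega m\,\mathrm{d}x\Bigr|\le C''d^{-1/2}\to0,\]
which is the claim. In fact only boundedness of $\mathcal{S}$ is needed, consistent with the remark preceding the lemma. As a cross-check, one could instead run Dini's theorem as in Lemma \ref{lemma2.18}: $d\mapsto\mu^\infty(d,m)$ is nondecreasing by the Rayleigh quotient, $m\mapsto\mu^\infty(d,m)$ is continuous by Lemma \ref{lemma2.17}, and the limit $m\mapsto\fint_\Omega m$ is continuous. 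That route would require Step 1 pointwise and the compactness of $\mathcal{S}$.

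\textbf{Main obstacle.} The only delicate point is the lower bound. The risk is controlling the cross term $2\bar\phi_d\int m\tilde\phi_d$ uniformly, which is handled above by Cauchy--Schwarz together with $|\bar\phi_d|\le|\Omega|^{-1/2}$. The key inputs are the a priori bound $|\mu^\infty|\le\|m\|_\infty$, which controls the Dirichlet energy by $O(1/d)$, and the Neumann Poincaré inequality, which converts that energy bound into smallness of $\tilde\phi_d$.
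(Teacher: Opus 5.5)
Your proof is correct and follows essentially the same route as the paper: the constant test function gives the upper bound, and the lower bound comes from splitting the eigenfunction into its mean and a mean-zero part, the Neumann Poincar\'{e} inequality, and Cauchy--Schwarz on the cross term, with all constants depending on $\mathcal{S}$ only through $\sup_{\mathcal{S}}\|m\|_\infty$. The one difference is that you first extract $\|\tilde\phi_d\|_{L^2}^2=O(1/d)$ from $|\mu^\infty|\le\|m\|_\infty$ and then discard the gradient term, which gives only the rate $O(d^{-1/2})$; the paper instead keeps the coercive term $\frac{dC_P}{\Lambda}\|\tilde\phi\|_{L^2}^2$ and absorbs the linear cross term into it via Young's inequality, obtaining the sharper bound $\bigl|\mu^\infty(d,m)-\fint_\Omega m\,\mathrm{d}x\bigr|\le C/d$.
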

\begin{proof}
We first prove
\[\mu^\infty(d, m )\le\fint_\Omega m (x)\,\mathrm{d}x,\qquad\forall~d>0,~m\in\mathcal{S}.\]
This inequality follows directly from the variational principle by choosing the constant test function. Take $\phi\equiv1$; clearly $\phi\in H^1(\Omega)$ and satisfies the homogeneous Neumann condition. Substituting into the Rayleigh quotient gives
\[\mu^\infty(d, m )\le\frac{d\int_\Omega\nabla\phi\cdot (A(x)\nabla\phi)\,\mathrm{d}x+\int_\Omega m (x)\phi^2\,\mathrm{d}x}{\int_\Omega\phi^2\,\mathrm{d}x} = \frac{\int_\Omega m (x)\,\mathrm{d}x}{|\Omega|} = \fint_\Omega m (x)\,\mathrm{d}x,~\forall~d>0,~m\in\mathcal{S}.\]

We now show that there exists a constant $C>0$, independent of $m\in\mathcal{S}$, such that
\[\fint_\Omega m (x)\,\mathrm{d}x-\mu^\infty(d, m )\le\frac{C}{d}\]
for all sufficiently large $d>0$. Let $\phi$ be the normalized principal eigenfunction corresponding to $\mu^\infty(d, m )$, i.e., $\fint_\Omega\phi^2\,\mathrm{d}x=1$. Then $(\mu^\infty(d, m ),\phi)$ satisfies
\begin{equation}\label{eq2.10}
\mu^\infty(d, m )=d\fint_\Omega\nabla\phi\cdot\left(A(x)\nabla\phi\right)\,\mathrm{d}x+\fint_\Omega m (x)\phi^2\,\mathrm{d}x.
\end{equation}
Denote $\bar\phi=\fint_\Omega\phi\,\mathrm{d}x$ and set $\tilde{\phi}(x)=\phi(x)-\bar\phi$, $x\in\overline{\Omega}$. By construction, $\int_\Omega\tilde{\phi}\,\mathrm{d}x=0$. Expanding the normalization condition $\fint_\Omega\phi^2\,\mathrm{d}x=1$ gives
\[1=\fint_\Omega\phi^2\,\mathrm{d}x = \fint_\Omega(\bar\phi+\tilde{\phi})^2\,\mathrm{d}x = \bar\phi^2+\fint_\Omega\tilde{\phi}^2\,\mathrm{d}x,\]
hence
\begin{equation}\label{eq2.11}
  \bar\phi^2\le1,\qquad \fint_\Omega\tilde{\phi}^2\,\mathrm{d}x\le 1.
\end{equation}
By the uniform ellipticity of $A(\cdot)$, there holds
\[\fint_\Omega \nabla\phi\cdot(A(x)\nabla\phi)\,\mathrm{d}x = \fint_\Omega \nabla\tilde{\phi}\cdot(A(x)\nabla\tilde{\phi})\,\mathrm{d}x \ge \Lambda^{-1}\fint_\Omega\left|\nabla\tilde{\phi}\right|^2\,\mathrm{d}x.\]
Combining with the Poincar\'{e}'s inequality  $\left\|\tilde{\phi}\right\|_{L^2(\Omega)}^{2}\le C_P^{-1}\int_\Omega\left|\nabla\tilde{\phi}\right|^2\,\mathrm{d}x$ with constant $C_P>0$, we obtain
\begin{equation}\label{eq2.12}
  d\int_\Omega \nabla\phi\cdot(A(x)\nabla\phi)\,\mathrm{d}x \ge \frac{d}{\Lambda}\fint_\Omega\left|\nabla\tilde{\phi}\right|^2\,\mathrm{d}x \ge \frac{dC_P}{\Lambda|\Omega|}\left\|\tilde{\phi}\right\|_{L^2}^2.
\end{equation}
Decompose $ m $ into its mean and a mean-zero part
\[ m (\cdot)=\fint_\Omega m (x)\,\mathrm{d}x+\tilde{ m }(\cdot),\qquad \fint_\Omega \tilde{ m }(x)\,\mathrm{d}x=0.\]
Because $m\in\mathcal{S}$ is uniformly bounded, there exists a constant $M>0$ such that for every $m\in\mathcal{S}$,
 \[\left|\fint_\Omega m (x)\,\mathrm{d}x\right|\le M,\qquad \left\|\tilde{ m }\right\|_{\infty}\le2M.\]
Inserting this decomposition into the potential term of \eqref{eq2.10} gives
\begin{equation}\label{eq2.13}
 \int_\Omega m \phi^2\,\mathrm{d}x = \fint_\Omega m (x)\,\mathrm{d}x+\int_\Omega \tilde{ m }(x)\phi^2\,\mathrm{d}x.
\end{equation}
Estimate the term containing $\tilde{ m }$. Using $\phi=\bar{\phi}+\tilde{\phi}$, we have
\begin{equation}\label{eq2.14}
 \begin{aligned}
  \left|\fint_\Omega \tilde{ m }(x)\phi^2\,\mathrm{d}x\right| &=\left|\fint_\Omega \tilde{ m }(x)(\bar\phi+\tilde{\phi})^2\,\mathrm{d}x\right| \\
  &\le 2|\bar\phi|\left|\fint_\Omega \tilde{ m }(x)\tilde{\phi}\,\mathrm{d}x\right|    +\left|\fint_\Omega \tilde{ m }(x)\tilde{\phi}^2\,\mathrm{d}x\right| \\
  &\le 2|\bar\phi|\|\tilde{ m }(\cdot)\|_{L^\infty(\Omega)}|\Omega|^{-1}\left\|\tilde{\phi}\right\|_{L^1(\Omega)}    +\|\tilde{ m }(\cdot)\|_{L^\infty(\Omega)}|\Omega|^{-1}\left\|\tilde{\phi}\right\|_{L^2(\Omega)}^2 \\
  &\le 2M\left(2|\bar\phi||\Omega|^{-1/2}\left\|\tilde{\phi}\right\|_{L^2(\Omega)}+|\Omega|^{-1} \left\|\tilde{\phi}\right\|_{L^2(\Omega)}^2\right) \\
  &=4M|\bar\phi||\Omega|^{-1/2}\left\|\tilde{\phi}\right\|_{L^2(\Omega)}+2M|\Omega|^{-1}\left\|\tilde{\phi}\right\|_{L^2(\Omega)}^2,
 \end{aligned}
\end{equation}
where the Cauchy-Schwarz inequality has been used to obtain the last inequality. Substituting \eqref{eq2.12}, \eqref{eq2.13} and \eqref{eq2.14} back into \eqref{eq2.10} yields
\begin{equation}\label{eqq002.16}
 \mu^\infty(d, m ) \ge \fint_\Omega m (x)\,\mathrm{d}x    +\left(\frac{dC_P}{\Lambda}-2M\right)|\Omega|^{-1}\left\|\tilde{\phi}\right\|_{L^2(\Omega)}^2    -4M|\bar\phi||\Omega|^{-1/2}\left\|\tilde{\phi}\right\|_{L^2(\Omega)}.
\end{equation}
Applying Young's inequality to the last term gives that for any $\varepsilon>0$,
 \[4M|\bar\phi||\Omega|^{-1/2}\left\|\tilde{\phi}\right\|_{L^2(\Omega)} \le \varepsilon\fint_\Omega\tilde{\phi}^2\,\mathrm{d}x+\frac{4M^2\bar\phi^2}{\varepsilon}.\]
Choose $\varepsilon=\dfrac{dC_P}{2\Lambda}$. Then for $d\ge\dfrac{4\Lambda M}{C_P}$ we have
 \[\frac{dC_P}{\Lambda}-2M-\varepsilon = \frac{dC_P}{2\Lambda}-2M\ge0.\]
Now \eqref{eqq002.16} becomes
 \[\mu^\infty(d, m ) \ge \fint_\Omega m (x)\,\mathrm{d}x    -\frac{4M^2\bar\phi^2}{\varepsilon}.\]
Replacing $\varepsilon$ by $\dfrac{dC_P}{2\Lambda}$ and using $\bar\phi^2\le1$ (from \eqref{eq2.11}) give
 \[\frac{4M^2\bar\phi^2}{\varepsilon} = \frac{8\Lambda M^2\bar\phi^2}{dC_P} \le \frac{8\Lambda M^2}{dC_P}.\]
Consequently,
 \[\mu^\infty(d, m ) \ge \fint_\Omega m (x)\,\mathrm{d}x-\frac{8\Lambda M^2}{dC_P}.\]
Taking $C=8\Lambda M^2/C_P$, we have
 \[\left|\mu^\infty(d, m ) - \fint_\Omega m (x)\,\mathrm{d}x\right|\le \frac{C}{d},\quad \forall~m\in\mathcal{S}.\]
Therefore, by letting $d\to\infty$, we obtain that
 \[\lim_{d\to\infty}\mu^\infty(d, m )=\fint_\Omega m (x)\,\mathrm{d}x\quad\text{ holds uniformly in }m\in\mathcal{S}.\]
This completes the proof.
\end{proof}

Similarly, we have the following results.
\begin{lemma}\label{lemma2.26}
For each $t\in\mathbb{R}$, let $\mu^0(t;d)$ be the principal eigenvalue of \eqref{eq1.4} with potential $c\in\mathcal{C}$ be a given function. Then the limit
  $$\lim_{d\to \infty}\mu^0 (t;d)=\fint_\Omega c(x,t)\,\mathrm{d}x\quad\text{holds uniformly in }t\in\mathbb{R}.$$
\end{lemma}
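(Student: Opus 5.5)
The plan is to deduce Lemma \ref{lemma2.26} from Lemma \ref{lemma2.25}. We realize the family of potentials $\{c(\cdot,t)\}_{t\in\mathbb{R}}$ as a subset of a bounded compact set $\mathcal{S}\subset C(\overline{\Omega})$. Since $c\in\mathcal{C}$, we have $\|c\|_\infty<\infty$ and $\sup_{t}[c(\cdot,t)]_{\delta;\Omega}<\infty$. Hence the family $\mathcal{F}:=\{c(\cdot,t)\}_{t\in\mathbb{R}}$ is uniformly bounded and equicontinuous on $\overline{\Omega}$. By the Arzel\`{a}--Ascoli theorem its closure $\mathcal{S}:=\overline{\mathcal{F}}$ in $C(\overline{\Omega})$ is compact, and it is bounded by $\|c\|_\infty$. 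By definition $\mu^0(t;d)=\mu^\infty(d,c(\cdot,t))$, because \eqref{eq1.4} is exactly \eqref{eq1.3} with $m=c(\cdot,t)$; the normalization $\|\phi^0\|_\infty=1$ does not affect the eigenvalue.

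Applying Lemma \ref{lemma2.25} with this $\mathcal{S}$ gives
\[\sup_{m\in\mathcal{S}}\left|\mu^\infty(d,m)-\fint_\Omega m(x)\,\mathrm{d}x\right|\to0\quad\text{as }d\to\infty.\]
Restricting to $m=c(\cdot,t)\in\mathcal{F}\subset\mathcal{S}$ yields the claimed uniform convergence in $t\in\mathbb{R}$.

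Alternatively, and more directly, one can rerun the proof of Lemma \ref{lemma2.25} with $m=c(\cdot,t)$. That argument produced the explicit bound
\[0\le\fint_\Omega c(x,t)\,\mathrm{d}x-\mu^0(t;d)\le \frac{8\Lambda M^2}{dC_P},\qquad d\ge\frac{4\Lambda M}{C_P}.\]
Here $M$ is any bound on $\|c\|_\infty$, and compactness is never actually used, only uniform boundedness. Since $M=\|c\|_\infty$ is independent of $t$, the estimate is uniform in $t$, and no step presents a genuine obstacle. The only point to check is that the Rayleigh quotient argument (the constant test function for the upper bound, and Poincar\'{e} plus Young for the lower bound) uses nothing beyond $\|m\|_\infty\le M$. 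This is the route I would write out, since it even avoids the Arzel\`{a}--Ascoli step.
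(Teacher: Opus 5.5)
Your proposal is correct and is essentially the paper's own argument: the paper proves Lemma \ref{lemma2.26} only by saying it follows ``similarly'' to Lemma \ref{lemma2.25}. That proof (constant test function for the upper bound, Poincar\'{e} plus Young for the lower bound) gives $0\le\fint_\Omega c(x,t)\,\mathrm{d}x-\mu^0(t;d)\le 8\Lambda\|c\|_\infty^2/(dC_P)$ for $d\ge 4\Lambda\|c\|_\infty/C_P$, with constants depending only on $\|c\|_\infty$, exactly as you observe; your first route, applying Lemma \ref{lemma2.25} to the Arzel\`{a}--Ascoli closure of $\{c(\cdot,t)\}_{t\in\mathbb{R}}$, is also valid, though as you note compactness is never actually needed.
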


\chapter{Basic properties of the normalized principal Floquet bundle}\label{chp3}
\section{Properties for $H$}
As mentioned in section 1.1, our analysis will primarily proceed along a sequence $\{T_n\}_{n\in\mathbb{N}^+}$ satisfying $T_n\to\infty$ as $n\to\infty$. Recall that, for a given sequence $\{T_n\}_{n\in\mathbb{N}^+}$, we define the set $\mathscr{C}_*$ of functions as
 \[\mathscr{C}_*:=\left\{\hat{c}\in C (\overline{\Omega})~|~\left\{\hat{c}_{T_n}\right\}_{n\in\mathbb{N}^+}\text{ admits a convergent subsequence converging to }\hat{c}\text{ in } C(\overline{\Omega})\right\}.\]
This chapter is devoted to establishing fundamental results for the normalized principal Floquet bundle $H$ of \eqref{eq1.1} in the context of the sequence $\{T_n\}_{n\in\mathbb{N}^+}$ of $T\to\infty$, as well as to proving Theorem \ref{theorem1.3}.
\begin{theorem}\label{theorem3.1}
Let $c\in\mathcal{C}$ be a given function and $(H,\varphi)$ be the normalized principal Floquet bundle of \eqref{eq1.1}. Let $\{T_n\}_{n\in\mathbb{N}^+}$ be a sequence satisfying $T_n\to\infty$ as $n\to\infty$. The following assertions are valid.
  \begin{enumerate}[{\rm (i)}]
    \item {\rm (}Recall that $\hat{c}_n(x)=\fint^{T_n}_{0}c(x,s)\,\mathrm{d}s,~x\in\overline{\Omega}${\rm)}. Then
         \[\liminf\limits_{n\to\infty}\fint^{T_n}_{0} H(s)\,\mathrm{d}s\le\inf\limits_{\hat{c}\in \mathscr{C}_*}\mu^\infty (\hat{c})\quad\text{and}\quad\limsup\limits_{n\to\infty}\fint^{T_n}_{0} H(s)\,\mathrm{d}s\le\sup\limits_{\hat{c}\in \mathscr{C}_*}\mu^\infty (\hat{c}).\]
    \item One has
         \[\liminf\limits_{n\to\infty}\fint^{T_n}_{0} H(s)\,\mathrm{d}s \le\liminf_{n\to\infty}\fint^{T_n}_{0}\fint_{\Omega}c(x,s)\,\mathrm{d}x\mathrm{d}s\]
        and
         \[\limsup\limits_{n\to\infty}\fint^{T_n}_{0} H(s)\,\mathrm{d}s \le\limsup_{n\to\infty}\fint^{T_n}_{0}\fint_{\Omega}c(x,s)\,\mathrm{d}x\mathrm{d}s.\]
    \item Suppose that there is a $t_0\in\mathbb{R}$ such that the functions $c_1,c_2\in \mathcal{C}$ satisfy $c_1(x,t)\ge c_2(x,t)$ for any $(x,t)\in\Omega\times[t_0,\infty)$. Then
         \[\limsup\limits_{n\to\infty}\fint^{T_n}_{0}H(s;c_1)\,\mathrm{d}s \ge \limsup\limits_{n\to\infty}\fint^{T_n}_{0}H(s;c_2)\,\mathrm{d}s\]
        and
         \[\liminf\limits_{n\to\infty}\fint^{T_n}_{0}H(s;c_1)\,\mathrm{d}s \ge \liminf\limits_{n\to\infty}\fint^{T_n}_{0}H(s;c_2)\,\mathrm{d}s;\]
        the two inequalities above are strict if $\liminf\limits_{n\to\infty}\fint^{T_n}_{0} \int_{\Omega}[c_1(x,s)- c_2(x,s)]\,\mathrm{d}x\mathrm{d}s>0$.
  \end{enumerate}
\end{theorem}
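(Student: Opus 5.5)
The plan is to turn the equation for $\varphi$ into integral identities, averaged over $[0,T_n]$, that bound $\fint_0^{T_n}H$. All boundary terms in time are controlled by the uniform two-sided Harnack bounds for the normalized bundles from Proposition \ref{proposition3.3}: there is $C_*>1$ with $C_*^{-1}\le\varphi,\psi\le C_*$ on $\overline{\Omega}\times\mathbb{R}$. I also use that $H$ is bounded. These bounds let every time-boundary term be absorbed into an $O(1/T_n)$ error.

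For (i), set $w=\ln\varphi$, which is bounded. It satisfies
\[\omega\partial_t w-d\operatorname{div}(A\nabla w)-d\nabla w\cdot(A\nabla w)+c=H.\]
Let $\hat\phi_n>0$ be the principal eigenfunction of \eqref{eq1.3} with $m=\hat c_n$ and $\int_\Omega\hat\phi_n^2=1$. Multiply the equation by $\hat\phi_n^2$ and integrate over $\Omega\times[0,T_n]$. The time term is $\omega[\int_\Omega w\hat\phi_n^2]_0^{T_n}=O(1)$. The Neumann condition turns the diffusion term into $2d\int\hat\phi_n\nabla\hat\phi_n\cdot A\nabla w$. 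By Cauchy--Schwarz for the form $A$,
\[2d\,\hat\phi_n\nabla\hat\phi_n\cdot A\nabla w-d\,\hat\phi_n^2\nabla w\cdot A\nabla w\le d\,\nabla\hat\phi_n\cdot A\nabla\hat\phi_n.\]
Dividing by $T_n$ gives
\[\fint_0^{T_n}H\le d\int_\Omega\nabla\hat\phi_n\cdot A\nabla\hat\phi_n+\int_\Omega\hat c_n\hat\phi_n^2+O(1/T_n)=\mu^\infty(\hat c_n)+O(1/T_n).\]
Now take any $\hat c\in\mathscr C_*$ and a subsequence with $\hat c_{n'}\to\hat c$. Lemma \ref{lemma2.17} (continuity of $\mu^\infty$ in $m$) gives $\liminf_n\fint_0^{T_n} H\le\mu^\infty(\hat c)$, and hence the $\inf$ bound. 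For the $\limsup$, pick a subsequence realizing it and refine it, by Lemma \ref{lemma2.1}, so that $\hat c_{n''}$ converges to some element of $\mathscr C_*$; this yields the $\sup$ bound. Part (ii) is the same computation with $\hat\phi_n$ replaced by the constant $|\Omega|^{-1/2}$. That gives $\fint_0^{T_n}H\le\fint_0^{T_n}\fint_\Omega c+O(1/T_n)$, and taking $\liminf$ and $\limsup$ finishes.

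For (iii), I pair $\varphi_1=\varphi(\cdot;c_1)$ with the adjoint bundle $\psi_2=\psi(\cdot;c_2)$. Set $P(t)=\int_\Omega\varphi_1\psi_2\,\mathrm dx$. Differentiate, substitute \eqref{eq1.1} for $c_1$ and \eqref{eq1.2} for $c_2$, and integrate the diffusion terms by parts; the Neumann conditions make them cancel. The result is
\[\omega P'(t)=\int_\Omega\big[(H_1-H_2)-(c_1-c_2)\big]\varphi_1\psi_2\,\mathrm dx,\]
that is,
\[H_1(t)-H_2(t)=\omega\frac{\mathrm d}{\mathrm dt}\ln P(t)+\frac{\int_\Omega(c_1-c_2)\varphi_1\psi_2\,\mathrm dx}{P(t)}.\]
The Harnack bounds give $C_*^{-2}\le\varphi_1\psi_2\le C_*^2$ and bound $P$ above and below. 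Averaging over $[0,T_n]$, the logarithmic term contributes $O(1/T_n)$ and the part of the integral over $[0,t_0]$ contributes $O(1/T_n)$. On $[t_0,\infty)$ we have $c_1\ge c_2$, so the last quotient is at least $\kappa\int_\Omega(c_1-c_2)$ with $\kappa=C_*^{-4}|\Omega|^{-1}>0$. Hence
\[\fint_0^{T_n}(H_1-H_2)\ge\kappa\fint_0^{T_n}\int_\Omega(c_1-c_2)-O(1/T_n).\]
Taking $\liminf$ gives $\liminf_n\fint_0^{T_n}(H_1-H_2)\ge0$. Under the strictness hypothesis it gives $\liminf_n\fint_0^{T_n}(H_1-H_2)\ge\kappa\,\liminf_n\fint_0^{T_n}\int_\Omega(c_1-c_2)>0$. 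The passage from this to the stated inequalities between the $\liminf$'s and the $\limsup$'s, both weak and strict, uses the elementary $\inf/\sup$ manipulation at the end of Lemma \ref{lemma2.13}. For the strict case that argument runs with a positive constant in place of $0$.

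The main obstacle is justifying the uniform two-sided bounds $C_*^{-1}\le\varphi,\psi\le C_*$ on all of $\overline{\Omega}\times\mathbb R$, including for the adjoint $\psi$ under the normalization $\int\varphi\psi=1$. I intend to take these from the Harnack and boundedness properties in Proposition \ref{proposition3.3} (H\'uska--Pol\'a\v cik). If only $\varphi$ is covered there, I would derive the bounds on $\psi$ from the adjoint Harnack inequality together with the normalization.
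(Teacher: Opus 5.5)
Your proposal is correct. Parts (ii) and (iii) follow the paper's own arguments: Lemma~\ref{lemma3.6} divides by $\varphi$ and drops the gradient term, and Lemma~\ref{lemma3.7} uses the pairing identity \eqref{eq3.2} for $\int_\Omega\varphi_1\psi_2$. The two-sided bounds you flag as the main obstacle are exactly what Proposition~\ref{proposition3.3}(ii) states, for $\psi$ as well as for $\varphi$, so nothing extra is needed there.

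Part (i) is where your route differs.

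\emph{The paper's route.} Lemma~\ref{lemma3.12} builds the time-averaged function $u_n=\exp\bigl(\fint_0^{T_n}\ln\varphi\,\mathrm{d}s\bigr)$. The Jensen-type Lemma~\ref{lemma3.11} shows that $u_n$ is a positive supersolution of the elliptic problem with potential $\hat c_n$, at level $\fint_0^{T_n}H$ up to an $O(1/T_n)$ error. The paper then extracts a convergent subsequence of $\{u_n\}$ and applies an elliptic comparison principle for the principal eigenvalue.

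\emph{Your route.} You test the logarithmic equation against $\hat\phi_n^2$ and complete the square. This is the dual, ground-state-substitution form of the same inequality. It gives, for every $n$,
\[
\fint_0^{T_n}H\le\mu^\infty(d,\hat c_n)+\frac{2\omega\|\ln\varphi\|_\infty}{T_n}.
\]
After that you only need the continuity (in fact $1$-Lipschitz in sup norm) of $m\mapsto\mu^\infty(d,m)$. This avoids both the compactness of $\{u_n\}$ and the external comparison lemma. You do not even need $\hat\phi_n$ to be the eigenfunction: any $\phi\in H^1(\Omega)$ with $\int_\Omega\phi^2=1$ gives the same estimate, and taking the infimum recovers the Rayleigh quotient.

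\emph{What the paper's route buys.} It produces an explicit positive supersolution of the averaged elliptic problem. The paper reuses this construction on short windows $[t,t+\tau]$ in Lemma~\ref{lemma7.2}. Your test-function argument would serve equally well there, with $\omega/T_n$ replaced by $\omega/\tau$.
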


We shall prove Theorems \ref{theorem1.3} and \ref{theorem3.1} by establishing a series of auxiliary lemmas.

\begin{remark}\label{remark3.2}{\rm
  If $c$ satisfies further a uniform H\"{o}lder condition (see hypothesis \eqref{H3}), we can use Lemma \ref{lemma2.10} to rewrite the lower bounds in Theorem \ref{theorem1.3}(i) as
   \[\inf_{x(\cdot)\in C(\mathbb{R};\overline{\Omega})}\liminf_{T\to\infty} \fint^{T}_{0}c(x(s),s)\,\mathrm{d}s\le \underline{\lambda}_{H}(\omega,d)
   \text{ and }
   \inf_{x(\cdot)\in C(\mathbb{R};\overline{\Omega})}\limsup_{T\to\infty} \fint^{T}_{0}c(x(s),s)\,\mathrm{d}s \le \overline{\lambda}_{H}(\omega,d).\]
  }
\end{remark}

\medskip
It is necessary to recall some basic properties of the normalized principal Floquet bundle, which have been established in \cite[Appendix A]{Cantrell2021On}, also in \cite[Theorems 4.2.2, 4.3.4, etc.]{Lam2022Introduction}, \cite[Appendix A]{Lam2024The}, where the authors set $\omega=1$. By repeating their arguments with slight modifications, one easily obtains the following Proposition \ref{proposition3.3}.
\begin{proposition}\label{proposition3.3}
 Let $(\omega,d,c)\in(0,\infty)\times(0,\infty)\times C^{\delta,\delta/2}(\overline{\Omega}\times\mathbb{R})$. Then the normalized principal Floquet bundle $(H,\varphi,\psi)$ satisfying \eqref{eq1.1} and \eqref{eq1.2} is unique, and the mapping
  \begin{align*}
    \Pi:~(0,\infty)\times(0,\infty)\times C^{\delta,\delta/2}(\overline{\Omega}\times\mathbb{R})&\to C^{\delta/2}(\mathbb{R})\times C^{2+\delta,1+\delta/2}(\overline{\Omega}\times\mathbb{R})\times C^{2+\delta,1+\delta/2}(\overline{\Omega}\times\mathbb{R})\\
    (\omega,d,c(x,t))\qquad &\mapsto \qquad (H(t),\varphi(x,t),\psi(x,t))
  \end{align*}
 is smooth. Furthermore, one has
 \begin{enumerate}[{\rm (i)}]
   \item {\rm (}Boundedness{\rm )} The estimate $\sup\limits_{t\in\mathbb{R}}|H(t)|\le \sup\limits_{(x,t)\in\Omega\times\mathbb{R}}|c(x,t)|$ holds uniformly in $(\omega,d)\in(0,\infty)\times(0,\infty)$;
   \item {\rm (}Harnack principle{\rm )} There is a constant $C_d>0$ independent of $\omega$ and independent of $d$ if $d$ is large {\rm (}Note that $C_d$ depends on $d$ if $d$ is small!{\rm )}, such that
         \[\frac{1}{C_d}\le \varphi(x,t),\psi(x,t)\le C_d,\qquad \forall~(x,t)\in\overline{\Omega}\times\mathbb{R};\]
   \item {\rm (}Decomposition{\rm )} Set
         \[X^1(t):=\operatorname{span}\{\varphi(\cdot,t)\},~
           X^2(t):=\left\{v_0\in L^2(\Omega):\int_{\Omega}\psi(x,t)v_0(x)\,\mathrm{d}x=0\right\},\quad t\in\mathbb{R}.\]
         Then the two spaces are forward-invariant in the sense that for $i\in\{1,2\}$,
         \[u(\cdot,t;u_0)\in X^i(t)\text{ if }u_0\in X^i(0)\text{ for all }t\in [0,\infty),\]
         where $u(\cdot,\cdot;u_0)$ is the classical solution of
         \begin{equation*}
           \left\{
           \begin{aligned}
             &\omega\partial_tu-d\operatorname{div}(A(x)\nabla u)+c(x,t)u=H(t)u,\quad &&(x,t)\in\Omega\times(0,\infty),\\
             &\mathbf{n}\cdot (A(x)\nabla u)=0 &&(x,t)\in\partial\Omega\times(0,\infty),\\
             &u(x,0)=u_0(x),&&x\in\overline{\Omega}.
           \end{aligned}
           \right.
         \end{equation*}
         Additionally,
         \[L^2(\Omega)=X^1(t)\oplus X^2(t),\quad\forall~t\in\mathbb{R};\]
   \item {\rm (}Exponential separation{\rm )} There exist constants $C,\gamma>0$ such that for any $t>0$, and any $u_0\in X^2(0)\cap C(\overline{\Omega})$ one has
         \[\|u(\cdot,t;u_0)\|_{\infty}\le C\mathrm{e}^{-\gamma t}\|u_0(\cdot)\|_{\infty}.\]
 \end{enumerate}
\end{proposition}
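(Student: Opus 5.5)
The statement is Proposition \ref{proposition3.3}. The plan is to reduce it to the case $\omega=1$, which is treated in \cite[Appendix A]{Cantrell2021On} and \cite{Lam2022Introduction}, by rescaling time. Set $\tau=t/\omega$ and
\[\tilde\varphi(x,\tau)=\varphi(x,\omega\tau),\qquad \tilde c(x,\tau)=c(x,\omega\tau),\qquad \tilde H(\tau)=H(\omega\tau).\]
Then \eqref{eq1.1} becomes
\[\partial_\tau\tilde\varphi-d\operatorname{div}(A\nabla\tilde\varphi)+\tilde c\,\tilde\varphi=\tilde H\tilde\varphi,\]
with the same Neumann condition and normalization, and \eqref{eq1.2} transforms in the same way. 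Since $\omega\mapsto\tilde c$ is smooth into the H\"older class on compact time windows, and the H\"older norms are uniformly controlled for $\omega$ in compact subsets of $(0,\infty)$, existence, uniqueness and smooth dependence on $(\omega,d,c)$ follow from the $\omega=1$ theory. One composes the smooth map $(d,\tilde c)\mapsto(\tilde H,\tilde\varphi,\tilde\psi)$ with the rescaling. Alternatively, one can rerun the implicit function argument of \cite{Cantrell2021On} directly, with $\omega$ treated as an additional parameter; the linearization in $(H,\varphi)$ is invertible by exponential separation.

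Property (i) I would check directly. Integrate \eqref{eq1.1} over $\Omega$, use $\int_\Omega\varphi=1$ and the Neumann condition, and obtain
\[H(t)=\int_\Omega c(x,t)\varphi(x,t)\,\mathrm{d}x-\omega\frac{\mathrm{d}}{\mathrm{d}t}\int_\Omega\varphi\,\mathrm{d}x=\int_\Omega c\varphi\,\mathrm{d}x.\]
Hence $|H(t)|\le\sup|c|$, independently of $(\omega,d)$.

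For (ii), the lower and upper bounds are the parabolic Harnack inequality applied to the positive entire solution $\varphi$, combined with $\int_\Omega\varphi=1$. After the rescaling the time variable changes, but an entire solution on all of $\mathbb{R}$ allows Harnack on unit time windows in $\tau$. The constant depends on $\|c\|_\infty/\omega$ in the rescaled equation, so obtaining independence of $\omega$ requires care.

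\textbf{Main obstacle.} Making the Harnack constant in (ii) independent of $\omega$ is the hard step. My first attempt would be to rescale instead by $s=t\,d/\omega$, which turns the equation into $\partial_s-\operatorname{div}(A\nabla)+(c-H)/d$. This gives a Harnack constant that depends on $\sup|c|/d$ but not on $\omega$, and that stays bounded as $d\to\infty$. The same argument applies to $\psi$, after reversing time and using $\int\varphi\psi=1$.

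Properties (iii) and (iv) transfer verbatim from the $\omega=1$ theory through the time rescaling. Under this rescaling the invariant splitting $X^1\oplus X^2$ is preserved, and the separation rate $\gamma$ is scaled by $1/\omega$, which is still positive.
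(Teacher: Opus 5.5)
Your overall plan matches the paper, which simply asserts that the $\omega=1$ arguments of \cite{Cantrell2021On,Lam2022Introduction,Lam2024The} go through with slight modifications. Your proof of (i) via $H(t)=\int_\Omega c\varphi\,\mathrm{d}x$ is exactly the computation in Lemma~\ref{lemma3.5}. (Integrating actually gives $H=\int_\Omega c\varphi\,\mathrm{d}x+\omega\frac{\mathrm{d}}{\mathrm{d}t}\int_\Omega\varphi\,\mathrm{d}x$, but the last term vanishes.)

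However, your primary argument for smooth dependence on $\omega$ fails. That argument composes the $\omega=1$ solution map with the time rescaling. The dilation $\omega\mapsto c(\cdot,\omega\,\cdot)$ is not differentiable into $C^{\delta,\delta/2}(\overline{\Omega}\times\mathbb{R})$. Its formal derivative $\tau\,\partial_t c(x,\omega\tau)$ needs a time derivative that a potential only $C^{\delta/2}$ in $t$ does not have, so compact time windows do not help. Even for smooth $c$, this derivative is unbounded in $\tau$. The dilation is not even continuous: for $c=\sin t$, the quantity $\sup_{\tau\in\mathbb{R}}|\sin(\omega\tau)-\sin(\omega'\tau)|$ stays close to $2$ for every $\omega'\neq\omega$ near $\omega$. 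The inverse rescaling $\tilde H\mapsto\tilde H(\cdot/\omega)$ has the same defect in $C^{\delta/2}(\mathbb{R})$.

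At a \emph{fixed} $\omega$, the rescaling is a bounded isomorphism of the H\"older spaces. So it legitimately transfers the following from the $\omega=1$ theory:
existence and uniqueness;
(iii) and (iv), with $\gamma=\tilde\gamma/\omega$;
invertibility of the linearization in $(H,\varphi)$.

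Smoothness must then come from your fallback, which is the paper's route: run the implicit function theorem with $\omega$ as an explicit parameter. This works because $\omega$ enters only through $\omega\,\partial_t\varphi$. Since $\partial_t$ is bounded from $C^{2+\delta,1+\delta/2}(\overline{\Omega}\times\mathbb{R})$ into $C^{\delta,\delta/2}(\overline{\Omega}\times\mathbb{R})$, the defining map is polynomial, hence smooth, in $(\omega,d,c,H,\varphi)$.

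Your \emph{main obstacle} in (ii) rests on a miscalculation. With $\tau=t/\omega$, the zero-order coefficient is $\tilde c-\tilde H$ with $\tilde c(x,\tau)=c(x,\omega\tau)$, so $\|\tilde c-\tilde H\|_\infty\le 2\|c\|_\infty$ by (i). Nothing is divided by $\omega$. Harnack constants depend on the coefficients only through ellipticity and $L^\infty$ bounds, not their time regularity, so independence of $\omega$ is automatic.

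Your second scaling $s=td/\omega$ is still the right one, because it gives uniformity for large $d$: the coefficient becomes $(c-H)/d$, bounded by $2\|c\|_\infty$ for $d\ge1$.

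You should spell out how the time-lagged Harnack inequality $\sup_\Omega\varphi(\cdot,s-1)\le C\inf_\Omega\varphi(\cdot,s)$ gives same-time bounds. Since $\int_\Omega\varphi(\cdot,s)\,\mathrm{d}x=1$ for every $s$, it yields $\inf_\Omega\varphi(\cdot,s)\ge 1/(C|\Omega|)$ and $\sup_\Omega\varphi(\cdot,s)\le C\inf_\Omega\varphi(\cdot,s+1)\le C/|\Omega|$. For $\psi$, argue in reversed time: $\int_\Omega\varphi\psi\,\mathrm{d}x=1$ together with the bounds on $\varphi$ pins $\int_\Omega\psi\,\mathrm{d}x$ between two positive constants.

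This is more self-contained than the paper. For (ii) the paper cites \cite[Theorem 2.2]{Lam2024The} for large $d$ and simply asserts independence of $\omega$. The paper's own Theorem~\ref{theorem3.14} (a ball chain plus the same-time closing Lemma~\ref{lemma3.16}) targets the sharp small-$d$ constant $\mathrm{e}^{C/\sqrt{d}}$. Your argument does not produce that constant, but (ii) does not require it.
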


As demonstrated in \cite[Theorem 2.2]{Lam2024The}, the constant appearing in the Harnack principle is independent of the diffusion rate $d$ when $d$ is sufficiently large. Furthermore, this constant remains independent of $\omega>0$. In Section \ref{sec3.2}, we establish a sharp Harnack estimate valid for small $d>0$, namely
 \[{\rm e}^{-\frac{C}{\sqrt{d}}}\le \varphi(x,t),\psi(x,t)\le {\rm e}^{\frac{C}{\sqrt{d}}},\qquad \forall~(x,t)\in\overline{\Omega}\times\mathbb{R},\]
where $C>0$ is a constant independent of $\omega>0$ and $d\in(0,1)$.
\begin{remark}\label{remark3.4}{\rm
More precisely, $X^1(\cdot)$ and $X^2(\cdot)$ are known as the principal Floquet bundle and the complementary Floquet bundle, respectively, as they naturally extend the concepts of the principal eigenfunction and the remaining eigenfunctions. Nevertheless, we also call $H$ (as an extension of the principal eigenvalue) the principal Floquet bundle for convenience in the present paper.
}
\end{remark}

Our discussion commences with a brief analysis of the relationship between the normalized principal Floquet bundle $H$ and potential $c$. We anticipate that the computations within its proof will facilitate a deeper comprehension of the normalized principal Floquet bundle.
\begin{lemma}\label{lemma3.5}
  Let $(H,\varphi)$ be the normalized principal Floquet bundle of \eqref{eq1.1} with $c\in \mathcal{C}$. The following assertions hold.
  \begin{enumerate}[{\rm (i)}]
    \item For each $t\in\mathbb{R}$, if $c(\cdot,t)\ge,\not\equiv 0$ on $\overline{\Omega}$, then $H(t)>0$; if $c(\cdot,t)\le,\not\equiv 0$ on $\overline{\Omega}$, then $H(t)<0$. Consequently, $\min_{\overline{\Omega}}c(\cdot,t)\le H(t)\le \max_{\overline{\Omega}}c(\cdot,t)$.
    \item If $H(t)=0$ at some $t\in\mathbb{R}$, then either $c(\cdot,t)\equiv 0$, or $\min_{\overline{\Omega}}c(\cdot,t)<0<\max_{\overline{\Omega}}c(\cdot,t)$.
    \item If $c(x,t)\equiv c(t)$ is independent of $x$, then $H(t)=c(t)$ for all $t\in\mathbb{R}$.
  \end{enumerate}
\end{lemma}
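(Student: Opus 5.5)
The plan is to integrate the equation in \eqref{eq1.1} over $\Omega$ at a fixed time $t$, using the normalization $\int_\Omega\varphi(x,t)\,\mathrm{d}x=1$. Since this holds for every $t$ and $\varphi\in C^{2+\delta,1+\delta/2}$, differentiating in $t$ gives $\int_\Omega\partial_t\varphi(x,t)\,\mathrm{d}x=0$. By the divergence theorem and the Neumann condition, $\int_\Omega \operatorname{div}(A\nabla\varphi)\,\mathrm{d}x=\int_{\partial\Omega}\mathbf{n}\cdot(A\nabla\varphi)\,\mathrm{d}S=0$. Integrating the equation therefore yields the identity
\[H(t)=H(t)\int_\Omega\varphi(x,t)\,\mathrm{d}x=\int_\Omega c(x,t)\varphi(x,t)\,\mathrm{d}x,\qquad t\in\mathbb{R}.\]
This identity is the key step, and all three assertions follow from it together with the positivity $\varphi>0$ on $\overline{\Omega}\times\mathbb{R}$.

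For (i), suppose $c(\cdot,t)\ge,\not\equiv0$. By continuity, $c(\cdot,t)>0$ on some open set of positive measure, and $\varphi(\cdot,t)>0$, so $\int_\Omega c\varphi\,\mathrm{d}x>0$ and hence $H(t)>0$. The case $c(\cdot,t)\le,\not\equiv0$ is symmetric and gives $H(t)<0$. The two-sided bound follows because $\varphi(\cdot,t)$ is a positive probability density:
\[\min_{\overline\Omega}c(\cdot,t)=\min_{\overline\Omega}c(\cdot,t)\int_\Omega\varphi\,\mathrm{d}x\le \int_\Omega c\varphi\,\mathrm{d}x\le \max_{\overline\Omega}c(\cdot,t).\]
One can also see this by applying the strict-sign statement to $c-\min c$ and $c-\max c$, using that shifting $c$ by a function of $t$ shifts $H$ by the same amount while $\varphi$ is unchanged.

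For (ii), suppose $H(t)=0$ and $c(\cdot,t)\not\equiv0$. If $\min_{\overline\Omega} c(\cdot,t)\ge0$, then $c(\cdot,t)\ge,\not\equiv0$, and (i) gives $H(t)>0$, a contradiction. Thus $\min c<0$, and the symmetric argument gives $\max c>0$.

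For (iii), if $c=c(t)$, then $\varphi\equiv1/|\Omega|$ and $H=c$ solve \eqref{eq1.1}, with $\psi\equiv1$ solving \eqref{eq1.2} under the normalization $\int\varphi\psi=1$. Uniqueness from Proposition \ref{proposition3.3} then gives $H(t)=c(t)$. The integral identity also gives this directly, since $H(t)=c(t)\int_\Omega\varphi\,\mathrm{d}x=c(t)$.

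The only delicate point is justifying $\int_\Omega\partial_t\varphi\,\mathrm{d}x=0$ and the boundary integral. Both follow from the regularity in Proposition \ref{proposition3.3}, which allows differentiating the normalization under the integral sign. No real obstacle is expected.
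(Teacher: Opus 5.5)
Your proposal is correct and is essentially the paper's argument: everything is reduced to the identity $H(t)=\int_\Omega c(x,t)\varphi(x,t)\,\mathrm{d}x$, and (i)--(iii) are read off from the positivity and normalization of $\varphi$. The only difference is how the identity is obtained. The paper integrates the equation for the auxiliary positive entire solution $u$ of \eqref{eq3.1} and uses the representation $\varphi=u\,\mathrm{e}^{\frac{1}{\omega}\int_0^t H(s)\,\mathrm{d}s}$, whereas you integrate \eqref{eq1.1} directly using $\frac{\mathrm{d}}{\mathrm{d}t}\int_\Omega\varphi\,\mathrm{d}x=0$, which is slightly more direct.
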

\begin{proof}
Arguing in the same way as the proof of \cite[Theorem 4.1.2]{Lam2022Introduction}, one can show that for a given function $c(x,t)\in C^{\delta,\delta/2}(\overline{\Omega}\times\mathbb{R})$, the problem
  \begin{equation}\label{eq3.1}
    \left\{
    \begin{aligned}
      &\omega\partial_t{u}-d\operatorname{div}(A(x)\nabla u)+c(x,t){u}=0, &&(x,t)\in\Omega\times\mathbb{R}, \\
      &\mathbf{n}\cdot(A(x)\nabla u)=0,&&(x,t)\in\partial\Omega\times\mathbb{R}, \\
      &{u}(x,t)>0,&&(x,t)\in\overline{\Omega}\times\mathbb{R}, \\
      &\int_{\Omega}{u}(x,0)\,\mathrm{d}x=1
    \end{aligned}
    \right.
  \end{equation}
has a unique solution in $C_{loc}^{2+\delta,1+\delta/2}(\overline{\Omega}\times\mathbb{R})$, which we denote by ${u}$. Then the normalized principal Floquet bundle of \eqref{eq1.1}  is determined by
   \[(H(t),\varphi(x,t))=\left(-\omega\frac{\mathrm{d}}{\mathrm{d}t}\left(\ln \int_{\Omega}{u}(x,t)\,\mathrm{d}x\right), {u}(x,t)\mathrm{e}^{\frac{1}{\omega}\int^{t}_{0}H(s)\,\mathrm{d}s}\right),\quad(x,t)\in\Omega\times\mathbb{R};\]
see \cite[Theorem 4.1.4]{Lam2022Introduction}. Direct computation yields
 \[\frac{\mathrm{d}}{\mathrm{d}t}\int_{\Omega}\varphi(x,t)\,\mathrm{d}x=\left[ \frac{\mathrm{d}}{\mathrm{d}t}\int_{\Omega}u(x,t)\,\mathrm{d}x+\frac{1}{\omega}H(t)\int_{\Omega}u(x,t)\,\mathrm{d}x\right] \mathrm{e}^{\frac{1}{\omega}\int^{t}_{0}H(s)\,\mathrm{d}s}=0,\qquad t\in\mathbb{R}.\]
This implies that
 \[\int_{\Omega}\varphi(x,t)\,\mathrm{d}x=\int_{\Omega}\varphi(x,0)\,\mathrm{d}x=\int_{\Omega}u(x,0) \,\mathrm{d}x=1,\qquad\forall~t\in\mathbb{R}.\]
Integrating equation \eqref{eq3.1} over $\Omega$ by parts and then dividing by $\int_{\Omega}{u}(x,\cdot)\,\mathrm{d}x$, one obtains
   \[H(t)=\frac{\int_{\Omega}c(x,t){u}(x,t)\,\mathrm{d}x}{\int_{\Omega}{u}(x,t)\,\mathrm{d}x} =\int_{\Omega}c(x,t)\varphi(x,t)\,\mathrm{d}x, \quad t\in\mathbb{R},\]
  which yields the conclusions immediately. This completes the proof.
\end{proof}

\medskip
Next, we provide an upper bound estimate for the principal Floquet exponent.
\begin{lemma}\label{lemma3.6}
Let $(H,\varphi)$ be the normalized Floquet bundle of \eqref{eq1.1} with potential  $c\in\mathcal{C}$ and $\{T_n\}_{n\in\mathbb{N}^+}$ be a sequence satisfying $T_n\to\infty$ as $n\to\infty$. Then
  \[\liminf\limits_{n\to\infty}\fint^{T_n}_{0} H(s)\,\mathrm{d}s \le\liminf_{n\to\infty}\fint^{T_n}_{0}\fint_{\Omega}c(x,s)\,\mathrm{d}x\mathrm{d}s\]
and
  \[\limsup\limits_{n\to\infty}\fint^{T_n}_{0} H(s)\,\mathrm{d}s \le\limsup_{n\to\infty}\fint^{T_n}_{0}\fint_{\Omega}c(x,s)\,\mathrm{d}x\mathrm{d}s.\]
Furthermore, one has
  \[\liminf_{T\to\infty}\fint^{T}_{0}H(s)\,\mathrm{d}s\le \liminf_{T\to\infty} \fint^{T}_{0}\fint_\Omega c(x,s) \,\mathrm{d}x\mathrm{d}s\]
and
  \[\limsup\limits_{T\to\infty}\fint^{T}_{0} H(s)\,\mathrm{d}s \le\limsup_{T\to\infty}\fint^{T}_{0}\fint_{\Omega}c(x,s)\,\mathrm{d}x\mathrm{d}s.\]
\end{lemma}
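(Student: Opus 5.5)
The plan is to use the adjoint bundle $\psi$ instead of $\varphi$. It gives an exact identity for $\fint_0^T H$ up to a boundary term of order $1/T$. Integrating \eqref{eq1.1} directly against $1$ only returns $H(t)=\int_\Omega c\varphi\,\mathrm{d}x$ (Lemma \ref{lemma3.5}), which is not what we want.

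The key observation is a different identity. Divide \eqref{eq1.1} by $\varphi>0$, which is legitimate by Proposition \ref{proposition3.3}(ii), and write $v=\ln\varphi$. Then
\[
\omega\partial_t v - d\operatorname{div}(A\nabla v) - d\,\nabla v\cdot (A\nabla v) + c(x,t) = H(t),
\]
with $\mathbf n\cdot(A\nabla v)=0$ on $\partial\Omega$. Averaging over $\Omega$ kills the divergence term by the Neumann condition, and the quadratic term is nonnegative by ellipticity. This gives
\[
H(t) \le \frac{\omega}{|\Omega|}\frac{\mathrm{d}}{\mathrm{d}t}\int_\Omega \ln\varphi(x,t)\,\mathrm{d}x + \fint_\Omega c(x,t)\,\mathrm{d}x.
\]
Next integrate over $[0,T_n]$ and divide by $T_n$:
\[
\fint_0^{T_n} H(s)\,\mathrm{d}s \le \fint_0^{T_n}\fint_\Omega c(x,s)\,\mathrm{d}x\mathrm{d}s + \frac{\omega}{T_n}\fint_\Omega\bigl[\ln\varphi(x,T_n)-\ln\varphi(x,0)\bigr]\mathrm{d}x.
\]

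For the boundary term, Proposition \ref{proposition3.3}(ii) gives $C_d^{-1}\le\varphi\le C_d$ uniformly in $t$, with $C_d$ fixed since $\omega,d$ are fixed here. Hence the bracket is bounded by $2\ln C_d$ and the last term is $O(1/T_n)\to0$. Taking $\liminf_{n\to\infty}$ and $\limsup_{n\to\infty}$ of the inequality yields the first two claims; the error term vanishes, so both operations pass through the inequality.

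For the statements with $T\to\infty$, the same inequality holds for every $T>0$ with error $2\omega\ln C_d/T$, so one can take $\liminf_{T\to\infty}$ and $\limsup_{T\to\infty}$ directly. Alternatively, one can choose sequences realizing the lower and upper limits of the left-hand side and apply the sequence version.

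The only delicate point is justifying the computation with $\ln\varphi$. We need $\varphi$ to be $C^{2,1}$ up to the boundary with a positive lower bound. Both hold by the regularity $\varphi\in C^{2+\delta,1+\delta/2}(\overline\Omega\times\mathbb R)$ and the Harnack bound, so the main obstacle is really just the uniform-in-time Harnack estimate, which is already available.
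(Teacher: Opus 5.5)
Your argument is correct and is essentially the paper's proof: divide by $\varphi$, average the equation for $\ln\varphi$ over $\Omega\times[0,T]$, drop the nonnegative quadratic term by ellipticity, and kill the boundary term $\frac{\omega}{T}\fint_\Omega\ln\varphi\,\mathrm{d}x\,\Big|_{0}^{T}$ with the Harnack bound of Proposition~\ref{proposition3.3}(ii). Your direct passage to $T\to\infty$, using that the inequality holds for every $T$, is slightly cleaner than the paper's choice of realizing sequences, and your opening remark about the adjoint bundle $\psi$ is vestigial, since the argument you actually give never uses $\psi$.
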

\begin{proof}
Dividing both sides of \eqref{eq1.1} by $\varphi$, we obtain
  \[\omega\partial_t\ln\varphi-d\operatorname{div}(A(x)\nabla\ln\varphi) -d\nabla\ln\varphi\cdot(A(x)\nabla\ln\varphi)+c(x,t)=H(t),\qquad(x,t)\in\Omega\times\mathbb{R}.\]
For any $T>0$, integrating the above identity over $\Omega\times[0,T]$ yields
 \[\frac{\omega}{T}\left(\fint_\Omega\ln\varphi\,\mathrm{d}x\right)\Big|^{T}_{0} -d\fint^{T}_{0}\fint_\Omega \nabla\ln\varphi\cdot(A(x)\nabla\ln\varphi) \,\mathrm{d}x\mathrm{d}s
     +\fint^{T}_{0}\fint_\Omega c(x,s) \,\mathrm{d}x\mathrm{d}s= \fint^{T}_{0}H(s)\,\mathrm{d}s.\]
By the uniform ellipticity assumption on matrix $A(\cdot)$, we know
 \[\fint^{T}_{0}\fint_\Omega \nabla\ln\varphi\cdot(A(x)\nabla\ln\varphi) \,\mathrm{d}x\mathrm{d}s\ge \frac{1}{\Lambda}\fint^{T}_{0}\fint_\Omega |\nabla\ln\varphi|^2 \,\mathrm{d}x\mathrm{d}s\ge 0,\qquad\forall~T>0.\]
Thus, for the sequence $\{T_n\}_{n\in\mathbb{N}^+}$ of $T\to\infty$, one has
 \[\fint^{T_n}_{0}H(s)\,\mathrm{d}s\le \fint^{T_n}_{0}\fint_\Omega c(x,s) \,\mathrm{d}x\mathrm{d}s +\frac{\omega}{T_n}\left(\fint_\Omega\ln\varphi\,\mathrm{d}x\right)\Bigg|^{T_n}_{0}, \qquad\forall~n\in\mathbb{N}^+.\]
Letting $n\to\infty$ yields
 \[\liminf\limits_{n\to\infty}\fint^{T_n}_{0} H(s)\,\mathrm{d}s \le\liminf_{n\to\infty}\fint^{T_n}_{0}\fint_{\Omega}c(x,s)\,\mathrm{d}x\mathrm{d}s\]
and
 \[\limsup\limits_{n\to\infty}\fint^{T_n}_{0} H(s)\,\mathrm{d}s \le\limsup_{n\to\infty}\fint^{T_n}_{0}\fint_{\Omega}c(x,s)\,\mathrm{d}x\mathrm{d}s,\]
where we have used the fact that $\varphi$ is uniformly bounded in $(x,t)\in\overline{\Omega}\times\mathbb{R}$ by the Harnack principle for $\varphi$, for fixed $\omega,d\in(0,\infty)$; see Proposition \ref{proposition3.3}(ii).

We now use the first statement to derive assertion (ii). By selecting the sequence $\{T_n\}_{n\in\mathbb{N}^+}$ such that
  \[\lim_{n\to\infty}\fint^{T_n}_{0}\fint_\Omega c(x,s) \,\mathrm{d}x\mathrm{d}s= \liminf_{T\to\infty} \fint^{T}_{0}\fint_\Omega c(x,s) \,\mathrm{d}x\mathrm{d}s,\]
we easily obtain that
  \[\liminf_{T\to\infty}\fint^{T}_{0}H(s)\,\mathrm{d}s\le \limsup_{n\to\infty}\fint^{T_n}_{0}H(s)\,\mathrm{d}s\le \liminf_{T\to\infty} \fint^{T}_{0}\fint_\Omega c(x,s) \,\mathrm{d}x\mathrm{d}s.\]
On the other hand, if $\{T_n\}_{n\in\mathbb{N}^+}$ is a sequence satisfying $T_n\to\infty$ as $n\to\infty$ such that
  \[\lim_{n\to\infty}\fint^{T_n}_{0}H(s)\,\mathrm{d}s=\limsup_{T\to\infty}\fint^{T}_{0}H(s)\,\mathrm{d}s,\]
then
  \begin{align*}
    \limsup_{T\to\infty}\fint^{T}_{0}H(s)\,\mathrm{d}s= &\lim_{n\to\infty}\fint^{T_n}_{0}H(s) \,\mathrm{d}s \\
    \le & \liminf_{n\to\infty}\fint^{T_n}_{0}\fint_\Omega c(x,s) \,\mathrm{d}x\mathrm{d}s\le \limsup_{T\to\infty}\fint^{T}_{0}\fint_\Omega c(x,s) \,\mathrm{d}x\mathrm{d}s.
  \end{align*}
This completes the proof.
\end{proof}

\medskip
We continue to study the monotonicity result of $H(\cdot;c)$ with respect to $c$.
\begin{lemma}\label{lemma3.7}
Suppose that there is a $t_0\in\mathbb{R}$ such that the functions $c_1,c_2\in \mathcal{C}$ satisfy $c_1(x,t)\ge c_2(x,t)$ for any $(x,t)\in\overline{\Omega}\times[t_0,\infty)$. Let $H_1$ and $H_2$ be the principal Floquet bundles of \eqref{eq1.1} corresponding to $c_1$ and $c_2$ respectively. Then
  \[\liminf\limits_{T\to\infty}\fint^{T}_{0}[H_1(s)-H_2(s)]\,\mathrm{d}s\ge 0,\]
the above inequality is strict if $\liminf\limits_{T\to\infty}\fint^{T}_{0} \int_{\Omega}[c_1(x,s)-c_2(x,s)]\,\mathrm{d}x\mathrm{d}s>0$.
Consequently, for any given sequence $\{T_n\}_{n\in\mathbb{N}^+}$ of $T\to\infty$, one has
 \[\liminf\limits_{n\to\infty}\fint^{T_n}_{0}H_1(s)\,\mathrm{d}s \ge \liminf\limits_{n\to\infty}\fint^{T_n}_{0}H_2(s)\,\mathrm{d}s\quad\text{and}\quad\limsup\limits_{n\to\infty}\fint^{T_n}_{0}H_1(s)\,\mathrm{d}s \ge \limsup\limits_{n\to\infty}\fint^{T_n}_{0}H_2(s)\,\mathrm{d}s,\]
the above inequalities are strict if $\liminf\limits_{n\to\infty}\fint^{T_n}_{0} \int_{\Omega}[c_1(x,s)-c_2(x,s)]\,\mathrm{d}x\mathrm{d}s>0$.
\end{lemma}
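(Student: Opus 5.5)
The cleanest route avoids sub- and super-solutions. I would compare the two bundles directly through the pairing of $\varphi_1$ with the adjoint bundle $\psi_2$. Let $(H_1,\varphi_1)$ be the normalized principal Floquet bundle for $c_1$, and let $(H_2,\varphi_2,\psi_2)$ be the triplet for $c_2$. Set $I(t):=\int_\Omega\varphi_1(x,t)\psi_2(x,t)\,\mathrm{d}x>0$. I would differentiate $I$, using \eqref{eq1.1} for $\varphi_1$ with potential $c_1$ and \eqref{eq1.2} for $\psi_2$ with potential $c_2$. Integrating the diffusion terms by parts twice, they cancel because both functions satisfy the same conormal condition $\mathbf{n}\cdot(A\nabla\cdot)=0$ and $A$ is symmetric. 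This gives the identity
\[
\omega\frac{\mathrm{d}}{\mathrm{d}t}\ln I(t)=H_1(t)-H_2(t)-\frac{\int_\Omega[c_1(x,t)-c_2(x,t)]\varphi_1\psi_2\,\mathrm{d}x}{\int_\Omega\varphi_1\psi_2\,\mathrm{d}x},\qquad t\in\mathbb{R}.
\]

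Next I would integrate this identity over $[0,T]$ and divide by $T$, which gives
\[
\fint_0^T[H_1-H_2]\,\mathrm{d}s=\frac{\omega}{T}\ln\frac{I(T)}{I(0)}+\fint_0^T\frac{\int_\Omega(c_1-c_2)\varphi_1\psi_2\,\mathrm{d}x}{\int_\Omega\varphi_1\psi_2\,\mathrm{d}x}\,\mathrm{d}s .
\]
By the Harnack principle in Proposition \ref{proposition3.3}(ii), with $\omega,d$ fixed, we have $C^{-1}\le\varphi_1,\psi_2\le C$ on $\overline{\Omega}\times\mathbb{R}$. Hence $I$ stays between $|\Omega|C^{-2}$ and $|\Omega|C^{2}$, and the boundary term is $O(1/T)$. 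I would split the remaining time integral at $t_0$ (assume $t_0>0$; otherwise nothing needs splitting). The piece over $[0,t_0]$ is bounded by $2t_0\max\{\|c_1\|_\infty,\|c_2\|_\infty\}/T$, which also tends to $0$. On $[t_0,T]$ we have $c_1-c_2\ge0$, so the weighted average is nonnegative. More precisely, by the Harnack bounds it is at least $C^{-4}|\Omega|^{-1}\int_\Omega(c_1-c_2)\,\mathrm{d}x$.

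Taking $\liminf_{T\to\infty}$ then yields $\liminf_{T\to\infty}\fint_0^T[H_1-H_2]\,\mathrm{d}s\ge0$. If in addition $\liminf_{T\to\infty}\fint_0^T\int_\Omega(c_1-c_2)\,\mathrm{d}x\mathrm{d}s=:\eta>0$, the same lower bound gives $\liminf_{T\to\infty}\fint_0^T[H_1-H_2]\,\mathrm{d}s\ge C^{-4}|\Omega|^{-1}\eta>0$. This holds because the $[0,t_0]$ contribution to the $c$-average is also $O(1/T)$. The argument works verbatim along any sequence $T_n\to\infty$. The statements for a fixed sequence then follow from the elementary $\liminf/\limsup$ manipulations already carried out at the end of the proof of Lemma \ref{lemma2.13}. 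The key input there is that $\fint_0^{T_n}H_1\ge\fint_0^{T_n}H_2+\fint_0^{T_n}[H_1-H_2]$, with the last term having nonnegative (respectively positive) $\liminf$.

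The only delicate step is the derivation of the identity for $I'(t)$. It requires justifying differentiation under the integral sign, and this is fine by the regularity $C^{2+\delta,1+\delta/2}$ of the bundles from Proposition \ref{proposition3.3}. It also requires checking that the boundary terms $\int_{\partial\Omega}[\psi_2\,\mathbf{n}\cdot A\nabla\varphi_1-\varphi_1\,\mathbf{n}\cdot A\nabla\psi_2]$ vanish, which follows from the Neumann conditions and the symmetry of $A$. As a fallback, I could instead build on $[t_0,t_0+T]$ the functions $\varphi_i(x,t)\exp\bigl(\frac1\omega\bigl[(t-t_0)\fint H_i-\int_{t_0}^tH_i\bigr]\bigr)$. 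Correcting the constants by $\pm\frac{\omega}{T}\ln C^2$ makes the endpoint inequalities hold, and these functions serve as sub- and supersolutions for the $c_2$-operator, to which Corollary \ref{corollary2.15} applies. That route, however, would not easily give strictness.
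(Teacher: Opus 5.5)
Your proposal is correct and is essentially the paper's proof. Pairing $\varphi_1$ with the adjoint bundle $\psi_2$ gives the identity \eqref{eq3.2}. The Harnack bounds of Proposition~\ref{proposition3.3}(ii) then remove the term $\frac{\omega}{T}\ln\frac{I(T)}{I(0)}$ and give the uniform lower bound on the weight $\varphi_1\psi_2/\int_\Omega\varphi_1\psi_2\,\mathrm{d}x$ needed for strictness. The sequence statements follow from the same $\liminf/\limsup$ bookkeeping, and your explicit splitting of the time integral at $t_0$ just makes precise the paper's reduction ``without loss of generality $t_0=0$''.
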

\begin{proof}
Let $(H_i,\varphi_i,\psi_i)=(H(\cdot;c_i),\varphi(\cdot,\cdot;c_i),\psi(\cdot,\cdot;c_i))$ be the normalized principal Floquet bundles of \eqref{eq1.1} associated with $c_i\in C^{\delta,\delta/2}(\overline{\Omega}\times\mathbb{R})$, $i=1,2$, namely, they satisfy
\begin{equation*}
  \left\{
  \begin{aligned}
    &\omega\partial_t\varphi_{i}-d\operatorname{div}(A(x)\nabla\varphi_i)+c_i(x,t)\varphi_{i}=H_i(t)\varphi_i,&&(x,t)\in\Omega\times\mathbb{R},\\
    &\mathbf{n}\cdot(A(x)\nabla\varphi_i)=0,&&(x,t)\in\partial\Omega\times\mathbb{R},\\
    &\varphi_i(x,t)>0,&&(x,t)\in\overline{\Omega}\times\mathbb{R},\\
    &\int_{\Omega}\varphi_i(x,t)\,\mathrm{d}x=1,&&t\in\mathbb{R}
  \end{aligned}
  \right.
\end{equation*}
and the adjoint problems
\begin{equation*}
  \left\{
  \begin{aligned}
    &-\omega\partial_t\psi_{i}-d\operatorname{div}(A(x)\nabla\psi_i)+c_i(x,t)\psi_{i}=H_i(t)\psi_i,&&(x,t)\in\Omega\times\mathbb{R},\\
    &\mathbf{n}\cdot(A(x)\nabla\psi_i)=0,&&(x,t)\in\partial\Omega\times\mathbb{R},\\
    &\psi_i(x,t)>0,&&(x,t)\in\overline{\Omega}\times\mathbb{R},\\
    &\int_{\Omega}\varphi_i(x,t)\psi_i(x,t)\,\mathrm{d}x=1,&&t\in\mathbb{R}.
  \end{aligned}
  \right.
\end{equation*}
Since
\[\lim\limits_{T\to\infty}\frac{1}{T}\int^{t_0}_{0}H_i(s)\,\mathrm{d}s=0\text{ and }\lim\limits_{T\to\infty}\frac{1}{T}\int^{t_0}_{0}c_i(x,s)\,\mathrm{d}s=0\text{ for all }x\in\overline{\Omega},\]
we may assume that $t_0=0$ without loss of generality. Multiplying the $\varphi_1$-equation by $\psi_2$ and integrating the resulting identity by parts over $\Omega$, we get
\[\omega\int_\Omega\psi_2\partial_t\varphi_1\,\mathrm{d}x+d\int_\Omega\nabla\psi_2\cdot(A(x)\nabla\varphi_1)\,\mathrm{d}x +\int_\Omega c_1(x,t)\varphi_1\psi_2\,\mathrm{d}x=H_1(t)\int_\Omega\varphi_1\psi_2\,\mathrm{d}x,\quad t\in\mathbb{R}.\]
Similarly, multiplying the $\psi_2$-equation by $\varphi_1$ and integrating the resulting identity by parts over $\Omega$ yield
\[-\omega\int_\Omega\varphi_1\partial_t\psi_2\,\mathrm{d}x+d\int_\Omega\nabla\varphi_1\cdot(A(x)\nabla\psi_2)\,\mathrm{d}x +\int_\Omega c_2(x,t)\varphi_1\psi_2\,\mathrm{d}x=H_2(t)\int_\Omega\varphi_1\psi_2\,\mathrm{d}x,\quad t\in\mathbb{R}.\]
Subtracting the two equations yields
 \[\omega\frac{\mathrm{d}}{\mathrm{d}t}\left(\int_{\Omega}\varphi_1\psi_2\,\mathrm{d}x\right)+ \int_{\Omega}[c_1(x,t)-c_2(x,t)]\varphi_1\psi_2\,\mathrm{d}x =[H_1(t)-H_2(t)]\int_{\Omega}\varphi_1\psi_2\,\mathrm{d}x,\quad t\in\mathbb{R}.\]
We first divide the above equation by $\int_{\Omega}\varphi_1\psi_2\,\mathrm{d}x$, and then for any $T>0$, integrate over $[0,T]$ and divide both sides by $T$ to obtain
\begin{equation}\label{eq3.2}
   \fint^{T}_{0}[H_1(s)-H_2(s)]\,\mathrm{d}s=\frac{\omega}{T}\left(\left.\ln\int_{\Omega}\varphi_1 \psi_2\,\mathrm{d}x\right)\right|^{T}_{t=0}+\fint^{T}_{0}\frac{\int_{\Omega}[c_1(x,s)- c_2(x,s)] \varphi_1\psi_2\,\mathrm{d}x }{\int_{\Omega}\varphi_1\psi_2\,\mathrm{d}x}\,\mathrm{d}s.
\end{equation}
In view of the Harnack principle for the normalized principal Floquet bundle (Proposition \ref{proposition3.3}(ii)) and the fact that $c_1(x,t)\ge c_2(x,t)$ for all $(x,t)\in\overline{\Omega}\times[0,\infty)$, by letting $T\to\infty$, one easily obtains
 \begin{equation}\label{eq3.3}
 \liminf\limits_{T\to\infty}\fint^{T}_{0}[H_1(s)-H_2(s)]\,\mathrm{d}s\ge 0.
 \end{equation}
Furthermore, if $\liminf_{T\to\infty}\fint^{T}_{0}\int_{\Omega}(c_1(x,s)-c_2(x,s)) \,\mathrm{d}x \,\mathrm{d}s>0$, then by using the Harnack principle again (so $\varphi_1(\cdot,t)\psi_2(\cdot,t)/\int_{\Omega}\varphi_1\psi_2\,\mathrm{d}x>C$ for some positive constant $C$ independent of $t\in\mathbb{R}$), one gets that
\[\fint^{T}_{0}\frac{\int_{\Omega}[c_1(x,s)- c_2(x,s)] \varphi_1\psi_2\,\mathrm{d}x }{\int_{\Omega}\varphi_1\psi_2\,\mathrm{d}x}\,\mathrm{d}s\ge C\fint^{T}_{0} \int_{\Omega}[c_1(x,s)- c_2(x,s)]\,\mathrm{d}x \,\mathrm{d}s>0.\]
A combination of this strict inequality and \eqref{eq3.2} gives that the inequality in \eqref{eq3.3} holds strictly.

For any sequence $\{T_n\}_{n\in\mathbb{N}^+}$ of $T\to\infty$ and a given integer $n'>0$, we have
\[\fint^{T_n}_{0}H_1(s)\,\mathrm{d}s-\inf_{n>n'}\fint^{T_{n'}}_{0}H_2(s)\,\mathrm{d}s\ge \fint^{T_n}_{0}H_1(s)\,\mathrm{d}s-\fint^{T_n}_{0}H_2(s)\,\mathrm{d}s,\quad\forall~n>n',\]
which implies that
\[\inf_{n>n'}\left(\fint^{T_n}_{0}H_1(s)\,\mathrm{d}s-\inf_{n>n'}\fint^{T_{n'}}_{0}H_2(s)\,\mathrm{d}s\right)\ge \inf_{n>n'}\left(\fint^{T_n}_{0}H_1(s)\,\mathrm{d}s-\fint^{T_n}_{0}H_2(s)\,\mathrm{d}s\right).\]
Letting $n'\to\infty$ and combining \eqref{eq3.3} yield
\[\liminf\limits_{n\to\infty}\fint^{T_n}_{0}H_1(s)\,\mathrm{d}s \ge \liminf\limits_{n\to\infty}\fint^{T_n}_{0}H_2(s)\,\mathrm{d}s.\]
A similar argument gives
\[\limsup\limits_{n\to\infty}\fint^{T_n}_{0}H_1(s)\,\mathrm{d}s \ge \limsup\limits_{n\to\infty}\fint^{T_n}_{0}H_2(s)\,\mathrm{d}s.\]
The strict inequalities appearing above are consequences of the strict inequality propagated from \eqref{eq3.3} under the condition $\liminf\limits_{n\to\infty}\fint^{T_n}_{0} \int_{\Omega}(c_1(x,s)-c_2(x,s)) \,\mathrm{d}x \,\mathrm{d}s>0$.

This completes the proof.
\end{proof}

If the sequence $\{T_n\}_{n\in\mathbb{N}^+}$ is selected to be the one such that \[\lim\limits_{n\to\infty}\fint^{T_n}_{0}H_1(s)\,\mathrm{d}s= \liminf\limits_{T\to\infty}\fint^{T}_{0}H_1(s)\,\mathrm{d}s\quad \text{ or }\quad\lim\limits_{n\to\infty}\fint^{T_n}_{0}H_2(s)\,\mathrm{d}s= \limsup\limits_{T\to\infty}\fint^{T}_{0}H_2(s)\,\mathrm{d}s\]
respectively, then we have the following corollary.
\begin{corollary}\label{corollary3.8}
 Suppose that the functions $c_1,c_2\in \mathcal{C}$ satisfy $c_1(x,t)\ge c_2(x,t)$ on $\overline{\Omega}\times[t_0,\infty)$ for some $t_0\in\mathbb{R}$. Then
 \begin{enumerate}[{\rm (i)}]
   \item the inequality \[\liminf\limits_{T\to\infty}\fint^{T}_{0}H(s;c_1)\,\mathrm{d}s\ge \liminf\limits_{T\to\infty}\fint^{T}_{0}H(s;c_2)\,\mathrm{d}s\]
       holds, and strictly holds if $\liminf\limits_{T\to\infty}\fint^{T}_{0} \int_{\Omega}[c_1(x,s)-c_2(x,s)] \,\mathrm{d}x \,\mathrm{d}s>0$;
   \item the inequality \[\limsup\limits_{T\to\infty}\fint^{T}_{0}H(s;c_1)\,\mathrm{d}s\ge \limsup\limits_{T\to\infty}\fint^{T}_{0}H(s;c_2)\,\mathrm{d}s\]
    holds, and strictly holds if $\liminf\limits_{T\to\infty}\fint^{T}_{0} \int_{\Omega}[c_1(x,s)-c_2(x,s)] \,\mathrm{d}x \,\mathrm{d}s>0$.
 \end{enumerate}
\end{corollary}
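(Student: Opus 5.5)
This corollary follows from Lemma \ref{lemma3.7} by choosing the sequence $\{T_n\}_{n\in\mathbb{N}^+}$ adapted to the quantity we want to control. Throughout, $H_i=H(\cdot;c_i)$, and the only input is the inequality \eqref{eq3.3}, namely $\liminf_{T\to\infty}\fint^{T}_{0}[H_1(s)-H_2(s)]\,\mathrm{d}s\ge 0$, together with its strict version. I would not redo the duality computation \eqref{eq3.2}.

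For (i), I pick $T_n\to\infty$ with $\lim_{n\to\infty}\fint^{T_n}_{0}H_1(s)\,\mathrm{d}s=\liminf_{T\to\infty}\fint^{T}_{0}H_1(s)\,\mathrm{d}s$. This is possible because $H_1$ is bounded by Proposition \ref{proposition3.3}(i), so the averages are bounded. Along this sequence,
\[\fint^{T_n}_{0}H_2\,\mathrm{d}s=\fint^{T_n}_{0}H_1\,\mathrm{d}s-\fint^{T_n}_{0}(H_1-H_2)\,\mathrm{d}s.\]
Taking $\liminf_n$ and using $\liminf_n\fint^{T_n}_{0}(H_1-H_2)\ge\liminf_T\fint^{T}_{0}(H_1-H_2)\ge 0$ gives
\[\liminf_{T\to\infty}\fint^{T}_{0}H_2\le\liminf_n\fint^{T_n}_{0}H_2\le\liminf_T\fint^{T}_{0}H_1-\liminf_T\fint^{T}_{0}(H_1-H_2)\le\liminf_T\fint^{T}_{0}H_1.\]
For (ii), I pick $T_n$ realizing $\limsup_{T\to\infty}\fint^{T}_{0}H_2$. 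Then
\[\limsup_T\fint^{T}_{0}H_1\ge\limsup_n\fint^{T_n}_{0}H_1\ge\lim_n\fint^{T_n}_{0}H_2+\liminf_n\fint^{T_n}_{0}(H_1-H_2)\ge\limsup_T\fint^{T}_{0}H_2+\liminf_T\fint^{T}_{0}(H_1-H_2).\]

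For strictness, assume $\liminf_T\fint^{T}_{0}\int_\Omega(c_1-c_2)\,\mathrm{d}x\,\mathrm{d}s>0$. Then Lemma \ref{lemma3.7} gives $\kappa:=\liminf_T\fint^{T}_{0}(H_1-H_2)>0$. The chains above then yield each inequality with an extra $+\kappa$, which is strict.

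The one step that needs care is exactly this strictness. One must check that the constant in Lemma \ref{lemma3.7} makes $\kappa$ a genuine positive number, not just positivity along each $T$. It does: from \eqref{eq3.2} and the Harnack bound, $\kappa\ge C\liminf_T\fint^{T}_{0}\int_\Omega(c_1-c_2)>0$. Everything else is elementary manipulation of $\liminf$ and $\limsup$.
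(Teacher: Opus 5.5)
Your proposal is correct and is essentially the paper's argument: the corollary is obtained from Lemma~\ref{lemma3.7} by choosing $T_n\to\infty$ realizing $\liminf_{T\to\infty}\fint^{T}_{0}H(s;c_1)\,\mathrm{d}s$ for (i) and $\limsup_{T\to\infty}\fint^{T}_{0}H(s;c_2)\,\mathrm{d}s$ for (ii). Strictness is inherited from the strict version of \eqref{eq3.3}, whose positive lower bound $C\liminf_{T\to\infty}\fint^{T}_{0}\int_{\Omega}(c_1-c_2)\,\mathrm{d}x\,\mathrm{d}s$ you identify correctly. As a minor simplification, you do not even need the sequence selection: apply $\liminf(a+b)\ge\liminf a+\liminf b$ and $\limsup(a+b)\ge\limsup a+\liminf b$ to $\fint^{T}_{0}H(s;c_1)\,\mathrm{d}s=\fint^{T}_{0}H(s;c_2)\,\mathrm{d}s+\fint^{T}_{0}[H(s;c_1)-H(s;c_2)]\,\mathrm{d}s$, and both inequalities, including the extra $+\kappa$, follow at once.
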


\begin{lemma}\label{lemma3.9}
Let $(H_1,\varphi_1,\psi_1)$ and $(H_2,\varphi_2,\psi_2)$ be the normalized principal Floquet bundles corresponding to $c_1$ and $c_2$ respectively, where the functions $c_1,c_2\in \mathcal{C}$ satisfy
  $$\lim_{t\to\infty} \left\|c_1(\cdot,t)-c_2(\cdot,t)\right\|_{\infty} =0.$$
Then
  \[\lim\limits_{T\to\infty}\fint^{T}_{0}[H_1(s)-H_2(s)]\,\mathrm{d}s= 0,\]
and moreover,
  \[\liminf\limits_{T\to\infty}\fint^{T}_{0}H_1(s)\,\mathrm{d}s= \liminf\limits_{T\to\infty}\fint^{T}_{0}H_2(s)\,\mathrm{d}s,\quad \limsup\limits_{T\to\infty}\fint^{T}_{0}H_1(s)\,\mathrm{d}s= \limsup\limits_{T\to\infty}\fint^{T}_{0}H_2(s)\,\mathrm{d}s.\]
\end{lemma}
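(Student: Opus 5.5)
We would reuse the identity \eqref{eq3.2} from the proof of Lemma \ref{lemma3.7}, which does not depend on any ordering of $c_1$ and $c_2$. Multiplying the $\varphi_1$-equation by $\psi_2$ and the $\psi_2$-equation by $\varphi_1$, integrating over $\Omega$ and subtracting gives, for every $T>0$,
\[
\fint^{T}_{0}[H_1(s)-H_2(s)]\,\mathrm{d}s=\frac{\omega}{T}\left(\left.\ln\int_{\Omega}\varphi_1\psi_2\,\mathrm{d}x\right)\right|^{T}_{t=0}+\fint^{T}_{0}\frac{\int_{\Omega}[c_1(x,s)-c_2(x,s)]\varphi_1\psi_2\,\mathrm{d}x}{\int_{\Omega}\varphi_1\psi_2\,\mathrm{d}x}\,\mathrm{d}s .
\]

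The first term tends to $0$. By the Harnack principle (Proposition \ref{proposition3.3}(ii)), with $\omega,d$ fixed, $\varphi_1$ and $\psi_2$ are bounded above and below by positive constants uniformly on $\overline{\Omega}\times\mathbb{R}$. Hence $\ln\int_\Omega\varphi_1\psi_2\,\mathrm{d}x$ is bounded uniformly in $t$, and the first term is $O(1/T)$.

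For the second term we use that $\varphi_1\psi_2/\int_\Omega\varphi_1\psi_2$ is a positive weight of total mass one. So the integrand at time $s$ is bounded in absolute value by $\varepsilon(s):=\|c_1(\cdot,s)-c_2(\cdot,s)\|_\infty$. Note that $\varepsilon$ is bounded, since $c_1,c_2\in\mathcal{C}$, and $\varepsilon(s)\to0$ as $s\to\infty$. A Cesàro argument then finishes the step. Given $\eta>0$, choose $t_\eta$ with $\varepsilon(s)<\eta$ for $s\ge t_\eta$. Then
\[
\left|\fint_0^T\cdots\right|\le \frac{t_\eta}{T}\bigl(\|c_1\|_\infty+\|c_2\|_\infty\bigr)+\eta,
\]
so the $\limsup$ as $T\to\infty$ is at most $\eta$. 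Since $\eta$ is arbitrary, $\lim_{T\to\infty}\fint_0^T[H_1-H_2]\,\mathrm{d}s=0$.

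The equalities of $\liminf$ and $\limsup$ follow at once. We write $\fint_0^TH_1=\fint_0^TH_2+o(1)$, and adding a null sequence does not change $\liminf$ or $\limsup$. Alternatively, we can apply the sequence-wise inequalities from the end of the proof of Lemma \ref{lemma3.7} in both directions.

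The only point that needs care is the uniformity of the Harnack bounds in $t\in\mathbb{R}$, which is exactly what Proposition \ref{proposition3.3}(ii) provides. Otherwise the argument is routine, and we expect no serious obstacle.
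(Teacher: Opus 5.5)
Your proposal is correct and is essentially the paper's own proof. You start from identity \eqref{eq3.2}, remove the boundary term $\frac{\omega}{T}\ln\int_\Omega\varphi_1\psi_2\,\mathrm{d}x\big|_0^T$ using the uniform Harnack bounds of Proposition~\ref{proposition3.3}(ii), bound the weighted-average term by $\fint_0^T\|c_1(\cdot,s)-c_2(\cdot,s)\|_\infty\,\mathrm{d}s\to0$ (your Ces\`aro argument fills in the step the paper calls obvious), and pass to $\liminf$ and $\limsup$ by adding a null sequence, exactly as the paper does.
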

\begin{proof}
Obviously, the condition $\lim_{t\to\infty}\left\|c_1(\cdot,t)-c_2(\cdot,t)\right\|_{\infty}=0$ implies
  \[\lim\limits_{T\to\infty}\fint^{T}_{0}\left\|c_1(\cdot,s)-c_2(\cdot,s)\right\|_{\infty} \,\mathrm{d}s=0.\]
Let $(H_i(\cdot),\varphi_i(\cdot,\cdot),\psi_i(\cdot,\cdot))=(H(\cdot;c_i),\varphi(\cdot,\cdot;c_i), \psi(\cdot,\cdot;c_i))$ be the normalized principal Floquet bundles of \eqref{eq1.1} associated with $c_i\in C^{\delta,\delta/2}(\overline{\Omega}\times\mathbb{R})$, $i=1,2$. As the proof of Lemma \ref{lemma3.7}, we can obtain \eqref{eq3.2}, that is,
 \begin{equation*}
   \fint^{T}_{0}[H_1(s)-H_2(s)]\,\mathrm{d}s=\frac{\omega}{T}\left(\left.\ln\int_{\Omega}\varphi_1 \psi_2\,\mathrm{d}x\right)\right|^{T}_{t=0}+\fint^{T}_{0}\frac{\int_{\Omega}[c_1(x,s)- c_2(x,s)] \varphi_1\psi_2\,\mathrm{d}x }{\int_{\Omega}\varphi_1\psi_2\,\mathrm{d}x}\,\mathrm{d}s.
 \end{equation*}
Note that
\[\left|\fint^{T}_{0}\frac{\int_{\Omega}[c_1(x,s)- c_2(x,s)] \varphi_1\psi_2\,\mathrm{d}x }{\int_{\Omega}\varphi_1\psi_2\,\mathrm{d}x}\,\mathrm{d}s\right|\le\fint^{T}_{0}\left\|c_1( \cdot,s)-c_2(\cdot,s)\right\|_{\infty} \,\mathrm{d}s.\]
Then one easily verifies
\begin{equation}\label{eq3.4}
 \left|\fint^{T}_{0}[H_1(s)-H_2(s)]\,\mathrm{d}s\right|\le \left|\frac{\omega}{T}\left.\left(\ln\int_{\Omega}\varphi_1 \psi_2\,\mathrm{d}x\right)\right|^{T}_{t=0}\right|+\fint^{T}_{0}\left\|c_1(\cdot,s)-c_2( \cdot,s)\right\|_{\infty} \,\mathrm{d}s.
\end{equation}
By letting $T\to\infty$, we have $\lim\limits_{T\to\infty}\fint^{T}_{0}[H_1(s)-H_2(s)]\,\mathrm{d}s= 0$.

Observe that
\[\fint^{T}_{0}H_1(s)\,\mathrm{d}s= \fint^{T}_{0}H_2(s)\,\mathrm{d}s+ \fint^{T}_{0}[H_1(s)-H_2(s)]\,\mathrm{d}s.\]
Taking ``$\liminf$'' or ``$\limsup$'' on both sides, one easily gets
  \[\liminf\limits_{T\to\infty}\fint^{T}_{0}H_1(s)\,\mathrm{d}s= \liminf\limits_{T\to\infty}\fint^{T}_{0}H_2(s)\,\mathrm{d}s\]
and
  \[\limsup\limits_{T\to\infty}\fint^{T}_{0}H_1(s)\,\mathrm{d}s= \limsup\limits_{T\to\infty}\fint^{T}_{0}H_2(s)\,\mathrm{d}s.\]
The proof is complete.
\end{proof}

\medskip
In fact, the hypotheses in (ii) and (iii) of Theorem \ref{theorem1.3}, as well as those in Lemma \ref{lemma3.7}, Corollary \ref{corollary3.8} and Lemma \ref{lemma3.9}, admit a suitable relaxation. More precisely, condition in Theorem \ref{theorem1.3}(ii) requires only that $c_1 \ge c_2$ holds almost everywhere on $[t_0,\infty)$, while condition in Theorem \ref{theorem1.3}(iii) can be reduced to $\lim_{T\to\infty} \fint_0^T \|c_1(\cdot,s)-c_2(\cdot,s)\|_\infty \, \,\mathrm{d}s = 0$.
\begin{lemma}\label{lemma3.10}
Let $\{T_n\}_{n\in\mathbb{N}^+}$ be a sequence satisfying $T_n\to\infty$ as $n\to\infty$, and $c_1,c_2\in \mathcal{C}$ be given functions.
\begin{enumerate}[{\rm (i)}]
  \item Denote by
         \[S_\varepsilon:=\{t\ge0~|~c_1(x,t)-c_2(x,t)\ge\varepsilon,~\forall ~ x\in\overline{\Omega}\}\]
  for any given $\varepsilon>0$. If $\lim_{\varepsilon\to 0^+}\limsup_{n\to\infty}\frac{1}{T_n}|S_\varepsilon\cap[0,T_n]|=0$, then
  \[\liminf\limits_{n\to\infty}\fint^{T_n}_{0}[H(s;c_1)-H(s;c_2)]\,\mathrm{d}s\ge 0,\]
the above inequality is strict if $\lim_{n\to\infty}\frac{1}{T_n}|S_{\varepsilon_0}\cap[0,T_n]|=0$ for some $\varepsilon_0>0$. Moreover, if $\lim_{\varepsilon\to 0^+}\limsup_{T\to\infty}\frac{1}{T}|S_\varepsilon\cap[0,T]|=0$, then
  \[\liminf\limits_{T\to\infty}\fint^{T}_{0}[H(s;c_1)-H(s;c_2)]\,\mathrm{d}s\ge 0,\]
the above inequality is strict if $\lim_{T\to\infty}\frac{1}{T}|S_{\varepsilon_0}\cap[0,T]|=0$ for some $\varepsilon_0>0$.
  \item Denote by
         \[S_\varepsilon:=\{t\ge0~|~\|c_1(\cdot,t)-c_2(\cdot,t)\|_\infty\ge\varepsilon\}.\]
  If $\lim_{\varepsilon\to 0^+}\limsup_{n\to\infty}\frac{1}{T_n}|S_\varepsilon\cap[0,T_n]|=0$, then
  \[\lim\limits_{n\to\infty}\fint^{T_n}_{0}[H(s;c_1)-H(s;c_2)]\,\mathrm{d}s= 0.\]
  Moreover, if $\lim_{\varepsilon\to 0^+}\limsup_{T\to\infty}\frac{1}{T}|S_\varepsilon\cap[0,T]|=0$, then
  \[\lim\limits_{T\to\infty}\fint^{T}_{0}[H(s;c_1)-H(s;c_2)]\,\mathrm{d}s= 0.\]
\end{enumerate}
\end{lemma}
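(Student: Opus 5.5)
The plan is to reuse the integral identity \eqref{eq3.2} from the proof of Lemma \ref{lemma3.7}. That identity needs no sign condition on $c_1-c_2$; it only requires multiplying the $\varphi_1$-equation by $\psi_2$, the $\psi_2$-equation by $\varphi_1$, and subtracting. Set
\[\rho(x,s):=\frac{\varphi_1(x,s)\psi_2(x,s)}{\int_\Omega\varphi_1\psi_2\,\mathrm{d}x},\]
which is a probability density in $x$ for each $s$. Then \eqref{eq3.2} reads
\[\fint^{T}_{0}[H_1-H_2]\,\mathrm{d}s=\frac{\omega}{T}\Bigl(\ln\int_\Omega\varphi_1\psi_2\,\mathrm{d}x\Bigr)\Big|^{T}_{0}+\frac1T\int_0^T\!\!\int_\Omega[c_1-c_2]\rho\,\mathrm{d}x\mathrm{d}s.\]
By the Harnack principle (Proposition \ref{proposition3.3}(ii)), $\int_\Omega\varphi_1\psi_2\,\mathrm{d}x$ is bounded above and away from zero uniformly in $t$. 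Hence the boundary term is $O(1/T)$ and vanishes along $T_n$ (or as $T\to\infty$). Everything therefore reduces to estimating the time average of
\[g(s):=\int_\Omega[c_1(x,s)-c_2(x,s)]\rho(x,s)\,\mathrm{d}x,\]
which satisfies $|g(s)|\le\|c_1-c_2\|_\infty\le 2M$, where $M:=\max_i\|c_i\|_\infty$.

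For (ii), fix $\varepsilon>0$ and split $[0,T_n]$ into $S_\varepsilon\cap[0,T_n]$ and its complement. On the complement, $\|c_1(\cdot,s)-c_2(\cdot,s)\|_\infty<\varepsilon$, so $|g(s)|\le\varepsilon$. On $S_\varepsilon$ we only use $|g|\le 2M$. This gives
\[\Bigl|\fint_0^{T_n}g\,\mathrm{d}s\Bigr|\le\varepsilon+\frac{2M}{T_n}\,|S_\varepsilon\cap[0,T_n]|.\]
Taking $\limsup_{n\to\infty}$ and then letting $\varepsilon\to0^+$, the hypothesis gives $\lim_n\fint_0^{T_n}[H_1-H_2]\,\mathrm{d}s=0$. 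The statement for $T\to\infty$ follows by the same computation with $T$ in place of $T_n$.

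For (i) the same splitting is used, but one-sided. The set that has to be small is the set of times where the order $c_1\ge c_2$ is violated by more than $\varepsilon$. I will therefore read $S_\varepsilon$ as the \emph{exceptional} set, the complement of the set where $c_1-c_2\ge-\varepsilon$ on $\overline\Omega$; the displayed definition must be interpreted in this sense for the hypothesis to be meaningful. Off $S_\varepsilon$ we have $g(s)\ge-\varepsilon$, since $\rho$ is a probability density, and on $S_\varepsilon$ we have $g\ge-2M$. This yields
\[\fint_0^{T_n}g\,\mathrm{d}s\ge-\varepsilon-\frac{2M}{T_n}\,|S_\varepsilon\cap[0,T_n]|,\]
and the conclusion $\liminf\ge0$ follows exactly as in (ii).

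For strictness we need a positive contribution instead of merely $\ge-\varepsilon$. Assume the exceptional set $S_{\varepsilon_0}$ has zero density. I would then combine this with the Harnack lower bound $\rho\ge C^{-1}$, exactly as in Lemma \ref{lemma3.7}, and with a positive-density set of times on which $c_1-c_2\ge\varepsilon_0$ on $\overline\Omega$. On that set $g\ge\varepsilon_0$, which produces a uniform positive lower bound for $\liminf_n\fint_0^{T_n}g\,\mathrm{d}s$.

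The main obstacle is not analytic, since all estimates are one-line consequences of the density bound and Harnack. It is bookkeeping the precise meaning of $S_\varepsilon$ in (i), in particular which set must have vanishing density and which must have positive density for the strict inequality. I would first check consistency with Lemma \ref{lemma3.7}, which must be recovered when $c_1\ge c_2$ everywhere, and then fix the interpretation accordingly.
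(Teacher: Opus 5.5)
Your handling of (ii), and the overall mechanism (identity \eqref{eq3.2}, Harnack control of the boundary term $\frac{\omega}{T}\ln\int_\Omega\varphi_1\psi_2\,\mathrm{d}x\big|_0^T$, and splitting $[0,T]$ along $S_\varepsilon$ using that $\rho$ is a probability density), is exactly the paper's argument. You are right that (i) cannot be proved with the displayed definition of $S_\varepsilon$. For $c_1=c_2-1$, every $S_\varepsilon$ is empty, so all hypotheses hold, while $H(\cdot;c_1)=H(\cdot;c_2)-1$. The paper's own proof is inconsistent at this point. Its first display bounds the time average below by $\frac{\varepsilon}{T}|[0,T]\cap S_\varepsilon|-\frac{C}{T}|[0,T]\setminus S_\varepsilon|$, and the next display silently exchanges $S_\varepsilon$ and $[0,T]\setminus S_\varepsilon$. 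What that second display actually proves is the statement with $S_\varepsilon$ replaced by its complement $\{t\ge0:\min_{\overline{\Omega}}(c_1-c_2)(\cdot,t)<\varepsilon\}$. Under that reading both clauses of (i) hold verbatim, and strictness even yields $\liminf\ge\varepsilon_0$. So your claim that the exceptional-set reading is forced ``for the hypothesis to be meaningful'' is not correct.

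The genuine gap is your strictness step. Take your reading $E_\varepsilon=\{t\ge0:\min_{\overline{\Omega}}(c_1-c_2)(\cdot,t)<-\varepsilon\}$. The non-strict inequality is then fine, and unlike the complement reading it contains the non-strict part of Lemma \ref{lemma3.7}. But the strictness criterion of the statement becomes false: for $c_1=c_2$ every $E_{\varepsilon_0}$ is empty while $H_1\equiv H_2$.

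Your remedy invokes ``a positive-density set of times on which $c_1-c_2\ge\varepsilon_0$'', which is not among the hypotheses, so you would be proving a different lemma. If you take this route you must add that hypothesis explicitly, in the form $\liminf_{n\to\infty}\frac{1}{T_n}|P\cap[0,T_n]|>0$ with $P=\{t:\min_{\overline{\Omega}}(c_1-c_2)(\cdot,t)\ge\varepsilon_0\}$. Positive upper density only makes a $\limsup$ positive. The Harnack bound on $\rho$ is then unnecessary, since $g\ge\varepsilon_0$ pointwise on $P$.

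As written, the proposal leaves the interpretation open and does not derive the strict inequality from the stated hypotheses. You need to commit to one of two options:
\begin{itemize}
\item the complement reading, where the proof is the paper's and is complete for both clauses;
\item your reading, with a reformulated strictness clause carrying the added lower-density hypothesis.
\end{itemize}
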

\begin{proof}
Firstly, we prove assertion (i). Note that
 \begin{align*}
     & \fint^{T}_{0}\frac{\int_{\Omega}[c_1(x,s)- c_2(x,s)] \varphi_1\psi_2\,\mathrm{d}x }{\int_{\Omega}\varphi_1\psi_2\,\mathrm{d}x}\,\mathrm{d}s \\
   = & \frac{1}{T}\int_{[0,T]\cap S_\varepsilon}\frac{\int_{\Omega}[c_1(x,s)- c_2(x,s)] \varphi_1\psi_2\,\mathrm{d}x }{\int_{\Omega}\varphi_1\psi_2\,\mathrm{d}x}\,\mathrm{d}s + \frac{1}{T}\int_{[0,T]\setminus S_\varepsilon}\frac{\int_{\Omega}[c_1(x,s)- c_2(x,s)] \varphi_1\psi_2\,\mathrm{d}x }{\int_{\Omega}\varphi_1\psi_2\,\mathrm{d}x}\,\mathrm{d}s \\
   \ge & \frac{\varepsilon}{T}|[0,T]\cap S_\varepsilon|-\frac{\|c_1\|_\infty+\|c_2\|_\infty}{T}|[0,T]\setminus S_\varepsilon|.
 \end{align*}
This, together with \eqref{eq3.2} gives
 \[\fint^{T}_{0}[H_1(s)-H_2(s)]\,\mathrm{d}s\ge\frac{\omega}{T}\left(\left.\ln\int_{\Omega}\varphi_1 \psi_2\,\mathrm{d}x\right)\right|^{T}_{t=0}+\frac{\varepsilon}{T}|[0,T]\setminus S_\varepsilon|-\frac{\|c_1\|_\infty+\|c_2\|_\infty}{T}|[0,T]\cap S_\varepsilon|.\]
Letting $n\to\infty$ with $T=T_n$, and $T\to\infty$ respectively give the first assertion.

To prove the second assertion, we note that
 \begin{align*}
    & \fint^{T}_{0}\left\|c_1(\cdot,s)-c_2( \cdot,s)\right\|_{\infty} \,\mathrm{d}s \\
  = & \frac{1}{T}\int_{[0,T]\setminus S_\varepsilon}\left\|c_1(\cdot,s)-c_2( \cdot,s)\right\|_{\infty}\,\mathrm{d}s + \frac{1}{T}\int_{[0,T]\cap S_\varepsilon}\left\|c_1(\cdot,s)-c_2( \cdot,s)\right\|_{\infty}\,\mathrm{d}s \\
   \le & \frac{\varepsilon}{T}|[0,T]\setminus S_\varepsilon|+\frac{\|c_1\|_\infty+\|c_2\|_\infty}{T}|[0,T]\cap S_\varepsilon|.
 \end{align*}
Then we rewrite the estimate \eqref{eq3.4} as
 \[\left|\fint^{T}_{0}[H_1(s)-H_2(s)]\,\mathrm{d}s\right|\le \left|\frac{\omega}{T}\left.\left(\ln\int_{\Omega}\varphi_1 \psi_2\,\mathrm{d}x\right)\right|^{T}_{t=0}\right|+\frac{\varepsilon}{T}|[0,T]\setminus S_\varepsilon|+\frac{\|c_1\|_\infty+\|c_2\|_\infty}{T}|[0,T]\cap S_\varepsilon|,\]
and the second assertion follows by sending $n\to\infty$ with $T=T_n$, and $T\to\infty$ respectively. This completes the proof.
\end{proof}

\medskip
We now give some upper estimates of $H(\cdot)$ via an elliptic eigenvalue problem. The following version of Jensen's inequality for positive definite quadratic form will be used.
\begin{lemma}\label{lemma3.11}
  Let $\mathbf{v}(\cdot)\in C(\mathbb{R};\mathbb{R}^N)\cap L^\infty(\mathbb{R})$. If $A$ is a positive definite matrix, then for any $-\infty<t_1<t_2<\infty$,
  \[\fint^{t_2}_{t_1}\mathbf{v}(s)\,\mathrm{d}s\cdot \left(A\fint^{t_2}_{t_1}\mathbf{v}(s)\,\mathrm{d}s\right)\le \fint^{t_2}_{t_1}\mathbf{v}(s)\cdot(A\mathbf{v}(s))\,\mathrm{d}s.\]
\end{lemma}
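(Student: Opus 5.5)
This is Jensen's inequality for the convex function $q(\mathbf{v}) = \mathbf{v}\cdot(A\mathbf{v})$. The cleanest route avoids invoking convexity abstractly and uses a direct variance-type expansion. Set
\[
\bar{\mathbf{v}} := \fint^{t_2}_{t_1}\mathbf{v}(s)\,\mathrm{d}s \in \mathbb{R}^N .
\]
This vector is well defined because $\mathbf{v}$ is continuous and bounded on the compact interval $[t_1,t_2]$. Write $\mathbf{w}(s) := \mathbf{v}(s) - \bar{\mathbf{v}}$. By linearity of the average, $\fint^{t_2}_{t_1}\mathbf{w}(s)\,\mathrm{d}s = 0$.

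Next expand the quadratic form, using only bilinearity and the symmetry of $A$. The lemma says only ``positive definite''; if $A$ is not assumed symmetric, replace it by its symmetric part $\frac12(A + A^{\top})$, which leaves every expression $\mathbf{u}\cdot(A\mathbf{u})$ unchanged. Pointwise in $s$,
\[
\mathbf{v}(s)\cdot(A\mathbf{v}(s)) = \bar{\mathbf{v}}\cdot(A\bar{\mathbf{v}}) + 2\,\bar{\mathbf{v}}\cdot(A\mathbf{w}(s)) + \mathbf{w}(s)\cdot(A\mathbf{w}(s)).
\]
Average this identity over $[t_1,t_2]$. The cross term averages to
\[
2\,\bar{\mathbf{v}}\cdot\Bigl(A\fint^{t_2}_{t_1}\mathbf{w}(s)\,\mathrm{d}s\Bigr) = 0,
\]
since averaging commutes with the constant linear map $A$ and with the dot product against the fixed vector $\bar{\mathbf{v}}$. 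This leaves
\[
\fint^{t_2}_{t_1}\mathbf{v}(s)\cdot(A\mathbf{v}(s))\,\mathrm{d}s = \bar{\mathbf{v}}\cdot(A\bar{\mathbf{v}}) + \fint^{t_2}_{t_1}\mathbf{w}(s)\cdot(A\mathbf{w}(s))\,\mathrm{d}s .
\]

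Finally, positive (semi-)definiteness gives $\mathbf{w}(s)\cdot(A\mathbf{w}(s)) \ge 0$ for every $s$, so the last average is nonnegative and the claimed inequality follows. As a byproduct, strict positive definiteness shows that equality holds exactly when $\mathbf{v}$ is constant on $[t_1,t_2]$, by continuity of $\mathbf{v}$.

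None of these steps is a genuine obstacle. The only points needing care are the symmetrization remark, and checking that the averages are finite so that the linearity manipulations are justified; the boundedness and continuity of $\mathbf{v}$ guarantee the latter. This is the form in which the lemma will be applied later, with $\mathbf{v} = \nabla\mathcal{M}$-type quantities and $A = A(x)$ frozen at a fixed $x$.
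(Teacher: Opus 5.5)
Your proof is correct, and it takes a different (though closely related) route from the one the paper indicates. The paper omits the details and suggests two options. The first is to quote Jensen's inequality for the convex function $\mathbf{u}\mapsto\mathbf{u}\cdot(A\mathbf{u})$. The second is to factor $A=B^{\mathrm{T}}B$ (Cholesky) and apply Cauchy--Schwarz, giving $\bar{\mathbf{v}}\cdot(A\bar{\mathbf{v}})=|B\bar{\mathbf{v}}|^2=\bigl|\fint^{t_2}_{t_1}B\mathbf{v}(s)\,\mathrm{d}s\bigr|^2\le\fint^{t_2}_{t_1}|B\mathbf{v}(s)|^2\,\mathrm{d}s$.

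Your variance decomposition proves the quadratic case of Jensen from scratch. It needs no factorization of $A$. It also gives an exact formula for the defect, $\fint^{t_2}_{t_1}\mathbf{v}\cdot(A\mathbf{v})\,\mathrm{d}s-\bar{\mathbf{v}}\cdot(A\bar{\mathbf{v}})=\fint^{t_2}_{t_1}\mathbf{w}\cdot(A\mathbf{w})\,\mathrm{d}s$, which makes the equality case immediate. The Cholesky route, by contrast, reduces everything to the scalar Cauchy--Schwarz inequality in one line once a symmetric factorization is available; that is harmless in the paper, where $A(x)$ is symmetric.

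Your symmetrization remark is not actually needed. Without symmetry the expansion has two cross terms, $\bar{\mathbf{v}}\cdot(A\mathbf{w})$ and $\mathbf{w}\cdot(A\bar{\mathbf{v}})$, and each averages to zero separately by linearity.

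One small correction to your closing aside. In the paper the lemma is used in Lemmas 3.12 and 7.2 with $\mathbf{v}(s)=\nabla\ln\varphi(x,s)$ and $A=A(x)$ frozen at a fixed $x$. There it bounds $\nabla\bigl(\fint\ln\varphi\bigr)\cdot\bigl(A\nabla\fint\ln\varphi\bigr)$ by $\fint\nabla\ln\varphi\cdot(A\nabla\ln\varphi)$; it is not applied to $\nabla\mathcal{M}$-type quantities.
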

A proof of this lemma may proceed via Jensen's inequality for convex functions, or alternatively, by employing the Cholesky decomposition of $A$ (there exists an invertible matrix $B$ with $A = B^{\mathrm{T}}B)$ and then invoking the Cauchy-Schwarz inequality. The details are omitted for brevity. In the special case where $A$ is the identity matrix, the inequality reduces to the standard Cauchy-Schwarz inequality. Recall that $\mu^\infty(m)$ is the principal eigenvalue of
  \begin{equation*}
    \left\{
    \begin{aligned}
      &-d\operatorname{div}(A(x)\nabla\hat{\phi})+m(x)\hat{\phi}=\mu\hat{\phi},&&x\in\Omega,\\
      &\mathbf{n}\cdot A(x)\nabla\hat{\phi}=0&&x\in\partial\Omega.
    \end{aligned}
    \right.
  \end{equation*}
\begin{lemma}\label{lemma3.12}
Let $(H,\varphi)$ be the normalized Floquet bundle of \eqref{eq1.1} with potential $c\in\mathcal{C}$ and let $\{T_n\}_{n\in\mathbb{N}^+}$ be a sequence satisfying $T_n\to\infty$ as $n\to\infty$. Then
 \[\liminf\limits_{n\to\infty}\fint^{T_n}_{0} H(s)\,\mathrm{d}s\le\inf\limits_{\hat{c}\in \mathscr{C}_*}\mu^\infty (\hat{c})\quad\text{and}\quad\limsup\limits_{n\to\infty}\fint^{T_n}_{0} H(s)\,\mathrm{d}s\le\sup\limits_{\hat{c}\in \mathscr{C}_*}\mu^\infty (\hat{c}).\]
Consequently, if $\lim\limits_{n\to\infty}\fint^{T_n }_{0}H(s)\,\mathrm{d}s$ exists, then
  \[\lim\limits_{n\to\infty}\fint^{T_n}_{0} H(s)\,\mathrm{d}s\le\inf\limits_{\hat{c}\in \mathscr{C}_*}\mu^\infty (\hat{c});\]
if the sequence $\{\hat{c}_n\}_{n\in\mathbb{N}^+}$ converges to $\hat{c}$ in $C(\overline{\Omega})$, then
  \[\limsup\limits_{n\to\infty}\fint^{T_n}_{0} H(s)\,\mathrm{d}s \le\mu^\infty (\hat{c}).\]
\end{lemma}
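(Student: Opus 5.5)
The plan is to time-average the logarithmic form of \eqref{eq1.1}, as in the proof of Lemma \ref{lemma3.6}, but this time to keep the spatial structure. That produces an elliptic supersolution with potential $\hat{c}_T$, which can then be compared with the principal eigenfunction of \eqref{eq1.3}. The first goal is the finite-$T$ estimate
\[\fint^{T}_{0}H(s)\,\mathrm{d}s\le \mu^\infty(\hat{c}_T)+\varepsilon_T,\qquad \varepsilon_T:=\frac{2\omega}{T}\sup_{\overline{\Omega}\times\mathbb{R}}|\ln\varphi|\to0 .\]
Here $\varepsilon_T\to0$ because $d,\omega$ are fixed and Proposition \ref{proposition3.3}(ii) bounds $|\ln\varphi|$.

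\textbf{Step 1 (averaged inequality).} Set $w=\ln\varphi$. It satisfies
\[\omega\partial_t w-d\operatorname{div}(A\nabla w)-d\nabla w\cdot(A\nabla w)+c=H,\qquad \mathbf{n}\cdot(A\nabla w)=0 .\]
Define $W(x):=\fint^{T}_{0}w(x,s)\,\mathrm{d}s$, which is $C^2(\overline{\Omega})$ by the regularity of $\varphi$ and still satisfies the Neumann condition. Averaging the equation over $[0,T]$ and interchanging $\operatorname{div}$ with the time average gives
\[\frac{\omega}{T}\big(w(x,T)-w(x,0)\big)-d\operatorname{div}(A\nabla W)-d\fint^{T}_{0}\nabla w\cdot(A\nabla w)\,\mathrm{d}s+\hat{c}_T=\fint^T_0H(s)\,\mathrm{d}s .\]
Apply Lemma \ref{lemma3.11} pointwise in $x$ with $\mathbf{v}(s)=\nabla w(x,s)$ and the matrix $A(x)$. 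This gives $\fint^T_0\nabla w\cdot(A\nabla w)\,\mathrm{d}s\ge \nabla W\cdot(A\nabla W)$, and hence
\[-d\operatorname{div}(A\nabla W)-d\nabla W\cdot(A\nabla W)+\hat{c}_T\ge \fint^T_0H(s)\,\mathrm{d}s-\varepsilon_T .\]
Now put $\Phi:=e^{W}>0$. Since $\operatorname{div}(A\nabla\Phi)=\Phi\,[\operatorname{div}(A\nabla W)+\nabla W\cdot(A\nabla W)]$, this becomes
\[-d\operatorname{div}(A\nabla\Phi)+\hat{c}_T\Phi\ge\Big(\fint^T_0H(s)\,\mathrm{d}s-\varepsilon_T\Big)\Phi,\qquad \mathbf{n}\cdot(A\nabla\Phi)=0 .\]

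\textbf{Step 2 (elliptic comparison).} Let $\hat{\phi}_T>0$ be the principal eigenfunction of \eqref{eq1.3} with $m=\hat{c}_T$. Multiply the last inequality by $\hat{\phi}_T$ and integrate by parts; both functions satisfy the Neumann condition and the operator is self-adjoint. This yields
\[\mu^\infty(\hat{c}_T)\int_\Omega\Phi\hat{\phi}_T\,\mathrm{d}x\ge\Big(\fint^T_0H(s)\,\mathrm{d}s-\varepsilon_T\Big)\int_\Omega\Phi\hat{\phi}_T\,\mathrm{d}x .\]
Dividing by the positive integral gives the finite-$T$ estimate.

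\textbf{Step 3 (passage to the limit).} For the liminf bound, fix $\hat{c}\in\mathscr{C}_*$ and choose a subsequence with $\hat{c}_{n_k}\to\hat{c}$ in $C(\overline{\Omega})$. Lemma \ref{lemma2.19} (or Lemma \ref{lemma2.17}) gives $\mu^\infty(\hat{c}_{n_k})\to\mu^\infty(\hat{c})$. Therefore $\liminf_n\fint^{T_n}_0H\le\liminf_k\fint^{T_{n_k}}_0H\le\mu^\infty(\hat{c})$, and taking the infimum over $\hat{c}\in\mathscr{C}_*$ gives the first inequality. For the limsup bound, take a subsequence realizing $\limsup_n\fint^{T_n}_0H$. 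By the relative compactness used in Lemma \ref{lemma2.1}, extract a further subsequence along which $\hat{c}_{n}$ converges to some $\hat{c}'\in\mathscr{C}_*$. This gives $\limsup_n\fint^{T_n}_0H\le\mu^\infty(\hat{c}')\le\sup_{\mathscr{C}_*}\mu^\infty$. The two consequences are immediate. If the limit exists, the liminf bound applies to it. If $\hat{c}_n\to\hat{c}$, then $\mathscr{C}_*=\{\hat{c}\}$.

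The step needing the most care is Step 1. One has to justify the pointwise use of Jensen's inequality (Lemma \ref{lemma3.11} with the $x$-dependent positive definite matrix $A(x)$), the interchange of time-averaging with the spatial derivatives, and the preservation of the Neumann condition under averaging. Everything else is standard. The uniform $\ln\varphi$ bound for fixed $(\omega,d)$ makes the boundary term harmless.
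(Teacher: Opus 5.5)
Your proof is correct and follows the paper's argument: your $\Phi=e^{W}$ is exactly the paper's $u_n$, and the key supersolution inequality comes from the same pointwise use of Lemma~\ref{lemma3.11}, giving \eqref{eq3.5}. The only difference is the order of operations. You test against the principal eigenfunction for $\hat{c}_T$ at each finite $T$ and then use continuity of $\mu^\infty$ in the potential. The paper instead first extracts a convergent subsequence of $u_n$, which needs uniform higher-regularity bounds and positivity of the limit, and then applies the elliptic comparison principle of Devinatz--Ellis--Friedman to the limiting inequality. Your ordering avoids that compactness step and also gives the quantitative bound $\fint_0^T H(s)\,\mathrm{d}s\le\mu^\infty(\hat{c}_T)+O(1/T)$.
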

\begin{proof}
Dividing \eqref{eq1.1} both sides by $\varphi$ yields
\[\omega\partial_t\ln\varphi-d\operatorname{div}(A(x)\nabla\ln\varphi)-d\nabla\ln\varphi \cdot(A(x)\nabla\ln\varphi)+c(x,t)=H(t),\qquad (x,t)\in\Omega\times\mathbb{R}.\]
Define $u_n(x)=\exp\left(\fint^{T_n}_{0}\ln\varphi(x,s)\,\mathrm{d}s\right)$, $x\in\overline{\Omega}$, $n\in\mathbb{N}^+$. One can check that
  \begin{align*}
    &d\operatorname{div}(A(x)\nabla u_n)\\
    =&d\left[\fint^{T_n}_{0}\operatorname{div}(A(x)\nabla \ln\varphi)\,\mathrm{d}s+\nabla\left(\fint^{T_n}_{0}\ln\varphi\,\mathrm{d}s\right)\cdot\left(A(x) \nabla\left(\fint^{T_n}_{0}\ln\varphi\,\mathrm{d}s\right)\right)\right]u_n \\
    =&\Bigg[\fint^{T_n}_{0}\left(\omega\partial_t\ln\varphi -d\nabla\ln\varphi \cdot(A(x)\nabla\ln\varphi)+c(x,t)-H(t)\right)\,\mathrm{d}s \\
    &\qquad +d\nabla\left(\fint^{T_n}_{0}\ln\varphi\,\mathrm{d}s\right)\cdot\left(A(x) \nabla\left(\fint^{T_n}_{0}\ln\varphi\,\mathrm{d}s\right)\right)\Bigg]u_n\\
    =&\Bigg[\frac{\omega}{T_n}\ln\varphi\Bigg|^{T_n}_{t=0} -d\fint^{T_n}_{0}\nabla\ln\varphi \cdot(A(x)\nabla\ln\varphi)\,\mathrm{d}s+\fint^{T_n}_{0}c(x,s)\,\mathrm{d}s-\fint^{T_n}_{0}H(s)\,\mathrm{d}s \\
    &\qquad +d\nabla\left(\fint^{T_n}_{0}\ln\varphi\,\mathrm{d}s\right)\cdot\left(A(x) \nabla\left(\fint^{T_n}_{0}\ln\varphi\,\mathrm{d}s\right)\right)\Bigg]u_n\\
    \le&\left(\frac{\omega}{T_n}\ln\varphi\Bigg|^{T_n}_{t=0} +\fint^{T_n}_{0}c(x,s)\,\mathrm{d}s-\fint^{T_n}_{0}H(s)\,\mathrm{d}s\right)u_n,\qquad\qquad \forall~x\in\Omega,
  \end{align*}
where Lemma \ref{lemma3.11} has been used in obtaining the last inequality. Substituting the equation satisfied by $\varphi$ into the above inequality, one gets that
  \begin{equation}\label{eq3.5}
    \left\{
    \begin{aligned}
      &\frac{\omega}{T_n}\left.\ln\varphi(x,t)\right|^{T_n}_{t=0}u_n-d\operatorname{div}(A(x)\nabla u_n) +\left(\fint^{T_n}_{0}c(x,s)\,\mathrm{d}s\right)u_n\ge \left(\fint^{T_n}_{0}H(s)\,\mathrm{d}s\right)u_n,&&x\in\Omega,\\
      &\mathbf{n}\cdot(A(x)\nabla u_n)=0&&x\in\partial\Omega.
    \end{aligned}
    \right.
  \end{equation}
Since $c$ is uniformly bounded in $\Omega\times\mathbb{R}$, by choosing a suitable subsequence of $\{T_n\}_{n\in\mathbb{N}^+}$, we may assume that the sequence $\{\hat{c}_n\}_{n\in\mathbb{N}^+}$ converges to some continuous function $\hat{c}$. Since $u_n(\cdot)\in C^{2+\delta}(\Omega)\cap C^{1+\delta/2}(\overline{\Omega})$ is uniformly bounded in $n\in\mathbb{N}^+$, there is a subsequence $\{T_{n_k}\}_{k\in\mathbb{N}^+}$ of $\{T_n\}_{n\in\mathbb{N}^+}$ such that $u_{n_k}(\cdot)$ converges to some strictly positive function $u (\cdot)\in C^{2+\delta}(\Omega)\cap C^{1+\delta/2}(\overline{\Omega})$. The positivity of $u $ follows from the Harnack principle of $\varphi$. Letting $n\to\infty$ in \eqref{eq3.5} yields
  \begin{equation*}
    \left\{
    \begin{aligned}
      &-d\operatorname{div}(A(x)\nabla u ) +\hat{c}(x)u \ge \liminf\limits_{n\to\infty}\left(\fint^{T_n}_{0} H(s)\,\mathrm{d}s\right)u ,&&x\in\Omega,\\
      &\mathbf{n}\cdot(A(x)\nabla u )=0,&&x\in\partial\Omega.
    \end{aligned}
    \right.
  \end{equation*}
By the comparison principle for elliptic eigenvalue problem\cite[Lemma 4.1]{Devinatz1974The}, we conclude that
    \[\liminf\limits_{n\to\infty}\fint^{T_n}_{0} H(s)\,\mathrm{d}s\le\mu^\infty(d,\hat{c}).\]
Since $\hat{c}$ is arbitrary in $\mathscr{C}_*$, we obtain that
 \[\liminf\limits_{n\to\infty}\fint^{T_n}_{0} H(s)\,\mathrm{d}s\le\inf\limits_{\hat{c}\in \mathscr{C}_*}\mu^\infty (\hat{c}).\]
On the other hand, if we choose a subsequence $\{T_{n'}\}_{n'\in\mathbb{N}^+}$ of $\{T_n\}_{n\in\mathbb{N}^+}$ such that
 \[\lim_{n'\to\infty}\fint^{T_{n'}}_{0} H(s)\,\mathrm{d}s= \limsup_{n\to\infty}\fint^{T_n}_{0} H(s)\,\mathrm{d}s,\]
then for any convergence point $\hat{c}$ of $\{\hat{c}_{n'}\}_{n'\in\mathbb{N}^+}$, letting $n'\to\infty$ in \eqref{eq3.5} yields
  \begin{equation*}
    \left\{
    \begin{aligned}
      &-d\operatorname{div}(A(x)\nabla u ) +\hat{c}(x)u \ge \lim\limits_{n'\to\infty}\left(\fint^{T_{n'}}_{0} H(s)\,\mathrm{d}s\right)u ,&&x\in\Omega,\\
      &\mathbf{n}\cdot(A(x)\nabla u )=0,&&x\in\partial\Omega.
    \end{aligned}
    \right.
  \end{equation*}
In the above limiting process, we naturally assume the convergence of $u_{n'}$ in $C^{2+\delta}(\Omega)\cap C^{1+\delta/2}(\overline{\Omega})$, a further subsequence can be taken if necessary. From this we conclude that
 \[\limsup\limits_{n\to\infty}\fint^{T_n}_{0} H(s)\,\mathrm{d}s\le\sup\limits_{\hat{c}\in \mathscr{C}_*}\mu^\infty (\hat{c}).\]
This completes the proof.
\end{proof}
\begin{corollary}\label{corollary3.13}
Let $H$ be the normalized principal Floquet bundle of \eqref{eq1.1} with potential $c\in\mathcal{C}$. Then
     \[\liminf\limits_{T\to\infty}\fint^{T}_{0}H(s)\,\mathrm{d}s\le \inf\limits_{\hat{c}\in\mathscr{C}}\mu^\infty(d,\hat{c})~\text{ and }~ \limsup\limits_{T\to\infty}\fint^{T}_{0}H(s)\,\mathrm{d}s\le \sup\limits_{\hat{c}\in\mathscr{C}}\mu^\infty(d,\hat{c}).\]
\end{corollary}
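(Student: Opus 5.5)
The plan is to derive Corollary \ref{corollary3.13} from Lemma \ref{lemma3.12} by choosing the sequence $\{T_n\}_{n\in\mathbb{N}^+}$ adapted to the quantity being bounded, and then using the inclusion $\mathscr{C}_*\subset\mathscr{C}$ noted in Remark \ref{remark2.2}.

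\emph{First inequality.} Take a sequence $T_n\to\infty$ with
\[\lim_{n\to\infty}\fint^{T_n}_{0}H(s)\,\mathrm{d}s=\liminf_{T\to\infty}\fint^{T}_{0}H(s)\,\mathrm{d}s.\]
A subsequence is not enough here, because $\inf_{\mathscr{C}}\mu^\infty$ may be attained at some $\hat{c}\in\mathscr{C}$ that is not a limit point of this particular sequence. So I argue directly. Fix any $\hat{c}\in\mathscr{C}$ and choose $T_n\to\infty$ with $\hat{c}_{T_n}\to\hat{c}$ in $C(\overline{\Omega})$. Then $\mathscr{C}_*=\{\hat{c}\}$ for this sequence, and Lemma \ref{lemma3.12} (its last assertion, or its first with $\mathscr{C}_*$ a singleton) gives
\[\liminf_{n\to\infty}\fint^{T_n}_{0}H(s)\,\mathrm{d}s\le\mu^\infty(d,\hat{c}).\]
Since $\liminf_{T\to\infty}\fint_0^T H\,\mathrm{d}s\le\liminf_{n\to\infty}\fint_0^{T_n}H\,\mathrm{d}s$ for any sequence $T_n\to\infty$, we get $\underline{\lambda}_H\le\mu^\infty(d,\hat{c})$. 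Taking the infimum over $\hat{c}\in\mathscr{C}$ yields the first inequality.

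\emph{Second inequality.} Choose $T_n\to\infty$ so that
\[\lim_{n\to\infty}\fint^{T_n}_{0}H(s)\,\mathrm{d}s=\limsup_{T\to\infty}\fint^{T}_{0}H(s)\,\mathrm{d}s.\]
By Lemma \ref{lemma2.1}, the associated set $\mathscr{C}_*$ is nonempty and compact, and it is contained in $\mathscr{C}$. Lemma \ref{lemma3.12} then gives
\[\limsup_{T\to\infty}\fint^{T}_{0}H\,\mathrm{d}s=\limsup_{n\to\infty}\fint^{T_n}_{0}H\,\mathrm{d}s\le\sup_{\mathscr{C}_*}\mu^\infty(d,\cdot)\le\sup_{\mathscr{C}}\mu^\infty(d,\cdot).\]

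There is no real obstacle; the only care point is in the first inequality. One must pick the sequence according to $\hat{c}$ rather than according to $H$, since the infimum over $\mathscr{C}$ can involve limit points not reached along a sequence realizing the $\liminf$ of $H$.
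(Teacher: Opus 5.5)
Your proof is correct and follows the paper's argument. For the first inequality you choose $T_n$ according to $\hat c\in\mathscr{C}$; the paper takes an $\varepsilon$-near-minimizer $\hat c_\varepsilon$ and uses the $\limsup$ form of Lemma \ref{lemma3.12}, while you take an arbitrary $\hat c$ with its $\liminf$ form, which is equivalent. For the second inequality you choose $T_n$ realizing the $\limsup$ of the averages of $H$ and use $\mathscr{C}_*\subset\mathscr{C}$, exactly as the paper does; the abandoned opening sentence of your first-inequality paragraph can simply be deleted.
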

\begin{proof}
We now show the first inequality. For any $\varepsilon>0$, there is a function $\hat{c}_{\varepsilon}\in \mathscr{C}$ and a sequence $\{T_n\}_{n\in\mathbb{N}^+}$ of $T\to\infty$, such that
  \[\hat{c}_{\varepsilon}=\lim_{n\to\infty}\fint^{T_n}_{0}c(\cdot,s)\,\mathrm{d}s,\qquad \mu^\infty(\hat{c}_{\varepsilon})\le \inf\limits_{\hat{c}\in\mathscr{C}}\mu^\infty(d,\hat{c})+\varepsilon.\]
Then by Lemma \ref{lemma3.12}, we have
 \[\limsup\limits_{n\to\infty}\fint^{T_n}_{0} H(s)\,\mathrm{d}s \le\mu^\infty(\hat{c}_{\varepsilon}).\]
We then deduce that
 \[\liminf\limits_{T\to\infty}\fint^{T}_{0} H(s)\,\mathrm{d}s\le\limsup_{n\to\infty}\int^{T_n}_{0}H(s)\,\mathrm{d}s\le \inf\limits_{\hat{c}\in\mathscr{C}}\mu^\infty(d,\hat{c})+\varepsilon.\]
By the arbitrariness of $\varepsilon$, we obtain the first estimate.

On the other hand, by selecting a sequence $\{T_n\}_{n\in\mathbb{N}^+}$ of $T\to\infty$ such that
  \[\lim\limits_{n\to\infty}\fint^{T_n}_{0}H(s)\,\mathrm{d}s=\limsup\limits_{T\to\infty}\fint^{T}_{0}H(s)\,\mathrm{d}s,\]
then the second estimate follows directly from Lemma \ref{lemma3.12}. This finishes the proof.
\end{proof}

\medskip
Theorem \ref{theorem1.3} and Theorem \ref{theorem3.1} follow from Lemmas \ref{lemma3.7}-\ref{lemma3.9} and Corollary \ref{corollary3.13}.

\section{A Harnack estimate}\label{sec3.2}
This section is devoted to establishing the Harnack estimate for the principal Floquet bundle (i.e., Proposition \ref{proposition3.3}(ii)), with a precise characterization of the dependence of the estimate constant on the coefficient $d\in(0,1)$. The results obtained in this section not only serve as a crucial tool for analyzing the limiting behavior of the principal Floquet exponent in later chapters, but also hold independent interest. It is worth noting that, under slightly weaker regularity assumptions on the parameters compared to other parts of this paper, we derive an elliptic-type Harnack inequality for a parabolic problem.

Let $\Omega\subset\mathbb{R}^n$ ($n\ge 1$) be a bounded domain with $C^2$ boundary. Consider nonnegative solutions of the following linear parabolic equation:
\begin{equation}\label{eq3.6}
\begin{cases}
\omega\,\partial_t u-d\,\mathrm{div}\big(A(x)\nabla u\big)
+c(x,t)\,u=0 &(x,t)\in\Omega\times\mathbb{R},\\[4pt]
\mathbf{n}\cdot A(x)\nabla u=0 &(x,t)\in\partial\Omega\times\mathbb{R},
\end{cases}
\end{equation}
where $\omega>0$ is the frequency parameter and $d>0$ is the diffusion coefficient. Denote by $\mathrm{diam}(\Omega)$ the Euclidean diameter. Define the \textbf{intrinsic diameter} by
 \[L_\Omega:=\sup_{x_1,x_2\in\Omega}\inf\{\ell(\gamma):\gamma\subset\Omega
\text{ is a rectifiable curve connecting }x_1\text{ and }x_2\},\]
where $\ell(\gamma)$ denotes the length of curve $\gamma$. For a bounded, connected $C^2$ domain, it is clear that $L_\Omega<\infty$. The coefficient matrix $A\in C^{0,1}(\overline\Omega;\mathbb{R}^{n\times n})$ is symmetric and uniformly elliptic: there exist constants $0<\lambda\le\Lambda<\infty$ such that $\lambda|\xi|^2\le A(x)\xi\cdot\xi\le\Lambda|\xi|^2$ for all $\xi\in\mathbb{R}^n$ and $x\in\overline\Omega$. Let $K_A$ denote the Lipschitz constant of $A$. Assume $c\in L^\infty(\Omega\times\mathbb{R})$ and set $c_0:=\|c\|_{L^\infty}<\infty$. Under these assumptions of bounded and measurable coefficients, any nonnegative, nontrivial weak entire solution (an entire solution is defined for all $t\in\mathbb{R}$; hereafter simply referred to as a nonnegative solution) $u\in W^{1,0}_2(\Omega\times\mathbb{R})\cap L^\infty_{\mathrm{loc}}(\mathbb{R};L^2(\Omega))$ of \eqref{eq3.6} is bounded strictly away from zero and infinity on every compact time interval.

\begin{theorem}\label{theorem3.14}
There exist constants $C'=C'(n,\lambda,\Lambda,c_0,K_A,\Omega)>0$ and $d_0=d_0(\Omega)<\min\{1,(\mathrm{diam}(\Omega))^2\}$, both independent of $\omega$ and $d$, such that for any $0<d\le d_0$ and any $\omega>0$, every nonnegative solution of \eqref{eq3.6} satisfies
\begin{equation}\label{eq3.7}
\sup_{x\in\Omega}u(x,t)\le
\exp\!\left(\frac{C'}{\sqrt d}\right)\inf_{x\in\Omega}u(x,t), \qquad\forall~t\in\mathbb{R}.
\end{equation}
\end{theorem}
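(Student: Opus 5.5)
The plan is to reduce \eqref{eq3.7} to a chain of local parabolic Harnack inequalities at the spatial scale $r\sim\sqrt d$. The time lag that parabolic Harnack produces is then absorbed by a crude maximum-principle bound on how fast $\max_x u$ and $\min_x u$ can change in time. Dividing \eqref{eq3.6} by $d$ gives
\[
\frac{\omega}{d}\,\partial_t u-\operatorname{div}(A\nabla u)+\frac{c}{d}\,u=0 .
\]
Take a cylinder of spatial radius $r=\sqrt d$ and set $y=(x-x_0)/r$, $s=d(t-t_0)/(\omega r^2)=(t-t_0)/\omega$. The rescaled equation is then $\partial_s v-\operatorname{div}_y(A(x_0+ry)\nabla_y v)+r^2 d^{-1}c\,v=0$. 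Its zeroth-order coefficient is bounded by $c_0$, and its coefficient matrix has ellipticity $\lambda,\Lambda$ and Lipschitz constant $rK_A\le K_A$. All of this is independent of $\omega$ and $d$.

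\textbf{Step 1: local Harnack.} The Moser/Krylov--Safonov parabolic Harnack inequality applies to nonnegative weak solutions and gives a constant $C_H=C_H(n,\lambda,\Lambda,c_0,K_A)$ such that
\[
u(y,s)\le C_H\, u(y',s+1)\qquad\text{for } |y-y'|\le 1 \text{ in the rescaled unit cylinder.}
\]
In original variables this reads $u(x,t)\le C_H\,u(x',t+h)$ whenever $|x-x'|\le\sqrt d$, with $h:=\omega$ (up to a fixed factor).

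Near $\partial\Omega$ I would flatten the boundary with $C^2$ charts and use even reflection across the flattened boundary. The conormal condition $\mathbf n\cdot A\nabla u=0$ is exactly what makes the reflected function a weak solution of a divergence-form equation with bounded measurable coefficients, so the same Harnack bound holds up to the boundary. The choice $d_0=d_0(\Omega)$ ensures that $\sqrt d$ is below the uniform chart size and below $\mathrm{diam}(\Omega)$. I expect this boundary step, with constants uniform in the chart, to be the main technical obstacle. I would handle it by citing the standard reflection argument and tracking only that the constants depend on $\lambda,\Lambda,K_A$ and the $C^2$ character of $\partial\Omega$.

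\textbf{Step 2: chaining in space.} Fix $x,x'\in\Omega$. Since $\Omega$ has finite intrinsic diameter $L_\Omega$, a rectifiable curve in $\Omega$ joins them, and it can be covered by $N\le C L_\Omega/\sqrt d$ steps of length at most $\sqrt d$. Pad with repeated points so that every chain has exactly $N_0:=\lceil C L_\Omega/\sqrt d\rceil$ steps. Applying Step 1 along the chain gives
\[
u(x,t-N_0h)\le C_H^{N_0}\,u(x',t).
\]
Taking the supremum over $x$ and the infimum over $x'$,
\[
\sup_\Omega u(\cdot,t-N_0h)\le e^{C/\sqrt d}\inf_\Omega u(\cdot,t).
\]

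\textbf{Step 3: removing the lag.} Both $e^{c_0t/\omega}u$ and $e^{-c_0t/\omega}u$ are super/subsolutions of the Neumann problem for $\omega\partial_t-d\operatorname{div}(A\nabla\cdot)$. The comparison principle (with the Hopf-type Lemma \ref{lemma2.11}) therefore gives
\[
\sup_\Omega u(\cdot,t)\le e^{c_0(t-s)/\omega}\sup_\Omega u(\cdot,s)\qquad\text{for } s<t .
\]
With $t-s=N_0h=N_0\omega$, the factor $e^{c_0N_0h/\omega}=e^{c_0N_0}$ is again $e^{C/\sqrt d}$, independent of $\omega$. Combining with Step 2,
\[
\sup_\Omega u(\cdot,t)\le e^{c_0N_0}\sup_\Omega u(\cdot,t-N_0h)\le e^{C'/\sqrt d}\inf_\Omega u(\cdot,t),
\]
which is \eqref{eq3.7}.

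The argument needs the solution to exist on $(t-N_0h,t]$, which is where the entire-in-time assumption is used. It also needs $u>0$, which follows from Step 2 once $u\not\equiv0$.
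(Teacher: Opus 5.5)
Your proposal is correct and follows essentially the same route as the paper: the same $(\sqrt d,\omega)$ rescaling, a local forward Harnack inequality up to the boundary via flattening and even reflection, a chain of $O(d^{-1/2})$ Harnack steps along a curve realizing the intrinsic diameter, and closing the accumulated time lag with the sup-growth bound $\sup_\Omega u(\cdot,t)\le e^{c_0(t-s)/\omega}\sup_\Omega u(\cdot,s)$ (the paper's Lemma~\ref{lemma3.16}). Padding every chain to the common length $N_0$ is a small but real tidying of the paper's Step~5, which implicitly needs the same time lag for all pairs of points.
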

The proof of Theorem \ref{theorem3.14} hinges on the following classical interior forward Harnack inequality.

\begin{lemma}\label{lemma3.15}
Let $w\ge0$ be a weak solution of
\[\partial_s w-\mathrm{div}(\tilde{A}(y)\nabla w)+q(y,s)w=0,\qquad (y,s)\in B_r(z)\times(s_0,s_0+\theta r^2),\]
where $B$ is symmetric and uniformly elliptic with constants $\tilde{\lambda},\tilde{\Lambda}$, and $q\in L^\infty$ with $\|q\|_\infty\le q_0$. Then there exists a constant $C_F=C_F(n,\tilde{\lambda},\tilde{\Lambda},q_0,\theta,r)>0$ such that
\[
\sup_{B_{r/2}(z)\times(s_0+\frac14\theta r^2,\,s_0+\frac12\theta r^2)}w
\le C_F
\inf_{B_{r/2}(z)\times(s_0+\frac34\theta r^2,\,s_0+\theta r^2)}w.
\]
\end{lemma}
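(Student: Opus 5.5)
The plan is to deduce the lemma from Moser's parabolic Harnack inequality for divergence-form operators with bounded measurable coefficients, in the version that allows a bounded zeroth-order term (Aronson--Serrin, Trudinger). The only work is to normalize the geometry and the potential so that the dependence of $C_F$ on $(n,\tilde\lambda,\tilde\Lambda,q_0,\theta,r)$ is transparent.

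\textbf{Step 1 (rescaling).} Set $Y=(y-z)/r$, $S=(s-s_0)/r^2$ and $\tilde w(Y,S)=w(z+rY,s_0+r^2S)$. Then $\tilde w\ge0$ is a weak solution of
\[\partial_S\tilde w-\operatorname{div}_Y\big(\tilde A(z+rY)\nabla_Y\tilde w\big)+r^2q\,\tilde w=0\quad\text{in }B_1(0)\times(0,\theta).\]
The new coefficient matrix has the same ellipticity constants $\tilde\lambda,\tilde\Lambda$, and the new potential satisfies $|r^2q|\le q_0r^2$. The cylinders in the statement become $B_{1/2}\times(\theta/4,\theta/2)$ and $B_{1/2}\times(3\theta/4,\theta)$. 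It therefore suffices to prove the inequality on this unit-scale configuration, with a constant depending on $n,\tilde\lambda,\tilde\Lambda,q_0r^2,\theta$.

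\textbf{Step 2 (removing the sign of the potential).} Put $v=e^{-q_0r^2S}\tilde w$. Then $v$ solves the same equation with potential $r^2q+q_0r^2\ge0$, which is still bounded by $2q_0r^2$. Because $e^{\pm q_0r^2\theta}$ is a fixed factor, a Harnack inequality for $v$ transfers to $\tilde w$.

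\textbf{Step 3 (three Moser ingredients).} I would run the standard argument on $v$:
\begin{enumerate}
\item[(a)] Local boundedness: Moser iteration with test functions $\eta^2v^{p}$, combined with the parabolic Sobolev embedding, gives $\sup_{Q^-_{1/2}}v\le C\big(\fint_{Q^-_{3/4}}v^{p}\big)^{1/p}$ for every $p>0$.
\item[(b)] Weak Harnack: iteration with negative powers gives $\big(\fint_{Q^+_{3/4}}v^{-p}\big)^{-1/p}\le C\inf_{Q^+_{1/2}}v$.
\item[(c)] Crossover: testing with $\eta^2/v$ shows that $\log v$ satisfies a parabolic BMO-type estimate. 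Here $Q^{\mp}$ denote the earlier and later subcylinders.
\end{enumerate}

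\textbf{Step 4 (main obstacle).} The crossover step (c) is where I expect the difficulty. The log estimate yields weak-$L^1$ bounds for $(\log v-k)_+$ on the earlier cylinder and for $(k-\log v)_+$ on the later one, for a suitable time-dependent constant $k$. To convert these into $\fint v^{p_0}\cdot\fint v^{-p_0}\le C$ for a small $p_0$, I would apply the Bombieri--Giusti abstract lemma rather than the parabolic John--Nirenberg inequality, since it needs only (a) and (b) on a nested family of cylinders plus the weak-$L^1$ log bounds. Combining this with (a) and (b) then gives $\sup_{Q^-_{1/2}}v\le C\inf_{Q^+_{1/2}}v$. Undoing Steps 2 and 1 yields the lemma with $C_F=C_F(n,\tilde\lambda,\tilde\Lambda,q_0,\theta,r)$. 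The dependence on $r$ enters only through $q_0r^2$, which is what the later proof of Theorem \ref{theorem3.14} needs.
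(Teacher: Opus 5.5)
Your proposal is correct and follows the route the paper indicates. The paper gives no argument beyond invoking Moser iteration (local boundedness plus negative-power/weak Harnack bounds, with the bounded potential absorbed) and citing Moser, Fabes--Safonov et al. Your rescaling, the exponential shift making the potential nonnegative, and the Bombieri--Giusti crossover (essentially Moser's 1971 simplification) fill in that same standard scheme, and they correctly show that for fixed $\theta$ the $r$-dependence of $C_F$ enters only through $q_0r^2$.
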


The proof of this lemma can be accomplished via Moser iteration (combining local $L^\infty$ boundedness and the weak Harnack inequality), where the potential term $q$ can be absorbed during the iteration, yielding a constant depending on $q_0$. It holds for bounded measurable coefficients (allowing time dependence); for details, we refer to \cite{Moser1964A,Moser1971On,Fabes1986A,Fabes1997Behavior,Fabes1999Behavior,Lieberman1984The,Lieberman1987Local}.

The following lemma serves as the key tool for closing the argument at the same time. Its proof follows directly from the comparison principle via an explicit supersolution construction.

\begin{lemma}\label{lemma3.16}
Let $w\ge0$ be a nonnegative weak solution of
\begin{equation*}
  \left\{
  \begin{aligned}
    &\partial_s w-\mathrm{div}(\tilde{A}(y)\nabla w)+q(y,s)w=0,&&(y,s)\in D\times\mathbb{R},\\
    &\mathbf{n}\cdot \tilde{A}\nabla w=0, &&(y,s)\in\partial D\times\mathbb{R},
  \end{aligned}
  \right.
\end{equation*}
where $D\subset\mathbb{R}^n$ is a bounded domain, $\tilde{A}$ is symmetric and uniformly elliptic, $q\in L^\infty$ with $\|q\|_\infty\le q_0$. Then for any $s\in\mathbb{R}$ and any $\Delta s>0$,
 \[M(s+\Delta s)\le {\rm e}^{q_0\Delta s}\,M(s),\]
where $M(s):=\sup_{y\in D}w(y,s)$.
\end{lemma}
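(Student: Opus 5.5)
Since the equation is linear, the plan is to compare $w$ with the spatially constant function
\[
\overline{w}(y,s):=M(s_1)\,{\rm e}^{q_0(s-s_1)},\qquad s\ge s_1,
\]
where $s_1$ is an arbitrary initial time. Here $M(s_1)<\infty$ because $w$ is locally bounded in time; if $M(s_1)=\infty$ the claim is vacuous.

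\emph{Step 1: $\overline{w}$ is a supersolution.} Since $\overline{w}$ does not depend on $y$, we have $\nabla\overline{w}=0$. The conormal boundary condition $\mathbf{n}\cdot\tilde A\nabla\overline{w}=0$ therefore holds on $\partial D$. In the interior,
\[
\partial_s\overline{w}-\operatorname{div}(\tilde A\nabla\overline{w})+q\overline{w}=(q_0+q)\overline{w}\ge0,
\]
because $|q|\le q_0$ and $\overline{w}\ge0$.

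\emph{Step 2: comparison.} Set $v:=w-\overline{w}$. Then $v(\cdot,s_1)\le0$ on $D$, and $v$ satisfies
\[
\partial_s v-\operatorname{div}(\tilde A\nabla v)+qv\le0 \quad\text{in } D\times(s_1,\infty),
\]
with homogeneous conormal condition on $\partial D$.

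For weak solutions we use a Stampacchia energy argument rather than a classical maximum principle. Test the inequality with $v^+$, integrate by parts, and use the Neumann condition to eliminate the boundary terms. This gives
\[
\frac12\frac{\mathrm{d}}{\mathrm{d}s}\int_D (v^+)^2\,\mathrm{d}y+\tilde\lambda\int_D|\nabla v^+|^2\,\mathrm{d}y\le q_0\int_D (v^+)^2\,\mathrm{d}y .
\]
Gronwall's inequality together with $v^+(\cdot,s_1)=0$ then yields $v^+\equiv0$ on $D\times[s_1,\infty)$.

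The only technical care needed is justifying the time derivative of $\int_D(v^+)^2$ for weak solutions. This is standard via Steklov averages, or directly if $w$ is classical, as it is in the applications, where the coefficients are Hölder continuous.

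\emph{Step 3: conclusion.} From $v^+\equiv0$ we get $w(y,s)\le M(s_1){\rm e}^{q_0(s-s_1)}$ for all $y\in D$ and all $s\ge s_1$. Taking the supremum over $y$ with $s_1=s$ and $s$ replaced by $s+\Delta s$ gives
\[
M(s+\Delta s)\le {\rm e}^{q_0\Delta s}M(s).
\]
The main obstacle is only the regularity justification in Step 2; the rest is immediate.
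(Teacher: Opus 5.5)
Your proposal is correct and follows the paper's proof: the same spatially constant supersolution $M(s_1)\mathrm{e}^{q_0(s-s_1)}$, checked via $q_0+q\ge 0$ and $\nabla\overline{w}=0$, then comparison, then taking the supremum over $y$. The only difference is that you prove the Neumann comparison step yourself, using the $v^+$ energy estimate and Gronwall, whereas the paper simply cites the weak comparison principle.
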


\begin{proof}
Fix any initial time $s_0\in\mathbb{R}$, and let $M_0:=\sup_{y\in D} w(y,s_0)$. Define the spatially constant function
\[
\Phi(y,s) := M_0 {\rm e}^{q_0 (s-s_0)},\qquad (y,s)\in D\times [s_0,\infty).
\]
A direct computation shows
\[
\partial_s \Phi - \mathrm{div}(\tilde{A}\nabla\Phi) + q(y,s)\Phi
= q_0 \Phi + q \Phi
= (q_0 + q(y,s))\Phi.
\]
Since $q \ge -\|q\|_{L^\infty} \ge -q_0$, we have $q_0+q\ge 0$, and hence
\[
\partial_s \Phi - \mathrm{div}(\tilde{A}\nabla\Phi) + q\Phi \ge 0
\quad\text{in } D\times [s_0,\infty),
\]
i.e., $\Phi$ is a supersolution of the equation.
On the boundary $\partial D$, $\nabla\Phi\equiv 0$, so $\Phi$ automatically satisfies the homogeneous Neumann boundary condition $\mathbf{n}\cdot \tilde{A}\nabla\Phi=0$.
At the initial time $s=s_0$, $\Phi(y,s_0)=M_0 \ge w(y,s_0)$ for all $y\in D$.

By the weak comparison principle for linear parabolic equations with Neumann boundary conditions, we have
\[
w(y,s) \le \Phi(y,s) = M_0 {\rm e}^{q_0(s-s_0)},\qquad \forall\,y\in D,\;\forall\,s\ge s_0.
\]
Taking the supremum over $y\in D$ on both sides yields
\[
M(s) \le M(s_0) {\rm e}^{q_0(s-s_0)}.
\]
Setting $s_0=s$ and replacing $s$ by $s+\Delta s$ gives $M(s+\Delta s)\le {\rm e}^{q_0\Delta s}M(s)$, as desired.
\end{proof}

\begin{proof}[Proof of Theorem \ref{theorem3.14}]
We take $d_0=\min\{1, \mathrm{diam}(\Omega)^2\}$ and assume $0<d\le d_0$. The proof proceeds through several steps.

\noindent\textbf{Step 1: Scaling transformation.}
Set
\[
y=\frac{x}{\sqrt d},\qquad s=\frac{t}{\omega},\qquad
v(y,s)=u(\sqrt d\,y,\omega s),\qquad
\Omega_d:=\frac{1}{\sqrt d}\Omega.
\]
Direct computation gives
\[
\partial_t u=\frac{1}{\omega}\partial_s v,\quad
\nabla_x u=\frac{1}{\sqrt d}\nabla_y v,\quad
\mathrm{div}_x(A\nabla_x u)=\frac{1}{d}\mathrm{div}_y(A(\sqrt d\,y)\nabla_y v).
\]
Substituting these into \eqref{eq3.6} yields
\[
\left\{
\begin{aligned}
  &\partial_s v-\mathrm{div}_y(\tilde A(y)\nabla_y v)+\tilde c(y,s)v=0
\quad&&(y,s)\in\Omega_d\times\mathbb{R},\\
&\tilde{\mathbf{n}}\cdot\tilde A\nabla_y v=0, &&(y,s)\in \partial\Omega_d\times\mathbb{R},
\end{aligned}
\right.
\]
where $\tilde A(y)=A(\sqrt d\,y)$ and $\tilde c(y,s)=c(\sqrt d\,y,\omega s)$.
After this change of variables, the structural constants of the problem remain unchanged and are independent of $\omega,d$:
\[
\lambda|\xi|^2\le\tilde A\xi\cdot\xi\le\Lambda|\xi|^2,\qquad
\|\tilde c\|_{L^\infty}=c_0,\qquad
\operatorname{Lip}(\tilde A)\le K_A.
\]
The geometric parameters of the domain change as follows: $\mathrm{diam}(\Omega_d)=\frac{\mathrm{diam}(\Omega)}{\sqrt d}$ and the intrinsic diameter $L_{\Omega_d}=L_\Omega/\sqrt d$.

\noindent\textbf{Step 2: Local even reflection and boundary treatment.}
Since $\partial\Omega$ is of class $C^2$, there exists a constant $r_0=r_0(\Omega)>0$, independent of $d$, such that for any boundary point $z_0\in\partial\Omega_d$, the ball $B_{4r_0}(z_0)$ can be mapped onto a half-ball via a $C^2$ diffeomorphism with uniformly bounded $C^2$ norm (depending only on $\Omega$).

Fix any $z\in\Omega_d$. We consider two cases.
\begin{enumerate}[{\rm (a)}]
  \item If $\operatorname{dist}(z,\partial\Omega_d)\ge r_0$, then $B_{r_0}(z)\subset\Omega_d$, and $v$ itself is a weak solution in $B_{r_0}(z)\times\mathbb{R}$.
  \item If $\operatorname{dist}(z,\partial\Omega_d)<r_0$, choose $z_0\in\partial\Omega_d$ such that $|z-z_0|<r_0$. Then $B_{2r_0}(z)\subset B_{4r_0}(z_0)$. We flatten the boundary in $B_{4r_0}(z_0)$, perform an even reflection across the flat boundary, and map back to the original coordinates. This yields an extended function defined on $B_{2r_0}(z)\times\mathbb{R}$, which is a nonnegative weak solution to the parabolic equation with extended coefficients. The extended matrix remains uniformly elliptic with constants $\hat\lambda,\hat\Lambda$ depending only on $\lambda,\Lambda,\Omega$, and the extended zero-order term satisfies $\|\hat c\|_{L^\infty}=c_0$.
\end{enumerate}
In both cases, the restriction of $v$ to $B_{r_0/2}(z)\times\mathbb{R}$ can be viewed as the restriction of an interior solution on a ball of radius $r_0$, with coefficients satisfying the same uniform bounds independent of $d$ and $z$. Therefore, the interior forward Harnack inequality (Lemma \ref{lemma3.15}) applies to every ball $B_{r_0/2}(z)$ with $z\in\Omega_d$, with a constant $C_F$ independent of $d$ and $z$.

\noindent\textbf{Step 3: Forward Harnack estimate on the fixed-scale ball.}
For any $z\in\Omega_d$ and any $\sigma\in\mathbb{R}$, apply Lemma \ref{lemma3.15} with $r=r_0$, $\theta=1$, $s_0=\sigma-3r_0^2/8$. This gives
\[
\sup_{B_{r_0/2}(z)\times(\sigma-\frac{r_0^2}{8},\sigma+\frac{r_0^2}{8})}v
\le C_F
\inf_{B_{r_0/2}(z)\times(\sigma+\frac{3r_0^2}{8},\sigma+\frac{5r_0^2}{8})}v,
\]
where $C_F=C_F(n,\lambda,\hat\Lambda,c_0)$. In particular, choosing $\sigma$ and $\sigma+r_0^2/2$ to lie in the two time intervals respectively, we obtain
\begin{equation}\label{eq3.8}
\sup_{B_{r_0/2}(z)}v(\cdot,\sigma)
\le C_F\inf_{B_{r_0/2}(z)}v\left(\cdot,\sigma+\frac{r_0^2}{2}\right).
\end{equation}
Note that $B_{r_0/2}(z)$ might not be entirely contained in $\Omega$ if $z$ close sufficiently to boundary $\partial\Omega$ in the inequalities above. Should this happen, we treat $v$ appearing therein as the extension of the solution. The reason we refrain from using more rigorous notation (namely, replacing $B_{r_0/2}(z)$ by $B_{r_0/2}(z) \cap \Omega_d$) is that it would greatly burden the notation.

\noindent\textbf{Step 4: Ball-chain covering.}
Fix any two points $y_1,y_2\in\Omega_d$. By the definition of the intrinsic diameter, there exists a rectifiable curve $\gamma\subset\Omega_d$ connecting $y_1$ and $y_2$ such that
\[\ell(\gamma)\le\frac{L_\Omega}{\sqrt d}+1.\]
Select points $z_1,z_2,\ldots,z_{N_d}$ along $\gamma$ with arc-length spacing $r_0/2$, such that: (a) $z_1=y_1$, $z_{N_d}=y_2$; (b) the arc-length along $\gamma$ from $z_i$ to $z_{i+1}$ is $r_0/2$, which implies the Euclidean distance $|z_i-z_{i+1}|\le r_0/2$. The number of balls satisfies
\[
N_d-1\le\frac{\ell(\gamma)}{r_0/2}=\frac{2\ell(\gamma)}{r_0}
\le\frac{2L_\Omega}{r_0\sqrt d}+\frac{2}{r_0},
\]
i.e.,
\begin{equation}\label{eq3.9}
N_d\le\frac{2L_\Omega}{r_0\sqrt d}+\frac{2}{r_0}+1.
\end{equation}
Since $|z_i-z_{i+1}|\le r_0/2$, the adjacent balls $B_{r_0/2}(z_i)$ and $B_{r_0/2}(z_{i+1})$ intersect. Moreover, by Step 2, the forward Harnack estimate \eqref{eq3.8} applies to each $B_{r_0/2}(z_i)$.

\noindent\textbf{Step 5: Spatio-temporal iteration.}
Fix the initial time $s\in\mathbb{R}$. For $i=1,2,\ldots,N_d$, apply \eqref{eq3.8} on the ball $B_{r_0/2}(z_i)$, advancing time from $s+\frac{(i-1)r_0^2}{2}$ to $s+\frac{i r_0^2}{2}$:
\begin{equation}\label{eq3.10}
\sup_{B_{r_0/2}(z_i)}v\left(\cdot,s+\frac{(i-1)r_0^2}{2}\right)
\le C_F\inf_{B_{r_0/2}(z_i)}v\left(\cdot,s+\frac{i r_0^2}{2}\right).
\end{equation}
We prove by induction that for each $k=1,\ldots,N_d$,
\begin{equation}\label{eq3.11}
v(y_1,s)\le C_F^k\inf_{B_{r_0/2}(z_k)}v\left(\cdot,s+\frac{k r_0^2}{2}\right).
\end{equation}
For $k=1$, this follows directly from \eqref{eq3.10} with $i=1$. Assume \eqref{eq3.11} holds for $k$. Pick an intersection point $\tilde{z}_k\in B_{r_0/2}(z_k)\cap B_{r_0/2}(z_{k+1})\cap\gamma$ (which lies in $\Omega_d$ by construction). Then
 \[\inf_{B_{r_0/2}(z_k)}v\left(\cdot,s+\frac{k r_0^2}{2}\right)\le v\left(\tilde{z}_k,s+\frac{k r_0^2}{2}\right) \le\sup_{B_{r_0/2}(z_{k+1})}v\left(\cdot,s+\frac{k r_0^2}{2}\right).\]
Combining this with \eqref{eq3.10} ($i=k+1$) and the induction hypothesis yields
 \[v(y_1,s)\le C_F^{k+1}\inf_{B_{r_0/2}(z_{k+1})}v\left(\cdot,s+\frac{(k+1)r_0^2}{2}\right).\]
The induction is complete. Taking $k=N_d$ and noting that $y_2\in B_{r_0/2}(z_{N_d})$, we get
 \[v(y_1,s)\le C_F^{N_d}\,v\left(y_2,s+\frac{N_d r_0^2}{2}\right).\]
Since $y_1,y_2\in\Omega_d$ are arbitrary,
\begin{equation}\label{eq3.12}
\sup_{y\in\Omega_d}v(y,s)
\le C_F^{N_d}\inf_{y\in\Omega_d}v(y,s+\Delta s),
\qquad\Delta s:=\frac{N_d r_0^2}{2}.
\end{equation}
By \eqref{eq3.9},
\begin{equation}\label{eq3.13}
\Delta s=\frac{N_d r_0^2}{2}\le\frac{L_\Omega r_0}{\sqrt d}+r_0+\frac{r_0^2}{2}.
\end{equation}

\noindent\textbf{Step 6: Closing at the same time.}
Let $M(s):=\sup_{y\in\Omega_d}v(y,s)$ and $m(s):=\inf_{y\in\Omega_d}v(y,s)$. By Lemma \ref{lemma3.16}, for any $\Delta s>0$,
\begin{equation}\label{eq3.14}
M(s)\le {\rm e}^{c_0\Delta s}M(s-\Delta s).
\end{equation}
We emphasize that the estimate in Lemma \ref{lemma3.16} depends only on the $L^\infty$ bound of the zero-order term and is independent of the domain size and geometry; thus it applies uniformly to $\Omega_d$ for all $0<d\le d_0$.

Replacing $s$ by $s-\Delta s$ in \eqref{eq3.12} gives
\[
M(s-\Delta s)\le C_F^{N_d}\,m(s).
\]
Substituting this into \eqref{eq3.14} yields
\[
M(s)\le {\rm e}^{c_0\Delta s}C_F^{N_d}\,m(s),
\]
that is,
\begin{equation}\label{eq3.15}
\sup_{y\in\Omega_d}v(y,s)\le
C_F^{N_d} {\rm e}^{c_0\Delta s}\inf_{y\in\Omega_d}v(y,s)
\qquad\forall\,s\in\mathbb{R}.
\end{equation}

\noindent\textbf{Step 7: Constant estimation and scaling back.}
From \eqref{eq3.9} and \eqref{eq3.13}, we have
\[
\begin{aligned}
C_F^{N_d} {\rm e}^{c_0\Delta s}
&\le C_F^{\frac{2L_\Omega}{r_0\sqrt d}+\frac{2}{r_0}+1}
   {\rm e}^{c_0\left(\frac{L_\Omega r_0}{\sqrt d}+r_0+\frac{r_0^2}{2}\right)} \\
&=\exp\!\left(
   \frac{2L_\Omega\ln C_F}{r_0\sqrt d}
   +\left(\frac{2}{r_0}+1\right)\ln C_F
   +\frac{c_0 L_\Omega r_0}{\sqrt d}
   +c_0\left(r_0+\frac{r_0^2}{2}\right)
   \right).
\end{aligned}
\]
Since $d\le d_0\le1$, we have $1/\sqrt d\ge1$, so the constant terms can be absorbed into the exponential:
\[
\left(\frac{2}{r_0}+1\right)\ln C_F + c_0\left(r_0+\frac{r_0^2}{2}\right)
\le\frac{\left(\frac{2}{r_0}+1\right)\ln C_F + c_0\left(r_0+\frac{r_0^2}{2}\right)}{\sqrt d}.
\]
Therefore,
\[
C_F^{N_d} {\rm e}^{c_0\Delta s}\le
\exp\!\left(\frac{C'}{\sqrt d}\right),
\]
where
\[
C':=\frac{2L_\Omega}{r_0}\ln C_F + c_0 L_\Omega r_0
+\left(\frac{2}{r_0}+1\right)\ln C_F + c_0\left(r_0+\frac{r_0^2}{2}\right).
\]
Substituting this into \eqref{eq3.15} gives
\[
\sup_{y\in\Omega_d}v(y,s)\le
\exp\!\left(\frac{C'}{\sqrt d}\right)\inf_{y\in\Omega_d}v(y,s)
\qquad\forall\,s\in\mathbb{R}.
\]
Since $v(y,s)=u(\sqrt d\,y,\omega s)$, letting $s=t/\omega$ for any $t\in\mathbb{R}$ leads to
\[
\sup_{x\in\Omega}u(x,t)
=\sup_{y\in\Omega_d}v(y,t/\omega)
\le {\rm e}^{C'/\sqrt d}\inf_{y\in\Omega_d}v(y,t/\omega)
={\rm e}^{C'/\sqrt d}\inf_{x\in\Omega}u(x,t).
\]
This is exactly \eqref{eq3.7}. The constant $C'$ depends on $n,\lambda,\Lambda,c_0,K_A,\Omega$, but is independent of $\omega$ and $d$. The proof is complete.
\end{proof}

Recall that the normalized principal Floquet bundle $(H,\varphi)$ of \eqref{eq1.1} is determined by
\[(H(t),\varphi(x,t))=\left(-\omega\frac{\mathrm{d}}{\mathrm{d}t}\left(\ln \int_{\Omega}{u}(x,t)\,\mathrm{d}x\right),u(x,t)\mathrm{e}^{\frac{1}{\omega}\int^{t}_{0}H(s) \,\mathrm{d}s}\right), \quad(x,t)\in\Omega\times\mathbb{R},\]
where $u$ is the unique solution of \eqref{eq3.1}. Now we improve the Harnack estimate in Proposition \ref{proposition3.3}(ii).
\begin{theorem}\label{theorem3.17}
  Let $(\varphi,\psi)$ be the normalized principal Floquet bundle of \eqref{eq1.1} and \eqref{eq1.2}. Then there is a constant $C>0$ independent of $\omega>0$ and $d\in(0,1)$, such that
         \[{\rm e}^{-\frac{C}{\sqrt{d}}}\le \varphi(x,t),\psi(x,t)\le {\rm e}^{\frac{C}{\sqrt{d}}},\qquad \forall~(x,t)\in\overline{\Omega}\times\mathbb{R}.\]
\end{theorem}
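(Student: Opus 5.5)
The plan is to read the estimate off Theorem \ref{theorem3.14}, applied separately to $\varphi$ and to a time-reversed version of $\psi$, and then to pin the absolute size using the two normalizations in \eqref{eq1.1} and \eqref{eq1.2}.

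\textbf{Step 1: the bound for $\varphi$.} I will not go through the solution $u$ of \eqref{eq3.1}. Instead, $\varphi$ itself is a positive entire solution of
\[\omega\partial_t\varphi-d\operatorname{div}(A(x)\nabla\varphi)+\big(c(x,t)-H(t)\big)\varphi=0\]
with the Neumann condition. By Proposition \ref{proposition3.3}(i), $|c-H|\le 2\|c\|_\infty$, so this potential is bounded independently of $\omega$ and $d$.

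\begin{itemize}
\item Theorem \ref{theorem3.14}, applied with $c_0$ replaced by $2\|c\|_\infty$, gives $\sup_{\Omega}\varphi(\cdot,t)\le e^{C'/\sqrt d}\inf_{\Omega}\varphi(\cdot,t)$ for all $t\in\mathbb{R}$, for $d\le d_0$, with $C'$ independent of $\omega$.
\item The normalization $\int_\Omega\varphi(x,t)\,\mathrm{d}x=1$ forces $\inf_\Omega\varphi(\cdot,t)\le 1/|\Omega|\le\sup_\Omega\varphi(\cdot,t)$.
\item Hence $|\Omega|^{-1}e^{-C'/\sqrt d}\le\varphi\le|\Omega|^{-1}e^{C'/\sqrt d}$.
\item Since $d<1$, the factor $|\ln|\Omega||$ is at most $|\ln|\Omega||/\sqrt d$, so it can be absorbed by enlarging $C$.
\end{itemize}

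\textbf{Step 2: the bound for $\psi$.} Set $w(x,s):=\psi(x,-s)$. Then $w$ is a positive entire solution of
\[\omega\partial_s w-d\operatorname{div}(A\nabla w)+\big(c(x,-s)-H(-s)\big)w=0\]
with the Neumann condition, and its potential again has sup norm at most $2\|c\|_\infty$. Theorem \ref{theorem3.14} therefore gives the same same-time Harnack ratio for $\psi(\cdot,t)$.

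The normalization $\int_\Omega\varphi\psi\,\mathrm{d}x=1$, together with $\int_\Omega\varphi\,\mathrm{d}x=1$, gives $\inf_\Omega\psi(\cdot,t)\le1\le\sup_\Omega\psi(\cdot,t)$. This yields $e^{-C'/\sqrt d}\le\psi\le e^{C'/\sqrt d}$.

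\textbf{Step 3: the range $d\in(d_0,1)$.} I expect this to be the main obstacle, because Theorem \ref{theorem3.14} is only stated for $d\le d_0$. Proposition \ref{proposition3.3}(ii) is independent of $\omega$, but it carries no explicit uniformity in $d$. My plan is to rerun the proof of Theorem \ref{theorem3.14} for $d\in(d_0,1)$.

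\begin{itemize}
\item After scaling, $\Omega_d=\Omega/\sqrt d$ with $\sqrt d\in(\sqrt{d_0},1)$, so the boundary-flattening radius can be taken as $r_0\sqrt{d_0}$ uniformly in $d$.
\item The interior Harnack constant $C_F$ is then fixed.
\item The chain length satisfies $N_d\le C(L_\Omega/\sqrt{d_0}+1)$, and the Lemma \ref{lemma3.16} step costs $e^{c_0\Delta s}$ with $\Delta s$ bounded.
\item So the Harnack ratio is bounded by a constant independent of $\omega$ and of $d\in[d_0,1)$. This constant is at most $e^{C''/\sqrt d}$ because $1/\sqrt d\ge1$.
\end{itemize}

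Taking $C$ to be the maximum of the constants from Steps 1–3 completes the proof.
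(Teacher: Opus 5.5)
Your proposal is correct and follows essentially the paper's argument: apply Theorem~\ref{theorem3.14} to $\varphi$ and to $\psi$, then fix the absolute scale using the normalizations in \eqref{eq1.1}--\eqref{eq1.2}. Your version is more careful than the paper's in three places: the paper tacitly takes $|\Omega|=1$; it says only ``similar argument'' for $\psi$, whereas your time reversal $s\mapsto -s$ together with $\int_\Omega\varphi\psi\,\mathrm{d}x=1$ is what is actually needed; and it does not address the range $d\in(d_0,1)$ that the stated form of Theorem~\ref{theorem3.14} leaves uncovered.
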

\begin{proof}
By Theorem \ref{theorem3.14}, we know there is some positive constant $C$, independent of $\omega>0$ and $d\in(0,1)$ such that
  \[\sup_{x\in\Omega}\varphi(x,t)\le\exp\!\left(\frac{C}{\sqrt d}\right)\inf_{x\in\Omega}\varphi(x,t), \qquad\forall~t\in\mathbb{R}.\]
Therefore, for any $x,y\in\overline{\Omega}$, one has
  \[\exp\!\left(-\frac{C}{\sqrt d}\right)\varphi(y,t)\le \varphi(x,t)\le\exp\!\left(\frac{C}{\sqrt d}\right)\varphi(y,t), \qquad\forall~t\in\mathbb{R}.\]
Integrating above inequalities with respect to $y$ and noting that $|\Omega|=1$ and $\int_\Omega\varphi(y,t)\,\mathrm{d}y=1$ for all $t\in\mathbb{R}$ yield
  \[\exp\!\left(-\frac{C}{\sqrt d}\right)\le \varphi(x,t)\le\exp\!\left(\frac{C}{\sqrt d}\right), \qquad\forall~t\in\mathbb{R}.\]
One can obtain the Harnack estimate for $\psi$ by a similar argument. This completes the proof.
\end{proof}

\chapter{Asymptotic behavior of $H$: $d\to0$ with fixed $\omega\in(0,\infty)$}\label{chp4}
From this chapter through Chapter 7, our basic approach to studying the limiting behavior of the principal Floquet exponent is to estimate its upper and lower bounds separately. On one hand, we estimate the upper bound by utilizing the principal eigenvalue of the time-averaged linear elliptic problem (see Lemma \ref{lemma3.12}, Corollary \ref{corollary3.13}, etc.), constructing sub- and super-solutions, and applying the comparison principle (see Section 2.2). On the other hand, the lower bound is obtained mainly by combining the Harnack principle (see Proposition \ref{proposition3.3}(ii)) to construct suitable sub- and super-solutions, and then employing the comparison principle (see Section 2.2). To clearly reflect this analytical strategy, we devote two sections in the current chapter to estimating the upper and lower bounds separately, thereby determining the limit. In the following three chapters, however, the estimates for the upper and lower bounds will be presented together within a single lemma, and some repeated analysis and computational details will be omitted for brevity.

This chapter focuses on the asymptotic behavior of the principal Floquet bundle $H$ of \eqref{eq1.1} for small diffusion rate $d$ with fixed $\omega>0$, and we will use $H(\cdot;d)$ to emphasize the dependence on $d$. Recall the notations $\mathscr{C}$ and $\mathscr{C}_*$: for a given function $c\in\mathcal{C}$,
 \[\mathscr{C}:=\left\{\hat{c}\in C (\overline{\Omega})~|~\text{there exists }T_n\to\infty\text{ such that }\|\hat{c}_{T_n}-\hat{c}\|_{C(\overline{\Omega})}\to 0\right\}\]
and for a given sequence $\{T_n\}_{n\in\mathbb{N}^+}$ of $T\to\infty$,
 \[\mathscr{C}_*:=\left\{\hat{c}\in C (\overline{\Omega})~|~\left\{\hat{c}_n \right\}_{n\in\mathbb{N}^+}\text{ admits a convergent subsequence converging to }\hat{c}\text{ in } C(\overline{\Omega})\right\}.\]
This chapter presents two principal results: one pertaining to the sequence $\{T_n\}_{n\in\mathbb{N}^+}$ of $T\to\infty$ (Theorem \ref{theorem4.1}), and the other dealing with the limit $T \to \infty$.
\begin{theorem}\label{theorem4.1}
Assume that $c\in\mathcal{C}\cap C^{2,1}(\overline{\Omega}\times\mathbb{R})$ satisfies \eqref{H2}. Let $(H(\cdot;d),\varphi)$ be the normalized principal Floquet bundle of \eqref{eq1.1}, and $\{T_n\}_{n\in\mathbb{N}^+}$ be a sequence satisfying $T_n\to\infty$ as $n\to\infty$. Then
  \[\lim\limits_{d\to 0}\liminf\limits_{n\to\infty}\fint^{T_n}_{0}H(s;d)\,\mathrm{d}s= \min\limits_{x\in\overline{\Omega}}\liminf_{n\to\infty}\fint^{T_n}_{0} c(x,s)\,\mathrm{d}s= \inf_{\hat{c}\in\mathscr{C}_*}\min_{x\in\overline{\Omega}}\hat{c}(x).\]
and
  \[\lim\limits_{d\to 0}\limsup\limits_{n\to\infty}\fint^{T_n}_{0}H(s;d)\,\mathrm{d}s= \limsup_{n\to\infty}\min\limits_{x\in\overline{\Omega}}\fint^{T_n}_{0} c(x,s)\,\mathrm{d}s= \sup_{\hat{c}\in\mathscr{C}_*}\min_{x\in\overline{\Omega}}\hat{c}(x).\]
Consequently, if the sequence $\left\{\fint^{T_n}_{0} c(\cdot,s)\,\mathrm{d}s\right\}_{n\in\mathbb{N}^+}$ of functions converges in $C(\overline{\Omega})$, then
  \[\lim\limits_{d\to 0}\liminf\limits_{n\to\infty}\fint^{T_n}_{0}H(s;d)\,\mathrm{d}s= \lim\limits_{d\to 0}\limsup\limits_{n\to\infty}\fint^{T_n}_{0}H(s;d)\,\mathrm{d}s = \min\limits_{x\in\overline{\Omega}}\lim_{n\to\infty}\fint^{T_n}_{0} c(x,s)\,\mathrm{d}s.\]
\end{theorem}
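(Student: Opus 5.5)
The plan is to prove matching upper and lower bounds for $\liminf_{n}\fint^{T_n}_{0}H(s;d)\,\mathrm{d}s$ and $\limsup_{n}\fint^{T_n}_{0}H(s;d)\,\mathrm{d}s$ and then let $d\to0$. The second equality in each line of the theorem is Lemma \ref{lemma2.4} (for the $\liminf$) and Lemma \ref{lemma2.3} (for the $\limsup$), applied to $\mathscr{C}_*$. So only the first equalities need proof.

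\textbf{Upper bound.} By Lemma \ref{lemma3.12}, for every $d>0$,
\[\liminf_{n\to\infty}\fint^{T_n}_{0}H(s;d)\,\mathrm{d}s\le \inf_{\hat c\in\mathscr{C}_*}\mu^\infty(d,\hat c),\qquad \limsup_{n\to\infty}\fint^{T_n}_{0}H(s;d)\,\mathrm{d}s\le \sup_{\hat c\in\mathscr{C}_*}\mu^\infty(d,\hat c).\]
The set $\mathscr{C}_*$ is compact and bounded in $C(\overline{\Omega})$ by Lemma \ref{lemma2.1}. Lemma \ref{lemma2.18} then shows that the right-hand sides tend to $\inf_{\mathscr{C}_*}\min\hat c$ and $\sup_{\mathscr{C}_*}\min\hat c$ as $d\to0$. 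This gives the $\limsup_{d\to0}$ half of both identities.

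\textbf{Lower bound via Lemma \ref{lemma2.13}.} On $[0,T_n]$ I take as subsolution
\[\underline{\varphi}_n(x,t)=\varphi(x,t)\exp\Big(\tfrac1\omega\Big[t\fint^{T_n}_0H-\int_0^tH\Big]+\tfrac{\beta_n t}{\omega}\Big),\qquad \underline{\lambda}_n=\fint^{T_n}_0H(s;d)\,\mathrm{d}s+\beta_n .\]
The exponential factor turns $H(t)$ into the constant $\underline{\lambda}_n$, and Neumann boundary conditions are preserved. The end condition $\underline{\varphi}_n(\cdot,0)\le\underline{\varphi}_n(\cdot,T_n)$ follows from the Harnack bound (Proposition \ref{proposition3.3}(ii), fixed $d$) if I choose $\beta_n=2\omega\ln C_d/T_n\to0$. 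Since $\beta_n\to0$, the shift does not affect any $\liminf$ or $\limsup$.

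As supersolution I take
\[\overline{\varphi}_n=\exp\Big(\tfrac1\omega\big[\mathcal{M}(x,t;T_n)-\kappa\phi^*(x)\big]\Big),\]
with $\kappa,\phi^*$ from Lemma \ref{lemma2.16}. Then $\overline{\varphi}_n>0$, and $\mathcal{M}(\cdot,0;T_n)=\mathcal{M}(\cdot,T_n;T_n)=0$ gives $\overline{\varphi}_n(\cdot,0)=\overline{\varphi}_n(\cdot,T_n)$. Lemma \ref{lemma2.16} gives $\mathbf{n}\cdot(A\nabla\overline{\varphi}_n)\ge0$ for large $n$. For the equation, $\omega\partial_t\mathcal{M}=\hat c_n(x)-c(x,t)$ produces the potential $\hat c_n$. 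The diffusion term is
\[-\tfrac d\omega\big[\operatorname{div}(A\nabla(\mathcal{M}-\kappa\phi^*))+\tfrac1\omega\nabla(\mathcal{M}-\kappa\phi^*)\cdot A\nabla(\mathcal{M}-\kappa\phi^*)\big]\overline{\varphi}_n .\]
For fixed $\omega$ this is bounded by $C_\omega d\,\overline{\varphi}_n$ uniformly in large $n$, by \eqref{H2} and the smoothness of $\phi^*$; the cross term is handled by Cauchy–Schwarz. Hence $\overline{\varphi}_n$ is a supersolution with $\overline{\lambda}_n=\min_{\overline{\Omega}}\hat c_n-C_\omega d$.

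Lemma \ref{lemma2.13} then gives
\[\liminf_{n}\fint^{T_n}_0 H\ge\liminf_n\min_{\overline{\Omega}}\hat c_n-C_\omega d,\qquad \limsup_n\fint^{T_n}_0 H\ge\limsup_n\min_{\overline{\Omega}}\hat c_n-C_\omega d .\]
Letting $d\to0$ yields the $\liminf_{d\to0}$ half of both identities. The final consequence of the theorem follows from Corollary \ref{corollary2.6}.

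\textbf{Main obstacle.} The delicate step is the supersolution. I must check that the gradient-squared and divergence terms of $\mathcal{M}-\kappa\phi^*$ stay uniformly bounded in $n$ and $t\in[0,T_n]$, so that their contribution is $O(d)$ with a constant independent of $n$. This is exactly where \eqref{H2} enters. I also must confirm that the boundary sign from Lemma \ref{lemma2.16} has the right orientation; if it does not, I would replace $-\kappa\phi^*$ by $+\kappa\phi^*$.
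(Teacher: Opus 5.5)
Your proposal is correct and follows essentially the paper's proof: the upper bound is Lemma \ref{lemma4.2} (Lemma \ref{lemma3.12} combined with Lemma \ref{lemma2.18}), and the lower bound is Lemma \ref{lemma4.5}, which uses the same supersolution $\exp[\tfrac{1}{\omega}(\mathcal{M}-\kappa\phi^*)]$ and a time-shifted $\varphi$ as subsolution in Lemma \ref{lemma2.13}. Your vanishing shift $\beta_n$ in place of the paper's fixed $\varepsilon/2$ (later sent to zero) is only a cosmetic difference. Two small remarks: the sign $-\kappa\phi^*$ is already the right one, since $\mathbf{n}\cdot(A\nabla\phi^*)<0$ on $\partial\Omega$; and it is $\partial_t\mathcal{M}$, not $\omega\partial_t\mathcal{M}$, that equals $\hat c_n-c$.
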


\section{Upper bound}
In this section, we investigate the upper bound of the principal Floquet exponent.
\begin{lemma}\label{lemma4.2}
Let $(H,\varphi)$ be the normalized principal Floquet bundle of \eqref{eq1.1} with potential $c\in\mathcal{C}$, and $\{T_n\}_{n\in\mathbb{N}^+}$ be a sequence satisfying $T_n\to\infty$ as $n\to\infty$. Then
\begin{equation}\label{eq4.1}
 \begin{aligned}
   &\limsup\limits_{d\to 0}\liminf\limits_{n\to\infty}\fint^{T_n}_{0} H(s;d)\,\mathrm{d}s\le \inf_{\hat{c}\in\mathscr{C}_*}\min\limits_{x\in\overline{\Omega}}\hat{c}(x) =\min\limits_{x\in\overline{\Omega}}\liminf\limits_{n\to\infty} \fint^{T_n}_{0}c(x,s)\,\mathrm{d}s\\
   \text{and}\qquad
   &\limsup\limits_{d\to 0}\limsup\limits_{n\to\infty}\fint^{T_n}_{0} H(s;d)\,\mathrm{d}s\le \sup_{\hat{c}\in\mathscr{C}_*}\min\limits_{x\in\overline{\Omega}}\hat{c}(x) =\limsup\limits_{n\to\infty} \min\limits_{x\in\overline{\Omega}}\fint^{T_n}_{0}c(x,s)\,\mathrm{d}s.
 \end{aligned}
\end{equation}
Consequently, if the sequence $\left\{\fint^{T_n}_{0} c(\cdot,s)\,\mathrm{d}s\right\}_{n\in\mathbb{N}^+}$ of functions converges to some function $\hat{c}(\cdot)$ in $C(\overline{\Omega})$, then
 \[\limsup\limits_{d\to 0}\limsup\limits_{n\to\infty}\fint^{T_n}_{0} H(s;d)\,\mathrm{d}s\le \min\limits_{x\in\overline{\Omega}}\hat{c}(x).\]
\end{lemma}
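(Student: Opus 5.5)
The plan is to combine the elliptic upper bound of Lemma \ref{lemma3.12} with the small-diffusion limit of Lemma \ref{lemma2.18}. The identities on the right of \eqref{eq4.1} are already given by Lemmas \ref{lemma2.3} and \ref{lemma2.4}, so only the two inequalities need proof.

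\textbf{Step 1.} Fix $d>0$. Lemma \ref{lemma3.12}, applied along the given sequence $\{T_n\}$, yields
\[\liminf_{n\to\infty}\fint^{T_n}_{0}H(s;d)\,\mathrm{d}s\le\inf_{\hat{c}\in\mathscr{C}_*}\mu^\infty(d,\hat{c}),\qquad \limsup_{n\to\infty}\fint^{T_n}_{0}H(s;d)\,\mathrm{d}s\le\sup_{\hat{c}\in\mathscr{C}_*}\mu^\infty(d,\hat{c}).\]
These bounds hold for every $d>0$. Lemma \ref{lemma3.12} does not need \eqref{H2}: its proof only uses the Harnack principle at the fixed value of $d$, which Proposition \ref{proposition3.3}(ii) provides.

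\textbf{Step 2.} By Lemma \ref{lemma2.1}, $\mathscr{C}_*$ is a nonempty compact subset of $C(\overline{\Omega})$. It is bounded by $\|c\|_\infty$. Lemma \ref{lemma2.18} with $\mathcal{S}=\mathscr{C}_*$ therefore gives
\[\lim_{d\to0}\inf_{\hat{c}\in\mathscr{C}_*}\mu^\infty(d,\hat{c})=\inf_{\hat{c}\in\mathscr{C}_*}\min_{\overline{\Omega}}\hat{c},\qquad \lim_{d\to0}\sup_{\hat{c}\in\mathscr{C}_*}\mu^\infty(d,\hat{c})=\sup_{\hat{c}\in\mathscr{C}_*}\min_{\overline{\Omega}}\hat{c}.\]
Taking $\limsup_{d\to0}$ in the inequalities of Step 1 then gives both inequalities in \eqref{eq4.1}.

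\textbf{Step 3.} Rewrite the right-hand sides. The first equals $\min_{x}\liminf_n\fint^{T_n}_0c(x,s)\,\mathrm{d}s$ by Lemma \ref{lemma2.4}, and the second equals $\limsup_n\min_x\fint^{T_n}_0c(x,s)\,\mathrm{d}s$ by Lemma \ref{lemma2.3}.

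\textbf{Step 4.} For the consequence: if $\hat{c}_n\to\hat{c}$ in $C(\overline{\Omega})$, then $\mathscr{C}_*=\{\hat{c}\}$ and the supremum reduces to $\min_{\overline{\Omega}}\hat{c}$.

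The only step that needs care is Step 2. Lemma \ref{lemma2.18} requires uniformity of the limit over $\mathscr{C}_*$, and it obtains this through Dini's theorem, which needs compactness of $\mathscr{C}_*$. Lemma \ref{lemma2.1} supplies that compactness, so no new obstacle arises.
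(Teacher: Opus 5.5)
Your proposal is correct and follows the paper's own proof essentially verbatim. Like the paper, you apply Lemma \ref{lemma3.12} at each fixed $d$, pass to $d\to0$ via Lemma \ref{lemma2.18} on the compact set $\mathscr{C}_*$ (compact by Lemma \ref{lemma2.1}), and rewrite the right-hand sides with Lemmas \ref{lemma2.3} and \ref{lemma2.4}; your Step 4, that convergence of $\hat{c}_n$ forces $\mathscr{C}_*=\{\hat{c}\}$, spells out a detail the paper leaves implicit.
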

\begin{proof}
Using Lemma \ref{lemma3.12} gives
 \[\liminf\limits_{n\to\infty}\fint^{T_n}_{0} H(s;d)\,\mathrm{d}s\le \inf_{\hat{c}\in\mathscr{C}_*}\mu^\infty(d,\hat{c})\quad\text{and}\quad\limsup\limits_{n\to\infty}\fint^{T_n}_{0} H(s;d)\,\mathrm{d}s\le \sup_{\hat{c}\in\mathscr{C}_*}\mu^\infty(d,\hat{c}),\]
where $\mu^\infty(d,\hat{c})$ is the principal eigenvalue of problem \eqref{eq1.3} with potential $\hat{c}$. By letting $d\to0$ and applying Lemmas \ref{lemma2.3}, \ref{lemma2.4} and \ref{lemma2.18}, we obtain \eqref{eq4.1}. This completes the proof.
\end{proof}

\begin{corollary}\label{corollary4.3}
Let $(H,\varphi)$ be the principal Floquet bundle of \eqref{eq1.1} with potential $c\in\mathcal{C}$. Then
 \[\limsup\limits_{d\to0}\liminf\limits_{T\to\infty}\fint^{T}_{0}H(s;d)\,\mathrm{d}s\le \inf\limits_{\hat{c}\in\mathscr{C}}\min_{x\in\overline{\Omega}}\hat{c}(x)= \min\limits_{x\in\overline{\Omega}}\liminf\limits_{T\to\infty} \fint^{T}_{0}c(x,s)\,\mathrm{d}s.\]
\end{corollary}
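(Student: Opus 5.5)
The plan is to derive Corollary \ref{corollary4.3} from Lemma \ref{lemma4.2} by choosing a good sequence $\{T_n\}$, in the same way Corollary \ref{corollary3.13} was obtained from Lemma \ref{lemma3.12}. The identity
\[
\inf_{\hat{c}\in\mathscr{C}}\min_{x\in\overline{\Omega}}\hat{c}(x)=\min_{x\in\overline{\Omega}}\liminf_{T\to\infty}\fint^{T}_{0}c(x,s)\,\mathrm{d}s
\]
is exactly the second statement of Lemma \ref{lemma2.4}, so only the inequality needs proof.

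\emph{Step 1: choose the sequence.} By Lemma \ref{lemma2.1} the set $\mathscr{C}$ is compact, and $\hat{c}\mapsto\min_{\overline{\Omega}}\hat{c}$ is continuous on $C(\overline{\Omega})$. Hence there is $\hat{c}_0\in\mathscr{C}$ with $\min_{\overline{\Omega}}\hat{c}_0=\inf_{\hat{c}\in\mathscr{C}}\min_{\overline{\Omega}}\hat{c}$. (Alternatively, pick an $\varepsilon$-minimizer and let $\varepsilon\to0$ at the end.) By the definition of $\mathscr{C}$, there is a sequence $T_n\to\infty$ with $\hat{c}_{T_n}\to\hat{c}_0$ in $C(\overline{\Omega})$. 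For this fixed sequence, the set $\mathscr{C}_*$ is the singleton $\{\hat{c}_0\}$.

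\emph{Step 2: apply Lemma \ref{lemma4.2}.} I will use the ``consequently'' part of Lemma \ref{lemma4.2}, or equivalently Lemma \ref{lemma3.12} together with Lemma \ref{lemma2.18} applied to $\mathcal{S}=\{\hat{c}_0\}$. This gives
\[
\limsup_{d\to0}\limsup_{n\to\infty}\fint^{T_n}_{0}H(s;d)\,\mathrm{d}s\le\min_{\overline{\Omega}}\hat{c}_0 .
\]
Lemma \ref{lemma4.2} requires only $c\in\mathcal{C}$, which matches the hypothesis here; no \eqref{H2} is needed.

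\emph{Step 3: pass from the sequence to $T\to\infty$.} For each fixed $d$,
\[
\liminf_{T\to\infty}\fint^{T}_{0}H(s;d)\,\mathrm{d}s\le\liminf_{n\to\infty}\fint^{T_n}_{0}H(s;d)\,\mathrm{d}s\le\limsup_{n\to\infty}\fint^{T_n}_{0}H(s;d)\,\mathrm{d}s,
\]
since a liminf over all $T$ is at most the liminf along any subsequence. Taking $\limsup_{d\to0}$ and combining with Step 2 yields the claim.

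The only point needing care is Step 1. The sequence $\{T_n\}$ must be chosen independently of $d$, so that the same sequence works as $d\to0$. This is why I pick it from the minimizer $\hat{c}_0\in\mathscr{C}$ rather than from the behavior of $H$. Once that is done, the argument is only an ordering of liminf and limsup.
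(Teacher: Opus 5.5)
Your argument is correct and follows essentially the paper's route. Fix a $d$-independent sequence $T_n\to\infty$ along which $\hat{c}_{T_n}$ converges uniformly to an optimal limit, apply Lemma~\ref{lemma4.2} along that sequence, bound $\liminf_{T\to\infty}$ by the behavior along $\{T_n\}$, and invoke Lemma~\ref{lemma2.4} for the identity.

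The only difference is in how the optimal limit is produced, and it is cosmetic. The paper minimizes the pointwise function $\liminf_{T\to\infty}\fint^{T}_{0}c(\cdot,s)\,\mathrm{d}s$ (continuous by Proposition~\ref{proposition1.1}(iii)) at a point $x_0$ and then extracts a uniformly convergent subsequence. You instead take a minimizer of $\hat{c}\mapsto\min_{\overline{\Omega}}\hat{c}$ over the compact set $\mathscr{C}$ directly, which is slightly cleaner.
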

\begin{proof}
In view of Proposition \ref{proposition1.1}(iii), we know that the point-wise defined function
 \[\hat{c}_\infty(x):=\liminf\limits_{T\to\infty}\fint^{T}_{0}c(x,s)\,\mathrm{d}s\]
is continuous on $\overline{\Omega}$. So $\hat{c}_\infty$ must attains its minimum at some point $x_0 \in \overline{\Omega}$. Then there is a sequence $\{T_n\}_{n\in\mathbb{N}^+}$ of $T\to\infty$ such that
 \[\hat{c}_\infty(x_0)=\lim\limits_{n\to\infty}\fint^{T_n}_{0}c(x_0,s)\,\mathrm{d}s=\min\limits_{x\in\overline{\Omega}}\liminf\limits_{T\to\infty} \fint^{T}_{0}c(x,s)\,\mathrm{d}s.\]
Using the assertion (i) in Proposition \ref{proposition1.1}, there is a subsequence $\{T_{n_k}\}_{k\in\mathbb{N}^+}$ of $\{T_n\}_{n\in\mathbb{N}^+}$ such that
 \[\left\{\fint^{T_{n_k}}_{0}c(\cdot,s)\,\mathrm{d}s\right\}_{k\in\mathbb{N}^+}\text{ converges to some function }\hat{c}(\cdot)\text{ in }C(\overline{\Omega}).\]
Obviously, $\hat{c}_\infty(x)\le \hat{c}(x)$ for all $x\in\overline{\Omega}$, and they share a common minimum at $x_0$. Using Lemma \ref{lemma4.2}, we have
\begin{align*}
  \limsup\limits_{d\to0}\liminf\limits_{T\to\infty}\fint^{T}_{0}H(s;d)\,\mathrm{d}s&\le \limsup\limits_{d\to0}\liminf\limits_{n\to\infty}\fint^{T_n}_{0}H(s;d)\,\mathrm{d}s \\
  &\le \hat{c}(x_0)=\hat{c}_\infty(x_0)=\min\limits_{x\in\overline{\Omega}}\liminf\limits_{T\to\infty} \fint^{T}_{0}c(x,s)\,\mathrm{d}s.
\end{align*}
Using Lemma \ref{lemma2.4}, we obtain the desired result. This completes the proof.
\end{proof}
\begin{lemma}\label{lemma4.4}
   Let $(H,\varphi)$ be the normalized principal Floquet bundle of \eqref{eq1.1}. Then
  \[\limsup\limits_{d\to0}\limsup\limits_{T\to\infty}\fint^{T}_{0}H(s;d)\,\mathrm{d}s\le \sup\limits_{\hat{c}\in\mathscr{C}}\min_{x\in\overline{\Omega}}\hat{c}(x)= \limsup\limits_{T\to\infty} \min\limits_{x\in\overline{\Omega}} \fint^{T}_{0}c(x,s)\,\mathrm{d}s.\]
\end{lemma}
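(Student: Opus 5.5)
The argument reduces to Corollary \ref{corollary3.13} and Lemma \ref{lemma2.18}, exactly parallel to Lemma \ref{lemma4.2}. Corollary \ref{corollary3.13} gives, for every fixed $d>0$,
\[\limsup_{T\to\infty}\fint^{T}_{0}H(s;d)\,\mathrm{d}s\le \sup_{\hat{c}\in\mathscr{C}}\mu^\infty(d,\hat{c}).\]
The right-hand side no longer involves the time parameter, so we may let $d\to0$ in it directly.

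We apply Lemma \ref{lemma2.18} with $\mathcal{S}=\mathscr{C}$. This is admissible because $\mathscr{C}$ is a nonempty compact subset of $C(\overline{\Omega})$ by Lemma \ref{lemma2.1}. It is bounded, since $\|\hat{c}\|_\infty\le\|c\|_\infty$ for every $\hat{c}\in\mathscr{C}$. Nonemptiness follows from Arzel\`a--Ascoli, as noted in Remark \ref{remark2.2}. Lemma \ref{lemma2.18} then yields
\[\lim_{d\to0}\sup_{\hat{c}\in\mathscr{C}}\mu^\infty(d,\hat{c})=\sup_{\hat{c}\in\mathscr{C}}\min_{x\in\overline{\Omega}}\hat{c}(x).\]
Taking $\limsup_{d\to0}$ in the inequality from Corollary \ref{corollary3.13} gives the asserted upper bound.

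The identity
\[\sup_{\hat{c}\in\mathscr{C}}\min_{x\in\overline{\Omega}}\hat{c}(x)=\limsup_{T\to\infty}\min_{x\in\overline{\Omega}}\fint^{T}_{0}c(x,s)\,\mathrm{d}s\]
is exactly the second statement of Lemma \ref{lemma2.3}.

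No step here should be a real obstacle, since the substantive work (the Jensen-type averaging in Lemma \ref{lemma3.12} and the Dini argument in Lemma \ref{lemma2.18}) is already done. The one point to check is that Lemma \ref{lemma2.18} only needs $\mathcal{S}$ to be compact and bounded in $C(\overline{\Omega})$, with no further regularity, and that $\mathscr{C}$ (rather than $\mathscr{C}_*$) satisfies this. Both hold by Lemma \ref{lemma2.1}.
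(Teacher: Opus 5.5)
Your proposal is correct and follows the paper's proof essentially verbatim. You bound the average for fixed $d$ via Corollary \ref{corollary3.13}, pass $d\to0$ with Lemma \ref{lemma2.18} applied to $\mathcal{S}=\mathscr{C}$ (admissible by Lemma \ref{lemma2.1}), and identify the limit through the second identity of Lemma \ref{lemma2.3}.
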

\begin{proof}
By Corollary \ref{corollary3.13}, we have
 \[\limsup\limits_{T\to\infty}\fint^{T}_{0}H(s;d)\,\mathrm{d}s\le \sup\limits_{\hat{c}\in\mathscr{C}}\mu^\infty(d,\hat{c}),\]
where $\mu^\infty(d,\hat{c})$ is the principal eigenvalue of \eqref{eq1.3}. Letting $d\to0$ and applying Lemmas \ref{lemma2.18} and \ref{lemma2.3} yield
 \[\limsup_{d\to0}\limsup\limits_{T\to\infty}\fint^{T}_{0}H(s;d)\,\mathrm{d}s\le \limsup_{d\to0}\sup\limits_{\hat{c}\in\mathscr{C}}\mu^\infty(d,\hat{c}) =\sup\limits_{\hat{c}\in\mathscr{C}}\min_{x\in\overline{\Omega}}\hat{c}(x)= \limsup\limits_{T\to\infty} \min\limits_{x\in\overline{\Omega}} \fint^{T}_{0}c(x,s)\,\mathrm{d}s.\]
This completes the proof.
\end{proof}

\section{Lower bound}
We now study the lower bound.
\begin{lemma}\label{lemma4.5}
Let $(H,\varphi)$ be the normalized principal Floquet bundle of \eqref{eq1.1} with potential $c\in\mathcal{C}\cap C^{2,1}(\overline{\Omega}\times\mathbb{R})$, and $\{T_n\}_{n\in\mathbb{N}^+}$ be a sequence satisfying $T_n\to\infty$ as $n\to\infty$. Assume that $c$ satisfies assumption \eqref{H2}. Then
\begin{equation}\label{eq4.2}
\begin{aligned}
  &\liminf\limits_{d\to 0}\liminf\limits_{n\to\infty}\fint^{T_n}_{0}H(s;d)\,\mathrm{d}s \ge \min\limits_{x\in\overline{ \Omega}} \liminf_{n\to\infty} \fint^{T_n}_{0}c(x,s)\,\mathrm{d}s =\inf_{\hat{c}\in\mathscr{C}_*}\min_{x\in\overline{\Omega}}\hat{c}(x) \\
  \text{and}\quad&\liminf\limits_{d\to 0}\limsup\limits_{n\to\infty}\fint^{T_n}_{0}H(s;d)\,\mathrm{d}s \ge \limsup_{n\to\infty} \min\limits_{x\in\overline{ \Omega}}\fint^{T_n}_{0}c(x,s)\,\mathrm{d}s =\sup_{\hat{c}\in\mathscr{C}_*} \min\limits_{x\in\overline{\Omega}}\hat{c}(x).
\end{aligned}
\end{equation}
Consequently, if the sequence $\left\{\fint^{T_n}_{0} c(\cdot,s)\,\mathrm{d}s\right\}_{n\in\mathbb{N}^+}$ of functions converges to some function $\hat{c}(\cdot)$ in $C(\overline{\Omega})$, then
 \[\liminf\limits_{d\to 0}\limsup\limits_{n\to\infty}\fint^{T_n}_{0}H(s;d)\,\mathrm{d}s \ge \liminf\limits_{d\to 0}\liminf\limits_{n\to\infty}\fint^{T_n}_{0}H(s;d)\,\mathrm{d}s \ge \min\limits_{x\in\overline{\Omega}}\hat{c}(x).\]
\end{lemma}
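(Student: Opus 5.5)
The plan is to bound $\fint^{T_n}_0 H(s;d)\,\mathrm{d}s$ from below through the long-time comparison principle (Lemma \ref{lemma2.13}). For that I need an explicit supersolution on each interval $[0,T_n]$ built from $\mathcal{M}(x,t;T_n)$, and a subsolution built from $\varphi$ itself. The key feature of $\mathcal{M}$ is that $\mathcal{M}(x,0;T_n)=\mathcal{M}(x,T_n;T_n)=0$ and $\partial_t\mathcal{M}=\hat c_n(x)-c(x,t)$, where $\hat c_n=\fint^{T_n}_0c(\cdot,s)\,\mathrm{d}s$. So an exponential of $\mathcal{M}/\omega$ replaces the time-dependent potential by its time average, and it is periodic on $[0,T_n]$.

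\textbf{Supersolution.} With $\kappa$, $T_0$ from Lemma \ref{lemma2.16} and $\phi^*$ the principal eigenfunction of \eqref{eq2.6}, set
\[\overline{\varphi}_n(x,t)=\exp\Big(\tfrac1\omega\big(\mathcal{M}(x,t;T_n)-\kappa\phi^*(x)\big)\Big).\]
It is positive and satisfies $\overline{\varphi}_n(\cdot,0)=\overline{\varphi}_n(\cdot,T_n)$. By Lemma \ref{lemma2.16}, $\mathbf{n}\cdot(A\nabla\overline{\varphi}_n)\ge0$ for $T_n>T_0$. A direct computation with $W:=\mathcal{M}-\kappa\phi^*$ gives
\[\frac{\omega\partial_t\overline{\varphi}_n-d\operatorname{div}(A\nabla\overline{\varphi}_n)+c\,\overline{\varphi}_n}{\overline{\varphi}_n}
=\hat c_n(x)-\frac d\omega\operatorname{div}(A\nabla W)-\frac d{\omega^2}\nabla W\cdot(A\nabla W).\]
By \eqref{H2}, together with the smoothness of $\phi^*$, the last two terms are bounded by $C d(\omega^{-1}+\omega^{-2})$ for all large $n$, with $C$ independent of $n$ and $d$. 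Hence the differential inequality holds with
\[\overline{\lambda}_n:=\min_{\overline{\Omega}}\hat c_n-Cd(\omega^{-1}+\omega^{-2}).\]

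\textbf{Subsolution.} Let $C_d$ be the Harnack constant of Proposition \ref{proposition3.3}(ii) at fixed $d$. Put $a_n:=2\ln C_d/T_n$ and
\[\underline{\varphi}_n(x,t)=\varphi(x,t)\exp\Big(-\tfrac1\omega\int_0^tH(s)\,\mathrm{d}s+\tfrac t\omega\fint_0^{T_n}H(s)\,\mathrm{d}s+a_nt\Big).\]
Then $\underline{\varphi}_n$ solves the equation with the constant $\underline{\lambda}_n:=\fint_0^{T_n}H(s;d)\,\mathrm{d}s+\omega a_n$ and satisfies the Neumann condition with equality. At the endpoints, $\underline{\varphi}_n(x,T_n)=\varphi(x,T_n)C_d^2\ge C_d\ge\varphi(x,0)=\underline{\varphi}_n(x,0)$, which is the required inequality. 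Both $\{\underline{\lambda}_n\}$ and $\{\overline{\lambda}_n\}$ are bounded.

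\textbf{Conclusion.} Lemma \ref{lemma2.13} yields $\liminf_n(\underline{\lambda}_n-\overline{\lambda}_n)\ge0$. Since $\omega a_n\to0$ for fixed $d$, this gives
\[\liminf_{n}\fint_0^{T_n}H(s;d)\,\mathrm{d}s\ge\liminf_n\min_{\overline{\Omega}}\hat c_n-Cd(\omega^{-1}+\omega^{-2}),\]
and the same inequality with $\limsup$ on both sides. Letting $d\to0$ and invoking Lemmas \ref{lemma2.3} and \ref{lemma2.4} gives \eqref{eq4.2}. The final statement follows from Corollary \ref{corollary2.6}.

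The main obstacle is ensuring that the constant $C$ in the supersolution estimate is uniform in $n$ and $d$. This is exactly what \eqref{H2} together with Lemma \ref{lemma2.16} provides. One must also check the sign convention at the boundary, namely that subtracting $\kappa\phi^*$ indeed makes the outward conormal derivative nonnegative. By contrast, the Harnack constant $C_d$ may blow up as $d\to0$, but this is harmless because it only enters through $a_n\to0$ at fixed $d$.
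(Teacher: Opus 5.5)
Your proof is correct and follows the paper's argument essentially step for step. The paper uses the same periodic supersolution $\exp\bigl(\frac{1}{\omega}(\mathcal{M}(x,t;T_n)-\kappa\phi^*)\bigr)$, controlled by \eqref{H2} and Lemma~\ref{lemma2.16}, and the same subsolution built from $\varphi$ with a small exponential slack justified by the fixed-$d$ Harnack bound. It then applies Lemma~\ref{lemma2.13}, lets $d\to0$, and invokes Lemmas~\ref{lemma2.3} and \ref{lemma2.4}, just as you do.

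The only difference is cosmetic. You take the slack $a_n=2\ln C_d/T_n\to0$, whereas the paper uses a fixed $\varepsilon/(2\omega)$ and sends $\varepsilon\to0$ at the end; your choice spares you that auxiliary parameter.
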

\begin{proof}
Recall that $\mathcal{M}(x,t;T)=t\fint^{T}_{0}c(x,s)\,\mathrm{d}s-\int^{t}_{0}c(x,s)\,\mathrm{d}s,~ (x,t)\in\overline{\Omega}\times[0,T]$ for any given $T>0$. Let $\phi^*$ be the principal eigenfunction of \eqref{eq2.6}, which belongs to $C^2(\overline{\Omega})$. By assumption \eqref{H2} and Lemma \ref{lemma2.16}, there exist constants $T_0>0$ and $\kappa>0$ such that for any $T>T_0$,
\begin{equation}\label{eq4.3}
  \mathbf{n}\cdot (A(x)\nabla\mathcal{M}(x,t;T))-\kappa\mathbf{n}\cdot(A(x)\nabla\phi^*(x))\ge 0,\qquad (x,t)\in\partial\Omega\times[0,T].
\end{equation}
Moreover, by assumption \eqref{H2}, there is a positive constant $C^*>0$ independent of $T>T_0$ such that
\begin{equation}\label{eq4.4}
\begin{aligned}
  &\Big|\frac{1}{\omega}\operatorname{div}(A(x)\nabla\mathcal{M}(x,t;T_n))-\frac{\kappa}{\omega}\operatorname{div}(A(x)\nabla\phi^*(x))\\
  &\quad +\frac{1}{\omega^2}\nabla\left(\mathcal{M}(x,t;T_n)-\kappa\phi^*(x)\right)\cdot\left(A(x)\nabla\left(\mathcal{M}(x,t;T_n) -\kappa\phi^*(x)\right)\right)\Big|\le C^*,\quad (x,t)\in \Omega\times(0,T].
\end{aligned}
\end{equation}
Let $\varepsilon>0$ be any given constant and set $d_\varepsilon:=\frac{\varepsilon}{2 C^*}$. For each $n\in\mathbb{N}^+$, define
\[\overline{\varphi}_n(x,t):=\exp\left[\frac{1}{\omega}\left(\mathcal{M}(x,t;T_n)-\kappa\phi^*(x)\right)\right], \qquad (x,t)\in\overline{\Omega}\times[0,T_n]\]
and
\[\underline{\varphi}_{n}(x,t):=\varphi(x,t;d) \exp\left[\frac{1}{\omega}\int^{t}_{0}\left(\fint^{T_{n}}_{0}H(r;d) \,\mathrm{d}r -H(s;d)+\frac{\varepsilon}{2}\right)\,\mathrm{d}s\right],\qquad (x,t)\in \overline{\Omega}\times[0,T_{n}].\]
Direct computations yield that for each $n\in\mathbb{N}^+$, the function $\overline{\varphi}_n$ satisfies
\begin{equation}\label{eq4.5}
\overline{\varphi}_{n}(x,0)=\overline{\varphi}_{n}(x,T_n), \qquad x\in\Omega,
\end{equation}
\begin{equation}\label{eq4.6}
\mathbf{n}\cdot(A(x)\nabla\overline{\varphi}_n)=\frac{1}{\omega}\left[\mathbf{n}\cdot (A(x)\nabla\mathcal{M}(x,t;T_n))-\kappa \mathbf{n}\cdot(A(x)\nabla\varphi^{*}(x)) \right]\overline{\varphi}_n,~(x,t)\in\partial\Omega\times(0,T_n],
\end{equation}
and
\begin{equation}\label{eq4.7}
\begin{aligned}
  &\omega\partial_t\overline{\varphi}_{n}-d\operatorname{div}(A(x)\nabla\overline{\varphi}_{n})+c(x,t)\overline{\varphi}_{n} \\
  =&\left(\fint^{T_n}_{0}c(x,s)\,\mathrm{d}s- d\frac{\operatorname{div}(A(x)\nabla\overline{\varphi}_{n})}{\overline{\varphi}_{n}}\right) \overline{\varphi}_{n}\\
  \ge&\left(\fint^{T_n}_{0}c(x,s)\,\mathrm{d}s- d\left|\frac{\operatorname{div}(A(x)\nabla\overline{\varphi}_{n})}{\overline{\varphi}_{n}}\right|\right) \overline{\varphi}_{n}\\
  =&\Bigg[\fint^{T_n}_{0}c(x,s)\,\mathrm{d}s- d\Big(\frac{1}{\omega}\Big|\operatorname{div}(A(x)\nabla\mathcal{M}(x,t;T_n))-\frac{\kappa}{\omega}\operatorname{div}(A(x)\nabla\phi^*(x))\\
  &\hskip 2cm +\frac{1}{\omega^2}\nabla\left(\mathcal{M}(x,t;T_n)-\kappa\phi^*(x)\right)\cdot\left(A(x)\nabla\left(\mathcal{M}(x,t;T_n) -\kappa\phi^*(x)\right)\right)\Big|\Big)\Bigg]\overline{\varphi}_{n}\\
  \ge &\left[\fint^{T_n}_{0}c(x,s)\,\mathrm{d}s- dC^*\right]\overline{\varphi}_{n},\qquad\qquad (x,t)\in\Omega\times(0,T_n],
\end{aligned}
\end{equation}
and the function $\underline{\varphi}_n$ satisfies
\begin{equation}\label{eq4.8}
\mathbf{n}\cdot(A(x)\nabla\underline{\varphi}_{n})=0,\qquad(x,t)\in\partial\Omega\times(0,T_{n}],
\end{equation}
\begin{equation}\label{eq4.9}
\underline{\varphi}_{n}(x,0)=\left(\frac{\varphi(x,0)}{\varphi(x,T_n)} \mathrm{e}^{-\frac{\varepsilon}{2\omega}T_n}\right)\underline{\varphi}_{n}(x,T_n), \qquad x\in\overline{\Omega}
\end{equation}
and
\begin{equation}\label{eq4.10}
  \omega\partial_t\underline{\varphi}_{n}-d\operatorname{div}(A(x)\nabla\underline{\varphi}_{n})+c(x,t)\underline{\varphi}_{n}=\left(\fint^{T_{n}}_{0} H(s;d)\,\mathrm{d}s +\frac{\varepsilon}{2}\right) \underline{\varphi}_{n},\qquad (x,t)\in\Omega\times(0,T_{n}].
\end{equation}

Now, we fix $d\in(0,d_\varepsilon)$. By Harnack principle (Proposition \ref{proposition3.3}(ii)), there is a positive constant $C_d$ such that
 \[\frac{1}{C_d}\le \varphi(x,t)\le C_d,\quad\forall~(x,t)\in\overline{\Omega}\times\mathbb{R}.\]
So there is a positive integer $N_\varepsilon>0$ such that
\[T_n>T_0\quad\text{ and }\quad\frac{\varphi(x,0)}{\varphi(x,T_n)} \mathrm{e}^{-\frac{\varepsilon}{2\omega}T_n}<1,\qquad\forall~n>N_\varepsilon,~x\in\overline{\Omega}.\]
This, together with \eqref{eq4.3}-\eqref{eq4.10}, yields
\begin{equation*}
  \left\{
  \begin{aligned}
    &\omega\partial_t\overline{\varphi}_{n}-d\operatorname{div}(A(x)\nabla\overline{\varphi}_{n})+c(x,t)\overline{\varphi}_{n}\ge \left(\min\limits_{x\in\overline{ \Omega}}\fint^{T_n}_{0}c(x,s)\,\mathrm{d}s-\frac{\varepsilon}{2}\right)\overline{\varphi}_{n}, &&(x,t)\in\Omega\times(0,T_n],\\
    &\omega\partial_t\underline{\varphi}_{n}-d\operatorname{div}(A(x)\nabla\underline{\varphi}_{n})+c(x,t)\underline{\varphi}_{n}= \left(\fint^{T_{n}}_{0} H(s;d)\,\mathrm{d}s +\frac{\varepsilon}{2}\right) \underline{\varphi}_{n},&& (x,t)\in\Omega\times(0,T_{n}],\\
    &\mathbf{n}\cdot(A(x)\nabla\overline{\varphi}_{n})\ge 0 \ge \mathbf{n}\cdot(A(x)\nabla\underline{\varphi}_{n}),&&(x,t)\in\partial\Omega\times(0,T_n],\\
    &\overline{\varphi}_{n}(x,0)\ge\overline{\varphi}_{n}(x,T_n),~ \underline{\varphi}_{n}(x,0)\le\underline{\varphi}_{n}(x,T_n),&&x\in\overline{\Omega}.
  \end{aligned}
  \right.
\end{equation*}
Applying Lemma \ref{lemma2.13}, we have
 \[\liminf_{n\to\infty}\fint^{T_{n}}_{0} H(s;d)\,\mathrm{d}s +\frac{\varepsilon}{2}\ge \liminf_{n\to\infty} \min\limits_{x\in\overline{ \Omega}}\fint^{T_n}_{0}c(x,s)\,\mathrm{d}s-\frac{\varepsilon}{2}\]
and
 \[\limsup_{n\to\infty}\fint^{T_{n}}_{0} H(s;d)\,\mathrm{d}s +\frac{\varepsilon}{2}\ge \limsup_{n\to\infty} \min\limits_{x\in\overline{ \Omega}}\fint^{T_n}_{0}c(x,s)\,\mathrm{d}s-\frac{\varepsilon}{2}.\]
Then we use Lemmas \ref{lemma2.3} and \ref{lemma2.4} to obtain that
 \[\liminf_{n\to\infty}\fint^{T_{n}}_{0} H(s;d)\,\mathrm{d}s \ge \inf_{\hat{c}\in\mathscr{C}_*} \min\limits_{x\in\overline{ \Omega}}\hat{c}(x)-\varepsilon\]
and
 \[\limsup_{n\to\infty}\fint^{T_{n}}_{0} H(s;d)\,\mathrm{d}s \ge \sup_{\hat{c}\in\mathscr{C}_*} \min\limits_{x\in\overline{ \Omega}}\hat{c}(x)-\varepsilon.\]
Now letting $d\to0$, one gets
 \[\liminf_{d\to0}\liminf_{n\to\infty}\fint^{T_{n}}_{0} H(s;d)\,\mathrm{d}s \ge \inf_{\hat{c}\in\mathscr{C}_*} \min\limits_{x\in\overline{ \Omega}}\hat{c}(x)-\varepsilon\]
and
 \[\liminf_{d\to0}\limsup_{n\to\infty}\fint^{T_{n}}_{0} H(s;d)\,\mathrm{d}s \ge \sup_{\hat{c}\in\mathscr{C}_*} \min\limits_{x\in\overline{ \Omega}}\hat{c}(x)-\varepsilon.\]
Then by the arbitrariness of $\varepsilon>0$, we get \eqref{eq4.2}. This completes the proof.
\end{proof}

\medskip
\begin{lemma}\label{lemma4.6}
Let $c\in\mathcal{C}\cap C^{2,1}(\overline{\Omega}\times\mathbb{R})$ be given, and $(H,\varphi)$ be the normalized principal Floquet bundle of \eqref{eq1.1}. Suppose that assumption \eqref{H2} holds, then
    \[\liminf\limits_{d\to0}\liminf\limits_{T\to\infty}\fint^{T}_{0}H(s;d)\,\mathrm{d}s\ge \inf_{\hat{c}\in\mathscr{C}}\min_{x\in\overline{\Omega}}\hat{c}(x)= \min\limits_{x\in\overline{\Omega}}\liminf\limits_{T\to\infty} \fint^{T}_{0}c(x,s)\,\mathrm{d}s\]
and
  \[\liminf\limits_{d\to0}\limsup\limits_{T\to\infty}\fint^{T}_{0}H(s;d)\,\mathrm{d}s\ge \sup_{\hat{c}\in\mathscr{C}}\min_{x\in\overline{\Omega}}\hat{c}(x)= \limsup\limits_{T\to\infty}\min\limits_{x\in\overline{\Omega}} \fint^{T}_{0}c(x,s)\,\mathrm{d}s.\]
\end{lemma}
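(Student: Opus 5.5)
The plan is to deduce Lemma \ref{lemma4.6} from the sequential statement, Lemma \ref{lemma4.5}, by choosing the sequence $\{T_n\}$ well, in the same way Corollary \ref{corollary4.3} and Lemma \ref{lemma4.4} were obtained from Lemma \ref{lemma4.2}. The key observation is that the proof of Lemma \ref{lemma4.5} gives a bound at each fixed small $d$. Namely, for every $\varepsilon>0$ there is $d_\varepsilon=\varepsilon/(2C^*)$ such that, for all $d\in(0,d_\varepsilon)$ and every sequence $T_n\to\infty$,
\[\liminf_{n\to\infty}\fint^{T_n}_{0}H(s;d)\,\mathrm{d}s\ge \liminf_{n\to\infty}\min_{x\in\overline{\Omega}}\fint^{T_n}_{0}c(x,s)\,\mathrm{d}s-\varepsilon,\]
and the same holds with $\limsup$ on both sides. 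Here $C^*$ and $\kappa$ come from \eqref{H2} and Lemma \ref{lemma2.16}, so they do not depend on the sequence. I would make this uniformity explicit first, since it is what lets the sequence be chosen depending on $d$.

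For the first inequality, fix $\varepsilon>0$ and $d\in(0,d_\varepsilon)$. Choose a sequence $T_n\to\infty$, depending on $d$, with
\[\lim_{n\to\infty}\fint^{T_n}_{0}H(s;d)\,\mathrm{d}s=\liminf_{T\to\infty}\fint^{T}_{0}H(s;d)\,\mathrm{d}s.\]
The uniform bound above then gives
\[\liminf_{T\to\infty}\fint^{T}_{0}H(s;d)\,\mathrm{d}s\ge\liminf_{n\to\infty}\min_{x\in\overline{\Omega}}\fint^{T_n}_{0}c\,\mathrm{d}s-\varepsilon\ge\liminf_{T\to\infty}\min_{x\in\overline{\Omega}}\fint^{T}_{0}c\,\mathrm{d}s-\varepsilon.\]
By Lemma \ref{lemma2.4}, the right-hand side equals $\inf_{\hat{c}\in\mathscr{C}}\min_{x\in\overline{\Omega}}\hat{c}(x)-\varepsilon$, which is also $\min_{x\in\overline{\Omega}}\liminf_{T\to\infty}\fint^{T}_{0}c(x,s)\,\mathrm{d}s-\varepsilon$. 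Letting $d\to0$ and then $\varepsilon\to0$ gives the claim.

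For the second inequality the sequence should instead be chosen from $c$, independently of $d$. Choose $T_n\to\infty$ with
\[\lim_{n\to\infty}\min_{x\in\overline{\Omega}}\fint^{T_n}_{0}c(x,s)\,\mathrm{d}s=\limsup_{T\to\infty}\min_{x\in\overline{\Omega}}\fint^{T}_{0}c(x,s)\,\mathrm{d}s.\]
The $\limsup$ version of the uniform bound then gives, for all $d<d_\varepsilon$,
\[\limsup_{T\to\infty}\fint^{T}_{0}H(s;d)\,\mathrm{d}s\ge\limsup_{n\to\infty}\fint^{T_n}_{0}H(s;d)\,\mathrm{d}s\ge\limsup_{T\to\infty}\min_{x\in\overline{\Omega}}\fint^{T}_{0}c\,\mathrm{d}s-\varepsilon.\]
Lemma \ref{lemma2.3} identifies the right-hand side with $\sup_{\hat{c}\in\mathscr{C}}\min_{x\in\overline{\Omega}}\hat{c}(x)-\varepsilon$, and sending $d\to0$, then $\varepsilon\to0$, finishes the proof.

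The only delicate point is the first inequality. There the sequence depends on $d$, so Lemma \ref{lemma4.5} cannot be quoted in its stated form, where $d\to0$ is taken after fixing the sequence. I would handle this by restating the conclusion of its proof for each fixed $d<d_\varepsilon$, checking that $\varepsilon$, $d_\varepsilon$, $T_0$ and $\kappa$ are independent of $\{T_n\}$; only $N_\varepsilon$ depends on $d$ and on the sequence, and that dependence is harmless. As an alternative I could apply Corollary \ref{corollary2.15} directly, with the super-solution $\overline{\varphi}_T$ built from $\mathcal{M}(\cdot,\cdot;T)$ and the sub-solution $\underline{\varphi}_T$ built from $\varphi$, exactly as in the proof of Lemma \ref{lemma4.5} with $T_n$ replaced by $T$.
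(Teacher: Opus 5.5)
Your proposal is correct and is essentially the paper's argument. The paper applies Corollary~\ref{corollary2.15} directly to the $T$-indexed families $\overline{\varphi}_T,\underline{\varphi}_T$, built exactly as in Lemma~\ref{lemma4.5}, which is your stated alternative; this amounts to your $d$-dependent sequence selection using the $\{T_n\}$-independent constants $C^*,\kappa,T_0$, and it handles precisely the point you flag, that Lemma~\ref{lemma4.5} cannot be quoted verbatim for the first inequality.
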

\begin{proof}
The main idea of the proof is similar to that of Lemma \ref{lemma4.5}. Let $\phi^*$ be the principal eigenfunction of \eqref{eq2.6}, $\kappa$, $T_0$ and $C^*$ be constants such that \eqref{eq4.3} and \eqref{eq4.4} hold. Let $\varepsilon>0$ be any given constant. We define two families $\{\overline{\varphi}_T\}_{T>0}$ and $\{\underline{\varphi}_T\}_{T>0}$ of functions as
   \[\overline{\varphi}_T(x,t):=\exp\left[\frac{1}{\omega}\left(\mathcal{M}(x,t;T)-\kappa\phi^*(x)\right)\right], \qquad (x,t)\in\overline{\Omega}\times[0,T]\]
and
   \[\underline{\varphi}_T(x,t):=\varphi(x,t) \exp \left[\frac{1}{\omega}\int^{t}_{0}\left(\fint^{T}_{0}H(r;d) \,\mathrm{d}r -H(s;d)+\frac{\varepsilon}{2}\right)\,\mathrm{d}s\right],\qquad (x,t)\in \overline{\Omega}\times[0,T].\]
   It is easy to verify that
   \[\left|\frac{\operatorname{div}(A(x)\nabla\overline{\varphi}_T)}{\overline{\varphi}_T}\right|\le C^*,\qquad (x,t)\in \Omega\times(0,T].\]
Then one can check that for each $T>T_0$, $\overline{\varphi}_T(x,t)$ and $\underline{\varphi}_T(x,t)$ satisfy
\begin{equation*}
  \left\{
  \begin{aligned}
    &\omega\partial_t\overline{\varphi}_T-d\operatorname{div}(A(x)\nabla\overline{\varphi}_T)+c(x,t)\overline{\varphi}_T\\
    &\quad\ge \left( \fint^{T}_{0}c(x,s)\,\mathrm{d}s -dC^*\right)\overline{\varphi}_T
    \ge \left(\min_{x\in\overline{\Omega}} \fint^{T}_{0}c(x,s)\,\mathrm{d}s -dC^*\right)\overline{\varphi}_T, &&(x,t) \in\Omega\times(0,T],\\
    &\omega\partial_t\underline{\varphi}_T-d\operatorname{div}(A(x)\nabla\underline{\varphi}_T)+c(x,t)\underline{\varphi}_T=\left(\fint^{T}_{0} H(s;d)\,\mathrm{d}s +\frac{\varepsilon}{2}\right) \underline{\varphi}_T,&&(x,t)\in\Omega\times(0,T],\\
    &\mathbf{n}\cdot(A(x)\nabla\overline{\varphi}_T)\ge 0 = \mathbf{n}\cdot(A(x)\nabla\underline{\varphi}_T),&&(x,t)\in\partial\Omega\times(0,T],\\
    &\overline{\varphi}_T(x,0)=\overline{\varphi}_T(x,T),&&x\in\overline{\Omega},\\
    &\underline{\varphi}_T(x,0)=\left(\frac{\varphi(x,0)}{\varphi(x,T)} \mathrm{e}^{-\frac{\varepsilon}{2\omega}T}\right)\underline{\varphi}_T(x,T), &&x\in\overline{\Omega}.
  \end{aligned}
  \right.
\end{equation*}
Then by the Harnack principle (Proposition \ref{proposition3.3}(ii)), for any $d>0$, there is a constant $C_d>1$ independent of $T>T_0$ such that
\[\frac{\varphi(x,0)}{\varphi(x,T)}<C_{d}^{2}.\]
So there is a positive constant, denoted still by $T_0$, such that
\[C_{d}^{2}\mathrm{e}^{-\frac{\varepsilon}{2\omega}T}<1,\qquad\forall~T>T_0.\]
Applying Corollary \ref{corollary2.15} yields
 \[\liminf\limits_{T\to\infty}\fint^{T}_{0}H(s;d)\,\mathrm{d}s+ \frac{\varepsilon}{2}\ge \liminf\limits_{T\to\infty}\min\limits_{x\in\overline{\Omega}} \fint^{T}_{0}c(x,s)\,\mathrm{d}s-dC^*\]
and
 \[\limsup\limits_{T\to\infty}\fint^{T}_{0}H(s;d)\,\mathrm{d}s+ \frac{\varepsilon}{2}\ge \limsup\limits_{T\to\infty}\min\limits_{x\in\overline{\Omega}} \fint^{T}_{0}c(x,s)\,\mathrm{d}s-dC^*.\]
Using Lemma \ref{lemma2.4}, we have
\[\liminf_{T\to\infty}\fint^{T}_{0} H(s;d)\,\mathrm{d}s +\frac{\varepsilon}{2}\ge \min\limits_{x\in\overline{\Omega}}\liminf_{T\to\infty}\fint^{T}_{0}c(x,s)\,\mathrm{d}s -dC^*.\]
Letting $d\to 0$ and by the arbitrariness of $\varepsilon>0$, we easily obtain the inequalities in the lemma. Finally, a direct application of Lemmas \ref{lemma2.3} and \ref{lemma2.4} yields the identities in the lemma. This finishes the proof.
\end{proof}

\medskip
Theorem \ref{theorem1.5} follows directly from Lemma \ref{lemma4.2}, Corollary \ref{corollary4.3}, and Lemmas \ref{lemma4.4}-\ref{lemma4.6}.

\chapter{Asymptotic behavior of $H$: $\omega\to0$ and $\omega\to\infty$ with fixed $d>0$}\label{chp5}
This chapter is concerned with the asymptotic analysis of the principal Floquet bundle under a fixed diffusion coefficient $d>0$, considering both the slow-frequency limit ($\omega \to 0$) and the fast-frequency limit ($\omega \to \infty$). Two main results are established: Theorem \ref{theorem5.1} (proved mainly with respect to a sequence $\{T_n\}_{n \in \mathbb{N}^+}$ with $T_n \to \infty$ as $n\to\infty$), and Theorem \ref{theorem1.6}. The arguments proceed by establishing a series of lemmas. To emphasize the dependence of $H$ on $\omega>0$ for fixed $d>0$, we denote by $H(\cdot;\omega)$ the normalized principal Floquet bundle. For a given function $c\in\mathcal{C}$, recall the set $\mathscr{C}$ of functions defined as
  \[\mathscr{C}:=\left\{\hat{c}\in C (\overline{\Omega})~|~\text{there exists }T_n\to\infty\text{ such that }\|\hat{c}_{T_n}-\hat{c}\|_{C(\overline{\Omega})}\to 0\right\},\]
where $\hat{c}_T(\cdot)=\fint^{T}_{0}c(\cdot,s)\,\mathrm{d}s$; and set $\mathscr{C}$ of functions defined as
  \[\mathscr{C}_*=\left\{\hat{c}\in C (\overline{\Omega})~|~\left\{\hat{c}_n\right\}_{n\in\mathbb{N}^+}\text{ admits a convergent subsequence converging to }\hat{c}\text{ in } C(\overline{\Omega})\right\}\]
for some sequence $\{T_n\}_{n\in\mathbb{N}^+}$ of $T\to\infty$, where $\hat{c}_n(\cdot)=\hat{c}_{T_n}(\cdot)$.

\begin{theorem}\label{theorem5.1}
Let $H(\cdot;\omega)$ be the normalized principal Floquet bundle of \eqref{eq1.1} with potential $c$ and $\{T_n\}_{n\in\mathbb{N}^+}$ be a sequence satisfying $T_n\to\infty$ as $n\to\infty$.
\begin{enumerate}[{\upshape (i)}]
  \item {\rm (}The case $\omega\to0${\rm )} Assume that $c\in \mathcal{C}\cap C^{0,1}(\overline{\Omega} \times\mathbb{R})$ satisfies $\sup_{t\in\mathbb{R}}\|\partial_t c(\cdot,t)\|_\infty<\infty$. For each $t\in\mathbb{R}$, denote by $\mu^{0}(t)$ the principal eigenvalue of \eqref{eq1.4} with $c(\cdot,t)$. Then
      \[\lim_{\omega\to0}\liminf_{n\to\infty}\fint^{T_n}_{0}H(s;\omega)\,\mathrm{d}s= \liminf_{n\to\infty}\fint^{T_n}_{0}\mu^0(s)\,\mathrm{d}s\]
      and
      \[\lim_{\omega\to0}\limsup_{n\to\infty}\fint^{T_n}_{0}H(s;\omega)\,\mathrm{d}s= \limsup_{n\to\infty}\fint^{T_n}_{0}\mu^0(s)\,\mathrm{d}s.\]
      Consequently, if the limit $\lim_{n\to\infty}\fint^{T_n}_{0}\mu^0(s)\,\mathrm{d}s$ exists, then
       \[\lim_{\omega\to0}\liminf_{n\to\infty}\fint^{T_n}_{0}H(s;\omega)\,\mathrm{d}s= \lim_{\omega\to0}\limsup_{n\to\infty}\fint^{T_n}_{0}H(s;\omega)\,\mathrm{d}s= \lim_{n\to\infty}\fint^{T_n}_{0}\mu^0(s)\,\mathrm{d}s.\]
  \item {\rm (}The case $\omega\to\infty${\rm )} Assume that $c\in \mathcal{C}\cap C^{2,1}(\overline{\Omega}\times\mathbb{R})$ satisfies assumption \eqref{H2}. Denote by $\mu^\infty$ the principal eigenvalue of \eqref{eq1.3} with potential $\hat{c}$. Then
       \[\lim_{\omega\to\infty}\liminf_{n\to\infty}\fint^{T_n}_{0}H(s;\omega)\,\mathrm{d}s= \inf_{\hat{c}\in\mathscr{C}_*}\mu^\infty(\hat{c})\quad\text{and}\quad \lim_{\omega\to\infty}\limsup_{n\to\infty}\fint^{T_n}_{0}H(s;\omega)\,\mathrm{d}s= \sup_{\hat{c}\in\mathscr{C}_*}\mu^\infty(\hat{c}).\]
      Consequently, if the sequence $\left\{\fint^{T_n}_{0}c(\cdot,s)\,\mathrm{d}s \right\}_{n\in\mathbb{N}^+}$ of functions converges to some function $\hat{c}(\cdot)$ in $C(\overline{\Omega})$, then
       \[\lim_{\omega\to\infty}\liminf_{n\to\infty}\fint^{T_n}_{0}H(s;\omega)\,\mathrm{d}s= \lim_{\omega\to\infty}\limsup_{n\to\infty}\fint^{T_n}_{0}H(s;\omega)\,\mathrm{d}s=\mu^\infty(\hat{c}).\]
\end{enumerate}
\end{theorem}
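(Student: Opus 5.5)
We treat the two regimes separately. In both we compare, through Lemma \ref{lemma2.13}, an explicit quasi-periodic test function on $[0,T_n]$ with the function built from the bundle itself:
\[
\underline{\varphi}_n:=\varphi(x,t;\omega)\exp\Big[\tfrac{1}{\omega}\int_0^t\Big(\fint_0^{T_n}H(r;\omega)\,\mathrm{d}r-H(s;\omega)\pm\tfrac{\varepsilon}{2}\Big)\mathrm{d}s\Big].
\]
This function satisfies the equation with constant $\fint_0^{T_n}H\pm\varepsilon/2$ and the Neumann condition. Since $\varphi$ is bounded above and below for fixed $(\omega,d)$ (Proposition \ref{proposition3.3}(ii)), its endpoint ratio is $\frac{\varphi(x,0)}{\varphi(x,T_n)}e^{\mp\varepsilon T_n/(2\omega)}$. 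For $n$ large this gives the required ordering of $\underline{\varphi}_n(\cdot,0)$ and $\underline{\varphi}_n(\cdot,T_n)$. The $+$ sign is used when $\underline{\varphi}_n$ plays the sub-solution role, and the $-$ sign when it plays the super-solution role.

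\emph{Case $\omega\to0$.} We use the frozen-time eigenpair $(\mu^0(t),\phi^0(x,t))$ of \eqref{eq1.4}. Set
\[
w_n^\pm(x,t):=\phi^0(x,t)\exp\Big[\tfrac{1}{\omega}\Big(t\fint_0^{T_n}\mu^0\,\mathrm{d}s-\int_0^t\mu^0(s)\,\mathrm{d}s\Big)\pm\tfrac{\varepsilon t}{\omega}\Big].
\]
A direct computation gives
\[
\omega\partial_tw_n^\pm-d\operatorname{div}(A\nabla w_n^\pm)+cw_n^\pm=\Big(\fint_0^{T_n}\mu^0\,\mathrm{d}s\pm\varepsilon+\omega\frac{\partial_t\phi^0}{\phi^0}\Big)w_n^\pm ,
\]
and $w_n^\pm$ satisfies the exact Neumann condition. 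By Lemma \ref{lemma2.22}, $|\omega\,\partial_t\phi^0/\phi^0|\le\omega C_0$, and the endpoint ratio $w_n^\pm(x,T_n)/w_n^\pm(x,0)$ lies between $C_0^{-2}e^{\pm\varepsilon T_n/\omega}$ and $C_0^{2}e^{\pm\varepsilon T_n/\omega}$. For large $n$ this gives the correct ordering, with $w_n^-$ serving as a super-solution and $w_n^+$ as a sub-solution. Pairing $w_n^-$ with $\underline{\varphi}_n$ (sign $+$) in Lemma \ref{lemma2.13} yields
\[
\liminf_n\fint_0^{T_n}H\ge\liminf_n\fint_0^{T_n}\mu^0-\omega C_0-2\varepsilon,
\]
and the analogous bound for $\limsup$. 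Swapping roles, with $w_n^+$ as sub and $\underline{\varphi}_n$ (sign $-$) as super, yields the reverse inequalities with $+\omega C_0+2\varepsilon$. Letting $\varepsilon\to0$ and then $\omega\to0$ proves (i). The key point is that $C_0$ does not depend on $\omega$, which Lemma \ref{lemma2.22} provides.

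\emph{Case $\omega\to\infty$.} The upper bounds $\liminf_n\fint_0^{T_n}H\le\inf_{\mathscr{C}_*}\mu^\infty$ and $\limsup_n\fint_0^{T_n}H\le\sup_{\mathscr{C}_*}\mu^\infty$ hold for every $\omega$ by Lemma \ref{lemma3.12}. For the lower bound, let $(\mu^\infty(\hat c_n),\hat\phi_n)$ be the principal eigenpair of \eqref{eq1.3} with potential $\hat c_n=\fint_0^{T_n}c\,\mathrm{d}s$, and set
\[
\overline{\varphi}_n:=\hat\phi_n(x)\exp\Big[\tfrac1\omega\big(\mathcal{M}(x,t;T_n)-\kappa\phi^*(x)\big)\Big],
\]
with $\kappa,\phi^*$ as in Lemma \ref{lemma2.16}. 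This function is exactly $T_n$-periodic. The time derivative exactly converts $c$ into $\hat c_n$, so the equation produces
\[
\mu^\infty(\hat c_n)-\frac{d}{\omega}\,\mathcal{E}_n ,
\]
where $\mathcal{E}_n$ collects $\operatorname{div}(A\nabla\mathcal{M})$, $\operatorname{div}(A\nabla\phi^*)$, the cross terms $\nabla\ln\hat\phi_n\cdot A\nabla(\mathcal{M}-\kappa\phi^*)$, and $\frac1\omega|\nabla(\mathcal{M}-\kappa\phi^*)|_A^2$. By \eqref{H2}, $\mathcal{E}_n$ is bounded uniformly in $n$, provided $\|\nabla\ln\hat\phi_n\|_\infty$ is. The boundary flux $\frac1\omega\,\mathbf{n}\cdot A\nabla(\mathcal{M}-\kappa\phi^*)$ is nonnegative by Lemma \ref{lemma2.16}. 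Lemma \ref{lemma2.13} with $\underline{\varphi}_n$ (sign $+$) then gives
\[
\liminf_n\fint_0^{T_n}H\ge\liminf_n\mu^\infty(\hat c_n)-\frac{C}{\omega}-\varepsilon,
\]
and the analogous bound with $\limsup$. Finally, $\hat c_n$ ranges in a relatively compact set whose limit points form $\mathscr{C}_*$, and $\mu^\infty$ is continuous (Lemma \ref{lemma2.17}). Hence $\liminf_n\mu^\infty(\hat c_n)=\inf_{\mathscr{C}_*}\mu^\infty$ and $\limsup_n\mu^\infty(\hat c_n)=\sup_{\mathscr{C}_*}\mu^\infty$. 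Sending $\omega\to\infty$ and then $\varepsilon\to0$ closes (ii).

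I expect the main obstacle to be the uniform-in-$n$ control of $\nabla\ln\hat\phi_n$, i.e. uniform $C^1$ bounds and a uniform positive lower bound for the eigenfunctions $\hat\phi_n$. I would argue by contradiction: a subsequence along which these bounds fail would, by compactness of $\mathscr{C}_*$ (Lemma \ref{lemma2.1}), have $\hat c_n\to\hat c$ in $C(\overline\Omega)$. Lemma \ref{lemma2.19} then gives $C^{1,\alpha}$ convergence of $\hat\phi_n$ to the positive $\hat\phi$, which contradicts the failure of the bounds.
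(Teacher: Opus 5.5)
Your proposal is correct and is essentially the paper's argument. It uses the same test-function pairs: $\phi^0$-based versus $\varphi$-based for $\omega\to0$, and $\hat\phi_n\exp[(\mathcal{M}-\kappa\phi^*)/\omega]$ versus $\varphi$-based for $\omega\to\infty$, fed into Lemma~\ref{lemma2.13}, with the $\omega\to\infty$ upper bound taken from Lemma~\ref{lemma3.12}.

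The only difference is bookkeeping in part (ii). You obtain a bound on $\nabla\ln\hat\phi_n$ uniform along the whole sequence (compactness plus Lemma~\ref{lemma2.19}). You then identify $\liminf_n\mu^\infty(\hat c_n)$ and $\limsup_n\mu^\infty(\hat c_n)$ with $\inf_{\mathscr{C}_*}\mu^\infty$ and $\sup_{\mathscr{C}_*}\mu^\infty$. The paper instead passes to convergent subsequences via Lemma~\ref{lemma5.5}. Your version makes explicit that the error constant is uniform over $\mathscr{C}_*$, which the paper's ``arbitrariness of $\hat c$'' step uses without stating.
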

\section{The case $\omega\to0$ with fixed $d\in(0,\infty)$}
We will use the normalized principal eigenfunction of \eqref{eq2.7} to construct suitable sub- and super-solutions. That is, for each $t\in\mathbb{R}$, the positive function $\phi^{0}(\cdot,t)$ related to principal eigenvalue $\mu^0(t)$ satisfies
\begin{equation*}
  \left\{
  \begin{aligned}
    &-d\operatorname{div}(A(x)\nabla\phi^{0})+c(x,t)\phi^{0}=\mu^0(t)\phi^{0},&&x\in\Omega,\\
    &\mathbf{n}\cdot(A(x)\nabla\phi^{0})=0,&&x\in\partial\Omega,\\
    &\|\phi^{0}(\cdot,t)\|_\infty=1.
  \end{aligned}
  \right.
\end{equation*}
We divide both sides by $\phi^{0}$ and perform a direct calculation to obtain
\begin{equation}\label{eq5.1}
-d\operatorname{div}(A(x)\nabla\ln\phi^0)-d\nabla\ln\phi^0\cdot(A(x)\nabla\ln\phi^0)+c(x,t)=\mu^0(t),\qquad x\in\Omega.
\end{equation}
We further recall several useful results stated in Chapter \ref{chp2}. Proposition \ref{proposition3.3}(ii) states that the normalized principal Floquet bundle $\varphi$ of \eqref{eq1.1} is uniformly bounded with respect to $\omega>0$ for fixed $d>0$, and Lemma \ref{lemma2.21} gives that $\phi^{0}$ is bounded. Consequently, there exists a positive constant $C>1$, independent of $\omega>0$, such that
\begin{equation}\label{eq5.2}
|\ln\varphi|+|\ln \phi^{0}|<C,\qquad\forall~(x,t)\in\overline{\Omega}\times\mathbb{R}.
\end{equation}
Furthermore, by Lemma \ref{lemma2.22}, the function $|\partial_t\ln \phi^{0}|$ is also bounded. These facts will be useful in the discussion to follow.
\begin{lemma}\label{lemma5.2}
Let $H(\cdot;\omega)$ be the normalized principal Floquet bundle of \eqref{eq1.1} with potential $c\in \mathcal{C}\cap C^{0,1}(\overline{\Omega}\times\mathbb{R})$, and $\mu^{0}(t)$, $t\in\mathbb{R}$ be the principal eigenvalue of \eqref{eq2.7} with potential $c(\cdot,t)$. Then for any sequence $\{T_n\}_{n\in\mathbb{N}^+}$ of $T\to\infty$, if $\sup_{t\in\mathbb{R}}\|\partial_tc(\cdot,t)\|_\infty<\infty$, there hold
 \[\lim_{\omega\to0}\liminf_{n\to\infty}\fint^{T_n}_{0}H(s;\omega)\,\mathrm{d}s=\liminf_{n\to\infty}\fint^{T_n}_{0}\mu^0(s)\,\mathrm{d}s\]
and
 \[\lim_{\omega\to0}\limsup_{n\to\infty}\fint^{T_n}_{0}H(s;\omega)\,\mathrm{d}s=\limsup_{n\to\infty}\fint^{T_n}_{0}\mu^0(s)\,\mathrm{d}s.\]
Consequently, if the limit $\lim_{n\to\infty}\fint^{T_n}_{0}\mu^0(s)\,\mathrm{d}s$ exists, then
  \[\lim_{\omega\to0}\liminf_{n\to\infty}\fint^{T_n}_{0}H(s;\omega)\,\mathrm{d}s= \lim_{\omega\to0}\limsup_{n\to\infty}\fint^{T_n}_{0}H(s;\omega)\,\mathrm{d}s= \lim_{n\to\infty}\fint^{T_n}_{0}\mu^0(s)\,\mathrm{d}s.\]
\end{lemma}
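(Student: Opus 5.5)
The plan is to build an explicit sub-solution and an explicit super-solution from the frozen-time eigenfunction $\phi^0$ of \eqref{eq2.7}, and to compare each with $\varphi$ through the long-time comparison principle (Lemma \ref{lemma2.13}). The key ansatz is
\[w(x,t):=\phi^0(x,t)\exp\Big(-\frac1\omega\int_0^t\mu^0(s)\,\mathrm{d}s\Big).\]
Using \eqref{eq2.7}, a direct computation gives
\[\omega\partial_t w-d\operatorname{div}(A\nabla w)+c\,w=\omega\,\frac{\partial_t\phi^0}{\phi^0}\,w .\]
By Lemma \ref{lemma2.22}, $|\partial_t\phi^0/\phi^0|<C_0$ uniformly in $(x,t)$, with $C_0$ independent of $\omega$. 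Hence $w$ is simultaneously a super-solution with eigenvalue $-C_0\omega$ and a sub-solution with eigenvalue $+C_0\omega$, relative to the time-dependent shift $\mu^0(t)$. It also satisfies the Neumann boundary condition exactly.

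The time-dependent shift is then converted into a constant over $[0,T_n]$, exactly as in the proof of Lemma \ref{lemma4.5}. Define
\[\overline{\varphi}_n:=\phi^0(x,t)\exp\Big[\frac1\omega\int_0^t\Big(\fint_0^{T_n}\mu^0\,\mathrm{d}r-\mu^0(s)-\varepsilon\Big)\mathrm{d}s\Big],\]
and let $\underline{\varphi}_n$ be the same expression with $+\varepsilon$ in place of $-\varepsilon$. Then
\[\omega\partial_t\overline{\varphi}_n-d\operatorname{div}(A\nabla\overline{\varphi}_n)+c\,\overline{\varphi}_n\ge\Big(\fint_0^{T_n}\mu^0-\varepsilon-C_0\omega\Big)\overline{\varphi}_n,\]
and the reverse inequality holds for $\underline{\varphi}_n$ with the constant $\fint_0^{T_n}\mu^0+\varepsilon+C_0\omega$.

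For the endpoint conditions, note $\overline{\varphi}_n(x,T_n)=\phi^0(x,T_n)e^{-\varepsilon T_n/\omega}$ while $\overline{\varphi}_n(x,0)=\phi^0(x,0)$. Since $1/C_0\le\phi^0\le1$ by Lemma \ref{lemma2.22}, we get $\overline{\varphi}_n(\cdot,0)\ge\overline{\varphi}_n(\cdot,T_n)$ for $n$ large; similarly $\underline{\varphi}_n(\cdot,0)\le\underline{\varphi}_n(\cdot,T_n)$ for $n$ large.

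On the Floquet side, I take
\[\varphi_n^{\pm}:=\varphi\exp\Big[\frac1\omega\int_0^t\Big(\fint_0^{T_n}H-H(s)\pm\varepsilon\Big)\mathrm{d}s\Big],\]
as in \eqref{eq4.9}--\eqref{eq4.10}. These are exact solutions with eigenvalue $\fint_0^{T_n}H\pm\varepsilon$. For fixed $\omega$, the Harnack bound (Proposition \ref{proposition3.3}(ii)) gives the correct endpoint inequalities for large $n$. Only their sign matters, so the fact that the Harnack constant depends on $\omega$ is harmless.

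Two applications of Lemma \ref{lemma2.13} then follow.
\begin{itemize}
\item Pairing $\overline{\varphi}_n$ with $\varphi_n^{+}$ (as sub-solution) gives
\[\liminf_n\Big(\fint_0^{T_n}H+\varepsilon-\fint_0^{T_n}\mu^0+\varepsilon+C_0\omega\Big)\ge0.\]
\item Pairing $\varphi_n^{-}$ (as super-solution) with $\underline{\varphi}_n$ gives the reverse bound
\[\liminf_n\Big(\fint_0^{T_n}\mu^0+\varepsilon+C_0\omega-\fint_0^{T_n}H+\varepsilon\Big)\ge0.\]
\end{itemize}
As in Lemma \ref{lemma2.13}, each of these yields inequalities for both $\liminf_n$ and $\limsup_n$. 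Letting $\varepsilon\to0$ leaves
\[\Big|\liminf_n\fint_0^{T_n}H(\cdot;\omega)-\liminf_n\fint_0^{T_n}\mu^0\Big|\le C_0\omega,\]
and the same bound for $\limsup_n$. Sending $\omega\to0$ gives both claimed limits, and the consequence for an existing limit is immediate.

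The main obstacle is the uniformity in $\omega$ of the bound on $\partial_t\phi^0/\phi^0$. This rests on Lemma \ref{lemma2.22}, hence on the hypothesis $\sup_t\|\partial_tc\|_\infty<\infty$ and the implicit-function differentiability of $\phi^0$ in $t$; I would simply invoke that lemma. One technical point also needs care: Lemma \ref{lemma2.13} is stated with strict positivity of the super-solution on the closed cylinder. This holds for $\overline{\varphi}_n$ and $\varphi_n^-$ by Lemma \ref{lemma2.22} and Proposition \ref{proposition3.3}(ii), and the regularity requirements are met.
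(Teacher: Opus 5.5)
Your proposal is correct and follows essentially the same route as the paper. The paper pairs the frozen-time eigenfunction $\phi^0$, shifted by $\fint_0^{T_n}\mu^0\mp\varepsilon$, with the rescaled Floquet bundle $\varphi$, shifted by $\fint_0^{T_n}H\pm\varepsilon$; it bounds the error term by $\omega\|\partial_t\ln\phi^0\|_\infty$ using Lemma~\ref{lemma2.22}, applies Lemma~\ref{lemma2.13} once for each direction, and then lets $\varepsilon\to0$ followed by $\omega\to0$. Your remark that, for fixed $\omega$, only the sign of the endpoint inequality for $\varphi_n^{\pm}$ matters is accurate (Proposition~\ref{proposition3.3}(ii) in fact gives an $\omega$-independent bound, but that uniformity is not needed here).
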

\begin{proof}
Fix an arbitrary $\varepsilon>0$.

\noindent\textbf{Lower bound.} For each $n\in\mathbb{N}^+$, define
\[\overline{\varphi}_n(x,t):=\phi^{0}(x,t)\exp\left[\frac{1}{\omega }\int^{t}_{0}\left(\fint^{T_{n}}_{0} \mu^0(r)\,\mathrm{d}r-\mu^0(s) -\varepsilon \right)\,\mathrm{d}s\right],\quad(x,t) \in\overline{\Omega}\times[0,T_{n}],\]
where $\phi^{0}$ is the principal eigenfunction of \eqref{eq2.7}, and
\[\underline{\varphi}_n(x,t):=\varphi(x,t)\exp\left[\frac{1}{\omega }\int^{t}_{0}\left(\fint^{T_{n}}_{0} H(r;\omega )\,\mathrm{d}r -H(s;\omega ) +\varepsilon \right)\,\mathrm{d}s\right],\quad(x,t) \in\overline{\Omega}\times[0,T_{n}].\]
 A straightforward calculation yields that for each $n\in\mathbb{N}^+$ and all $(x,t)\in\Omega\times(0,T_{n}]$, there hold
\begin{align*}
   & \omega \partial_t\overline{\varphi}_n- d\operatorname{div}(A(x)\nabla  \overline{\varphi}_n)+c(x,t) \overline{\varphi}_n \\
 = & \left[\omega \partial_t\ln\overline{\varphi}_n-d\operatorname{div}(A(x)\nabla \ln\overline{\varphi}_n)- d\nabla \ln\overline{\varphi}_n\cdot(A(x)\nabla\ln\overline{\varphi}_n)+c(x,t) \right]\overline{\varphi}_n \\
 = & \Bigg[\fint^{T_{n}}_{0}\mu^{0}(s)\,\mathrm{d}s + \omega\partial_t\ln \phi^{0}-\varepsilon  \\
 &\qquad \underbrace{-d\operatorname{div}(A(x)\nabla\ln\phi^0)-d\nabla\ln\phi^0\cdot(A(x)\nabla\ln\phi^0) +c(x,t)-\mu^0(t)}_{=0\text{ by }\eqref{eq5.1}} \Bigg]\overline{\varphi}_n\\
 = & \left[\fint^{T_{n}}_{0}\mu^{0}(s)\,\mathrm{d}s +\omega\partial_t\ln \phi^{0}-\varepsilon \right]\overline{\varphi}_n \\
\ge & \left[\fint^{T_{n}}_{0}\mu^{0}(s)\,\mathrm{d}s -\omega\left\|\partial_t\ln \phi^{0}\right\|_\infty-\varepsilon \right]\overline{\varphi}_n
\end{align*}
and
\begin{align*}
   & \omega \partial_t\underline{\varphi}_n- d\operatorname{div}(A(x)\nabla\underline{\varphi}_n)+c(x,t) \underline{\varphi}_n \\
 = & \left[\omega \partial_t\ln\underline{\varphi}_n-d\operatorname{div}(A(x)\nabla \ln\underline{\varphi}_n)- d\nabla \ln\underline{\varphi}_n\cdot(A(x)\nabla\ln\underline{\varphi}_n) +c(x,t)\right]\underline{\varphi}_n \\
 = & \Bigg[\fint^{T_{n}}_{0}H(s;\omega )\,\mathrm{d}s+\varepsilon \\
 &\qquad+\underbrace{\omega\partial_t\ln\varphi-d\operatorname{div}(A(x)\nabla\ln\varphi-d\nabla\ln\varphi\cdot(A(x)\nabla \ln\varphi)+c(x,t)-H(t;\omega)}_{=0}\Bigg] \underline{\varphi}_n\\
 = & \left(\fint^{T_{n}}_{0}H(s;\omega )\,\mathrm{d}s+\varepsilon \right) \underline{\varphi}_n.
\end{align*}
Then for every $\omega>0$, there is an integer $N_\varepsilon>0$ such that
\[T_{n}>\frac{2C\omega}{\varepsilon},\qquad\forall~n>N_\varepsilon,\]
where $C>1$ is the constant given in \eqref{eq5.2}. Now, it is straightforward to verify that for all $n>N_\varepsilon$, $\overline{\varphi}_n$ and $\underline{\varphi}_n$ satisfy
\begin{equation*}
\left\{
  \begin{aligned}
   & \omega \partial_t\overline{\varphi}_n-d\operatorname{div}(A(x)\nabla \overline{\varphi}_n)+c(x,t) \overline{\varphi}_n \\
   &\hskip 4cm \ge  \left(\fint^{T_{n}}_{0}\mu^{0}(s)\,\mathrm{d}s -\omega\left\|\partial_t\ln \phi^{0}\right\|_\infty-\varepsilon \right)\overline{\varphi}_n, &&(x,t)\in\Omega\times(0,T_{n}], \\
   & \omega \partial_t\underline{\varphi}_n-d\operatorname{div}(A(x)\nabla \underline{\varphi}_n)+c(x,t) \underline{\varphi}_n = \left(\fint^{T_{n}}_{0}H(s;\omega )\,\mathrm{d}s+\varepsilon \right) \underline{\varphi}_n, &&(x,t)\in\Omega\times(0,T_{n}], \\
   &\mathbf{n}\cdot(A(x)\nabla\overline{\varphi}_n)=\mathbf{n}\cdot(A(x)\nabla\underline{\varphi}_n) = 0,&&(x,t)\in\partial\Omega\times(0,T_{n}],\\
   &\overline{\varphi}_n(x,0)=\phi^{0}(x,0)\ge\phi^{0}(x,T_{n})\mathrm{e}^{-\frac{\varepsilon T_{n}}{\omega }}=\overline{\varphi}_n(x,T_{n}),&&x\in\overline{\Omega},\\
   &\underline{\varphi}_n(x,0)=\varphi(x,0)\le\varphi(x,T_{n})\mathrm{e}^{\frac{\varepsilon T_{n}}{\omega }}=\underline{\varphi}_n(x,T_{n}),&&x\in\overline{\Omega}.
  \end{aligned}
\right.
\end{equation*}
Using Lemma \ref{lemma2.13}, we obtain
 \[\liminf_{n\to\infty}\fint^{T_{n}}_{0}H(s;\omega )\,\mathrm{d}s+\varepsilon\ge \liminf_{n\to\infty}\fint^{T_{n}}_{0}\mu^{0}(s)\,\mathrm{d}s -\omega\left\|\partial_t\ln \phi^{0}\right\|_\infty-\varepsilon\]
and
 \[\limsup_{n\to\infty}\fint^{T_{n}}_{0}H(s;\omega )\,\mathrm{d}s+\varepsilon\ge \limsup_{n\to\infty}\fint^{T_{n}}_{0}\mu^{0}(s)\,\mathrm{d}s -\omega\left\|\partial_t\ln \phi^{0}\right\|_\infty-\varepsilon.\]
By the arbitrariness of $\varepsilon>0$, we conclude that
 \[\liminf_{n\to\infty}\fint^{T_{n}}_{0}H(s;\omega )\,\mathrm{d}s\ge \liminf_{n\to\infty}\fint^{T_{n}}_{0}\mu^{0}(s)\,\mathrm{d}s -\omega\left\|\partial_t\ln \phi^{0}\right\|_\infty\]
and
 \[\limsup_{n\to\infty}\fint^{T_{n}}_{0}H(s;\omega )\,\mathrm{d}s\ge \limsup_{n\to\infty}\fint^{T_{n}}_{0}\mu^{0}(s)\,\mathrm{d}s -\omega\left\|\partial_t\ln \phi^{0}\right\|_\infty.\]
Finally, applying the boundedness of $\partial_t\ln\phi^0$ (see Lemma \ref{lemma2.22}) and letting $\omega\to 0$, we obtain
 \[\liminf_{\omega\to0}\liminf_{n\to\infty}\fint^{T_{n}}_{0}H(s;\omega )\,\mathrm{d}s\ge \liminf_{n\to\infty}\fint^{T_{n}}_{0}\mu^{0}(s)\,\mathrm{d}s\]
and
 \[\liminf_{\omega\to0}\limsup_{n\to\infty}\fint^{T_{n}}_{0}H(s;\omega )\,\mathrm{d}s\ge \limsup_{n\to\infty}\fint^{T_{n}}_{0}\mu^{0}(s)\,\mathrm{d}s.\]
This obtains the lower bounds.

\medskip
\noindent\textbf{Upper bound.} In a similar manner, for each $n\in\mathbb{N}^+$, define
\[\overline{\varphi}_n(x,t):=\varphi(x,t)\exp\left[\frac{1}{\omega}\int^{t}_{0}\left(\fint^{T_{n}}_{0} H(r;\omega)\,\mathrm{d}r -H(s;\omega) -\varepsilon \right)\,\mathrm{d}s\right],\quad(x,t) \in\overline{\Omega}\times[0,T_{n}]\]
and
\[\underline{\varphi}_n(x,t):=\phi^{0}(x,t)\exp\left[\frac{1}{\omega}\int^{t}_{0}\left(\fint^{T_{n}}_{0} \mu^0(r)\,\mathrm{d}r -\mu^0(s) +\varepsilon \right)\,\mathrm{d}s\right]\quad(x,t) \in\overline{\Omega}\times[0,T_{n}].\]
Proceeding similarly, for each $\omega>0$, there exists an integer $N_\varepsilon$ such that
\[T_{n}>\frac{2C\omega}{\varepsilon},\qquad\forall~n>N_\varepsilon,\]
where the constant $C>0$ is given in \eqref{eq5.2}. Then it can be verified that for each $n>N_\varepsilon$, the functions $\underline{\varphi}_n$ and $\overline{\varphi}_n$ satisfy
\begin{equation*}
\left\{
  \begin{aligned}
   & \omega \partial_t\overline{\varphi}_n-d\operatorname{div}(A(x)\nabla \overline{\varphi}_n)+c(x,t) \overline{\varphi}_n= \left(\fint^{T_{n}}_{0}H(s;\omega )\,\mathrm{d}s-\varepsilon \right)\overline{\varphi}_n, &&(x,t)\in\Omega\times(0,T_{n}], \\
   & \omega \partial_t\underline{\varphi}_n-d\operatorname{div}(A(x)\nabla \underline{\varphi}_n)+c(x,t) \underline{\varphi}_n\\
   &\hskip 4cm \le \left(\fint^{T_{n}}_{0}\mu^{0}(s)\,\mathrm{d}s +\omega\left\|\partial_t\ln \phi^{0}\right\|_\infty+\varepsilon \right) \underline{\varphi}_n, &&(x,t)\in\Omega\times(0,T_{n}], \\
   &\mathbf{n}\cdot(A(x)\nabla\overline{\varphi}_n)=\mathbf{n}\cdot(A(x)\nabla\underline{\varphi}_n) = 0,&&(x,t)\in\partial\Omega\times(0,T_{n}],\\
   &\overline{\varphi}_n(x,0)=\varphi(x,0)\ge\varphi(x,T_{n})\mathrm{e}^{-\frac{\varepsilon T_{n}}{\omega }}=\overline{\varphi}_n(x,T_{n}),&&x\in\overline{\Omega},\\
   &\underline{\varphi}_n(x,0)=\phi^{0}(x,0)\le\phi^{0}(x,T_{n})\mathrm{e}^{\frac{\varepsilon T_{n}}{\omega }}=\underline{\varphi}_n(x,T_{n}),&&x\in\overline{\Omega}.
  \end{aligned}
\right.
\end{equation*}
Applying Lemma \ref{lemma2.13} and using the arbitrariness of $\varepsilon>0$, we obtain
 \[\liminf_{n\to\infty}\fint^{T_{n}}_{0}H(s;\omega )\,\mathrm{d}s\le \liminf_{n\to\infty}\fint^{T_{n}}_{0}\mu^{0}(s)\,\mathrm{d}s +\omega\left\|\partial_t\ln \phi^{0}\right\|_\infty\]
and
 \[\limsup_{n\to\infty}\fint^{T_{n}}_{0}H(s;\omega )\,\mathrm{d}s\le \limsup_{n\to\infty}\fint^{T_{n}}_{0}\mu^{0}(s)\,\mathrm{d}s +\omega\left\|\partial_t\ln \phi^{0}\right\|_\infty.\]
Then applying Lemma \ref{lemma2.22} again and letting $\omega\to 0$, we obtain
 \[\limsup_{\omega\to0}\liminf_{n\to\infty}\fint^{T_{n}}_{0}H(s;\omega )\,\mathrm{d}s\le \liminf_{n\to\infty}\fint^{T_{n}}_{0}\mu^{0}(s)\,\mathrm{d}s\]
and
 \[\limsup_{\omega\to0}\limsup_{n\to\infty}\fint^{T_{n}}_{0}H(s;\omega )\,\mathrm{d}s\le \limsup_{n\to\infty}\fint^{T_{n}}_{0}\mu^{0}(s)\,\mathrm{d}s.\]
We obtain the upper bounds.

A combination of the lower bounds and the upper bounds yields the desired result. This completes the proof.
\end{proof}

\begin{lemma}\label{lemma5.3}
Let $c\in \mathcal{C}\cap C^{0,1}(\overline{\Omega}\times\mathbb{R})$ be a function satisfying $\sup_{t\in\mathbb{R}}\|\partial_tc(\cdot,t)\|_\infty <\infty$, $H(\cdot ;\omega)$ be the normalized principal Floquet bundle of \eqref{eq1.1} with potential $c$, and $\mu^{0}(t)$, $t\in\mathbb{R}$ be the principal eigenvalue of \eqref{eq2.7} with potential $c(\cdot,t)$. Then
\begin{equation}\label{eq5.3}
  \lim_{\omega\to0}\liminf_{T\to\infty}\fint^{T}_{0}H(s;\omega)\,\mathrm{d}s= \liminf_{T\to\infty}\fint^{T}_{0}\mu^0(s)\,\mathrm{d}s
\end{equation}
and
\begin{equation}\label{eq5.4}
  \lim_{\omega\to0}\limsup_{T\to\infty}\fint^{T}_{0}H(s;\omega)\,\mathrm{d}s= \limsup_{T\to\infty}\fint^{T}_{0}\mu^0(s)\,\mathrm{d}s.
\end{equation}
\end{lemma}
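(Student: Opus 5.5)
Lemma \ref{lemma5.3} is the continuous-parameter version of Lemma \ref{lemma5.2}. The cleanest route is to observe that the proof of Lemma \ref{lemma5.2} actually produced bounds that are uniform in the sequence. For each fixed $\omega>0$ it gives, for \emph{every} sequence $T_n\to\infty$,
\[
\Bigl|\liminf_{n\to\infty}\fint^{T_n}_{0}H(s;\omega)\,\mathrm{d}s-\liminf_{n\to\infty}\fint^{T_n}_{0}\mu^0(s)\,\mathrm{d}s\Bigr|\le \omega K,
\]
and the same estimate with $\limsup$ in both places. Here $K:=\sup_{(x,t)}|\partial_t\ln\phi^0|<\infty$ by Lemma \ref{lemma2.22}, so $K$ depends only on $d$ and $c$, not on $\omega$ or the sequence. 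I would rather not pass through sequences, however, since the liminf over $T$ need not be realized along a sequence common to $H$ and $\mu^0$.

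Instead I would rerun the construction of Lemma \ref{lemma5.2} directly with families indexed by $T>0$ and apply Corollary \ref{corollary2.15} in place of Lemma \ref{lemma2.13}. For the lower bound, fix $\varepsilon>0$ and set
\[
\overline{\varphi}_T:=\phi^0(x,t)\exp\Bigl[\tfrac1\omega\int_0^t\Bigl(\fint_0^T\mu^0-\mu^0(s)-\varepsilon\Bigr)\mathrm{d}s\Bigr],
\qquad
\underline{\varphi}_T:=\varphi(x,t)\exp\Bigl[\tfrac1\omega\int_0^t\Bigl(\fint_0^TH-H(s;\omega)+\varepsilon\Bigr)\mathrm{d}s\Bigr].
\]
The computation via \eqref{eq5.1} gives the differential inequalities with constants $\overline{\lambda}_T=\fint_0^T\mu^0-\omega K-\varepsilon$ and $\underline{\lambda}_T=\fint_0^TH+\varepsilon$. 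The Neumann conditions hold with equality. The bounds \eqref{eq5.2} give the endpoint inequalities $\overline{\varphi}_T(\cdot,0)\ge\overline{\varphi}_T(\cdot,T)$ and $\underline{\varphi}_T(\cdot,0)\le\underline{\varphi}_T(\cdot,T)$ once $T>2C\omega/\varepsilon$.

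Corollary \ref{corollary2.15} then yields
\[
\liminf_{T\to\infty}\fint_0^TH\ge\liminf_{T\to\infty}\fint_0^T\mu^0-\omega K-2\varepsilon,
\]
and the analogous inequality for $\limsup$. For the upper bound I swap the roles exactly as in Lemma \ref{lemma5.2}: $\varphi$ with $-\varepsilon$ becomes the supersolution and $\phi^0$ with $+\varepsilon$ the subsolution. This gives the reverse inequalities with $+\omega K+2\varepsilon$.

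Letting $\varepsilon\to0$ gives
\[
\Bigl|\liminf_{T\to\infty}\fint_0^TH(s;\omega)\,\mathrm{d}s-\liminf_{T\to\infty}\fint_0^T\mu^0\Bigr|\le\omega K,
\]
and likewise for $\limsup$. Sending $\omega\to0$ proves \eqref{eq5.3} and \eqref{eq5.4}.

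The only point needing care is uniformity in $\omega$. The constant $C$ in \eqref{eq5.2} must be independent of $\omega$, which holds because the Harnack bound of Proposition \ref{proposition3.3}(ii) is $\omega$-independent for fixed $d$. The threshold $T>2C\omega/\varepsilon$ may depend on $\omega$, which is harmless since $T\to\infty$ first. The boundedness of the families $\underline{\lambda}_T$ and $\overline{\lambda}_T$ required by Corollary \ref{corollary2.15} follows from $|H|,|\mu^0|\le\|c\|_\infty$.
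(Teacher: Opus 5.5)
Your proposal is correct and is essentially the paper's argument: the same two pairs of sub- and super-solutions built from $\phi^0$ and $\varphi$, the bound on $\partial_t\ln\phi^0$ from Lemma~\ref{lemma2.22}, and Corollary~\ref{corollary2.15} applied to $T$-indexed families. The only difference is organizational. The paper gets the upper bound in \eqref{eq5.3} and the lower bound in \eqref{eq5.4} from Lemma~\ref{lemma5.2}, along a sequence realizing $\liminf_{T\to\infty}$ (resp.\ $\limsup_{T\to\infty}$) of $\fint_0^T\mu^0(s)\,\mathrm{d}s$, and reruns the construction only for the other two directions; you derive all four inequalities directly from Corollary~\ref{corollary2.15}, which gives the slightly cleaner quantitative estimate $\bigl|\liminf_{T\to\infty}\fint_0^T H(s;\omega)\,\mathrm{d}s-\liminf_{T\to\infty}\fint_0^T\mu^0(s)\,\mathrm{d}s\bigr|\le\omega K$ and its $\limsup$ analogue.
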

\begin{proof}
We divide the proof into two steps.

\medskip
\noindent\textbf{Step 1.} We prove \eqref{eq5.3}. Firstly, let $\{T_n\}_{n\in\mathbb{N}^+}$ be a sequence satisfying $T_n\to\infty$ as $n\to\infty$ such that
\[\lim_{n\to\infty}\fint^{T_n}_{0}\mu^0(s)\,\mathrm{d}s= \liminf_{T\to\infty}\fint^{T_n}_{0}\mu^0(s)\,\mathrm{d}s.\]
Then by Lemma \ref{lemma5.2}, we have
\begin{equation}\label{eq5.5}
\begin{aligned}
     \limsup_{\omega\to0}\liminf_{T\to\infty}\fint^{T}_{0}H(s;\omega)\,\mathrm{d}s \le & \limsup_{\omega\to0}\liminf_{n\to\infty}\fint^{T_n}_{0}H(s;\omega)\,\mathrm{d}s \\
     = & \lim_{n\to\infty}\fint^{T_n}_{0}\mu^0(s)\,\mathrm{d}s= \liminf_{T\to\infty}\fint^{T}_{0}\mu^0(s)\,\mathrm{d}s.
\end{aligned}
\end{equation}
This yields the upper bound of \eqref{eq5.3}.

We now establish the lower bound of \eqref{eq5.3}. For each $T>0$, define
 \[\overline{\varphi}_T (x,t):=\phi^{0}(x,t)\exp\left[\frac{1}{\omega }\int^{t}_{0}\left(\fint^{T}_{0}\mu^0(r)\,\mathrm{d}r -\mu^0(s) -\varepsilon \right)\,\mathrm{d}s\right],\quad(x,t) \in\overline{\Omega}\times[0,T],\]
and
 \[\underline{\varphi}_T (x,t):=\varphi(x,t)\exp\left[\frac{1}{\omega}\int^{t}_{0}\left(\fint^{T}_{0} H(r;\omega)\,\mathrm{d}r-H(s;\omega )+\varepsilon \right)\,\mathrm{d}s\right],\quad(x,t) \in\overline{\Omega}\times[0,T],\]
where $\phi^{0}$ is the principal eigenfunction of \eqref{eq2.7}, $\varphi$ is the normalized principal Floquet bundle of \eqref{eq1.1}. Recall the constant $C>1$ given in \eqref{eq5.2}, that is, $C$ is independent of $\omega>0$ such that
 \[|\ln\varphi|+|\ln \phi^{0}|<C,\qquad\forall~(x,t)\in\overline{\Omega}\times\mathbb{R}.\]
Then for each fixed $\omega>0$ and any
\begin{equation}\label{eq5.6}
 T>\frac{2C\omega}{\varepsilon},
\end{equation}
the functions $\overline{\varphi}_T $ and $\underline{\varphi}_T $ satisfy
\begin{equation*}
  \left\{
  \begin{aligned}
    &\omega \partial_t\overline{\varphi}_T -d\operatorname{div}(A(x)\nabla \overline{\varphi}_T )+c(x,t)\overline{\varphi}_T  \\
    &\hskip 3cm =\left(\fint^{T}_{0}\mu^0(s)\,\mathrm{d}s+\omega \frac{\partial_t\phi^{0}}{\phi^{0}}-\varepsilon \right) \overline{\varphi}_T \\
    &\hskip 3cm \ge\left(\fint^{T}_{0}\mu^0(s)\,\mathrm{d}s-\omega \|\partial_t\ln\phi^{0}\|_\infty-\varepsilon \right) \overline{\varphi}_T ,&&(x,t)\in\Omega\times(0,T],\\
    &\omega \partial_t\underline{\varphi}_T -d\operatorname{div}(A(x)\nabla \underline{\varphi}_T )+c(x,t)\underline{\varphi}_T  =\left(\fint^{T}_{0}H(s;\omega )\,\mathrm{d}s+\varepsilon \right) \underline{\varphi}_T ,&&(x,t)\in\Omega\times(0,T],\\
    &\mathbf{n}\cdot(A(x)\nabla\overline{\varphi}_T )=\mathbf{n}\cdot(A(x)\nabla\underline{\varphi}_T )= 0,&&(x,t)\in\partial\Omega\times(0,T],\\
    &\overline{\varphi}_T (x,0)=\phi^{0}(x,0)\ge \phi^{0}(x,T) \mathrm{e}^{-\frac{\varepsilon}{\omega }T}=\overline{\varphi}_T (x,T),&&x\in\Omega,\\
    &\underline{\varphi}_T (x,0)=\varphi(x,0)\le \varphi(x,T)\mathrm{e}^{\frac{\varepsilon}{\omega }T}=\underline{\varphi}_T (x,T),&&x\in\Omega.
  \end{aligned}
  \right.
\end{equation*}
Corollary \ref{corollary2.15} implies that
\[\liminf_{T\to\infty}\fint^{T}_{0}\mu^0(s)\,\mathrm{d}s-\omega \|\partial_t\ln\phi^{0}\|_\infty-\varepsilon
\le \liminf_{T\to\infty}\fint^{T}_{0}H(s;\omega )\,\mathrm{d}s+\varepsilon .\]
Now using Lemma \ref{lemma2.22}, letting $\omega\to0$ and by the arbitrariness of $\varepsilon>0$, we have
\begin{equation}\label{eq5.7}
   \liminf_{\omega\to0}\liminf_{T\to\infty}\fint^{T}_{0}H(s;\omega)\,\mathrm{d}s\ge \liminf_{T\to\infty}\fint^{T}_{0}\mu^0(s)\,\mathrm{d}s.
\end{equation}
Combining \eqref{eq5.5} and \eqref{eq5.7}, we have \eqref{eq5.3}.

\medskip
\noindent\textbf{Step 2.} The idea of proving \eqref{eq5.4} is similar to Step 1. Assume that $\{T_n\}_{n\in\mathbb{N}^+}$ is a sequence satisfying $T_n\to\infty$ as $n\to\infty$ such that
\[\lim_{n\to\infty}\fint^{T_n}_{0}\mu^0(s)\,\mathrm{d}s= \limsup_{T\to\infty}\fint^{T}_{0}\mu^0(s)\,\mathrm{d}s.\]
Then by Lemma \ref{lemma5.2}, we obtain the lower bound, namely,
\[\begin{aligned}
  \liminf_{\omega\to0}\limsup_{T\to\infty}\fint^{T}_{0}H(s;\omega)\,\mathrm{d}s \ge & \liminf_{\omega\to0}\limsup_{n\to\infty}\fint^{T_n}_{0}H(s;\omega)\,\mathrm{d}s \\
  = & \lim_{n\to\infty}\fint^{T_n}_{0}\mu^0(s)\,\mathrm{d}s= \limsup_{T\to\infty}\fint^{T}_{0}\mu^0(s)\,\mathrm{d}s.
\end{aligned}\]

To obtain the upper bound, we define
 \[\overline{\varphi}_T (x,t):=\varphi(x,t)\exp\left[\frac{1}{\omega }\int^{t}_{0}\left(\fint^{T}_{0} H(r;\omega )\,\mathrm{d}r -H(s;\omega ) -\varepsilon \right)\,\mathrm{d}s\right],\quad(x,t) \in\overline{\Omega}\times[0,T] \]
and
 \[\underline{\varphi}_T (x,t):=\phi^{0}(x,t)\exp\left[\frac{1}{\omega }\int^{t}_{0}\left(\fint^{T}_{0} \mu^0(s)\,\mathrm{d}s -\mu^0(s) +\varepsilon \right)\,\mathrm{d}s\right],\quad(x,t) \in\overline{\Omega}\times[0,T]\]
for each $T>0$, where $\phi^{0}$ is the principal eigenfunction of \eqref{eq2.7}, $\varphi$ is the normalized principal Floquet bundle of \eqref{eq1.1}. We fix $\omega>0$ and then compute that for each
$T>\frac{2C\omega}{\varepsilon}$ (where $C>1$ is defined by \eqref{eq5.2}), the functions $\overline{\varphi}_T $ and $\underline{\varphi}_T $ satisfy
\begin{equation*}
  \left\{
  \begin{aligned}
    &\omega \partial_t\overline{\varphi}_T -d\operatorname{div}(A(x)\nabla \overline{\varphi}_T )+c(x,t)\overline{\varphi}_T  =\left(\fint^{T}_{0}H(s;\omega )\,\mathrm{d}s-\varepsilon \right) \overline{\varphi}_T ,&&(x,t)\in\Omega\times(0,T],\\
    &\omega \partial_t\underline{\varphi}_T -d\operatorname{div}(A(x)\nabla \underline{\varphi}_T )+c(x,t)\underline{\varphi}_T  \\
    &\qquad\qquad \le\left(\fint^{T}_{0}\mu^0(s)\,\mathrm{d}s+\omega \|\partial_t\ln\phi^{0}\|_\infty+\varepsilon \right) \underline{\varphi}_T ,&&(x,t)\in\Omega\times(0,T],\\
    &\mathbf{n}\cdot(A(x)\nabla\overline{\varphi}_T )=\mathbf{n}\cdot(A(x)\nabla\underline{\varphi}_T )= 0,&&(x,t)\in\partial\Omega\times(0,T],\\
    &\overline{\varphi}_T (x,0)=\varphi(x,0)\ge \varphi(x,T) \mathrm{e}^{-\frac{\varepsilon}{\omega }T}=\overline{\varphi}_T (x,T),&&x\in\Omega,\\
    &\underline{\varphi}_T (x,0)=\phi^{0}(x,0)\le \phi^{0}(x,T)\mathrm{e}^{\frac{\varepsilon}{\omega }T}=\underline{\varphi}_T (x,T),&&x\in\Omega.
  \end{aligned}
  \right.
\end{equation*}
Now using Corollary \ref{corollary2.15} and Lemma \ref{lemma2.22}, letting $\omega\to0$ and by the arbitrariness of $\varepsilon>0$, we deduce that
\begin{equation*}
   \limsup_{\omega\to0}\limsup_{T\to\infty}\fint^{T}_{0}H(s;\omega)\,\mathrm{d}s\le \limsup_{T\to\infty}\fint^{T}_{0}\mu^0(s)\,\mathrm{d}s.
\end{equation*}
This, together with the lower bound, yields \eqref{eq5.4}.

The proof is complete.
\end{proof}

\medskip
The proof of Lemma \ref{lemma5.3} requires that, for any given $\omega > 0$, a time $T > \frac{2C\omega}{\varepsilon}$ is chosen to ensure $\underline{\varphi}(x,0) \le \underline{\varphi}(x,T)$; see \eqref{eq5.6}. Alternatively, for a fixed $T>0$, the inequality can be guaranteed by taking $\omega>0$ small enough. This observation enables the derivation of the following local asymptotic result concerning the normalized principal Floquet bundle.
\begin{lemma}\label{lemma5.4}
Let $c\in \mathcal{C}\cap C^{0,1}(\overline{\Omega}\times\mathbb{R})$ be a function satisfying $\sup_{t\in\mathbb{R}}\|\partial_tc(\cdot,t)\|_\infty<\infty$, $H(\cdot;\omega)$ be the normalized principal Floquet bundle of \eqref{eq1.1} with potential $c$, and $\mu^{0}(t)$, $t\in\mathbb{R}$ be the principal eigenvalue of \eqref{eq2.7} with potential $c(\cdot,t)$. For any $-\infty<T_1<T_2<\infty$, there holds
  \[ \lim_{\omega\to0}\fint^{T_2}_{T_1}H(s;\omega)\,\mathrm{d}s= \fint^{T_2}_{T_1}\mu^0(s)\,\mathrm{d}s.\]
\end{lemma}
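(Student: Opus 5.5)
The plan is to run the same sub/super-solution comparison as in Lemma \ref{lemma5.3}, but on the fixed finite interval $[T_1,T_2]$ with the roles reversed: $\omega\to0$ supplies the largeness instead of $T\to\infty$. After a time shift we may assume $T_1=0$ and $T_2=T>0$. Since $c(\cdot,\cdot+T_1)$ satisfies the same hypotheses with the same bounds, and the Floquet bundle simply translates in time, this is no loss of generality.

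Fix $\varepsilon>0$. Set
\[
\overline{\varphi}(x,t)=\phi^0(x,t)\exp\Big[\tfrac1\omega\int_0^t\Big(\fint_0^T\mu^0\,\mathrm{d}r-\mu^0(s)-\varepsilon\Big)\mathrm{d}s\Big]
\]
and
\[
\underline{\varphi}(x,t)=\varphi(x,t)\exp\Big[\tfrac1\omega\int_0^t\Big(\fint_0^TH(r;\omega)\,\mathrm{d}r-H(s;\omega)+\varepsilon\Big)\mathrm{d}s\Big].
\]
The computation in Lemma \ref{lemma5.2}, which uses \eqref{eq5.1} and the equation for $\ln\varphi$, gives the interior inequalities with constants $\fint_0^T\mu^0\,\mathrm{d}s-\omega\|\partial_t\ln\phi^0\|_\infty-\varepsilon$ and $\fint_0^TH(s;\omega)\,\mathrm{d}s+\varepsilon$. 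Both functions satisfy homogeneous Neumann boundary conditions.

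For the periodicity-type endpoint inequalities we need
\[
\phi^0(x,0)\ge\phi^0(x,T)e^{-\varepsilon T/\omega}\quad\text{and}\quad \varphi(x,0)\le\varphi(x,T)e^{\varepsilon T/\omega}.
\]
By \eqref{eq5.2}, where $C$ is independent of $\omega$ (from Proposition \ref{proposition3.3}(ii) and Lemma \ref{lemma2.21}), both hold as soon as $\varepsilon T/\omega>2C$, that is, $\omega<\varepsilon T/(2C)$. Lemma \ref{lemma2.12}, the single-interval comparison principle, then gives
\[
\fint_0^T\mu^0\,\mathrm{d}s-\omega\|\partial_t\ln\phi^0\|_\infty-\varepsilon\le\fint_0^TH(s;\omega)\,\mathrm{d}s+\varepsilon .
\]
Lemma \ref{lemma2.22} bounds $\|\partial_t\ln\phi^0\|_\infty$ by $C_0$. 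Letting $\omega\to0$ and then $\varepsilon\to0$ yields $\liminf_{\omega\to0}\fint_0^TH\ge\fint_0^T\mu^0$.

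The reverse inequality comes from swapping roles. Take $\overline{\varphi}=\varphi\exp[\frac1\omega\int_0^t(\fint_0^T H-H-\varepsilon)]$ and $\underline{\varphi}=\phi^0\exp[\frac1\omega\int_0^t(\fint_0^T\mu^0-\mu^0+\varepsilon)]$. For $\omega<\varepsilon T/(2C)$, Lemma \ref{lemma2.12} gives
\[
\fint_0^TH-\varepsilon\le\fint_0^T\mu^0+\omega C_0+\varepsilon ,
\]
so $\limsup_{\omega\to0}\fint_0^TH\le\fint_0^T\mu^0$.

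The main point to check is that every constant is uniform in $\omega$. This concerns the Harnack bound on $\ln\varphi$ for fixed $d$ (Proposition \ref{proposition3.3}(ii) is independent of $\omega$) and the bound on $\partial_t\ln\phi^0$, which is independent of $\omega$ by construction. A secondary check is that Lemma \ref{lemma2.12} applies on the fixed interval $[0,T]$ with these functions: they are positive, and $\underline\varphi\not\equiv0$.
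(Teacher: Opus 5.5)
Your proposal is correct and follows the paper's proof essentially line by line. It uses the same reduction to $[0,T]$, the same sub-/super-solution pairs built from $\phi^0$ and $\varphi$ with the $\pm\varepsilon$ drift, the same smallness condition $\omega<\varepsilon T/(2C)$ coming from the $\omega$-independent bound \eqref{eq5.2}, and the same closing step via Lemma~\ref{lemma2.12} and Lemma~\ref{lemma2.22}. The only difference is presentation: you write out both directions in full, starting with the lower bound, whereas the paper details the upper bound and omits the symmetric lower bound.
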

\begin{proof}
  For the sake of convenience, we set $T_1=0$ and $T_2=T>0$. Let $\varphi$ be the normalized principal Floquet bundle of \eqref{eq1.1} and $(\mu^0(t),\phi^{0}(\cdot,t))$ be the normalized principal eigenpair of \eqref{eq2.7} with potential $c(\cdot,t)$ for each $t\in[0,T]$.

\medskip
\noindent\textbf{Upper bound.} Define
\begin{equation*}
\overline{\varphi}_T (x,t):=\varphi(x,t)\exp\left[\frac{1}{\omega }\int^{t}_{0}\left(\fint^{T}_{0} H(r;\omega )\,\mathrm{d}r -H(s;\omega ) -\varepsilon \right)\,\mathrm{d}s\right],\quad(x,t) \in\overline{\Omega}\times[0,T]
\end{equation*}
and
\[\underline{\varphi}_T (x,t):=\phi^{0}(x,t)\exp\left[\frac{1}{\omega }\int^{t}_{0} \left(\fint^{T}_{0} \mu^0(r)\,\mathrm{d}r -\mu^0(s) +\varepsilon \right)\,\mathrm{d}s \right],\quad(x,t) \in\overline{\Omega}\times[0,T].\]
Recall the constant $C>1$ defined by \eqref{eq5.2}. Then for any $\omega\in\left(0,\frac{T\varepsilon}{2C}\right)$, the functions $\overline{\varphi}_T $ and $\underline{\varphi}_T $ satisfy
\begin{equation*}
  \left\{
  \begin{aligned}
    &\omega \partial_t\overline{\varphi}_T -d\operatorname{div}(A(x)\nabla \overline{\varphi}_T )+c(x,t)\overline{\varphi}_T  =\left(\fint^{T}_{0}H(s;\omega )\,\mathrm{d}s-\varepsilon \right) \overline{\varphi}_T ,&&(x,t)\in\Omega\times(0,T],\\
    &\omega \partial_t\underline{\varphi}_T -d\operatorname{div}(A(x)\nabla \underline{\varphi}_T )+c(x,t)\underline{\varphi}_T \\
    &\qquad\qquad \le\left(\fint^{T}_{0}\mu^0(s)\,\mathrm{d}s+\omega \|\partial_t\ln\phi^{0}\|_\infty+\varepsilon \right) \underline{\varphi}_T ,&&(x,t)\in\Omega\times(0,T],\\
    &\mathbf{n}\cdot(A(x)\nabla\overline{\varphi}_T )=\mathbf{n}\cdot(A(x)\nabla\underline{\varphi}_T )= 0,&&(x,t)\in\partial\Omega\times(0,T],\\
    &\overline{\varphi}_T (x,0)=\varphi(x,0)\ge \varphi(x,T) \mathrm{e}^{-\frac{\varepsilon}{\omega }T}=\overline{\varphi}_T (x,T),&&x\in\Omega,\\
    &\underline{\varphi}_T (x,0)=\phi^{0}(x,0)\le  \phi^{0}(x,T)\mathrm{e}^{\frac{\varepsilon}{\omega }T}=\underline{\varphi}_T (x,T),&&x\in\Omega.
  \end{aligned}
  \right.
\end{equation*}
Now using Lemmas \ref{lemma2.12} and \ref{lemma2.22}, then letting $\omega\to0$ and by the arbitrariness of $\varepsilon>0$, we deduce that
  \[ \limsup_{\omega\to0}\fint^{T}_{0}H(s;\omega)\,\mathrm{d}s\le \fint^{T}_{0}\mu^0(s)\,\mathrm{d}s.\]

\medskip
\noindent\textbf{Lower bound.} Similarly, we easily show
  \[\liminf_{\omega\to 0}\fint^{T}_{0}H(s;\omega)\,\mathrm{d}s\ge \fint^{T}_{0} \mu^0(s)\,\mathrm{d}s.\]
In fact, one can construct the sub- and super-solutions as
 \[\overline{\varphi}_T (x,t):=\phi^{0}(x,t)\exp\left[\frac{1}{\omega }\int^{t}_{0} \left(\fint^{T}_{0} \mu^0(r)\,\mathrm{d}r -\mu^0(s) -\varepsilon \right)\,\mathrm{d}s\right],\quad(x,t) \in\overline{\Omega}\times[0,T]\]
and
 \[\underline{\varphi}_T (x,t):=\varphi(x,t)\exp\left[\frac{1}{\omega }\int^{t}_{0} \left(\fint^{T}_{0} H(r;\omega )\,\mathrm{d}r -H(s;\omega ) +\varepsilon \right)\,\mathrm{d}s \right],\quad(x,t) \in\overline{\Omega}\times[0,T],\]
respectively. We omit the details of arguments.

Combining the upper bound and the lower bound, we complete the proof of the lemma.
\end{proof}

\section{The case $\omega\to\infty$ with fixed $d\in(0,\infty)$}
Let $c\in\mathcal{C}$ and $\{T_n\}_{n\in\mathbb{N}^+}$ be a sequence satisfying $T_n\to\infty$ as $n\to\infty$; define $\hat{c}_n(x):=\fint^{T_n}_{0}c(x,s)\,\mathrm{d}s$, $x\in\overline{\Omega}$. Consider the Neumann eigenvalue problem
 \begin{equation}\label{eq5.8}
  \left\{
  \begin{aligned}
    &-d\operatorname{div}(A(x)\nabla \hat{\phi}) +\hat{c}_n(x)\hat{\phi}=\mu\hat{\phi},&&x\in\Omega,\\
    &\mathbf{n}\cdot(A(x)\nabla\hat{\phi})=0,&&x\in\partial\Omega,\\
    &\|\hat{\phi}\|_{\infty}=1.
  \end{aligned}
  \right.
\end{equation}
A direct corollary of Lemma \ref{lemma2.19} is as follows.
\begin{lemma}\label{lemma5.5}
Let $\{T_n\}_{n\in\mathbb{N}^+}$ be a sequence satisfying $T_n\to\infty$ as $n\to\infty$ and $(\mu^\infty_{n},\hat{\phi}_n)=(\mu^\infty_{T_n},\hat{\phi}_{T_n})$ be the principal eigenpair of \eqref{eq5.8}. If $\lim_{n\to\infty}\left\|\hat{c}_n(\cdot)-\hat{c}(\cdot)\right\|_{ C(\overline{\Omega})}=0$, then
  \[\lim_{n\to\infty}\mu^\infty_{n}=\mu^\infty\quad\text{ and } \quad\lim_{n\to\infty}\left\|\hat{\phi}_n-\hat{\phi}\right\|_{C^{1+\alpha}(\overline{\Omega})}=0,\]
where $\alpha\in(0,1)$ and $(\mu^\infty,\hat{\phi})$ is the principal eigenpair of \eqref{eq1.3} with potential $\hat{c}$.
\end{lemma}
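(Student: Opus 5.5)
Lemma \ref{lemma5.5} is a direct specialization of Lemma \ref{lemma2.19} to the sequence $m_n:=\hat{c}_n$, so the plan is to check that hypotheses and normalizations match and then invoke it.

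\textbf{Step 1: the hypothesis of Lemma \ref{lemma2.19}.} Since $c\in\mathcal{C}$ is uniformly bounded and uniformly H\"{o}lder in $x$, each $\hat{c}_n$ belongs to $C(\overline{\Omega})$, in fact to $C^{\delta}(\overline{\Omega})$ with seminorm bounded by $\sup_t[c(\cdot,t)]_{\delta;\Omega}$. The assumption $\|\hat{c}_n-\hat{c}\|_{C(\overline{\Omega})}\to0$ is exactly the convergence $m_n\to m$ in $C(\overline{\Omega})$ that Lemma \ref{lemma2.19} requires. Moreover, $\hat{c}\in\mathscr{C}_*\subset C(\overline{\Omega})$, so the limiting problem \eqref{eq1.3} with potential $\hat{c}$ has a well-defined principal eigenpair.

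\textbf{Step 2: eigenvalue convergence.} Lemma \ref{lemma2.19} then gives $\mu^\infty_n\to\mu^\infty$ immediately. Alternatively, this follows from the Rayleigh quotient: it yields $|\mu^\infty(d,m_1)-\mu^\infty(d,m_2)|\le\|m_1-m_2\|_\infty$, which is the continuity part of Lemma \ref{lemma2.17}.

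\textbf{Step 3: matching the normalization.} This is the only point needing care. In \eqref{eq5.8} the eigenfunctions are normalized by $\|\hat{\phi}_n\|_\infty=1$, while Lemma \ref{lemma2.19} is stated for the ``normalized'' eigenpair with convergence obtained first in $L^2$. The plan is as follows.

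\begin{itemize}
\item Apply Lemma \ref{lemma2.19} to the $L^2$-normalized positive eigenfunctions $\tilde{\phi}_n\to\tilde{\phi}$ in $C^{1,\alpha}(\overline{\Omega})$.
\item Set $\hat{\phi}_n=\tilde{\phi}_n/\|\tilde{\phi}_n\|_\infty$. Because $\tilde{\phi}_n\to\tilde{\phi}$ uniformly, we have $\|\tilde{\phi}_n\|_\infty\to\|\tilde{\phi}\|_\infty$.
\item The limit satisfies $\|\tilde{\phi}\|_\infty>0$, since $\tilde{\phi}$ is a positive eigenfunction with unit $L^2$ norm.
\item Since $C^{1,\alpha}(\overline{\Omega})$ is a Banach algebra and multiplying a convergent sequence by convergent scalars preserves convergence, $\hat{\phi}_n\to\tilde{\phi}/\|\tilde{\phi}\|_\infty=\hat{\phi}$ in $C^{1+\alpha}(\overline{\Omega})$.
\end{itemize}

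Simplicity of the principal eigenvalue fixes the limit uniquely once positivity and the $L^\infty$ normalization are imposed, so no subsequence extraction is needed. This holds for every $\alpha\in(0,1)$.

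I expect no real obstacle. The only subtle point is the bookkeeping between the $L^2$ and $L^\infty$ normalizations, together with ruling out degeneration of $\|\tilde{\phi}_n\|_\infty$; this is handled by the uniform $C^{1,\alpha}$ bounds of Lemma \ref{lemma2.19} and the positivity of the limit.
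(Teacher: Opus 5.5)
Your proposal is correct and takes the same approach as the paper, which states Lemma~\ref{lemma5.5} as a direct corollary of Lemma~\ref{lemma2.19} applied with $m_n=\hat{c}_n$. Your additional step converting between the $L^2$ and $L^\infty$ normalizations, using $\|\tilde{\phi}_n\|_\infty\to\|\tilde{\phi}\|_\infty>0$, is a detail the paper leaves implicit, and you handle it correctly.
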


\begin{lemma}\label{theorem5.6}
Let $H(\cdot;\omega)$ be the normalized principal Floquet bundle of \eqref{eq1.1} with potential $c\in \mathcal{C}\cap C^{2,1}(\overline{\Omega}\times\mathbb{R})$. If $c$ satisfies \eqref{H2}, then for any sequence $\{T_n\}_{n\in\mathbb{N}^+}$ of $T\to\infty$, there hold
 \[\lim_{\omega\to\infty}\liminf_{n\to\infty}\fint^{T_n}_{0}H(s;\omega)\,\mathrm{d}s= \inf_{\hat{c}\in\mathscr{C}_*}\mu^\infty(\hat{c})\quad\text{and}\quad \lim_{\omega\to\infty}\limsup_{n\to\infty}\fint^{T_n}_{0}H(s;\omega)\,\mathrm{d}s= \sup_{\hat{c}\in\mathscr{C}_*}\mu^\infty(\hat{c}).\]
Consequently, if the sequence $\left\{\fint^{T_n}_{0}c(\cdot,s)\,\mathrm{d}s\right\}_{n\in\mathbb{N}^+}$ converges to $\hat{c}(\cdot)$ in $C(\overline{\Omega})$, then
  \[\lim_{\omega\to\infty}\liminf_{n\to\infty}\fint^{T_n}_{0}H(s;\omega)\,\mathrm{d}s= \lim_{\omega\to\infty}\limsup_{n\to\infty}\fint^{T_n}_{0}H(s;\omega)\,\mathrm{d}s=\mu^\infty(\hat{c}),\]
where $\mu^\infty(\hat{c})$ is the principal eigenvalue of \eqref{eq1.3} with potential $\hat{c}$.
\end{lemma}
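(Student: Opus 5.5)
The upper bounds require no new work. Lemma \ref{lemma3.12} already gives, for every fixed $\omega>0$,
\[\liminf_{n\to\infty}\fint^{T_n}_{0}H(s;\omega)\,\mathrm{d}s\le\inf_{\hat c\in\mathscr{C}_*}\mu^\infty(\hat c)\quad\text{and}\quad\limsup_{n\to\infty}\fint^{T_n}_{0}H(s;\omega)\,\mathrm{d}s\le\sup_{\hat c\in\mathscr{C}_*}\mu^\infty(\hat c).\]
Hence the two $\limsup_{\omega\to\infty}$ inequalities are immediate. The remaining task is the matching lower bounds as $\omega\to\infty$. For these I will use the comparison principle Lemma \ref{lemma2.13}, with $\underline{\varphi}$ built from $\varphi$ as in Lemma \ref{lemma4.5}, and with a supersolution built from the principal eigenfunction $\hat\phi_n$ of \eqref{eq5.8} for $\hat c_n$, corrected by the oscillation function $\mathcal M$.

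\textbf{The supersolution.} Fix $\varepsilon>0$ and set
\[\overline{\varphi}_n(x,t)=\hat\phi_n(x)\exp\Big[\tfrac1\omega\big(\mathcal M(x,t;T_n)-\kappa\phi^*(x)\big)\Big],\]
where $\kappa$ and $\phi^*$ are taken from Lemma \ref{lemma2.16}. By construction $\mathcal M(x,0;T_n)=\mathcal M(x,T_n;T_n)=0$, so $\overline{\varphi}_n(\cdot,0)=\overline{\varphi}_n(\cdot,T_n)$. The boundary flux is $\mathbf n\cdot A\nabla\overline{\varphi}_n=\frac1\omega[\mathbf n\cdot A\nabla\mathcal M-\kappa\mathbf n\cdot A\nabla\phi^*]\overline{\varphi}_n\ge0$, using $\mathbf n\cdot A\nabla\hat\phi_n=0$ and Lemma \ref{lemma2.16}. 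Writing $w=\frac1\omega(\mathcal M-\kappa\phi^*)$, the time derivative $\omega\partial_t w=\hat c_n-c$ cancels $c$. Expanding $\operatorname{div}(A\nabla(\hat\phi_n e^{w}))$, I expect
\[\omega\partial_t\overline{\varphi}_n-d\operatorname{div}(A\nabla\overline{\varphi}_n)+c\overline{\varphi}_n=\Big[\mu^\infty_n-d\operatorname{div}(A\nabla w)-d\nabla w\cdot A\nabla w-2d\,\tfrac{\nabla\hat\phi_n\cdot A\nabla w}{\hat\phi_n}\Big]\overline{\varphi}_n.\]
The first two error terms are $O(1/\omega)$ and $O(1/\omega^2)$ uniformly in $n$ large, by \eqref{H2}. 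For the cross term I need $|\nabla\mathcal M|$ bounded uniformly, which follows from the quadratic-form part of \eqref{H2}, and $|\nabla\hat\phi_n|/\hat\phi_n$ bounded uniformly in $n$. The latter I will get from Lemma \ref{lemma2.19}/Lemma \ref{lemma5.5}: the $C^{1,\alpha}$ bounds together with a uniform positive lower bound for $\hat\phi_n$, which holds because $\{\hat c_n\}$ is precompact and the limit eigenfunctions are positive. This cross term is the main obstacle: precompactness only gives the bounds along subsequences, so I will argue by contradiction on subsequences, or invoke the compactness of $\mathscr{C}_*$ (Lemma \ref{lemma2.1}) to get uniformity over all $n\ge N$. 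The result is that the left-hand side is at least $(\mu^\infty_n-C/\omega)\overline{\varphi}_n$, with $C$ independent of $n$ and $\omega\ge1$.

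\textbf{The subsolution and the comparison.} Take
\[\underline{\varphi}_n=\varphi\,\exp\Big[\tfrac1\omega\int_0^t\Big(\fint^{T_n}_0H-H(s)+\varepsilon\Big)\mathrm{d}s\Big].\]
This is an exact solution with eigenvalue $\fint^{T_n}_0H+\varepsilon$, and for fixed $\omega$ the Harnack bound of Proposition \ref{proposition3.3}(ii) gives $\underline{\varphi}_n(\cdot,0)\le\underline{\varphi}_n(\cdot,T_n)$ once $n$ is large. Lemma \ref{lemma2.13} then yields
\[\liminf_n\fint^{T_n}_0H+\varepsilon\ge\liminf_n\mu^\infty_n-\tfrac C\omega\quad\text{and}\quad\limsup_n\fint^{T_n}_0H+\varepsilon\ge\limsup_n\mu^\infty_n-\tfrac C\omega.\]
By Lemma \ref{lemma2.19} and the compactness of $\mathscr{C}_*$, $\liminf_n\mu^\infty_n=\inf_{\mathscr{C}_*}\mu^\infty$ and $\limsup_n\mu^\infty_n=\sup_{\mathscr{C}_*}\mu^\infty$; this is the same subsequence argument as in Lemma \ref{lemma2.7}, using continuity of $\mu^\infty$ in $m$. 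Letting $\varepsilon\to0$ and then $\omega\to\infty$ gives the lower bounds. Combined with the upper bounds, this proves both limits. The consequence when $\hat c_n\to\hat c$ follows because in that case $\mathscr{C}_*=\{\hat c\}$.
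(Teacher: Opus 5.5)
Your proposal is correct and follows essentially the same route as the paper: the upper bounds come from Lemma \ref{lemma3.12}, and the lower bounds come from Lemma \ref{lemma2.13} with the same supersolution $\hat\phi_n\exp[(\mathcal M-\kappa\phi^*)/\omega]$ and the same $\varphi$-based subsolution. The only difference is cosmetic: you make the bound on $\nabla\ln\hat\phi_n$ uniform over all large $n$ by compactness, and then identify $\liminf_n\mu^\infty_n$ and $\limsup_n\mu^\infty_n$ with $\inf_{\mathscr{C}_*}\mu^\infty$ and $\sup_{\mathscr{C}_*}\mu^\infty$; the paper instead passes to convergent subsequences of $\{\hat c_n\}$ before applying the comparison lemma.
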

\begin{proof}
Firstly, we derive the upper bound. Using Lemma \ref{lemma3.12}, we have
  \[\liminf_{n\to\infty}\fint^{T_n}_{0}H(s;\omega)\,\mathrm{d}s\le\inf_{\hat{c}\in\mathscr{C}_*}\mu^\infty(\hat{c})\quad\text{and}\quad \limsup_{n\to\infty}\fint^{T_n}_{0}H(s;\omega)\,\mathrm{d}s\le\sup_{\hat{c}\in\mathscr{C}_*}\mu^\infty(\hat{c}).\]
Letting $\omega\to\infty$, we see that
  \[\limsup_{\omega\to\infty}\liminf_{n\to\infty}\fint^{T_n}_{0}H(s;\omega)\,\mathrm{d}s\le\inf_{\hat{c}\in\mathscr{C}_*}\mu^\infty(\hat{c})\quad\text{and}\quad \limsup_{\omega\to\infty}\limsup_{n\to\infty}\fint^{T_n}_{0}H(s;\omega)\,\mathrm{d}s\le\sup_{\hat{c}\in\mathscr{C}_*}\mu^\infty(\hat{c}).\]

Now we turn to derive the lower bound. Recall the function $\mathcal{M}(x,t;T)=t\fint^{T}_{0}c(x,s)\,\mathrm{d}s-\int^{t}_{0}c(x,s)\,\mathrm{d}s$ defined on $\overline{\Omega}\times[0,\infty)$ and assumption \eqref{H2}
\[\limsup\limits_{T\to\infty}\sup\limits_{(x,t)\in\Omega\times[0,T]}(|\operatorname{div}(A(x)\nabla\mathcal{M}(x,t;T))|+|\nabla \mathcal{M}(x,t;T)\cdot (A(x)\nabla \mathcal{M}(x,t;T))|)<\infty.\]
There exist positive constants $T_0$ and $V_0$ independent of $\omega$ such that
\[\sup_{(x,t)\in\Omega\times[0,T]}(|\operatorname{div}(A(x)\nabla\mathcal{M}(x,t;T))|+|\nabla \mathcal{M}(x,t;T)\cdot (A(x)\nabla \mathcal{M}(x,t;T))|)<V_0,\quad\forall~T>T_0.\]
Denote by $\phi^{*}$ the principal eigenfunction of \eqref{eq2.6}. By Lemma \ref{lemma2.16}, there exists constant $\kappa>0$ such that for any $T>T_0$,
\begin{equation}\label{eq5.9}
-\kappa\mathbf{n}\cdot(A(x)\nabla\phi^{*})\ge \sup\limits_{(x,t)\in\partial\Omega\times[0,T]} |\mathbf{n}\cdot(A(x)\nabla\mathcal{M}(x,t;T))|, \quad\forall~x\in\partial\Omega,~t\in[0,T].
\end{equation}
For each $n\in\mathbb{N}^+$, define $\hat{c}_n(x):=\fint^{T_n}_{0}c(x,s)\,\mathrm{d}s$, $x\in\overline{\Omega}$, and let $(\mu_{n},\hat{\phi}_n)$ be the principal eigenpairs of problem \eqref{eq5.8} with potential $\hat{c}_n$. The principal eigenfunctions, together with $\kappa$ and $\phi^{*}$, will be used to construct upper solutions in the following discussion. We assume that $\{\hat{c}_n\}_{n\in\mathbb{N}^+}$ converges. (If not, we may choose a convergent subsequence and proceed with the following analysis along it. Because this subsequence is selected arbitrarily, we will be able to invoke the arbitrariness of $\hat{c}\in\mathscr{C}_*$ in due course to derive the required result.) By Lemma \ref{lemma5.5}, $\hat{\phi}_n$ converges to some function $\hat{\phi}\in C^{1+\alpha}(\overline{\Omega})$ in $C^{1+\alpha}$ norm, and we may assume that
\[\|\ln\hat{\phi}_n\|_{ C^1(\overline{\Omega})}\le \|\ln\hat{\phi}\|_{ C^1(\overline{\Omega})}+1,\quad\forall ~n\in\mathbb{N}^+.\]

For each $n\in\mathbb{N}^+$, define
\[\overline{\varphi}_{n}(x,t):=\hat{\phi}_{n}(x)\exp\left[\frac{1}{\omega}\left(\mathcal{M}(x,t;T_{n})- \kappa\phi^{*}\right) \right],\quad(x,t) \in\overline{\Omega}\times[0,T_{n}]\]
and
\[\underline{\varphi}_{n}(x,t):=\varphi(x,t)\exp\left[\frac{1}{\omega}\int^{t}_{0}\left(\fint^{T_{n}}_{0} H(r;\omega)\,\mathrm{d}r - H(s;\omega) +\varepsilon \right)\,\mathrm{d}s\right],\quad(x,t) \in\overline{\Omega}\times[0,T_{n}].\]
A straightforward computation yields that for each $n\in\mathbb{N}^+$, and all $(x,t)\in\Omega\times(0,T_n]$, $\overline{\varphi}_{n}$ and $\underline{\varphi}_{n}$ satisfy
\begin{equation}\label{eq5.10}
\begin{aligned}
   & \omega\partial_t\overline{\varphi}_{n}-d\operatorname{div}(A(x)\nabla\overline{\varphi}_{n})+c(x,t)\overline{\varphi}_{n} \\
 = & \left[\omega\partial_t\ln\overline{\varphi}_{n}-d\operatorname{div}(A(x)\nabla \ln\overline{\varphi}_{n})-d\nabla \ln\overline{\varphi}_{n}\cdot(A(x)\nabla \ln\overline{\varphi}_{n})+c(x,t)\right]\overline{\varphi}_{n} \\
 = & \Bigg[\fint^{T_n}_{0}c(x,s)\,\mathrm{d}s-c(x,t)-d\operatorname{div}(A(x)\nabla \ln\hat{\phi}_{n})-\frac{d}{\omega}\operatorname{div}(A(x)\nabla(\mathcal{M}(x,t;T_n)-\kappa\phi^*))\\ &\quad-d\nabla \ln\hat{\phi}_{n}\cdot(A(x)\nabla \ln\hat{\phi}_{n}) -\frac{d}{\omega}\nabla \ln\hat{\phi}_{n}\cdot(A(x)\nabla (\mathcal{M}(x,t;T_n)-\kappa\phi^*))\\
 &\quad-\frac{d}{\omega}\nabla (\mathcal{M}(x,t;T_n)-\kappa\phi^*)\cdot(A(x)\nabla \ln\hat{\phi}_{n}) \\
 &\quad -\frac{d}{\omega^2}\nabla (\mathcal{M}(x,t;T_n)-\kappa\phi^*)\cdot(A(x)\nabla (\mathcal{M}(x,t;T_n)-\kappa\phi^*)) +c(x,t)\Bigg]\overline{\varphi}_{n} \\
 = & \Bigg[\mu_n-\frac{d}{\omega}\operatorname{div}(A(x)\nabla(\mathcal{M}(x,t;T_n)-\kappa\phi^*))-2\frac{d}{\omega}\nabla \ln\hat{\phi}_{n}\cdot(A(x)\nabla (\mathcal{M}(x,t;T_n)-\kappa\phi^*))\\
 &\quad -\frac{d}{\omega^2}\nabla (\mathcal{M}(x,t;T_n)-\kappa\phi^*)\cdot(A(x)\nabla (\mathcal{M}(x,t;T_n)-\kappa\phi^*))\Bigg]\overline{\varphi}_{n} \\
 = & \Bigg[\mu_{n}- \frac{d}{\omega}\operatorname{div}\left(A(x)\nabla\left(\mathcal{M}(x,t;T_{n})-\kappa\phi^{*}\right)\right) \\
 &~-\frac{d}{\omega}\nabla \left(\mathcal{M}(x,t;T_{n})-\kappa\phi^{*}\right)\cdot\left(A(x)\left(2\nabla\ln\hat{\phi}_{n}+\frac{1}{\omega} \nabla \left(\mathcal{M}(x,t;T_{n})-\kappa\phi^{*}\right)\right)\right)\Bigg]\overline{\varphi}_{n}\\
 \ge & \Bigg[\mu_{n}- \frac{d}{\omega}\left(V_0+\kappa\|\operatorname{div}(A(\cdot)\nabla\phi^*)\|_\infty\right) \\
 &~-\frac{d\Lambda}{\omega} \left(V_0+\kappa\|\phi^{*}\|_{C^2(\Omega)}\right)\cdot\left(2\|\ln\hat{\phi}\|_{ C^1(\overline{\Omega})}+2+\frac{1}{\omega} \left(V_0+\kappa\|\phi^{*}\|_{C^2(\Omega)}\right)\right)\Bigg]\overline{\varphi}_{n} \\
 \ge & \left[\mu_{n}- C\left(\frac{d}{\omega}+\frac{d}{\omega^2}\right)\right]\overline{\varphi}_{n}
\end{aligned}
\end{equation}
for some constant $C>1$ which is independent of $\omega>0$, and
\begin{equation}\label{eq5.11}
\begin{aligned}
   & \omega\partial_t\underline{\varphi}_{n}-d\operatorname{div}(A(x)\nabla \underline{\varphi}_{n})+c(x,t)\underline{\varphi}_{n}\\
 = & \left(\omega\partial_t\ln\underline{\varphi}_{n}-d\operatorname{div}(A(x)\nabla \ln\underline{\varphi}_{n})-d\nabla \ln\underline{\varphi}_{n}\cdot(A(x)\nabla\ln\underline{\varphi}_n) +c(x,t)\right)\underline{\varphi}_{n} \\
 = & \left(\fint^{T_{n}}_{0}H(s;\omega)\,\mathrm{d}s+\varepsilon  \right)\underline{\varphi}_{n},
\end{aligned}
\end{equation}
respectively. Now by Proposition \ref{proposition3.3}(ii), we know that for any fixed $d>0$, there exists a positive constant $C_d$, independent of $\omega>0$, such that
\[|\ln\varphi(x,t)| \le C_d,\qquad (x,t)\in\overline{\Omega}\times\mathbb{R}.\]
Then for each $\omega>0$, there is an integer $N_\varepsilon>0$ such that
\[T_n>\frac{2C_d\omega }{\varepsilon},\qquad\forall~n>N_\varepsilon.\]
This, combining with \eqref{eq5.9}, \eqref{eq5.10}, \eqref{eq5.11}, and boundary conditions for $\varphi$ and $\hat{\phi}_n$, yields that for fixed $\omega>0$ and any $n>N_\varepsilon$, the pair $(\underline{\varphi}_n,\overline{\varphi}_n)$ satisfies
\begin{equation*}
  \left\{
  \begin{aligned}
    &\omega \partial_t\overline{\varphi}_n-d\operatorname{div}(A(x)\nabla \overline{\varphi}_n)+c(x,t)\overline{\varphi}_n \ge \left[\mu_{n}- C\left(\frac{d}{\omega}+\frac{d}{\omega^2}\right)\right] \overline{\varphi}_{n},&&(x,t)\in\Omega\times(0,T_{n}],\\
    &\omega \partial_t\underline{\varphi}_n-d\operatorname{div}(A(x)\nabla \underline{\varphi}_n)+c(x,t)\underline{\varphi}_n = \left(\fint^{T_{n}}_{0}H(s;\omega)\,\mathrm{d}s+\varepsilon  \right)\underline{\varphi}_{n},&&(x,t)\in\Omega\times(0,T_{n}],\\
    &\mathbf{n}\cdot(A(x)\nabla\overline{\varphi}_n)\ge 0 =\mathbf{n}\cdot(A(x)\nabla\underline{\varphi}_n), &&(x,t)\in\partial\Omega\times(0,T_{n}],\\
    &\overline{\varphi}_n(x,0)=\overline{\varphi}_n(x,T_{n}),&&x\in\overline{\Omega},\\
    &\underline{\varphi}_{n}(x,0)=\varphi(x,0)\le\mathrm{e}^{\frac{ \varepsilon}{\omega}T_{n}-2C_d}\varphi(x,0)\le\varphi(x,T_{n})\mathrm{e}^{\frac{ \varepsilon}{\omega}T_{n}}=\underline{\varphi}_{n}(x,T_{n}),&&x\in\overline{\Omega}.
  \end{aligned}
  \right.
\end{equation*}
For any $\hat{c}\in\mathscr{C}_*$, there is a subsequence $\{T_{n'}\}_{n'\in\mathbb{N}^+}$ of $\{T_{n}\}_{n\in\mathbb{N}^+}$ such that $\hat{c}_{n'}$ converges to some $\hat{c}$ in $C(\overline{\Omega})$. Therefore, using Lemmas \ref{lemma2.13} and \ref{lemma5.5} yields that
\[\limsup_{n\to\infty}\fint^{T_{n}}_{0}H(s;\omega)\,\mathrm{d}s+\varepsilon \ge \limsup_{n'\to\infty}\fint^{T_{n'}}_{0}H(s;\omega)\,\mathrm{d}s+\varepsilon \ge \mu^\infty(\hat{c})- C\left(\frac{d}{\omega}+\frac{d}{\omega^2}\right).\]
By arbitrariness of $\hat{c}\in\mathscr{C}_*$, we get that
\[\limsup_{n\to\infty}\fint^{T_{n}}_{0}H(s;\omega)\,\mathrm{d}s+\varepsilon \ge \sup_{\hat{c}\in\mathscr{C}_*}\mu^\infty(\hat{c})- C\left(\frac{d}{\omega}+\frac{d}{\omega^2}\right).\]
Sending $\omega\to\infty$ and by the arbitrariness of $\varepsilon>0$, we obtain that
  \[\liminf_{\omega\to\infty}\limsup_{n\to\infty}\fint^{T_n}_{0}H(s;\omega)\,\mathrm{d}s\ge \sup_{\hat{c}\in\mathscr{C}_*}\mu^\infty(\hat{c}).\]
On the other hand, for each $\omega>0$, there is a subsequence of $\{T_n\}_{n\in\mathbb{N}^+}$, denoted still by $\{T_{n'}\}_{n'\in\mathbb{N}^+}$, such that
 \[\lim_{n'\to\infty}\fint^{T_{n'}}_{0}H(s;\omega)\,\mathrm{d}s=\liminf_{n\to\infty}\fint^{T_{n}}_{0}H(s;\omega)\,\mathrm{d}s.\]
Passing to a subsequence of $\{T_{n'}\}_{n'\in\mathbb{N}^+}$, we assume that $\{\hat{c}_{n'}\}_{n'\in\mathbb{N}^+}$ converges to some function in $C(\overline{\Omega})$, denoted still by $\hat{c}\in\mathscr{C}_*$. An application of Lemmas \ref{lemma2.13} and \ref{lemma5.5} and then letting $\omega\to\infty$ yield
 \[\liminf_{\omega\to\infty}\liminf_{n\to\infty}\fint^{T_n}_{0}H(s;\omega)\,\mathrm{d}s\ge \inf_{\hat{c}\in\mathscr{C}_*}\mu^\infty(\hat{c}).\]

This together with the estimate of upper bound, we can complete the proof of the lemma.
\end{proof}

\medskip
\begin{lemma}\label{lemma5.7}
Let $H(\cdot;\omega)$ be the normalized principal Floquet bundle of \eqref{eq1.1} with potential $c\in \mathcal{C}\cap C^{2,1}(\overline{\Omega}\times\mathbb{R})$, $\mu^\infty(\hat{c})$ be the principal eigenvalue of \eqref{eq1.3} with potential $\hat{c}\in\mathscr{C}$. If the assumption \eqref{H2} holds, then
  \[\lim_{\omega\to\infty}\liminf_{T\to\infty}\fint^{T}_{0}H(s;\omega)\,\mathrm{d}s= \inf_{\hat{c}\in\mathscr{C}}\mu^\infty(\hat{c})\quad\text{and}\quad \lim_{\omega\to\infty}\limsup_{T\to\infty}\fint^{T}_{0}H(s;\omega)\,\mathrm{d}s= \sup_{\hat{c}\in\mathscr{C}}\mu^\infty(\hat{c}).\]
\end{lemma}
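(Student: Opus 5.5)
The plan is to deduce Lemma \ref{lemma5.7} from the sequential version, Lemma \ref{theorem5.6}, combined with Corollary \ref{corollary3.13}. The four required inequalities are handled separately, mirroring the reduction from Lemma \ref{lemma5.2} to Lemma \ref{lemma5.3}.

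\textbf{Upper bounds.} Corollary \ref{corollary3.13} gives, for every $\omega>0$,
\[\liminf_{T\to\infty}\fint^{T}_{0}H(s;\omega)\,\mathrm{d}s\le\inf_{\hat{c}\in\mathscr{C}}\mu^\infty(\hat{c}),\qquad \limsup_{T\to\infty}\fint^{T}_{0}H(s;\omega)\,\mathrm{d}s\le\sup_{\hat{c}\in\mathscr{C}}\mu^\infty(\hat{c}).\]
Taking $\limsup_{\omega\to\infty}$ of both sides yields the two upper bounds directly.

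\textbf{Lower bound for the $\limsup$.} Fix $\varepsilon>0$ and choose $\hat{c}_\varepsilon\in\mathscr{C}$ with $\mu^\infty(\hat{c}_\varepsilon)\ge\sup_{\mathscr{C}}\mu^\infty-\varepsilon$; the supremum is attained anyway by compactness of $\mathscr{C}$ (Lemma \ref{lemma2.1}) and continuity of $\mu^\infty$ (Lemma \ref{lemma2.17}). Take $T_n\to\infty$ with $\hat{c}_{T_n}\to\hat{c}_\varepsilon$ in $C(\overline{\Omega})$, so that $\mathscr{C}_*=\{\hat{c}_\varepsilon\}$. Lemma \ref{theorem5.6} then gives
\[\lim_{\omega\to\infty}\limsup_{n}\fint^{T_n}_{0}H(s;\omega)\,\mathrm{d}s=\mu^\infty(\hat{c}_\varepsilon).\]
Since $\limsup_T\ge\limsup_n$, we get $\liminf_{\omega\to\infty}\limsup_T\fint^{T}_{0}H\ge\sup_{\mathscr{C}}\mu^\infty-\varepsilon$.

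\textbf{Lower bound for the $\liminf$.} This is the main obstacle, because a $\liminf$ over $T$ cannot be reduced to a single sequence chosen independently of $\omega$. I would not pass through Lemma \ref{theorem5.6}. Instead, I would repeat its supersolution construction with the continuous parameter $T$ in place of $T_n$. Take
\[\overline{\varphi}_T=\hat{\phi}_T(x)\exp\!\left[\frac{1}{\omega}\bigl(\mathcal{M}(x,t;T)-\kappa\phi^*\bigr)\right],\]
where $\hat{\phi}_T$ is the principal eigenfunction of \eqref{eq5.8} with potential $\hat{c}_T$, and take $\underline{\varphi}_T$ built from $\varphi$ exactly as in Lemma \ref{lemma5.3}.

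The key uniform estimate is $\sup_{T>T_0}\|\ln\hat{\phi}_T\|_{C^1(\overline{\Omega})}<\infty$. I would obtain it from the uniform boundedness of $\{\hat{c}_T\}$ and the $W^{2,p}$ and Harnack estimates in the proof of Lemma \ref{lemma2.19}. Then \eqref{eq5.10} holds with $\mu_T:=\mu^\infty(\hat{c}_T)$, the constant $C$ independent of $T$, and the boundary condition supplied by \eqref{eq5.9}. Corollary \ref{corollary2.15} yields
\[\liminf_{T}\fint^{T}_{0}H(s;\omega)\,\mathrm{d}s+\varepsilon\ge\liminf_{T}\mu^\infty(\hat{c}_T)-C\Bigl(\frac{d}{\omega}+\frac{d}{\omega^2}\Bigr).\]

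It remains to show $\liminf_T\mu^\infty(\hat{c}_T)=\inf_{\mathscr{C}}\mu^\infty$. Pick $T_k\to\infty$ realizing the $\liminf$, extract a subsequence along which $\hat{c}_{T_k}\to\hat{c}\in\mathscr{C}$ (Arzel\`a--Ascoli), and use Lemma \ref{lemma2.19}; this gives $\liminf_T\mu^\infty(\hat{c}_T)=\mu^\infty(\hat{c})\ge\inf_{\mathscr{C}}\mu^\infty$. Finally, letting $\omega\to\infty$ and then $\varepsilon\to0$ closes the argument.
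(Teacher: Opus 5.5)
Your proposal is correct and essentially follows the paper's proof. The $\liminf$ lower bound matches the paper's: the same continuous-parameter supersolution $\hat{\phi}_T\exp[\frac{1}{\omega}(\mathcal{M}-\kappa\phi^*)]$, Corollary \ref{corollary2.15}, and the identity $\liminf_{T\to\infty}\mu^\infty(\hat{c}_T)=\inf_{\mathscr{C}}\mu^\infty$. The differences are minor: you take the upper bounds directly from Corollary \ref{corollary3.13} rather than from Lemma \ref{theorem5.6} along an approximating sequence, and you spell out the uniform-in-$T$ bound on $\|\ln\hat{\phi}_T\|_{C^1(\overline{\Omega})}$ and the $\limsup$ case, both of which the paper leaves implicit.
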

\begin{proof}
We show the first identity only since the second one can be proven by a similar way with a little modification. First of all, for any $0<\varepsilon<<1$, there is a function $\hat{c}_{\varepsilon}\in\mathscr{C}$ and sequence $\{T_n\}_{n\in\mathbb{N}^+}$ of $T\to\infty$ such that
\[\lim_{n\to\infty}\fint^{T_n}_{0}c(\cdot,s)\,\mathrm{d}s=\hat{c}_{\varepsilon}\qquad\text{and}\qquad \mu^\infty(\hat{c}_{\varepsilon})\le \inf_{\hat{c}\in\mathscr{C}}\mu^\infty(\hat{c})+\varepsilon.\]
Thus
\begin{equation}\label{eq5.12}
\mu^\infty\left(\lim_{n\to\infty}\fint^{T_n}_{0}c(\cdot,s)\,\mathrm{d}s\right)= \mu^\infty(\hat{c}_{\varepsilon})\le \inf_{\hat{c}\in\mathscr{C}}\mu^\infty(\hat{c})+\varepsilon.
\end{equation}
Using Lemma \ref{theorem5.6}, we have
\[\limsup_{\omega\to\infty}\liminf_{T\to\infty}\fint^{T}_{0}H(s;\omega)\,\mathrm{d}s\le \limsup_{\omega\to\infty}\limsup_{n\to\infty}\fint^{T_n}_{0}H(s;\omega)\,\mathrm{d}s= \mu^\infty\left(\lim_{n\to\infty}\fint^{T_n}_{0}c(\cdot,s)\,\mathrm{d}s\right).\]
By arbitrariness of $\varepsilon>0$, we get
  \[\limsup_{\omega\to\infty}\liminf_{T\to\infty}\fint^{T}_{0}H(s;\omega)\,\mathrm{d}s\le \inf_{\hat{c}\in\mathscr{C}}\mu^\infty(\hat{c}).\]
Then we need only to show
 \[\liminf_{\omega\to\infty}\liminf_{T\to\infty}\fint^{T}_{0}H(s;\omega)\,\mathrm{d}s\ge \inf_{\hat{c}\in\mathscr{C}}\mu^\infty(\hat{c}).\]
Clearly, it is sufficiently to show the estimate
\begin{equation}\label{eq5.13}
  \liminf_{\omega\to\infty}\liminf_{T\to\infty}\fint^{T}_{0}H(s;\omega)\,\mathrm{d}s\ge \liminf_{T\to\infty} \mu^\infty\left(\fint^{T}_{0}c(\cdot,s)\,\mathrm{d}s\right)
\end{equation}
and the identity
\begin{equation}\label{eq5.14}
  \liminf_{T\to\infty}\mu^\infty\left(\fint^{T}_{0}c(\cdot,s)\,\mathrm{d}s\right)=\inf_{\hat{c}\in\mathscr{C}} \mu^\infty(\hat{c}).
\end{equation}

\medskip
We first show \eqref{eq5.14}. Obviously, \eqref{eq5.12} also implies that
 \[\liminf_{T\to\infty}\mu^\infty\left(\fint^{T}_{0}c(\cdot,s)\,\mathrm{d}s\right)\le \inf_{\hat{c}\in\mathscr{C}} \mu^\infty(\hat{c}).\]
We then aim to show the inverse inequality. There is a sequence $\{T_n\}_{n\in\mathbb{N}^+}$ of $T\to\infty$ such that
 \[\lim_{n\to\infty}\mu^\infty\left(\fint^{T_n}_{0}c(\cdot,s)\,\mathrm{d}s\right)= \liminf_{T\to\infty}\mu^\infty\left(\fint^{T}_{0}c(\cdot,s)\,\mathrm{d}s\right).\]
An application of Proposition \ref{proposition1.1} gives that there is a subsequence $\{T_{n'}\}_{n'\in\mathbb{N}^+}$ of $\{T_{n}\}_{n\in\mathbb{N}^+}$ such that $\left\{\fint^{T_{n'}}_{0}c(\cdot,s)\,\mathrm{d}s\right\}_{n'\in\mathbb{N}^+}$ converges to some $\hat{c}'$ in $C(\overline{\Omega})$. Here we note that $\hat{c}'$ also belongs to $\mathscr{C}$. By the continuous  dependence of $\mu^\infty(\cdot)$ on potential (see Lemma \ref{lemma5.5}), we have
  \begin{align*}
    \inf_{\hat{c}\in\mathscr{C}}\mu^\infty(\hat{c}) & \le\mu^\infty(\hat{c}')=  \mu^\infty\left(\lim_{n'\to\infty}\fint^{T_{n'}}_{0}c(\cdot,s)\,\mathrm{d}s\right)
    =  \lim_{n'\to\infty}\mu^\infty\left(\fint^{T_{n'}}_{0}c(\cdot,s)\,\mathrm{d}s\right) \\
    &\hskip 4cm  =  \lim_{n\to\infty}\mu^\infty\left(\fint^{T_n}_{0}c(\cdot,s)\,\mathrm{d}s\right)= \liminf_{T\to\infty}\mu^\infty\left(\fint^{T}_{0}c(\cdot,s)\,\mathrm{d}s\right).
  \end{align*}
Hence, limit \eqref{eq5.14} is proven.

\medskip
Now it remains to show \eqref{eq5.13}. Recall that the constants and functions appears in the proof of Lemma \ref{theorem5.6}, such as $V_0$, $\kappa$, $C$, $C_d$, $\phi^{*}$ and so on. For each $T>0$, set
\[\overline{\varphi}_T(x,t):=\hat{\phi}_T(x)\exp\left[\frac{1}{\omega}\left(\mathcal{M}(x,t;T)- \kappa\phi^{*}\right) \right],\quad(x,t) \in\overline{\Omega}\times[0,T],\]
where $\mathcal{M}(x,t;T)=t\fint^{T}_{0}c(x,s)\,\mathrm{d}s-\int^{t}_{0}c(x,s)\,\mathrm{d}s$ and $\hat{\phi}_T$ is the principal eigenfunctions of problem \eqref{eq5.8} with potential $\fint^{T}_{0}c(\cdot,s)\,\mathrm{d}s$, and
\[\underline{\varphi}_T(x,t):=\varphi(x,t)\exp\left[\frac{1}{\omega}\int^{t}_{0}\left(\fint^{T}_{0} H(r;\omega)\,\mathrm{d}r - H(s;\omega) +\varepsilon \right)\,\mathrm{d}s\right],\quad(x,t) \in\overline{\Omega}\times[0,T],\]
where $\varphi$ is the normalized principal Floquet bundle of \eqref{eq1.1}. Observe that the constant $C$ is independent of $\omega$. Then, arguing similarly as the proof in Lemma \ref{theorem5.6}, for each $\omega>0$ and $T>\frac{2C_d\omega}{\varepsilon}$, the pair $(\underline{\varphi}_T,\overline{\varphi}_T)$ satisfies
\begin{equation*}
  \left\{
  \begin{aligned}
    &\omega \partial_t\overline{\varphi}_T-d\operatorname{div}(A(x)\nabla \overline{\varphi}_T)+c(x,t)\overline{\varphi}_T \\
    &\hskip 4cm \ge \left[\mu^\infty\left(\fint^{T}_{0}c(\cdot,s)\,\mathrm{d}s\right)- C\left(\frac{d}{\omega}+\frac{d}{\omega^2}\right)\right] \overline{\varphi}_T,&&(x,t)\in\Omega\times(0,T],\\
    &\omega \partial_t\underline{\varphi}_T-d\operatorname{div}(A(x)\nabla \underline{\varphi}_T)+c(x,t)\underline{\varphi}_T = \left(\fint^{T}_{0}H(s;\omega)\,\mathrm{d}s+\varepsilon  \right)\underline{\varphi}_T,&&(x,t)\in\Omega\times(0,T],\\
    &\mathbf{n}\cdot(A(x)\nabla\overline{\varphi}_T)\ge 0 =\mathbf{n}\cdot(A(x)\nabla\underline{\varphi}_T), &&(x,t)\in\partial\Omega\times(0,T],\\
    &\overline{\varphi}_T(x,0)=\overline{\varphi}_T(x,T),&&x\in\overline{\Omega},\\
    &\underline{\varphi}_T(x,0)=\varphi(x,0)\le\mathrm{e}^{\frac{ \varepsilon}{\omega}T-2C_d}\varphi(x,0)\le\varphi(x,T)\mathrm{e}^{\frac{ \varepsilon}{\omega}T}=\underline{\varphi}_T(x,T),&&x\in\overline{\Omega}.
  \end{aligned}
  \right.
\end{equation*}
Using Corollary \ref{corollary2.15}, we conclude that for each $\omega>0$,
\[\liminf_{n\to\infty}\fint^{T}_{0}H(s;\omega)\,\mathrm{d}s+\varepsilon \ge \liminf_{T\to\infty}\mu^\infty\left(\fint^{T}_{0}c(\cdot,s)\,\mathrm{d}s\right)- C\left(\frac{d}{\omega}+\frac{d}{\omega^2}\right).\]
Sending $\omega\to\infty$ and by the arbitrariness of $\varepsilon>0$, we get \eqref{eq5.13}. This completes the proof.
\end{proof}

\section{The properties of the limiting values}
We note that the limiting values obtained in the previous two sections are determined by the principal eigenvalues of associated elliptic problems, which depend nontrivially on the diffusion rate $d>0$. Therefore, in order to better understand how these limiting values of the principal Floquet exponent relate to the diffusion rate, we wish to exploit known properties of elliptic principal eigenvalues with respect to $d$. Our proof relies crucially on two uniform convergence results: the uniform convergence of the principal eigenvalue $\mu^0(t;d)$ with respect to the parameter $t$ (see Lemma \ref{lemma2.24}), and the uniform convergence of the principal eigenvalue $\mu^\infty(d)$ with respect to the potential function (see Lemmas \ref{lemma2.25} and \ref{lemma2.26}). We prove the following result for the sequence $\{T_n\}_{n\in\mathbb{N}^+}$ of $T\to\infty$.
\begin{theorem}\label{theorem5.8}
Let $H(\cdot;\omega)$ be the normalized principal Floquet bundle of \eqref{eq1.1}, with potential $c\in \mathcal{C}$, and $\mu^{0}(t;d)$, $t\in\mathbb{R}$ be the principal eigenvalue of \eqref{eq1.4}. Let $\{T_n\}_{n\in\mathbb{N}^+}$ be a sequence satisfying $T_n\to\infty$ as $n\to\infty$.
\begin{enumerate}[{\rm (i)}]
  \item If $c\in C^{0,1}(\overline{\Omega}\times\mathbb{R})$ fulfills $\sup_{t\in\mathbb{R}}\|\partial_tc(\cdot,t)\|_\infty<\infty$, then the limits
      \[\lim_{\omega\to0}\liminf_{n\to\infty}\fint^{T_n}_{0}H(s;\omega,d)\,\mathrm{d}s\quad\text{and}\quad \lim_{\omega\to0}\limsup_{n\to\infty}\fint^{T_n}_{0}H(s;\omega,d)\,\mathrm{d}s\]
      are continuous and non-decreasing in $d>0$, and satisfy
      \begin{align*}
         & \lim_{d\to 0}\lim_{\omega\to0}\liminf_{n\to\infty}\fint^{T_n}_{0}H(s;\omega,d)\,\mathrm{d}s =\liminf_{n\to\infty}\fint^{T_n}_{0}\min_{x\in\overline{\Omega}}c(x,s)\,\mathrm{d}s, \\
         & \lim_{d\to 0}\lim_{\omega\to0}\limsup_{n\to\infty}\fint^{T_n}_{0}H(s;\omega,d)\,\mathrm{d}s =\limsup_{n\to\infty}\fint^{T_n}_{0} \min_{x\in\overline{\Omega}}c(x,s)\,\mathrm{d}s, \\
         & \lim_{d\to \infty}\lim_{\omega\to0}\liminf_{n\to\infty}\fint^{T_n}_{0} H(s;\omega,d)\,\mathrm{d}s=\liminf_{n\to\infty}\fint^{T_n}_{0}\int_\Omega c(x,s)\,\mathrm{d}x\mathrm{d}s, \\
         &\lim_{d\to \infty}\lim_{\omega\to0}\limsup_{n\to\infty}\fint^{T_n}_{0} H(s;\omega,d)\,\mathrm{d}s=\limsup_{n\to\infty}\fint^{T_n}_{0}\int_\Omega c(x,s)\,\mathrm{d}x\mathrm{d}s.
      \end{align*}
  \item Assume  in addition that $c\in\mathcal{C}\cap C^{2,1}(\overline{\Omega}\times\mathbb{R})$ satisfies \eqref{H2}. If the sequence $\left\{\fint^{T_n}_{0}c(\cdot,s)\,\mathrm{d}s \right\}_{n\in\mathbb{N}^+}$ of functions converges to some function $\hat{c}(\cdot)$ in $C(\overline{\Omega})$, then the limits
      \[\lim_{\omega\to\infty}\liminf_{n\to\infty}\fint^{T_n}_{0}H(s;\omega,d)\,\mathrm{d}s\quad\text{and}\quad \lim_{\omega\to\infty}\limsup_{n\to\infty}\fint^{T_n}_{0}H(s;\omega,d)\,\mathrm{d}s \]
are continuous and non-decreasing in $d>0$, and
 \[\lim_{d\to 0}\lim_{\omega\to\infty}\liminf_{n\to\infty}\fint^{T_n}_{0}H(s;\omega,d)\,\mathrm{d}s= \lim_{d\to 0}\lim_{\omega\to\infty}\limsup_{n\to\infty}\fint^{T_n}_{0}H(s;\omega,d)\,\mathrm{d}s =\min_{x\in\overline{\Omega}}\hat{c}(x),\]
and
 \[\lim_{d\to \infty}\lim_{\omega\to\infty}\liminf_{n\to\infty}\fint^{T_n}_{0}H(s;\omega,d)\,\mathrm{d}s = \lim_{d\to \infty}\lim_{\omega\to\infty}\limsup_{n\to\infty}\fint^{T_n}_{0}H(s;\omega,d)\,\mathrm{d}s =\fint_{\Omega}\hat{c}(x)\,\mathrm{d}x.\]
\end{enumerate}
\end{theorem}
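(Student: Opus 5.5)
By Lemma \ref{lemma5.2} (for part (i)) and Lemma \ref{theorem5.6} (for part (ii)), the inner limits in $\omega$ are already known explicitly, so the theorem reduces to properties of these explicit quantities as functions of $d$. Set $F_-(d):=\liminf_{n\to\infty}\fint^{T_n}_{0}\mu^0(s;d)\,\mathrm{d}s$ and $F_+(d):=\limsup_{n\to\infty}\fint^{T_n}_{0}\mu^0(s;d)\,\mathrm{d}s$. In part (ii), where $\hat{c}_n\to\hat{c}$, the set $\mathscr{C}_*=\{\hat{c}\}$ is a singleton, so both inner limits equal $\mu^\infty(d,\hat{c})$.

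\textbf{Part (ii).} Monotonicity of $\mu^\infty(d,\hat{c})$ in $d$ follows from the Rayleigh quotient: it is an infimum over $\phi$ of functions that are affine and nondecreasing in $d$. Being an infimum of affine functions, it is also concave in $d$, hence continuous on $(0,\infty)$. The limit $d\to0$ is Lemma \ref{lemma2.18} applied with $\mathcal{S}=\{\hat{c}\}$. The limit $d\to\infty$ is Lemma \ref{lemma2.25} applied with $\mathcal{S}=\{\hat{c}\}$.

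\textbf{Part (i), monotonicity and continuity.} For each fixed $t$, $d\mapsto\mu^0(t;d)$ is nondecreasing by the same Rayleigh-quotient argument. Averaging over $[0,T_n]$ and then taking $\liminf$ or $\limsup$ preserves this order, so $F_\pm$ are nondecreasing. For continuity I need an estimate uniform in $t$. Take $0<d_1<d_2$ and insert the eigenfunction for $(t,d_1)$ as a test function in the quotient for $d_2$. This gives
\[0\le\mu^0(t;d_2)-\mu^0(t;d_1)\le (d_2-d_1)\,\frac{\int_\Omega\nabla\phi^0\cdot(A\nabla\phi^0)}{\int_\Omega(\phi^0)^2}\le \frac{d_2-d_1}{d_1}\bigl(\mu^0(t;d_1)-\min c\bigr)\le \frac{d_2-d_1}{d_1}\,2\|c\|_\infty .\]
The third inequality uses $d_1\int\nabla\phi^0\cdot(A\nabla\phi^0)=\int(\mu^0-c)(\phi^0)^2$. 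This bound is independent of $t$, so it is locally Lipschitz uniformly in $t$ on $d\ge d_1$. It survives time averaging and the $\liminf$ or $\limsup$, since $|\liminf a_n-\liminf b_n|\le\sup_n|a_n-b_n|$, and the same holds for $\limsup$. Hence $F_\pm$ are continuous.

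\textbf{Part (i), limits in $d$.} As $d\to0$, Lemma \ref{lemma2.24} gives $\sup_t|\mu^0(t;d)-\min_x c(x,t)|\to0$. Averaging over $[0,T_n]$ and using the same $\liminf/\limsup$ stability, $F_\pm(d)$ tends to $\liminf_n$ (respectively $\limsup_n$) of $\fint^{T_n}_0\min_x c(x,s)\,\mathrm{d}s$. As $d\to\infty$, Lemma \ref{lemma2.26} gives uniform convergence $\mu^0(t;d)\to\fint_\Omega c(x,t)\,\mathrm{d}x$, and the same averaging argument gives the stated limit. The stated $\int_\Omega$ agrees with $\fint_\Omega$ under the paper's convention $|\Omega|=1$.

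\textbf{Main obstacle.} The only nontrivial point is the uniformity in $t$. The $d\to0$ and $d\to\infty$ limits rest on Lemmas \ref{lemma2.24} and \ref{lemma2.26} being uniform in $t$. Continuity rests on the $t$-independent Lipschitz bound above. If Lemma \ref{lemma2.26} needed more care, I would rerun the proof of Lemma \ref{lemma2.25} with $m=c(\cdot,t)$; it uses only $\|c\|_\infty$, so it is uniform in $t$.
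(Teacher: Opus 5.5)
Your proposal is correct and follows essentially the paper's route. You reduce via Theorem~\ref{theorem5.1} (Lemmas~\ref{lemma5.2} and \ref{theorem5.6}) to the $d$-dependence of $\liminf_n$ and $\limsup_n$ of $\fint^{T_n}_0\mu^0(s;d)\,\mathrm{d}s$ and of $\mu^\infty(d,\hat c)$. You then use Rayleigh-quotient monotonicity, a $t$-uniform Lipschitz bound in $d$, and the uniform limits of Lemmas~\ref{lemma2.18} and \ref{lemma2.24}--\ref{lemma2.26}, exactly as the paper does in Lemmas~\ref{lemma5.9} and \ref{lemma5.10}. Your explicit estimate $0\le\mu^0(t;d_2)-\mu^0(t;d_1)\le 2\|c\|_\infty(d_2-d_1)/d_1$, and the concavity argument for the singleton $\mathscr{C}_*=\{\hat c\}$, make precise the continuity claims that the paper only sketches.
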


We obtain Theorems \ref{theorem5.8} and \ref{corollary1.8} by examining the limiting behavior, as $d$ varies, of the quantities associated with the eigenvalues $\mu^0(\cdot;d)$ and $\mu^\infty(d)$.
\begin{lemma}\label{lemma5.9}
Let $c\in \mathcal{C}\cap C^{0,1}(\overline{\Omega}\times\mathbb{R})$ be a function such that $\sup_{t\in\mathbb{R}}\|\partial_tc(\cdot,t)\|_\infty<\infty$, $H(\cdot;\omega)$ be the normalized principal Floquet bundle of \eqref{eq1.1} with potential $c$, and $\mu^{0}(t;d)$, $t\in\mathbb{R}$ be the principal eigenvalue of \eqref{eq2.7}. Then for any sequence $\{T_n\}_{n\in\mathbb{N}^+}$ of $T\to\infty$, the quantities $\liminf_{n\to\infty}\fint^{T_n}_{0}\mu^0(s;d)\,\mathrm{d}s$ and $\limsup_{n\to\infty}\fint^{T_n}_{0}\mu^0(s;d)\,\mathrm{d}s$ are continuous and nondecreasing in $d>0$, and
\begin{align*}
 &\lim_{d\to 0}\liminf_{n\to\infty}\fint^{T_n}_{0}\mu^0(s;d)\,\mathrm{d}s=\liminf_{n\to\infty}\fint^{T_n}_{0}\min_{x\in\overline{\Omega}}c(x,s)\,\mathrm{d}s,\\
 &\lim_{d\to 0}\limsup_{n\to\infty}\fint^{T_n}_{0}\mu^0(s;d)\,\mathrm{d}s =\limsup_{n\to\infty}\fint^{T_n}_{0}\min_{x\in\overline{\Omega}}c(x,s)\,\mathrm{d}s,\\
 &\lim_{d\to \infty}\liminf_{n\to\infty}\fint^{T_n}_{0}\mu^0(s;d)\,\mathrm{d}s=\liminf_{n\to\infty}\fint^{T_n}_{0}\int_\Omega c(x,s)\,\mathrm{d}x\mathrm{d}s,\\
 &\lim_{d\to \infty}\limsup_{n\to\infty}\fint^{T_n}_{0}\mu^0(s;d)\,\mathrm{d}s =\limsup_{n\to\infty}\fint^{T_n}_{0}\int_\Omega c(x,s)\,\mathrm{d}x\mathrm{d}s.
\end{align*}
 Furthermore, $\liminf_{T\to\infty}\fint^{T}_{0}\mu^0(s;d)\,\mathrm{d}s$ and $\limsup_{T\to\infty}\fint^{T}_{0}\mu^0(s;d)\,\mathrm{d}s$ are continuous and increasing in $d>0$; and the following limits are true,
\begin{align*}
   &\lim_{d\to 0}\liminf_{T\to\infty}\fint^{T}_{0}\mu^0(s;d)\,\mathrm{d}s=\liminf_{T\to\infty}\fint^{T}_{0}\min_{x\in\overline{\Omega}}c(x,s)\,\mathrm{d}s,\\
 &\lim_{d\to 0}\limsup_{T\to\infty}\fint^{T}_{0}\mu^0(s;d)\,\mathrm{d}s =\limsup_{T\to\infty}\fint^{T}_{0}\min_{x\in\overline{\Omega}}c(x,s)\,\mathrm{d}s,\\
 &\lim_{d\to \infty}\liminf_{T\to\infty}\fint^{T}_{0}\mu^0(s;d)\,\mathrm{d}s=\liminf_{T\to\infty}\fint^{T}_{0}\int_\Omega c(x,s)\,\mathrm{d}x\mathrm{d}s,\\
 &\lim_{d\to \infty}\limsup_{T\to\infty}\fint^{T}_{0}\mu^0(s;d)\,\mathrm{d}s =\limsup_{T\to\infty}\fint^{T}_{0}\int_\Omega c(x,s)\,\mathrm{d}x\mathrm{d}s.
\end{align*}
\end{lemma}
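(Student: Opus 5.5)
The plan is to reduce everything to pointwise-in-$t$ properties of the elliptic principal eigenvalue $\mu^0(t;d)$ and then pass those properties through the time averages. The three needed properties are monotonicity in $d$, a modulus of continuity in $d$ uniform in $t$, and the two uniform limits from Lemmas \ref{lemma2.24} and \ref{lemma2.26}.

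\textbf{Monotonicity.} From the Rayleigh quotient for \eqref{eq2.7}, $d\mapsto\mu^0(t;d)$ is nondecreasing for each $t$. The Dirichlet integral is nonnegative, so enlarging $d$ increases every quotient and hence the minimum. Consequently, for $d_1<d_2$ and every $n$,
\[
\fint_0^{T_n}\mu^0(s;d_1)\,\mathrm{d}s\le \fint_0^{T_n}\mu^0(s;d_2)\,\mathrm{d}s .
\]
Taking $\liminf$ and $\limsup$ in $n$ (or in $T$) preserves this inequality, which gives the monotonicity of the four quantities.

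\textbf{Continuity in $d$, uniformly in $t$.} Fix $d_0>0$ and let $d$ lie in a compact subinterval $[d_0/2,2d_0]$. Inserting the normalized eigenfunction for $d_1$ into the Rayleigh quotient for $d_2$, and vice versa, gives
\[
|\mu^0(t;d_1)-\mu^0(t;d_2)|\le |d_1-d_2|\,\max_{i=1,2}\frac{\int_\Omega\nabla\phi_i\cdot(A\nabla\phi_i)\,\mathrm{d}x}{\int_\Omega\phi_i^2\,\mathrm{d}x},
\]
where $\phi_i=\phi^0(\cdot,t;d_i)$. Each quotient is bounded by $2\|c\|_\infty/d_i$, since
\[
d_i\int_\Omega\nabla\phi_i\cdot(A\nabla\phi_i)\,\mathrm{d}x=\int_\Omega(\mu^0-c)\phi_i^2\,\mathrm{d}x\le 2\|c\|_\infty\int_\Omega\phi_i^2\,\mathrm{d}x .
\]
Hence
\[
|\mu^0(t;d_1)-\mu^0(t;d_2)|\le \frac{4\|c\|_\infty}{d_0}\,|d_1-d_2| \qquad\text{uniformly in }t.
\]
Averaging over $[0,T_n]$, the time averages $\fint_0^{T_n}\mu^0(s;d)\,\mathrm{d}s$ are equi-Lipschitz in $d$ on this interval. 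Since $\liminf$ and $\limsup$ are $1$-Lipschitz with respect to sup-norm perturbations of a sequence, the four quantities are continuous in $d$.

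\textbf{Limits $d\to0$ and $d\to\infty$.} By Lemma \ref{lemma2.24}, $\sup_t|\mu^0(t;d)-\min_{\overline{\Omega}}c(\cdot,t)|=:E(d)\to0$ as $d\to0$. Then
\[
\Big|\fint_0^{T}\mu^0(s;d)\,\mathrm{d}s-\fint_0^T\min_{\overline{\Omega}}c(\cdot,s)\,\mathrm{d}s\Big|\le E(d)
\]
for all $T$. Therefore the $\liminf$ and $\limsup$ of the two averages, along $T_n$ or along $T\to\infty$, differ by at most $E(d)$, and letting $d\to0$ gives the first two limits in each group. The same argument with Lemma \ref{lemma2.26} gives the $d\to\infty$ limits with the spatial average $\fint_\Omega c(x,s)\,\mathrm{d}x$. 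This average coincides with $\int_\Omega c$ as written in the statement under the normalization $|\Omega|=1$ used in the paper, e.g.\ in Theorem \ref{theorem3.17}.

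\textbf{Main obstacle.} The only delicate point is the uniformity in $t$. For the limits this is supplied by Lemmas \ref{lemma2.24} and \ref{lemma2.26}. For continuity it is supplied by the explicit Lipschitz bound above, which depends only on $\|c\|_\infty$. The regularity hypothesis $\sup_t\|\partial_tc\|_\infty<\infty$ is not needed for these statements; it enters only when linking them to $H$ via Lemmas \ref{lemma5.2} and \ref{lemma5.3}.
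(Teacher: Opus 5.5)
Your proof is correct and follows essentially the same route as the paper. Monotonicity in $d$ comes from the Rayleigh quotient, continuity from a Lipschitz bound in $d$ that is uniform in $t$, and the limits from the uniform convergence of Lemmas \ref{lemma2.24} and \ref{lemma2.26} passed through the time averages; your energy-identity estimate makes explicit the uniform Lipschitz bound that the paper only asserts. For the $d\to\infty$ limits, note that the paper's own proof arrives at $\fint_\Omega c(x,s)\,\mathrm{d}x$, so the $\int_\Omega$ in the statement is best read as a typo rather than justified by $|\Omega|=1$, which is not a standing normalization in the paper.
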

\begin{proof}
Owing to the Rayleigh variational representation of the principal eigenvalue, $\mu(t;d)$ exhibits monotonic increase in the diffusion coefficient $d>0$ for every fixed time $t\in\mathbb{R}$. As a result, this monotonicity is inherited by both the upper and lower limits of its long-time average, rendering them non-decreasing in $d>0$. Furthermore, in conjunction with the uniform boundedness of the potential and the uniform ellipticity of the elliptic operator, uniform energy bounds for the principal eigenfunction can be established. These bounds entail that the Lipschitz continuity of $\mu(t;d)$ with respect to d is uniformly valid for all $t \in \mathbb{R}$. Consequently, both the upper and lower limits of the long-time average are locally Lipschitz continuous with respect to $d>0$.

Because of $c\in\mathcal{C}$, Lemma \ref{lemma2.24} implies that
 \[\lim_{d\to 0}\mu^0(t;d)=\min_{x\in\overline{\Omega}}c(x,t) \quad \text{holds uniformly in }t\in\mathbb{R}.\]
Therefore, for any $\varepsilon>0$ there exists $d_\varepsilon >0$ such that whenever $0<d<d_\varepsilon $,
 \[\sup_{t\in\mathbb{R}}\left|\mu^0(t;d)-\min_{x\in\overline{\Omega}}c(x,t)\right|<\varepsilon.\]
Consequently, for every $T>0$,
 \[\left|\fint^{T}_{0}\mu^0(s;d)\,\mathrm{d}s-\fint^{T}_{0}\min_{x\in\overline{\Omega}}c(x,s)\,\mathrm{d}s\right| \le\fint^{T}_{0}\left|\mu^0(s;d)-\min_{x\in\overline{\Omega}}c(x,s)\right|\,\mathrm{d}s<\varepsilon.\]
Taking $T=T_n$ in particular, we obtain
 \[\sup_{n\in\mathbb{N}^+}\left|\fint^{T_n}_{0}\mu^0(s;d)\,\mathrm{d}s-\fint^{T_n}_{0}\min_{x\in\overline{\Omega}}c(x,s)\,\mathrm{d}s\right|<\varepsilon, \qquad\forall~d\in(0,d_\varepsilon).\]
Consequently, we have
 \[\fint^{T_n}_{0}\min_{x\in\overline{\Omega}}c(x,s)\,\mathrm{d}s-\varepsilon<\fint^{T_n}_{0}\mu^0(s;d)\,\mathrm{d}s<\fint^{T_n}_{0}\min_{x\in\overline{\Omega}}c(x,s)\,\mathrm{d}s+\varepsilon \qquad\forall~n\in\mathbb{N}^+,~d\in(0,d_\varepsilon).\]
Taking the $\liminf$ with respect to $n$ and using the monotonicity of the $\liminf$, we get
 \[\liminf_{n\to\infty}\left(\fint^{T_n}_{0}\min_{x\in\overline{\Omega}}c(x,s)\,\mathrm{d}s-\varepsilon\right) \le\liminf_{n\to\infty}\fint^{T_n}_{0}\mu^0(s;d)\,\mathrm{d}s \le\liminf_{n\to\infty}\left(\fint^{T_n}_{0}\min_{x\in\overline{\Omega}}c(x,s)\,\mathrm{d}s+\varepsilon\right),\]
or equivalently,
\[\liminf_{n\to\infty}\fint^{T_n}_{0}\min_{x\in\overline{\Omega}}c(x,s)\,\mathrm{d}s-\varepsilon \le\liminf_{n\to\infty}\fint^{T_n}_{0}\mu^0(s;d)\,\mathrm{d}s \le\liminf_{n\to\infty}\fint^{T_n}_{0}\min_{x\in\overline{\Omega}}c(x,s)\,\mathrm{d}s+\varepsilon\]
for all $d\in(0,d_\varepsilon)$. Thus, by letting $d\to 0$ and the arbitrariness of $\varepsilon$, we conclude that
 \[\lim_{d\to0}  \liminf_{n\to\infty}\fint^{T_n}_{0}\mu^0(s;d)\,\mathrm{d}s = \liminf_{n\to\infty}\fint^{T_n}_{0}\min_{x\in\overline{\Omega}}c(x,s)\,\mathrm{d}s.\]
Similarly, we can show that
 \[\lim_{d\to0}  \limsup_{n\to\infty}\fint^{T_n}_{0}\mu^0(s;d)\,\mathrm{d}s =  \limsup_{n\to\infty}\fint^{T_n}_{0}\min_{x\in\overline{\Omega}}c(x,s)\,\mathrm{d}s.\]

For the large-diffusion case, Lemma \ref{lemma2.26} implies that
 \[\lim_{d\to \infty}\mu^0(t;d)=\fint_\Omega c(x,t)\,\mathrm{d}x \quad \text{holds uniformly in }t\in\mathbb{R}.\]
Then following a line of argument analogous to that for the small-diffusion limit gives

 \[\fint^{T_n}_{0}\fint_{\Omega}c(x,s)\,\mathrm{d}x\mathrm{d}s-\varepsilon <\fint^{T_n}_{0}\mu^0(s;d)\,\mathrm{d}s<\fint^{T_n}_{0}\fint_{\Omega}c(x,s) \,\mathrm{d}x\mathrm{d}s+\varepsilon, \qquad\forall~n\in\mathbb{N}^+,~d>d_\varepsilon\]
for any given $\varepsilon>0$ and all $d>d_\varepsilon$ for some $d_\varepsilon>0$. Then we can establish the following equalities,
 \[\lim_{d\to\infty}\liminf_{n\to\infty}\fint^{T_n}_{0}\mu^0(s;d)\,\mathrm{d}s =  \liminf_{n\to\infty}\fint^{T_n}_{0}\fint_\Omega c(x,s)\,\mathrm{d}x\mathrm{d}s\]
and
 \[\lim_{d\to\infty}\limsup_{n\to\infty}\fint^{T_n}_{0}\mu^0(s;d)\,\mathrm{d}s =  \limsup_{n\to\infty}\fint^{T_n}_{0}\fint_\Omega c(x,s)\,\mathrm{d}x\mathrm{d}s.\]

Similarly, by a few modifications one can check the last four limits in the lemma. This completes the proof.
\end{proof}

\begin{lemma}\label{lemma5.10}
Let $c\in \mathcal{C}$ be a function, $\{T_n\}_{n\in\mathbb{N}^+}$ be a sequence satisfying $T_n\to\infty$ as $n\to\infty$. Then the two quantities $\inf_{\hat{c}\in\mathscr{C}_*}\mu^\infty(d,\hat{c})$ and $\sup_{\hat{c}\in\mathscr{C}_*}\mu^\infty(d,\hat{c})$ are continuous and non-decreasing in $d>0$, and the following limits are valid,
\begin{equation}\label{eq5.15}
  \begin{aligned}
   & \lim_{d\to 0}\inf_{\hat{c}\in\mathscr{C}_*}\mu^\infty(d,\hat{c})=\min_{x\in\overline{\Omega}} \liminf_{n\to\infty}\fint^{T_n}_{0}c(x,s)\,\mathrm{d}s, \\
   & \lim_{d\to\infty}\inf_{\hat{c}\in\mathscr{C}_*}\mu^\infty(d,\hat{c})= \liminf_{n\to\infty}\fint^{T_n}_{0} \fint_{\Omega}c(x,s)\,\mathrm{d}x\mathrm{d}s, \\
   & \lim_{d\to 0}\sup_{\hat{c}\in\mathscr{C}_*}\mu^\infty(d,\hat{c})= \limsup_{n\to\infty}\min_{x\in\overline{\Omega}}\fint^{T_n}_{0}c(x,s)\,\mathrm{d}s, \\
   & \lim_{d\to\infty}\sup_{\hat{c}\in\mathscr{C}_*}\mu^\infty(d,\hat{c})= \limsup_{n\to\infty}\fint^{T_n}_{0} \fint_{\Omega}c(x,s)\,\mathrm{d}x\mathrm{d}s.
  \end{aligned}
\end{equation}
Furthermore, the two quantities $\inf_{\hat{c}\in\mathscr{C}}\mu^\infty(d,\hat{c})$ and $\sup_{\hat{c}\in\mathscr{C}}\mu^\infty(d,\hat{c})$ are continuous and nondecreasing in $d>0$; and the following quantities are valid,
\begin{equation}\label{eq5.16}
  \begin{aligned}
   & \lim_{d\to 0}\inf_{\hat{c}\in\mathscr{C}}\mu^\infty(d,\hat{c})=\min_{x\in\overline{\Omega}} \liminf_{T\to\infty}\fint^{T}_{0}c(x,s)\,\mathrm{d}s, \\
   & \lim_{d\to\infty}\inf_{\hat{c}\in\mathscr{C}}\mu^\infty(d,\hat{c})= \liminf_{T\to\infty}\fint^{T}_{0} \fint_{\Omega}c(x,s)\,\mathrm{d}x\mathrm{d}s, \\
   & \lim_{d\to 0}\sup_{\hat{c}\in\mathscr{C}}\mu^\infty(d,\hat{c})= \limsup_{T\to\infty}\min_{x\in\overline{\Omega}}\fint^{T}_{0}c(x,s)\,\mathrm{d}s, \\
   & \lim_{d\to\infty}\sup_{\hat{c}\in\mathscr{C}}\mu^\infty(d,\hat{c})= \limsup_{T\to\infty}\fint^{T}_{0} \fint_{\Omega}c(x,s)\,\mathrm{d}x\mathrm{d}s.
  \end{aligned}
\end{equation}
\end{lemma}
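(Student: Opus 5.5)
The plan is to treat $\mathscr{C}_*$ (and likewise $\mathscr{C}$) as a fixed compact subset of $C(\overline{\Omega})$ (Lemma \ref{lemma2.1}) and to reduce everything to uniform-in-potential properties of the elliptic eigenvalue $\mu^\infty(d,m)$. \textbf{Monotonicity:} for each fixed $m$, the Rayleigh quotient shows that $d\mapsto\mu^\infty(d,m)$ is nondecreasing. Taking inf and sup over $m\in\mathscr{C}_*$ preserves this.

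\textbf{Continuity:} I will show that $d\mapsto\mu^\infty(d,m)$ is locally Lipschitz on $(0,\infty)$, uniformly in $m\in\mathscr{C}_*$. Since $\mu^\infty(d,m)$ is an infimum of functions affine in $d$, it is concave in $d$, and it is bounded by $\|m\|_\infty\le M:=\sup_{\mathscr{C}_*}\|\hat c\|_\infty$. A concave function on $(0,\infty)$ with values in $[-M,M]$ has, on $[d_1,d_2]\subset(0,\infty)$, difference quotients bounded by $2M/d_1$. This gives a Lipschitz constant independent of $m$. Because inf and sup of an equi-Lipschitz family are Lipschitz with the same constant, both $\inf_{\mathscr{C}_*}\mu^\infty(d,\cdot)$ and $\sup_{\mathscr{C}_*}\mu^\infty(d,\cdot)$ are continuous in $d$.

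\textbf{Limits as $d\to0$:} I apply Lemma \ref{lemma2.18} with $\mathcal{S}=\mathscr{C}_*$, which gives $\inf_{\mathscr{C}_*}\min_{\overline\Omega}\hat c$ and $\sup_{\mathscr{C}_*}\min_{\overline\Omega}\hat c$. I then rewrite these via Lemma \ref{lemma2.4} (the inf case, equal to $\min_x\liminf_n\fint_0^{T_n}c$) and Lemma \ref{lemma2.3} (the sup case, equal to $\limsup_n\min_x\fint_0^{T_n}c$).

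\textbf{Limits as $d\to\infty$:} Lemma \ref{lemma2.25} gives $\sup_{m\in\mathscr{C}_*}|\mu^\infty(d,m)-\fint_\Omega m|\to0$. Hence $\inf_{\mathscr{C}_*}\mu^\infty(d,\cdot)\to\inf_{\mathscr{C}_*}\fint_\Omega\hat c$ and $\sup_{\mathscr{C}_*}\mu^\infty(d,\cdot)\to\sup_{\mathscr{C}_*}\fint_\Omega\hat c$, by the same interchange argument as Step 3 of Lemma \ref{lemma2.18}. Lemma \ref{lemma2.7} identifies these values with $\liminf_n$ and $\limsup_n$ of $\fint_0^{T_n}\fint_\Omega c$. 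This proves \eqref{eq5.15}. The statements \eqref{eq5.16} follow verbatim with $\mathscr{C}$ in place of $\mathscr{C}_*$, using the second identities of Lemmas \ref{lemma2.3}, \ref{lemma2.4} and \ref{lemma2.7}.

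The only step needing care is the uniform continuity in $d$, since Lemma \ref{lemma2.18} relied on Dini's theorem rather than any Lipschitz bound. The concavity argument above avoids eigenfunction energy estimates entirely, so I expect it to go through without difficulty.
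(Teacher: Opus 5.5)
Your proof is correct. For monotonicity and for all four limits it coincides with the paper's argument:
\begin{itemize}
\item for $d\to0$, Lemma~\ref{lemma2.18} with $\mathcal{S}=\mathscr{C}_*$ (resp.\ $\mathscr{C}$), followed by Lemmas~\ref{lemma2.3} and~\ref{lemma2.4};
\item for $d\to\infty$, the uniform convergence of Lemma~\ref{lemma2.25}, the interchange of extrema, and then Lemma~\ref{lemma2.7}.
\end{itemize}
The paper proves monotonicity of the infimum by contradiction, which is only a longer version of your remark that inf and sup preserve monotonicity.

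The genuine difference is the continuity in $d$. The paper argues that $(d,m)\mapsto\mu^\infty(d,m)$ is jointly continuous, so compactness of the potential set gives equicontinuity in $d$ on compact $d$-intervals. You instead observe that $d\mapsto\mu^\infty(d,m)$ is concave, being an infimum of affine functions of $d$, and takes values in $[-M,M]$. This yields the explicit Lipschitz constant $2M/d_1$ on $[d_1,d_2]$, uniformly in $m$.

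Your route is more elementary and quantitative. For the continuity step it needs only uniform boundedness of the potential set, not compactness; compactness is still used in the $d\to0$ limit via Lemma~\ref{lemma2.18}. It also avoids the joint continuity of $\mu^\infty$, which the paper asserts without proof.
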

\begin{proof}
By compactness of the potential set and joint continuity of $\mu_\infty$, the family is equicontinuous in $d$ on every compact $d$-interval; hence the upper and lower envelops ( $\inf_{\hat{c}\in\mathscr{C}_*}\mu^\infty(d,\hat{c})$ and $\sup_{\hat{c}\in\mathscr{C}_*}\mu^\infty(d,\hat{c})$) are continuous in $d$. We now give a brief proof of the monotonicity of $\inf_{\hat{c}\in\mathscr{C}_*} \mu^\infty(d,\hat{c})$. Let $0<d_1<d_2$. Suppose, for contradiction, that
 \[\inf_{\hat{c}\in\mathscr{C}_*}\mu^\infty(d_1;\hat{c})> \inf_{\hat{c}\in\mathscr{C}_*}\mu^\infty(d_2;\hat{c}).\]
Set
\[\varepsilon = \frac{1}{2}\Bigl(\inf_{\hat{c}\in\mathscr{C}_*}\mu^\infty(d_1;\hat{c})-\inf_{\hat{c}\in\mathscr{C}_*}\mu^\infty(d_2;\hat{c})\Bigr) > 0.\]
By the definition of the infimum, there exists $\hat{c}_\varepsilon\in\mathscr{C}_*$ such that
 \[\mu^\infty(d_2;\hat{c}_\varepsilon) \le \inf_{\hat{c}\in\mathscr{C}_*}\mu^\infty(d_2;\hat{c}) + \varepsilon.\]
Using the monotonicity of the elliptic principal eigenvalue with respect to the diffusion rate (i.e., the map $d \mapsto \mu^\infty(d,\hat{c})$ is nondecreasing), we obtain
 \[\inf_{\hat{c}\in\mathscr{C}_*}\mu^\infty(d_1;\hat{c}) \le \mu^\infty(d_1;\hat{c}_\varepsilon) \le \mu^\infty(d_2;\hat{c}_\varepsilon) \le \inf_{\hat{c}\in\mathscr{C}_*}\mu^\infty(d_2;\hat{c}) + \varepsilon.\]
Substituting the expression for $\varepsilon$, the right-hand side equals
 \[\frac{1}{2}\Bigl(\inf_{\hat{c}\in\mathscr{C}_*}\mu^\infty(d_1;\hat{c}) + \inf_{\hat{c}\in\mathscr{C}_*}\mu^\infty(d_2;\hat{c})\Bigr) = \inf_{\hat{c}\in\mathscr{C}_*}\mu^\infty(d_1;\hat{c}) - \varepsilon,\]
which yields
 \[\inf_{\hat{c}\in\mathscr{C}_*}\mu^\infty(d_1;\hat{c}) \le \inf_{\hat{c}\in\mathscr{C}_*}\mu^\infty(d_1;\hat{c}) - \varepsilon,\]
a contradiction. Hence the monotonicity is proven.

We now consider the small-diffusion limits. By Lemma \ref{lemma2.18}, we have
 \[ \lim_{d\to0}\inf_{\hat{c}\in \mathscr{C}_*}\mu^{\infty}(d,\hat{c}) =\inf_{\hat{c}\in \mathscr{C}_*}\min_{x\in\overline{\Omega}}\hat{c}(x) \quad\text{and}\quad \lim_{d\to0}\sup_{\hat{c}\in \mathscr{C}_*}\mu^{\infty}(d,\hat{c})=\sup_{\hat{c}\in \mathscr{C}_*}\min_{x\in\overline{\Omega}}\hat{c}(x).\]
Using Lemmas \ref{lemma2.3} and \ref{lemma2.4} yields
 \[\lim_{d\to0}\inf_{\hat{c}\in\mathscr{C}_*}\mu^\infty(d,\hat{c})= \min_{x\in\overline{\Omega}}\liminf_{n\to0}\fint^{T_n}_{0}c(x,s)\,\mathrm{d}s\]
and
 \[ \lim_{d\to0}\sup_{\hat{c}\in\mathscr{C}_*}\mu^\infty(d,\hat{c})= \limsup_{n\to\infty}\min_{x\in\overline{\Omega}}\fint^{T_n}_{0}c(x,s)\,\mathrm{d}s.\]

We now turn to the large-diffusion limit. Indeed, Lemma \ref{lemma2.25} tells us that the limit
\[\lim_{d\to\infty}\mu^\infty(d,\hat{c})=\fint_\Omega \hat{c}(x)\,\mathrm{d}x\]
holds uniformly with respect to $\hat{c}\in\mathscr{C}_*$. In other words, for every $\varepsilon>0$ there exists $d_\varepsilon>0$ such that whenever $d>d_\varepsilon$,
 \[\fint_\Omega\hat{c}(x)\,\mathrm{d}x-\varepsilon < \mu^\infty(d,\hat{c}) < \fint_\Omega\hat{c}(x)\,\mathrm{d}x+\varepsilon, \qquad \forall\,\hat{c}\in\mathscr{C}_*.\]
Taking the infimum and supremum over $\hat{c}\in\mathscr{C}_*$ gives
 \[\inf_{\hat{c}\in\mathscr{C}_*}\fint_\Omega\hat{c}(x)\,\mathrm{d}x-\varepsilon \le \inf_{\hat{c}\in\mathscr{C}_*}\mu^\infty(d,\hat{c}) \le \inf_{\hat{c}\in\mathscr{C}_*}\fint_\Omega\hat{c}(x)\,\mathrm{d}x+\varepsilon,\qquad\forall~d>d_\varepsilon\]
and
 \[\sup_{\hat{c}\in\mathscr{C}_*}\fint_\Omega\hat{c}(x)\,\mathrm{d}x-\varepsilon \le \sup_{\hat{c}\in\mathscr{C}_*}\mu^\infty(d,\hat{c}) \le \sup_{\hat{c}\in\mathscr{C}_*}\fint_\Omega\hat{c}(x)\,\mathrm{d}x+\varepsilon,\qquad\forall~d>d_\varepsilon,\]
respectively. Letting $d\to\infty$ and noting that $\varepsilon>0$ is arbitrary, we obtain
 \[\lim_{d\to\infty}\inf_{\hat{c}\in\mathscr{C}_*}\mu^\infty(d,\hat{c})= \inf_{\hat{c}\in\mathscr{C}_*}\fint_\Omega\hat{c}(x)\,\mathrm{d}x =\liminf_{n\to\infty}\fint^{T_n}_{0}\fint_\Omega c(x,s)\,\mathrm{d}x\mathrm{d}s,\]
and
 \[\lim_{d\to\infty}\sup_{\hat{c}\in\mathscr{C}_*}\mu^\infty(d,\hat{c})= \sup_{\hat{c}\in\mathscr{C}_*}\fint_\Omega\hat{c}(x)\,\mathrm{d}x =\limsup_{n\to\infty}\fint^{T_n}_{0}\fint_\Omega c(x,s)\,\mathrm{d}x\mathrm{d}s,\]
respectively, where Lemma \ref{lemma2.7} has been used.

Following the same line of thought, \eqref{eq5.16} can be proven. This completes the proof.
\end{proof}

\medskip
Combining Theorem \ref{theorem5.1} with Lemmas \ref{lemma5.9} and \ref{lemma5.10} gives Theorem \ref{theorem5.8}. Moreover, Corollary \ref{corollary1.8} follows directly from Lemmas \ref{lemma5.9} and \ref{lemma5.10}.

\chapter{Asymptotic behavior of $H$: $d\to\infty$ and $(\omega,d)\to(\infty,0)$}\label{chp6}
The purpose of this chapter is to prove Theorem \ref{theorem1.9} and its version in the sense of sequence $\{T_n\}_{n\in\mathbb{N}^+}$ of $T\to\infty$.
\begin{theorem}\label{theorem6.1}
Let $H(\cdot;\omega,d)$ be the normalized principal Floquet bundle of \eqref{eq1.1} with potential $c\in\mathcal{C}$ and let $\{T_n\}_{n\in\mathbb{N}^+}$ be a sequence satisfying $T_n\to\infty$ as $n\to\infty$.
\begin{enumerate}[{\upshape (i)}]
  \item Let $\vartheta\in[0,\infty]$ be given. Assume that $\sup_{t\in\mathbb{R}}\|\partial_t c(\cdot,t)\|_{\infty}<\infty$. Suppose further that $c\in C^{2,1}(\overline{\Omega}\times\mathbb{R})$ satisfies assumption \eqref{H2} if $\frac{\omega}{d}\to\infty$. Then
       \[ \lim\limits_{\left(d,\frac{\omega}{d}\right)\to(\infty,\vartheta)} \liminf\limits_{n\to\infty}\fint^{T_n}_{0}H(s;\omega,d)\,\mathrm{d}s = \liminf_{n\to\infty}\fint^{T_n}_{0}\fint_\Omega c(x,s)\,\mathrm{d}x\mathrm{d}s\]
      and
       \[\lim\limits_{\left(d,\frac{\omega}{d}\right)\to(\infty,\vartheta)} \limsup\limits_{n\to\infty}\fint^{T_n}_{0}H(s;\omega,d)\,\mathrm{d}s = \limsup_{n\to\infty}\fint^{T_n}_{0}\fint_\Omega c(x,s)\,\mathrm{d}x\mathrm{d}s.\]
      Consequently, if the sequence $\left\{\fint^{T_n}_{0} c(\cdot,s)\,\mathrm{d}s\right\}_{n\in\mathbb{N}^+}$ of functions converges to some function $\hat{c}(\cdot)$ in $C(\overline{\Omega})$, then
       \[\lim\limits_{\left(d,\frac{\omega}{d}\right)\to(\infty,\vartheta)} \liminf\limits_{n\to\infty}\fint^{T_n}_{0}H(s;\omega,d)\,\mathrm{d}s= \lim\limits_{\left(d,\frac{\omega}{d}\right)\to(\infty,\vartheta)} \limsup\limits_{n\to\infty}\fint^{T_n}_{0}H(s;\omega,d)\,\mathrm{d}s = \fint_\Omega \hat{c}(x)\,\mathrm{d}x.\]
  \item Suppose that $c$ satisfies \eqref{H2}. Then
       \[\lim_{(\omega,d)\to(\infty,0)}\liminf_{n\to\infty}\fint^{T_n}_{0}H(s;\omega,d)\,\mathrm{d}s= \min_{x\in\overline{\Omega}}\liminf_{n\to\infty}\fint^{T_n}_{0}c(x,s)\,\mathrm{d}s= \inf_{\hat{c}\in\mathscr{C}_*}\min_{x\in\overline{\Omega}}\hat{c}(x)\]
      and
       \[\lim_{(\omega,d)\to(\infty,0)}\limsup_{n\to\infty}\fint^{T_n}_{0}H(s;\omega,d)\,\mathrm{d}s= \limsup_{n\to\infty}\min_{x\in\overline{\Omega}}\fint^{T_n}_{0}c(x,s)\,\mathrm{d}s= \sup_{\hat{c}\in\mathscr{C}_*}\min_{x\in\overline{\Omega}}\hat{c}(x).\]
      Consequently, if the sequence $\left\{\fint^{T_n}_{0}c(\cdot,s)\,\mathrm{d}s\right\}_{n\in\mathbb{N}^+}$ converges to $\hat{c}(\cdot)$ in $C(\overline{\Omega})$, then
       \[\lim_{ (\omega,d)\to(\infty,0)}\liminf_{n\to\infty}\fint^{T_n}_{0}H(s;\omega,d)\,\mathrm{d}s= \lim_{(\omega,d)\to(\infty,0)}\limsup_{n\to\infty}\fint^{T_n}_{0}H(s;\omega,d)\,\mathrm{d}s= \min_{x\in\overline{\Omega}}\hat{c}(x).\]
\end{enumerate}
\end{theorem}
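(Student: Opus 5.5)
For (i), the upper bound needs no work. Lemma \ref{lemma3.6} gives, for every $(\omega,d)$,
\[\liminf_{n\to\infty}\fint^{T_n}_0 H\,\mathrm{d}s\le\liminf_{n\to\infty}\fint^{T_n}_0\fint_\Omega c\,\mathrm{d}x\mathrm{d}s,\]
and the same inequality with $\limsup$ on both sides. The work is the lower bound. I will use the comparison principle Lemma \ref{lemma2.13} with the sub-solution $\underline{\varphi}_n$ built from $\varphi$ exactly as in Lemma \ref{lemma5.2}, with the $+\varepsilon$ shift:
\[\underline{\varphi}_n=\varphi\exp\Big[\tfrac1\omega\int_0^t\big(\fint_0^{T_n}H-H+\varepsilon\big)\,\mathrm{d}s\Big].\]
For fixed $(\omega,d)$ the condition $\underline{\varphi}_n(\cdot,0)\le\underline{\varphi}_n(\cdot,T_n)$ holds for $n$ large, by the Harnack bound of Proposition \ref{proposition3.3}(ii). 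The super-solution is chosen by regime.

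\textbf{Case $\vartheta<\infty$.} Take the frozen-time elliptic eigenfunction
\[\overline{\varphi}_n=\phi^0(x,t;d)\exp\Big[\tfrac1\omega\int_0^t\big(\fint_0^{T_n}\mu^0-\mu^0-\varepsilon\big)\,\mathrm{d}s\Big].\]
This gives
\[\liminf_n\fint_0^{T_n}H\ge\liminf_n\fint_0^{T_n}\mu^0(s;d)\,\mathrm{d}s-\omega\|\partial_t\ln\phi^0(\cdot,\cdot;d)\|_\infty,\]
and likewise for $\limsup$. Lemma \ref{lemma2.26} gives $\mu^0(t;d)\to\fint_\Omega c(x,t)\,\mathrm{d}x$ uniformly in $t$ as $d\to\infty$. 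It remains to show the error $\omega\|\partial_t\ln\phi^0\|_\infty$ vanishes in the joint limit.

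\textbf{Main obstacle (case $\vartheta<\infty$).} I need a large-$d$ version of Lemma \ref{lemma2.22} of the form $\|\partial_t\phi^0\|_\infty\le C/d$ with $\phi^0$ bounded below uniformly. From the differentiated equation
\[-d\operatorname{div}(A\nabla\partial_t\phi^0)+(c-\mu^0)\partial_t\phi^0=(\mu^0_t-c_t)\phi^0,\]
whose right side is bounded and orthogonal to $\phi^0$, the spectral gap of $-d\operatorname{div}(A\nabla\cdot)$ is of order $d$. I would first normalize $\phi^0$ in $L^2$ and fix $\int\phi^0\partial_t\phi^0=0$. Then I expect $\|\partial_t\phi^0\|\lesssim 1/d$ in $L^2$, upgraded to $L^\infty$ by elliptic regularity. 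This makes the error $O(\omega/d)$. That handles $\vartheta=0$ directly, but $\vartheta\in(0,\infty)$ leaves an $O(\vartheta)$ error. So I would instead aim for the sharper $O(\omega/d^2)$: the mean-zero, $O(1/d)$ part of $\phi^0$ is what is being differentiated, and if $\partial_t\ln\phi^0=O(d^{-2})$ can be extracted from that, the error vanishes when $\omega/d\to\vartheta<\infty$. If this fails, the fallback is Lam--Lou \cite{Lam2024The}, namely $\varphi\to1/|\Omega|$ uniformly in $\omega$. The identity $H(t)=\int c\varphi$ (Lemma \ref{lemma3.5}) then gives
\[\Big|H-\fint_\Omega c\,\mathrm{d}x\Big|\le\|c\|_\infty\|\varphi-1/|\Omega|\|_{L^1}\to0,\]
which would settle all of (i) directly. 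This route is probably the cleanest, since the Harnack constant there is independent of $\omega$.

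\textbf{Case $\vartheta=\infty$.} Use the Lemma \ref{theorem5.6} super-solution,
\[\hat\phi_n\exp\big[\tfrac1\omega(\mathcal M-\kappa\phi^*)\big],\]
with its error $C(d/\omega+d/\omega^2)$. Here I must verify that $C$ is uniform as $d\to\infty$, via $\|\ln\hat\phi_n\|_{C^1}\lesssim1/d$. With that, the error tends to $0$. Lemma \ref{lemma2.25}, uniform over the compact set $\mathscr C_*$, then gives
\[\inf_{\mathscr C_*}\mu^\infty(d,\cdot)\to\inf_{\hat c\in\mathscr C_*}\fint_\Omega\hat c\,\mathrm{d}x,\]
and the analogous statement for $\sup$. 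Lemma \ref{lemma2.7} converts these into the stated $\liminf$ and $\limsup$ expressions.

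For (ii), the upper bound is Lemma \ref{lemma3.12} together with Lemma \ref{lemma2.18} and Lemmas \ref{lemma2.3} and \ref{lemma2.4}, exactly as in Lemma \ref{lemma4.2}; it is uniform in $\omega$. For the lower bound I repeat Lemma \ref{lemma4.5} with $\overline{\varphi}_n=\exp[\frac1\omega(\mathcal M-\kappa\phi^*)]$. By \eqref{H2} its defect is $d\,(V/\omega+V/\omega^2)\to0$ as $(\omega,d)\to(\infty,0)$, and it needs no Harnack control uniform in $d$, since that control is used only at fixed parameters to take $n\to\infty$. Lemma \ref{lemma2.13} then gives
\[\liminf_n\fint_0^{T_n} H\ge\liminf_n\min_x\hat c_n-o(1),\]
and similarly for $\limsup$. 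Lemmas \ref{lemma2.3} and \ref{lemma2.4} finish the proof.
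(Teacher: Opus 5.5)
Your part (ii) is essentially the paper's Lemma~\ref{lemma6.4}. The upper bound comes from Lemma~\ref{lemma3.12} and the small-$d$ limit of $\inf/\sup_{\mathscr C_*}\mu^\infty(d,\cdot)$. The lower bound comes from $\exp[\frac1\omega(\mathcal M-\kappa\phi^*)]$, whose defect is $O(d/\omega+d/\omega^2)$. Your $\vartheta=0$ and $\vartheta=\infty$ super-solutions are analogues of those in Lemma~\ref{lemma6.2}, using the exact eigenfunctions $\phi^0,\hat\phi_n$ in place of the first-order expansions $e^{\Gamma_0/d}$ and $1+\Gamma_{\infty,T_n}/d$.

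Your primary plan for $\vartheta\in(0,\infty)$ fails, because the hoped-for bound $\partial_t\ln\phi^0=O(d^{-2})$ is false. Up to an $x$-independent term, $\ln\phi^0=\Gamma_0(x,t)/d+O(d^{-2})$, where $-\operatorname{div}(A\nabla\Gamma_0)=\fint_\Omega c-c$. So the pointwise defect $\omega\partial_t\ln\phi^0$ tends to $\vartheta\,\partial_t\Gamma_0(x,t)$. This limit depends on $x$, so no factor depending only on $t$ can absorb it. It is nonzero whenever the spatial profile of $c$ moves in time. This is why the paper uses the parabolic corrector $(1+\Gamma_\vartheta/d)$, with $\vartheta\partial_t\Gamma_\vartheta-\operatorname{div}(A\nabla\Gamma_\vartheta)=\fint_\Omega c-c$, in this regime.

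Your fallback is the right move, and once justified it proves all of (i) by a genuinely different route. It is not a citation, though. Lam--Lou work at $\omega=1$, and rescaling time preserves $\|c\|_\infty$ but not the time regularity of $c$. An $\omega$-independent Harnack constant also only gives two-sided bounds, not convergence to $1/|\Omega|$. Prove it directly as follows. Set $w=\varphi-1/|\Omega|$, which has mean zero, and $y=\|w(\cdot,t)\|_{L^2}^2$. Then
\[\frac{\omega}{2}y'+d\int_\Omega\nabla w\cdot(A\nabla w)\,\mathrm{d}x=\int_\Omega(H-c)\varphi w\,\mathrm{d}x.\]
Use Poincar\'e--Wirtinger with constant $\lambda_2$, together with $|H|\le\|c\|_\infty$ and $\|\varphi\|_{L^2}\le|\Omega|^{-1/2}+\sqrt y$. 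For $d\ge4\Lambda\|c\|_\infty/\lambda_2$ this gives $\frac\omega2y'\le-ay+b\sqrt y$, with $a=d\lambda_2/(2\Lambda)$ and $b=2\|c\|_\infty|\Omega|^{-1/2}$.

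For fixed $(\omega,d)$, $y$ is bounded on $\mathbb R$ by Proposition~\ref{proposition3.3}(ii). Wherever $y>4b^2/a^2$ it would grow exponentially backward in time, so $\sup_t y\le4b^2/a^2$. Combining this with $H(t)=\int_\Omega c\varphi\,\mathrm{d}x$ from Lemma~\ref{lemma3.5} gives
\[\Big|H(t;\omega,d)-\fint_\Omega c(x,t)\,\mathrm{d}x\Big|=\Big|\int_\Omega c\,w\,\mathrm{d}x\Big|\le\frac{8\Lambda\|c\|_\infty^2}{\lambda_2\,d}\quad\text{for all }t\in\mathbb R,\ \omega>0.\]

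This yields (i) along every joint limit with $d\to\infty$. It needs no condition on $\omega/d$, no bound on $\partial_tc$, and no \eqref{H2}, and it holds even before time averaging. The paper instead feeds three regime-specific super-solutions into Lemma~\ref{lemma2.13}. That keeps it inside the sub/super-solution machinery used for every other regime and displays the correctors $\Gamma_0,\Gamma_\vartheta,\Gamma_{\infty,T}$, but costs those extra hypotheses.
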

The proof will be accomplished via a series of lemmas. Precisely, we analyze the asymptotic behavior of the normalized principal Floquet bundle under two extreme parameter regimes: (i) when the diffusion coefficient $d$ tends to infinity and $\frac{\omega}{d}$ approaches a certain limit, its time average converges to the spatio-temporal integral mean of the potential function $c$; (ii) whereas when $(\omega, d)$ tends to $(\infty, 0)$, its time average converges to the minimum value attained by the temporal average of $c$ over the spatial domain.
\section{The case $\left(d,\frac{\omega}{d}\right)\to(\infty,\vartheta)$ with $\vartheta\in[0,\infty]$}
This section investigates the asymptotics of the normalized principal Floquet exponent under large diffusion. We will see that the same limit is obtained whether $\omega/d$ tends to zero, tends to a finite positive number, or tends to infinity; nevertheless, the method of constructing sub- and super-solutions depends on the limiting value of the ratio.
\begin{lemma}\label{lemma6.2}
Let $\vartheta\in[0,\infty]$ be fixed and let $c\in\mathcal{C}\cap C^{2,1}(\overline{\Omega}\times\mathbb{R})$ be a given function satisfying $\sup_{t\in\mathbb{R}}\|\partial_t c(\cdot,t)\|_{\infty}<\infty$. Denote by $(H,\varphi)$ the normalized principal Floquet bundle of \eqref{eq1.1} with potential $c$. Suppose further that assumption \eqref{H2} holds for $c$ if $\vartheta=\infty$. Then for any sequence $\{T_n\}_{n\in\mathbb{N}^+}$ of $T\to\infty$, there hold
  \[\lim\limits_{\left(d,\frac{\omega}{d}\right)\to(\infty,\vartheta)} \liminf\limits_{n\to\infty}\fint^{T_n}_{0}H(s;\omega,d)\,\mathrm{d}s= \liminf_{n\to\infty} \fint^{T_n}_{0}\fint_\Omega c(x,s)\,\mathrm{d}x\mathrm{d}s\]
and
  \[\lim\limits_{\left(d,\frac{\omega}{d}\right)\to(\infty,\vartheta)} \limsup\limits_{n\to\infty}\fint^{T_n}_{0}H(s;\omega,d)\,\mathrm{d}s= \limsup_{n\to\infty} \fint^{T_n}_{0}\fint_\Omega c(x,s)\,\mathrm{d}x\mathrm{d}s.\]
Consequently, if the sequence $\left\{\fint^{T_n}_{0} c(\cdot,s)\,\mathrm{d}s\right\}_{n\in\mathbb{N}^+}$ of functions converges to some function $\hat{c}(\cdot)$ in $C(\overline{\Omega})$, then
   \[\lim\limits_{\left(d,\frac{\omega}{d}\right)\to(\infty,\vartheta)} \liminf\limits_{n\to\infty}\fint^{T_n}_{0}H(s;\omega,d)\,\mathrm{d}s= \lim\limits_{\left(d,\frac{\omega}{d}\right)\to(\infty,\vartheta)} \limsup\limits_{n\to\infty}\fint^{T_n}_{0}H(s;\omega,d)\,\mathrm{d}s = \fint_\Omega \hat{c}(x)\,\mathrm{d}x.\]
\end{lemma}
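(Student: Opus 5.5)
The upper bound is immediate and needs no assumption on $\vartheta$. By Lemma \ref{lemma3.6}, for every $(\omega,d)$ we have
\[\liminf_{n\to\infty}\fint^{T_n}_{0}H(s;\omega,d)\,\mathrm{d}s\le \liminf_{n\to\infty}\fint^{T_n}_{0}\fint_\Omega c\,\mathrm{d}x\mathrm{d}s,\]
and the analogous inequality holds for $\limsup$. The $\limsup$ of these quantities along $(d,\omega/d)\to(\infty,\vartheta)$ is therefore bounded by the spatio-temporal averages. All the work is in the lower bound. For that I plan to use Lemma \ref{lemma2.13} with the subsolution
\[\underline{\varphi}_n=\varphi\exp\Bigl[\tfrac1\omega\int_0^t\bigl(\fint_0^{T_n}H-H(s)+\varepsilon\bigr)\mathrm{d}s\bigr].\]
By the Harnack bound of Proposition \ref{proposition3.3}(ii), which is uniform in $\omega$ and also uniform for $d$ large, this satisfies $\underline{\varphi}_n(\cdot,0)\le\underline{\varphi}_n(\cdot,T_n)$ once $n$ is large. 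The supersolution $\overline{\varphi}_n$ must be built so that its eigen-type constant is $\fint_0^{T_n}\fint_\Omega c-o(1)$, where the error depends only on $(\omega,d)$ and is uniform in $n$. Its construction depends on the regime.

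\emph{Case $\vartheta<\infty$.} Here $\omega\le Cd$. I take
\[\overline{\varphi}_n=\phi^0(x,t)\exp\Bigl[\tfrac1\omega\int_0^t\bigl(\fint_0^{T_n}\mu^0(r;d)\,dr-\mu^0(s;d)-\varepsilon\bigr)\mathrm{d}s\Bigr],\]
as in Lemma \ref{lemma5.2}. The computation there gives the constant $\fint_0^{T_n}\mu^0(s;d)\,\mathrm{d}s-\omega\|\partial_t\ln\phi^0\|_\infty-\varepsilon$. Two estimates are needed on top of this. The first is $\sup_t|\mu^0(t;d)-\fint_\Omega c(\cdot,t)|\le C/d$, which is Lemma \ref{lemma2.26}; its proof, which mirrors Lemma \ref{lemma2.25}, gives the rate $1/d$. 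The second is $\|\partial_t\ln\phi^0\|_\infty=O(1/d)$. For the latter I would redo the argument of Lemma \ref{lemma2.22} with $d$-tracking. One writes $\partial_t\phi^0$ as $\phi^0$ times a constant plus a component orthogonal to $\phi^0$. The orthogonal part solves an elliptic equation whose right side is $(\mu^0_t-c_t)\phi^0$ and whose spectral gap is of order $d$, so it is $O(1/d)$ in $L^2$ and, after elliptic regularity, in $L^\infty$. The normalization $\|\phi^0\|_\infty=1$ together with the uniform Harnack bound for large $d$ then forces $\partial_t\ln\phi^0=O(1/d)$. Consequently $\omega\|\partial_t\ln\phi^0\|_\infty\le C\omega/d$, which is bounded. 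This is where I expect the main difficulty. A bounded error is not enough; the error has to tend to $0$, and I only get that when $\vartheta=0$. For $\vartheta\in(0,\infty)$ I would sharpen the bound to $\|\partial_t\ln\phi^0\|_\infty=O(1/d^2)$. The reason to expect this is that $\phi^0=\text{const}\cdot(1+d^{-1}w(x,t)+O(d^{-2}))$, where $w$ solves $-\operatorname{div}(A\nabla w)=\fint_\Omega c-c$, so that $\partial_t\ln\phi^0=d^{-1}\partial_tw+\dots$. But $\partial_tw$ requires control of $c_t$ in space, and a crude bound gives only $O(1/d)$. If the sharper bound fails, I would instead use the corrected supersolution
\[\exp\Bigl[d^{-1}w(x,t)+\tfrac1\omega\int_0^t\bigl(\text{mean}-\fint_\Omega c\bigr)\Bigr],\]
in which the mean-in-space drift is absorbed exactly. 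The remaining error is then $\omega d^{-1}\|\partial_tw\|_\infty+d^{-1}\|\nabla w\|^2$, and $\omega/d\to\vartheta$ still leaves a bounded term. In that case I would add a further exponential-in-time correction to remove the $\omega d^{-1}\partial_t w$ term.

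\emph{Case $\vartheta=\infty$.} Now I use (H2) and the supersolution of Lemma \ref{theorem5.6},
\[\overline{\varphi}_n=\hat\phi_n\exp[(\mathcal M(x,t;T_n)-\kappa\phi^*)/\omega].\]
It yields the constant $\mu^\infty(d,\hat c_n)-C(d/\omega+d/\omega^2)\cdot(1+\|\ln\hat\phi_n\|_{C^1})$. For large $d$, the term $\|\nabla\ln\hat\phi_n\|$ is $O(1/d)$ uniformly in $n$ by the expansion just described. With $d/\omega\to0$ the error therefore tends to $0$, and Lemma \ref{lemma2.25} gives $\mu^\infty(d,\hat c_n)=\fint_\Omega\hat c_n+O(1/d)$ uniformly in $n$.

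In every case Lemma \ref{lemma2.13} then gives
\[\liminf/\limsup_n\fint_0^{T_n}H+\varepsilon\ge\liminf/\limsup_n\fint_0^{T_n}\fint_\Omega c-o(1),\]
and letting $\varepsilon\to0$ and passing to the limit in $(\omega,d)$ closes the lower bound. The final consequence, for the case where $\hat c_n$ converges, is then immediate from dominated convergence.
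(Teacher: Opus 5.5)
Your upper bound via Lemma~\ref{lemma3.6} is correct, and so are your lower bounds for $\vartheta=0$ and $\vartheta=\infty$. For $\vartheta=0$, your fallback $\exp[d^{-1}w+\cdots]$ with the elliptic corrector $w$ is exactly the paper's construction, and it spares you the extra $d$-uniform estimate on $\partial_t\ln\phi^0$. For $\vartheta=\infty$, reusing the supersolution of Lemma~\ref{theorem5.6} with the uniform $O(d^{-1})$ rate from the proof of Lemma~\ref{lemma2.25} is a legitimate variant of the paper's $(1+d^{-1}\Gamma_{\infty,T_n})\exp[(\mathcal{M}-\kappa\phi^*)/\omega]$.

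The genuine gap is the regime $\vartheta\in(0,\infty)$, which your proposal does not close.

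\emph{Why your attempts fail.} Every supersolution you consider there is built from objects that are elliptic in $x$ at each frozen time ($\phi^0(\cdot,t)$ or $w(\cdot,t)$). For any such ansatz the term $\omega\partial_t$ leaves the $x$-dependent residual $\frac{\omega}{d}\partial_t w$ (equivalently $\omega\partial_t\ln\phi^0\approx\frac{\omega}{d}\partial_t w$). Its size tends to $\vartheta\|\partial_t w\|_\infty$, which is nonzero in general.
\begin{itemize}
\item The hoped-for $O(d^{-2})$ bound on $\partial_t\ln\phi^0$ is therefore false.
\item An ``exponential-in-time correction'' cannot help. A factor depending only on $t$ shifts the constant by a function of $t$ alone, while $\partial_t w$ varies in $x$: it has zero spatial mean but is not identically zero.
\item Iterating elliptic correctors would consume time derivatives of $c$ beyond the first, which are not assumed.
\end{itemize}

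\emph{The missing idea} is a corrector that is parabolic at the limiting ratio. Take the bounded entire solution of
\[\vartheta\partial_t\Gamma_\vartheta-\operatorname{div}(A(x)\nabla\Gamma_\vartheta)=\fint_\Omega c(y,t)\,\mathrm{d}y-c(x,t)\ \text{ in }\Omega\times\mathbb{R},\qquad \mathbf{n}\cdot(A(x)\nabla\Gamma_\vartheta)=0\ \text{ on }\partial\Omega\times\mathbb{R}.\]
It exists, is unique up to constants, and has $\Gamma_\vartheta$ and $\partial_t\Gamma_\vartheta$ bounded, because the forcing has zero spatial mean; cf.\ \cite[Appendix A]{Lam2024The}. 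Then set
\[\overline{\varphi}_n=\Bigl(1+\tfrac{1}{d}\Gamma_\vartheta\Bigr)\exp\Bigl[\tfrac{1}{\omega}\int_0^t\Bigl(\fint^{T_n}_{0}\fint_\Omega c(x,r)\,\mathrm{d}x\mathrm{d}r-\fint_\Omega c(x,s)\,\mathrm{d}x-\varepsilon\Bigr)\mathrm{d}s\Bigr].\]
A direct computation gives the supersolution inequality with constant $\fint^{T_n}_{0}\fint_\Omega c\,\mathrm{d}x\mathrm{d}s-\varepsilon-C\bigl|\tfrac{\omega}{d}-\vartheta\bigr|-\tfrac{C}{d}$. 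The condition $\overline{\varphi}_n(\cdot,0)\ge\overline{\varphi}_n(\cdot,T_n)$ holds once $d>2\|\Gamma_\vartheta\|_\infty$ and $T_n\ge\omega\ln 3/\varepsilon$. Lemma~\ref{lemma2.13} then closes this case.

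This is the paper's Case (ii). Without it, or an equivalent time-dependent corrector, your argument covers only $\vartheta\in\{0,\infty\}$.
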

\begin{proof}
First of all, if $\left\{\fint^{T_n}_{0} c(\cdot,s)\,\mathrm{d}s\right\}_{n\in\mathbb{N}^+}$ converges, by Lebesgue's dominated theorem, we know
\begin{equation*}
 \fint_\Omega \hat{c}(x)\,\mathrm{d}x=\fint_\Omega\lim_{n\to\infty}\fint^{T_n}_{0}c(x,s)\,\mathrm{d}s\mathrm{d}x= \lim_{n\to\infty}\fint^{T_n}_{0}\fint_\Omega c(x,s)\,\mathrm{d}x\mathrm{d}s.
\end{equation*}
The identities in the last line in the lemma follows directly from the first two identities.

It remains to show the first two identities. By Lemma \ref{lemma3.6}, one has
   \[\liminf\limits_{n\to\infty}\fint^{T_n}_{0} H(s;\omega,d)\,\mathrm{d}s \le\liminf_{n\to\infty}\fint^{T_n}_{0}\fint_{\Omega}c(x,s)\,\mathrm{d}x\mathrm{d}s\]
   and
   \[\limsup\limits_{n\to\infty}\fint^{T_n}_{0} H(s;\omega,d)\,\mathrm{d}s \le\limsup_{n\to\infty}\fint^{T_n}_{0}\fint_{\Omega}c(x,s)\,\mathrm{d}x\mathrm{d}s.\]
By letting $\left(d,\frac{\omega}{d}\right)\to(\infty,\vartheta)$, one easily obtains
\begin{equation}\label{eq6.1}
\begin{aligned}
     &\limsup\limits_{\left(d,\frac{\omega}{d}\right)\to(\infty,\vartheta)}\liminf\limits_{n\to\infty}\fint^{T_n}_{0} H(s;\omega,d)\,\mathrm{d}s \le\liminf_{n\to\infty}\fint^{T_n}_{0}\fint_{\Omega}c(x,s)\,\mathrm{d}x\mathrm{d}s\qquad
   \text{and} \\
     &\limsup\limits_{\left(d,\frac{\omega}{d}\right)\to(\infty,\vartheta)}\limsup\limits_{n\to\infty}\fint^{T_n}_{0} H(s;\omega,d)\,\mathrm{d}s \le\limsup_{n\to\infty}\fint^{T_n}_{0}\fint_{\Omega}c(x,s)\,\mathrm{d}x\mathrm{d}s.
\end{aligned}
\end{equation}

We now turn to derive the lower bounds by constructing suitable sub- and super-solutions. In view of the Harnack principle of $\varphi$ (Proposition \ref{proposition3.3}(ii)), we see that for the normalized principal Floquet bundle of \eqref{eq1.1}, there is a constant $C>0$ independent of $\omega$ and large $d$ such that
\begin{equation}\label{eq6.2}
|\ln \varphi(x,t)|\le C,\qquad (x,t)\in\Omega\times\mathbb{R}.
\end{equation}
We will prove the lower bound in three cases: (i) $\left(d,\frac{\omega}{d}\right)\to(\infty,0)$, (ii) $\left(d,\frac{\omega}{d}\right)\to(\infty,\vartheta)$ with $\vartheta\in(0,\infty)$, and (iii) $\left(d,\frac{\omega}{d}\right)\to(\infty,\infty)$. Although the limiting behavior is same in three cases, the sub- and super-solutions constructed in the three cases are different.

\medskip
\noindent\textbf{Case (i).} $\left(d,\frac{\omega}{d}\right)\to(\infty,0)$.

For each $t\in\mathbb{R}$, let $\Gamma_0$ be the unique solution of
\begin{equation}\label{eq6.3}
  \left\{
  \begin{aligned}
    &-\operatorname{div}(A(x)\nabla\Gamma_0)=\fint_\Omega c(x,t)\,\mathrm{d}x-c(x,t),&&x\in\Omega,\\
    &\mathbf{n}\cdot(A(x)\nabla\Gamma_0)=0,&&x\in\partial\Omega,\\
    &\int_\Omega{\Gamma_0}(x,t)\,\mathrm{d}x=0.
  \end{aligned}
  \right.
\end{equation}
The existence and uniqueness of the solution of \eqref{eq6.3} are standard results of Fredholm's alternative theorem applying to elliptic problems, and we omit the details. We will use $\Gamma_0$ to construct suitable sub- and super-solutions in forthcoming arguments.

By the global Schauder regularity theory for elliptic equations with Neumann boundary conditions, since $\sup_{t\in\mathbb{R}} \|c(\cdot,t)\|_{C^\delta(\overline\Omega)} < \infty$ for $c\in\mathcal{C}$, the right-hand term $\fint_\Omega c(x,t)\,\mathrm{d}x - c(x,t)$ is uniformly bounded in $C^\delta(\overline\Omega)$ with respect to $t\in\mathbb{R}$, which yields $\Gamma_0(\cdot, t) \in C^{2+\delta}(\overline\Omega)$ for each $t \in \mathbb{R}$ and the uniform boundedness of $\|\Gamma_0(\cdot,t)\|_{C^2(\overline\Omega)}$ over $t\in\mathbb{R}$.

We now justify the continuous differentiability of $\Gamma_0(\cdot,t)$ with respect to $t$. Let
$$
X = \left\{ u\in C^{2+\delta}(\overline\Omega) : \mathbf{n}\cdot(A(x)\nabla u)=0 \text{ on } \partial\Omega,\ \int_\Omega u\,\mathrm{d}x=0 \right\},
\quad
Y = \left\{ f\in C^{\delta}(\overline\Omega) : \int_\Omega f\,\mathrm{d}x=0 \right\}.
$$
The operator $L = -\operatorname{div}(A(x)\nabla\cdot) : X \to Y$ is a linear isomorphism by Fredholm's alternative. Denote the right-hand side of \eqref{eq6.3} by $f(x,t):=\fint_\Omega c(x,t)\,\mathrm{d}x - c(x,t)$. Since $c(x,t)$ is continuously differentiable in $t$, the map $t\mapsto f(\cdot,t)\in Y$ is continuously differentiable, and thus the solution $\Gamma_0(\cdot,t) = L^{-1} f(\cdot,t)$ is continuously differentiable in $t\in\mathbb{R}$.

Differentiating system \eqref{eq6.3} with respect to $t$, we find that $\partial_t \Gamma_0$ satisfies the same type of elliptic Neumann problem with zero spatial average:
\begin{equation*}
  \left\{
  \begin{aligned}
    &-\operatorname{div}(A(x)\nabla\partial_t\Gamma_0)=\fint_\Omega \partial_t c(x,t)\,\mathrm{d}x-\partial_t c(x,t),&&x\in\Omega,\\
    &\mathbf{n}\cdot(A(x)\nabla\partial_t\Gamma_0)=0,&&x\in\partial\Omega,\\
    &\int_\Omega\partial_t\Gamma_0(x,t)\,\mathrm{d}x=0.
  \end{aligned}
  \right.
\end{equation*}
By the assumption $\sup_{t\in\mathbb{R}} \|\partial_t c(\cdot,t)\|_{C(\overline\Omega)} < \infty$, its right-hand side is uniformly bounded in $L^\infty(\Omega)$. Taking any $p>N$, where $N$ denotes the spatial dimension, the global $W^{2,p}$ estimate for elliptic equations together with the Sobolev embedding theorem implies that $\|\partial_t \Gamma_0(\cdot,t)\|_{C(\overline\Omega)}$ is also uniformly bounded for all $t\in\mathbb{R}$. So there is a positive constant, denoted still by $C$, such that
 \begin{equation}\label{eq6.4}
   \sup_{t\in\mathbb{R}}\left(\|{\Gamma_0}(\cdot,t)\|_{C^{2}(\overline\Omega)}+\|\partial_t {\Gamma_0}(\cdot,t)\|_{C(\overline\Omega)}\right)<C.
 \end{equation}

Fix $(\omega,d)$ first. Then for each $n\in\mathbb{N}^+$, we define
 \[\begin{aligned}
      &\overline{\varphi}_n(x,t):=\\
      &\quad\exp\left[\frac{1}{\omega}\int^{t}_{0}\left(\fint^{T_{n}}_{0}\fint_\Omega c(x,r)\,\mathrm{d}x\mathrm{d}r-\fint_\Omega c(x,s)\,\mathrm{d}x-\varepsilon \right)\,\mathrm{d}s+\frac{{\Gamma_0}(x,t)}{d}\right],\quad (x,t)\in\overline{\Omega}\times[0,T_{n}]
   \end{aligned}
 \]
and
 \[\underline{\varphi}_n(x,t):=\varphi(x,t) \exp\left[\frac{1}{\omega}\int^{t}_{0}\left(\fint^{T_{n}}_{0}H(r;\omega,d)\,\mathrm{d}r- H(s;\omega,d)+\varepsilon \right)\,\mathrm{d}s\right],\quad (x,t)\in\overline{\Omega}\times[0,T_{n}].\]
We compute directly to obtain that for each $n\in\mathbb{N}^+$ and all $(x,t)\in\Omega\times(0,T_n]$, there hold
\begin{align*}
  &\omega\partial_t\overline{\varphi}_{n}-d\operatorname{div}(A(x)\nabla\overline{\varphi}_{n}) +c(x,t)\overline{\varphi}_{n} \\
  = & \left[\omega\partial_t\ln\overline{\varphi}_{n}-d\operatorname{div}(A(x)\nabla\ln \overline{\varphi}_{n}) -d\nabla \ln\overline{\varphi}_{n}\cdot(A(x)\nabla \ln\overline{\varphi}_{n})+c(x,t)\right]\overline{\varphi}_{n} \\
  = & \Bigg[\fint^{T_{n}}_{0}\fint_{\Omega}c(x,s)\,\mathrm{d}x\mathrm{d}s-\fint_{\Omega}c(x,t)\,\mathrm{d}x -\varepsilon +\frac{\omega}{d}\partial_t{\Gamma_0}\\
   &\hskip 3cm -\operatorname{div}(A(x)\nabla\Gamma_0) -\frac{1}{d}\nabla{\Gamma_0}\cdot(A(x)\nabla{\Gamma_0})+c(x,t)\Bigg] \overline{\varphi}_{n} \\
  \ge & \left(\fint^{T_{n}}_{0}\fint_{\Omega}c(x,s)\,\mathrm{d}x\mathrm{d}s -\frac{\omega}{d}C-\frac{\Lambda}{d}C^2-\varepsilon \right) \overline{\varphi}_{n}\hskip 2cm \text{(by \eqref{eq6.4})}
\end{align*}
and
\begin{align*}
  &\omega\partial_t\underline{\varphi}_{n}-d\operatorname{div}(A(x)\nabla\underline{\varphi}_{n}) +c(x,t)\underline{\varphi}_{n} \\
  = & \left[\omega\partial_t\ln\underline{\varphi}_{n}-d\operatorname{div}(A(x)\nabla \ln\underline{\varphi}_{n}) -d\nabla \ln\underline{\varphi}_{n}\cdot(A(x)\nabla \ln\underline{\varphi}_{n})+c(x,t)\right]\underline{\varphi}_{n} \\
  = & \Bigg[\omega\partial_t\ln\varphi+\fint^{T_{n}}_{0}H(s;\omega,d)\,\mathrm{d}s- H(t;\omega,d) +\varepsilon  \\
  &\hskip 3cm -d\operatorname{div}(A(x)\nabla\ln\varphi)-d\nabla \ln\varphi\cdot(A(x)\nabla \ln\varphi)+c(x,t)\Bigg]\underline{\varphi}_{n} \\
  = & \left(\fint^{T_{n}}_{0}H(s;\omega,d)\,\mathrm{d}s+ \varepsilon \right)\underline{\varphi}_{n}.
\end{align*}
There is an integer $N_\varepsilon>0$ such that
\[T_n>\frac{2C\omega}{d\varepsilon},\qquad \forall~n>N_\varepsilon.\]
Then for each $n>N_\varepsilon$, the pair $(\underline{\varphi}_n,\overline{\varphi}_n)$ satisfies
\begin{equation*}
  \left\{
  \begin{aligned}
    &\omega \partial_t\overline{\varphi}_n-d\operatorname{div}(A(x)\nabla\overline{\varphi}_n) +c(x,t)\overline{\varphi}_n \\
    &\qquad\qquad \ge \left(\fint^{T_{n}}_{0}\fint_{\Omega}c(x,s)\,\mathrm{d}x\mathrm{d}s -\frac{\omega}{d}C-\frac{1}{d}C^2-\varepsilon \right) \underline{\varphi}_{n} ,&&(x,t)\in\Omega\times(0,T_{n}],\\
    &\omega \partial_t\underline{\varphi}_n-d\operatorname{div}(A(x)\nabla\underline{\varphi}_n)+c(x,t)\underline{\varphi}_n =\left(\fint^{T_{n}}_{0}H(s;\omega,d)\,\mathrm{d}s+\varepsilon  \right)\underline{\varphi}_{n},&&(x,t)\in\Omega\times(0,T_{n}],\\
    &\mathbf{n}\cdot(A(x)\nabla\overline{\varphi}_n)\ge 0 =\mathbf{n}\cdot(A(x)\nabla\underline{\varphi}_n), &&(x,t)\in\partial\Omega\times(0,T_{n}],\\
    &\overline{\varphi}_n(x,0)=\mathrm{e}^{\frac{{\Gamma_0}(x,0)}{d}} \ge \mathrm{e}^{-\frac{\varepsilon T_{n}}{\omega}+\frac{{\Gamma_0}(x,T_{n})}{d}}=\overline{\varphi}_n(x,T_{n}) ,&&x\in\overline{\Omega},\\
    &\underline{\varphi}_n(x,0)=\varphi(x,0)\le \varphi(x,0)\mathrm{e}^{\frac{\varepsilon T_n}{\omega}-2C} \le \varphi(x,T_{n})\mathrm{e}^{\frac{\varepsilon T_{n}}{\omega}}=\underline{\varphi}_n(x,T_{n}),&&x\in\overline{\Omega}.
  \end{aligned}
  \right.
\end{equation*}
We have used \eqref{eq6.2} to obtain the second inequality in the last line. Using Lemma \ref{lemma2.13}, we have
 \[\liminf_{n\to\infty}\fint^{T_{n}}_{0}H(s;\omega,d)\,\mathrm{d}s+\varepsilon \ge \liminf_{n\to\infty}\fint^{T_{n}}_{0}\fint_{\Omega}c(x,s)\,\mathrm{d}x\mathrm{d}s -\frac{\omega}{d}C-\frac{1}{d}C^2-\varepsilon \]
and
 \[\limsup_{n\to\infty}\fint^{T_{n}}_{0}H(s;\omega,d)\,\mathrm{d}s+\varepsilon \ge \limsup_{n\to\infty}\fint^{T_{n}}_{0}\fint_{\Omega}c(x,s)\,\mathrm{d}x\mathrm{d}s -\frac{\omega}{d}C-\frac{1}{d}C^2-\varepsilon .\]
Then sending $\left(d,\frac{\omega}{d}\right)\to (\infty,0)$ combined with the arbitrariness of $\varepsilon>0$, we obtain
\begin{equation}\label{eq6.5}
 \begin{aligned}
   &\liminf\limits_{\left(d,\frac{\omega}{d}\right)\to(\infty,0)}\liminf_{n\to\infty}\fint^{T_{n}}_{0}H(s;\omega,d)\,\mathrm{d}s\ge \liminf_{n\to\infty}\fint^{T_{n}}_{0}\fint_{\Omega}c(x,s)\,\mathrm{d}x\mathrm{d}s\\ \text{and}\quad
   &\liminf\limits_{\left(d,\frac{\omega}{d}\right)\to(\infty,0)}\limsup_{n\to\infty}\fint^{T_{n}}_{0}H(s;\omega,d)\,\mathrm{d}s\ge \limsup_{n\to\infty}\fint^{T_{n}}_{0}\fint_{\Omega}c(x,s)\,\mathrm{d}x\mathrm{d}s.
 \end{aligned}
\end{equation}

\medskip
\noindent\textbf{Case (ii).} $\left(d,\frac{\omega}{d}\right)\to(\infty,\vartheta)$ with $\vartheta\in(0,\infty)$.

For this case, we will use the solution of the following problem to construct a super-solution,
\begin{equation}\label{eq6.6}
  \left\{
  \begin{aligned}
    &\vartheta\partial_t{\Gamma_\vartheta}-\operatorname{div}(A(x)\nabla \Gamma_\vartheta) =\fint_\Omega c(x,t)\,\mathrm{d}x-c(x,t),&&(x,t)\in\Omega\times\mathbb{R},\\
    &\mathbf{n}\cdot(A(x)\nabla\Gamma_\vartheta)=0,&&(x,t)\in\partial\Omega\times\mathbb{R},\\
    &\limsup_{t\to-\infty}\fint_{\Omega}{\Gamma_\vartheta}^2(x,t)\,\mathrm{d}x<\infty.
  \end{aligned}
  \right.
\end{equation}
Problem \eqref{eq6.6} admits a unique (up to an addictive constant) solution, denoted by ${\Gamma_\vartheta}$. Moreover, there is a positive constant, denoted still by $C>0$, such that (see \cite[Appendix A.]{Lam2024The})
 \[\sup_{t\in\mathbb{R}}(\|{\Gamma_\vartheta}(\cdot,t)\|_\infty+\|\partial_t{\Gamma_\vartheta}(\cdot,t)\|_\infty)<C.\]
We fix the pair $(\omega,d)$ first, fulfilling $d>2C$. For each $n\in\mathbb{N}^+$, we define
 \[\overline{\varphi}_n(x,t):=\left(1+\frac{1}{d}{\Gamma_\vartheta}(x,t)\right)\exp\left[ \frac{1}{\omega}\int^{t}_{0}\left(\fint^{T_{n}}_{0}\fint_\Omega c(x,r)\,\mathrm{d}x\mathrm{d}r -\fint_\Omega c(x,s)\,\mathrm{d}x-\varepsilon  \right)\,\mathrm{d}s\right]\]
and
 \[\underline{\varphi}_n(x,t):=\varphi(x,t)\exp\left[\frac{1}{\omega}\int^{t}_{0}\left( \fint^{T_{n}}_{0}H(r;\omega,d)\,\mathrm{d}r- H(s;\omega,d)+\varepsilon \right)\,\mathrm{d}s\right]\]
on $(x,t)\in\overline{\Omega}\times[0,T_{n}]$. Direct computations yield that
\begin{align*}
  &\omega\partial_t\overline{\varphi}_{n}-d\operatorname{div}(A(x)\nabla\overline{\varphi}_{n})+c(x,t)\overline{\varphi}_{n} \\
  = & \left[\frac{\omega}{d}\frac{\partial_t{\Gamma_\vartheta}}{1+{\Gamma_\vartheta}/d} +\fint^{T_{n}}_{0}\fint_\Omega c(x,s)\,\mathrm{d}x\mathrm{d}s-\fint_\Omega c(x,s)\,\mathrm{d}x-\varepsilon  -\frac{\operatorname{div}(A(x)\nabla\Gamma_\vartheta)}{1+{\Gamma_\vartheta}/d}+c(x,t)\right]\overline{\varphi}_{n}\\
  = & \Bigg[\frac{\omega}{d}\left(1-\frac{{\Gamma_\vartheta}}{d+{\Gamma_\vartheta}}\right)\partial_t{\Gamma_\vartheta}+\fint^{T_{n}}_{0}\fint_\Omega c(x,s)\,\mathrm{d}x\mathrm{d}s-\fint_\Omega c(x,s)\,\mathrm{d}x-\varepsilon \\
  &~~+\left(1-\frac{{\Gamma_\vartheta}}{d+{\Gamma_\vartheta}}\right)\left(\fint_\Omega c(x,s)\,\mathrm{d}x-c(x,t)-\vartheta\partial_t{\Gamma_\vartheta}\right)+c(x,t)\Bigg]\overline{\varphi}_{n}\\
  = & \Bigg[\fint^{T_{n}}_{0}\fint_\Omega c(x,s)\,\mathrm{d}x\mathrm{d}s+ \left(\frac{\omega}{d}-\vartheta\right)\partial_t{\Gamma_\vartheta}-\varepsilon \\
  &~~-\frac{{\Gamma_\vartheta}}{d+{\Gamma_\vartheta}}\left(\fint_\Omega c(x,s)\,\mathrm{d}x-c(x,t)+\left(\frac{\omega}{d}-\vartheta\right)\partial_t{\Gamma_\vartheta}\right)\Bigg]\overline{\varphi}_{n}\\
   \ge& \left(\fint^{T_{n}}_{0}\fint_\Omega c(x,s)\,\mathrm{d}x\mathrm{d}s-\left|\frac{\omega}{d}-\vartheta\right| C-\varepsilon -\frac{C}{d-C}\left(2\|c\|_\infty+\left|\frac{\omega}{d}-\vartheta\right| C\right)\right)\overline{\varphi}_{n}.
\end{align*}
There is an integer $N_\varepsilon>0$ such that
\[T_n>\max\left\{\frac{2C\omega}{\varepsilon},\frac{\omega\ln 3}{\varepsilon}\right\},\qquad \forall~n>N_\varepsilon.\]
Then for each $n>N_\varepsilon$, the pair $(\underline{\varphi}_n,\overline{\varphi}_n)$ satisfies
\begin{equation*}
  \left\{
  \begin{aligned}
    &\omega \partial_t\overline{\varphi}_n-d\operatorname{div}(A(x)\nabla\overline{\varphi}_n) +c(x,t)\overline{\varphi}_n
    \ge \Bigg[\fint^{T_{n}}_{0}\fint_\Omega c(x,s)\,\mathrm{d}x\mathrm{d}s \\ &\qquad-\left|\frac{\omega}{d}-\vartheta\right| C-\varepsilon -\frac{C}{d-C}\left(2\|c\|_\infty+\left|\frac{\omega}{d}-\vartheta\right| C\right)\Bigg] \underline{\varphi}_{n} ,&&(x,t)\in\Omega\times(0,T_{n}],\\
    &\omega \partial_t\underline{\varphi}_n-d\operatorname{div}(A(x)\nabla\underline{\varphi}_n) +c(x,t)\underline{\varphi}_n = \left(\fint^{T_{n}}_{0}H(s;\omega,d)\,\mathrm{d}s +\varepsilon  \right)\underline{\varphi}_{n},&&(x,t)\in\Omega\times(0,T_{n}],\\
    &\mathbf{n}\cdot(A(x)\nabla\overline{\varphi}_n)= 0 =\mathbf{n}\cdot(A(x)\nabla \underline{\varphi}_n), &&(x,t)\in\partial\Omega\times(0,T_{n}],\\
    &\overline{\varphi}_n(x,0)=1+\frac{{\Gamma_\vartheta}(x,0)}{d} \ge \left(1+\frac{{\Gamma_\vartheta}(x,T_{n})}{d}\right)\mathrm{e}^{-\frac{\varepsilon T_{n}}{\omega}}=\overline{\varphi}_n(x,T_{n}),&&x\in\overline{\Omega},\\
    &\underline{\varphi}_n(x,0)=\varphi(x,0)\le\underbrace{\varphi(x,0)\mathrm{e}^{\frac{\varepsilon T_{n}}{\omega}-2C}\le \varphi(x,T_{n})\mathrm{e}^{\frac{\varepsilon T_{n}}{\omega}}}_{\text{by \eqref{eq6.2}}}=\underline{\varphi}_n(x,T_{n}),&&x\in\overline{\Omega}.
  \end{aligned}
  \right.
\end{equation*}
Using Lemma \ref{lemma2.13}, then sending $\left(d,\frac{\omega}{d}\right)\to(\infty, \vartheta)$ and combining with the arbitrariness of $\varepsilon>0$, we have that
\begin{equation}\label{eq6.7}
 \begin{aligned}
   &\liminf\limits_{\left(d,\frac{\omega}{d}\right)\to(\infty,\vartheta)}\liminf_{n\to\infty}\fint^{T_{n}}_{0}H(s;\omega,d)\,\mathrm{d}s\ge \liminf_{n\to\infty}\fint^{T_{n}}_{0}\fint_{\Omega}c(x,s)\,\mathrm{d}x\mathrm{d}s \\
   \text{and}\quad
   &\liminf\limits_{\left(d,\frac{\omega}{d}\right)\to(\infty,\vartheta)}\limsup_{n\to\infty}\fint^{T_{n}}_{0}H(s;\omega,d)\,\mathrm{d}s\ge \limsup_{n\to\infty}\fint^{T_{n}}_{0}\fint_{\Omega}c(x,s)\,\mathrm{d}x\mathrm{d}s.
 \end{aligned}
\end{equation}

\medskip
\noindent\textbf{Case (iii).} $\left(d,\frac{\omega}{d}\right)\to(\infty,\infty)$.

For any fixed $T>0$, denote by ${\Gamma_{\infty,T}} $ the unique solution of
\begin{equation}\label{eq6.8}
  \left\{
  \begin{aligned}
    &-\operatorname{div}(A(x)\nabla \Gamma_{\infty,T}) +\fint^{T}_{0}c(x,s)\,\mathrm{d}s=\fint^{T}_{0}\int_\Omega c(x,s)\,\mathrm{d}x\mathrm{d}s,&&x\in\Omega,\\
    &\mathbf{n}\cdot(A(x)\nabla\Gamma_{\infty,T}) =0,&&x\in\partial\Omega,\\
    &\int_\Omega{\Gamma_{\infty,T}} (x)\,\mathrm{d}x=0.
  \end{aligned}
  \right.
\end{equation}
The standard regularity theory for elliptic equation implies that there is a constant, denoted still by $C>0$, such that
 \[\|{\Gamma_{\infty,T}} \|_{C^{2}(\overline{\Omega})}<C.\]
We remark that the aforementioned estimate is uniform with respect to $T>0$. The reason is that, owing to $c \in \mathcal{C}$, the nonlinear term in the equation possesses a uniformly bounded $C^{\delta}(\overline{\Omega})$ norm. Applying the Schauder's estimates then gives a uniform bound on $\|\Gamma_{\infty,T}\|_{C^{2,\delta}(\overline{\Omega})}$, whence the uniformity follows. On the other hand, assumption \eqref{H2} implies that there is a positive constant, denoted still by $C$, such that
\begin{equation}\label{eq6.9}
\sup_{T>0}\sup_{(x,t)\in\Omega\times[0,T]}(|\operatorname{div}(A(x)\nabla\mathcal{M}(x,t;T))|+|\nabla \mathcal{M}(x,t;T)\cdot (A(x)\nabla \mathcal{M}(x,t;T))|)<C.
\end{equation}
By Lemma \ref{lemma2.16}, there exist constants $\kappa,T_0>0$ such that for any $T>T_0$,
\[-\kappa\mathbf{n}\cdot(A(x)\nabla\phi^*)\ge \sup\limits_{(x,t)\in\partial\Omega\times[0,T]} |\mathbf{n}\cdot(A(x)\nabla\mathcal{M}(x,t;T))|, \quad\forall~x\in\partial\Omega,~t\in[0,T],\]
where $\phi^*$ is the principal eigenfunction of \eqref{eq2.6}.

For each $n\in\mathbb{N}^+$, we define
 \[\overline{\varphi}_n(x,t):=\left(1+\frac{1}{d}{\Gamma_{\infty,n}} (x)\right)\exp\left[\frac{1}{\omega} \left(t\fint^{T_{n}}_{0} c(x,s)\,\mathrm{d}s-\int^{t}_{0} c(x,s)\,\mathrm{d}s-\kappa\phi^*(x)\right)\right],\]
where $\Gamma_{\infty,n}=\Gamma_{\infty,T_n}$ is the unique solution of \eqref{eq6.8} for each $n\in\mathbb{N}^+$, and
 \[\underline{\varphi}_n(x,t):=\varphi(x,t) \exp\left[\frac{1}{\omega}\int^{t}_{0}\left(\fint^{T_{n}}_{0}H(r;\omega,d)\,\mathrm{d}r- H(s;\omega,d)+\varepsilon \right)\,\mathrm{d}s\right]\]
on $(x,t)\in\overline{\Omega}\times[0,T_{n}]$. By \eqref{eq6.8}, \eqref{eq6.9} and some direct computations, one checks that for all $d>2\|{\Gamma_{\infty,n}}\|_{C^2(\overline{\Omega})}$, it holds
\begin{align*}
  &\omega\partial_t\overline{\varphi}_{n}-d\operatorname{div}(A(x)\nabla\overline{\varphi}_{n})+c(x,t)\overline{\varphi}_{n} \\
  = & \left[\omega\partial_t\ln\overline{\varphi}_{n}- d\operatorname{div}(A(x)\nabla\ln\overline{\varphi}_{n}) -d\nabla \ln\overline{\varphi}_{n}\cdot(A(x)\nabla \ln\overline{\varphi}_{n})+c(x,t)\right]\overline{\varphi}_{n} \\
  = &\Bigg[  \fint^{T_{n}}_{0}c(x,s)\,\mathrm{d}s-c(x,t) \\
    &~~-d\operatorname{div}\left(A(x)\nabla\left( \ln\left(1+\frac{{\Gamma_{\infty,n}} }{d}\right)+\frac{1}{\omega} \left(\mathcal{M}(x,t;T_{n})-\kappa\phi^*\right)\right) \right)\\
    &~~-d\nabla\left( \ln\left(1+\frac{{\Gamma_{\infty,n}} }{d}\right)+ \frac{1}{\omega}\left(\mathcal{M}(x,t;T_{n})-\kappa\phi^*\right)\right)\\
    &~~~~~~\cdot\left(A(x)\nabla\left( \ln\left(1+\frac{{\Gamma_{\infty,n}} }{d}\right)+ \frac{1}{\omega}\left(\mathcal{M}(x,t;T_{n})-\kappa\phi^*\right)\right)\right)+c(x,t)\Bigg]\overline{\varphi}_{n}\\
  = &\Bigg[  \fint^{T_{n}}_{0}c(x,s)\,\mathrm{d}s-d\operatorname{div}\left(A(x)\nabla \ln\left(1+\frac{{\Gamma_{\infty,n}} }{d}\right)\right)\\
  &~~-\frac{d}{\omega}\operatorname{div}\left(A(x)\nabla\left(\mathcal{M}(x,t;T_{n})-\kappa\phi^*\right)\right) -d\nabla \ln\left(1+\frac{{\Gamma_{\infty,n}} }{d}\right)\cdot\left(A(x)\nabla \ln\left(1+\frac{{\Gamma_{\infty,n}} }{d}\right)\right) \\
    &~~-\frac{2d}{\omega}\nabla\ln\left(1+\frac{{\Gamma_{\infty,n}} }{d}\right)\cdot \left(A(x)\nabla\left(\mathcal{M}(x,t;T_{n})-\kappa\phi^*\right)\right)\\
    &~~-\frac{d}{\omega^{2}} \nabla\left(\mathcal{M}(x,t;T_{n})-\kappa\phi^*\right)\cdot\left(A(x) \nabla\left(\mathcal{M}(x,t;T_{n})-\kappa\phi^*\right)\right)\Bigg] \overline{\varphi}_{n}\\
  = &\Bigg[\fint^{T_{n}}_{0}c(x,s)\,\mathrm{d}s +\frac{d}{d+\Gamma_{\infty,n}} \left(\fint^{T_n}_{0}\fint_\Omega c(x,s)\,\mathrm{d}x\mathrm{d}s-\fint^{T_n}_{0} c(x,s)\,\mathrm{d}s\right) \\ &~~-\frac{d}{\omega}\operatorname{div}\left(A(x)\nabla\left(\mathcal{M}(x,t;T_{n})-\kappa\phi^*\right)\right)-\frac{2d}{\omega} \frac{1}{d+{\Gamma_{\infty,n}} }\nabla{\Gamma_{\infty,n}}\cdot\left(A(x) \nabla\left(\mathcal{M}(x,t;T_{n})-\kappa\phi^*\right)\right) \\
  &~~ -\frac{d}{\omega^{2}}\nabla\left(\mathcal{M}(x,t;T_{n})-\kappa\phi^*\right)\cdot\left(A(x)\nabla\left(\mathcal{M}(x,t;T_{n})-\kappa\phi^*\right)\right) \Bigg] \overline{\varphi}_{n}\\
  = &\Bigg[\fint^{T_{n}}_{0}\fint_\Omega c(x,s)\,\mathrm{d}x\mathrm{d}s -\frac{\Gamma_{\infty,n}}{d+\Gamma_{\infty,n}} \left(\fint^{T_n}_{0}\fint_\Omega c(x,s)\,\mathrm{d}x\mathrm{d}s-\fint^{T_n}_{0} c(x,s)\,\mathrm{d}s\right) \\ &~~-\frac{d}{\omega}\operatorname{div}\left(A(x)\nabla\left(\mathcal{M}(x,t;T_{n})-\kappa\phi^*\right)\right)-\frac{2d}{\omega} \frac{1}{d+{\Gamma_{\infty,n}} }\nabla{\Gamma_{\infty,n}}\cdot\left(A(x) \nabla\left(\mathcal{M}(x,t;T_{n})-\kappa\phi^*\right)\right) \\
  &~~ -\frac{d}{\omega^{2}}\nabla\left(\mathcal{M}(x,t;T_{n})-\kappa\phi^*\right)\cdot\left(A(x)\nabla\left(\mathcal{M}(x,t;T_{n})-\kappa\phi^*\right)\right) \Bigg] \overline{\varphi}_{n}\\
\ge&\Bigg[\fint^{T_{n}}_{0}\fint_\Omega c(x,s)\,\mathrm{d}x\mathrm{d}s -\frac{2\|c\|_\infty\|{\Gamma_{\infty,n}} \|^{2}_{C^2(\overline{\Omega})}}{d-\|{\Gamma_{\infty,n}} \|_{C^2(\overline{\Omega})}} -\frac{d}{\omega}\left(C+\kappa\|\phi^*\|_{C^2(\overline{\Omega})}\right)\\
    &~~-\frac{2d}{\omega} \frac{\|{\Gamma_{\infty,n}} \|_{C^2(\overline{\Omega})}
    \left(C+\kappa\|\phi^*\|_{C^2(\overline{\Omega})}\right) }{d-\|{\Gamma_{\infty,n}} \|_{C^2(\overline{\Omega})}} -\frac{d}{\omega^{2}}\left(C+\kappa\|\phi^*\|_{C^2(\overline{\Omega})}\right)^2 \Bigg] \overline{\varphi}_{n}
\end{align*}
in $\Omega\times(0,T_n]$ and
 \[\mathbf{n}\cdot(A(x)\nabla\overline{\varphi}_n)=\frac{1}{\omega} \left(1+\frac{1}{d}\Gamma_{\infty,n}\right)[\mathbf{n}\cdot(A(x)\nabla\mathcal{M}(x,t;T_{n}))- \kappa\mathbf{n}\cdot(A(x)\nabla\phi^*)]\overline{\varphi}_n\ge 0,\quad \forall~d>C.\]
We can find an integer $N_\varepsilon>0$ such that
\[T_n>\max\left\{\frac{2C\omega}{\varepsilon},\frac{\omega\ln 3}{\varepsilon}\right\},\qquad \forall~n>N_\varepsilon.\]
Then for each $n>N_\varepsilon$, the pair $(\underline{\varphi}_n,\overline{\varphi}_n)$ satisfies
\begin{equation*}
  \left\{
  \begin{aligned}
    &\omega \partial_t\overline{\varphi}_n-d\operatorname{div}(A(x)\nabla\overline{\varphi}_n)+c(x,t)\overline{\varphi}_n \\
    &~~\ge \Bigg[\fint^{T_{n}}_{0}\fint_\Omega c(x,s)\,\mathrm{d}x\mathrm{d}s -\frac{2\|c\|_\infty\|{\Gamma_{\infty,n}} \|^{2}_{C^2(\overline{\Omega})}}{d-\|{\Gamma_{\infty,n}} \|_{C^2(\overline{\Omega})}} \\
    &\qquad~-\frac{d}{\omega}\left(C+\kappa\|\phi^*\|_{C^2(\overline{\Omega})}\right)-\frac{2d}{\omega} \frac{\|{\Gamma_{\infty,n}} \|_{C^2(\overline{\Omega})}
    \left(C+\kappa\|\phi^*\|_{C^2(\overline{\Omega})}\right) }{d-\|{\Gamma_{\infty,n}} \|_{C^2(\overline{\Omega})}}\\
    &\qquad~ -\frac{d}{\omega^{2}}\left(C+\kappa\|\phi^*\|_{C^2(\overline{\Omega})}\right)^2\Bigg] \overline{\varphi}_{n} ,&&(x,t)\in\Omega\times(0,T_{n}],\\
    &\omega \partial_t\underline{\varphi}_n-d\operatorname{div}(A(x)\nabla\underline{\varphi}_n) +c(x,t)\underline{\varphi}_n = \left(\fint^{T_{n}}_{0}H(s;\omega,d)\,\mathrm{d}s+\varepsilon  \right)\underline{\varphi}_{n},&&(x,t)\in\Omega\times(0,T_{n}],\\
    &\mathbf{n}\cdot(A(x)\nabla\overline{\varphi}_n)\ge0=\mathbf{n}\cdot (A(x)\nabla\underline{\varphi}_n), &&(x,t)\in\partial\Omega\times(0,T_{n}],\\
    &\overline{\varphi}_n(x,0)= \left(1+\frac{{\Gamma_{\infty,n}} (x)}{d}\right)\mathrm{e}^{-\frac{\kappa\phi^*}{\omega}}=\overline{\varphi}_n(x,T_{n}),&&x\in\overline{\Omega},\\
    &\underline{\varphi}_n(x,0)=\varphi(x,0) \le \underbrace{\varphi(x,0)\mathrm{e}^{\frac{\varepsilon T_{n}}{\omega}-2C}\le \varphi(x,T_{n})\mathrm{e}^{\frac{\varepsilon T_{n}}{\omega}}}_{\text{by \eqref{eq6.2}}}=\underline{\varphi}_n(x,T_{n}),&&x\in\overline{\Omega}.
  \end{aligned}
  \right.
\end{equation*}
Using Lemma \ref{lemma2.13}, then sending $\left(d,\frac{\omega}{d}\right)\to(\infty, \infty)$ combined with the arbitrariness of $\varepsilon>0$, we obtain that
\begin{equation}\label{eq6.10}
  \begin{aligned}
   &\liminf\limits_{\left(d,\frac{\omega}{d}\right)\to(\infty,\infty)}\liminf_{n\to\infty}\fint^{T_{n}}_{0}H(s;\omega,d)\,\mathrm{d}s\ge \liminf_{n\to\infty}\fint^{T_{n}}_{0}\fint_{\Omega}c(x,s)\,\mathrm{d}x\mathrm{d}s \\
   \text{and}\quad
   &\liminf\limits_{\left(d,\frac{\omega}{d}\right)\to(\infty,\infty)}\limsup_{n\to\infty}\fint^{T_{n}}_{0}H(s;\omega,d)\,\mathrm{d}s\ge \limsup_{n\to\infty}\fint^{T_{n}}_{0}\fint_{\Omega}c(x,s)\,\mathrm{d}x\mathrm{d}s.
 \end{aligned}
\end{equation}

Combining \eqref{eq6.1}, \eqref{eq6.5}, \eqref{eq6.7} and \eqref{eq6.10}, we complete the proof.
\end{proof}

\begin{lemma}\label{lemma6.3}
Let $\vartheta\in[0,\infty]$ be fixed and let $c\in\mathcal{C}\cap C^{2,1}(\overline{\Omega}\times\mathbb{R})$ be a given function satisfying $\sup_{t\in\mathbb{R}}\|\partial_t c(\cdot,t)\|_{\infty}<\infty$. Denote by $(H,\varphi)$ the normalized principal Floquet bundle of \eqref{eq1.1} with potential $c$. Suppose further that assumption \eqref{H2} holds if $\vartheta=\infty$, then
   \[ \lim\limits_{\left(d,\frac{\omega}{d}\right)\to(\infty,\vartheta)} \liminf\limits_{T\to\infty}\fint^{T}_{0}H(s;\omega,d)\,\mathrm{d}s = \liminf_{T\to\infty}\fint^{T}_{0}\fint_\Omega c(x,s)\,\mathrm{d}x\mathrm{d}s\]
 and
   \[ \lim\limits_{\left(d,\frac{\omega}{d}\right)\to(\infty,\vartheta)} \limsup\limits_{T\to\infty}\fint^{T}_{0}H(s;\omega,d)\,\mathrm{d}s = \limsup_{T\to\infty}\fint^{T}_{0}\fint_\Omega c(x,s)\,\mathrm{d}x\mathrm{d}s.\]
\end{lemma}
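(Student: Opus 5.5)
The plan is to derive Lemma \ref{lemma6.3} from the sequential version, Lemma \ref{lemma6.2}, combined with the $T$-family comparison principle, Corollary \ref{corollary2.15}. This mirrors the way Lemma \ref{lemma5.3} was derived from Lemma \ref{lemma5.2}, and Lemma \ref{lemma4.6} from Lemma \ref{lemma4.5}. Each identity will be proved as two one-sided inequalities.

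\textbf{Easy halves.} For the upper bound of the $\liminf$ identity and the upper bound of the $\limsup$ identity, we would apply Lemma \ref{lemma3.6} directly. It gives, for every $(\omega,d)$,
\[
\liminf_{T\to\infty}\fint^{T}_{0}H(s;\omega,d)\,\mathrm{d}s\le \liminf_{T\to\infty}\fint^{T}_{0}\fint_\Omega c(x,s)\,\mathrm{d}x\mathrm{d}s ,
\]
and the analogous inequality for $\limsup$. These bounds are uniform in the parameters, so taking $\limsup$ as $(d,\omega/d)\to(\infty,\vartheta)$ is immediate.

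For the lower bound of the $\limsup$ identity, we would choose a sequence $T_n\to\infty$ along which $\fint^{T_n}_{0}\fint_\Omega c\,\mathrm{d}x\mathrm{d}s$ converges to $\limsup_{T\to\infty}\fint^{T}_{0}\fint_\Omega c\,\mathrm{d}x\mathrm{d}s$. Then we use
\[
\limsup_{T\to\infty}\fint_0^T H\,\mathrm{d}s\ge\limsup_{n\to\infty}\fint_0^{T_n}H\,\mathrm{d}s
\]
and invoke the second identity of Lemma \ref{lemma6.2} along this sequence.

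\textbf{Remaining half.} What is left is the lower bound
\[
\liminf_{(d,\omega/d)\to(\infty,\vartheta)}\liminf_{T\to\infty}\fint^{T}_{0}H\,\mathrm{d}s\ge \liminf_{T\to\infty}\fint^{T}_{0}\fint_\Omega c\,\mathrm{d}x\mathrm{d}s .
\]
A single sequence cannot capture a $\liminf$ over all $T$, so here we would redo the three constructions of Lemma \ref{lemma6.2} with $T_n$ replaced by a continuous parameter $T$. The sub-solution is always
\[
\underline{\varphi}_T=\varphi\exp\Big[\frac1\omega\int_0^t\Big(\fint_0^T H\,\mathrm{d}r-H+\varepsilon\Big)\,\mathrm{d}s\Big].
\]
Its endpoint ordering $\underline{\varphi}_T(\cdot,0)\le\underline{\varphi}_T(\cdot,T)$ holds for $T>2C\omega/\varepsilon$, using \eqref{eq6.2}, whose constant is uniform in $\omega$ and large $d$. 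The super-solution depends on the regime:
\begin{itemize}
\item in Case (i), $\vartheta=0$, we use $\Gamma_0$ from \eqref{eq6.3};
\item in Case (ii), $\vartheta\in(0,\infty)$, we use $1+\Gamma_\vartheta/d$ with $\Gamma_\vartheta$ from \eqref{eq6.6};
\item in Case (iii), $\vartheta=\infty$, we use $(1+\Gamma_{\infty,T}/d)\,e^{(\mathcal{M}(x,t;T)-\kappa\phi^*)/\omega}$.
\end{itemize}
In Case (iii) the bounds needed are those from Schauder theory on $\Gamma_{\infty,T}$, from \eqref{eq6.9}, and from Lemma \ref{lemma2.16}. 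All of these bounds are uniform in $T>T_0$, so the differential inequalities hold with the same error terms as before, namely $\frac{\omega}{d}C+\frac{C^2}{d}$, the analogous expression for Case (ii), and $O(1/d+d/\omega+d/\omega^2)$ for Case (iii).

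Corollary \ref{corollary2.15} then yields, for each fixed $(\omega,d)$,
\[
\liminf_{T\to\infty}\fint_0^TH\,\mathrm{d}s+\varepsilon\ge\liminf_{T\to\infty}\fint_0^T\fint_\Omega c\,\mathrm{d}x\mathrm{d}s-\mathrm{err}(\omega,d)-\varepsilon .
\]
Letting $(d,\omega/d)\to(\infty,\vartheta)$ and then $\varepsilon\to0$ finishes this half. The same argument, using the second conclusion of Corollary \ref{corollary2.15}, reproves the $\limsup$ lower bound as well.

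\textbf{Main obstacle.} The only delicate point is checking that every constant, in particular $C$ in \eqref{eq6.2} and \eqref{eq6.4}, the bounds on $\Gamma_\vartheta$, and the constants $\kappa,T_0$, is independent of $T$ as well as of $\omega$ and large $d$. Only then do the error terms vanish uniformly before $T\to\infty$ is taken inside Corollary \ref{corollary2.15}. This holds because these constants come from uniform elliptic and parabolic estimates and from \eqref{H2}, none of which involve the specific sequence $T_n$.
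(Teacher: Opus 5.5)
Your proposal is correct and follows essentially the same route as the paper: the upper bounds come from Lemma~\ref{lemma3.6}, the $\limsup$ lower bound comes from a well-chosen sequence $T_n$ together with Lemma~\ref{lemma6.2}, and the $\liminf$ lower bound comes from re-running the three regime-dependent super-solutions and the common sub-solution with the continuous parameter $T$ and then applying Corollary~\ref{corollary2.15}. You are also right that the one point needing care is the $T$-independence of the constants, in particular the Schauder bound on $\Gamma_{\infty,T}$ and the \eqref{H2}-constants in Case (iii). The paper relies on exactly this uniformity, though it only says it proceeds ``along the lines'' of Lemma~\ref{lemma6.2}.
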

\begin{proof}
  An application of Lemma \ref{lemma3.6} yields
   \[ \limsup\limits_{\left(d,\frac{\omega}{d}\right)\to(\infty,\vartheta)} \liminf\limits_{T\to\infty}\fint^{T}_{0}H(s;\omega,d)\,\mathrm{d}s \le \liminf_{T\to\infty}\fint^{T}_{0}\fint_\Omega c(x,s)\,\mathrm{d}x\mathrm{d}s\]
 and
   \[ \limsup\limits_{\left(d,\frac{\omega}{d}\right)\to(\infty,\vartheta)} \limsup\limits_{T\to\infty}\fint^{T}_{0}H(s;\omega,d)\,\mathrm{d}s \le \limsup_{T\to\infty}\fint^{T}_{0}\fint_\Omega c(x,s)\,\mathrm{d}x\mathrm{d}s.\]
By choosing suitable sequence $\{T_n\}_{n\in\mathbb{N}^+}$ of $T\to\infty$ such that
 \[\lim_{n\to\infty}\fint^{T_n}_{0}\fint_\Omega c(x,s)\,\mathrm{d}x\mathrm{d}s=\limsup_{T\to\infty}\fint^{T}_{0}\fint_\Omega c(x,s)\,\mathrm{d}x\mathrm{d}s\]
and then using Lemma \ref{lemma6.2}, we further have
   \[ \liminf\limits_{\left(d,\frac{\omega}{d}\right)\to(\infty,\vartheta)} \limsup\limits_{T\to\infty}\fint^{T}_{0}H(s;\omega,d)\,\mathrm{d}s \ge \limsup_{T\to\infty}\fint^{T}_{0}\fint_\Omega c(x,s)\,\mathrm{d}x\mathrm{d}s.\]
So it needs only to show
\begin{equation}\label{eq6.11}
  \liminf\limits_{\left(d,\frac{\omega}{d}\right)\to(\infty,\vartheta)} \liminf\limits_{T\to\infty}\fint^{T}_{0}H(s;\omega,d)\,\mathrm{d}s \ge \liminf_{T\to\infty}\fint^{T}_{0}\fint_\Omega c(x,s)\,\mathrm{d}x\mathrm{d}s.
\end{equation}

For any given $T>0$, we define following axillary functions on $\overline{\Omega}\times[0,T]$,
 \[\begin{aligned}
   \overline{\varphi}_T&(x,t):=\\
   &\left\{
 \begin{aligned}
   &\exp\left[\frac{1}{\omega}\int^{t}_{0}\left(\fint^{T}_{0}\fint_\Omega c(x,r)\,\mathrm{d}x\mathrm{d}r-\fint_\Omega c(x,s)\,\mathrm{d}x-\varepsilon \right)\,\mathrm{d}s+\frac{{\Gamma_0}(x,t)}{d}\right],&&\text{ if }\frac{\omega}{d}\to0,\\
 &\left(1+\frac{1}{d}{\Gamma_\vartheta}(x,t)\right)\exp\left[\frac{1}{\omega}\int^{t}_{0}\left(t\fint^{T}_{0}\fint_\Omega c(x,r)\,\mathrm{d}x\mathrm{d}r-\fint_\Omega c(x,s)\,\mathrm{d}x-\varepsilon \right)\,\mathrm{d}s\right],&&\text{ if }\frac{\omega}{d}\to\vartheta,\\
 &\left(1+\frac{1}{d}{\Gamma_{\infty,T}} (x)\right)\exp\left[\frac{1}{\omega} \left(t\fint^{T}_{0} c(x,s)\,\mathrm{d}s-\int^{t}_{0} c(x,s)\,\mathrm{d}s-\kappa\phi^*(x)\right)\right],&&\text{ if }\frac{\omega}{d}\to\infty
 \end{aligned}
 \right.
 \end{aligned}
\]
and
 \[\underline{\varphi}_T(x,t):=\varphi(x,t) \exp\left[\frac{1}{\omega}\int^{t}_{0}\left(\fint^{T}_{0}H(r;\omega,d)\,\mathrm{d}r- H(s;\omega,d)+\varepsilon \right)\,\mathrm{d}s\right],\]
where $\varphi$ is the normalized principal Floquet bundle of \eqref{eq1.1}, ${\Gamma_0}$, $\Gamma_\vartheta$ and $\Gamma_{\infty,T}$ are the solutions of \eqref{eq6.3}, \eqref{eq6.6} and \eqref{eq6.8}, respectively. Proceeding along the lines as in the proof of Lemma \ref{lemma6.2}, we have \eqref{eq6.11}. This completes the proof.
\end{proof}

\section{The case $(\omega,d)\to(\infty,0)$}
We now examine the limit $(\omega, d) \to (\infty, 0)$. The method for constructing sub- and super-solutions, and the results thereby obtained, are quite similar to the situation where only $d \to 0$; see Chapter \ref{chp4}.
\begin{lemma}\label{lemma6.4}
Let $c\in \mathcal{C}\cap C^{2,1}(\overline{\Omega}\times\mathbb{R})$ be a function satisfying \eqref{H2}, $H(\cdot;\omega,d)$ be the normalized principal Floquet bundle of \eqref{eq1.1} with potential $c$, $\{T_n\}_{n\in\mathbb{N}^+}$ be a sequence satisfying $T_n\to\infty$ as $n\to\infty$. One has
  \[\lim_{(\omega,d)\to(\infty,0)}\liminf_{n\to\infty}\fint^{T_n}_{0}H(s;\omega,d)\,\mathrm{d}s= \min_{x\in\overline{\Omega}}\liminf_{n\to\infty}\fint^{T_n}_{0}c(x,s)\,\mathrm{d}s=\inf_{\hat{c}\in\mathscr{C}_*} \min\limits_{x\in\overline{\Omega}}\hat{c}(x)\]
and
  \[\lim_{(\omega,d)\to(\infty,0)}\limsup_{n\to\infty}\fint^{T_n}_{0}H(s;\omega,d)\,\mathrm{d}s= \limsup_{n\to\infty}\min_{x\in\overline{\Omega}}\fint^{T_n}_{0}c(x,s)\,\mathrm{d}s=\sup_{\hat{c}\in\mathscr{C}_*} \min\limits_{x\in\overline{\Omega}}\hat{c}(x).\]
Consequently, if the sequence $\left\{\fint^{T_n}_{0}c(\cdot,s)\,\mathrm{d}s\right\}_{n\in\mathbb{N}^+}$ of functions converges to $\hat{c}(\cdot)$ in $C(\overline{\Omega})$, then
  \[\lim_{(\omega,d)\to(\infty,0)}\liminf_{n\to\infty}\fint^{T_n}_{0}H(s;\omega,d)\,\mathrm{d}s= \lim_{(\omega,d)\to(\infty,0)}\limsup_{n\to\infty}\fint^{T_n}_{0}H(s;\omega,d)\,\mathrm{d}s= \min_{x\in\overline{\Omega}}\hat{c}(x).\]
\end{lemma}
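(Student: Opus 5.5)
The plan is to bound the principal Floquet exponent from above and from below separately, in the same way as Chapter \ref{chp4}, but now keeping track of how every error term depends jointly on $(\omega,d)$.

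\textbf{Upper bound.} By Lemma \ref{lemma3.12},
\[
\liminf_{n\to\infty}\fint^{T_n}_{0}H(s;\omega,d)\,\mathrm{d}s\le\inf_{\hat c\in\mathscr{C}_*}\mu^\infty(d,\hat c),
\qquad
\limsup_{n\to\infty}\fint^{T_n}_{0}H(s;\omega,d)\,\mathrm{d}s\le\sup_{\hat c\in\mathscr{C}_*}\mu^\infty(d,\hat c),
\]
for every $\omega>0$. The right-hand sides do not depend on $\omega$. We let $(\omega,d)\to(\infty,0)$ and apply Lemma \ref{lemma2.18} to the compact set $\mathscr{C}_*$ (Lemma \ref{lemma2.1}). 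This gives the two $\limsup$ bounds $\inf_{\hat c\in\mathscr{C}_*}\min\hat c$ and $\sup_{\hat c\in\mathscr{C}_*}\min\hat c$, exactly as in Lemma \ref{lemma4.2}. The identifications with $\min_x\liminf_n$ and $\limsup_n\min_x$ then follow from Lemmas \ref{lemma2.3} and \ref{lemma2.4}.

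\textbf{Lower bound.} We reuse the pair from Lemma \ref{lemma4.5}. The supersolution is
\[
\overline{\varphi}_n=\exp\Big[\tfrac1\omega\big(\mathcal{M}(x,t;T_n)-\kappa\phi^*\big)\Big],
\]
with $\kappa,T_0$ taken from Lemma \ref{lemma2.16}. The subsolution $\underline{\varphi}_n$ is $\varphi$ multiplied by the exponential correction carrying $\fint^{T_n}_0H+\varepsilon$.

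The key point is the size of the error in \eqref{eq4.7}. It equals $d$ times
\[
\tfrac1\omega\,\big|\operatorname{div}(A\nabla(\mathcal{M}-\kappa\phi^*))\big|+\tfrac1{\omega^2}\,\nabla(\mathcal{M}-\kappa\phi^*)\cdot A\nabla(\mathcal{M}-\kappa\phi^*).
\]
By \eqref{H2} and the $C^2$ bounds on $\phi^*$, this is at most $C(d/\omega+d/\omega^2)$ with $C$ independent of $(\omega,d)$ once $T_n>T_0$. (In Chapter \ref{chp4} this constant was written $C^*$ and absorbed $1/\omega$.) The boundary inequality \eqref{eq4.3} is also independent of $\omega$, because it is only rescaled by the positive factor $1/\omega$.

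The only $(\omega,d)$-dependent obstacle is the periodicity-type inequality $\underline{\varphi}_n(\cdot,0)\le\underline{\varphi}_n(\cdot,T_n)$. We expect this step to be the main difficulty, and we handle it by fixing $(\omega,d)$ before sending $n\to\infty$. The Harnack bound (Proposition \ref{proposition3.3}(ii), or quantitatively Theorem \ref{theorem3.17}) gives $\varphi(\cdot,0)/\varphi(\cdot,T_n)\le e^{2C/\sqrt d}$. Hence the inequality holds once $T_n>2C\omega/(\varepsilon\sqrt d)$, which is true for all large $n$. The required threshold blows up as $(\omega,d)\to(\infty,0)$, but this is harmless because $n\to\infty$ comes first.

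Lemma \ref{lemma2.13} then yields
\[
\liminf_n\fint^{T_n}_0H+\varepsilon\ \ge\ \liminf_n\min_x\fint^{T_n}_0c-C\Big(\frac d\omega+\frac d{\omega^2}\Big),
\]
and the analogous inequality with $\limsup$. Since $d/\omega\to0$ and $d/\omega^2\to0$, letting $(\omega,d)\to(\infty,0)$ and then $\varepsilon\to0$ gives the lower bounds. Combining these with Lemmas \ref{lemma2.3} and \ref{lemma2.4} finishes the main identities.

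\textbf{Final assertion.} When $\hat c_n\to\hat c$ in $C(\overline\Omega)$, we have $\mathscr{C}_*=\{\hat c\}$. Both limits then coincide with $\min_x\hat c(x)$ by Corollary \ref{corollary2.6}.
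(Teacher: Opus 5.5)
Your proposal is correct and follows essentially the same route as the paper. For the upper bound, the paper combines Lemma~\ref{lemma3.12} with the small-diffusion limit of $\inf_{\hat c\in\mathscr{C}_*}\mu^\infty(d,\hat c)$ and $\sup_{\hat c\in\mathscr{C}_*}\mu^\infty(d,\hat c)$, cited as \eqref{eq5.15} in Lemma~\ref{lemma5.10}, which is built from Lemmas~\ref{lemma2.18}, \ref{lemma2.3} and \ref{lemma2.4}. For the lower bound, it uses the same pair $\overline{\varphi}_n=\exp[\frac1\omega(\mathcal{M}(x,t;T_n)-\kappa\phi^*)]$ and time-corrected $\varphi$, the same $C(\frac{d}{\omega}+\frac{d}{\omega^2})$ error, a Harnack bound at frozen $(\omega,d)$ before $n\to\infty$, and Lemma~\ref{lemma2.13}. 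Your optional use of Theorem~\ref{theorem3.17} in place of the paper's $C_d$ makes no difference, since $(\omega,d)$ is fixed when the threshold on $T_n$ is imposed.
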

\begin{proof}
Using Lemma \ref{lemma3.12}, one has
 \[\liminf\limits_{n\to\infty}\fint^{T_n}_{0} H(s;\omega,d)\,\mathrm{d}s\le\inf\limits_{\hat{c}\in \mathscr{C}_*}\mu^\infty (d,\hat{c})\quad\text{and}\quad\limsup\limits_{n\to\infty}\fint^{T_n}_{0} H(s;\omega,d)\,\mathrm{d}s\le\sup\limits_{\hat{c}\in \mathscr{C}_*}\mu^\infty (d,\hat{c}),\]
where $\mu^\infty(d,\hat{c})$ is the principal eigenvalue of \eqref{eq1.3} with potential $\hat{c}$. By letting $(\omega,d)\to(\infty, 0)$ and applying \eqref{eq5.15} in Lemma \ref{lemma5.10}, we get
 \[\limsup_{(\omega,d)\to(\infty,0)}\liminf_{n\to\infty}\fint^{T_n}_{0}H(s;\omega,d)\,\mathrm{d}s\le\lim_{d\to 0}\inf_{\hat{c}\in\mathscr{C}_*}\mu^\infty(d,\hat{c})=\min_{x\in\overline{\Omega}}\liminf_{n\to\infty}\fint^{T_n}_{0}c(x,s)\,\mathrm{d}s\]
and
 \[\limsup_{(\omega,d)\to(\infty,0)}\limsup_{n\to\infty}\fint^{T_n}_{0}H(s;\omega,d)\,\mathrm{d}s\le\lim_{d\to 0}\sup_{\hat{c}\in\mathscr{C}_*}\mu^\infty(d,\hat{c})=\limsup_{n\to\infty}\min_{x\in\overline{\Omega}}\fint^{T_n}_{0}c(x,s)\,\mathrm{d}s.\]

We turn to derive the lower bounds. Recall \[\mathcal{M}(x,t;T)=t\fint^{T}_{0}c(x,s)\,\mathrm{d}s-\int^{t}_{0}c(x,s)\,\mathrm{d}s\]
and assumption \eqref{H2}
\[\limsup\limits_{T\to\infty}\sup\limits_{(x,t)\in\Omega\times[0,T]}(|\operatorname{div}(A(x)\nabla\mathcal{M}(x,t;T))|+|\nabla \mathcal{M}(x,t;T)\cdot (A(x)\nabla \mathcal{M}(x,t;T))|)<\infty.\]
There exists a positive constant $C$ such that
\[\sup_{T>0}~\sup_{(x,t)\in\Omega\times[0,T]}(|\operatorname{div}(A(x)\nabla\mathcal{M}(x,t;T))|_{\infty}+|\nabla \mathcal{M}(x,t;T)\cdot (A(x)\nabla \mathcal{M}(x,t;T))|)<C.\]
Let $\phi^*$ be the principal eigenfunction of \eqref{eq2.6}. By Lemma \ref{lemma2.16}, we can find a constant $\kappa>0$ such that
\[-\kappa\mathbf{n}\cdot(A(x)\nabla\phi^*)\ge \sup_{T>0}~\sup\limits_{(x,t)\in\partial\Omega\times[0,T]} |\mathbf{n}\cdot(A(x)\nabla\mathcal{M}(x,t;T))|.\]

Let $\varepsilon>0$ be given. For each $n\in\mathbb{N}^+$, define
\[\overline{\varphi}_n(x,t):=\exp\left[\frac{1}{\omega}\left(\mathcal{M}(x,t;T_{n})- \kappa\phi^*\right)\right],\quad(x,t) \in\overline{\Omega}\times[0,T_{n}]\]
and
\[\underline{\varphi}_n(x,t):=\varphi(x,t)\exp\left[\frac{1}{\omega}\int^{t}_{0} \left(\fint^{T_{n}}_{0} H(r;\omega,d)\,\mathrm{d}r -H(s;\omega,d) +\varepsilon \right)\,\mathrm{d}s\right],\quad(x,t) \in\overline{\Omega}\times[0,T_{n}].\]
By direct computations, one can check that for each $n\in\mathbb{N}^+$, $(\underline{\varphi}_{n},\overline{\varphi}_{n})$ satisfies
\begin{align*}
   & \omega\partial_t\overline{\varphi}_{n}-d\operatorname{div}(A(x)\nabla\overline{\varphi}_{n})+c(x,t)\overline{\varphi}_{n} \\
 = & \left(\omega\partial_t\ln\overline{\varphi}_{n}-d\operatorname{div}(A(x)\nabla\ln\overline{\varphi}_{n})-d\nabla \ln\overline{\varphi}_{n}\cdot(A(x)\nabla \ln\overline{\varphi}_{n}) +c(x,t)\right)\overline{\varphi}_{n} \\
 = & \Bigg[\fint^{T_n}_{0}c(x,s)\,\mathrm{d}s- \frac{d}{\omega}\operatorname{div}\left(A(x)\nabla \left(\mathcal{M}(x,t;T_{n})-\kappa\phi^*\right)\right) \\
 &~-\frac{d}{\omega^2}\nabla \left(\mathcal{M}(x,t;T_{n})-\kappa\phi^*\right)\cdot\left(A(x)\nabla \left(\mathcal{M}(x,t;T_{n})-\kappa\phi^*\right)\right) \Bigg]\overline{\varphi}_{n}  \\
 \ge & \Bigg[\min_{x\in\overline{\Omega}}\fint^{T_n}_{0}c(x,s)\,\mathrm{d}s- \frac{d}{\omega} \left(C+\kappa\|\phi^*\|_{C^2(\overline{\Omega})} \right)-\frac{d}{\omega^2}\left(C+\kappa\|\phi^*\|_{C^2(\overline{\Omega})} \right)^2 \Bigg]\overline{\varphi}_{n}
\end{align*}
and
\begin{align*}
   & \omega\partial_t\underline{\varphi}_{n}-d\operatorname{div}(A(x)\nabla\underline{\varphi}_{n}) +c(x,t)\underline{\varphi}_{n}\\
 = & \left(\omega\partial_t\ln\underline{\varphi}_{n}-d\operatorname{div}(A(x)\nabla\ln\underline{\varphi}_{n}) -d(\nabla \ln\underline{\varphi}_{n}\cdot(A(x)\nabla \ln\underline{\varphi}_{n})+c(x,t)\right)\underline{\varphi}_{n} \\
 = & \left(\fint^{T_{n}}_{0}H(s;\omega,d)\,\mathrm{d}s+\varepsilon  \right)\underline{\varphi}_{n}
\end{align*}
in $\Omega\times(0,T_n]$. By virtue of the Harnack principle for the normalized principal Floquet bundle $\varphi$, we know that for fixed $(\omega,d)$, there is a constant $C_d>0$ such that
\begin{equation}\label{eq6.12}
|\ln\varphi(x,t)|\le C_d\qquad\forall~(x,t)\in\overline{\Omega}\times\mathbb{R}.
\end{equation}
There is an integer $N_\varepsilon>0$ such that
\[T_n>\frac{2C_d\omega}{\varepsilon},\qquad \forall~n>N_\varepsilon.\]
Then one can check that for each $n>N_\varepsilon$, the pair $(\underline{\varphi}_n,\overline{\varphi}_n)$ satisfies
\begin{equation*}
  \left\{
  \begin{aligned}
    &\omega \partial_t\overline{\varphi}_n-d\operatorname{div}(A(x)\nabla\overline{\varphi}_n)+c(x,t)\overline{\varphi}_n \ge\Bigg[\min_{x\in\overline{\Omega}}\fint^{T_n}_{0}c(x,s)\,\mathrm{d}s \\
 &\qquad\qquad\quad - \frac{d}{\omega} \left(C+\kappa\|\phi^*\|_{C^2(\overline{\Omega})} \right)-\frac{d}{\omega^2}\left(C+\kappa\|\phi^*\|_{C^2(\overline{\Omega})} \right)^2 \Bigg]\overline{\varphi}_{n},&&(x,t)\in\Omega\times(0,T_{n}],\\
    &\omega \partial_t\underline{\varphi}_n-d\operatorname{div}(A(x)\nabla\underline{\varphi}_n)+c(x,t)\underline{\varphi}_n =\left(\fint^{T_{n}}_{0}H(s;\omega,d)\,\mathrm{d}s+\varepsilon  \right)\underline{\varphi}_{n},&&(x,t)\in\Omega\times(0,T_{n}],\\
    &\mathbf{n}\cdot(A(x)\nabla\overline{\varphi}_n)\ge 0 =\mathbf{n}\cdot(A(x)\nabla \underline{\varphi}_n), &&(x,t)\in\partial\Omega\times(0,T_{n}],\\
    &\overline{\varphi}_n(x,0)=\overline{\varphi}_n(x,T_{n}) ,&&x\in\overline{\Omega},\\
    &\underline{\varphi}_n(x,0)=\varphi(x,0)\le\underbrace{\varphi(x,0)\mathrm{e}^{\frac{ \varepsilon T_{n}}{\omega}-2C_d}\le\varphi(x,T_{n})\mathrm{e}^{\frac{ \varepsilon T_{n}}{\omega}}}_{\text{by \eqref{eq6.12}}}=\underline{\varphi}_n(x,T_{n}),&&x\in\overline{\Omega}.
  \end{aligned}
  \right.
\end{equation*}
Using Lemma \ref{lemma2.13}, we have
\[
\begin{aligned}
   &\liminf\limits_{n\to\infty}\fint^{T_n}_{0}H(s;\omega,d)\,\mathrm{d}s+\varepsilon \\
   &\qquad\qquad\ge \liminf\limits_{n\to\infty}\min_{x\in\overline{\Omega}}\fint^{T_n}_{0}c(x,s)\,\mathrm{d}s- \frac{d}{\omega} \left(C+\kappa\|\phi^*\|_{C^2(\overline{\Omega})} \right)-\frac{d}{\omega^2} \left(C+\kappa\|\phi^*\|_{C^2(\overline{\Omega})} \right)^2\\
\text{and}&\\
   &\limsup\limits_{n\to\infty}\fint^{T_n}_{0}H(s;\omega,d)\,\mathrm{d}s+\varepsilon \\
   &\qquad\qquad\ge \limsup\limits_{n\to\infty}\min_{x\in\overline{\Omega}}\fint^{T_n}_{0}c(x,s)\,\mathrm{d}s- \frac{d}{\omega} \left(C+\kappa\|\phi^*\|_{C^2(\overline{\Omega})} \right)-\frac{d}{\omega^2} \left(C+\kappa\|\phi^*\|_{C^2(\overline{\Omega})} \right)^2.
\end{aligned}
\]
Then sending $(\omega,d)\to(\infty,0)$, and combining Lemma \ref{lemma2.4} and the arbitrariness of $\varepsilon>0$, we have
\[
\begin{aligned}
  & &&\liminf_{(\omega,d)\to(\infty,0)}\liminf\limits_{n\to\infty}\fint^{T_n}_{0}H(s;\omega,d)\,\mathrm{d}s\ge \min_{x\in\overline{\Omega}}\liminf\limits_{n\to\infty}\fint^{T_n}_{0}c(x,s)\,\mathrm{d}s\\
  &\text{and} &&\liminf_{(\omega,d)\to(\infty,0)}\limsup\limits_{n\to\infty}\fint^{T_n}_{0}H(s;\omega,d)\,\mathrm{d}s\ge \limsup\limits_{n\to\infty}\min_{x\in\overline{\Omega}}\fint^{T_n}_{0}c(x,s)\,\mathrm{d}s.
\end{aligned}
\]
This, together with the upper bounds, yields the desired result. The proof is complete.
\end{proof}

\begin{lemma}\label{lemma6.5}
Let $c\in \mathcal{C}\cap C^{2,1}(\overline{\Omega}\times\mathbb{R})$ be a function satisfying \eqref{H2}, $H(\cdot;\omega,d)$ be the normalized principal Floquet bundle of \eqref{eq1.1}. Then
\begin{equation}\label{eq6.13}
\lim_{(\omega,d)\to(\infty,0)}\liminf_{T\to\infty}\fint^{T}_{0}H(s;\omega,d)\,\mathrm{d}s= \min_{x\in\overline{\Omega}}\liminf_{T\to\infty}\fint^{T}_{0}c(x,s)\,\mathrm{d}s
\end{equation}
and
\begin{equation}\label{eq6.14}
\lim_{(\omega,d)\to(\infty,0)}\limsup_{T\to\infty}\fint^{T}_{0}H(s;\omega,d)\,\mathrm{d}s= \limsup_{T\to\infty}\min_{x\in\overline{\Omega}}\fint^{T}_{0}c(x,s)\,\mathrm{d}s.
\end{equation}
\end{lemma}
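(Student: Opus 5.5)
The plan is to mirror the passage from Lemma \ref{lemma6.4} (sequence version) to the full $T\to\infty$ version, exactly as Lemma \ref{lemma4.6} and Corollary \ref{corollary4.3}, Lemma \ref{lemma4.4} did for the case $d\to0$. For each identity we prove the upper bound from the eigenvalue estimate and the lower bound from a family of sub/super-solutions combined with Corollary \ref{corollary2.15}.

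\textbf{Upper bounds.} By Corollary \ref{corollary3.13},
\[
\liminf_{T\to\infty}\fint^{T}_{0}H(s;\omega,d)\,\mathrm{d}s\le\inf_{\hat c\in\mathscr C}\mu^\infty(d,\hat c),
\qquad
\limsup_{T\to\infty}\fint^{T}_{0}H(s;\omega,d)\,\mathrm{d}s\le\sup_{\hat c\in\mathscr C}\mu^\infty(d,\hat c),
\]
and the right-hand sides are independent of $\omega$. Letting $(\omega,d)\to(\infty,0)$ and invoking \eqref{eq5.16} of Lemma \ref{lemma5.10} (equivalently Lemmas \ref{lemma2.18}, \ref{lemma2.3} and \ref{lemma2.4}, using the compactness of $\mathscr C$ from Lemma \ref{lemma2.1}), the two upper limits are at most $\min_{x}\liminf_{T}\fint_0^T c$ and $\limsup_T\min_x\fint_0^Tc$, respectively.

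\textbf{Lower bounds.} Fix $\varepsilon>0$ and take $\phi^*$, $\kappa$, $T_0$ and the constant $C$ from \eqref{H2} and Lemma \ref{lemma2.16}, as in Lemma \ref{lemma6.4}. For $T>T_0$ define
\[
\overline\varphi_T=\exp\!\Big[\tfrac1\omega\big(\mathcal M(x,t;T)-\kappa\phi^*\big)\Big]
\]
and
\[
\underline\varphi_T=\varphi\exp\!\Big[\tfrac1\omega\int_0^t\Big(\fint_0^T H-H(s)+\varepsilon\Big)\,\mathrm{d}s\Big].
\]
The computation in Lemma \ref{lemma6.4}, carried out verbatim with $T_n$ replaced by $T$, gives the following.
\begin{itemize}
\item $\overline\varphi_T$ is a supersolution with constant $\min_x\fint_0^Tc-\frac d\omega K-\frac d{\omega^2}K^2$, where $K=C+\kappa\|\phi^*\|_{C^2}$.
\item Its boundary flux is nonnegative and it is $T$-periodic, since $\mathcal M(x,0;T)=\mathcal M(x,T;T)=0$.
\item $\underline\varphi_T$ solves the equation with constant $\fint_0^TH+\varepsilon$ and has zero flux.
\item For fixed $(\omega,d)$, the Harnack bound \eqref{eq6.12} gives $\underline\varphi_T(\cdot,0)\le\underline\varphi_T(\cdot,T)$ once $T>2C_d\omega/\varepsilon$.
\end{itemize}
Corollary \ref{corollary2.15} then gives both
\[
\liminf_T\fint_0^TH+\varepsilon\ge\liminf_T\min_x\fint_0^Tc-\tfrac d\omega K-\tfrac d{\omega^2}K^2
\]
and the analogous $\limsup$ inequality. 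By Lemma \ref{lemma2.4}, $\liminf_T\min_x\fint_0^Tc=\min_x\liminf_T\fint_0^Tc$. Sending $(\omega,d)\to(\infty,0)$, where $d/\omega\to0$ and $d/\omega^2\to0$, and then $\varepsilon\to0$ yields \eqref{eq6.13} and \eqref{eq6.14}.

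\textbf{Main obstacle.} The delicate point is uniformity. The constants $C$ and $\kappa$ from \eqref{H2} must be independent of $T>T_0$ and of $(\omega,d)$, and they are. The Harnack constant $C_d$ blows up as $d\to0$, but it is only used at fixed $(\omega,d)$ to select $T$ large, before any parameter limit is taken. So the order of limits is harmless: first $T\to\infty$ through Corollary \ref{corollary2.15}, then $(\omega,d)\to(\infty,0)$, then $\varepsilon\to0$. Alternatively, one could deduce the lower bound for the $\limsup$ directly from Lemma \ref{lemma6.4} along a sequence realizing $\limsup_T\min_x\fint_0^Tc$, as in Lemma \ref{lemma6.3}.
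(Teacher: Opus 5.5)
Your proposal is correct and follows the paper's proof essentially line by line: the same pair $\overline{\varphi}_T,\underline{\varphi}_T$ with Corollary \ref{corollary2.15} and Lemma \ref{lemma2.4} for the lower bounds, and Corollary \ref{corollary3.13} with Lemmas \ref{lemma2.18} and \ref{lemma2.3} for the upper bound in \eqref{eq6.14}. The only deviation is the upper bound in \eqref{eq6.13}: the paper picks a minimizing point $x_0$ and a sequence $T_n$ and invokes Lemma \ref{lemma6.4}, whereas you use the first inequality of Corollary \ref{corollary3.13} together with \eqref{eq5.16}, which is equally valid and slightly more direct since both routes rest on the same eigenvalue estimates.
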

The proof follows the same lines as that of Corollary \ref{corollary4.3}, Lemmas \ref{lemma4.4} and \ref{lemma4.6}. However, for the convenience of reading, we provide a complete proof.

\begin{proof}[Proof of Lemma \ref{lemma6.5}]
Let $C$, $C_d$ and $\phi^*$ be the notations introduced in the proof of Lemma \ref{lemma6.4}. Fix $\varepsilon>0$. For a given $T>0$, define
\[\overline{\varphi}_T(x,t):=\exp\left[\frac{1}{\omega}\left(\mathcal{M}(x,t;T)- \kappa\phi^*\right) \right],\quad(x,t) \in\overline{\Omega}\times[0,T]\]
and
\[\underline{\varphi}_T(x,t):=\varphi(x,t)\exp\left[\frac{1}{\omega}\int^{t}_{0}\left(\fint^{T}_{0} H(r;\omega,d)\,\mathrm{d}r - H(s;\omega,d) +\varepsilon \right)\,\mathrm{d}s \right],\quad(x,t) \in\overline{\Omega}\times[0,T],\]
where $\varphi$ is the normalized principal Floquet bundle of \eqref{eq1.1}. A direct calculation yields
\begin{equation*}
  \left\{
  \begin{aligned}
    &\omega \partial_t\overline{\varphi}_T-d\operatorname{div}(A(x)\nabla\overline{\varphi}_T)+c(x,t)\overline{\varphi}_T \\
 &\ge\Bigg[\fint^{T}_{0}c(x,s)\,\mathrm{d}s- \frac{d}{\omega} \left(C+\kappa\|\phi^*\|_{C^2(\overline{\Omega})} \right)-\frac{d}{\omega^2}\left(C+\kappa\|\phi^*\|_{C^2(\overline{\Omega})} \right)^2 \Bigg]\overline{\varphi}_T\\
 &\ge\Bigg[\min_{x\in\overline{\Omega}}\fint^{T}_{0}c(x,s)\,\mathrm{d}s- \frac{d}{\omega} \left(C+\kappa\|\phi^*\|_{C^2(\overline{\Omega})} \right)-\frac{d}{\omega^2}\left(C+\kappa\|\phi^*\|_{C^2(\overline{\Omega})} \right)^2 \Bigg]\overline{\varphi}_T,&&(x,t)\in\Omega\times(0,T],\\
    &\omega \partial_t\underline{\varphi}_T-d\operatorname{div}(A(x)\nabla\underline{\varphi}_T)+c(x,t)\underline{\varphi}_T =\left(\fint^{T}_{0}H(s;\omega,d)\,\mathrm{d}s+\varepsilon  \right)\underline{\varphi}_T,&&(x,t)\in\Omega\times(0,T],\\
    &\mathbf{n}\cdot(A(x)\nabla\overline{\varphi}_T)\ge 0 =\mathbf{n}\cdot(A(x)\nabla\underline{\varphi}_T), &&(x,t)\in\partial\Omega\times(0,T],\\
    &\overline{\varphi}_T(x,0)=\overline{\varphi}_T(x,T) ,&&x\in\overline{\Omega},\\
    &\underline{\varphi}_T(x,0)=\varphi(x,0)\le\varphi(x,0)\mathrm{e}^{\frac{ \varepsilon T}{\omega}-2C_d}\le\varphi(x,T)\mathrm{e}^{\frac{ \varepsilon T}{\omega}}=\underline{\varphi}_T(x,T),&&x\in\overline{\Omega},
  \end{aligned}
  \right.
\end{equation*}
if $T>2C_d\omega/\varepsilon$. An application of Corollary \ref{corollary2.15} yields
\[
\begin{aligned}
   &\liminf\limits_{T\to\infty}\fint^{T}_{0}H(s;\omega,d)\,\mathrm{d}s+\varepsilon  \\
    &\qquad\ge\liminf_{T\to\infty}\min_{x\in\overline{\Omega}}\fint^{T}_{0}c(x,s)\,\mathrm{d}s- \frac{d}{\omega} \left(C+\kappa\|\phi^*\|_{C^2(\overline{\Omega})} \right)-\frac{d}{\omega^2} \left(C+\kappa\|\phi^*\|_{C^2(\overline{\Omega})} \right)^2
\end{aligned}
\]
and
\[
\begin{aligned}
   &\limsup\limits_{T\to\infty}\fint^{T}_{0}H(s;\omega,d)\,\mathrm{d}s+\varepsilon  \\
    &\qquad\ge\limsup_{T\to\infty}\min_{x\in\overline{\Omega}}\fint^{T}_{0}c(x,s)\,\mathrm{d}s- \frac{d}{\omega} \left(C+\kappa\|\phi^*\|_{C^2(\overline{\Omega})} \right)-\frac{d}{\omega^2} \left(C+\kappa\|\phi^*\|_{C^2(\overline{\Omega})} \right)^2.
\end{aligned}
\]
By letting $(\omega,d)\to(\infty,0)$ and observing the arbitrariness of $\varepsilon>0$, we obtain
 \[\liminf_{(\omega,d)\to(\infty,0)}\liminf_{T\to\infty}\fint^{T}_{0}H(s;\omega,d)\,\mathrm{d}s\ge \liminf_{T\to\infty}\min_{x\in\overline{\Omega}}\fint^{T}_{0}c(x,s)\,\mathrm{d}s\]
and
 \[\liminf_{(\omega,d)\to(\infty,0)}\limsup_{T\to\infty}\fint^{T}_{0}H(s;\omega,d)\,\mathrm{d}s\ge \limsup_{T\to\infty}\min_{x\in\overline{\Omega}}\fint^{T}_{0}c(x,s)\,\mathrm{d}s.\]
Since
 \[\min_{x\in\overline{\Omega}}\liminf_{T\to\infty}\fint^{T}_{0}c(x,s)\,\mathrm{d}s =\liminf_{T\to\infty}\min_{x\in\overline{\Omega}}\fint^{T}_{0}c(x,s)\,\mathrm{d}s\]
(see Lemma \ref{lemma2.4}), we get
 \[\liminf_{(\omega,d)\to(\infty,0)}\liminf_{T\to\infty}\fint^{T}_{0}H(s;\omega,d)\,\mathrm{d}s\ge \min_{x\in\overline{\Omega}}\liminf_{T\to\infty}\fint^{T}_{0}c(x,s)\,\mathrm{d}s.\]

Let $x_0\in\overline{\Omega}$ be a point and $\{T_n\}_{n\in\mathbb{N}^+}$ be a sequence satisfying $T_n\to\infty$ as $n\to\infty$ such that
 \[\lim_{n\to\infty}\fint^{T_n}_{0}c(x_0,s)\,\mathrm{d}s=\liminf_{T\to\infty}\fint^{T}_{0}c(x_0,s)\,\mathrm{d}s =\min_{x\in\overline{\Omega}}\liminf_{T\to\infty}\fint^{T}_{0}c(x,s)\,\mathrm{d}s.\]
Now let $\{T_{n'}\}_{n'\in\mathbb{N}^+}$ be a subsequence of $\{T_n\}_{n\in\mathbb{N}^+}$ such that $\left\{\fint^{T_{n'}}_{0}c(\cdot,s)\,\mathrm{d}s\right\}_{n'\in\mathbb{N}^+}$ converges to some $\hat{c}$ in $C(\overline{\Omega})$. By the selection, one has
 \[\hat{c}(x)\ge\lim_{n\to\infty}\fint^{T_n}_{0}c(x_0,s)\,\mathrm{d}s,\quad\forall~x\in\overline{\Omega},\]
and the minimum of $\hat{c}$ is achievable at $x_0$. By Lemma \ref{lemma6.4}, we have
 \[\begin{aligned}
     \limsup_{(\omega,d)\to(\infty,0)}\liminf_{T\to\infty}\fint^{T}_{0}H(s;\omega,d)\,\mathrm{d}s &\le \limsup_{(\omega,d)\to(\infty,0)}\limsup_{n'\to\infty}\fint^{T_{n'}}_{0}H(s;\omega,d)\,\mathrm{d}s \\
     &\le\min_{x\in\overline{\Omega}}\hat{c}(x) \\
     & = \min_{x\in\overline{\Omega}}\liminf_{T\to\infty}\fint^{T}_{0}c(x,s)\,\mathrm{d}s.
   \end{aligned}
 \]
This, together with the lower bound, verifies \eqref{eq6.13}.

Using Corollary \ref{corollary3.13}, we have
 \[\limsup\limits_{T\to\infty}\fint^{T}_{0}H(s;\omega,d)\,\mathrm{d}s\le \sup\limits_{\hat{c}\in\mathscr{C}}\mu^\infty(d,\hat{c}),\]
where $\mu^\infty(d,\hat{c})$ is the principal eigenvalue of \eqref{eq1.3}.
By letting $(\omega,d)\to(\infty,0)$, we get
 \[\limsup_{(\omega,d)\to(\infty,0)}\limsup\limits_{T\to\infty}\fint^{T}_{0}H(s;\omega,d)\,\mathrm{d}s\le \limsup_{d\to0}\sup\limits_{\hat{c}\in\mathscr{C}}\mu^\infty(d,\hat{c}).\]
Using Lemma \ref{lemma2.18}, we have
 \[\limsup_{d\to0}\sup\limits_{\hat{c}\in\mathscr{C}}\mu^\infty(d,\hat{c}) =\sup\limits_{\hat{c}\in\mathscr{C}}\min_{x\in\overline{\Omega}}\hat{c}(x).\]
By Lemma \ref{lemma2.3}, we obtain
 \[ \limsup_{(\omega,d)\to(\infty,0)}\limsup_{T\to\infty}\fint^{T}_{0}H(s;\omega,d)\,\mathrm{d}s\le \limsup_{T\to\infty}\min_{x\in\overline{\Omega}}\fint^{T}_{0}c(x,s)\,\mathrm{d}s.\]
A combination of the upper and the lower bounds gives \eqref{eq6.14}. This completes the proof.
\end{proof}

\chapter{Asymptotic behavior of $H$: joint effects of small $\omega$ and small $d$}\label{chp7}
We prove Theorem \ref{theorem1.10} in this chapter, splitting the proof into several lemmas. This chapter establishes the following result with respect to the sequence $\{T_n\}_{n\in\mathbb{N}^+}$ of $T\to\infty$, thereby concluding the demonstration of Theorem \ref{theorem1.10}.
\begin{theorem}\label{theorem7.1}
  Let $H(\cdot;\omega,d)$ be the normalized principal Floquet bundle of \eqref{eq1.1} with potential $c\in \mathcal{C}\cap C^{2,1}(\overline{\Omega}\times\mathbb{R})$ and let $\{T_n\}_{n\in\mathbb{N}^+}$ be a sequence satisfying $T_n\to\infty$ as $n\to\infty$.
\begin{enumerate}[{\rm (i)}]
  \item Assume that the function $c$ satisfies uniform H\"{o}lder condition \eqref{H3}. Then
      \[\lim_{\left(d,\frac{\omega}{\sqrt{d}}\right)\to(0,0)}\liminf_{n\to\infty}\fint^{T_n}_{0}H(s;\omega,d)\,\mathrm{d}s= \liminf_{n\to\infty}\fint^{T_n}_{0}\min_{x\in\overline{\Omega}}c(x,s)\,\mathrm{d}s\]
     and
      \[\lim_{\left(d,\frac{\omega}{\sqrt{d}}\right)\to(0,0)}\limsup_{n\to\infty}\fint^{T_n}_{0}H(s;\omega,d)\,\mathrm{d}s= \limsup_{n\to\infty}\fint^{T_n}_{0}\min_{x\in\overline{\Omega}}c(x,s)\,\mathrm{d}s.\]
     Consequently, if the sequence $\left\{\fint^{T_n}_{0}\min_{x\in\overline{\Omega}}c(x,s)\,\mathrm{d}s\right\}_{n\in\mathbb{N}^+}$ converges, then
       \[\begin{aligned}
         \lim_{\left(d,\frac{\omega}{\sqrt{d}}\right)\to(0,0)}\liminf_{n\to\infty}\fint^{T_n}_{0}H(s;\omega,d) \,\mathrm{d}s
        =& \lim_{\left(d,\frac{\omega}{\sqrt{d}}\right)\to(0,0)}\limsup_{n\to\infty}\fint^{T_n}_{0} H(s;\omega,d) \,\mathrm{d}s\\
        =&\lim_{n\to\infty}\fint^{T_n}_{0}\min_{x\in\overline{\Omega}}c(x,s)\,\mathrm{d}s.
        \end{aligned}\]
  \item Assume that the function $c$ satisfies assumption \eqref{H2}. Then
       \[\lim\limits_{\left(\omega,\frac{\omega}{\sqrt{d}}\right)\to(0,\infty)} \liminf\limits_{n\to\infty}\fint^{T_n}_{0}H(s;\omega,d)\,\mathrm{d}s = \min\limits_{x\in\overline{\Omega}}\liminf_{n\to\infty}\fint^{T_n}_{0}c(x,s)\,\mathrm{d}s= \inf_{\hat{c}\in\mathscr{C}_*}\min_{x\in\overline{\Omega}}\hat{c}(x)\]
      and
       \[\lim\limits_{\left(\omega,\frac{\omega}{\sqrt{d}}\right)\to(0,\infty)} \limsup\limits_{n\to\infty}\fint^{T_n}_{0}H(s;\omega,d)\,\mathrm{d}s = \limsup_{n\to\infty}\min\limits_{x\in\overline{\Omega}}\fint^{T_n}_{0}c(x,s)\,\mathrm{d}s= \sup_{\hat{c}\in\mathscr{C}_*}\min_{x\in\overline{\Omega}}\hat{c}(x).\]
     Consequently, if the sequence $\left\{\fint^{T_n}_{0} c(\cdot,s)\,\mathrm{d}s\right\}_{n\in\mathbb{N}^+}$ of functions converges to $\hat{c}(\cdot)$ in $C(\overline{\Omega})$, then
       \[\lim\limits_{\left(\omega,\frac{\omega}{\sqrt{d}}\right)\to(0,\infty)} \liminf\limits_{n\to\infty}\fint^{T_n}_{0}H(s;\omega,d)\,\mathrm{d}s= \lim\limits_{\left(\omega,\frac{\omega}{\sqrt{d}}\right)\to(0,\infty)} \limsup\limits_{n\to\infty}\fint^{T_n}_{0}H(s;\omega,d)\,\mathrm{d}s = \min\limits_{x\in\overline{\Omega}}\hat{c}(x).\]
\end{enumerate}
\end{theorem}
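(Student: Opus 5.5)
For part (i) I would prove the lower and upper bounds separately. The lower bound is immediate and holds for every $(\omega,d)$. By Lemma \ref{lemma3.5}(i), $H(t;\omega,d)\ge\min_{x\in\overline{\Omega}}c(x,t)$ for every $t$. Hence $\fint^{T_n}_0H(s;\omega,d)\,\mathrm{d}s\ge\fint^{T_n}_0\min_{x\in\overline{\Omega}}c(x,s)\,\mathrm{d}s$ for all $n$, and taking $\liminf_n$ and $\limsup_n$ gives the lower bounds in (i) without any restriction on the parameters.

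For the upper bound in (i) I would fix a time scale $\varrho\in(0,1)$ and use the averaged potential $V(x;t,\varrho)=\fint^{t+\varrho}_tc(x,s)\,\mathrm{d}s$. Let $(\mu^\infty(d,t,\varrho),\phi^\varrho(\cdot,t))$ be the principal eigenpair of \eqref{eq2.8}. The subsolution would be
\[
\underline{\varphi}_n=\phi^\varrho(x,t)\exp\Big[\frac1\omega\int_0^t\Big(\fint_0^{T_n}\big(\mu^\infty(d,r,\varrho)+g(r)\big)\mathrm{d}r-\mu^\infty(d,s,\varrho)-g(s)+\varepsilon\Big)\mathrm{d}s\Big].
\]
Here $g(t)$ is a purely time-dependent correction that absorbs the $x$-independent part of $\omega\partial_t\ln\phi^\varrho$. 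The function $\varphi$ is used for the supersolution, as in Lemma \ref{lemma5.2}. The endpoint conditions at $0$ and $T_n$ hold for $n$ large because, by Theorem \ref{theorem3.17} together with the analogous elliptic Harnack bound of Lemma \ref{lemma2.20}, both $|\ln\varphi|$ and $|\ln\phi^\varrho|$ are bounded by $C/\sqrt d$. Lemma \ref{lemma2.13} would then give
\[
\limsup_{n}\fint_0^{T_n}H\le\limsup_n\fint_0^{T_n}\mu^\infty(d,s,\varrho)\,\mathrm{d}s+\omega\sup_t\operatorname{osc}_x\partial_t\ln\phi^\varrho+\varepsilon,
\]
and the same inequality with $\liminf$ in place of $\limsup$. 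Lemma \ref{lemma2.23} replaces $\mu^\infty(d,s,\varrho)$ by $\min_xV(x;s,\varrho)$ uniformly as $d\to0$. By \eqref{H3} and Lemma \ref{lemma2.8}, $\min_xV(\cdot;s,\varrho)\le\min_xc(\cdot,s)+2L\varrho^{\delta/2}$. Finally I would let $\varepsilon\to0$ and then $\varrho\to0$.

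The main obstacle is the bound $\sup_t\operatorname{osc}_x\partial_t\ln\phi^\varrho\le C_\varrho/\sqrt d$, which is exactly what makes the error term vanish when $\omega/\sqrt d\to0$. To get it, I would set $w=\partial_t\ln\phi^\varrho$ and differentiate \eqref{eq2.8} in $t$. This gives the drift equation
\[
-d\operatorname{div}(A\nabla w)-2d\,A\nabla\ln\phi^\varrho\cdot\nabla w=\partial_t\mu^\infty-\partial_tV,
\]
with homogeneous Neumann data, and the right-hand side is bounded by $4\|c\|_\infty/\varrho$. I would rescale $y=x/\sqrt d$ as in the proof of Theorem \ref{theorem3.14}. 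In the rescaled variables the drift $\sqrt d\,\nabla_x\ln\phi^\varrho$ is bounded, by an elliptic log-gradient estimate. A Harnack-chain argument or a barrier argument on the rescaled domain, whose diameter is $\sim d^{-1/2}$, should then bound the oscillation of $w$ by $C\varrho^{-1}d^{-1/2}$. If this estimate turns out to be too weak, the fallback is to mollify $V$ in time at scale $\varrho$ and accept a constant depending on $\varrho$; only the power of $d$ matters here.

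For part (ii) I would follow Lemmas \ref{lemma4.5} and \ref{lemma6.4}. The upper bound comes from Lemma \ref{lemma3.12}, which bounds $\liminf_n$ (respectively $\limsup_n$) of $\fint_0^{T_n}H$ by $\inf$ (respectively $\sup$) of $\mu^\infty(d,\hat c)$ over $\hat c\in\mathscr C_*$. Since $d\to0$ in this regime, \eqref{eq5.15} of Lemma \ref{lemma5.10}, together with Lemmas \ref{lemma2.3} and \ref{lemma2.4}, identifies the limits of these bounds as the stated quantities. For the lower bound I would take the supersolution $\overline{\varphi}_n=\exp[(\mathcal M(x,t;T_n)-\kappa\phi^*)/\omega]$ with $\kappa$ from Lemma \ref{lemma2.16}, and the subsolution built from $\varphi$ as in the proof of Lemma \ref{lemma6.4}. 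Under \eqref{H2} the computation there gives an error of size $C(d/\omega+d/\omega^2)$. Both terms tend to $0$: $d/\omega^2=(\sqrt d/\omega)^2\to0$, and $d/\omega=\omega\cdot d/\omega^2\to0$ because $\omega\to0$. The endpoint condition for $\underline{\varphi}_n$ again holds for $n$ large at fixed $(\omega,d)$, by Proposition \ref{proposition3.3}(ii). Applying Lemma \ref{lemma2.13} and then Lemma \ref{lemma2.4} completes (ii).
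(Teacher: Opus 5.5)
The lower bound in (i) and all of (ii) are fine. In (ii), reusing the supersolution $\exp[(\mathcal{M}(x,t;T_n)-\kappa\phi^*)/\omega]$ from Lemma \ref{lemma6.4} works, since its error $C(d/\omega+d/\omega^2)$ tends to $0$ in this regime. It is in fact simpler than the paper's choice $\exp[\frac{1}{\sqrt d}(1-\phi^*/M)+\frac{1}{\omega}\mathcal{M}]$.

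The gap is in the upper bound of (i). Your key estimate $\sup_t\operatorname{osc}_x\partial_t\ln\phi^\varrho\le C_\varrho/\sqrt d$ is false in general. Take $c$ (hence $V(\cdot;t,\varrho)$ for small $\varrho$) with two interior wells whose depths cross in time, e.g. depths differing by $\sin t$. At the time where the two local ground-state levels cross, the Neumann operator $-d\operatorname{div}(A\nabla\cdot)+V$ has a tunnelling-size spectral gap $\mu_2-\mu_1\approx e^{-S/\sqrt d}$. First-order perturbation theory gives $\partial_t\phi^\varrho\approx-\frac{\langle\partial_tV\,\phi_1,\phi_2\rangle}{\mu_2-\mu_1}\phi_2$, with $\langle\partial_tV\,\phi_1,\phi_2\rangle$ of order one. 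So the concentration of $\phi^\varrho$ switches wells within an exponentially short time, and $\operatorname{osc}_x\partial_t\ln\phi^\varrho\approx e^{S/\sqrt d}$.

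This is consistent with what your proposed tools give. A Harnack-chain or barrier argument for your drift equation on a rescaled domain of diameter $\sim d^{-1/2}$ with $O(1)$ drift yields constants of size $e^{C/\sqrt d}$. The chain in Theorem \ref{theorem3.14} turns into an additive $C/\sqrt d$ bound only after taking the logarithm of a positive solution, and $w$ is not such a logarithm. Mollifying $V$ in time changes only the $\varrho$-dependence, not this $d$-dependence. Hence your error term $\omega\sup_t\operatorname{osc}_x\partial_t\ln\phi^\varrho$ vanishes only if $\omega e^{C/\sqrt d}\to0$. For example, $\omega=d$ lies in the regime $(d,\omega/\sqrt d)\to(0,0)$ but is not covered.

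The missing idea is never to differentiate an eigenfunction in time, so that $\omega\partial_t$ enters only through a boundary term controlled by Theorem \ref{theorem3.17}. This is what the paper does in Lemma \ref{lemma7.2}. On a window $[t,t+\tau]$ set $u(x)=\exp\bigl(\fint_t^{t+\tau}\ln\varphi(x,s)\,\mathrm{d}s\bigr)$. By Lemma \ref{lemma3.11} and the equation for $\ln\varphi$,
\[-d\operatorname{div}(A(x)\nabla u)+\Bigl(\fint_t^{t+\tau}c(x,s)\,\mathrm{d}s\Bigr)u\ge\Bigl(\fint_t^{t+\tau}H(s;\omega,d)\,\mathrm{d}s-\frac{\omega}{\tau}\bigl[\ln\varphi(x,\cdot)\bigr]_{t}^{t+\tau}\Bigr)u\]
with Neumann data. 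Elliptic comparison and $|\sqrt d\ln\varphi|\le C$ then give $\fint_t^{t+\tau}H\le\mu^\infty(d,t,\tau)+\frac{2C\omega}{\sqrt d\,\tau}$.

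Lemma \ref{lemma2.23} applies uniformly in $(t,\tau)$, and \eqref{H3} makes $\min_x\fint_t^{t+\tau}c$ and $\fint_t^{t+\tau}\min_xc$ differ by $O(\tau^{\delta/2})$. Together they bound the right-hand side by $\fint_t^{t+\tau}\min_xc+O(\tau^{\delta/2})+o_{d\to0}(1)+\frac{2C\omega}{\sqrt d\,\tau}$. Summing over a partition of $[0,T]$ into windows of fixed length $\hbar$ gives a bound uniform in $T$, which passes directly to $\liminf_n$ and $\limsup_n$ along $T_n$. Let $(d,\omega/\sqrt d)\to(0,0)$ and then $\hbar\to0$. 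Here $\omega/\sqrt d$ is multiplied only by the fixed constant $1/\hbar$, not by an eigenfunction-dependent quantity, which is exactly why the regime $\omega/\sqrt d\to0$ suffices.
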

\section{The case $\left(d,\frac{\omega}{\sqrt{d}}\right)\to(0,0)$}
We first show the following asymptotic result on a finite interval.
\begin{lemma}\label{lemma7.2}
  Let $H(\cdot;\omega,d)$ be the normalized principal Floquet bundle of \eqref{eq1.1} with potential $c\in \mathcal{C}\cap C^{0,1}(\overline{\Omega}\times\mathbb{R})$ fulfilling \eqref{H3}. For any $T_1,T_2\in\mathbb{R}$ with $T_1<T_2$, there holds
 \[\lim_{\left(d,\frac{\omega}{\sqrt{d}}\right)\to(0,0)}\fint^{T_2}_{T_1}H(s;\omega,d)\,\mathrm{d}s= \fint^{T_2}_{T_1}\min_{x\in\overline{\Omega}}c(x,s)\,\mathrm{d}s.\]
\end{lemma}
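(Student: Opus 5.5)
Lower bound: Lemma \ref{lemma3.5}(i) gives $H(t;\omega,d)\ge\min_{x\in\overline{\Omega}}c(x,t)$ pointwise in $t$. Averaging over $[T_1,T_2]$ then yields $\fint_{T_1}^{T_2}H\ge\fint_{T_1}^{T_2}\min_{\overline\Omega}c$ for every $(\omega,d)$. So the whole difficulty is the upper bound
\[
\limsup_{\left(d,\frac{\omega}{\sqrt d}\right)\to(0,0)}\fint^{T_2}_{T_1}H\,\mathrm{d}s\le\fint^{T_2}_{T_1}\min_{\overline\Omega}c\,\mathrm{d}s .
\]
After a translation I take $T_1=0$, $T_2=T$. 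I will apply the finite-interval comparison principle, Lemma \ref{lemma2.12}, in the same spirit as the proof of Lemma \ref{lemma5.4}.

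Super-solution. I take
\[
\overline{\varphi}(x,t)=\varphi(x,t)\exp\Big[\tfrac1\omega\int_0^t\Big(\fint_0^TH(r)\,\mathrm{d}r-H(s)-\varepsilon\Big)\mathrm{d}s\Big].
\]
It solves the equation with constant $\fint_0^TH-\varepsilon$ and has zero Neumann flux. The condition $\overline\varphi(\cdot,0)\ge\overline\varphi(\cdot,T)$ requires $\varphi(x,0)\ge\varphi(x,T)e^{-\varepsilon T/\omega}$. By the sharp Harnack estimate, Theorem \ref{theorem3.17}, $|\ln\varphi|\le C/\sqrt d$ uniformly in $\omega$, so it suffices that $\varepsilon T/\omega\ge 2C/\sqrt d$, i.e. $\omega/\sqrt d\le \varepsilon T/(2C)$. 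This is exactly where the regime $\omega/\sqrt d\to0$ is used.

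Sub-solution. Using the Hölder condition \eqref{H3} and the construction in the proof of Lemma \ref{lemma2.10}, I build a curve $x(\cdot)$ on $[0,T]$ with three properties: it is piecewise smooth with bounded speed $|\dot x|\le V_\varepsilon$; it satisfies $x(0)=x(T)$ (append a short return leg, whose cost in the average is $O(\varepsilon)$); and it satisfies
\[
\fint_0^T c(x(s),s)\,\mathrm{d}s\le\fint_0^T\min_{\overline\Omega}c\,\mathrm{d}s+\varepsilon .
\]
I also keep $x(t)$ in a fixed compact subset of $\Omega$, losing at most another $\varepsilon$ by the uniform spatial Hölder bound. Set $\ln w=-K|x-x(t)|^2/\sqrt d$ and
\[
\underline\varphi=w\exp\Big[\tfrac1\omega\int_0^t\Big(\fint_0^Tc(x(r),r)\,\mathrm{d}r-c(x(s),s)+\varepsilon\Big)\mathrm{d}s\Big].
\]
Then $\underline\varphi(\cdot,0)\le\underline\varphi(\cdot,T)$ because $x(0)=x(T)$. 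Expanding $\omega\partial_t\ln w-d\,\mathrm{div}(A\nabla\ln w)-d\nabla\ln w\cdot A\nabla\ln w+c$ gives three terms:
\begin{itemize}
\item the transport term $2K\omega(x-x(t))\cdot\dot x/\sqrt d=O(\omega/\sqrt d)\to0$;
\item the diffusion term, which is $O(K\sqrt d)\to0$;
\item the negative term $-d|\nabla\ln w|^2_A\le-4K^2\Lambda^{-1}|x-x(t)|^2$.
\end{itemize}
Fix $r$ such that $|c(x,t)-c(x(t),t)|\le\varepsilon$ for $|x-x(t)|<r$. Then choose $K$ with $4K^2r^2/\Lambda\ge2\|c\|_\infty$, so the negative term absorbs the excess of $c$ outside the ball. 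This gives
\[
\omega\partial_t\underline\varphi-d\,\mathrm{div}(A\nabla\underline\varphi)+c\,\underline\varphi\le\Big(\fint_0^Tc(x(s),s)\,\mathrm{d}s+3\varepsilon+o(1)\Big)\underline\varphi .
\]

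Conclusion. Lemma \ref{lemma2.12} gives $\fint_0^TH-\varepsilon\le\fint_0^T\min_{\overline\Omega}c+4\varepsilon+o(1)$. Letting $(d,\omega/\sqrt d)\to(0,0)$ and then $\varepsilon\to0$ proves the upper bound.

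Main obstacle: the Neumann sign $\mathbf n\cdot A\nabla\underline\varphi\le0$ on $\partial\Omega$ for nonconvex $\Omega$. On the boundary $|x-x(t)|\ge r_0>0$, so $w$ is exponentially small there, of order $e^{-Kr_0^2/\sqrt d}$, but its outward derivative need not be nonpositive. I would first try multiplying $w$ by $e^{\kappa\phi^*/\sqrt d}$, where $\phi^*$ is the Dirichlet eigenfunction of \eqref{eq2.6} and Lemma \ref{lemma2.16} supplies the needed sign of its normal derivative. Note that $\kappa\phi^*$ enters with the opposite sign to the super-solution construction of Lemma \ref{lemma2.16}, because here we need a sub-solution. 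The size of $\kappa$ must be checked near $\partial\Omega$ against $\nabla|x-x(t)|^2$, which is $O(1)$ there. This perturbation adds $O(\sqrt d)$ and $O(1)$-bounded gradient contributions, which would be absorbed by enlarging $K$. Failing that, I would replace $|x-x(t)|^2$ by a squared geodesic-type distance modified near $\partial\Omega$ so that its conormal derivative is nonnegative.
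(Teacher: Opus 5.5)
Your lower bound, your super-solution, and your interior computation for the moving bump $w=e^{-K|x-x(t)|^2/\sqrt d}$ are all fine. The super-solution is where, as in the paper, $\omega/\sqrt d\to0$ is spent via Theorem~\ref{theorem3.17}.

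\textbf{The gap.} The argument is not complete. Lemma~\ref{lemma2.12} needs $\mathbf n\cdot(A\nabla\underline\varphi)\le0$ on $\partial\Omega$, i.e.\ $\mathbf n\cdot A(x)(x-x(t))\ge0$. This fails for nonconvex $\Omega$, and even for convex $\Omega$ when $A$ is anisotropic. It is the hypothesis behind the Hopf step of the comparison, so it cannot be skipped.

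\textbf{Why your first remedy fails.} To dominate $2K\,\mathbf n\cdot A(x-x(t))$ on $\partial\Omega$ you need $\kappa\ge c_\Omega K$ for a geometric constant $c_\Omega>0$. So enlarging $K$ enlarges $\kappa$ proportionally, and nothing is absorbed. The exponent $G=-K|x-x(t)|^2+\kappa\phi^*$ attains its maximum at a point $x^\sharp$ with $\nabla G(x^\sharp)=0$. Having $x^\sharp\in B_r(x(t))$ would force $|\nabla\phi^*(x^\sharp)|\le 2r/c_\Omega$, i.e.\ $x(t)$ close to the critical set of $\phi^*$. Nothing ensures this, since $x(t)$ tracks minimizers of $c(\cdot,t)$. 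At $x^\sharp$ the absorbing term $-d\nabla\ln w\cdot A\nabla\ln w$ vanishes, and $-d\,\mathrm{div}(A\nabla\ln w)\ge0$ there. The sub-solution inequality at $x^\sharp$ therefore reads $c(x^\sharp,t)\le c(x(t),t)+O(\varepsilon)+o(1)$, which is false in general. Your geodesic-distance fallback is not carried out, and such distances are not $C^2$ in general.

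\textbf{How to close it.} Use your own observation that $w$ is exponentially small near $\partial\Omega$. Lemma~\ref{lemma2.12} only asks $\underline\varphi\ge,\not\equiv0$, so truncate. Mollify $x(\cdot)$ periodically and keep $\mathrm{dist}(x(t),\partial\Omega)>\rho_0>r$. Put $\eta=e^{-K\rho_0^2/\sqrt d}$ and $\bar c=\fint_0^Tc(x(s),s)\,\mathrm ds$. Take $\Psi$ smooth and convex with $\Psi=0$ on $[0,\eta]$ and $\Psi(s)=s-2\eta$ for $s\ge3\eta$, and set $\underline\varphi=\Psi(w)\exp[\frac1\omega\int_0^t(\bar c-c(x(s),s)+\varepsilon)\,\mathrm ds]$.
\begin{itemize}
\item $\underline\varphi(\cdot,t)$ vanishes outside $B_{\rho_0}(x(t))\Subset\Omega$, so the Neumann condition is vacuous.
\item Convexity gives $-d\Psi''(w)\nabla w\cdot A\nabla w\le0$ and $\Psi'(w)w\ge\Psi(w)$.
\item For small $d$ the layer $\{\eta<w<3\eta\}$ lies outside $B_r(x(t))$. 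There $\Xi:=\omega\partial_t\ln w-d\,\mathrm{div}(A\nabla\ln w)-d\nabla\ln w\cdot A\nabla\ln w<0$, so $\Psi'(w)w\,\Xi\le\Psi(w)\,\Xi$.
\item On $B_r(x(t))$ one has $w\gg\eta$.
\end{itemize}
With $K$ slightly enlarged, your estimate then survives with $\underline\lambda=\bar c+O(\varepsilon)+o(1)$.

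\textbf{Comparison with the paper.} The paper avoids the boundary issue altogether:
\begin{itemize}
\item it slices $[0,T]$ into intervals of length $\hbar$;
\item it turns $\exp\fint_t^{t+\hbar}\ln\varphi\,\mathrm ds$ into an elliptic Neumann super-solution via Lemma~\ref{lemma3.11} and Theorem~\ref{theorem3.17};
\item it compares with the Neumann eigenvalue $\mu(d,\fint_t^{t+\hbar}c)$, whose Rayleigh quotient accepts compactly supported test functions (Lemma~\ref{lemma2.23});
\item it swaps $\min$ with short-time averages using \eqref{H3}.
\end{itemize}
Your single space--time sub-solution instead makes explicit the path picture of Lemma~\ref{lemma2.10} and the role of $\omega/\sqrt d$ as a transport error. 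The price is this boundary construction.
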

\begin{proof}
Without loss of generality, we assume that $T_1=0$ and $T_2=T$. By Lemma \ref{lemma3.5}(i), we know
 \[H(t)\ge\min_{x\in\overline{\Omega}} c(x,t),\qquad t\in\mathbb{R},\]
which follow that
 \[\liminf_{\left(d,\frac{\omega}{\sqrt{d}}\right)\to(0,0)}\fint^{T}_{0}H(s;\omega,d)\,\mathrm{d}s\ge \fint^{T}_{0}\min_{x\in\overline{\Omega}}c(x,s)\,\mathrm{d}s.\]
Therefore, we need only to show that
\begin{equation}\label{eq7.1}
  \limsup_{\left(d,\frac{\omega}{\sqrt{d}}\right)\to(0,0)}\fint^{T}_{0}H(s;\omega,d)\,\mathrm{d}s\le \fint^{T}_{0}\min_{x\in\overline{\Omega}}c(x,s)\,\mathrm{d}s.
\end{equation}

Note that $c\in \mathcal{C}\cap C^{0,1}(\overline{\Omega}\times\mathbb{R})$ satisfy the uniform H\"{o}lder condition \eqref{H3}. Then for any $t\in\mathbb{R}$, we have
 \[|c(x,s)-c(x,t)|\le L(s-t)^{\frac{\delta}{2}},\quad\forall~s>t,~x\in\overline{\Omega},\]
which follows that
 \[c(x,t)-L(s-t)^{\frac{\delta}{2}}\le c(x,s)\le c(x,t)+L(s-t)^{\frac{\delta}{2}},\quad\forall~s>t,~x\in\overline{\Omega}.\]
Fix $\tau>0$. Integrating it over $[t,t+\tau]$, we have
 \[c(x,t)-L\tau^{1+\frac{\delta}{2}}\le \int^{t+\tau}_{t}c(x,s)\,\mathrm{d}s\le c(x,t)+L\frac{\tau^{1+\frac{\delta}{2}}}{1+{\frac{\delta}{2}}} \le c(x,t)+L\tau^{1+\frac{\delta}{2}}, \quad\forall~x\in\overline{\Omega}.\]
Taking the minimum in $x$ over $\overline{\Omega}$ in the inequality above, we obtain
 \[  \min_{x\in\overline{\Omega}}c(x,t)-L\tau^{1+{\frac{\delta}{2}}}\le \min_{x\in\overline{\Omega}}\int^{t+\tau}_{t}c(x,s)\,\mathrm{d}s\le \min_{x\in\overline{\Omega}}c(x,t)+L\tau^{1+\frac{\delta}{2}},\]
or equivalently,
\begin{equation}\label{eq7.2}
  \left|\min_{x\in\overline{\Omega}}c(x,t)- \min_{x\in\overline{\Omega}}\int^{t+\tau}_{t}c(x,s)\,\mathrm{d}s\right|\le L\tau^{1+\frac{\delta}{2}}.
\end{equation}
Similarly, one has
\begin{equation}\label{eq7.3}
  \left|\min_{x\in\overline{\Omega}}c(x,t)- \int^{t+\tau}_{t}\min_{x\in\overline{\Omega}}c(x,s)\,\mathrm{d}s\right|\le L\tau^{1+\frac{\delta}{2}}.
\end{equation}
Combining \eqref{eq7.2} and \eqref{eq7.3} yields
\begin{equation}\label{eq7.4}
  \left|\min_{x\in\overline{\Omega}}\int^{t+\tau}_{t}c(x,s)\,\mathrm{d}s- \int^{t+\tau}_{t}\min_{x\in\overline{\Omega}}c(x,s)\,\mathrm{d}s\right|\le2\tau^{1+\frac{\delta}{2}}L.
\end{equation}

Notice from Theorem \ref{theorem3.17} that $-\sqrt{d}\ln\varphi$ is uniformly bounded in $(\omega,d)\in (0,1)\times(0,1)$, that is, there is a constant $C>0$ independent of $(\omega,d)$ such that
\begin{equation}\label{eq7.5}
  \sup_{(x,t)\in\Omega\times\mathbb{R}}\left|-\sqrt{d}\ln\varphi(x,t)\right|<C.
\end{equation}
For any given $\tau>$ and $t\in\mathbb{R}$, define
 \[u(x):=\exp\left(\fint^{t+\tau}_{t}\ln\varphi(x,s)\,\mathrm{d}s\right),\qquad x\in\overline{\Omega}.\]
By direct computations, we have
  \begin{align*}
    &\operatorname{div}(A(x)\nabla u)\\
  = & \Bigg[\fint^{t+\tau}_{t}\left(\frac{\operatorname{div}(A(x)\nabla\varphi)}{\varphi}- \frac{\nabla\varphi\cdot(A(x)\nabla\varphi)}{\varphi^2}\right)\,\mathrm{d}s \\
  &\hskip 3cm+\nabla\left(\fint^{t+\tau}_{t}\ln\varphi \,\mathrm{d}s\right)\cdot\left(A(x)\nabla\left(\fint^{t+\tau}_{t}\ln\varphi \,\mathrm{d}s\right)\right)\Bigg]u \\
    \le&\fint^{t+\tau}_{t}\frac{\operatorname{div}(A(x)\nabla\varphi)}{\varphi}\,\mathrm{d}s u,\qquad\qquad \forall~x\in\Omega,
  \end{align*}
where Lemma \ref{lemma3.11} has been used in obtaining the last inequality. Substituting the equation satisfied by $\varphi$ into the above inequality, one gets that
\[
    \left\{
    \begin{aligned}
      &-d\operatorname{div}(A(x)\nabla u) +\left(\fint^{t+\tau}_{t}c(x,s)\,\mathrm{d}s\right)u\\
      &\hskip 2 cm \ge \left(\fint^{t+\tau}_{t}H(s;\omega,d)\,\mathrm{d}s+ \frac{\omega }{\sqrt{d}\tau}\left.(-\sqrt{d}\ln\varphi(x,t))\right|^{t+\tau}_{t}\right)u\\
      &\hskip 2 cm \ge \left(\fint^{t+\tau}_{t}H(s;\omega,d)\,\mathrm{d}s- \frac{2\omega C }{\sqrt{d}\tau}\right)u,&&x\in\Omega,\\
      &\mathbf{n}\cdot(A(x)\nabla u)=0&&x\in\partial\Omega,
    \end{aligned}
    \right.
\]
where we have used \eqref{eq7.5} in the second inequality. Let $\mu\left(d,\fint^{t+\tau}_{t}c(x,s)\,\mathrm{d}s\right)$ be the principal eigenvalue of \eqref{eq1.3} with potential $\fint^{t+\tau}_{t}c(x,s)\,\mathrm{d}s$. Then
by comparison principle for elliptic eigenvalue problem, one easily get that
\begin{equation}\label{eq7.6}
 \fint^{t+\tau}_{t}H(s;\omega,d)\,\mathrm{d}s\le \mu\left(d,\fint^{t+\tau}_{t}c(x,s)\,\mathrm{d}s\right)+\frac{\omega}{\sqrt{d}}\frac{2C}{\tau}.
\end{equation}
Using the estimate \eqref{eq2.9} in the proof of Lemma \ref{lemma2.23}, for any $\varepsilon>0$, there is a positive constant $\tilde{r}_\varepsilon$, independent of $(t,\tau)\in\mathbb{R}\times(0,1)$, such that
\begin{equation}\label{eq7.7}
  \mu\left(d,\fint^{t+\tau}_{t}c(x,s)\,\mathrm{d}s\right)\le \min_{x\in\overline{\Omega}}\fint^{t+\tau}_{t}c(x,s)\,\mathrm{d}s+2^{N+4}\Lambda \frac{d}{r^2}+\varepsilon,\quad\forall~r\in(0,\tilde{r}_\varepsilon).
\end{equation}

We now use the above inequalities to estimate $ \fint^{T}_{0} H(s;\omega,d)\,\mathrm{d}s$. For any given $0<\hbar<<1$, denote by $\ell=\lfloor T/\hbar\rfloor$ and $t_i=i\hbar$, $i=0,1,\cdots,\ell$. A direct computation yields
\begin{align*}
   & \fint^{T}_{0} H(s;\omega,d)\,\mathrm{d}s \\
 = & \frac{1}{T}\left(\hbar\sum^{\ell-1}_{i=0}\fint^{t_{i+1}}_{t_i} H(s;\omega,d)\,\mathrm{d}s +(T-t_\ell)\fint^{T}_{t_\ell} H(s;\omega,d)\,\mathrm{d}s \right) \\
 \le & \frac{1}{T}\left[\hbar\sum^{\ell-1}_{i=0}\mu\left(d,\fint^{t_{i+1}}_{t_{i}}c(x,s)\,\mathrm{d}s\right) +(T-t_\ell)\mu\left(d,\fint^{T}_{t_\ell}c(x,s)\,\mathrm{d}s\right)\right] +\frac{2C}{\hbar}\left(1+\frac{1}{\ell}\right)\frac{\omega}{\sqrt{d}} &&\text{(by \eqref{eq7.6})}\\
  \le & \frac{1}{T}\left[\sum^{\ell-1}_{i=0}  \min_{x\in\overline{\Omega}}\int^{t_{i+1}}_{t_{i}}c(x,s)\,\mathrm{d}s +\min_{x\in\overline{\Omega}}\int^{T}_{t_\ell}c(x,s)\,\mathrm{d}s\right]+ 2^{N+4}\Lambda \frac{d}{r^2}+\varepsilon+\frac{2C}{\hbar}\left(1+\frac{1}{\ell}\right)\frac{\omega}{\sqrt{d}}  &&\text{(by \eqref{eq7.7})}\\
  \le & \frac{1}{T}\left[\sum^{\ell-1}_{i=0}\int^{t_{i+1}}_{t_{i}}\min_{x\in\overline{\Omega}}c(x,s)\,\mathrm{d}s +\int^{T}_{t_\ell}\min_{x\in\overline{\Omega}}c(x,s)\,\mathrm{d}s\right] \\
  &\hskip 2cm +2L\hbar^{1+\frac{\delta}{2}}+ 2^{N+4}\Lambda \frac{d}{r^2}+\varepsilon+\frac{2C}{\hbar}\left(1+\frac{1}{\ell}\right)\frac{\omega}{\sqrt{d}}  &&\text{(by \eqref{eq7.4})}\\
  \le & \fint^{T}_{0}\min_{x\in\overline{\Omega}} c(x,s)\,\mathrm{d}s+2L\hbar^{1+\frac{\delta}{2}}+ 2^{N+4}\Lambda \frac{d}{r^2}+\varepsilon+\frac{2C}{\hbar}\left(1+\frac{1}{\ell}\right)\frac{\omega}{\sqrt{d}},\qquad r\in(0,\tilde{r}_\varepsilon).
\end{align*}
This estimate is uniform in $T>0$ but depends on the step size $\hbar$. Hence, for each fixed $\hbar>0$ we first let $\left(d,\frac{\omega}{\sqrt{d}}\right)\to (0,0)$ and then use the arbitrariness of $\hbar>0$ and $\varepsilon>0$ to obtain estimate \eqref{eq7.1}. This completes the proof.
\end{proof}

\medskip
Theorem \ref{theorem1.11} is a direct corollary of Lemmas \ref{lemma7.2} and \ref{lemma2.9}. We shall establish the asymptotics of the principal Floquet exponent by means of Lemma \ref{lemma7.2}. The argument crucially relies on the fact that the limiting behavior in Lemma \ref{lemma7.2} is uniform with respect to $T>0$.

\begin{lemma}\label{lemma7.3}
Let $H(\cdot;\omega,d)$ be the normalized principal Floquet bundle of \eqref{eq1.1} with potential $c\in \mathcal{C}\cap C^{0,1}(\overline{\Omega}\times\mathbb{R})$, and let $\mu^{0}(t)$, $t\in\mathbb{R}$ be the principal eigenvalue of \eqref{eq2.7} with potential $c(\cdot,t)$. If the function $c$ satisfies uniform H\"{o}lder condition \eqref{H3}, then for any sequence $\{T_n\}_{n\in\mathbb{N}^+}$ of $T\to\infty$,
 \[\lim_{\left(d,\frac{\omega}{\sqrt{d}}\right)\to(0,0)}\liminf_{n\to\infty}\fint^{T_n}_{0}H(s;\omega,d)\,\mathrm{d}s= \liminf_{n\to\infty}\fint^{T_n}_{0}\min_{x\in\overline{\Omega}}c(x,s)\,\mathrm{d}s\]
and
 \[\lim_{\left(d,\frac{\omega}{\sqrt{d}}\right)\to(0,0)}\limsup_{n\to\infty}\fint^{T_n}_{0}H(s;\omega,d)\,\mathrm{d}s= \limsup_{n\to\infty}\fint^{T_n}_{0}\min_{x\in\overline{\Omega}}c(x,s)\,\mathrm{d}s.\]
Consequently, if the limit $\lim_{n\to\infty}\fint^{T_n}_{0}\min_{x\in\overline{\Omega}}c(x,s) \,\mathrm{d}s$ exists, there hold
 \begin{align*}
   \lim_{\left(d,\frac{\omega}{\sqrt{d}}\right)\to(0,0)}\liminf_{n\to\infty}\fint^{T_n}_{0}H(s;\omega,d)\,\mathrm{d}s
   =&\lim_{\left(d,\frac{\omega}{\sqrt{d}}\right)\to(0,0)}\limsup_{n\to\infty}\fint^{T_n}_{0}H(s;\omega,d)\,\mathrm{d}s\\
   =&\lim_{n\to\infty}\fint^{T_n}_{0}\min_{x\in\overline{\Omega}}c(x,s)\,\mathrm{d}s.
 \end{align*}
\end{lemma}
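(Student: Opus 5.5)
The lower bound is immediate. By Lemma \ref{lemma3.5}(i) we have $H(t;\omega,d)\ge \min_{x\in\overline{\Omega}}c(x,t)$ for every $t$ and every $(\omega,d)$. Averaging over $[0,T_n]$ and taking $\liminf_{n}$ and $\limsup_{n}$ gives, for all $(\omega,d)$,
\[\liminf_{n\to\infty}\fint^{T_n}_{0}H(s;\omega,d)\,\mathrm{d}s\ge \liminf_{n\to\infty}\fint^{T_n}_{0}\min_{x\in\overline{\Omega}}c(x,s)\,\mathrm{d}s,\qquad \limsup_{n\to\infty}\fint^{T_n}_{0}H(s;\omega,d)\,\mathrm{d}s\ge \limsup_{n\to\infty}\fint^{T_n}_{0}\min_{x\in\overline{\Omega}}c(x,s)\,\mathrm{d}s.\]
So the whole task is the matching upper bound, with an error that vanishes as $\left(d,\frac{\omega}{\sqrt{d}}\right)\to(0,0)$ uniformly in $n$.

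For the upper bound, I will reuse the final chain of inequalities in the proof of Lemma \ref{lemma7.2}, which the paper already notes is uniform in $T>0$. Fix $\varepsilon>0$ and a step $\hbar\in(0,1)$, and take $r\in(0,\tilde r_\varepsilon)$ with $\tilde r_\varepsilon$ from \eqref{eq7.7}, which is independent of $t$ and of the block length. For each $T>\hbar$ the proof of Lemma \ref{lemma7.2} gives
\[\fint^{T}_{0}H(s;\omega,d)\,\mathrm{d}s\le \fint^{T}_{0}\min_{x\in\overline{\Omega}}c(x,s)\,\mathrm{d}s+2L\hbar^{1+\frac{\delta}{2}}+2^{N+4}\Lambda\frac{d}{r^2}+\varepsilon+\frac{4C}{\hbar}\frac{\omega}{\sqrt d},\]
where $\ell=\lfloor T/\hbar\rfloor\ge1$ and $C$ is the constant in \eqref{eq7.5}, independent of $(\omega,d)\in(0,1)^2$ by Theorem \ref{theorem3.17}. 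One point needs care. The last, incomplete block $[t_\ell,T]$ may be very short, so the term $\frac{\omega}{\sqrt d}\frac{2C}{T-t_\ell}$ from \eqref{eq7.6} must be weighted by $(T-t_\ell)/T$. This yields at most $2C\omega/(\sqrt d\,T)\le 2C\omega/(\sqrt d\,\hbar)$, so the bound stays uniform. Alternatively, I can bound the last block crudely by $2\|c\|_\infty\hbar/T$, which tends to $0$ as $T\to\infty$.

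Next I set $T=T_n$, take $\liminf_n$ (respectively $\limsup_n$) on both sides, and use that the error terms do not depend on $n$. This gives
\[\liminf_{n\to\infty}\fint^{T_n}_{0}H\le \liminf_{n\to\infty}\fint^{T_n}_{0}\min_{x\in\overline{\Omega}}c+2L\hbar^{1+\frac{\delta}{2}}+2^{N+4}\Lambda\frac{d}{r^2}+\varepsilon+\frac{4C}{\hbar}\frac{\omega}{\sqrt d},\]
and the same with $\limsup$. Now let $\left(d,\frac{\omega}{\sqrt d}\right)\to(0,0)$ with $\hbar$ and $r$ fixed, then send $\hbar\to0$ and $\varepsilon\to0$. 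This gives both upper bounds, and together with the lower bounds both identities follow. If $\lim_n\fint^{T_n}_0\min_{x\in\overline{\Omega}} c$ exists, the two right-hand sides coincide, which gives the final assertion.

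The main obstacle is making sure every constant in the block estimate is independent of the time window and of $T$. Concretely, the Harnack bound \eqref{eq7.5} must hold for all $t\in\mathbb{R}$ (Theorem \ref{theorem3.17}). The eigenvalue estimate \eqref{eq2.9} must be uniform in $(t,\varrho)$ (Lemma \ref{lemma2.23}), which includes the shorter last block since its length lies in $(0,1)$. And \eqref{eq7.4} must hold with the global constant $L$ from \eqref{H3}. Once this uniformity is checked, the passage from the finite-interval result to the $n\to\infty$ statement is only an exchange of limits, legitimate because the error is independent of $n$.
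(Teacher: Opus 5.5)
Your proposal is correct and follows the paper's proof essentially verbatim. The lower bound comes from $H(t)\ge\min_{x\in\overline{\Omega}}c(x,t)$ (Lemma~\ref{lemma3.5}). The upper bound comes from inserting $T=T_n$ into the $T$-uniform block estimate at the end of the proof of Lemma~\ref{lemma7.2}, taking $\liminf_{n}$ and $\limsup_{n}$ with the $n$-independent error, then letting $\bigl(d,\frac{\omega}{\sqrt{d}}\bigr)\to(0,0)$ and finally $\hbar,\varepsilon\to0$.

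One small bookkeeping slip is inherited from the paper: averaging the per-block error $2L\hbar^{1+\frac{\delta}{2}}$ of \eqref{eq7.4} over the roughly $T/\hbar$ blocks gives $2L\hbar^{\frac{\delta}{2}}$, not $2L\hbar^{1+\frac{\delta}{2}}$. This is harmless, since the corrected term still vanishes as $\hbar\to0$.
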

\begin{proof}
It follows directly from
 \[H(t)\ge\min_{x\in\overline{\Omega}} c(x,t),\qquad t\in\mathbb{R},\]
that
 \[\liminf_{\left(d,\frac{\omega}{\sqrt{d}}\right)\to(0,0)}\liminf_{n\to\infty}\fint^{T_n}_{0}H(s;\omega,d)\,\mathrm{d}s\ge \liminf_{n\to\infty}\fint^{T_n}_{0}\min_{x\in\overline{\Omega}}c(x,s)\,\mathrm{d}s\]
and
 \[\liminf_{\left(d,\frac{\omega}{\sqrt{d}}\right)\to(0,0)}\limsup_{n\to\infty}\fint^{T_n}_{0}H(s;\omega,d)\,\mathrm{d}s\ge \limsup_{n\to\infty}\fint^{T_n}_{0}\min_{x\in\overline{\Omega}}c(x,s)\,\mathrm{d}s.\]
We need to show
\begin{equation}\label{eq7.8}
  \begin{aligned}
     & \limsup_{\left(d,\frac{\omega}{\sqrt{d}}\right)\to(0,0)}\liminf_{n\to\infty}\fint^{T_n}_{0}H(s;\omega,d)\,\mathrm{d}s\le \liminf_{n\to\infty}\fint^{T_n}_{0} \min_{x\in\overline{\Omega}}c(x,s)\,\mathrm{d}s \\
     \text{and}\quad& \limsup_{\left(d,\frac{\omega}{\sqrt{d}}\right)\to(0,0)}\limsup_{n\to\infty}\fint^{T_n}_{0}H(s;\omega,d)\,\mathrm{d}s\le \limsup_{n\to\infty}\fint^{T_n}_{0} \min_{x\in\overline{\Omega}}c(x,s)\,\mathrm{d}s.
  \end{aligned}
\end{equation}
We recall that the last inequality appearing in the proof of Lemma \ref{lemma7.2} is uniform in $T>0$. Substitution of the sequence $\{T_n\}_{n\in\mathbb{N}^+}$ for $T$ then yields the following estimate
 \[\fint^{T_n}_{0} H(s;\omega,d)\,\mathrm{d}s
  \le \fint^{T_n}_{0}\min_{x\in\overline{\Omega}} c(x,s)\,\mathrm{d}s+2L\hbar^{1+\frac{\delta}{2}}+ 2^{N+4}\Lambda \frac{d}{r^2}+\varepsilon+\frac{2C}{\hbar}\left(1+\frac{1}{\ell}\right)\frac{\omega}{\sqrt{d}}\]
for all $n\in\mathbb{N}^+,~r\in(0,\tilde{r}_\varepsilon)$, where $\hbar,\varepsilon>0$ are any given constant and $\tilde{r}_\varepsilon$ relies on $\varepsilon$ by independent of $n$ and $d>0$. By letting $n\to\infty$, we have
 \[\liminf_{n\to\infty}\fint^{T_n}_{0} H(s;\omega,d)\,\mathrm{d}s\le \liminf_{n\to\infty}\fint^{T_n}_{0}\min_{x\in\overline{\Omega}} c(x,s)\,\mathrm{d}s+ 2L\hbar^{1+\frac{\delta}{2}}+ 2^{N+4}\Lambda \frac{d}{r^2}+\varepsilon+\frac{2C}{\hbar}\left(1+\frac{1}{\ell}\right)\frac{\omega}{\sqrt{d}}\]
and
 \[\limsup_{n\to\infty}\fint^{T_n}_{0} H(s;\omega,d)\,\mathrm{d}s\le \limsup_{n\to\infty}\fint^{T_n}_{0}\min_{x\in\overline{\Omega}} c(x,s)\,\mathrm{d}s+ 2L\hbar^{1+\frac{\delta}{2}}+ 2^{N+4}\Lambda \frac{d}{r^2}+\varepsilon+\frac{2C}{\hbar}\left(1+\frac{1}{\ell}\right)\frac{\omega}{\sqrt{d}}.\]
Now letting $\left(d,\frac{\omega}{\sqrt{d}}\right)\to (0,0)$ and then by the arbitrariness of $\hbar,\varepsilon>0$, we arrive at \eqref{eq7.8}. The proof is complete.
\end{proof}

\medskip
Furthermore, using Lemma \ref{lemma7.3} and the arguments therein, we have the following assertions.
\begin{lemma}\label{lemma7.4}
Let $H(\cdot;\omega,d)$ be the normalized principal Floquet bundle of \eqref{eq1.1} with potential $c\in \mathcal{C}\cap C^{0,1}(\overline{\Omega}\times\mathbb{R})$, and let $\mu^{0}(t)$, $t\in\mathbb{R}$ be the principal eigenvalue of \eqref{eq2.7} with potential $c(\cdot,t)$. If the function $c$ satisfies uniform H\"{o}lder condition \eqref{H3}, then
 \[\lim_{\left(d,\frac{\omega}{\sqrt{d}}\right)\to(0,0)}\liminf_{T\to\infty}\fint^{T}_{0}H(s;\omega,d)\,\mathrm{d}s= \liminf_{T\to\infty}\fint^{T}_{0}\min_{x\in\overline{\Omega}}c(x,s)\,\mathrm{d}s\]
and
 \[\lim_{\left(d,\frac{\omega}{\sqrt{d}}\right)\to(0,0)}\limsup_{T\to\infty}\fint^{T}_{0}H(s;\omega,d)\,\mathrm{d}s= \limsup_{T\to\infty}\fint^{T}_{0}\min_{x\in\overline{\Omega}}c(x,s)\,\mathrm{d}s.\]
\end{lemma}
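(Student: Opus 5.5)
The argument mirrors the proof of Lemma \ref{lemma7.3}, with $T\to\infty$ in place of the sequence $\{T_n\}$. The central tool is that the final estimate in the proof of Lemma \ref{lemma7.2} holds uniformly in $T>0$.

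\textbf{Lower bound.} This is the easy half. By Lemma \ref{lemma3.5}(i),
\[
H(t;\omega,d)\ge\min_{x\in\overline{\Omega}}c(x,t)\quad\text{for every } t.
\]
Averaging over $[0,T]$ and taking $\liminf_{T\to\infty}$ (respectively $\limsup_{T\to\infty}$) gives, for every $(\omega,d)$,
\[
\liminf_{T\to\infty}\fint^{T}_{0}H(s;\omega,d)\,\mathrm{d}s\ge \liminf_{T\to\infty}\fint^{T}_{0}\min_{x\in\overline{\Omega}}c(x,s)\,\mathrm{d}s,
\]
and the analogous inequality with $\limsup$ on both sides. We then pass to $\liminf$ as $\left(d,\frac{\omega}{\sqrt{d}}\right)\to(0,0)$.

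\textbf{Upper bound.} We recall the inequality at the end of the proof of Lemma \ref{lemma7.2}. For any fixed step $\hbar\in(0,1)$, any $\varepsilon>0$ and any $r\in(0,\tilde r_\varepsilon)$, it reads
\[
\fint^{T}_{0}H(s;\omega,d)\,\mathrm{d}s\le \fint^{T}_{0}\min_{x\in\overline{\Omega}}c(x,s)\,\mathrm{d}s+2L\hbar^{1+\frac{\delta}{2}}+2^{N+4}\Lambda\frac{d}{r^2}+\varepsilon+\frac{4C}{\hbar}\frac{\omega}{\sqrt d}
\]
for all $T\ge\hbar$. Here we bounded $1+1/\ell$ by $2$.

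The one point to verify is that no constant depends on $T$:
\begin{itemize}
\item $C$ comes from the Harnack bound of Theorem \ref{theorem3.17}, via \eqref{eq7.5}, and depends on neither $T$ nor $(\omega,d)$.
\item $\tilde r_\varepsilon$ comes from Lemma \ref{lemma2.23} and is uniform in $(t,\tau)\in\mathbb{R}\times(0,1)$. The last partial interval $[t_\ell,T]$ has length less than $\hbar<1$, so it is also covered.
\item $L$ is the constant of \eqref{H3}.
\end{itemize}
This verification is the only real obstacle, and it is already settled inside the proof of Lemma \ref{lemma7.2}.

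\textbf{Conclusion.} Since the additive error in the displayed inequality does not depend on $T$, we take $\liminf_{T\to\infty}$ and $\limsup_{T\to\infty}$ on both sides with the error unchanged. Next we let $\left(d,\frac{\omega}{\sqrt d}\right)\to(0,0)$ with $r,\hbar,\varepsilon$ fixed, which removes the terms $d/r^2$ and $\omega/(\hbar\sqrt d)$. Finally we let $\hbar,\varepsilon\to0$. This yields the upper bounds
\[
\limsup_{\left(d,\frac{\omega}{\sqrt d}\right)\to(0,0)}\liminf_{T\to\infty}\fint^{T}_{0}H(s;\omega,d)\,\mathrm{d}s\le\liminf_{T\to\infty}\fint^{T}_{0}\min_{x\in\overline{\Omega}}c(x,s)\,\mathrm{d}s,
\]
and the analogous bound with $\limsup_{T\to\infty}$ on both sides. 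Combined with the lower bounds, this proves both identities.
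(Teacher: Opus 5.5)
Your proposal is correct and follows essentially the paper's argument: the lower bounds come from $H(t)\ge\min_{x\in\overline{\Omega}}c(x,t)$ (Lemma \ref{lemma3.5}), and the upper bounds come from the $T$-uniform estimate at the end of the proof of Lemma \ref{lemma7.2}. You apply that estimate directly under both $\liminf_{T\to\infty}$ and $\limsup_{T\to\infty}$, whereas for the $\liminf$ upper bound the paper routes through Lemma \ref{lemma7.3} along a sequence $T_n$ realizing $\liminf_{T\to\infty}\fint_0^T\min_{x\in\overline{\Omega}}c(x,s)\,\mathrm{d}s$; this is a streamlining, not a different method. One small correction inherited from the paper: summing the $\lfloor T/\hbar\rfloor$ block errors $2L\hbar^{1+\delta/2}$ and dividing by $T$ gives a H\"older term $2L\hbar^{\delta/2}$, not $2L\hbar^{1+\delta/2}$, but it still vanishes as $\hbar\to0$, so the conclusion is unaffected.
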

\begin{proof}
Let $\{T_n\}_{n\in\mathbb{N}^+}$ be a sequence satisfying $T_n\to\infty$ as $n\to\infty$ such that
 \[ \lim_{n\to\infty}\fint^{T_n}_{0}\min_{x\in\overline{\Omega}}c(x,s)\,\mathrm{d}s= \liminf_{T\to\infty}\fint^{T}_{0}\min_{x\in\overline{\Omega}}c(x,s)\,\mathrm{d}s.\]
Then by Lemma \ref{lemma7.3}, we conclude that
  \begin{align*}
    \limsup_{\left(d,\frac{\omega}{\sqrt{d}}\right)\to(0,0)}\liminf_{T\to\infty}\fint^{T}_{0}H(s;\omega,d)\,\mathrm{d}s \le & \lim_{\left(d,\frac{\omega}{\sqrt{d}}\right)\to(0,0)}\limsup_{n\to\infty}\fint^{T_n}_{0}H(s;\omega,d)\,\mathrm{d}s \\
    = & \lim_{n\to\infty}\fint^{T_n}_{0}\min_{x\in\overline{\Omega}}c(x,s)\,\mathrm{d}s=  \liminf_{T\to\infty}\fint^{T}_{0}\min_{x\in\overline{\Omega}}c(x,s)\,\mathrm{d}s.
  \end{align*}
On the other hand, by virtue of $H(t;\omega,d)\ge \min_{\overline{\Omega}}c(\cdot,t)$ for all $t\in\mathbb{R}$ (see Lemma \ref{lemma3.5}), we have
\begin{align*}
   \inf_{T>T_1}\fint^{T}_{0}H(s;\omega,d)\,\mathrm{d}s - &\inf_{T>T_1}\fint^{T}_{0}\min_{x\in \overline{\Omega}} c(x,s)\,\mathrm{d}s \\
   \ge & \inf_{T>T_1}\left(\fint^{T}_{0}H(s;\omega,d)\,\mathrm{d}s - \fint^{T}_{0}\min_{x\in\overline{\Omega}}c(x,s)\,\mathrm{d}s\right)\ge 0
\end{align*}
for all $T_1>0$. By letting $T_1\to\infty$ first and then $d\to 0$ with $\frac{\omega}{\sqrt{d}}\to 0$, one easily gets that
 \[\liminf_{\left(d,\frac{\omega}{\sqrt{d}}\right)\to(0,0)}\liminf_{T\to\infty}\fint^{T}_{0}H(s;\omega,d)\,\mathrm{d}s\ge \liminf_{T\to\infty}\fint^{T}_{0}\min_{x\in\overline{\Omega}}c(x,s)\,\mathrm{d}s.\]
Together this with the upper bound, we conclude the first identity.

For the second one, choosing suitable sequence of $T\to\infty$ and applying Lemma \ref{lemma7.3} yield that
 \[\liminf_{\left(d,\frac{\omega}{\sqrt{d}}\right)\to(0,0)}\limsup_{T\to\infty}\fint^{T}_{0}H(s;\omega,d)\,\mathrm{d}s\ge \limsup_{T\to\infty}\fint^{T}_{0}\min_{x\in\overline{\Omega}}c(x,s)\,\mathrm{d}s.\]
So we need only to show the inverse inequality. Indeed, for any $T>0$, repeating the arguments in the proof of Lemma \ref{lemma7.3}, one easily proves that
 \[\fint^{T}_{0}H(s;\omega,d)\,\mathrm{d}s\le\fint^{T}_{0}\min_{x\in\overline{\Omega}}c(x,s) \,\mathrm{d}s +2L\hbar^{1+\frac{\delta}{2}}+ 2^{N+4}\Lambda \frac{d}{r^2}+\varepsilon+\frac{2C}{\hbar}\left(1+\frac{1}{\ell}\right)\frac{\omega}{\sqrt{d}},\quad~r\in(0,\tilde{r}_\varepsilon),\]
where $\hbar,\varepsilon>0$ are any given constant and $\tilde{r}_\varepsilon$ relies on $\varepsilon$ by independent of $T>0$ and $d>0$. Letting $T\to\infty$ first and then $d\to 0$ with $\frac{\omega}{\sqrt{d}}\to 0$, and by arbitrariness of $\hbar,\varepsilon>0$, we obtain
 \[\limsup_{\left(d,\frac{\omega}{\sqrt{d}}\right)\to(0,0)}\limsup_{T\to\infty}\fint^{T}_{0}H(s;\omega,d)\,\mathrm{d}s\le \limsup_{T\to\infty}\fint^{T}_{0}\min_{x\in\overline{\Omega}}c(x,s)\,\mathrm{d}s.\]
The second assertion follows from this inequality and the lower bound. This ends the proof.
\end{proof}

\section{The case $\left(\omega,\frac{\omega}{\sqrt{d}}\right)\to(0,\infty)$}
This section investigates the asymptotic behavior of the normalized principal Floquet exponent in the limit where $\left(\omega,\frac{\omega}{\sqrt{d}}\right)\to (0,\infty)$. The proof strategy proceeds as follows: first, an upper bound is obtained by using the principal eigenvalue of the long-time averaged elliptic problem; next, a lower bound is established by constructing appropriate sub- and super-solutions and applying a comparison principle. Combining these bounds yields the exact limit.
\begin{lemma}\label{lemma7.5}
Let $H(\cdot;\omega,d)$ be the normalized principal Floquet bundle of \eqref{eq1.1} with potential $c\in\mathcal{C}\cap C^{2,1}(\overline{\Omega}\times\mathbb{R})$ satisfying \eqref{H2}, and $\{T_n\}_{n\in\mathbb{N}^+}$ be a sequence satisfying $T_n\to\infty$ as $n\to\infty$. Then
 \[\lim\limits_{\left(\omega,\frac{\omega}{\sqrt{d}}\right)\to(0,\infty)} \liminf\limits_{n\to\infty}\fint^{T_n}_{0}H(s;\omega,d)\,\mathrm{d}s = \min\limits_{x\in\overline{\Omega}}\liminf_{n\to\infty}\fint^{T_n}_{0}c(x,s)\,\mathrm{d}s =\inf_{\hat{c}\in\mathscr{C}_*}\min_{x\in\overline{\Omega}}\hat{c}(x)\]
and
 \[\lim\limits_{\left(\omega,\frac{\omega}{\sqrt{d}}\right)\to(0,\infty)} \limsup\limits_{n\to\infty}\fint^{T_n}_{0}H(s;\omega,d)\,\mathrm{d}s = \limsup_{n\to\infty}\min\limits_{x\in\overline{\Omega}}\fint^{T_n}_{0}c(x,s)\,\mathrm{d}s =\sup_{\hat{c}\in\mathscr{C}_*}\min_{x\in\overline{\Omega}}\hat{c}(x).\]
Consequently, if the sequence $\left\{\fint^{T_n}_{0} c(\cdot,s)\,\mathrm{d}s\right\}_{n\in\mathbb{N}^+}$ of functions converges to some function $\hat{c}(\cdot)$ in $C(\overline{\Omega})$, then
 \[\lim\limits_{\left(\omega,\frac{\omega}{\sqrt{d}}\right)\to(0,\infty)} \liminf\limits_{n\to\infty}\fint^{T_n}_{0}H(s;\omega,d)\,\mathrm{d}s= \lim\limits_{\left(\omega,\frac{\omega}{\sqrt{d}}\right)\to(0,\infty)} \limsup\limits_{n\to\infty}\fint^{T_n}_{0}H(s;\omega,d)\,\mathrm{d}s = \min\limits_{x\in\overline{\Omega}}\hat{c}(x).\]
\end{lemma}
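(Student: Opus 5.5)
The plan is to mirror Lemma \ref{lemma6.4}, replacing the regime $(\omega,d)\to(\infty,0)$ by $\left(\omega,\frac{\omega}{\sqrt d}\right)\to(0,\infty)$. Note that in this regime $d/\omega^2=(\sqrt d/\omega)^2\to0$, and $d\le \omega^2(\sqrt d/\omega)^2\to0$. Also $d/\omega=\omega\,(\sqrt d/\omega)^2\to0$. These three smallness facts are what the argument uses.

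\textbf{Upper bound.} By Lemma \ref{lemma3.12},
\[\liminf_{n\to\infty}\fint^{T_n}_0H\,\mathrm{d}s\le\inf_{\hat c\in\mathscr{C}_*}\mu^\infty(d,\hat c),\qquad \limsup_{n\to\infty}\fint^{T_n}_0H\,\mathrm{d}s\le\sup_{\hat c\in\mathscr{C}_*}\mu^\infty(d,\hat c).\]
These bounds are independent of $\omega$. Since $d\to0$, \eqref{eq5.15} of Lemma \ref{lemma5.10} (equivalently Lemma \ref{lemma2.18} together with Lemmas \ref{lemma2.3} and \ref{lemma2.4}) gives the desired $\limsup$ bounds:
\[\min_{x\in\overline\Omega}\liminf_{n\to\infty}\fint^{T_n}_0c(x,s)\,\mathrm{d}s \quad\text{and}\quad \limsup_{n\to\infty}\min_{x\in\overline\Omega}\fint^{T_n}_0c(x,s)\,\mathrm{d}s.\]

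\textbf{Lower bound.} Use exactly the pair from Lemma \ref{lemma6.4}:
\[\overline\varphi_n=\exp\left[\frac1\omega\bigl(\mathcal M(x,t;T_n)-\kappa\phi^*\bigr)\right],\]
with $\kappa$ from Lemma \ref{lemma2.16}, and $\underline\varphi_n=\varphi\exp\bigl[\frac1\omega\int_0^t(\fint_0^{T_n}H-H+\varepsilon)\bigr]$. The computation in Lemma \ref{lemma6.4} gives
\[\omega\partial_t\overline\varphi_n-d\operatorname{div}(A\nabla\overline\varphi_n)+c\,\overline\varphi_n\ge\left[\min_{\overline\Omega}\fint_0^{T_n}c-\frac d\omega C_1-\frac d{\omega^2}C_1^2\right]\overline\varphi_n,\]
with $C_1=C+\kappa\|\phi^*\|_{C^2}$ coming from \eqref{H2}. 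We also have $\overline\varphi_n(\cdot,0)=\overline\varphi_n(\cdot,T_n)$ and the Neumann inequality $\mathbf n\cdot A\nabla\overline\varphi_n\ge0$.

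For $\underline\varphi_n$, at fixed $(\omega,d)$ the Harnack bound $|\ln\varphi|\le C_d$ holds. Taking $n$ large so that $T_n>2C_d\omega/\varepsilon$ yields $\underline\varphi_n(\cdot,0)\le\underline\varphi_n(\cdot,T_n)$. Lemma \ref{lemma2.13} then gives
\[\liminf_{n}\fint_0^{T_n}H+\varepsilon\ge\liminf_n\min_{\overline\Omega}\fint_0^{T_n}c-\frac d\omega C_1-\frac d{\omega^2}C_1^2,\]
and the analogous statement with $\limsup$. Let $\left(\omega,\frac{\omega}{\sqrt d}\right)\to(0,\infty)$, so both error terms vanish by the remarks above. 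Then use the arbitrariness of $\varepsilon$ and Lemmas \ref{lemma2.3} and \ref{lemma2.4} to identify the limits with the infima and suprema over $\mathscr C_*$. The final consequence for convergent $\hat c_n$ follows from Corollary \ref{corollary2.6}.

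\textbf{Main obstacle.} The only delicate point is that the order of limits is $n\to\infty$ first at fixed $(\omega,d)$. Because of this, the $d$-dependent Harnack constant $C_d$ (possibly of size $e^{C/\sqrt d}$ by Theorem \ref{theorem3.17}) is harmless: it only affects how large $n$ must be. The real check is that the supersolution errors are controlled by $d/\omega$ and $d/\omega^2$, not by $1/\omega$ alone. Both tend to $0$ precisely because $\omega/\sqrt d\to\infty$, which is the role of the scaling hypothesis.
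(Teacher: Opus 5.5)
Your proposal is correct, and your lower bound takes a different, simpler route than the paper's.

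\textbf{The paper's construction.} The paper does not reuse the supersolution of Lemma \ref{lemma6.4}. It takes $\overline{\varphi}_n=\exp\bigl[\frac{1}{\sqrt d}w(x)+\frac1\omega\mathcal{M}(x,t;T_n)\bigr]$ with $w=1-\phi^*/M$ and $M>2$ large, so the boundary correction lives at scale $\sqrt d$ rather than $\omega$.
\begin{itemize}
\item Its error terms are of order $\sqrt d/M$, $d/\omega$, $\sqrt d/(M\omega)$ and $d/\omega^2$.
\item The quadratic term $\nabla w\cdot(A\nabla w)$ adds an error $\|\phi^*\|^2_{C^2(\overline{\Omega})}/M^2$. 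This does not vanish in the parameter limit and must be removed by sending $M\to\infty$ at the end.
\item The Neumann sign $\mathbf{n}\cdot(A\nabla\overline{\varphi}_n)\ge0$ holds only once $\sqrt d/\omega$ is small compared with $1/(MV_0)$.
\end{itemize}

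\textbf{Your construction.} You take $\overline{\varphi}_n=\exp\bigl[\frac1\omega(\mathcal{M}-\kappa\phi^*)\bigr]$.
\begin{itemize}
\item The boundary sign comes from Lemma \ref{lemma2.16}, independently of $(\omega,d)$.
\item Periodicity follows from $\mathcal{M}(\cdot,0;T_n)=\mathcal{M}(\cdot,T_n;T_n)=0$.
\item The only errors are $\frac{d}{\omega}C_1+\frac{d}{\omega^2}C_1^2$. The cross term $\nabla\mathcal{M}\cdot(A\nabla\phi^*)$ is controlled by Cauchy--Schwarz from \eqref{H2}.
\item These errors vanish because $d/\omega^2=(\sqrt d/\omega)^2\to0$ and $d/\omega=\omega\cdot d/\omega^2\to0$.
\end{itemize}

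\textbf{Comparison.} Your route avoids the auxiliary parameter $M$. It also shows that a single supersolution gives the lower bound in every regime where $d/\omega\to0$ and $d/\omega^2\to0$, so it covers Lemmas \ref{lemma4.5}, \ref{lemma6.4} and \ref{lemma7.5} at once. The paper's $\sqrt d$-scaled construction matches the natural scaling of $-\sqrt d\ln\varphi$ behind the Hamilton--Jacobi problem \eqref{eq7.12}, but gains nothing extra for this lemma. Your upper bound and your use of Lemmas \ref{lemma2.3}, \ref{lemma2.4}, \ref{lemma2.13} and Corollary \ref{corollary2.6} are the same as in the paper.
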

\begin{proof}
The upper and lower bounds are estimated separately.

\medskip
\noindent\textbf{Upper bound.} Firstly, it follows from Lemma \ref{lemma3.12} that
\begin{equation}\label{eq7.9}
  \liminf\limits_{n\to\infty}\fint^{T_n}_{0} H(s;\omega,d)\,\mathrm{d}s \le\inf_{\hat{c}\in\mathscr{C}_*}\mu^{\infty}(d,\hat{c})\quad\text{and}\quad \limsup\limits_{n\to\infty}\fint^{T_n}_{0} H(s;\omega,d)\,\mathrm{d}s \le\sup_{\hat{c}\in\mathscr{C}_*}\mu^{\infty}(d,\hat{c}),
\end{equation}
where $\mu^{\infty}(d,\hat{c})$ is the principal eigenvalue of \eqref{eq1.3} with potential $\hat{c}$. By letting $\left(\omega,\frac{\omega}{\sqrt{d}}\right)\to (0,\infty)$ in \eqref{eq7.9} and applying Lemma \ref{lemma5.10} we obtain the upper bound
 \[\limsup\limits_{\left(\omega,\frac{\omega}{\sqrt{d}}\right)\to(0,\infty)} \liminf\limits_{n\to\infty}\fint^{T_n}_{0}H(s;\omega,d)\,\mathrm{d}s \le\min\limits_{x\in\overline{\Omega}}\liminf_{n\to\infty}\fint^{T_n}_{0}c(x,s)\,\mathrm{d}s\]
and
 \[\limsup\limits_{\left(\omega,\frac{\omega}{\sqrt{d}}\right)\to(0,\infty)} \limsup\limits_{n\to\infty}\fint^{T_n}_{0}H(s;\omega,d)\,\mathrm{d}s \le\limsup_{n\to\infty}\min\limits_{x\in\overline{\Omega}}\fint^{T_n}_{0}c(x,s)\,\mathrm{d}s.\]
\medskip
\noindent\textbf{Lower bound.} For a given $c\in\mathcal{C}$, recall the notation
 \[\mathcal{M}(x,t;T)=t\fint^{T}_{0}c(x,s)\,\mathrm{d}s-\int^{t}_{0}c(x,s)\,\mathrm{d}s, \qquad (x,t)\in\overline{\Omega}\times[0,T].\]
Assumption \eqref{H2} implies that there exist positive constants $T_0$ and $V_0$ such that
 \[\sup_{(x,t)\in\Omega\times[0,T]}(|\operatorname{div}(A(x)\nabla\mathcal{M}(x,t;T))|+|\nabla \mathcal{M}(x,t;T)\cdot (A(x)\nabla \mathcal{M}(x,t;T))|)<V_0,\quad\forall~T>T_0.\]
Without loss of generality, we assume that $T_n>T_0$ for all $n\in\mathbb{N}^+$. Let $\phi^*$ be the principal eigenfunction of \eqref{eq2.6}. Set
\begin{equation}\label{eq7.10}
  w(x)=1-\frac{1}{M}\phi^*(x),\qquad x\in\overline{\Omega},
\end{equation}
where $M>2$ is a constant which can be selected sufficiently large.

Let $\varepsilon\in(0,1)$. For each $n\in\mathbb{N}^+$, define
 \[\overline{\varphi}_n(x,t):=\exp\left[\frac{1}{\sqrt{d }}{w}(x)+\frac{1}{\omega }\mathcal{M}(x,t;T_{n})\right],\quad(x,t) \in\overline{\Omega}\times[0,T_{n}]\]
and
 \[\underline{\varphi}_n(x,t):=\varphi(x,t)\exp\left[\frac{1}{\omega }\int^{t}_{0} \left(\fint^{T_{n}}_{0} H(r;\omega ,d )\,\mathrm{d}r -H(s;\omega ,d ) +\varepsilon \right)\,\mathrm{d}s\right],\quad(x,t) \in\overline{\Omega}\times[0,T_{n}],\]
where $\varphi$ is the normalized principal Floquet bundle of \eqref{eq1.1}. By direct computations, we have
\begin{align*}
   & \omega \partial_t\overline{\varphi}_{n}-d \operatorname{div}(A(x)\nabla\overline{\varphi}_{n})+c(x,t)\overline{\varphi}_{n} \\
 = & \left(\omega \partial_t\ln\overline{\varphi}_{n}-d \operatorname{div}(A(x)\nabla\ln\overline{\varphi}_{n})-d\nabla \ln\overline{\varphi}_{n}\cdot(A(x)\nabla \ln\overline{\varphi}_{n})+c(x,t)\right)\overline{\varphi}_{n} \\
 = & \Bigg[\fint^{T_{n}}_{0}c(x,s)\,\mathrm{d}s -\sqrt{d}\operatorname{div}(A(x)\nabla w) -\frac{d }{\omega } \operatorname{div}(A(x)\nabla\mathcal{M}(x,t;T_{n}))\\
 &~~- \nabla{w}\cdot(A(x)\nabla{w})-\frac{2\sqrt{d }}{\omega }\nabla{w}\cdot(A(x)\nabla \mathcal{M}(x,t;T_{n})) \\
 &~~-\frac{d}{\omega^{2}}\nabla \mathcal{M}(x,t;T_{n})\cdot(A(x)\nabla \mathcal{M}(x,t;T_{n})) \Bigg]\overline{\varphi}_{n} \\
  \ge &  \Bigg[\min_{x\in\overline{\Omega}}\fint^{T_{n}}_{0}c(x,s)\,\mathrm{d}s -\frac{\sqrt{d }}{M}\Lambda\|\phi^*\|_{C^2(\overline{\Omega})}-\frac{d }{\omega }V_0- \frac{\|\phi^*\|^{2}_{C^2(\overline{\Omega})}}{M^2} -\frac{2\sqrt{d }}{M\omega } V_0\|\phi^*\|_{C^2(\overline{\Omega})}-\frac{d }{\omega^{2}}V^{2}_{0} \Bigg]\overline{\varphi}_{n}
 \end{align*}
in $(x,t)\in\Omega\times(0,T_n]$. We note that $\min_{x\in\partial\Omega}(-\mathbf{n}\cdot(A(x)\nabla\phi^*))>0$ and select the pair $(\omega,d)$ satisfying
 \[\frac{\sqrt{d}}{\omega}<\frac{\min\limits_{x\in\partial\Omega}(-\mathbf{n}\cdot(A(x)\nabla\phi^*))}{MV_0}.\]
Then, by the Harnack principle (Proposition \ref{proposition3.3}(ii)) for the normalized principal Floquet bundle $\varphi$, there is a constant $C_d>0$ such that
\begin{equation}\label{eq7.11}
|\ln\varphi(x,t)|<C_d,\qquad\forall~(x,t)\in\overline{\Omega}\times\mathbb{R}.
\end{equation}
Now one can find an integer $N_\varepsilon>0$ such that
 \[T_n>\frac{2C_d\omega}{\varepsilon},\qquad \forall~n>N_\varepsilon.\]
Then one can check that for each $n>N_\varepsilon$, the pair $(\underline{\varphi}_n,\overline{\varphi}_n)$ satisfies
\begin{equation*}
  \left\{
  \begin{aligned}
    &\omega \partial_t\overline{\varphi}_n-d\operatorname{div}(A(x)\nabla\overline{\varphi}_n) +c(x,t)\overline{\varphi}_n \ge\Bigg[\min_{x\in\overline{\Omega}}\fint^{T_{n}}_{0}c(x,s)\,\mathrm{d}s-\frac{d }{\omega }V_0 \\
    &\quad-\frac{\sqrt{d }}{M}\|\phi^*\|_{C^2(\overline{\Omega})}- \frac{\|\phi^*\|^{2}_{C^2(\overline{\Omega})}}{M^2} -\frac{2\sqrt{d }}{M\omega } V_0\|\phi^*\|_{C^2(\overline{\Omega})}-\frac{d }{\omega^{2}}V^{2}_{0} \Bigg]\overline{\varphi}_{n},&&(x,t)\in\Omega\times(0,T_{n}],\\
    &\omega \partial_t\underline{\varphi}_n-d\operatorname{div}(A(x)\nabla\underline{\varphi}_n) +c(x,t)\underline{\varphi}_n = \left(\fint^{T_{n}}_{0}H(s;\omega,d)\,\mathrm{d}s+\varepsilon  \right)\underline{\varphi}_{n},&&(x,t)\in\Omega\times(0,T_{n}],\\
    &\mathbf{n}\cdot(A(x)\nabla\overline{\varphi}_n) \\
    &~=\frac{1}{\sqrt{d}}\left(-\frac{1}{M}\mathbf{n}\cdot(A(x)\nabla\phi^*) +\frac{\sqrt{d}}{\omega}\mathbf{n}\cdot(A(x)\nabla\mathcal{M}(x,t;T_n))\right)\overline{\varphi}_n\ge 0 , &&(x,t)\in\partial\Omega\times(0,T_{n}],\\
    &\mathbf{n}\cdot(A(x)\nabla\underline{\varphi}_n)=0, &&(x,t)\in\partial\Omega\times(0,T_{n}],\\
    &\overline{\varphi}_n(x,0)=\overline{\varphi}_n(x,T_{n}) ,&&x\in\overline{\Omega},\\
    &\underline{\varphi}_n(x,0)=\varphi(x,0)\le\mathrm{e}^{\varepsilon T_{n}-2C_d}\varphi(x,0)\le\varphi(x,T_{n})\mathrm{e}^{\varepsilon T_{n}}=\underline{\varphi}_n(x,T_{n}),&&x\in\overline{\Omega}.
  \end{aligned}
  \right.
\end{equation*}
Using Lemma \ref{lemma2.13}, we have
 \[
\begin{aligned}
   \liminf\limits_{n\to\infty}\fint^{T_n}_{0}H(s;\omega,d)\,\mathrm{d}s+\varepsilon \ge &\liminf\limits_{n\to\infty}\min_{x\in\overline{\Omega}}\fint^{T_{n}}_{0}c(x,s)\,\mathrm{d}s \\
    &-\frac{\sqrt{d }}{M}\|\phi^*\|_{C^2(\overline{\Omega})}-\frac{d }{\omega }V_0- \frac{\|\phi^*\|^{2}_{C^2(\overline{\Omega})}}{M^2} -\frac{2\sqrt{d }}{M\omega } V_0\|\phi^*\|_{C^2(\overline{\Omega})}-\frac{d }{\omega^{2}}V^{2}_{0}
\end{aligned}
\]
and
 \[
\begin{aligned}
   \limsup\limits_{n\to\infty}\fint^{T_n}_{0}H(s;\omega,d)\,\mathrm{d}s+\varepsilon \ge &\limsup\limits_{n\to\infty}\min_{x\in\overline{\Omega}}\fint^{T_{n}}_{0}c(x,s)\,\mathrm{d}s \\
    &-\frac{\sqrt{d }}{M}\|\phi^*\|_{C^2(\overline{\Omega})}-\frac{d }{\omega }V_0- \frac{\|\phi^*\|^{2}_{C^2(\overline{\Omega})}}{M^2} -\frac{2\sqrt{d }}{M\omega } V_0\|\phi^*\|_{C^2(\overline{\Omega})}-\frac{d }{\omega^{2}}V^{2}_{0}.
\end{aligned}
\]
Applying Lemma \ref{lemma2.4} and sending $\left(\omega,\frac{\omega}{\sqrt{d}}\right)\to (0,\infty)$, we have
 \[
\begin{aligned}
   & &&\liminf\limits_{\left(\omega,\frac{\omega}{\sqrt{d}}\right)\to(0,\infty)}\liminf\limits_{n\to\infty}\fint^{T_n}_{0}H(s;\omega,d)\,\mathrm{d}s+\varepsilon \ge \min_{x\in\overline{\Omega}}\liminf\limits_{n\to\infty}\fint^{T_{n}}_{0}c(x,s)\,\mathrm{d}s- \frac{\|\phi^*\|^{2}_{C^2(\overline{\Omega})}}{M^2}\\
   &\text{and}&&\liminf\limits_{\left(\omega,\frac{\omega}{\sqrt{d}}\right)\to(0,\infty)}\limsup\limits_{n\to\infty}\fint^{T_n}_{0}H(s;\omega,d)\,\mathrm{d}s+\varepsilon \ge \limsup\limits_{n\to\infty}\min_{x\in\overline{\Omega}}\fint^{T_{n}}_{0}c(x,s)\,\mathrm{d}s- \frac{\|\phi^*\|^{2}_{C^2(\overline{\Omega})}}{M^2}.
\end{aligned}
\]
Lastly, by the arbitrariness of $\varepsilon\in(0,1)$ and $M>2$, we show that
 \[
\begin{aligned}
   & &&\liminf\limits_{\left(\omega,\frac{\omega}{\sqrt{d}}\right)\to(0,\infty)}\liminf\limits_{n\to\infty}\fint^{T_n}_{0}H(s;\omega,d)\,\mathrm{d}s\ge \min_{x\in\overline{\Omega}}\liminf\limits_{n\to\infty}\fint^{T_{n}}_{0}c(x,s)\,\mathrm{d}s  \\
   &\text{and} \quad &&\liminf\limits_{\left(\omega,\frac{\omega}{\sqrt{d}}\right)\to(0,\infty)}\limsup\limits_{n\to\infty}\fint^{T_n}_{0}H(s;\omega,d)\,\mathrm{d}s\ge \limsup\limits_{n\to\infty}\min_{x\in\overline{\Omega}}\fint^{T_{n}}_{0}c(x,s)\,\mathrm{d}s.
\end{aligned}
\]

Using Lemmas \ref{lemma2.3} and \ref{lemma2.4}, we have
\[\min\limits_{x\in\overline{\Omega}}\liminf_{n\to\infty}\fint^{T_n}_{0}c(x,s)\,\mathrm{d}s =\inf_{\hat{c}\in\mathscr{C}_*}\min_{x\in\overline{\Omega}}\hat{c}(x) \quad\text{and}\quad\limsup_{n\to\infty}\min\limits_{x\in\overline{\Omega}}\fint^{T_n}_{0}c(x,s)\,\mathrm{d}s =\sup_{\hat{c}\in\mathscr{C}_*}\min_{x\in\overline{\Omega}}\hat{c}(x).\]
Combining the upper bounds and the lower bounds, we complete the proof of the lemma.
\end{proof}

\begin{lemma}\label{lemma7.6}
 Let $(H,\varphi)$ be the normalized principal Floquet bundle of \eqref{eq1.1} with potential $c\in\mathcal{C}\cap C^{2,1}(\overline{\Omega}\times\mathbb{R})$. Suppose that assumption \eqref{H2} holds, then
 \[\lim\limits_{\left(\omega,\frac{\omega}{\sqrt{d}}\right)\to(0,\infty)} \liminf\limits_{T\to\infty}\fint^{T}_{0}H(s;\omega,d)\,\mathrm{d}s = \min\limits_{x\in\overline{\Omega}}\liminf_{T\to\infty}\fint^{T}_{0}c(x,s)\,\mathrm{d}s =\inf_{\hat{c}\in\mathscr{C}}\min_{x\in\overline{\Omega}}\hat{c}(x)\]
 and
 \[\lim\limits_{\left(\omega,\frac{\omega}{\sqrt{d}}\right)\to(0,\infty)} \limsup\limits_{T\to\infty}\fint^{T}_{0}H(s;\omega,d)\,\mathrm{d}s = \limsup_{T\to\infty}\min\limits_{x\in\overline{\Omega}}\fint^{T}_{0}c(x,s)\,\mathrm{d}s =\sup_{\hat{c}\in\mathscr{C}}\min_{x\in\overline{\Omega}}\hat{c}(x).\]
\end{lemma}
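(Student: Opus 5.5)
The plan is to follow the pattern of Lemma \ref{lemma6.5} (and Corollary \ref{corollary4.3}, Lemmas \ref{lemma4.4}, \ref{lemma4.6}). The sequence version, Lemma \ref{lemma7.5}, supplies half of each identity. Corollary \ref{corollary3.13} combined with Lemma \ref{lemma5.10} supplies the remaining upper bound for the $\limsup$. The one new ingredient is a lower bound stated directly in $T$, which comes from Corollary \ref{corollary2.15}. The identities in terms of $\mathscr{C}$ then follow from Lemmas \ref{lemma2.3} and \ref{lemma2.4}.

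\textbf{Upper bounds.} For the $\limsup$, I would apply Corollary \ref{corollary3.13} to get
\[\limsup_{T\to\infty}\fint^T_0 H\,\mathrm{d}s\le\sup_{\hat c\in\mathscr{C}}\mu^\infty(d,\hat c).\]
Since $\omega\to0$ and $\omega/\sqrt d\to\infty$ force $d\to0$, letting the parameters tend to their limits and invoking \eqref{eq5.16} gives the bound $\limsup_{T\to\infty}\min_{x}\fint^T_0c\,\mathrm{d}s$.

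For the $\liminf$, I would choose $x_0$ and a sequence $T_n$ realizing $\min_x\liminf_{T}\fint_0^{T}c(x,s)\,\mathrm{d}s$, using Proposition \ref{proposition1.1}(iii). Passing to a subsequence along which $\hat c_{T_n}\to\hat c$ uniformly, $\hat c$ attains its minimum at $x_0$, as in Corollary \ref{corollary4.3}. Then Lemma \ref{lemma7.5} yields
\[\limsup\liminf_{T}\fint_0^T H\le \limsup_{n}\fint^{T_n}_0H\to\min\hat c=\min_x\liminf_T\fint_0^T c(x,s)\,\mathrm{d}s.\]

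\textbf{Lower bounds.} I would reuse the construction from Lemma \ref{lemma7.5}, with $T$ now a continuous parameter. Set
\[\overline\varphi_T=\exp\Bigl[\tfrac{1}{\sqrt d}w(x)+\tfrac1\omega\mathcal M(x,t;T)\Bigr],\qquad w=1-\phi^*/M,\]
and let $\underline\varphi_T$ be $\varphi$ multiplied by the usual exponential correction carrying $\varepsilon$.

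For $T>T_0$, hypothesis \eqref{H2} gives $\overline\varphi_T$ the lower bound $\min_x\fint_0^Tc-R(\omega,d,M)$, where
\[R=\tfrac{\sqrt d}{M}\|\phi^*\|_{C^2}+\tfrac d\omega V_0+\|\phi^*\|_{C^2}^2/M^2+\tfrac{2\sqrt d}{M\omega}V_0\|\phi^*\|_{C^2}+\tfrac{d}{\omega^2}V_0^2 .\]
The Neumann condition is nonnegative once $\sqrt d/\omega$ is small, by Lemma \ref{lemma2.16}, and $\overline\varphi_T(\cdot,0)=\overline\varphi_T(\cdot,T)$ holds. For $\underline\varphi_T$ the time ordering holds once $T>2C_d\omega/\varepsilon$, by Proposition \ref{proposition3.3}(ii).

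Corollary \ref{corollary2.15} then gives both
\[\liminf_T\fint_0^T H+\varepsilon\ge\liminf_T\min_x\fint_0^Tc-R\]
and the corresponding $\limsup$ inequality. Along the limit, $d/\omega=\omega(\sqrt d/\omega)^2\to0$ and $d/\omega^2\to0$, so $R\to\|\phi^*\|_{C^2}^2/M^2$. Letting $M\to\infty$ and $\varepsilon\to0$, and applying Lemma \ref{lemma2.4} for the $\liminf$ version, gives the lower bounds.

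\textbf{Main obstacle.} The main difficulty is uniformity in $T$. The constant $V_0$ and the boundary constant from Lemma \ref{lemma2.16} must be independent of $T>T_0$, and they are, by \eqref{H2}. The requirement $T>2C_d\omega/\varepsilon$ depends on $(\omega,d)$, but this is harmless because $T\to\infty$ is taken before the parameter limit.
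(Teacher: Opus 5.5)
Your proposal is correct and follows essentially the same route as the paper. The lower bounds come from the supersolution $\exp[w/\sqrt{d}+\mathcal{M}/\omega]$ with $w=1-\phi^*/M$ together with Corollary~\ref{corollary2.15}, the $\limsup$ upper bound from Corollary~\ref{corollary3.13} and the $d\to0$ limit of $\sup_{\hat c\in\mathscr{C}}\mu^\infty(d,\hat c)$, and the $\liminf$ upper bound from passing to a suitable sequence and invoking Lemma~\ref{lemma7.5}. The only cosmetic difference is in that last step: the paper picks $T_n$ realizing $\liminf_{T\to\infty}\min_x\fint_0^T c$ and uses Corollary~\ref{corollary2.6}, whereas you pick $T_n$ realizing the pointwise $\liminf$ at a minimizer $x_0$, as in Corollary~\ref{corollary4.3}, and both work.
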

\begin{proof}
We first derive the lower bound. The proof proceeds analogously to that of the lower bound in Lemma \ref{lemma7.5}. Therefore, in the interest of brevity, we confine ourselves to presenting the key steps and estimates, omitting the detailed calculations. We continue to use the positive constants such as $V_0$, $C$ and $M$ and the functions $\phi^*$ and $w$ that appeared in the proof of the lower bound of Lemma \ref{lemma7.5}. Let $\varepsilon\in(0,1)$ and $T>0$ be given. Set
 \[\overline{\varphi}_T(x,t):=\exp\left[\frac{1}{\sqrt{d }}{w}(x)+\frac{1}{\omega }\mathcal{M}(x,t;T)\right],\quad(x,t) \in\overline{\Omega}\times[0,T]\]
and
 \[\underline{\varphi}_T(x,t):=\varphi(x,t)\exp\left[\frac{1}{\omega }\int^{t}_{0}\left(\fint^{T}_{0} H(r;\omega ,d )\,\mathrm{d}r - H(s;\omega ,d ) +\varepsilon \right)\,\mathrm{d}s \right],\quad(x,t) \in\overline{\Omega}\times[0,T],\]
where $w$ is given in \eqref{eq7.10}. By direct computations, one easily checks that for any $T>2C_d\varepsilon^{-1}\omega$ ($C_d$ is the constant defined in \eqref{eq7.11}), the pair $(\underline{\varphi}_T,\overline{\varphi}_T)$ satisfies
\begin{equation*}
  \left\{
  \begin{aligned}
    &\omega \partial_t\overline{\varphi}_T-d\operatorname{div}(A(x)\nabla\overline{\varphi}_T) +c(x,t)\overline{\varphi}_T \ge\Bigg[\min_{x\in\overline{\Omega}}\fint^{T}_{0}c(x,s)\,\mathrm{d}s-\frac{d }{\omega }V_0 \\
    &\quad-\frac{\sqrt{d }}{M}\|\phi^*\|_{C^2(\overline{\Omega})}- \frac{\|\phi^*\|^{2}_{C^2(\overline{\Omega})}}{M^2} -\frac{2\sqrt{d }}{M\omega } V_0\|\phi^*\|_{C^2(\overline{\Omega})}-\frac{d }{\omega^{2}}V^{2}_{0} \Bigg]\overline{\varphi}_T,&&(x,t)\in\Omega\times(0,T],\\
    &\omega \partial_t\underline{\varphi}_T-d\operatorname{div}(A(x)\nabla\underline{\varphi}_T) +c(x,t)\underline{\varphi}_T = \left(\fint^{T}_{0}H(s;\omega,d)\,\mathrm{d}s+\varepsilon  \right)\underline{\varphi}_T,&&(x,t)\in\Omega\times(0,T],\\
    &\mathbf{n}\cdot(A(x)\nabla\overline{\varphi}_T)\\
    &~~=\frac{1}{\sqrt{d}}\left(-\frac{1}{M}\mathbf{n}\cdot(A(x)\nabla\phi^*) +\frac{\sqrt{d}}{\omega}\mathbf{n}\cdot(A(x)\nabla\mathcal{M}(x,t;T))\right)\overline{\varphi}_T\ge 0, &&(x,t)\in\partial\Omega\times(0,T],\\
    &\mathbf{n}\cdot(A(x)\nabla \underline{\varphi}_T)=0, &&(x,t)\in\partial\Omega\times(0,T],\\
    &\overline{\varphi}_T(x,0)=\overline{\varphi}_T(x,T) ,&&x\in\overline{\Omega},\\
    &\underline{\varphi}_T(x,0)=\varphi(x,0)\le\mathrm{e}^{\frac{ \varepsilon T}{\omega}-2C_d}\varphi(x,0)\le\varphi(x,T)\mathrm{e}^{\frac{ \varepsilon T}{\omega}}=\underline{\varphi}_T(x,T),&&x\in\overline{\Omega}.
  \end{aligned}
  \right.
\end{equation*}
Using Corollary \ref{corollary2.15}, we have
 \[
 \begin{aligned}
     & \liminf\limits_{T\to\infty}\fint^{T}_{0}H(s;\omega,d)\,\mathrm{d}s+\varepsilon \\
 \ge & \liminf_{T\to\infty}\min_{x\in\overline{\Omega}}\fint^{T}_{0}c(x,s)\,\mathrm{d}s-\frac{d }{\omega }V_0 -\frac{\sqrt{d }}{M}\|\phi^*\|_{C^2(\overline{\Omega})}- \frac{\|\phi^*\|^{2}_{C^2(\overline{\Omega})}}{M^2} -\frac{2\sqrt{d }}{M\omega } V_0\|\phi^*\|_{C^2(\overline{\Omega})}-\frac{d }{\omega^{2}}V^{2}_{0}
 \end{aligned}
 \]
and
 \[
 \begin{aligned}
     & \limsup_{T\to\infty}\fint^{T}_{0}H(s;\omega,d)\,\mathrm{d}s+\varepsilon \\
 \ge & \limsup_{T\to\infty}\min_{x\in\overline{\Omega}}\fint^{T}_{0}c(x,s)\,\mathrm{d}s-\frac{d }{\omega }V_0 -\frac{\sqrt{d }}{M}\|\phi^*\|_{C^2(\overline{\Omega})}- \frac{\|\phi^*\|^{2}_{C^2(\overline{\Omega})}}{M^2} -\frac{2\sqrt{d }}{M\omega } V_0\|\phi^*\|_{C^2(\overline{\Omega})}-\frac{d }{\omega^{2}}V^{2}_{0}.
 \end{aligned}
 \]
Lastly, by sending $\left(\omega,\frac{\omega}{\sqrt{d}}\right)\to (0,\infty)$ and the arbitrariness of $\varepsilon\in(0,1)$ and $M>2$, we show that
 \[\liminf\limits_{\left(\omega,\frac{\omega}{\sqrt{d}}\right)\to(0,\infty)} \liminf\limits_{T\to\infty}\fint^{T}_{0}H(s;\omega,d)\,\mathrm{d}s \ge\min_{x\in\overline{\Omega}}\liminf_{T\to\infty} \fint^{T}_{0}c(x,s)\,\mathrm{d}s\]
and
 \[\liminf\limits_{\left(\omega,\frac{\omega}{\sqrt{d}}\right)\to(0,\infty)} \limsup\limits_{T\to\infty}\fint^{T}_{0}H(s;\omega,d)\,\mathrm{d}s \ge \limsup_{T\to\infty} \min\limits_{x\in\overline{\Omega}}\fint^{T}_{0}c(x,s)\,\mathrm{d}s= \sup_{\hat{c}\in\mathscr{C}}\min_{x\in\overline{\Omega}}\hat{c}(x),\]
where we have used Lemma \ref{lemma2.3} in the last identity.

There exists a sequence $\{T_n\}_{n\in\mathbb{N}^+}$ of $T\to\infty$ such that
 \[\lim_{n\to\infty}\min_{x\in\overline{\Omega}}\fint^{T_n}_{0}c(x,s)\,\mathrm{d}s =\liminf_{T\to\infty}\min_{x\in\overline{\Omega}}\fint^{T}_{0}c(x,s)\,\mathrm{d}s.\]
Passing to a subsequence, we may assume that $\left\{\fint^{T_n}_{0}c(\cdot,s)\,\mathrm{d}s\right\}_{n\in\mathbb{N}^+}$ converges in $C(\overline{\Omega})$. By Corollary \ref{corollary2.6}, we have
 \[\lim_{n\to\infty}\min_{x\in\overline{\Omega}}\fint^{T_n}_{0}c(x,s)\,\mathrm{d}s =\min_{x\in\overline{\Omega}}\lim_{n\to\infty}\fint^{T_n}_{0}c(x,s)\,\mathrm{d}s.\]
Then one use Lemma \ref{lemma7.5} to obtain
 \begin{align*}
   \limsup\limits_{\left(\omega,\frac{\omega}{\sqrt{d}}\right)\to(0,\infty)} \liminf\limits_{T\to\infty}\fint^{T}_{0}H(s;\omega,d)\,\mathrm{d}s\le & \limsup\limits_{\left(\omega,\frac{\omega}{\sqrt{d}}\right)\to(0,\infty)} \liminf\limits_{n\to\infty}\fint^{T_n}_{0}H(s;\omega,d)\,\mathrm{d}s \\
    = & \min_{x\in\overline{\Omega}}\lim_{n\to\infty}\fint^{T_n}_{0}c(x,s)\,\mathrm{d}s = \min\limits_{x\in\overline{\Omega}}\liminf_{T\to\infty}\fint^{T}_{0}c(x,s)\,\mathrm{d}s.
 \end{align*}
This, combining with the lower bound, yields the first two identities in the lemma.

Let $\mu^\infty(d,\hat{c})$ be the principal eigenvalue of \eqref{eq1.3}. By Corollary \ref{corollary3.13}, one has
 \[\limsup\limits_{T\to\infty}\fint^{T}_{0}H(s;\omega,d)\,\mathrm{d}s\le \sup\limits_{\hat{c}\in\mathscr{C}}\mu^\infty(d,\hat{c}).\]
Letting $\left(\omega,\frac{\omega}{\sqrt{d}}\right)\to(0,\infty)$ in the inequality above yields
 \[\limsup_{\left(\omega,\frac{\omega}{\sqrt{d}}\right)\to(0,\infty)}\limsup\limits_{T\to\infty}\fint^{T}_{0}H(s;\omega,d)\,\mathrm{d}s\le \limsup_{d\to0}\sup\limits_{\hat{c}\in\mathscr{C}}\mu^\infty(d,\hat{c}).\]
It follows directly from Lemmas \ref{lemma2.18} and \ref{lemma2.3} that
 \[ \limsup_{\left(\omega,\frac{\omega}{\sqrt{d}}\right)\to(0,\infty)}\limsup_{T\to\infty} \fint^{T}_{0}H(s;\omega,d)\,\mathrm{d}s\le \limsup_{T\to\infty}\min_{x\in\overline{\Omega}} \fint^{T}_{0}c(x,s)\,\mathrm{d}s.\]
Combining with the lower bound yields the last two identities in the lemma. This completes the proof.
\end{proof}

\section{Open problems}
In their study of the asymptotic behavior of the principal eigenvalue for periodic parabolic operators as $(\omega, d) \to (0,0)$, Liu and Lou \cite{Liu2022Classifying} conducted a tripartite case analysis ((i) $(\omega,\omega/\sqrt{d})\to(0,0)$; (ii) $(\omega,\omega/\sqrt{d})\to(0,\vartheta)$ for some $\vartheta\in(0,\infty)$; (iii) $(\omega,\omega/\sqrt{d})\to(0,\infty)$), and obtained complete limiting results. We therefore conjecture that such a classification remains applicable in the general setting. In particular, in the second case, via formal computation, we observe that the limit of $-\sqrt{d}\ln \varphi$ appears to converge to a viscosity solution of the Hamilton-Jacobi equation
\begin{equation}\label{eq7.12}
  \left\{
  \begin{aligned}
    &\vartheta\partial_t U+\nabla U\cdot(A(x)\nabla U)=c(x,t)-H^*(t),\quad &&(x,t)\in\Omega\times\mathbb{R},\\
    &\mathbf{n}\cdot(A(x)\nabla U)=0,&&(x,t)\in\partial\Omega\times\mathbb{R}.
  \end{aligned}
  \right.
\end{equation}
Naturally, this leads to the following questions.
\begin{question}
  Under what conditions does the Hamilton-Jacobi equation \eqref{eq7.12} possess a unique Lipschitz continuous viscosity solution $U$ related to some (or a unique) function $H^*(t)$ defined on $\mathbb{R}$? Moreover, does the limit
  \[\lim_{\left(\omega,\frac{\omega}{\sqrt{d}}\right)\to(0,\vartheta)}H(t;\omega,d)= H^*(t;\vartheta),\qquad t\in\mathbb{R}\]
   remain valid?
\end{question}

\chapter{Almost periodic problem}\label{chp8}
Our results can be used to derive asymptotic properties of the spectrum for almost periodic parabolic operators. Specifically, if the potential function in equation \eqref{eq1.1} is uniformly almost periodic, then the long-time integral average limit exists. Moreover, we will show that by approximating the uniformly almost periodic function with a finite sum of periodic functions, assumptions \eqref{H2} and \eqref{H3} can be removed.

\begin{lemma}\label{lemma8.1}
  Let $c \in C^{2,1}(\overline{\Omega}\times\mathbb{R})$. If $c$ is a finite sum of time-periodic functions from this space, then $c$ satisfies hypotheses \eqref{H2} and \eqref{H3}, even if the periods of these functions differ.
\end{lemma}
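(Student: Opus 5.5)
By linearity of $\mathcal{M}(x,t;T)$ in $c$, I will reduce to a single time-periodic summand. Write $c=\sum_{k=1}^{m}c_k$, where each $c_k\in C^{2,1}(\overline{\Omega}\times\mathbb{R})$ is $P_k$-periodic in $t$. Then
\[\mathcal{M}(x,t;T)=\sum_{k=1}^m\mathcal{M}_k(x,t;T),\qquad \mathcal{M}_k(x,t;T):=t\fint^{T}_{0}c_k(x,s)\,\mathrm{d}s-\int^{t}_{0}c_k(x,s)\,\mathrm{d}s.\]
The goal is to show that each $\mathcal{M}_k$ is bounded in $C^2$ in $x$, uniformly in $T$ large and $t\in[0,T]$. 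Once that holds, $\operatorname{div}(A\nabla\mathcal{M})$ and $\nabla\mathcal{M}\cdot(A\nabla\mathcal{M})$ are bounded, since $A\in C^{2+\delta}$ and each quantity is at most quadratic in finitely many uniformly bounded derivatives. That gives \eqref{H2}. Because the periods are allowed to differ, I will not use any common period; each summand is handled on its own.

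For a single $P$-periodic $c_k$, let $\bar c_k(x):=\fint_0^{P}c_k(x,s)\,\mathrm{d}s$ and $\tilde c_k:=c_k-\bar c_k$. The function $\tilde c_k$ has zero mean over a period, so $G_k(x,t):=\int_0^t\tilde c_k(x,s)\,\mathrm{d}s$ is $P$-periodic in $t$. It satisfies
\[\|G_k(\cdot,t)\|_{C^2(\overline{\Omega})}\le 2P\sup_s\|c_k(\cdot,s)\|_{C^2(\overline{\Omega})}.\]
Here differentiation in $x$ passes under the integral because $c_k\in C^{2,1}$, and $c_k$ is bounded in $C^2$ uniformly in $s$ by continuity and periodicity. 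Next,
\[\fint_0^T c_k(x,s)\,\mathrm{d}s=\bar c_k(x)+\frac{G_k(x,T)}{T},\]
which gives
\[\mathcal{M}_k(x,t;T)=\frac{t}{T}G_k(x,T)-G_k(x,t).\]
Since $t/T\in[0,1]$, it follows that $\|\mathcal{M}_k(\cdot,t;T)\|_{C^2(\overline{\Omega})}\le 2\sup_{\tau}\|G_k(\cdot,\tau)\|_{C^2(\overline{\Omega})}$ for all $T>0$ and $t\in[0,T]$. Summing over $k$ proves \eqref{H2}.

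For \eqref{H3}, each $c_k$ is $C^1$ in $t$, and periodicity makes $\partial_t c_k$ bounded on $\overline{\Omega}\times\mathbb{R}$. Hence $c$ is Lipschitz in $t$ uniformly in $x$ with some constant $L_1$. For $|t-s|\le1$ this gives $|c(x,t)-c(x,s)|\le L_1|t-s|\le L_1|t-s|^{\delta/2}$. For $|t-s|>1$, boundedness of $c$ gives $|c(x,t)-c(x,s)|\le 2\|c\|_\infty|t-s|^{\delta/2}$. Therefore $L\le\max\{L_1,2\|c\|_\infty\}<\infty$.

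The main point needing care is the step that handles differing periods: the time average over $[0,T]$ need not equal $\bar c$ exactly, and the identity $\fint_0^T c_k=\bar c_k+G_k(\cdot,T)/T$ absorbs this error for each summand separately. The only remaining regularity issue is the uniform $C^2$ bound on $c_k(\cdot,s)$, which I read into the hypothesis $c_k\in C^{2,1}(\overline{\Omega}\times\mathbb{R})$ together with periodicity.
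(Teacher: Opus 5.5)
Your proof is correct and follows essentially the paper's route: reduce by linearity of $\mathcal{M}$ to a single periodic summand and show that $\mathcal{M}(\cdot,t;T)$ is bounded in $C^2(\overline{\Omega})$ uniformly in $T$ and $t\in[0,T]$. Your identity $\mathcal{M}_k=\frac{t}{T}G_k(\cdot,T)-G_k(\cdot,t)$, built from the periodic mean-zero primitive $G_k$, is a tidier packaging of the paper's floor-function splitting $\mathcal{M}=\bigl(\frac{t}{T}\lfloor T/p\rfloor-\lfloor t/p\rfloor\bigr)\int_0^p c\,\mathrm{d}s+\cdots$, and your Lipschitz-plus-boundedness argument for \eqref{H3} supplies the detail that the paper only calls evident.
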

\begin{proof}
It is evident that a single time-periodic function meets hypothesis \eqref{H3}, and the same holds for any finite sum of time-periodic functions. Consequently, it remains only to verify that such a finite sum also satisfies hypothesis \eqref{H2}.

First, the hypothesis \eqref{H2} holds for a single periodic function. Indeed, assume that $c(x,t)\in C^{2,1}(\overline{\Omega}\times\mathbb{R})$ is periodic in $t$ with period $p>0$. A direct calculation gives
\begin{align*}
  \mathcal{M}(x,t;T)=&t\fint^{T}_{0}c(x,s)\,\mathrm{d}s-\int^{t}_{0}c(x,s)\,\mathrm{d}s \\
  =& \frac{t}{T}\int^{\left\lfloor \frac{T}{p}\right\rfloor p}_{0}c(x,s)\,\mathrm{d}s +\frac{t}{T}\int^{T-\left\lfloor \frac{T}{p}\right\rfloor p}_{0}c(x,s)\,\mathrm{d}s -\int^{\left\lfloor \frac{t}{p}\right\rfloor p}_{0}c(x,s)\,\mathrm{d}s -\int^{t-\left\lfloor \frac{t}{p}\right\rfloor p}_{0}c(x,s)\,\mathrm{d}s \\
  =& \left(\frac{t}{T}\left\lfloor \frac{T}{p}\right\rfloor -\left\lfloor \frac{t}{p}\right\rfloor  \right)\int^{p}_{0}c(x,s)\,\mathrm{d}s +\frac{t}{T}\int^{T-\left\lfloor \frac{T}{p} \right\rfloor p}_{0}c(x,s)\,\mathrm{d}s -\int^{t-\left\lfloor \frac{t}{p}\right\rfloor p}_{0}c(x,s)\,\mathrm{d}s,
\end{align*}
where $\lfloor\cdot\rfloor$ stands the floor function. Observing that
 \[\left|\frac{t}{p}-\left\lfloor \frac{t}{p}\right\rfloor \right|<1,\quad \frac{t}{T}<1,\quad \left|\frac{T}{p}-\left\lfloor \frac{T}{p}\right\rfloor \right|<1\]
and
\[\frac{t}{T}\left\lfloor \frac{T}{p}\right\rfloor -\left\lfloor \frac{t}{p}\right\rfloor = \frac{t}{T}\cdot\frac{T}{p}+\frac{t}{T}\left(\left\lfloor \frac{T}{p}\right\rfloor -\frac{T}{p}\right)-\left\lfloor \frac{t}{p}\right\rfloor  =\left(\frac{t}{p}-\left\lfloor \frac{t}{p}\right\rfloor \right)-\frac{t}{T}\left(\frac{T}{p}-\left\lfloor \frac{T}{p}\right\rfloor \right),\]
we have
 \[-1<\frac{t}{T}\left\lfloor \frac{T}{p}\right\rfloor -\left\lfloor \frac{t}{p}\right\rfloor <1.\]
Therefore, one easily checks that $\mathcal{M}(x,t;T)$ satisfies the hypothesis \eqref{H2}.

Now, if $c\in C^{2,1}(\overline{\Omega}\times\mathbb{R})$ is the sum of finitely many periodic functions $c_1,c_2,\cdots, c_K$ with periods $p_1,p_2,\cdots, p_K$, respectively, where $K\ge 1$ is an integer, then
\[\mathcal{M}(x,t;T)=\sum^{K}_{k=1}\left(t\fint^{T}_{0}c_k(x,s)\,\mathrm{d}s-\int^{t}_{0}c_k(x,s)\,\mathrm{d}s\right) =\sum^{K}_{k=1}\mathcal{M}_k(x,t;T).\]
Obviously, every $\mathcal{M}_k(x,t;T)$ satisfies the hypothesis \eqref{H2}, $k=1,2,\cdots,K$. As a result, $\mathcal{M}(x,t;T)$ satisfies the hypothesis \eqref{H2} due to the finiteness of $K$. This finishes the proof.
\end{proof}

\medskip
In order to prove Theorem 1.10, we need further to prove two lemmas that serve as preparations.
\begin{lemma}\label{lemma8.2}
  For each $t\in\mathbb{R}$, let $\mu^0(t)$ be the principal eigenvalue of \eqref{eq1.4}. If $\{c(x,\cdot)\}_{x\in\overline{\Omega}}$ is a uniformly almost periodic family, then $\mu^0(\cdot)$ is almost periodic.
\end{lemma}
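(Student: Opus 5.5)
The plan is to show that $\mu^0(\cdot)$ inherits almost periodicity from the family $\{c(x,\cdot)\}_{x\in\overline{\Omega}}$ through a Lipschitz estimate of the principal eigenvalue with respect to the potential in the sup norm. The first step is the standard estimate that follows from the Rayleigh quotient characterization used in the proof of Lemma \ref{lemma2.18}: for $m_1,m_2\in C(\overline{\Omega})$ and fixed $d>0$,
\[
|\mu^\infty(d,m_1)-\mu^\infty(d,m_2)|\le \|m_1-m_2\|_\infty .
\]
This holds because, for every test function $\phi\in H^1(\Omega)\setminus\{0\}$, the two Rayleigh quotients differ by at most $\|m_1-m_2\|_\infty$, and taking the minimum over $\phi$ preserves this bound. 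Since $\mu^0(t)=\mu^\infty(d,c(\cdot,t))$ by the definitions \eqref{eq1.3} and \eqref{eq1.4}, we obtain, for all $t,\tau\in\mathbb{R}$,
\[
|\mu^0(t+\tau)-\mu^0(t)|\le \sup_{x\in\overline{\Omega}}|c(x,t+\tau)-c(x,t)|.
\]

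Next we use the definition of a uniformly almost periodic family recalled before Theorem \ref{theorem1.14}. For each $\epsilon>0$, the common translation set $\mathcal{T}(\mathcal{F},\epsilon)=\cap_{x}\mathcal{T}(c(x,\cdot),\epsilon)$ is relatively dense. Every $\tau$ in this set satisfies $|c(x,t+\tau)-c(x,t)|<\epsilon$ for all $x$ and $t$, so the supremum over $x$ is at most $\epsilon$. By the estimate above, $\tau\in\mathcal{T}(\mu^0,\epsilon)$. Hence $\mathcal{T}(\mu^0,\epsilon)$ contains a relatively dense set, and so it is itself relatively dense.

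It remains to check that $\mu^0$ is continuous, which Bohr's definition requires. We will also note that it is bounded, since $|\mu^0(t)|\le\|c\|_\infty$. Continuity, and in fact uniform continuity, follows from the same Lipschitz bound together with the uniform equicontinuity in $t$ of the family. That equicontinuity is a standard consequence of uniform almost periodicity, see \cite{Fink2006Almost}; alternatively it follows from $c\in C^{\delta,\delta/2}$ with bounded seminorm in $\mathcal{C}$.

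The only delicate point is to make sure the translation numbers are uniform in $x$, since the supremum over $x$ must be controlled. This is exactly what the word ``uniformly'' in the hypothesis supplies, so no genuine obstacle is expected.
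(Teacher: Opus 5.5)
Your proposal is correct and follows the paper's proof. The paper likewise uses the $L^2$-normalized eigenfunction for $\mu^0(t_1)$ as a test function in the Rayleigh quotient at $t_2$ to obtain $|\mu^0(t_1)-\mu^0(t_2)|\le\|c(\cdot,t_1)-c(\cdot,t_2)\|_\infty$, and then transfers the common $\epsilon$-translation numbers of the family to $\mu^0$. Your explicit continuity check, which the paper leaves implicit, is a harmless addition; it already follows from the ``interval about $0$'' clause in the definition of a uniformly almost periodic family, or simply from local continuity of $c$.
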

\begin{proof}
  For each $t\in\mathbb{R}$, let $(\mu^0(t),\phi^0(\cdot,t))$ be the principal eigenpair of
\begin{equation*}
  \left\{
  \begin{aligned}
    &-d\operatorname{div}(A(x)\nabla\phi^{0})+c(x,t)\phi^{0}=\mu^0(t)\phi^{0},&&x\in\Omega,\\
    &\mathbf{n}\cdot(A(x)\nabla\phi^{0})=0,&&x\in\partial\Omega,\\
    &\|\phi^{0}(\cdot,t)\|_{L^2(\Omega)}=1.
  \end{aligned}
  \right.
\end{equation*}
By the variation formula for the principal eigenvalue $\mu^0$, for any $t_1,t_2\in\mathbb{R}$ we have
\[\begin{aligned}
    \mu^0(t_1)=&\int_{\Omega}\left\{d\nabla\phi^0(x,t_1)\cdot(A(x)\nabla\phi^0(x,t_1))+c(x,t_1)[\phi^0(x,t_1)]^2\right\}\,\mathrm{d}x \\
    = &\int_{\Omega}\left\{d\nabla\phi^0(x,t_1)\cdot(A(x)\nabla\phi^0(x,t_1))+c(x,t_2)[\phi^0(x,t_1)]^2\right\}\,\mathrm{d}x \\
    &\quad +\int_{\Omega}\left\{[c(x,t_2)-c(x,t_1)][\phi^0(x,t_1)]^2\right\}\,\mathrm{d}x \\
    \le & \mu^0(t_2)+\|c(\cdot,t_2)-c(\cdot,t_1)\|_\infty,
  \end{aligned}
\]
as well as $\mu^0(t_2)\le \mu^0(t_1)+\|c(\cdot,t_2)-c(\cdot,t_1)\|_\infty$. Therefore, one has
\[|\mu^0(t_1)- \mu^0(t_2)|\le\|c(\cdot,t_2)-c(\cdot,t_1)\|_\infty.\]
Based on the estimate above, one can deduce the almost periodicity of $\mu^0(t)$ from the almost periodicity of $c(\cdot,t)$. This completes the proof.
\end{proof}
\begin{lemma}\label{lemma8.3}
 Let $c\in\mathcal{C}$ be a given function. If $\{c(x,\cdot)\}_{x\in\overline{\Omega}}$ is a uniformly almost periodic family, then $\min_{x\in\overline{\Omega}}c(x,\cdot)$ is almost periodic in $t\in\mathbb{R}$, and thus the limit $\lim_{T\to\infty}\fint^{T}_{0}\min_{x\in\overline{\Omega}}c(x,s)\,\mathrm{d}s$ exists.
\end{lemma}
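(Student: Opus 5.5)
The proof mirrors that of Lemma \ref{lemma8.2}: first establish a Lipschitz-type estimate for $c_m(t):=\min_{x\in\overline{\Omega}}c(x,t)$ in terms of the sup-norm oscillation of $c(\cdot,t)$, then transfer almost periodicity. The key pointwise inequality is
\[
|c_m(t_1)-c_m(t_2)|\le \|c(\cdot,t_1)-c(\cdot,t_2)\|_\infty,\qquad t_1,t_2\in\mathbb{R}.
\]
It follows exactly as in the proof of Lemma \ref{lemma2.8}. Pick a minimizer $x(t_2)$ of $c(\cdot,t_2)$; then
\[
c_m(t_1)-c_m(t_2)\le c(x(t_2),t_1)-c(x(t_2),t_2)\le \|c(\cdot,t_1)-c(\cdot,t_2)\|_\infty .
\]
Exchanging $t_1$ and $t_2$ gives the reverse bound. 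Continuity of $c_m$ also follows from this, since $c\in\mathcal{C}$ is jointly continuous on the compact set $\overline{\Omega}\times[a,b]$, hence uniformly continuous there.

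Next, use the definition of a uniformly almost periodic family (\cite[Definition 2.1]{Fink2006Almost}). For each $\epsilon>0$ the common translation set $\mathcal{T}(\mathcal{F},\epsilon)=\cap_{x}\mathcal{T}(c(x,\cdot),\epsilon)$ is relatively dense. For $\tau$ in this set, $\sup_{t}\sup_{x}|c(x,t+\tau)-c(x,t)|\le\epsilon$, i.e. $\sup_t\|c(\cdot,t+\tau)-c(\cdot,t)\|_\infty\le\epsilon$. By the pointwise inequality, $\sup_t|c_m(t+\tau)-c_m(t)|\le\epsilon$, so $\mathcal{T}(\mathcal{F},\epsilon)\subset\mathcal{T}(c_m,\epsilon)$. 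Hence $\mathcal{T}(c_m,\epsilon)$ is relatively dense, and together with continuity and boundedness ($|c_m|\le\|c\|_\infty$) this is Bohr almost periodicity of $c_m$.

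Finally, the existence of $\lim_{T\to\infty}\fint_0^T c_m(s)\,\mathrm{d}s$ is the classical mean value theorem for almost periodic functions (\cite[Theorem 3.1]{Fink2006Almost}, applied to the single function $c_m$, or to the one-element family $\{c_m\}$).

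The only potentially delicate point is that the $\epsilon$-translation numbers must be \emph{uniform in $x$}. This is exactly what the uniform almost periodic family hypothesis supplies, so no real obstacle is expected; the remaining steps are routine.
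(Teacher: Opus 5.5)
Your proposal is correct and follows essentially the same route as the paper: there too a pointwise minimizer $x_m(t)$ is used to show that every common $\varepsilon$-translation number of the family $\{c(x,\cdot)\}_{x\in\overline{\Omega}}$ is an $\varepsilon$-translation number of $c_m$, and then the mean value theorem \cite[Theorem 3.1]{Fink2006Almost} is applied. The only difference is packaging: you first isolate the estimate $|c_m(t_1)-c_m(t_2)|\le\|c(\cdot,t_1)-c(\cdot,t_2)\|_\infty$, which also gives the continuity of $c_m$ directly.
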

\begin{proof}
Since the family of functions $\{c(x, \cdot)\}_{x \in \Omega}$ is uniformly almost periodic, that is, for any $\varepsilon > 0$, there exists a relatively dense set $\mathcal{I} \subset \mathbb{R}$ such that
  \[|c(x, t+\tau) - c(x,t)| < \varepsilon,\qquad \forall~\tau\in\mathcal{I},~x \in \overline{\Omega},~t\in\mathbb{R}.\]
Consider the function $c_m(t):= \min_{x\in\overline{\Omega}}c(x,t)$, $t\in\mathbb{R}$. From the continuity of $c$ and the compactness of $\overline{\Omega}$, we know that $c_m(t)$ is continuous in $t\in\mathbb{R}$. For the above $\tau$ and any $t$, let $x_m(t)$ satisfy $c_m(t)=c(x_m(t),t)$, $t\in\mathbb{R}$. Then
  \[c_m(t+\tau)\le c(x_m(t),t+\tau)<c(x_m(t),t)+\varepsilon=c_m(t)+\varepsilon, \qquad t\in\mathbb{R}.\]
Similarly, we have $c_m(t)<c_m(t+\tau)+\varepsilon$, $t\in\mathbb{R}$ and therefore
 \[|c_m(t+\tau) - c_m(t)| < \varepsilon,\qquad\forall~t\in\mathbb{R}.\]
That is, $c_m(t)$ is also almost periodic in $t\in\mathbb{R}$. Since an almost periodic function has a mean value (see \cite[Theorem 3.1]{Fink2006Almost}), then the limit
  \[\lim_{T\to\infty}\fint^{T}_{0}c_m(s)\,\mathrm{d}s\quad\text{exists}.\]
This finishes the proof.
\end{proof}

\medskip
\begin{proof}[Proof of Theorem \ref{theorem1.14}]
Firstly, note that if the function $c(x,t)$ is a uniformly almost periodic function, then the limit
\begin{equation}\label{eq8.1}
 \lim_{T\to\infty}\fint^{T}_{0}c(x,s)\,\mathrm{d}s\quad\text{ exists pointwise in }x\in\overline{\Omega};
\end{equation}
see \cite[Theorem 3.1]{Fink2006Almost}. Using the Bohr's approximation theorem (see, for example, \cite[Theorem 3.17]{Fink2006Almost}), we know that for any $\epsilon>0$, there is a trigonometric polynomial $\tilde{c}(x,t)$ such that for each $x\in\overline{\Omega}$,
  \[\|c(x,\cdot)-\tilde{c}(x,\cdot)\|_\infty<\epsilon,\]
or equivalently,
  \[\tilde{c}(x,t)-\epsilon<c(x,t)<\tilde{c}(x,t)+\epsilon,\quad\forall~t\in\mathbb{R}.\]
By arguments of approximation, we assume that the trigonometric polynomial $\tilde{c}$ belongs to $\mathcal{C}\cap C^{2,1}(\overline{\Omega}\times\mathbb{R})$; see, for example, \cite[Lemma 4.1]{Bai2020Asymptotic}. From \eqref{eq8.1}, we know that for each $x\in\overline{\Omega}$, it holds that the limit
 \[\lim_{T\to\infty}\fint^{T}_{0}\tilde{c}(x,s)\,\mathrm{d}s\quad\text{ exists pointwise in }x\in\overline{\Omega},\]
which is also continuous in $x\in\overline{\Omega}$ by Proposition \ref{proposition1.1}. Applying Corollary \ref{corollary3.8}, we obtain that
\begin{align*}
  \liminf\limits_{T\to\infty}\fint^{T}_{0}  H(s;d,\tilde{c})\,\mathrm{d}s- \epsilon \le & \liminf\limits_{T\to\infty}\fint^{T}_{0}H(s;d,c)\,\mathrm{d}s \\
  \le & \limsup\limits_{T\to\infty}\fint^{T}_{0}H(s;d,c)\,\mathrm{d}s= \limsup\limits_{T\to\infty}\fint^{T}_{0}H(s;d,\tilde{c})\,\mathrm{d}s+\epsilon.
\end{align*}
On the other hand, the trigonometric polynomial $\tilde{c}$, which is the sum of finitely many periodic functions, satisfies assumption \eqref{H2}; see Lemma \ref{lemma8.1}. By letting $d\to 0$ for fixed $\omega>0$ and using Theorem \ref{theorem1.5}, we have
 \[\lim\limits_{d\to0}\liminf\limits_{T\to\infty}\fint^{T}_{0}H(s;d,\tilde{c})\,\mathrm{d}s= \lim\limits_{d\to0}\limsup\limits_{T\to\infty}\fint^{T}_{0}H(s;d,\tilde{c})\,\mathrm{d}s= \min\limits_{x\in\overline{\Omega}}\lim\limits_{T\to\infty} \fint^{T}_{0}\tilde{c}(x,s)\,\mathrm{d}s.\]
Consequently, by the arbitrariness of $\epsilon>0$, we conclude that
 \[\lim\limits_{d\to0}\liminf\limits_{T\to\infty}\fint^{T}_{0}H(s;d,c)\,\mathrm{d}s= \lim\limits_{d\to0}\limsup\limits_{T\to\infty}\fint^{T}_{0}H(s;d,c)\,\mathrm{d}s= \min\limits_{x\in\overline{\Omega}}\lim\limits_{T\to\infty} \fint^{T}_{0}c(x,s)\,\mathrm{d}s.\]
This proves Theorem \ref{theorem1.14}(i), and similarly, one can verify Theorem \ref{theorem1.14}(iv)-(v) and (vii). Theorem \ref{theorem1.14}(ii) follows from Lemma \ref{lemma8.2}, Theorem \ref{theorem1.14}(vi) follows from Lemma \ref{lemma8.3} and Theorem \ref{theorem1.14}(iii) is obvious. This finishes the proof.
\end{proof}

\chapter{A diffusive SIS model in non-autonomous environments}\label{chp9}
In 2008, Allen et al. \cite{Allen2008Asymptotic} proposed an SIS epidemic model with standard incidence on a bounded domain $\Omega\subset\mathbb{R}^N~(N\ge 1)$, where the authors investigated the existence and uniqueness of the endemic equilibrium and its asymptotic behavior as the diffusion rate $d_S$ of susceptible individuals converges to zero. Their model reads
\begin{equation*}
  \left\{
  \begin{aligned}
    &\partial_tS-d_{S}\Delta S=-\frac{\beta(x)SI}{S+I} +\gamma(x)I,\qquad &&x\in\Omega,t>0,\\
    &\partial_tI-d_{I}\Delta I=\frac{\beta(x)SI}{S+I} -\gamma(x)I,\qquad &&x\in\Omega,t>0,\\
    &\partial_{\mathbf{n}}S=\partial_{\mathbf{n}}I=0,&&x\in\partial\Omega,t>0,\\
    &S(x,0)=S_{0}(x),I(x,0)=I_{0}(x),&&x\in\overline{\Omega},
  \end{aligned}
  \right.
\end{equation*}
where $S(x,t)$ and $I(x,t)$ represent the densities of susceptible and infected populations at location $x\in\overline{\Omega}$ and time $t\in[0,\infty)$ respectively. The positive constants $d_{S}$ and $d_{I}$ are diffusion rates of susceptible and infected individuals respectively, positive functions $\beta(x)$ and $\gamma(x)$ are H${\rm \ddot{o}}$lder continuous on $\overline{\Omega}$, which account for the rates of disease transmission and recovery at $x\in\overline{\Omega}$ respectively. Peng \cite{Peng2009Asymptotic} examined the effects of large and small diffusion rates of susceptible and infected population on the persistence and extinction of the disease, and analyzed the asymptotic behavior of the endemic equilibrium when the diffusion rates $d_S,d_I$ approach infinity or zero. Due to the seasonal fluctuation and periodic availability of vaccination strategies, Peng and Zhao \cite{Peng2012A} proposed an SIS model with time-periodic coefficients $\beta(\cdot,t)$ and $\gamma(\cdot,t)$, in which the qualitative properties of the basic reproduction number were analyzed and the epidemic dynamics were established. More recently, there have been a number of studies concerning the SIS system in many different settings: SIS models with standard/saturated/mass-action incidence\cite{Peng2009Asymptotic, Gao2024A, Wu2016Asymptotic, Peng2025Spatial, Peng2025Spatial2, Castellano2024Multiplicity}, concentration phenomena in advective environments\cite{Cui2017Dynamics, Cui2021Concentration, Kuto2017Concentration, Li2025Asymptotic}, SIS models in spatially-temporally heterogeneous environment\cite{Jiang2018A, Liu2022Classifying,Wang2015A}, SIS epidemic patch models \cite{Allen2007Asymptotic,Li2019Dynamics}. In all of these works, the basic reproduction number plays an essential role and there has been extensive research concerned with the properties of the basic reproduction number \cite{Cui2016A,Liu2022Classifying,Zhang2021Asymptotic,Wang2012Basic}. Obviously, the arguments do not work in the setting of a non-autonomous environment, since it seems that there is no corresponding eigenvalue problem if the coefficients depend on $t$ but they are not periodic in $t$.

Let us consider the following SIS epidemic model in non-autonomous environment
\begin{equation}\label{eq9.1}
 \left\{
  \begin{aligned}
    &\omega\partial_t S-d_S\Delta S=-\frac{\beta(x,t)SI}{S+I}+\gamma(x,t)I,&&(x,t)\in\Omega\times(0,\infty),\\
    &\omega\partial_t I-d_I\Delta I=\frac{\beta(x,t)SI}{S+I}-\gamma(x,t)I,&&(x,t)\in\Omega\times(0,\infty),\\
    &\partial_{\mathbf{n}}S=\partial_{\mathbf{n}}I=0,&&(x,t)\in\partial\Omega\times(0,\infty),\\
    &S(x,0)=S_0(x)\ge0,~I(x,0)=I_0(x)\ge,\not\equiv0,&&x\in\overline{\Omega}.
  \end{aligned}
 \right.
\end{equation}
In the current situation, the assumptions on parameters are the same or similar to those mentioned above, except that the transmission rate $\beta(x,t)$ and the recovery rate $\gamma(x,t)$ depend nontrivially on both $x\in\overline{\Omega}$ and $t\in[0,\infty)$, but may not be periodic in $t\in[0,\infty)$; and in addition, $0<\beta_*<\beta<\beta^*<\infty$ and $0<\gamma_*<\gamma<\gamma^*<\infty$ on $\overline{\Omega}\times[0,\infty)$ for some constant $\beta_*,\beta^*,\gamma_*$ and $\gamma^*$. The total population is constant, that is
\[\int_{\Omega}[S(x,t)+I(x,t)]\,\mathrm{d}x=\int_{\Omega}[S_0(x)+I_0(x)]\,\mathrm{d}x=N_{total}=\text{a positive constant},\qquad\forall ~t\in[0,\infty).\]
The solution of system \eqref{eq9.1} is uniformly bounded in $t\in[0,\infty)$.
\begin{lemma}[{\cite[Lemma 3.2]{Peng2012A}}]\label{lemma9.1}
  Let $(S,I)$ be the unique positive solution of \eqref{eq9.1}. Then there exists a positive constant $C$ independent of the initial data $(S_0,I_0)$ such that
   \[\limsup\limits_{t\to\infty}(\|S(\cdot,t)\|_{L^\infty(\Omega)}+\|I(\cdot,t)\|_{L^\infty(\Omega)})\le C .\]
\end{lemma}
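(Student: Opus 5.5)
We will argue with the conserved total mass and a uniform-in-time $L^1$-to-$L^\infty$ smoothing estimate for the heat semigroup. Integrating the $S$- and $I$-equations over $\Omega$ and adding them, the reaction terms cancel and the Neumann condition kills the diffusion terms. This gives $\int_\Omega(S+I)(x,t)\,\mathrm{d}x=N_{total}$ for all $t\ge0$, so $\|S(\cdot,t)\|_{L^1}+\|I(\cdot,t)\|_{L^1}\le N_{total}$ uniformly in $t$. Positivity of both components follows from the strong maximum principle, since $I_0\ge,\not\equiv0$ and the system is quasi-positive. Global existence then follows from the a priori bounds below.

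Next we bound the reaction terms linearly. Because $0\le \frac{S}{S+I}\le 1$, the $I$-equation gives $\omega\partial_t I-d_I\Delta I\le \beta^* I$. The $S$-equation gives $\omega\partial_t S-d_S\Delta S\le \gamma^* I$, and the right-hand side is controlled once $I$ is. A linear $L^1$ bound alone does not rule out growth, however, so we work on unit time windows. For any $t_0\ge1$, we compare $I$ on $[t_0-1,t_0]$ with the solution of $\omega\partial_t v-d_I\Delta v=\beta^* v$ with Neumann data and $v(\cdot,t_0-1)=I(\cdot,t_0-1)$. By the comparison principle, $I\le v=\mathrm{e}^{\beta^*(t-t_0+1)/\omega}\,e^{(t-t_0+1)d_I\Delta/\omega}I(\cdot,t_0-1)$. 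The Neumann heat semigroup estimate $\|e^{s\Delta}f\|_{L^\infty}\le C(1+s^{-N/2})\|f\|_{L^1}$ at $s=d_I/\omega$ then yields $\|I(\cdot,t_0)\|_\infty\le C_1 N_{total}$, with $C_1$ depending only on $\Omega,N,d_I,\omega,\beta^*$.

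The same window argument bounds $S$. Using Duhamel, $S(\cdot,t_0)\le e^{d_S\Delta/\omega}S(\cdot,t_0-1)+\frac{\gamma^*}{\omega}\int_{t_0-1}^{t_0}e^{(t_0-s)d_S\Delta/\omega}I(\cdot,s)\,\mathrm{d}s$. The first term is bounded by the $L^1$ smoothing estimate. The second is bounded by $\gamma^*\sup_{[t_0-1,t_0]}\|I\|_\infty/\omega$, which is covered by the previous step applied at every $s\ge1$. Hence $\sup_{t\ge1}(\|S\|_\infty+\|I\|_\infty)\le C N_{total}$.

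The main obstacle is the requirement that $C$ be independent of the initial data. The argument above gives a constant proportional to $N_{total}$, which does depend on $(S_0,I_0)$ through their mass. I expect the statement to be read, as in Peng--Zhao, with $N_{total}$ fixed, so $C$ is independent of the profile of the data beyond its total mass. A genuinely mass-independent bound is impossible, because scaling the data by a constant scales the solution by the same constant. We would state this dependence explicitly and cite \cite[Lemma 3.2]{Peng2012A}. We would also note that periodicity is never used there: only the bounds $\beta^*$, $\gamma^*$ and mass conservation enter.
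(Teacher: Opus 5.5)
Your argument is correct, and it takes a different route from the paper, which gives no proof of its own and simply imports the bound from Peng--Zhao's time-periodic model. The standard argument in that literature upgrades the conserved $L^1$ mass to an eventual $L^\infty$ bound through a general $L^1$-to-$L^\infty$ dissipativity result for parabolic systems (Alikakos--Moser iteration or Le Dung's theorem).

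You replace that machinery with two linear comparisons on unit time windows and the Neumann heat-semigroup estimate $\|e^{s\Delta}f\|_{L^\infty}\le C(1+s^{-N/2})\|f\|_{L^1}$. This has three advantages:
\begin{itemize}
\item it is more elementary;
\item it gives an explicit constant of the form $C(\Omega,N,d_S,d_I,\omega,\beta^*,\gamma^*)\,N_{total}$;
\item it shows that only mass conservation and the bounds $\beta\le\beta^*$, $\gamma\le\gamma^*$ are used.
\end{itemize}
The third point is exactly what justifies citing a periodic-case lemma in the present non-periodic setting, a step the paper leaves implicit.

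Your reading of ``independent of the initial data'' is also the right one. The system is positively homogeneous of degree one, so $(\lambda S,\lambda I)$ is the solution with data $(\lambda S_0,\lambda I_0)$. Moreover $\|S(\cdot,t)\|_\infty+\|I(\cdot,t)\|_\infty\ge N_{total}/|\Omega|$ for all $t$. Hence $C$ must depend on $N_{total}$, and the statement is meaningful only with $N_{total}$ fixed, as in Allen et al.\ and Peng--Zhao.

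There is one small slip in the $S$-step. For $t_0\in[1,2)$ the window $[t_0-1,t_0]$ contains times $s<1$, where you have not bounded $\|I(\cdot,s)\|_\infty$ independently of the profile of the data. Your conclusion should therefore read $\sup_{t\ge 2}$ rather than $\sup_{t\ge 1}$, which is harmless for the $\limsup$ statement.
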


Define two critical values $\underline{\mathcal{R}}_0$ and $\overline{\mathcal{R}}_0$ via the normalized principal Floquet bundle,
 \[\limsup\limits_{T\to\infty}\fint^{T}_{0}H\left(s;\omega,d_I,\gamma-\beta/\underline{\mathcal{R}}_0\right)\,\mathrm{d}s=0 \quad \text{ and } \quad \liminf\limits_{T\to\infty}\fint^{T}_{0}H\left(s;\omega,d_I,\gamma-\beta/\overline{\mathcal{R}}_0\right)\,\mathrm{d}s=0.\]
By virtue of monotonicity of $H$ with potential (see Theorem \ref{theorem1.3} and Lemma \ref{lemma3.5}), $\underline{\mathcal{R}}_0$ and $\overline{\mathcal{R}}_0$ are well-defined and uniquely determined. These two critical values are natural extensions of the basic reproduction number. Specifically, if these two numbers are equal, we can define the basic reproduction number of system \eqref{eq9.1} as $\mathcal{R}_0:=\underline{\mathcal{R}}_0=\overline{\mathcal{R}}_0$; for example, the situation where the potential is periodic in $t\in\mathbb{R}$. The normalized principal Floquet bundle $H(\cdot;\gamma-\beta)$ also plays an important role in studying the dynamics of system \eqref{eq9.1}, and the next assertion follows from the definitions of $\underline{\mathcal{R}}_0$ and $\overline{\mathcal{R}}_0$.
\begin{theorem}\label{theorem9.2}
The number $1-\underline{\mathcal{R}}_0$ has the same sign as $\limsup\limits_{T\to\infty}\fint^{T}_{0}H(s;\gamma-\beta)\,\mathrm{d}s$; the number $1-\overline{\mathcal{R}}_0$ has the same sign as $\liminf\limits_{T\to\infty}\fint^{T}_{0}H(s;\gamma-\beta)\,\mathrm{d}s$. Furthermore, if $\beta,\gamma$ satisfy uniform H\"{o}lder condition \eqref{H3}, then $\underline{\mathcal{R}}_0$ and $\overline{\mathcal{R}}_0$ are estimated by
  \[\liminf\limits_{T\to\infty}\frac{\int^{T}_{0}\int_{\Omega}\beta(x,s)\,\mathrm{d}x\mathrm{d}s }{\int^{T}_{0}\int_{\Omega}\gamma(x,s)\,\mathrm{d}x\mathrm{d}s}\le\underline{\mathcal{R}}_0 \le \sup_{x(\cdot)\in C(\mathbb{R};\overline{\Omega})}\liminf_{T\to\infty} \frac{\fint^{T}_{0} \beta(x(s),s)\,\mathrm{d}s}{\fint^{T}_{0}\gamma(x(s),s)\,\mathrm{d}s}\]
  and
   \[\limsup\limits_{T\to\infty}\frac{\int^{T}_{0}\int_{\Omega}\beta(x,s)\,\mathrm{d}x\mathrm{d}s }{\int^{T}_{0}\int_{\Omega}\gamma(x,s)\,\mathrm{d}x\mathrm{d}s}\le \overline{\mathcal{R}}_0\le \sup_{x(\cdot)\in C(\mathbb{R};\overline{\Omega})}\limsup_{T\to\infty} \frac{\fint^{T}_{0} \beta(x(s),s)\,\mathrm{d}s}{\fint^{T}_{0}\gamma(x(s),s)\,\mathrm{d}s}.\]
\end{theorem}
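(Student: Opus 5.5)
The plan has two parts: the sign statements, and the two-sided estimates for $\underline{\mathcal{R}}_0$ and $\overline{\mathcal{R}}_0$.

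\textbf{Sign statements.} Define $F(R):=\limsup_{T\to\infty}\fint^{T}_{0}H(s;\gamma-\beta/R)\,\mathrm{d}s$ and $G(R):=\liminf_{T\to\infty}\fint^{T}_{0}H(s;\gamma-\beta/R)\,\mathrm{d}s$ for $R>0$. The potential $\gamma-\beta/R$ is increasing in $R$. For $R_1<R_2$ the difference of the potentials is $\beta(1/R_1-1/R_2)\ge\beta_*(1/R_1-1/R_2)>0$, so its time–space average has a positive $\liminf$. Theorem \ref{theorem1.3}(ii) (strict form) then gives that $F$ and $G$ are strictly increasing.

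Lemma \ref{lemma3.5}(i) gives $\min(\gamma-\beta/R)\le H\le\max(\gamma-\beta/R)$. Hence $F,G<0$ for $R$ small (where $\beta/R>\gamma^*$) and $F,G>0$ for $R$ large (where $\beta^*/R<\gamma_*$). Continuity in $R$ follows from the estimate \eqref{eq3.4} in the proof of Lemma \ref{lemma3.9}. That estimate bounds the difference of averages of $H$ by $\sup|\beta||1/R_1-1/R_2|$ plus a boundary term that vanishes as $T\to\infty$. So $F$ and $G$ are Lipschitz in $1/R$, and their zeros $\underline{\mathcal{R}}_0$ and $\overline{\mathcal{R}}_0$ exist and are unique.

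Since $F(1)=\limsup\fint H(\cdot;\gamma-\beta)$, strict monotonicity gives $\mathrm{sign}\,F(1)=\mathrm{sign}(1-\underline{\mathcal{R}}_0)$. Indeed, $1<\underline{\mathcal{R}}_0$ iff $F(1)<F(\underline{\mathcal{R}}_0)=0$. The same argument applies to $G$ and $\overline{\mathcal{R}}_0$.

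\textbf{Lower estimates.} Use the upper bound in Theorem \ref{theorem1.3}(i):
\[
F(R)\le\limsup_T\fint^T_0\fint_\Omega(\gamma-\beta/R)\,\mathrm{d}x\mathrm{d}s.
\]
At $R=\underline{\mathcal{R}}_0$ this reads $0\le\limsup_T\bigl(\bar\gamma_T-\bar\beta_T/\underline{\mathcal{R}}_0\bigr)$, where $\bar\beta_T,\bar\gamma_T$ denote space-time averages. Since $\bar\gamma_T\ge\gamma_*>0$, this becomes $\underline{\mathcal{R}}_0\ge\liminf_T\bar\beta_T/\bar\gamma_T$. Note that $\limsup$ of $\bar\gamma_T-\bar\beta_T/R$ being $\ge0$ gives $\liminf$ of the ratio $\le R$, which is the correct direction. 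Using $G$ in the same way gives $\overline{\mathcal{R}}_0\ge\limsup_T\bar\beta_T/\bar\gamma_T$.

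\textbf{Upper estimates.} Use the lower bound in Theorem \ref{theorem1.3}(i), rewritten via Remark \ref{remark3.2} and Lemma \ref{lemma2.10}; this is where \eqref{H3} enters:
\[
F(R)\ge\inf_{x(\cdot)}\limsup_T\fint^T_0\bigl(\gamma-\beta/R\bigr)(x(s),s)\,\mathrm{d}s.
\]
Set $R=\underline{\mathcal{R}}_0$, and suppose $\underline{\mathcal{R}}_0>R^*:=\sup_{x(\cdot)}\liminf_T\fint\beta/\fint\gamma$. Then for every curve $x(\cdot)$, for all large $T$ we have $\fint\beta(x(s),s)\le R'\fint\gamma(x(s),s)$ with $R'<\underline{\mathcal{R}}_0$ fixed. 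Note that a $\liminf$ of the ratio below $R'$ only gives this along a sequence, so the quantifiers have to be checked. This yields $\limsup_T\fint(\gamma-\beta/\underline{\mathcal{R}}_0)\ge(1-R'/\underline{\mathcal{R}}_0)\gamma_*>0$, uniformly in the curve, which contradicts $F(\underline{\mathcal{R}}_0)=0$. The argument for $\overline{\mathcal{R}}_0$ is analogous, using $G$ with $\liminf$ and the ratio with $\limsup$.

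The main obstacle is getting the $\liminf/\limsup$ and $\inf/\sup$ pairings right. Converting $\inf_{x(\cdot)}\limsup\fint(\gamma-\beta/R)$ into the ratio form requires the uniform bounds $\gamma_*\le\gamma$ and $\beta\le\beta^*$. I would first verify each pairing on the curve-wise ratio before taking the $\sup$ over curves, and replace $R'$ by $R^*+\epsilon$ if necessary.
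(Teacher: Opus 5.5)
Your proposal is correct and follows essentially the paper's route. The sign statements come from strict monotonicity, Lipschitz continuity in $1/R$ via \eqref{eq3.4}, and the sign of $H$ for small and large $R$ (the paper's facts (a)--(d)); the lower bounds come from the averaged identity behind Lemma \ref{lemma3.6} (the paper's Lemma \ref{lemma9.6}); and the upper bounds come from $H\ge\min_{x}c$ combined with Lemma \ref{lemma2.10} (the paper's Lemma \ref{lemma9.7}), where you argue by contradiction uniformly over curves instead of fixing a near-optimal curve $x_\epsilon$. The quantifier point you flagged causes no trouble: for $\underline{\mathcal{R}}_0$, a curve-dependent sequence $T_n$ along which the ratio stays below $R'$ already gives $\limsup_T\fint^T_0(\gamma-\beta/\underline{\mathcal{R}}_0)(x(s),s)\,\mathrm{d}s\ge\gamma_*(1-R'/\underline{\mathcal{R}}_0)$ for every curve, which is all you need; for $\overline{\mathcal{R}}_0$, the $\limsup$ hypothesis does give the bound for all large $T$, which is what controlling the $\liminf$ requires.
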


The lower bounds and upper bounds of $\underline{\mathcal{R}}_0$ and $\overline{\mathcal{R}}_0$ are achievable; see Theorem \ref{theorem9.4}. We now use the two critical values $\underline{\mathcal{R}}_0$ and $\overline{\mathcal{R}}_0$ to study the dynamics of system \eqref{eq9.1}.
\begin{theorem}\label{theorem9.3}
Let $(S,I)$ be the unique positive solution of \eqref{eq9.1}.
\begin{enumerate}[{\rm (i)}]
  \item If $\overline{\mathcal{R}}_0<1$, then
   \[(S,I)\to\left(\frac{N_{total}}{|\Omega|},0\right)\quad\text{in }C(\overline{\Omega})\text{ as }t\to\infty.\]
  \item Suppose that $d_S=d_I$. If $\underline{\mathcal{R}}_0>1$, then
   \[\liminf\limits_{T\to\infty}\fint^{T}_{0} \int_{\Omega}I(x,s)\,\mathrm{d}x\mathrm{d}s>0.\]
\end{enumerate}
\end{theorem}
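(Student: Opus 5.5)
For part (i), the plan is to compare $I$ with a super-solution of the linearized infected equation and then push the resulting decay over to $S$. Since $S/(S+I)\le 1$, the function $I$ satisfies
\[
\omega\partial_t I-d_I\Delta I+(\gamma-\beta)I\le 0 .
\]
Write $(H_0,\varphi_0)$ for the normalized principal Floquet bundle with potential $\gamma-\beta$. By Theorem \ref{theorem9.2}, the hypothesis $\overline{\mathcal{R}}_0<1$ gives
\[
\eta:=\liminf_{T\to\infty}\fint_0^T H_0(s)\,\mathrm{d}s>0 .
\]
The candidate comparison function is $\overline{I}(x,t)=M\varphi_0(x,t)\exp\bigl(-\frac1\omega\int_0^tH_0(s)\,\mathrm{d}s\bigr)$, which solves the linear equation exactly. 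I choose $M$ large, using the Harnack bound of Proposition \ref{proposition3.3}(ii), so that $\overline I(\cdot,0)\ge I_0$. The parabolic comparison principle then gives
\[
I(x,t)\le C\exp\Bigl(-\frac1\omega\int_0^tH_0\Bigr)\le C e^{-\eta t/(2\omega)}
\]
for all large $t$, because $\int_0^tH_0\ge \eta t/2$ eventually. So $I\to0$ uniformly, and in fact exponentially fast.

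For $S$, I view its equation as $\omega\partial_tS-d_S\Delta S=f$, where $|f|\le (\beta^*+\gamma^*)I$ also decays exponentially. Mass conservation gives $\int_\Omega S\to N_{total}$. Split $S=\fint_\Omega S+\tilde S$. Since $\tilde S$ has zero mean, it decays under the Neumann heat semigroup at the rate of the first nonzero eigenvalue. Duhamel's formula with the exponentially small forcing then gives $\|\tilde S(\cdot,t)\|_{L^2}\to0$. The boundedness from Lemma \ref{lemma9.1}, combined with parabolic regularity, upgrades this to convergence in $C(\overline\Omega)$. This yields $S\to N_{total}/|\Omega|$.

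For part (ii), with $d_S=d_I=:d$, the total $N=S+I$ satisfies the heat equation $\omega\partial_tN-d\Delta N=0$ with Neumann data. Hence $N\to N_{total}/|\Omega|$ uniformly, and I may take $N\ge \nu>0$ for $t\ge t_0$. Then
\[
\frac{S}{S+I}=1-\frac{I}{N},
\]
so $I$ solves
\[
\omega\partial_tI-d\Delta I+(\gamma-\beta)I=-\frac{\beta}{N}I^2 .
\]
I argue by contradiction. Suppose
\[
\liminf_{T\to\infty}\fint_0^T\!\int_\Omega I=0 .
\]
The Floquet exponent of the nonlinear equation should be compared with $\limsup\fint H_0<0$, which follows from $\underline{\mathcal R}_0>1$ via Theorem \ref{theorem9.2}. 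The device is the adjoint bundle $\psi_0$ of \eqref{eq1.2}. I set $P(t)=\int_\Omega I\psi_0\,\mathrm{d}x$ and compute, as in the proof of Lemma \ref{lemma3.7},
\[
\omega\frac{\mathrm d}{\mathrm dt}\ln P=-H_0(t)+\frac{\int_\Omega \beta I^2\psi_0/N}{\int_\Omega I\psi_0}.
\]
By the Harnack inequality for $I$ on unit time intervals (solutions of a linear equation with bounded coefficients), together with the Harnack bounds on $\psi_0$ and $\varphi_0$ and the boundedness of $N$, the last quotient is at most $C\int_\Omega I(\cdot,t)$. Integrating over $[0,T]$ and dividing by $T$ gives
\[
\frac{\omega}{T}\ln\frac{P(T)}{P(0)}\ge -\fint_0^TH_0-C\fint_0^T\!\int_\Omega I .
\]
Along a sequence $T_n$ realizing the $\liminf$ of $\fint\int I$ (which is $0$ by assumption), the right side has $\liminf\ge -\limsup\fint H_0>0$. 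This forces $P(T_n)\to\infty$, contradicting Lemma \ref{lemma9.1}.

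The main obstacle is the Harnack step. It requires bounding $\int I^2\psi_0/\int I\psi_0$ by $\int I$ uniformly in time, and the Harnack constant has to be obtained backward in time while staying uniform. If this proves troublesome, the fallback is to use the cruder bound $\sup_x I(\cdot,t)\le C\fint_{t-1}^{t}\int_\Omega I$, which time-averages equally well.
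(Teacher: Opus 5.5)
Your proposal is correct and follows essentially the same route as the paper. In (i) you compare $I$ with $M\varphi_0\exp\bigl(-\frac{1}{\omega}\int_0^t H_0\bigr)$ and then run a convergence argument for $S$, which the paper delegates to Allen et al. In (ii) you argue by contradiction through $\ln\int_\Omega I\psi_0\,\mathrm{d}x$ with the adjoint bundle, exactly as the paper does. The only cosmetic difference is that you obtain $S+I\ge\nu$ from convergence of the Neumann heat flow, whereas the paper uses the parabolic Harnack inequality.

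One slip: the nonlinear term enters with a minus sign,
$\omega\frac{\mathrm{d}}{\mathrm{d}t}\ln P=-H_0(t)-\frac{\int_\Omega \beta I^2\psi_0/N\,\mathrm{d}x}{\int_\Omega I\psi_0\,\mathrm{d}x}$.
This is the form your subsequent integrated inequality, and your Harnack step, actually require.
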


In Theorem \ref{theorem9.3}(ii) we assume that $d_S = d_I$. For the general case, it is difficult to determine the persistence of the system; see Remark \ref{remark9.11}. In other special cases, however, such as (a) $\beta/\gamma = r(t)\in C([0,\infty);(1,\infty))$, or (b) fixed $d_I > 0$ with $d_S$ sufficiently large, one can prove that the persistence conclusion stated above continues to hold.

Furthermore, applying our theoretical results established in the previous chapters, we can obtain the asymptotic behavior of $\underline{\mathcal{R}}_0$ and $\overline{\mathcal{R}}_0$ with respect to the diffusion rate $d_I$ and the frequency $\omega$.
\begin{theorem}\label{theorem9.4}
  Assume that $\gamma,\beta\in\mathcal{C}$. The following statements are valid.
  \begin{enumerate}[{\rm (i)}]
  \item Let $\vartheta\in[0,\infty]$ be given. Assume in addition that $\beta,\gamma\in C^{2,1}(\overline{\Omega}\times\mathbb{R})$, with uniformly bounded time derivatives $(\sup_{t\in\mathbb{R}}(\|\partial_t \beta(\cdot,t)\|_{\infty}+\|\partial_t \gamma(\cdot,t)\|_{\infty})<\infty)$; impose \eqref{H2} in the subregime required by Theorem \ref{theorem1.9}(i). Then \[\lim\limits_{\left(d_I,\frac{\omega}{d_I}\right)\to(\infty,\vartheta)}\underline{\mathcal{R}}_0(\omega,d_I) =\liminf_{T\to\infty} \frac{\int^{T}_{0} \int_{\Omega} \beta(x,s)\,\mathrm{d}x\mathrm{d}s}{\int^{T}_{0}\int_{\Omega}\gamma(x,s) \,\mathrm{d}x\mathrm{d}s}\]
      and
      \[ \lim\limits_{\left(d_I,\frac{\omega}{d_I}\right)\to(\infty,\vartheta)} \overline{\mathcal{R}}_0(\omega,d_I)=\limsup_{T\to\infty} \frac{\int^{T}_{0} \int_{\Omega} \beta(x,s)\,\mathrm{d}x\mathrm{d}s}{\int^{T}_{0}\int_{\Omega}\gamma(x,s) \,\mathrm{d}x\mathrm{d}s}.\]
  \item Assume that $\beta,\gamma\in \mathscr{X}$, where $\mathscr{X}$ is defined in \eqref{eq1.5}. If $\omega\in(0,\infty)$ is given, then
      \[\lim\limits_{d_I\to0}\underline{\mathcal{R}}_0(d_I) =\liminf_{T\to\infty}\max\limits_{x\in\overline{\Omega}} \frac{\int^{T}_{0}  \beta(x,s)\,\mathrm{d}s}{\int^{T}_{0}\gamma(x,s)\,\mathrm{d}s}~\text{and}~ \lim\limits_{d_I\to0}\overline{\mathcal{R}}_0(d_I) =\limsup_{T\to\infty}\max\limits_{x\in\overline{\Omega}} \frac{\int^{T}_{0}  \beta(x,s)\,\mathrm{d}s}{\int^{T}_{0}\gamma(x,s)\,\mathrm{d}s}.\]
  \item Assume that $\beta,\gamma\in C^{0,1}(\overline{\Omega}\times\mathbb{R})$ satisfies $\sup_{t\in\mathbb{R}}(\|\partial_t \beta(\cdot,t)\|_{\infty}+\|\partial_t \gamma(\cdot,t)\|_{\infty})<\infty$. Let $r^{-}_{0}$ and $r^{+}_{0}$ be positive constants determined by
       \[\limsup\limits_{T\to\infty}\fint^{T}_{0}\mu^0\left(s;d_I,\gamma-\beta/r^{-}_{0}\right)\,\mathrm{d}s=0 \quad \text{ and } \quad \liminf\limits_{T\to\infty}\fint^{T}_{0}\mu^0\left(s;d_I,\gamma-\beta/r^{+}_{0}\right)\,\mathrm{d}s=0.\]
       Then
      \[\lim\limits_{\omega\to0}\underline{\mathcal{R}}_0(\omega)=r^{-}_{0}\quad\text{and}\quad \lim\limits_{\omega\to0}\overline{\mathcal{R}}_0(\omega)=r^{+}_{0}.\]
  \item Assume that $\beta,\gamma\in \mathscr{X}$. For a given $r>0$, set $\hat{c}_{T,r}(\cdot):=\fint^{T}_{0}\left[\gamma(\cdot,s)-\beta(\cdot,s)/r\right]\,\mathrm{d}s$ and define the set $\mathscr{C}_r$ of functions as
     \[\mathscr{C}_r:=\left\{\hat{c}\in C (\overline{\Omega})~|~\text{there exists }T_n\to\infty\text{ such that }\|\hat{c}_{T_n,r}-\hat{c}_r\|_{C(\overline{\Omega})}\to 0\right\}.\]
      Let $r^{-}_{\infty}$ and $r^{+}_{\infty}$ be positive constants such that
       \[\sup_{\hat{c}\in\mathscr{C}_{r^{-}_{\infty}}}\mu^\infty(\hat{c})=0\quad \text{ and } \quad \inf_{\hat{c}\in\mathscr{C}_{r^{+}_{\infty}}}\mu^\infty(\hat{c})=0 .\]
       Then
      \[\lim\limits_{\omega\to\infty}\underline{\mathcal{R}}_0(\omega)=r^{-}_{\infty}\quad\text{and}\quad \lim\limits_{\omega\to\infty}\overline{\mathcal{R}}_0(\omega)=r^{+}_{\infty}.\]
  \item Assume that $\beta,\gamma\in \mathscr{X}$, where $\mathscr{X}$ is defined in \eqref{eq1.5}. Then
      \[\lim\limits_{(\omega,d_I)\to(\infty,0)}\underline{\mathcal{R}}_0(\omega,d_I) =\liminf_{T\to\infty}\max\limits_{x\in\overline{\Omega}} \frac{\fint^{T}_{0}  \beta(x,s)\,\mathrm{d}s}{\fint^{T}_{0}\gamma(x,s)\,\mathrm{d}s}\]
      and
      \[\lim\limits_{(\omega,d_I)\to(\infty,0)}\overline{\mathcal{R}}_0(\omega,d_I) =\limsup_{T\to\infty}\max\limits_{x\in\overline{\Omega}} \frac{\fint^{T}_{0}  \beta(x,s)\,\mathrm{d}s}{\fint^{T}_{0}\gamma(x,s)\,\mathrm{d}s}.\]
  \item Assume that the functions $\beta,\gamma\in C^{2,1}(\overline{\Omega}\times\mathbb{R})$ and that both satisfy uniform H\"{o}lder condition \eqref{H3}.
      Then
      \[\lim\limits_{\left(d_I,\frac{\omega}{\sqrt{d_I}}\right)\to(0,0)}\underline{\mathcal{R}}_0(\omega,d_I)=\sup\limits_{x(\cdot)\in C(\mathbb{R};\overline{\Omega})}\liminf_{T\to\infty} \frac{\fint^{T}_{0}  \beta(x(s),s)\,\mathrm{d}s}{\fint^{T}_{0}\gamma(x(s),s)\,\mathrm{d}s}\]
      and
      \[\lim\limits_{\left(d_I,\frac{\omega}{\sqrt{d_I}}\right)\to(0,0)}\overline{\mathcal{R}}_0(\omega,d_I)=\sup\limits_{x(\cdot)\in C(\mathbb{R};\overline{\Omega})}\limsup_{T\to\infty} \frac{\fint^{T}_{0}  \beta(x(s),s)\,\mathrm{d}s}{\fint^{T}_{0}\gamma(x(s),s)\,\mathrm{d}s}.\]
  \item If $\beta,\gamma\in \mathscr{X}$, then
      \[\lim\limits_{\left(\omega,\frac{\omega}{\sqrt{d_I}}\right)\to(0,\infty)}\underline{\mathcal{R}}_0(\omega,d_I) =\liminf_{T\to\infty}\max\limits_{x\in\overline{\Omega}} \frac{\int^{T}_{0}\beta(x,s)\,\mathrm{d}s}{\int^{T}_{0}\gamma(x,s)\,\mathrm{d}s}\]
      and
      \[\lim\limits_{\left(\omega,\frac{\omega}{\sqrt{d_I}}\right)\to(0,\infty)} \overline{\mathcal{R}}_0(\omega,d_I)=\limsup_{T\to\infty}\max\limits_{x\in\overline{\Omega}} \frac{\int^{T}_{0}  \beta(x,s)\,\mathrm{d}s}{\int^{T}_{0}\gamma(x,s)\,\mathrm{d}s}.\]
\end{enumerate}
\end{theorem}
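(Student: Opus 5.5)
The plan is to translate each limit statement for $H$ into a statement for the critical values through the monotone map $r\mapsto \fint_0^T H(s;\gamma-\beta/r)\,\mathrm{d}s$. Write $F^\pm(r;\omega,d_I)$ for the $\limsup$/$\liminf$ of this average. By Theorem~\ref{theorem1.3}(ii) with the strict version, together with $\beta\ge\beta_*>0$, both $F^\pm$ are strictly increasing in $r$. The identity $H(\cdot;c+k)=H(\cdot;c)+k$ for a constant $k$ gives the same Lipschitz-type bounds in $r$, with constants uniform in $(\omega,d_I)$, since the factor $\beta/r$ changes by at least $\beta_*|1/r-1/r'|$ pointwise. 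By Lemma~\ref{lemma3.5}(i) we have $\min(\gamma-\beta/r)\le H\le\max(\gamma-\beta/r)$, so $F^\pm(r)<0$ for $r<\beta_*/\gamma^*$ and $F^\pm(r)>0$ for $r>\beta^*/\gamma_*$. Hence $\underline{\mathcal{R}}_0,\overline{\mathcal{R}}_0$ lie in a fixed compact interval $[r_1,r_2]\subset(0,\infty)$ independent of the parameters.

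For each regime, the earlier theorems (Theorem~\ref{theorem1.9}(i) for (i), Theorem~\ref{theorem1.5} with Corollary~\ref{corollary1.12} for (ii), Theorem~\ref{theorem1.6} for (iii)--(iv), Theorem~\ref{theorem1.9}(ii) for (v), Theorem~\ref{theorem1.10}(i) for (vi), Theorem~\ref{theorem1.10}(ii) for (vii)) give, for each fixed $r$, the pointwise convergence $F^\pm(r;\omega,d_I)\to G^\pm(r)$. Here $G^\pm$ is the corresponding limiting functional applied to the potential $c_r=\gamma-\beta/r$. One checks that $c_r$ inherits the hypotheses: $\mathcal{C}$, $C^{2,1}$, \eqref{H2}, \eqref{H3} and $\mathscr{X}$ are all linear conditions, so they are preserved by $\gamma-\beta/r$. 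The key abstract step is then elementary. Suppose $F_k$ are increasing functions converging pointwise to $G$, $G$ is strictly increasing near its unique zero $r^*$, and $F_k(r_k)=0$. Then $r_k\to r^*$, because $G(r^*-\eta)<0<G(r^*+\eta)$ forces $F_k(r^*\pm\eta)$ to have these signs for large $k$, and the monotonicity of $F_k$ traps $r_k$. So it remains to identify the zero of each $G^\pm$ and show strictness there.

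Next we identify the zeros. In (i), $G^+(r)=\limsup_T\fint_0^T\fint_\Omega(\gamma-\beta/r)$, and $G^+(r)=0$ exactly when $r$ equals the $\liminf$ of the ratio $\int\!\int\beta/\int\!\int\gamma$. This is because $\limsup_T(a_T-b_T/r)=0$ is equivalent to $\inf\lim(b_T/a_T)=r$, using $a_T\ge\gamma_*|\Omega|$. This yields $\underline{\mathcal{R}}_0$, and the same argument with $G^-$ yields $\overline{\mathcal{R}}_0$. Similarly, in (ii), (v) and (vii), $G^+(r)=\limsup_T\min_x\fint(\gamma-\beta/r)$. Its zero is characterized by
\[\liminf_T\max_x\frac{\int\beta}{\int\gamma}=r,\]
since $\min_x \fint_0^T\gamma(x,s)\,(1-R_T(x)/r)\,\mathrm{d}s$ has the sign of $1-\max_x R_T/r$, with factors uniformly bounded above and below. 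Case (vi) is handled the same way using Lemma~\ref{lemma2.10}, rewriting the limit as an infimum over curves and exchanging sign with a supremum of the ratio along curves. Cases (iii) and (iv) are definitional; there we only need strict monotonicity of the limiting functionals, which follows from Lemma~\ref{lemma2.17} and from the monotonicity of $\mu^0$ in its potential.

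The main obstacle is the strict monotonicity and sign change of $G^\pm$ in $r$, together with the careful handling of $\liminf$/$\limsup$ of ratios. The limits involve $\limsup$ of a minimum of a time average, not a clean limit. I will try a quantitative bound: for $r<r'$,
\[G(r')-G(r)\ge\beta_*(1/r-1/r')>0,\]
which holds because $c_{r'}\ge c_r+\beta_*(1/r-1/r')$ pointwise, and every limiting functional ($\min$, average, $\mu^\infty$, $\mu^0$, the curve infimum) is monotone and commutes with adding constants. This gives uniform strictness, so the trapping argument applies directly.
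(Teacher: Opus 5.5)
Your proposal is correct and follows essentially the same route as the paper: you identify the zero of the limiting functional in $r$ (the paper's Lemmas~\ref{lemma9.12}--\ref{lemma9.14} and \ref{lemma9.19}), apply the single-regime limit theorems at fixed $r$ to the potential $\gamma-\beta/r$, and transfer convergence to the roots using monotonicity in $r$. The only difference is minor: the paper argues by contradiction along a sequence $(\omega_k,d_k)$ whose roots converge to some $\underline{r}_0\neq\underline{r}$, using the uniform Lipschitz bound in $1/r$ from \eqref{eq3.4}, whereas your direct squeeze between $r^*\pm\eta$ uses only monotonicity and needs no subsequence extraction.
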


\section{Dynamics}
Extend the domains of functions $\beta(\cdot,t)$ and $\gamma(\cdot,t)$ to whole $\mathbb{R}$ by setting $\beta(\cdot,t)=\beta(\cdot,0)$ and $\gamma(\cdot,t)=\gamma(\cdot,0)$ if $t<0$, respectively. By  mollifying the extended functions $\beta$ and $\gamma$ near the $t=0$, we may assume that the extended functions $\beta,\gamma\in\mathcal{C}$. While this operation might slightly modify the function values of $\beta$ and $\gamma$ in the vicinity of $0^+$, Lemma \ref{lemma3.9} informs us that such an alteration has no influence on the principal Floquet exponent. Moreover, drawing upon Lemma \ref{lemma3.9}, we shall briefly elaborate in Remark \ref{remark9.5} on why distinct ways of extending backward in time likewise do not impact the forward long-time average quantities considered in this work. Under this view, from now on, we regard $\beta(\cdot,t)$ and $\gamma(\cdot,t)$ as the functions defined on whole axis $\mathbb{R}$ without further explicit mention. For any positive number $r$, let $(H(\cdot;\omega,d_I,r),\varphi(\cdot,\cdot;r))$ be the normalized principal Floquet bundle of
\begin{equation}\label{eq9.2}
  \left\{
  \begin{aligned}
    &\omega\partial_t \varphi-d_I\Delta\varphi+\left(\gamma(x,t)-\frac{\beta(x,t)}{r}\right)\varphi =H(t;\omega,d_I,r)\varphi,&&(x,t)\in\Omega\times\mathbb{R},\\
    &\partial_{\mathbf{n}}\varphi=0,&&(x,t)\in\partial\Omega\times\mathbb{R},\\
    &\varphi(x,t)>0,&&(x,t)\in\overline{\Omega}\times\mathbb{R},\\
    &\int_{\Omega}\varphi(x,t)\,\mathrm{d}x=1,&&t\in\mathbb{R}.
  \end{aligned}
  \right.
\end{equation}
By virtue of Lemma \ref{lemma3.5}, $H(t;\omega,d_I,r)\le0$ for each $t\in\mathbb{R}$ if $r\le\frac{\inf_{\Omega\times\mathbb{R}}\beta(x,t)}{\sup_{\Omega\times\mathbb{R}}\gamma(x,t)}$ and $H(t;\omega,d_I,r)\ge0$ for each $t\in\mathbb{R}$ if $r\ge\frac{\sup_{\Omega\times\mathbb{R}}\beta(x,t)}{\inf_{\Omega\times\mathbb{R}} \gamma(x,t)}$. Recall the definitions of $\underline{\mathcal{R}}_0$ and $\overline{\mathcal{R}}_0$ in Section \ref{sec1.1},
 \[\limsup\limits_{T\to\infty}\fint^{T}_{0}H(s;\underline{\mathcal{R}}_0)\,\mathrm{d}s=0 \quad \text{ and } \quad \liminf\limits_{T\to\infty}\fint^{T}_{0}H(s;\overline{\mathcal{R}}_0)\,\mathrm{d}s=0.\]
The quantities $\underline{\mathcal{R}}_0$ and $\overline{\mathcal{R}}_0$ are well-defined. In fact, their well-definedness is based on the following facts:
\begin{enumerate}[{\rm (a)}]
  \item $H$ depends smoothly on parameters $r\in(0,\infty)$ (Proposition \ref{proposition3.3});
  \item Both
        \[\overline{\mathcal{H}}(r;\omega,d_I):=\limsup_{T\to\infty}\fint^{T}_{0}H(s;\omega,d_I,r)\,\mathrm{d}s\quad\text{ and }\quad\underline{\mathcal{H}}(r;\omega,d_I):=\liminf_{T\to\infty}\fint^{T}_{0}H(s;\omega,d_I,r)\,\mathrm{d}s\]
        are increasing in $r$ (see Corollary \ref{corollary3.8});
  \item In addition, by \eqref{eq3.4}, for any two distinct numbers $r_1,r_2>0$, there hold
       \[|\overline{\mathcal{H}}(r_1;\omega,d_I)-\overline{\mathcal{H}}(r_2;\omega,d_I)|\le \|\beta\|_\infty\left|\frac{1}{r_1}-\frac{1}{r_2}\right|,\]
       and
       \[ |\underline{\mathcal{H}}(r_1;\omega,d_I)-\underline{\mathcal{H}}(r_2;\omega,d_I)| \le \|\beta\|_\infty\left|\frac{1}{r_1}-\frac{1}{r_2}\right|;\]
  \item By the strict positivity of $\beta$ and $\gamma$, one has $\overline{\mathcal{H}}(0+;\omega,d_I)=-\infty$ and $\overline{\mathcal{H}}(\infty;\omega,d_I)>\inf_{\Omega\times\mathbb{R}}\gamma$, and similarly $\underline{\mathcal{H}}(0+;\omega,d_I)=-\infty$ and $\underline{\mathcal{H}}(\infty;\omega,d_I)>\inf_{\Omega\times\mathbb{R}}\gamma$.
\end{enumerate}
Furthermore, one can derive the upper bounds of $\underline{\mathcal{R}}_0$ and $\overline{\mathcal{R}}_0$ immediately, that is,
\[\underline{\mathcal{R}}_0\le\overline{\mathcal{R}}_0\le \frac{\sup_{\Omega\times\mathbb{R}}\beta(x,t)}{\inf_{\Omega\times\mathbb{R}} \gamma(x,t)}.\]
Indeed, the upper bound will be improved; see Lemma \ref{lemma9.7}.

\begin{remark}\label{remark9.5}{\rm
The normalized principal Floquet bundle $(H,\varphi)$ of \eqref{eq9.2} may be different for different extensions of $\beta$ and $\gamma$ to the whole axis $\mathbb{R}$. However, the values $\underline{\mathcal{R}}_0$ and $\overline{\mathcal{R}}_0$ are independent of the ways of extending $\beta$ and $\gamma$ to $\mathbb{R}$. Denote $c=\gamma-\frac{\beta}{r}$ for a given $r>0$. Then we use notations $c_1$ and $c_2$ to represent two different extensions of $\beta$ and $\gamma$ to $\mathbb{R}$. Obviously, $c_1(\cdot,t)=c_2(\cdot,t)$ for all $t>0$ over $\overline{\Omega}$. By arguments of approximation for functions, we may assume $c_1,c_2\in\mathcal{C}$ and $\lim_{t\to\infty}\left\|c_1(\cdot,t)-c_2(\cdot,t)\right\|_{C(\overline{\Omega})}=0$. Then using Lemma \ref{lemma3.9} (also, Theorem \ref{theorem1.3}-(iii)), we see
 \[\lim\limits_{T\to\infty}\fint^{T}_{0}[H_1(s;r)-H_2(s;r)]\,\mathrm{d}s= 0,\]
where $H_1(t;r)$ and $H_2(t;r)$ are the normalized principal Floquet bundles of \eqref{eq9.2} corresponding to $c_1$ and $c_2$ respectively. As a result, the $\underline{\mathcal{R}}_0$ and $\overline{\mathcal{R}}_0$ are constants for different ways to extend $\beta$ and $\gamma$.
}
\end{remark}

Our first result gives the lower bounds of $\underline{\mathcal{R}}_0$ and $\overline{\mathcal{R}}_0$.
\begin{lemma}\label{lemma9.6}
The following estimates hold:
  \[\underline{\mathcal{R}}_0\ge\liminf\limits_{T\to\infty}\frac{\int^{T}_{0}\int_{\Omega}\beta(x,s)\,\mathrm{d}x\mathrm{d}s }{\int^{T}_{0}\int_{\Omega}\gamma(x,s)\,\mathrm{d}x\mathrm{d}s}\quad\text{ and }\quad \overline{\mathcal{R}}_0 \ge\limsup\limits_{T\to\infty}\frac{\int^{T}_{0}\int_{\Omega}\beta(x,s)\,\mathrm{d}x\mathrm{d}s }{\int^{T}_{0}\int_{\Omega}\gamma(x,s)\,\mathrm{d}x\mathrm{d}s}.\]
\end{lemma}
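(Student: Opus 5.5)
The plan is to use the upper bound of Lemma~\ref{lemma3.6} (Theorem~\ref{theorem1.3}(i)) applied to the potential $c_r:=\gamma-\beta/r$, and then read off the inequality at $r=\underline{\mathcal{R}}_0$ and at $r=\overline{\mathcal{R}}_0$. Write
\[
B(T):=\int^{T}_{0}\int_{\Omega}\beta(x,s)\,\mathrm{d}x\mathrm{d}s,\qquad G(T):=\int^{T}_{0}\int_{\Omega}\gamma(x,s)\,\mathrm{d}x\mathrm{d}s,
\]
so that $\fint^T_0\fint_\Omega c_r\,\mathrm{d}x\mathrm{d}s=\frac{1}{T|\Omega|}\bigl(G(T)-B(T)/r\bigr)$. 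Since $\beta,\gamma$ are bounded above and below by positive constants, we have $G(T)/T\in[|\Omega|\gamma_*,|\Omega|\gamma^*]$ for all $T>0$, which lets us divide freely.

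For $\underline{\mathcal{R}}_0$, I would take $r=\underline{\mathcal{R}}_0$ in the $\limsup$ inequality of Lemma~\ref{lemma3.6}. This gives
\[
0=\limsup_{T\to\infty}\fint^{T}_{0}H(s;\underline{\mathcal{R}}_0)\,\mathrm{d}s\le\limsup_{T\to\infty}\frac{G(T)}{T|\Omega|}\Bigl(1-\frac{B(T)}{\underline{\mathcal{R}}_0G(T)}\Bigr).
\]
Now suppose, to the contrary, that $\underline{\mathcal{R}}_0<\liminf_{T\to\infty} B(T)/G(T)$. Then there are $\eta>0$ and $T_1$ such that $B(T)/(\underline{\mathcal{R}}_0G(T))\ge 1+\eta$ for all $T>T_1$. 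The right-hand side is then at most $-\eta\gamma_*<0$, which is a contradiction. Hence $\underline{\mathcal{R}}_0\ge\liminf_{T\to\infty} B(T)/G(T)$.

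For $\overline{\mathcal{R}}_0$, I would use the $\liminf$ inequality with $r=\overline{\mathcal{R}}_0$:
\[
0=\liminf_{T\to\infty}\fint^{T}_{0}H(s;\overline{\mathcal{R}}_0)\,\mathrm{d}s\le\liminf_{T\to\infty}\frac{G(T)}{T|\Omega|}\Bigl(1-\frac{B(T)}{\overline{\mathcal{R}}_0G(T)}\Bigr).
\]
Suppose $\overline{\mathcal{R}}_0<\limsup_{T\to\infty} B(T)/G(T)$. Then there are $\eta>0$ and a sequence $T_n\to\infty$ with $B(T_n)/(\overline{\mathcal{R}}_0G(T_n))\ge 1+\eta$. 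Along this sequence the bracket is at most $-\eta\gamma_*$, so the $\liminf$ is negative, which is again a contradiction.

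The only subtle point is pairing the correct $\limsup$ or $\liminf$ from Lemma~\ref{lemma3.6} with each critical value. The extension of $\beta,\gamma$ to negative times does not matter, because only integrals over $[0,T]$ appear; this is also consistent with Remark~\ref{remark9.5}.
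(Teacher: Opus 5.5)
Your proof is correct and follows essentially the paper's route. The paper re-derives inline the inequality behind Lemma~\ref{lemma3.6}: it divides by $\varphi$, integrates over $\Omega\times[0,T]$, discards $-d_I\int_\Omega|\nabla\ln\varphi|^2\,\mathrm{d}x$, rearranges the result into a lower bound for $r$, and evaluates it at $r=\underline{\mathcal{R}}_0,\overline{\mathcal{R}}_0$. You instead cite Lemma~\ref{lemma3.6} directly and conclude by contradiction using $\gamma\ge\gamma_*>0$, and your pairing of the $\limsup$ bound with $\underline{\mathcal{R}}_0$ and the $\liminf$ bound with $\overline{\mathcal{R}}_0$ is exactly right.
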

\begin{proof}
Let $(H(\cdot;\omega,d_I,r),\varphi)$ be the normalized Floquet bundle of \eqref{eq9.2}. Divide both sides of the identity by $\varphi$ satisfied by $(H(\cdot;\omega,d_I,r),\varphi)$ and then integrate over $\Omega$ by parts to obtain
  \[\omega\frac{\mathrm{d}}{\mathrm{d}t}\int_{\Omega} \ln\varphi\,\mathrm{d}x-d_I\int_{\Omega}|\nabla\ln\varphi|^2\,\mathrm{d}x+\int_{\Omega}\left(\gamma(x,t) -\frac{\beta(x,t)}{r}\right)\,\mathrm{d}x=|\Omega|H(t;\omega,d_I,r),\qquad t\in[0,\infty).\]
Since $-d_I\int_{\Omega}|\nabla\ln\varphi|^2\,\mathrm{d}x\le0$, it follows that
  \[\omega\frac{\mathrm{d}}{\mathrm{d}t}\int_{\Omega} \ln\varphi\,\mathrm{d}x+\int_{\Omega}\left(\gamma(x,t) -\frac{\beta(x,t)}{r}\right)\,\mathrm{d}x\ge|\Omega|H(t;\omega,d_I,r),\qquad t\in[0,\infty).\]
Integrating the above inequality over $[0,T]$, one has
 \[r\ge \frac{\int^{T}_{0}\int_{\Omega}\beta(x,s)\,\mathrm{d}x\mathrm{d}s}{ \omega\int_{\Omega}\ln\varphi\,\mathrm{d}x\Big|^{T}_{0} +\int^{T}_{0}\int_{\Omega}\gamma(x,s)\,\mathrm{d}x\mathrm{d}s- |\Omega|\int^{T}_{0}H(s;\omega,d_I,r)\,\mathrm{d}s}.\]
Substituting $r=\underline{\mathcal{R}}_0$ and $r=\overline{\mathcal{R}}_0$ respectively, and then letting $T\to\infty$, one easily obtains the lower bounds of $\underline{\mathcal{R}}_0$ and $\overline{\mathcal{R}}_0$. The proof is complete.
\end{proof}

\begin{lemma}\label{lemma9.7}
Assume that $\beta,\gamma$ satisfy uniform H\"{o}lder condition \eqref{H3}. The following estimates hold:
  \[\underline{\mathcal{R}}_0\le\sup_{x(\cdot)\in C(\mathbb{R};\overline{\Omega})}\liminf\limits_{T\to\infty}\frac{\int^{T}_{0}\beta(x(s),s) \,\mathrm{d}s }{\int^{T}_{0}\gamma(x(s),s)\,\mathrm{d}s}\quad\text{and}\quad \overline{\mathcal{R}}_0 \le\sup_{x(\cdot)\in C(\mathbb{R};\overline{\Omega})}\limsup\limits_{T\to\infty}\frac{\int^{T}_{0}\beta(x(s),s)\,\mathrm{d}s }{\int^{T}_{0}\gamma(x(s),s)\,\mathrm{d}s}.\]
\end{lemma}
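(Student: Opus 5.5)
The plan is to use the universal lower bound $H(t)\ge\min_{x\in\overline{\Omega}}c(x,t)$ from Lemma \ref{lemma3.5}(i), combined with Lemma \ref{lemma2.10} to pass from spatial minima to curves. Fix $r>0$ and set $c_r:=\gamma-\beta/r$. Since $\beta,\gamma$ satisfy \eqref{H3}, so does $c_r$ (with constant $L_\gamma+L_\beta/r$). Lemma \ref{lemma3.5}(i) gives $H(t;\omega,d_I,r)\ge\min_{\overline{\Omega}}c_r(\cdot,t)$. Averaging and applying Lemma \ref{lemma2.10} yields
\[
\overline{\mathcal H}(r)\ge\inf_{x(\cdot)\in C(\mathbb{R};\overline{\Omega})}\limsup_{T\to\infty}\fint_0^T c_r(x(s),s)\,\mathrm{d}s,\qquad
\underline{\mathcal H}(r)\ge\inf_{x(\cdot)}\liminf_{T\to\infty}\fint_0^T c_r(x(s),s)\,\mathrm{d}s .
\]

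Next I would set $R^-:=\sup_{x(\cdot)}\liminf_{T}\int_0^T\beta(x(s),s)\,\mathrm{d}s/\int_0^T\gamma(x(s),s)\,\mathrm{d}s$ and $R^+$ analogously with $\limsup$. These are finite because $\beta\le\beta^*$ and $\gamma\ge\gamma_*$. Take any $r>R^-$ and any curve $x(\cdot)$. Then there is a sequence $T_n\to\infty$ along which the ratio is at most $(R^-+r)/2<r$.

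Along that sequence,
\[
\fint_0^{T_n}c_r(x(s),s)\,\mathrm{d}s=\fint_0^{T_n}\gamma\Bigl(1-\frac{1}{r}\frac{\int\beta}{\int\gamma}\Bigr)\ge\gamma_*\Bigl(1-\frac{R^-+r}{2r}\Bigr)=:\eta_r>0 .
\]
So $\limsup_T\fint_0^Tc_r(x(s),s)\,\mathrm{d}s\ge\eta_r$. Because $\eta_r$ is uniform in the curve, the infimum over curves is also at least $\eta_r$, hence $\overline{\mathcal H}(r)\ge\eta_r>0$.

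At this point I have to check which of $\underline{\mathcal R}_0,\overline{\mathcal R}_0$ each bound controls. The definition reads $\limsup\fint H(\cdot;\underline{\mathcal R}_0)=0$, i.e. $\overline{\mathcal H}(\underline{\mathcal R}_0)=0$. Both $\overline{\mathcal H}$ and $\underline{\mathcal H}$ are increasing in $r$ (Corollary \ref{corollary3.8}). Therefore $\overline{\mathcal H}(r)>0$ forces $r>\underline{\mathcal R}_0$. Letting $r\downarrow R^-$ gives $\underline{\mathcal R}_0\le R^-$, which is the claimed bound on $\underline{\mathcal R}_0$ with the $\liminf$ ratio.

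For $\overline{\mathcal R}_0$, take $r>R^+$. For every curve the ratio is eventually at most $(R^++r)/2$ for all large $T$. Hence $\liminf_T\fint_0^Tc_r(x(s),s)\,\mathrm{d}s\ge\eta_r>0$ uniformly in the curve. This gives $\underline{\mathcal H}(r)>0$, so $r>\overline{\mathcal R}_0$ because $\underline{\mathcal H}(\overline{\mathcal R}_0)=0$. Letting $r\downarrow R^+$ gives $\overline{\mathcal R}_0\le R^+$.

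The main obstacle is the uniformity of the positive margin across curves, because the infimum over curves could a priori degenerate. This is handled by the explicit bound $\eta_r$, which depends only on $\gamma_*$, $r$ and $R^\pm$. A second subtlety is that strict monotonicity of $\overline{\mathcal H}$ is needed only in the weak form "$\overline{\mathcal H}(r)>0\Rightarrow r>\underline{\mathcal R}_0$". This follows from monotonicity together with $\overline{\mathcal H}(\underline{\mathcal R}_0)=0$.
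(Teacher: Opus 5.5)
Your proof is correct and uses the same ingredients as the paper's: the pointwise bound $H(t)\ge\min_{\overline{\Omega}}c(\cdot,t)$ from Lemma~\ref{lemma3.5}(i), the nontrivial direction of Lemma~\ref{lemma2.10} (which is where \eqref{H3} enters), and elementary manipulation of the ratio $\int_0^T\beta/\int_0^T\gamma$ using $\gamma\ge\gamma_*$. The difference is only in how the argument is organized. The paper works directly at $r=\underline{\mathcal R}_0$ (resp.\ $\overline{\mathcal R}_0$) and extracts a single $\epsilon$-optimal curve. You instead argue contrapositively for $r>R^{\pm}$, using a margin $\gamma_*(r-R^{\pm})/(2r)$ that is uniform over all curves, and then invoke monotonicity of $\overline{\mathcal H},\underline{\mathcal H}$ in $r$ (Corollary~\ref{corollary3.8}). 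This also avoids the paper's ratio estimate, whose stated factor $(1-\epsilon/\fint_0^T\beta)\,\underline{\mathcal R}_0$ should read $(1-\epsilon\,\underline{\mathcal R}_0/\fint_0^T\beta)\,\underline{\mathcal R}_0$; this slip is harmless as $\epsilon\to0$.
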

\begin{proof}
By Lemma \ref{lemma3.5}, we have
 \[\min_{x\in\overline{\Omega}}\left(\gamma(x,t)-\frac{\beta(x,t)}{\underline{\mathcal{R}}_0}\right)\le H(t;\omega,d_I,\underline{\mathcal{R}}_0),\quad\forall~t\in\mathbb{R}\]
and
 \[\min_{x\in\overline{\Omega}}\left(\gamma(x,t)-\frac{\beta(x,t)}{\overline{\mathcal{R}}_0}\right)\le H(t;\omega,d_I,\overline{\mathcal{R}}_0),\quad\forall~t\in\mathbb{R}.\]
Averaging the above inequalities over $[0,T]$ yields
 \[\fint^{T}_{0}\min_{x\in\overline{\Omega}}\left(\gamma(x,s)-\frac{\beta(x,s)}{\underline{\mathcal{R}}_0}\right)\,\mathrm{d}s \le \fint^{T}_{0}H(s;\omega,d_I,\underline{\mathcal{R}}_0)\,\mathrm{d}s\]
and
 \[\fint^{T}_{0}\min_{x\in\overline{\Omega}}\left(\gamma(x,s)-\frac{\beta(x,s)}{\overline{\mathcal{R}}_0}\right)\,\mathrm{d}s \le \fint^{T}_{0}H(s;\omega,d_I,\overline{\mathcal{R}}_0)\,\mathrm{d}s.\]
Letting $T\to\infty$, we have
 \[\limsup_{T\to\infty}\fint^{T}_{0}\min_{x\in\overline{\Omega}}\left(\gamma(x,s)-\frac{\beta(x,s)}{\underline{\mathcal{R}}_0}\right)\,\mathrm{d}s \le0\quad\text{and}\quad \liminf_{T\to\infty}\fint^{T}_{0}\min_{x\in\overline{\Omega}}\left(\gamma(x,s)-\frac{\beta(x,s)}{\overline{\mathcal{R}}_0}\right)\,\mathrm{d}s \le0.\]
Using Lemma \ref{lemma2.10}, we have
 \[\inf_{x(\cdot)\in C(\mathbb{R};\overline{\Omega})}\limsup_{T\to\infty}\fint^{T}_{0}\left(\gamma(x(s),s)-\frac{\beta(x(s),s)}{\underline{\mathcal{R}}_0}\right)\,\mathrm{d}s \le0\]
and
 \[ \inf_{x(\cdot)\in C(\mathbb{R};\overline{\Omega})}\liminf_{T\to\infty}\fint^{T}_{0}\left(\gamma(x(s),s)-\frac{\beta(x(s),s)}{\overline{\mathcal{R}}_0}\right)\,\mathrm{d}s \le0.\]
For any $\epsilon>0$, there exists $x_\epsilon(\cdot)\in C(\mathbb{R};\overline{\Omega})$ such that
  \[\limsup_{T\to\infty}\fint^{T}_{0} \left[\gamma(x_\epsilon(s),s) -\frac{\beta(x_\epsilon(s),s)}{\underline{\mathcal{R}}_0}\right]\,\mathrm{d}s<\frac{\epsilon}{2}.\]
Moreover, we can find a $T_\epsilon>0$ such that
 \[\fint^{T}_{0}\gamma(x_\epsilon(s),s)\,\mathrm{d}s -\frac{1}{\underline{\mathcal{R}}_0}\fint^{T}_{0}\beta(x_\epsilon(s),s)\,\mathrm{d}s<\epsilon,\qquad\forall~T>T_\epsilon,\]
which implies that
 \[\frac{\fint^{T}_{0}\beta(x_\epsilon(s),s)\,\mathrm{d}s}{\fint^{T}_{0}\gamma(x_\epsilon(s),s)\,\mathrm{d}s }>\left(1-\frac{\epsilon}{\fint^{T}_{0}\beta(x_\epsilon(s),s)\,\mathrm{d}s}\right)\underline{\mathcal{R}}_0\ge \left(1-\frac{\epsilon}{\inf_{\overline{\Omega}\times\mathbb{R}}\beta}\right)\underline{\mathcal{R}}_0.\]
By sending $T\to\infty$, we get
\[\liminf_{T\to\infty}\frac{\fint^{T}_{0}\beta(x_\epsilon(s),s)\,\mathrm{d}s}{\fint^{T}_{0} \gamma(x_\epsilon(s),s)\,\mathrm{d}s }\ge \left(1-\frac{\epsilon}{\inf_{\overline{\Omega}\times\mathbb{R}}\beta}\right)\underline{\mathcal{R}}_0.\]
Since $\epsilon$ is arbitrary, we see that
 \[\underline{\mathcal{R}}_0\le \sup_{x(\cdot)\in C(\mathbb{R};\overline{\Omega})}\liminf_{T\to\infty} \frac{\fint^{T}_{0} \beta(x(s),s)\,\mathrm{d}s}{\fint^{T}_{0}\gamma(x(s),s)\,\mathrm{d}s}.\]
In the same way, we can show that
 \[\overline{\mathcal{R}}_0\le \sup_{x(\cdot)\in C(\mathbb{R};\overline{\Omega})}\limsup_{T\to\infty} \frac{\fint^{T}_{0} \beta(x(s),s)\,\mathrm{d}s}{\fint^{T}_{0}\gamma(x(s),s)\,\mathrm{d}s}.\]
This completes the proof.
\end{proof}

\medskip
Theorem \ref{theorem9.2} follows from Lemmas \ref{lemma9.6} and \ref{lemma9.7}. We now explore the long-time behavior of the positive solution $(S,I)$ of \eqref{eq9.1}.
\begin{lemma}\label{lemma9.8}
Let $(S,I)$ be the solution of \eqref{eq9.1}. Then
  \[\liminf\limits_{t\to\infty}\int_{\Omega}S(x,t)\,\mathrm{d}x\ge\frac{\gamma_*}{\gamma^*+\beta^*}N_{total},\]
  where $\gamma^*:=\sup\limits_{\Omega\times[0,\infty)}\gamma$, $\beta^*:=\sup\limits_{\Omega\times[0,\infty)}\beta$ and $\gamma_*:=\inf\limits_{\Omega\times[0,\infty)}\gamma$.
\end{lemma}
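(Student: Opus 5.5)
Integrating the $S$-equation over $\Omega$ and using the Neumann boundary condition gives an ODE inequality for the total susceptible mass
\[
\mathcal{S}(t):=\int_\Omega S(x,t)\,\mathrm{d}x .
\]
Since $S,I\ge0$, we have $\frac{S}{S+I}\le 1$, so $\beta\frac{SI}{S+I}\le \beta^* I$. Also $\gamma I\ge \gamma_* I$. Hence
\[
\omega\frac{\mathrm{d}}{\mathrm{d}t}\mathcal{S}(t)=\int_\Omega\Big(-\frac{\beta SI}{S+I}+\gamma I\Big)\mathrm{d}x\ \ge\ \int_\Omega(\gamma_*-\beta\tfrac{S}{S+I})I\,\mathrm{d}x .
\]
This crude bound is not enough, since $\gamma_*-\beta^*$ may be negative. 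A sharper pointwise estimate is needed.

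The sharper estimate uses $\frac{SI}{S+I}\le S$, so $\beta\frac{SI}{S+I}\le\beta^* S$. Combined with $\gamma I\ge\gamma_* I$, this gives
\[
\omega\mathcal{S}'(t)\ \ge\ \gamma_*\int_\Omega I\,\mathrm{d}x-\beta^*\mathcal{S}(t).
\]
Mass conservation gives $\int_\Omega I\,\mathrm{d}x=N_{total}-\mathcal{S}(t)$. Therefore
\[
\omega\mathcal{S}'(t)\ \ge\ \gamma_*N_{total}-(\gamma_*+\beta^*)\mathcal{S}(t).
\]
Comparison with the linear ODE $\omega y'=\gamma_*N_{total}-(\gamma_*+\beta^*)y$, $y(0)=\mathcal{S}(0)\ge0$, yields
\[
\liminf_{t\to\infty}\mathcal{S}(t)\ \ge\ \frac{\gamma_*}{\gamma_*+\beta^*}N_{total}.
\]
This is at least as strong as the stated bound, because $\gamma_*\le\gamma^*$ implies $\frac{\gamma_*}{\gamma_*+\beta^*}\ge\frac{\gamma_*}{\gamma^*+\beta^*}$.

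The main points to check are routine. We need $S,I\ge0$ and the mass identity $\int_\Omega(S+I)\,\mathrm{d}x\equiv N_{total}$; both follow from the maximum principle and from adding the two equations. The incidence term should be read as $0$ where $S+I=0$, and the bound $\frac{SI}{S+I}\le\min\{S,I\}$ still holds there. The differentiability of $\mathcal{S}$ comes from classical regularity for $t>0$. The ODE comparison is the elementary Gronwall-type argument: set $z=y$-gap, multiply by $e^{(\gamma_*+\beta^*)t/\omega}$, and integrate.
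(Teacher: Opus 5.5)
Your proof is correct and follows the paper's route: integrate the $S$-equation over $\Omega$, bound the incidence term by $\beta^* S$, use conservation of $\int_\Omega(S+I)\,\mathrm{d}x=N_{total}$, and close with a linear Gronwall-type comparison. The only difference is that the paper writes $\gamma I=\gamma(S+I)-\gamma S$ and bounds $\gamma S\le\gamma^* S$, whereas you bound $\gamma I\ge\gamma_* I$ directly; this gives the slightly sharper constant $\gamma_*/(\gamma_*+\beta^*)$, which implies the stated estimate.
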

\begin{proof}
Add $\gamma S$ to both sides of the $S$-equation and integrate the resulting identity over $\Omega$ to obtain
  \[\omega\frac{\mathrm{d}}{\mathrm{d}t} \int_{\Omega}S\,\mathrm{d}x+\int_{\Omega}\left(\frac{\beta(x,t)I}{S+I}+\gamma(x,t)\right)S \,\mathrm{d}x= \int_{\Omega}\gamma(x,t)(S+I)\,\mathrm{d}x,\qquad t\in(0,\infty).\]
Then
  \[\omega\frac{\mathrm{d}}{\mathrm{d}t} \int_{\Omega}S(x,t)\,\mathrm{d}x+\left(\gamma^*+\beta^* \right)\int_{\Omega}S(x,t) \,\mathrm{d}x\ge\gamma_*N_{total},\qquad t\in(0,\infty),\]
which implies
  \[\omega\frac{\mathrm{d}}{\mathrm{d}t}\left(\mathrm{e}^{\frac{\gamma^*+\beta^*}{\omega}t}\int_{\Omega}S(x,t)\,\mathrm{d}x\right)\ge N_{total}\gamma_*\mathrm{e}^{\frac{\gamma^*+\beta^*}{\omega}t},\qquad t\in(0,\infty).\]
Integrating the above inequality over $[0,T]$ and by some simple computations, we have
  \[\int_{\Omega}S(x,T)\,\mathrm{d}x\ge\frac{\gamma_*}{\gamma^*+\beta^*}N_{total}+\mathrm{e}^{-\frac{\gamma^*+\beta^*}{\omega}T}\left(\int_{\Omega} S(x,0)\,\mathrm{d}x-\frac{\gamma_*}{\gamma^*+\beta^*}N_{total}\right).\]
Consequently, sending $T\to\infty$, we obtain
  \[\liminf\limits_{T\to\infty}\int_{\Omega}S(x,T)\,\mathrm{d}x\ge\frac{\gamma_*}{\gamma^*+\beta^*}N_{total}.\]
This proves the lemma.
\end{proof}

\begin{lemma}[Extinction]\label{lemma9.9}
Let $(S,I)$ be the unique positive solution of \eqref{eq9.1}. If $\overline{\mathcal{R}}_0<1$, then
   \[(S,I)\to\left(\frac{N_{total}}{|\Omega|},0\right)\quad\text{in }C(\overline{\Omega})\text{ as }t\to\infty,\]
where $N_{total}>0$ is the total population.
\end{lemma}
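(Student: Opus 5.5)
The plan is to prove first that $I(\cdot,t)\to0$ exponentially fast in $C(\overline{\Omega})$, and then to recover $S$ from conservation of total population and diffusion. Since $\overline{\mathcal{R}}_0<1$ and $\underline{\mathcal{H}}(r;\omega,d_I)$ is increasing and continuous in $r$ (facts (b)–(c) above), I fix $r\in(\overline{\mathcal{R}}_0,1)$. This gives
\[
\liminf_{T\to\infty}\fint_0^T H(s;\omega,d_I,r)\,\mathrm{d}s=:2\eta>0 .
\]
Because $S/(S+I)\le1$, the $I$-equation yields
\[
\omega\partial_t I-d_I\Delta I+(\gamma-\beta)I\le0 .
\]
Since $r<1$, we have $\gamma-\beta\ge \gamma-\beta/r$ only where $\beta\le0$, so this direct comparison fails. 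I therefore use the finer bound $\frac{S}{S+I}\le 1$ only through the comparison with the potential $\gamma-\beta/r$. This requires $\frac{\beta S}{S+I}\le \frac{\beta}{r}$, which holds because $r<1$ and $S/(S+I)\le 1\le 1/r$. Hence
\[
\omega\partial_tI-d_I\Delta I+\Bigl(\gamma-\frac{\beta}{r}\Bigr)I\le0 ,
\]
with homogeneous Neumann data.

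Next I build a supersolution from the Floquet bundle $\varphi=\varphi(\cdot,\cdot;r)$ of \eqref{eq9.2}:
\[
\overline{I}(x,t):=M\varphi(x,t)\exp\Bigl(-\frac1\omega\int_0^tH(s;\omega,d_I,r)\,\mathrm{d}s\Bigr).
\]
A direct computation shows that $\overline I$ solves $\omega\partial_t\overline I-d_I\Delta\overline I+(\gamma-\beta/r)\overline I=0$. By the Harnack principle (Proposition~\ref{proposition3.3}(ii)) there is $C_d$ with $C_d^{-1}\le\varphi\le C_d$, so $M$ can be chosen with $\overline I(\cdot,0)\ge I_0$. The parabolic comparison principle then gives $I\le\overline I$ for all $t\ge0$. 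By the choice of $\eta$, for $t$ large we have $\int_0^tH\,\mathrm{d}s\ge \eta t$, and therefore $\|I(\cdot,t)\|_\infty\le MC_d e^{-\eta t/\omega}\to0$. This is the step I expect to be the main obstacle. One must check that the liminf-type quantity (rather than the limsup) is exactly what controls decay for \emph{all} large $t$; this is why the hypothesis is placed on $\overline{\mathcal R}_0$, whose definition uses the liminf. One must also handle the sign and monotonicity bookkeeping in $r$ carefully.

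For $S$, integrating the system gives $\int_\Omega S\,\mathrm{d}x=N_{total}-\int_\Omega I\,\mathrm{d}x\to N_{total}$. Write $\bar S(t):=\fint_\Omega S\,\mathrm{d}x$ and $v:=S-\bar S$. The $S$-equation reads $\omega\partial_tS-d_S\Delta S=f$, where
\[
|f|\le(\beta^*+\gamma^*)I\le Ce^{-\eta t/\omega}.
\]
Multiplying the equation for $v$ by $v$ and using the Poincaré inequality gives
\[
\frac{\omega}{2}\frac{\mathrm{d}}{\mathrm{d}t}\|v\|_2^2\le -d_SC_P\|v\|_2^2+C e^{-\eta t/\omega}\|v\|_2 ,
\]
and hence $\|v\|_2\to0$. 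Uniform parabolic Hölder estimates on $[t-1,t+1]$, using the $L^\infty$ bound from Lemma~\ref{lemma9.1}, make $\{S(\cdot,t)\}_{t\ge1}$ precompact in $C(\overline\Omega)$. The $L^2$ convergence therefore upgrades to $S(\cdot,t)\to N_{total}/|\Omega|$ in $C(\overline\Omega)$, which completes the argument.
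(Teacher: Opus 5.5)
Your proposal is correct and follows the paper's proof. The paper likewise bounds $I$ by the supersolution $M\varphi\,e^{-\int_0^t H}$ built from the Floquet bundle of \eqref{eq9.2}, but it works at $r=1$ directly. That already suffices, because $\underline{\mathcal H}$ is strictly increasing in $r$ (Corollary~\ref{corollary3.8}, using $\beta\ge\beta_*>0$), so $\underline{\mathcal H}(1)>\underline{\mathcal H}(\overline{\mathcal R}_0)=0$; strictness, not mere monotonicity or continuity, is what gives $\eta>0$. Your detour through $r\in(\overline{\mathcal R}_0,1)$ is therefore unnecessary but harmless. Your remark that the direct comparison with potential $\gamma-\beta$ ``fails'' is backwards: for $r\le 1$ one has $\gamma-\beta\ge\gamma-\beta/r$ everywhere.

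For $S$ the paper only cites Allen et al. Your Poincar\'e energy estimate combined with parabolic compactness is a valid, explicit replacement for that citation.
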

\begin{proof}
  Let $(H_1,\varphi)$ be the normalized principal Floquet bundle of \eqref{eq9.2} with $r=1$. Set $u(x,t)=M\mathrm{e}^{-\int^{t}_{0}H_1(s)\,\mathrm{d}s}\varphi(x,t)$ for $(x,t)\in\overline{\Omega}\times[0,\infty)$, where $M>0$ is a positive number determined later. Then $u$ satisfies
  \begin{equation*}
 \left\{
  \begin{aligned}
    &\omega\partial_t u-d_I\Delta u+\left(\gamma(x,t)-\beta(x,t)\right)u=0,&&(x,t)\in\Omega\times(0,\infty),\\
    &\partial_{\mathbf{n}}u=0,&&(x,t)\in\partial\Omega\times(0,\infty),\\
    &u(x,0)=M\varphi(x,0)\ge,\not\equiv0,&&x\in\overline{\Omega}.
  \end{aligned}
 \right.
\end{equation*}
  On the other hand, $I$ satisfies
  \begin{equation*}
 \left\{
  \begin{aligned}
    &\omega\partial_t I-d_I\Delta I+\left(\gamma(x,t)-\beta(x,t)\right)I\le0,&&(x,t)\in\Omega\times(0,\infty),\\
    &\partial_{\mathbf{n}}I=0,&&(x,t)\in\partial\Omega\times(0,\infty),\\
    &I(x,0)=I_0(x)\ge,\not\equiv0,&&x\in\overline{\Omega}.
  \end{aligned}
 \right.
\end{equation*}
Hence, for any given initial data $I_0\ge 0$, one can choose $M$ large enough such that $u(x,0)\ge I_0(x)$ on $\overline{\Omega}$. By the classical comparison principle for parabolic equation, we conclude that $I(x,t)\le u(x,t)$ on $\overline{\Omega}\times[0,\infty)$, which implies
 \[\lim\limits_{t\to\infty}I(\cdot,t)\le\lim\limits_{t\to\infty}M\mathrm{e}^{-\int^{t}_{0}H_1(s)\,\mathrm{d}s} \varphi(\cdot,t)=0\qquad\text{uniformly on }\overline{\Omega},\]
where the fact that $\int^{t}_{0}H_1(s)\,\mathrm{d}s\to\infty$ as $t\to\infty$ if $\overline{\mathcal{R}}_0<1$ has been used. Then, with a little modification to the proof of \cite[Lemma 2.5]{Allen2008Asymptotic}, one can show $S\to\frac{N_{total}}{|\Omega|}$ in $C(\overline{\Omega})$. This ends the proof.
\end{proof}

\medskip
Next, we turn to investigate the persistence of system \eqref{eq9.1}.

\begin{lemma}[Weak persistence with $d_S=d_I$]\label{lemma9.10}
Let $(S,I)$ be the unique positive solution of \eqref{eq9.1} and $d:=d_S=d_I$. If $\underline{\mathcal{R}}_0>1$, then
   \[\liminf\limits_{T\to\infty}\fint^{T}_{0} \int_{\Omega}I(x,s)\,\mathrm{d}x\mathrm{d}s>0.\]
\end{lemma}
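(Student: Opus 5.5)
Since $d_S=d_I=d$, adding the two equations of \eqref{eq9.1} gives $N:=S+I$ solving $\omega\partial_t N-d\Delta N=0$ with Neumann boundary condition. Hence $N(\cdot,t)\to N_{total}/|\Omega|$ uniformly on $\overline{\Omega}$, exponentially fast. Writing $S=N-I$, the $I$-equation becomes a scalar nonautonomous logistic-type equation
\[
\omega\partial_t I-d\Delta I=\Big(\beta-\gamma-\frac{\beta I}{N}\Big)I .
\]
The plan is to argue by contradiction. Suppose $\liminf_{T\to\infty}\fint_0^T\int_\Omega I\,\mathrm{d}x\mathrm{d}s=0$; the goal is to show that the linearized growth rate forces a contradiction with $\underline{\mathcal{R}}_0>1$.

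First we translate $\underline{\mathcal{R}}_0>1$ into a condition on $H$. By monotonicity in $r$ (item (b) above and Corollary \ref{corollary3.8}), $\overline{\mathcal{H}}(1)<\overline{\mathcal{H}}(\underline{\mathcal{R}}_0)=0$. The strict inequality follows from the Lipschitz/strict monotonicity encoded in \eqref{eq3.2}, since $\beta\ge\beta_*>0$ gives $\liminf$ of the average of $c_1-c_2$ positive. So
\[
\limsup_{T\to\infty}\fint_0^TH(s;1)\,\mathrm{d}s=:-2\eta<0,
\]
and in particular $\fint_0^TH(s;1)\,\mathrm{d}s\le-\eta$ for all large $T$. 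Next we test the $I$-equation against the adjoint bundle $\psi$ of \eqref{eq1.2} with potential $c=\gamma-\beta$. Set $P(t)=\int_\Omega I\psi\,\mathrm{d}x>0$. Integrating by parts as in the proof of Lemma \ref{lemma3.7} gives
\[
\omega P'(t)=-H(t;1)P(t)-\int_\Omega\frac{\beta I^2}{N}\psi\,\mathrm{d}x .
\]
Dividing by $P$ and integrating over $[0,T]$ yields
\[
\frac{\omega}{T}\ln\frac{P(T)}{P(0)}=-\fint_0^TH\,\mathrm{d}s-\fint_0^T\frac{\int_\Omega\beta I^2\psi/N\,\mathrm{d}x}{P}\,\mathrm{d}s .
\]

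The key estimate bounds the last term by a constant multiple of $\|I(\cdot,s)\|_\infty$. This uses the Harnack bounds on $\psi$ (Proposition \ref{proposition3.3}(ii)), the lower bound $N\ge N_{total}/(2|\Omega|)$ for large $t$, and $\beta\le\beta^*$. Parabolic Harnack/regularity for $I$, which is bounded by Lemma \ref{lemma9.1} and has bounded coefficients, then gives $\|I(\cdot,s)\|_\infty\le C\int_\Omega I(\cdot,s+1)\,\mathrm{d}x$. Consequently, the time average of the nonlinear term is at most $C\fint_0^{T+1}\int_\Omega I$.

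Finally, we choose $T_n\to\infty$ along which $\fint_0^{T_n}\int_\Omega I\to0$. For large $n$ the right-hand side is at least $\eta/2$, so $P(T_n)\ge P(0)e^{\eta T_n/(2\omega)}$, which contradicts the uniform boundedness of $I$ (Lemma \ref{lemma9.1}) together with the boundedness of $\psi$.

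The main obstacle is the step converting time averages of $\int_\Omega I$ into control of $\|I\|_\infty$. A pointwise Harnack inequality with a time shift is needed, uniformly in $t$. If this is awkward, the fallback is to use only that $P(T_n)$ is bounded, together with Jensen-type control of $\int I^2\psi/P\le\|I\|_\infty$. That control is in turn handled through an $L^\infty$–$L^1$ smoothing estimate for the linear parabolic equation with bounded zeroth-order coefficient.
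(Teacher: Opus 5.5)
Your proof is correct and follows essentially the same route as the paper. Both test the $I$-equation against the adjoint bundle $\psi$ for the potential $\gamma-\beta$, take logarithmic time averages of $\int_\Omega I\psi$, bound the nonlinear term by $C\int_\Omega I$ via Harnack-type estimates, and contradict $\limsup_{T\to\infty}\fint_0^T H(s;1)\,\mathrm{d}s<0$ along a sequence with $\fint_0^{T_n}\int_\Omega I\to0$. The differences are minor: the paper uses same-time Harnack bounds for $S+I$, $I$ and $\psi$, whereas you use $\int_\Omega I^2\psi\le\|I\|_\infty\int_\Omega I\psi$, the lower bound on $S+I$, and a time-lagged Harnack inequality for $I$ (standard here, since $\beta S/(S+I)-\gamma$ is bounded uniformly in $t$).
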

\begin{proof}
Let $(H_1,\varphi)$ be the normalized principal Floquet bundle of \eqref{eq9.2} with $r=1$. By Theorem \ref{theorem9.2}, we know $\limsup\limits_{T\to\infty}\fint^{T}_{0}H_1(s)\,\mathrm{d}s<0$ if $\underline{\mathcal{R}}_0>1$. One can check that
\begin{equation*}
 \left\{
  \begin{aligned}
    &\omega\partial_t (S+I)-d\Delta (S+I)=0,&&(x,t)\in\Omega\times(0,\infty),\\
    &\partial_{\mathbf{n}}(S+I)=0,&&(x,t)\in\partial\Omega\times(0,\infty).
  \end{aligned}
 \right.
\end{equation*}
Since $S+I$ is a nonnegative solution to the heat equation with homogeneous Neumann boundary conditions, the parabolic Harnack's inequality \cite[Theorem 1.2.6]{Lam2022Introduction} applies, giving
\[\sup\limits_{x\in\Omega}[S(x,t)+I(x,t)]\le C\inf\limits_{x\in\Omega}[S(x,t)+I(x,t)],\quad\text{for each }t>1,\]
where $C$ is a positive constant independent of $t$ and may vary from line to line. As a consequence,
\begin{equation}\label{eq9.3}
  \int_{\Omega}\frac{I}{S+I}\,\mathrm{d}x=\frac{1}{N_{total}}\int_\Omega \frac{\int_\Omega(S+I)\,\mathrm{d}x}{S+I}I\,\mathrm{d}x\le C\int_{\Omega}I\,\mathrm{d}x.
\end{equation}

Let $(H_1,\psi)$ be the adjoint normalized principal Floquet bundle satisfying
\begin{equation}\label{eq9.4}
  \left\{
  \begin{aligned}
    &-\omega\partial_t \psi-d_I\Delta\psi+\left(\gamma(x,t)-\beta(x,t)\right)\psi=H_1(t)\psi,&&(x,t)\in\Omega\times\mathbb{R},\\
    &\partial_{\mathbf{n}}\psi=0,&&(x,t)\in\partial\Omega\times\mathbb{R}.
  \end{aligned}
  \right.
\end{equation}
Multiply the $I$-equation by $\psi$ firstly and then integrate the resulting equation over $\Omega$ by parts to obtain
\[\omega\frac{\mathrm{d}}{\mathrm{d}t} \int_{\Omega}I\psi\,\mathrm{d}x+\int_{\Omega}\left(-\omega\partial_t \psi-d_I\Delta\psi+(\gamma(x,t)-\beta(x,t)) \psi\right)I\,\mathrm{d}x=-\int_{\Omega}\frac{\beta(x,t)I}{S+I}\psi I\,\mathrm{d}x\]
for all $t\in(0,\infty)$. Dividing both sides by $\int_{\Omega}\psi I\,\mathrm{d}x$ and combining with \eqref{eq9.3} and \eqref{eq9.4}, we have the following estimate
\begin{align*}
  \omega\frac{\mathrm{d}}{\mathrm{d}t} \left(\ln\int_{\Omega}\psi I\,\mathrm{d}x\right)+H_1(t)=-\int_{\Omega}\frac{\beta(x,t)\psi  I}{\int_{\Omega}I\psi\,\mathrm{d}x}\frac{I}{S+I}\,\mathrm{d}x\ge -C\int_{\Omega}\frac{I}{S+I}\,\mathrm{d}x
  \ge-C\int_{\Omega}I\,\mathrm{d}x,
\end{align*}
where the Harnack's inequality has been used for $I$ and $\psi$ in the first inequality and $C$ is some positive constant independent of $t$ which may differ at each occurrence. Then taking the integral average over $[0,T]$ yields
\begin{equation}\label{eq9.5}
\begin{aligned}
  \fint^{T}_{0}H_1(s)\,\mathrm{d}s
  \ge&-\frac{\omega}{T}\left.\ln\int_{\Omega}\psi I\,\mathrm{d}x\right|^{T}_{t=0}-C\fint^{T}_{0}\left(\int_{\Omega}I\,\mathrm{d}x\right)^{1/2}\,\mathrm{d}s\\
  \ge&-\frac{\omega}{T}\left. \ln\int_{\Omega}\psi I\,\mathrm{d}x\right|^{T}_{t=0}-C\left(\fint^{T}_{0}\int_{\Omega}I\,\mathrm{d}x\mathrm{d}s\right)^{1/2}.
\end{aligned}
\end{equation}
Using the Harnack principle for $\psi$ and $I$, we know that $\ln\int_{\Omega}\psi(x,T) I(x,T)\,\mathrm{d}x$ is uniformly bounded from above for all $T>0$. Therefore,
\[\liminf\limits_{T\to\infty}\left. \left(-\frac{\omega}{T}\ln\int_{\Omega}\psi I\,\mathrm{d}x\right)\right|^{T}_{t=0}\ge 0.\]
On the other hand, if $\liminf\limits_{T\to\infty}\fint^{T}_{0}\int_{\Omega}I\,\mathrm{d}x\mathrm{d}s=0$, then there exists a sequence $\{T_n\}_{n\in\mathbb{N}^+}$ of $T\to\infty$ such that
\[\lim\limits_{n\to\infty}\fint^{T_n}_{0}\int_{\Omega}I(x,s)\,\mathrm{d}x\mathrm{d}s=0.\]
Letting $n\to\infty$ with $T=T_n$ in \eqref{eq9.5}, one gets
\[\liminf\limits_{n\to\infty}\fint^{T_n}_{0}H_1(s)\,\mathrm{d}s\ge0,\]
which leads at a contradiction to $\limsup\limits_{T\to\infty}\fint^{T}_{0}H_1(s)\,\mathrm{d}s<0$.
  This finishes the proof.
\end{proof}

\begin{remark}\label{remark9.11}{\rm
  A key step in the proof of Lemma \ref{lemma9.10} is the estimate \eqref{eq9.3}. Precisely, if \[\liminf\limits_{T\to\infty}\fint^{T}_{0} \int_{\Omega}I(x,s)\,\mathrm{d}x\mathrm{d}s=0 \text{ implies }\liminf\limits_{T\to\infty}\fint^{T}_{0} \int_{\Omega}\frac{I}{S+I}\,\mathrm{d}x\mathrm{d}s=0,\]
  then the weak persistence follows. Unfortunately, we ran into trouble when proving it when the condition $d_S=d_I$ is removed.
  }
\end{remark}

\medskip
Theorem \ref{theorem9.3} follows from Lemmas \ref{lemma9.9} and \ref{lemma9.10}.

\section{Asymptotic behavior of the two critical numbers}
We conclude by investigating the asymptotics of $\underline{\mathcal{R}}_0$ and $\overline{\mathcal{R}}_0$. First, the solutions to a number of limiting equations are studied (Lemmas \ref{lemma9.12}-\ref{lemma9.14}). It is subsequently proven that the limits of $\underline{\mathcal{R}}_0$ and $\overline{\mathcal{R}}_0$ in various scaling regimes are precisely given by these solutions.
\begin{lemma}\label{lemma9.12}
The positive numbers
 \[\underline{r}:=\liminf_{T\to\infty} \frac{\int^{T}_{0} \int_{\Omega} \beta(x,s)\,\mathrm{d}x\mathrm{d}s }{\int^{T}_{0}\int_{\Omega}\gamma(x,s)\,\mathrm{d}x\mathrm{d}s}\quad\text{and}\quad \overline{r}:= \limsup_{T\to\infty} \frac{\int^{T}_{0} \int_{\Omega} \beta(x,s)\,\mathrm{d}x\mathrm{d}s}{\int^{T}_{0} \int_{\Omega}\gamma(x,s)\,\mathrm{d}x\mathrm{d}s}\]
are unique solutions of problems
 \[\limsup\limits_{T\to\infty} \fint^{T}_{0}\fint_{\Omega}\left(\gamma(x,s)- \frac{\beta(x,s)}{r}\right)\,\mathrm{d}x\mathrm{d}s=0 \quad\text{and}\quad \liminf\limits_{T\to\infty} \fint^{T}_{0}\fint_{\Omega}\left(\gamma(x,s)- \frac{\beta(x,s)}{r}\right)\,\mathrm{d}x\mathrm{d}s=0,\]
respectively.
\end{lemma}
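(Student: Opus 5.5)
The argument is elementary: it only uses the positivity bounds $0<\beta_*<\beta<\beta^*$, $0<\gamma_*<\gamma<\gamma^*$ and the structure of $\liminf$/$\limsup$ of ratios. Set
\[
B(T):=\fint^{T}_{0}\fint_\Omega \beta(x,s)\,\mathrm{d}x\mathrm{d}s,\qquad G(T):=\fint^{T}_{0}\fint_\Omega \gamma(x,s)\,\mathrm{d}x\mathrm{d}s .
\]
Then $\beta_*\le B(T)\le\beta^*$ and $\gamma_*\le G(T)\le\gamma^*$ for all $T>0$. Moreover $\underline{r}=\liminf_{T\to\infty}B(T)/G(T)$ and $\overline{r}=\limsup_{T\to\infty}B(T)/G(T)$, since the factors $T|\Omega|$ cancel. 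In particular $\underline r,\overline r\in[\beta_*/\gamma^*,\beta^*/\gamma_*]\subset(0,\infty)$. Define
\[
\overline{F}(r):=\limsup_{T\to\infty}\Bigl(G(T)-\tfrac{B(T)}{r}\Bigr),\qquad \underline{F}(r):=\liminf_{T\to\infty}\Bigl(G(T)-\tfrac{B(T)}{r}\Bigr).
\]
These are exactly the left-hand sides of the two problems in the statement.

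\emph{Uniqueness.} Both functions are strictly increasing in $r$. For $r_1<r_2$ we have, for all $T$,
\[
G(T)-\tfrac{B(T)}{r_2}\ \ge\ G(T)-\tfrac{B(T)}{r_1}+\beta_*\Bigl(\tfrac1{r_1}-\tfrac1{r_2}\Bigr),
\]
and this uniform additive gap survives taking $\limsup$ and $\liminf$. Hence each equation has at most one root.

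\emph{Existence.} We check $\overline F(\underline r)=0$ by writing $G-B/r=G\,(1-(B/G)/r)$.
\begin{itemize}
\item[(a)] Take $\epsilon>0$. Eventually $B/G\ge\underline r-\epsilon$, so eventually $G-B/\underline r\le G\,\epsilon/\underline r\le \gamma^*\epsilon/\underline r$. This gives $\overline F(\underline r)\le0$.
\item[(b)] Along a sequence $T_n$ with $B(T_n)/G(T_n)\to\underline r$, we get $G(T_n)-B(T_n)/\underline r=G(T_n)\bigl(1-(B/G)(T_n)/\underline r\bigr)\to0$, because $G$ is bounded. This gives $\overline F(\underline r)\ge0$.
\end{itemize}
Symmetrically we check $\underline F(\overline r)=0$.
\begin{itemize}
\item[(a)] Eventually $B/G\le\overline r+\epsilon$, so eventually $G-B/\overline r\ge -\gamma^*\epsilon/\overline r$. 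This gives $\underline F(\overline r)\ge0$.
\item[(b)] Along a sequence realizing the $\limsup$ of $B/G$, the quantity $G-B/\overline r$ tends to $0$. This gives $\underline F(\overline r)\le0$.
\end{itemize}

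The only point needing care is the pairing of $\limsup$ with $\underline r$ and of $\liminf$ with $\overline r$. It follows from the sign of the factor $1-(B/G)/r$, using that $G$ is bounded above and below by positive constants. This is where the strict positivity of $\beta$ and $\gamma$ is essential. With existence and uniqueness both established, $\underline r$ and $\overline r$ are the unique solutions of the respective problems, which is the claim.
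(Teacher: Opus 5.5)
Your proof is correct and follows essentially the same route as the paper. Uniqueness comes from strict monotonicity in $r$, and you verify directly that $\underline r$ (resp.\ $\overline r$) is a root: a sequence realizing the $\liminf$ (resp.\ $\limsup$) of $B/G$ gives one inequality, and the eventual bound on $B/G$ gives the other, which the paper phrases as a contradiction. Your version is slightly leaner, since it skips the paper's separate existence step via the limits as $r\to0^+$ and $r\to\infty$ (an intermediate-value argument that would also need continuity); your direct verification makes that step unnecessary.
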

\begin{proof}
  For $r\in (0,\infty)$, we define
 \[\overline{f}(r):=\limsup\limits_{T\to\infty} \fint^{T}_{0}\fint_{\Omega}\left(\gamma(x,s)- \frac{\beta(x,s)}{r}\right)\,\mathrm{d}x\mathrm{d}s\]
and
 \[\underline{f}(r):=\liminf\limits_{T\to\infty} \fint^{T}_{0}\fint_{\Omega}\left(\gamma(x,s)- \frac{\beta(x,s)}{r}\right)\,\mathrm{d}x\mathrm{d}s,\]
respectively. Apparently, both $\overline{f}(r)$ and $\underline{f}(r)$ are strictly increasing in $r\in(0,\infty)$. Direct computations yield that
\[\overline{f}(r)\to-\infty\quad\text{and}\quad\underline{f}(r)\to-\infty\quad\text{ as }r\to0,\]
while
\[\overline{f}(r)\to\limsup_{T\to\infty}\fint^{T}_{0}\fint_\Omega\gamma(x,s)\,\mathrm{d}s\quad \text{and}\quad \underline{f}(r)\to\liminf_{T\to\infty}\fint^{T}_{0}\fint_\Omega\gamma(x,s)\,\mathrm{d}s\quad\text{ as }r\to\infty.\]
So the solutions of $\overline{f}(r)=0$ and $\underline{f}(r)=0$ exist uniquely, respectively. It remains to check that
 \[\limsup\limits_{T\to\infty} \fint^{T}_{0}\fint_{\Omega}\left(\gamma(x,s)- \frac{\beta(x,s)}{\underline{r}}\right)\,\mathrm{d}x\mathrm{d}s=0\quad\text{and}\quad \liminf\limits_{T\to\infty} \fint^{T}_{0}\fint_{\Omega}\left(\gamma(x,s)- \frac{\beta(x,s)}{\overline{r}}\right)\,\mathrm{d}x\mathrm{d}s=0.\]
 Indeed, by the definition of $\underline{r}$, we know that there is a sequence $\{T_n\}_{n\in\mathbb{N}^+}$ of $T\to\infty$ such that
  \[\underline{r}=\lim_{n\to\infty} \frac{\int^{T_n}_{0} \int_{\Omega} \beta(x,s)\,\mathrm{d}x\mathrm{d}s }{\int^{T_n}_{0}\int_{\Omega}\gamma(x,s)\,\mathrm{d}x\mathrm{d}s},\quad\text{and }\lim\limits_{n\to\infty} \fint^{T_n}_{0}\fint_{\Omega}\gamma(x,s)\,\mathrm{d}x\mathrm{d}s,~ \lim\limits_{n\to\infty}\fint^{T_n}_{0}\fint_{\Omega}\beta(x,s)\,\mathrm{d}x\mathrm{d}s\text{ exist}.\]
 Therefore, we easily check that
  \[\limsup\limits_{T\to\infty} \fint^{T}_{0}\fint_{\Omega}\left(\gamma(x,s)-\frac{\beta(x,s)}{ \underline{r}}\right)\,\mathrm{d}x\mathrm{d}s\ge \lim\limits_{n\to\infty} \fint^{T_n}_{0}\fint_{\Omega}\left(\gamma(x,s)-\frac{\beta(x,s)}{ \underline{r}}\right)\,\mathrm{d}x\mathrm{d}s=0.\]
 Arguing by a contradiction, we assume that the inequality holds strictly. Then there is a sequence $\{\tilde{T}_n\}_{n\in\mathbb{N}^+}$ of $T\to\infty$ such that the limits $\lim\limits_{n\to\infty} \fint^{\tilde{T}_n}_{0}\fint_{\Omega}\gamma(x,s)\,\mathrm{d}x\mathrm{d}s$ and $\lim\limits_{n\to\infty}\fint^{\tilde{T}_n}_{0}\fint_{\Omega}\beta(x,s)\,\mathrm{d}x\mathrm{d}s$ exist, and
\[\lim\limits_{n\to\infty} \fint^{\tilde{T}_n}_{0}\fint_{\Omega}\gamma(x,s)\,\mathrm{d}x\mathrm{d}s -\frac{1}{ \underline{r}}\lim\limits_{n\to\infty}\fint^{\tilde{T}_n}_{0} \fint_{\Omega}\beta(x,s)\,\mathrm{d}x\mathrm{d}s>0.\]
Equivalently, one has
\[\underline{r}>\lim\limits_{n\to\infty}\frac{\int^{\tilde{T}_n}_{0}\int_{\Omega}\beta(x,s)\,\mathrm{d}x\mathrm{d}s}{ \int^{\tilde{T}_n}_{0}\int_{\Omega}\gamma(x,s)\,\mathrm{d}x\mathrm{d}s},\]
which contradicts the definition of $\underline{r}$. Similarly, we conclude
\[\liminf\limits_{T\to\infty} \fint^{T}_{0}\fint_{\Omega}\left(\gamma(x,s)- \frac{\beta(x,s)}{ \overline{r}}\right)\,\mathrm{d}x\mathrm{d}s=0.\]
This completes the proof.
\end{proof}

\begin{lemma}\label{lemma9.13}
Let $\beta,\gamma\in C(\overline{\Omega}\times[0,\infty))$ be uniformly bounded and uniformly positive. Define
 \[\underline{R}:=\liminf_{T\to\infty}\max_{x\in\overline{\Omega}} \frac{\fint_{0}^{T}\beta(x,s)\,\mathrm{d}s}{\fint_{0}^{T}\gamma(x,s)\,\mathrm{d}s},\qquad  \overline{R}:=\limsup_{T\to\infty}\max_{x\in\overline{\Omega}} \frac{\fint_{0}^{T}\beta(x,s)\,\mathrm{d}s}{\fint_{0}^{T}\gamma(x,s)\,\mathrm{d}s}.\]
\begin{enumerate}[{\rm (i)}]
 \item If a constant $\underline{r}>0$ satisfies
   \[\limsup_{T\to\infty}\min_{x\in\overline{\Omega}}    \fint^{T}_{0}\left(\gamma(x,s)-\frac{\beta(x,s)}{\underline{r}}\right)\,\mathrm{d}s=0,\]
  then $\underline{r}=\underline{R}$.
 \item If a constant $\overline{r}>0$ satisfies
   \[\liminf_{T\to\infty}\min_{x\in\overline{\Omega}}    \fint^{T}_{0}\left(\gamma(x,s)-\frac{\beta(x,s)}{\overline{r}}\right)\,\mathrm{d}s=0,\]
  then $\overline{r}=\overline{R}$.
\end{enumerate}
\end{lemma}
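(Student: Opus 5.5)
The plan is to reduce both parts to a pointwise-in-$T$ equivalence between ``$\min_x$ of the averaged potential'' and ``$\max_x$ of the ratio of averages''. Write $B_T(x):=\fint_0^T\beta(x,s)\,\mathrm{d}s$ and $G_T(x):=\fint_0^T\gamma(x,s)\,\mathrm{d}s$. Both satisfy
\[
0<\beta_*\le B_T\le\beta^*,\qquad 0<\gamma_*\le G_T\le\gamma^*.
\]
Set $R_T:=\max_{x}B_T(x)/G_T(x)$ and, for $r>0$,
\[
F_T(r):=\min_x\Bigl(G_T(x)-B_T(x)/r\Bigr)=\min_x G_T(x)\Bigl(1-\frac{B_T(x)}{rG_T(x)}\Bigr).
\]
Since $G_T>0$, the sign of each bracket at $x$ is the sign of $r-B_T(x)/G_T(x)$. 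Hence $F_T(r)\ge0$ iff $r\ge R_T$, and there is a quantitative two-sided estimate. At a maximizer $x_T$ of the ratio we get $F_T(r)\le G_T(x_T)(1-R_T/r)$. For every $x$, $G_T(x)-B_T(x)/r\ge G_T(x)(1-R_T/r)$. Therefore
\[
\min\{\gamma_*,\gamma^*\}\text{-type bounds:}\quad c_1\,(1-R_T/r)\ \le\ F_T(r)\ \le\ c_2\,(1-R_T/r)
\]
holds with the appropriate choice of $c_1,c_2\in\{\gamma_*,\gamma^*\}$ depending on the sign of $1-R_T/r$. Precisely, if $1-R_T/r\ge0$ then $\gamma_*(1-R_T/r)\le F_T\le\gamma^*(1-R_T/r)$, and if it is negative the roles of $\gamma_*,\gamma^*$ swap. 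In all cases $F_T(r)$ and $1-R_T/r$ have the same sign and are comparable up to the factors $\gamma_*,\gamma^*$.

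For (i), the hypothesis is $\limsup_T F_T(\underline r)=0$. I first show $\underline r\ge\underline R$. If $\underline r<\underline R$, then $R_T\ge(\underline r+\underline R)/2$ for all large $T$. By the upper bound, $F_T(\underline r)\le\gamma_*(1-R_T/\underline r)\le-\eta<0$ eventually, which contradicts $\limsup F_T=0$.

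Next I show $\underline r\le\underline R$. If $\underline r>\underline R$, pick $T_n\to\infty$ with $R_{T_n}\to\underline R$. Then $1-R_{T_n}/\underline r\ge\eta>0$, so $F_{T_n}(\underline r)\ge\gamma_*\eta$, and $\limsup F_T\ge\gamma_*\eta>0$, again a contradiction.

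Part (ii) is symmetric with $\liminf$ and $\overline R$. If $\overline r<\overline R$, choose $T_n$ with $R_{T_n}\to\overline R$; then $F_{T_n}(\overline r)\le-\gamma_*\eta$, so $\liminf F_T<0$. If $\overline r>\overline R$, then $R_T\le(\overline r+\overline R)/2$ eventually, so $F_T(\overline r)\ge\gamma_*\eta$ for all large $T$, and $\liminf F_T>0$. Both cases give a contradiction.

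The only delicate step is the comparison constants, i.e. making sure the sign-dependent choice of $\gamma_*$ or $\gamma^*$ is correct. Using the one-sided bounds exactly as above avoids any issue, because each contradiction only needs a strictly signed bound with a constant bounded away from zero, and $\gamma_*>0$ supplies that. Continuity of $x\mapsto B_T/G_T$ ensures the max is attained, and $R_T\le\beta^*/\gamma_*$ keeps all quantities finite.
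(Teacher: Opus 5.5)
Your proof is correct and is essentially the paper's argument: the same four contradiction cases, with the strictly negative bound obtained at a maximizer $x_T$ of $B_T/G_T$, the strictly positive bound obtained pointwise in $x$, and $\gamma_*>0$ supplying the uniform margin each time. Packaging everything through $F_T(r)$, $R_T$ and the sign-dependent comparison with $1-R_T/r$ is only tidier bookkeeping of the same estimates. It also makes precise the paper's loose ``for any sequence'' phrasing in Step 1, where ``for all large $T$'' is what is meant.
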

\begin{proof}
 Recall the notation
  \[\gamma_*=\inf_{(x,t)\in\overline{\Omega}\times\mathbb{R}}\gamma(x,t)>0.\]
Define functions $\overline{f},\underline{f}:~(0,\infty)\to\mathbb{R}$ by
\begin{align*} \overline{f}(r)&:=\limsup_{T\to\infty}\min_{x\in\overline{\Omega}} \fint^{T}_{0}\left(\gamma(x,s)-\frac{\beta(x,s)}{r}\right)\,\mathrm{d}s,\\ \underline{f}(r)&:=\liminf_{T\to\infty}\min_{x\in\overline{\Omega}} \fint^{T}_{0}\left(\gamma(x,s)-\frac{\beta(x,s)}{r}\right)\,\mathrm{d}s.
\end{align*}
Both $\overline{f}(r)$ and $\underline{f}(r)$ are strictly increasing, and
\begin{align*}
&\lim_{r\to0^+}\overline{f}(r)=\lim_{r\to0^+}\underline{f}(r)=-\infty,\\
&\lim_{r\to\infty}\overline{f}(r)=\limsup_{T\to\infty}\min_{x\in\overline{\Omega}} \fint^{T}_{0}\gamma(x,s)\,\mathrm{d}s,\\
&\lim_{r\to\infty}\underline{f}(r)=\liminf_{T\to\infty}\min_{x\in\overline{\Omega}} \fint^{T}_{0}\gamma(x,s)\,\mathrm{d}s .
\end{align*}
Consequently, the equations $\overline{f}(r)=0$ and $\underline{f}(r)=0$ possess unique positive solutions, which we denote by $\underline{r}$ and $\overline{r}$ respectively. It remains to prove that $\underline{r}=\underline{R}$ and $\overline{r}=\overline{R}$.

\medskip\noindent
\textbf{Step 1: $\underline{r}\ge \underline{R}$.}
Suppose, for contradiction, that $\underline{r}<\underline{R}$ and put $\varepsilon_1=(\underline{R}-\underline{r})/2>0$. By the definition of $\underline{R}$, for any sequence $\{T_n\}_{n\in\mathbb{N}^+}$, it holds
 \[\max_{x\in\overline{\Omega}} \frac{\fint_{0}^{T_n}\beta(x,s)\,\mathrm{d}s}      {\fint_{0}^{T_n}\gamma(x,s)\,\mathrm{d}s} >\underline{R}-\varepsilon_1 =\frac{\underline{r}+\underline{R}}{2}.\]
For each $n\in\mathbb{N}^+$, choose $x_n\in\overline{\Omega}$ attaining the above maximum. Then
 \[\frac{\fint_{0}^{T_n}\beta(x_n,s)\,\mathrm{d}s}{\fint_{0}^{T_n}\gamma(x_n,s)\,\mathrm{d}s} >\frac{\underline{r}+\underline{R}}{2}>\underline{r},\]
equivalently,
 \[\fint_{0}^{T_n}\beta(x_n,s)\,\mathrm{d}s >\underline{r}\fint_{0}^{T_n}\gamma(x_n,s)\,\mathrm{d}s.\]
Dividing by $\underline{r}$ and rearranging give
 \[\fint^{T_n}_{0} \left(\gamma(x_n,s)-\frac{\beta(x_n,s)}{\underline{r}}\right)\,\mathrm{d}s<0 .\]
Using the lower bound $(\fint^{T_n}_{0}\beta(x_n,s)\,\mathrm{d}s>\frac{\underline{r}+\underline{R}}{2}\fint^{T_n}_{0}\gamma(x_n,s)\,\mathrm{d}s)$, we obtain
 \[\fint^{T_n}_{0} \left(\gamma(x_n,s)-\frac{\beta(x_n,s)}{\underline{r}}\right)\,\mathrm{d}s <\fint^{T_n}_{0}\gamma(x_n,s)\,\mathrm{d}s\; \frac{\underline{r}-\underline{R}}{2\underline{r}}.\]
Because of $\gamma\ge\gamma_*>0$, we have $\fint^{T_n}_{0}\gamma(x_n,s)\,\mathrm{d}s\ge\gamma_*$, and since $\underline{r}-\underline{R}<0$,
 \[\fint^{T_n}_{0} \left(\gamma(x_n,s)-\frac{\beta(x_n,s)}{\underline{r}}\right)\,\mathrm{d}s \le\gamma_*\frac{\underline{r}-\underline{R}}{2\underline{r}}<0.\]
Hence
 \[\min_{x\in\overline{\Omega}}\fint^{T_n}_{0} \left(\gamma(x,s)-\frac{\beta(x,s)}{\underline{r}}\right)\,\mathrm{d}s \le\gamma_*\frac{\underline{r}-\underline{R}}{2\underline{r}}<0 .\]
Letting $n\to\infty$ yields
 \[\overline{f}(\underline{r}) =\limsup_{T\to\infty}\min_{x\in\overline{\Omega}} \fint^{T}_{0}\left(\gamma(x,s)-\frac{\beta(x,s)}{\underline{r}}\right)\,\mathrm{d}s \le\gamma_*\frac{\underline{r}-\underline{R}}{2\underline{r}}<0,\]
which contradicts $\overline{f}(\underline{r})=0$. Thus $\underline{r}\ge \underline{R}$.

\medskip\noindent
\textbf{Step 2: $\underline{r}\le \underline{R}$.} Assume that $\underline{r}>\underline{R}$ and set $\varepsilon_2=(\underline{r}-\underline{R})/2>0$. By the definition of $\underline{R}$, there exists a sequence $T_n\to\infty$ with
 \[\max_{x\in\overline{\Omega}} \frac{\fint_{0}^{T_n}\beta(x,s)\,\mathrm{d}s}      {\fint_{0}^{T_n}\gamma(x,s)\,\mathrm{d}s} <\underline{R}+\varepsilon_2 =\frac{\underline{r}+\underline{R}}{2}.\]
Consequently, for every $x\in\overline{\Omega}$,
 \[\fint_{0}^{T_n}\beta(x,s)\,\mathrm{d}s <\frac{\underline{r}+\underline{R}}{2}\fint_{0}^{T_n}\gamma(x,s)\,\mathrm{d}s.\]
Dividing by $\underline{r}$ and rearranging give
 \[\fint^{T_n}_{0} \left(\gamma(x,s)-\frac{\beta(x,s)}{\underline{r}}\right)\,\mathrm{d}s >\fint^{T_n}_{0}\gamma(x,s)\,\mathrm{d}s\; \frac{\underline{r}-\underline{R}}{2\underline{r}}.\]
Since $\gamma\ge\gamma_*>0$ and $\underline{r}-\underline{R}>0$, one has
 \[\fint^{T_n}_{0} \left(\gamma(x,s)-\frac{\beta(x,s)}{\underline{r}}\right)\,\mathrm{d}s \ge\gamma_*\frac{\underline{r}-\underline{R}}{2\underline{r}}>0,\qquad \forall x\in\overline{\Omega}.\]
Taking the minimum over $x\in\overline{\Omega}$ and then the limit as $n\to\infty$ gives
 \[\overline{f}(\underline{r}) \ge\gamma_*\frac{\underline{r}-\underline{R}}{2\underline{r}}>0,\]
again contradicting $\overline{f}(\underline{r})=0$. Hence $\underline{r}\le \underline{R}$.

\medskip\noindent
\textbf{Step 3: $\overline{r}\le \overline{R}$.} Suppose that $\overline{r}>\overline{R}$ and put $\varepsilon_1=(\overline{r}-\overline{R})/2>0$. By the definition of $\overline{R}$, there exists $T_0>0$ such that for all $T>T_0$,
 \[\max_{x\in\overline{\Omega}} \frac{\fint_{0}^{T}\beta(x,s)\,\mathrm{d}s}      {\fint_{0}^{T}\gamma(x,s)\,\mathrm{d}s} <\overline{R}+\varepsilon_1 =\frac{\overline{r}+\overline{R}}{2}.\]
Thus for every $x\in\overline{\Omega}$ and every $T>T_0$,
 \[\fint_{0}^{T}\beta(x,s)\,\mathrm{d}s <\frac{\overline{r}+\overline{R}}{2}\fint_{0}^{T}\gamma(x,s)\,\mathrm{d}s.\]
Dividing by $\overline{r}$ and rearranging yield
 \[\fint^{T}_{0} \left(\gamma(x,s)-\frac{\beta(x,s)}{\overline{r}}\right)\,\mathrm{d}s >\fint^{T}_{0}\gamma(x,s)\,\mathrm{d}s\; \frac{\overline{r}-\overline{R}}{2\overline{r}} .\]
Because $\gamma\ge\gamma_*>0$ and $\overline{r}-\overline{R}>0$,
 \[\fint^{T}_{0} \left(\gamma(x,s)-\frac{\beta(x,s)}{\overline{r}}\right)\,\mathrm{d}s \ge\gamma_*\frac{\overline{r}-\overline{R}}{2\overline{r}}>0,\qquad \forall x\in\overline{\Omega},\;T>T_0.\]
Taking the minimum over x and then the limit as $T\to\infty$ gives
\[\underline{f}(\overline{r}) \ge\gamma_*\frac{\overline{r}-\overline{R}}{2\overline{r}}>0,\]
which contradicts $\underline{f}(\overline{r})=0$. Therefore $\overline{r}\le \overline{R}$.

\medskip\noindent
\textbf{Step 4: $\overline{r}\ge \overline{R}$.} Assume that $\overline{r}<\overline{R}$ and set $\varepsilon_2=(\overline{R}-\overline{r})/2>0$. By the definition of $\overline{R}$, there exists a sequence $\{T_n\}_{n\in\mathbb{N}^+}$ such that
 \[\max_{x\in\overline{\Omega}} \frac{\int_0^{T_n}\beta(x,s)\,\mathrm{d}s}      {\int_0^{T_n}\gamma(x,s)\,\mathrm{d}s} >\overline{R}-\varepsilon_2 =\frac{\overline{r}+\overline{R}}{2}.\]
For each $n\in\mathbb{N}^+$, choose $x_n\in\overline{\Omega}$ attaining the above maximum. Then
 \[\frac{\fint_{0}^{T_n}\beta(x_n,s)\,\mathrm{d}s}{\fint_{0}^{T_n}\gamma(x_n,s)\,\mathrm{d}s} >\frac{\overline{r}+\overline{R}}{2}>\overline{r},\]
equivalently,
 \[\fint_{0}^{T_n}\beta(x_n,s)\,\mathrm{d}s >\overline{r}\fint_{0}^{T_n}\gamma(x_n,s)\,\mathrm{d}s .\]
Dividing by $\overline{r}$ and rearranging give
 \[\fint^{T_n}_{0} \left(\gamma(x_n,s)-\frac{\beta(x_n,s)}{\overline{r}}\right)\,\mathrm{d}s<0 .\]
Using the lower bound $(\int_0^{T_n}\beta(x_n,s)\,\mathrm{d}s> \frac{\overline{r}+\overline{R}}{2}\int_0^{T_n}\gamma(x_n,s)\,\mathrm{d}s)$, we obtain
 \[\fint^{T_n}_{0} \left(\gamma(x_n,s)-\frac{\beta(x_n,s)}{\overline{r}}\right)\,\mathrm{d}s <\fint^{T_n}_{0}\gamma(x_n,s)\,\mathrm{d}s\; \frac{\overline{r}-\overline{R}}{2\overline{r}} .\]
Since $\gamma\ge\gamma_*>0$, we have $\fint^{T_n}_{0}\gamma(x_n,s)\,\mathrm{d}s\ge\gamma_*$, and because of $\overline{r}-\overline{R}<0$,
 \[\fint^{T_n}_{0} \left(\gamma(x_n,s)-\frac{\beta(x_n,s)}{\overline{r}}\right)\,\mathrm{d}s \le\gamma_*\frac{\overline{r}-\overline{R}}{2\overline{r}}<0 .\]
Hence
 \[\min_{x\in\overline{\Omega}}\fint^{T_n}_{0} \left(\gamma(x,s)-\frac{\beta(x,s)}{\overline{r}}\right)\,\mathrm{d}s \le\gamma_*\frac{\overline{r}-\overline{R}}{2\overline{r}}<0 .\]
Letting $n\to\infty$ yields
 \[\underline{f}(\overline{r}) \le\gamma_*\frac{\overline{r}-\overline{R}}{2\overline{r}}<0,\]
which contradicts $\underline{f}(\overline{r})=0$. Thus $\overline{r}\ge \overline{R}$.

Combining Steps 1 and 2 we obtain $\underline{r}=\underline{R}$. Combining Steps 3 and 4 we obtain $\overline{r}=\overline{R}$. This completes the proof.
\end{proof}

\begin{lemma}\label{lemma9.14}
Let $\beta,\gamma\in\mathcal{C}$ be given functions. If $\underline{r}$ and $\overline{r}$ are the positive constants such that
  \[\inf_{x(\cdot)\in C(\mathbb{R};\overline{\Omega})}\limsup_{T\to\infty}\fint^{T}_{0} \left(\gamma(x(s),s) -\frac{\beta(x(s),s)}{\underline{r}}\right)\,\mathrm{d}s=0\]
and
  \[\inf_{x(\cdot)\in C(\mathbb{R};\overline{\Omega})}\liminf_{T\to\infty}\fint^{T}_{0} \left(\gamma(x(s),s)-\frac{\beta(x(s),s)}{\overline{r}}\right)\,\mathrm{d}s=0,\]
then
  \[\underline{r}=\sup_{x(\cdot)\in C(\mathbb{R};\overline{\Omega})}\liminf_{T\to\infty} \frac{\fint^{T}_{0} \beta(x(s),s)\,\mathrm{d}s}{\fint^{T}_{0}\gamma(x(s),s)\,\mathrm{d}s} \quad{and}\quad \overline{r}=\sup_{x(\cdot)\in C(\mathbb{R};\overline{\Omega})} \limsup_{T\to\infty}\frac{\fint^{T}_{0} \beta(x(s),s)\,\mathrm{d}s}{\fint^{T}_{0} \gamma(x(s),s)\,\mathrm{d}s}.\]
\end{lemma}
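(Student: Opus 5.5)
The plan mirrors the proof of Lemma \ref{lemma9.13}, now with curves $x(\cdot)$ in place of points. Introduce
\[\overline{g}(r):=\inf_{x(\cdot)\in C(\mathbb{R};\overline{\Omega})}\limsup_{T\to\infty}\fint^{T}_{0}\Big(\gamma(x(s),s)-\frac{\beta(x(s),s)}{r}\Big)\mathrm{d}s\]
and let $\underline{g}(r)$ be the same expression with $\liminf$ in place of $\limsup$. Also set
\[\underline{R}:=\sup_{x(\cdot)}\liminf_{T\to\infty}\frac{\fint^{T}_{0}\beta(x(s),s)\,\mathrm{d}s}{\fint^{T}_{0}\gamma(x(s),s)\,\mathrm{d}s},\qquad \overline{R}:=\sup_{x(\cdot)}\limsup_{T\to\infty}\frac{\fint^{T}_{0}\beta(x(s),s)\,\mathrm{d}s}{\fint^{T}_{0}\gamma(x(s),s)\,\mathrm{d}s}.\]
Both are finite and positive, since $0<\beta_*\le\beta\le\beta^*$ and $\gamma_*\le\gamma\le\gamma^*$. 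The goal is $\underline{r}=\underline{R}$ and $\overline{r}=\overline{R}$. Note first that $\overline{g}$ and $\underline{g}$ are strictly increasing in $r$: for $r_1<r_2$ the integrands differ by at least $\beta_*(1/r_1-1/r_2)$ uniformly in the curve. So the zeros are unique, and it suffices to rule out $\underline{r}<\underline{R}$ and $\underline{r}>\underline{R}$, and likewise for $\overline{r}$.

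\textbf{Step 1 ($\underline{r}\ge\underline{R}$).} Suppose $\underline{r}<\underline{R}$. Pick a curve $x(\cdot)$ whose $\liminf$ ratio exceeds $\rho:=(\underline{r}+\underline{R})/2$. Then for all large $T$ we have $\fint^T_0\beta(x(s),s)\,\mathrm{d}s>\rho\fint^T_0\gamma(x(s),s)\,\mathrm{d}s$, and hence
\[\fint^T_0\Big(\gamma-\frac{\beta}{\underline{r}}\Big)(x(s),s)\,\mathrm{d}s<\gamma_*\,\frac{\underline{r}-\rho}{\underline{r}}<0.\]
Taking $\limsup_{T\to\infty}$ and then the infimum over curves gives $\overline{g}(\underline{r})<0$, a contradiction.

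\textbf{Step 2 ($\underline{r}\le\underline{R}$).} Suppose $\underline{r}>\underline{R}$ and set $\rho=(\underline{r}+\underline{R})/2$. For every curve $x(\cdot)$ there is a sequence $T_n\to\infty$ (depending on the curve) along which the ratio is $<\rho$. Along that sequence
\[\fint^{T_n}_0\Big(\gamma-\frac{\beta}{\underline{r}}\Big)(x(s),s)\,\mathrm{d}s>\gamma_*\,\frac{\underline{r}-\rho}{\underline{r}}=:\eta>0,\]
so the $\limsup$ for this curve is at least $\eta$. Since $\eta$ does not depend on the curve, taking the infimum gives $\overline{g}(\underline{r})\ge\eta>0$, again a contradiction.

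\textbf{Steps 3 and 4 for $\overline{r}$.} These are symmetric. If $\overline{r}<\overline{R}$, choose a curve with $\limsup$ ratio $>\rho$. Along a sequence $T_n$ the integral is $\le-\eta$, so the $\liminf$ for that curve is $\le-\eta$, giving $\underline{g}(\overline{r})<0$. If $\overline{r}>\overline{R}$, then for every curve the ratio is eventually $<\rho$ for all large $T$. So the integral is eventually $\ge\eta$, its $\liminf$ is at least $\eta$ uniformly over curves, and $\underline{g}(\overline{r})\ge\eta>0$.

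The main obstacle is making sure the strict margin $\eta$ is uniform over all curves in Steps 2 and 4, because the infimum over the noncompact family $C(\mathbb{R};\overline{\Omega})$ could otherwise push it to $0$. This is handled exactly by the uniform lower bound $\fint\gamma\ge\gamma_*$ together with the ratio bound holding for every curve, since $\underline{R}$ and $\overline{R}$ are suprema. A minor point is that the hypothesis only asks $\beta,\gamma\in\mathcal{C}$. No H\"older condition \eqref{H3} is needed here, because Lemma \ref{lemma2.10} is not invoked; it is used only later, to link $\overline{g}$ and $\underline{g}$ with $\min_x$ averages.
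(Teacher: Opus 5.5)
Your proof is correct and follows essentially the paper's route. Step 1 is the paper's contradiction argument. Step 2 is the contrapositive of the paper's direct argument, which takes an $\epsilon$-optimal curve $x_\epsilon(\cdot)$ for the infimum defining $\underline{r}$ and bounds its $\liminf$ ratio from below. Your Steps 3--4 spell out the $\overline{r}$ case that the paper leaves as ``similarly.''

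Your contradiction form also sidesteps a slip in the paper's step $\fint^{T}_{0}\beta\big/\fint^{T}_{0}\gamma>\bigl(1-\epsilon/\fint^{T}_{0}\beta\bigr)\underline{r}$, where the $\epsilon$-term should carry an extra factor $\underline{r}$; this is harmless once $\epsilon\to0$.
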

\begin{proof}
We first show that
\begin{equation}\label{eq9.6}
 \underline{r}\ge \sup_{x(\cdot)\in C(\mathbb{R};\overline{\Omega})}\liminf_{T\to\infty} \frac{\fint^{T}_{0} \beta(x(s),s)\,\mathrm{d}s}{\fint^{T}_{0}\gamma(x(s),s)\,\mathrm{d}s}.
\end{equation}
In fact, if
   \[\sup_{x(\cdot)\in C(\mathbb{R};\overline{\Omega})}\liminf_{T\to\infty} \frac{\fint^{T}_{0} \beta(x(s),s)\,\mathrm{d}s}{\fint^{T}_{0}\gamma(x(s),s)\,\mathrm{d}s}>\underline{r},\]
then there is a curve $x_0(\cdot)\in C(\mathbb{R};\overline{\Omega})$ such that
   \[\underline{R}:=\liminf_{T\to\infty} \frac{\fint^{T}_{0} \beta(x_0(s),s)\,\mathrm{d}s}{\fint^{T}_{0} \gamma(x_0(s),s)\,\mathrm{d}s}>\underline{r}.\]
We set $\varepsilon:=\frac{\underline{R}-\underline{r}}{2}$. Then there exists a constant $T_0>0$ such that
 \[\frac{\fint^{T}_{0} \beta(x_0(s),s)\,\mathrm{d}s}{\fint^{T}_{0} \gamma(x_0(s),s)\,\mathrm{d}s}>\underline{R}-\varepsilon=\frac{\underline{R}+\underline{r}}{2},\qquad\forall~T>T_0.\]
It follows that
 \[\fint^{T}_{0}\left[\gamma(x_0(s),s)-\frac{\beta(x_0(s),s)}{\underline{r}}\right]\,\mathrm{d}s< \frac{\underline{r}-\underline{R}}{2\underline{r}}\fint^{T}_{0}\gamma(x_0(s),s)\,\mathrm{d}s\le- \frac{\varepsilon}{\underline{r}}\inf_{\overline{\Omega}\times\mathbb{R}}\gamma.\]
By letting $T\to\infty$, we have
 \[\limsup_{T\to\infty}\fint^{T}_{0}\left[\gamma(x_0(s),s)-\frac{\beta(x_0(s),s)}{\underline{r}} \right]\,\mathrm{d}s \le-\frac{\varepsilon}{\underline{r}}\inf_{\overline{\Omega}\times\mathbb{R}}\gamma<0.\]
This is impossible, due to the definition of $\underline{r}$. We obtain \eqref{eq9.6}.

Now, by the definition of $\underline{r}$, we know that for any $\epsilon>0$, there is a curve $x_\epsilon(\cdot)\in C(\mathbb{R};\overline{\Omega})$ such that
  \[\limsup_{T\to\infty}\fint^{T}_{0} \left[\gamma(x_\epsilon(s),s) -\frac{\beta(x_\epsilon(s),s)}{\underline{r}}\right]\,\mathrm{d}s<\frac{\epsilon}{2}.\]
Hence, there is a constant $T_\epsilon>0$ such that
 \[\fint^{T}_{0}\gamma(x_\epsilon(s),s)\,\mathrm{d}s -\frac{1}{\underline{r}}\fint^{T}_{0}\beta(x_\epsilon(s),s)\,\mathrm{d}s<\epsilon,\qquad\forall~T>T_\epsilon,\]
or equivalently,
 \[\frac{\fint^{T}_{0}\beta(x_\epsilon(s),s)\,\mathrm{d}s}{\fint^{T}_{0}\gamma(x_\epsilon(s),s)\,\mathrm{d}s }>\left(1-\frac{\epsilon}{\fint^{T}_{0}\beta(x_\epsilon(s),s)\,\mathrm{d}s}\right)\underline{r}\ge \left(1-\frac{\epsilon}{\inf_{\overline{\Omega}\times\mathbb{R}}\beta}\right)\underline{r}.\]
Letting $T\to\infty$ yields
\[\liminf_{T\to\infty}\frac{\fint^{T}_{0}\beta(x_\epsilon(s),s)\,\mathrm{d}s}{\fint^{T}_{0} \gamma(x_\epsilon(s),s)\,\mathrm{d}s }\ge \left(1-\frac{\epsilon}{\inf_{\overline{\Omega}\times\mathbb{R}}\beta}\right)\underline{r}.\]
Since $\epsilon$ is arbitrary, we see that
 \[\underline{r}\le \sup_{x(\cdot)\in C(\mathbb{R};\overline{\Omega})}\liminf_{T\to\infty} \frac{\fint^{T}_{0} \beta(x(s),s)\,\mathrm{d}s}{\fint^{T}_{0}\gamma(x(s),s)\,\mathrm{d}s}.\]
This, together with \eqref{eq9.6}, yields
 \[ \underline{r}= \sup_{x(\cdot)\in C(\mathbb{R};\overline{\Omega})}\liminf_{T\to\infty} \frac{\fint^{T}_{0} \beta(x(s),s)\,\mathrm{d}s}{\fint^{T}_{0}\gamma(x(s),s)\,\mathrm{d}s}.\]

Similarly, one can verify the second identity in the lemma. This finishes the proof.
\end{proof}

\begin{lemma}\label{lemma9.15}
Let $\vartheta\in[0,\infty]$ be given. Assume $\beta,\gamma\in\mathcal{C}\cap C^{2,1}(\overline{\Omega}\times\mathbb{R})$, with uniformly bounded time derivatives, and impose \eqref{H2} whenever required by Theorem \ref{theorem1.9}(i). There hold
 \[\lim\limits_{{\left(d_I,\frac{\omega}{d_I}\right)\to(\infty,\vartheta)}}\underline{\mathcal{R}}_0(\omega,d_I) =\liminf_{T\to\infty} \frac{\int^{T}_{0} \int_{\Omega} \beta(x,s)\,\mathrm{d}x\mathrm{d}s}{\int^{T}_{0}\int_{\Omega}\gamma(x,s)\,\mathrm{d}x\mathrm{d}s}\]
      and
 \[ \lim\limits_{{\left(d_I,\frac{\omega}{d_I}\right)\to(\infty,\vartheta)}} \overline{\mathcal{R}}_0(\omega,d_I)=\limsup_{T\to\infty} \frac{\int^{T}_{0} \int_{\Omega} \beta(x,s)\,\mathrm{d}x\mathrm{d}s}{\int^{T}_{0}\int_{\Omega}\gamma(x,s)\,\mathrm{d}x\mathrm{d}s}.\]
\end{lemma}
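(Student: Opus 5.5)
The argument combines Lemma \ref{lemma9.12} (which identifies the claimed limits as roots of the limiting equations), Theorem \ref{theorem1.9}(i) (which gives convergence of the Floquet exponents for each fixed $r$), and a uniform-in-parameters monotonicity/Lipschitz property of $r\mapsto \overline{\mathcal{H}}(r;\omega,d_I)$, $\underline{\mathcal{H}}(r;\omega,d_I)$.

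Write $c_r:=\gamma-\beta/r$, which belongs to $\mathcal{C}\cap C^{2,1}$ and has bounded time derivative. Assumption \eqref{H2} is linear in $c$: the function $\mathcal{M}$ for $c_r$ equals $\mathcal{M}_\gamma-\mathcal{M}_\beta/r$, and the quadratic term is controlled by Cauchy--Schwarz. So \eqref{H2} holds for $c_r$ whenever it holds for $\beta$ and $\gamma$, locally uniformly in $r$. Theorem \ref{theorem1.9}(i) then gives, for each fixed $r>0$,
\[\overline{\mathcal{H}}(r;\omega,d_I)\to \overline{f}(r):=\limsup_{T\to\infty}\fint_0^T\fint_\Omega c_r\,\mathrm{d}x\mathrm{d}s,\qquad \underline{\mathcal{H}}(r;\omega,d_I)\to \underline{f}(r),\]
as $(d_I,\omega/d_I)\to(\infty,\vartheta)$, with $\underline f$ defined by $\liminf$ in the same way. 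By Lemma \ref{lemma9.12}, $\overline f$ and $\underline f$ vanish exactly at $\underline r$ and $\overline r$ respectively, which are the claimed limits.

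For the upper bound on $\underline{\mathcal{R}}_0$, fix $\epsilon>0$ and set $r_+=\underline r+\epsilon$. Strict monotonicity of $\overline f$ gives a quantitative gap, not just $\overline f(r_+)>0$. Indeed,
\[\fint_0^T\fint_\Omega(c_{r_+}-c_{\underline r})=\Big(\tfrac1{\underline r}-\tfrac1{r_+}\Big)\fint_0^T\fint_\Omega\beta\ge \eta>0,\]
with $\eta$ depending on $\beta_*$. This yields $\overline f(r_+)\ge \overline f(\underline r)+\eta=\eta$. By the pointwise convergence above, $\overline{\mathcal H}(r_+;\omega,d_I)>0$ once the parameters are close enough to the limit. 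Since $\overline{\mathcal H}(\cdot;\omega,d_I)$ is increasing and $\overline{\mathcal H}(\underline{\mathcal R}_0;\omega,d_I)=0$, we get $\underline{\mathcal R}_0(\omega,d_I)<r_+$.

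For the lower bound, take $r_-=\underline r-\epsilon$. The same computation gives $\overline f(r_-)\le-\eta'<0$, hence $\overline{\mathcal H}(r_-)<0$ eventually, and so $\underline{\mathcal R}_0>r_-$. Since $\epsilon$ is arbitrary, $\underline{\mathcal R}_0\to\underline r$. The argument for $\overline{\mathcal R}_0$ with $\underline{\mathcal H},\underline f,\overline r$ is identical, using $\liminf$ and the same additive gap, since $\liminf(a_T+b_T)\ge\liminf a_T+\liminf b_T$ with $b_T\ge\eta$.

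The main point of care is the monotonicity step. Corollary \ref{corollary3.8} provides strict monotonicity of $\overline{\mathcal H}$ and $\underline{\mathcal H}$ in $r$ (because $\liminf\fint\int(c_{r_1}-c_{r_2})>0$ when $r_1>r_2$). That is all that is needed to pass from the sign of $\overline{\mathcal H}(r_\pm)$ to the position of the root. No uniformity of the convergence in $r$ is required, because only the two fixed values $r_\pm$ enter, and the uniqueness of roots established in Section 9.1 makes the comparison valid.
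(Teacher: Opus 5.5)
Your argument is correct, but it locates the root differently from the paper.

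\textbf{The paper's route.} The paper takes the lower bound $\underline{\mathcal{R}}_0(\omega,d_I)\ge\underline{r}$, valid for all parameters, directly from Lemma~\ref{lemma9.6}. It proves the upper bound by contradiction:
\begin{enumerate}
\item it extracts $(\omega_k,d_k)$ with $\underline{\mathcal{R}}_0(\omega_k,d_k)\to\underline{r}_0>\underline{r}$;
\item it uses the parameter-uniform Lipschitz bound $|\overline{\mathcal{H}}(r_1;\omega,d_I)-\overline{\mathcal{H}}(r_2;\omega,d_I)|\le\|\beta\|_\infty|1/r_1-1/r_2|$, from \eqref{eq3.4}, to conclude $\overline{\mathcal{H}}(\underline{r}_0;\omega_k,d_k)\to0$;
\item it contradicts this with Theorem~\ref{theorem1.9}(i) at the fixed value $\underline{r}_0$, together with the gap $(1/\underline{r}-1/\underline{r}_0)\inf\beta>0$.
\end{enumerate}

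\textbf{Your route.} Your bracketing argument needs only two things: the pointwise limits at the two fixed values $\underline{r}\pm\epsilon$, and the non-strict monotonicity of $r\mapsto\overline{\mathcal{H}}(r;\omega,d_I)$ for each fixed $(\omega,d_I)$. It therefore dispenses with the uniform Lipschitz estimate. It also avoids the compactness needed to extract a convergent subsequence of $\underline{\mathcal{R}}_0(\omega_k,d_k)$, and it does not use Lemma~\ref{lemma9.6}. Your $r_-$ half is in fact uniform in the parameters, since Lemma~\ref{lemma3.6} already gives $\overline{\mathcal{H}}(r_-;\omega,d_I)\le\overline{f}(r_-)<0$ for every $(\omega,d_I)$. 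You also check explicitly that \eqref{H2} for $\beta,\gamma$ transfers to $\gamma-\beta/r$, which the paper leaves implicit.

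The paper reuses its Lipschitz-transfer template in Lemmas~\ref{lemma9.16}--\ref{lemma9.20}. Your bracketing works there as well, since each limiting function has the same additive gap in $1/r$.
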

\begin{proof}
Denote by $H(\cdot;\omega,d_I,\underline{r})$ and $H(\cdot;\omega,d_I,\overline{r})$ the normalized principal Floquet bundles of \eqref{eq9.2} respectively, where the constants $\underline{r}$ and $\overline{r}$ are given by
 \[\underline{r}=\liminf_{T\to\infty} \frac{\int^{T}_{0} \int_{\Omega} \beta(x,s)\,\mathrm{d}x\mathrm{d}s}{\int^{T}_{0}\int_{\Omega}\gamma(x,s)\,\mathrm{d}x\mathrm{d}s} \quad\text{and}\quad \overline{r}=\limsup_{T\to\infty} \frac{\int^{T}_{0} \int_{\Omega} \beta(x,s)\,\mathrm{d}x\mathrm{d}s}{\int^{T}_{0}\int_{\Omega}\gamma(x,s)\,\mathrm{d}x\mathrm{d}s}.\]
By Theorem \ref{theorem1.9}(i) and Lemma \ref{lemma9.12}, one has
 \[\lim\limits_{{\left(d_I,\frac{\omega}{d_I}\right)\to(\infty,\vartheta)}} \limsup\limits_{T\to\infty}\fint^{T}_{0}H(s;\omega,d_I,\underline{r})\,\mathrm{d}s=\limsup\limits_{T\to\infty} \fint^{T}_{0}\fint_{\Omega}\left(\gamma(x,s)-\frac{\beta(x,s)}{ \underline{r}}\right)\,\mathrm{d}x\mathrm{d}s=0\]
and
 \[\lim\limits_{{\left(d_I,\frac{\omega}{d_I}\right)\to(\infty,\vartheta)}} \liminf\limits_{T\to\infty}\fint^{T}_{0}H(s;\omega,d_I,\overline{r})\,\mathrm{d}s=\liminf\limits_{T\to\infty} \fint^{T}_{0}\fint_{\Omega}\left(\gamma(x,s)-\frac{\beta(x,s)}{ \overline{r}}\right)\,\mathrm{d}x\mathrm{d}s=0.\]

Now we show that
\begin{equation}\label{eq9.7}
 \lim\limits_{{\left(d_I,\frac{\omega}{d_I}\right)\to(\infty,\vartheta)}} \underline{\mathcal{R}}_0(\omega,d_I)=\underline{r}\quad\text{ and }\quad \lim\limits_{{\left(d_I,\frac{\omega}{d_I}\right)\to(\infty,\vartheta)}} \overline{\mathcal{R}}_0(\omega,d_I)=\overline{r}.
\end{equation}
Suppose to the contrary that there exists a sequence $\{(\omega_k,d_k)\}$ satisfying $d_k\to\infty$ and $\frac{\omega_k}{d_k}\to\vartheta\in[0,\infty]$ as $k\to\infty$ such that
\[\lim_{k\to\infty}\underline{\mathcal{R}}_0(\omega_k,d_k)=\underline{r}_0\neq\underline{r}\qquad\text{ for some }\underline{r}_0>0.\]
We claim that
\begin{equation}\label{eq9.8}
  \lim_{k\to\infty}\limsup_{T\to\infty}\fint^{T}_{0}H(s;\omega_k,d_k,\underline{r}_0)\,\mathrm{d}s=0.
\end{equation}
Indeed, for any given $\varepsilon>0$, there is an integer $K_\varepsilon$ such that
\[\left|\frac{1}{\underline{\mathcal{R}}_0(\omega_k,d_k)}-\frac{1}{\underline{r}_0}\right| <\frac{\varepsilon}{\|\beta\|_\infty},\qquad\forall~k>K_\varepsilon.\]
Thus, for all $k>K_\varepsilon$, we have
\begin{align*}
    & \limsup_{T\to\infty}\fint^{T}_{0}H\left(s;\omega_k,d_k,\left(\gamma-\frac{\beta}{\underline{r}_0}\right)\right)\,\mathrm{d}s \\
  = & \limsup_{T\to\infty}\fint^{T}_{0}H\left(s;\omega_k,d_k,\left(\gamma-\frac{\beta}{\underline{\mathcal{R}}_0}+ \frac{\beta}{\underline{\mathcal{R}}_0}-\frac{\beta}{\underline{r}_0}\right)\right)\,\mathrm{d}s\\
  \le& \limsup_{T\to\infty}\fint^{T}_{0}H\left(s;\omega_k,d_k,\left(\gamma-\frac{\beta}{\underline{\mathcal{R}}_0}+ \varepsilon\right)\right)\,\mathrm{d}s \\
  =& \limsup_{T\to\infty}\fint^{T}_{0}H\left(s;\omega_k,d_k,\left(\gamma-\frac{\beta}{\underline{\mathcal{R}}_0}\right)\right)\,\mathrm{d}s + \varepsilon\\
  = &\varepsilon.
\end{align*}
Similarly, we can show that
\[\limsup_{T\to\infty}\fint^{T}_{0}H\left(s;\omega_k,d_k,\left(\gamma-\frac{\beta}{\underline{r}_0}\right)\right)\,\mathrm{d}s\ge-\varepsilon.\]
This proves \eqref{eq9.8}. On the other hand, by Lemma \ref{lemma9.6}, $r_0\ge r$, and thus $r_0\neq \underline{r}$ implies that $r_0>\underline{r}$. Direct calculation yields
\begin{align*}
    & \lim_{k\to\infty}\limsup_{T\to\infty}\fint^{T}_{0}H(s;\omega_k,d_k,\underline{r}_0)\,\mathrm{d}s \\
  = & \limsup_{T\to\infty}\fint^{T}_{0}\fint_\Omega\left(\gamma-\frac{\beta}{\underline{r}_0}\right)\,\mathrm{d}x\mathrm{d}s \\
  = & \limsup_{T\to\infty}\fint^{T}_{0}\fint_\Omega\left[\left(\gamma-\frac{\beta}{\underline{r}}\right) +\beta\left(\frac{1}{\underline{r}}-\frac{1}{\underline{r}}_0\right)\right]\,\mathrm{d}x\mathrm{d}s\\
  \ge & \limsup_{T\to\infty}\fint^{T}_{0}\fint_\Omega\left(\gamma-\frac{\beta}{\underline{r}}\right) \,\mathrm{d}x\mathrm{d}s+\left(\frac{1}{\underline{r}}-\frac{1}{\underline{r}}_0\right)\inf_{\Omega\times\mathbb{R}}\beta \\
  = &\left(\frac{1}{\underline{r}}-\frac{1}{\underline{r}}_0\right)\inf_{\Omega\times\mathbb{R}}\beta>0.
\end{align*}
This is impossible, and the first identity in \eqref{eq9.7} is true. Similarly, one can show the second one in \eqref{eq9.7}. The proof is complete.
\end{proof}

\begin{lemma}\label{lemma9.16}
Assume that $\beta,\gamma\in \mathscr{X}$, where $\mathscr{X}$ is defined in \eqref{eq1.5}. Then
  \[\lim\limits_{d_I\to0}\underline{\mathcal{R}}_0(d_I)=\liminf_{T\to\infty} \max\limits_{x\in\overline{\Omega}}\frac{\int^{T}_{0}  \beta(x,s)\,\mathrm{d}s}{\int^{T}_{0}\gamma(x,s)\,\mathrm{d}s}\quad\text{and}\quad \lim\limits_{d_I\to0}\overline{\mathcal{R}}_0(d_I)=\limsup_{T\to\infty} \max\limits_{x\in\overline{\Omega}}\frac{\int^{T}_{0}  \beta(x,s)\,\mathrm{d}s}{\int^{T}_{0}\gamma(x,s)\,\mathrm{d}s}.\]
\end{lemma}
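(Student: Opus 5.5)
I will follow the template of Lemma \ref{lemma9.15}, replacing the large-diffusion limit by the small-diffusion limit of Theorem \ref{theorem1.5}(i), which is available for potentials in $\mathscr{X}$ by Corollary \ref{corollary1.12}.

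\emph{Step 1: closed-membership of the potential class.} For each fixed $r>0$, I first check that $c_r:=\gamma-\beta/r$ belongs to $\mathscr{X}$ whenever $\beta,\gamma$ do. Writing $\beta=\tilde\beta+\eta_\beta$ and $\gamma=\tilde\gamma+\eta_\gamma$, one has $c_r=(\tilde\gamma-\tilde\beta/r)+(\eta_\gamma-\eta_\beta/r)$. The quantity $\mathcal{M}$ is linear in $c$, and \eqref{H2} bounds only $|\operatorname{div}(A\nabla\mathcal M)|$ and the quadratic form of $\nabla\mathcal M$. Hence linear combinations preserve \eqref{H2}, using $|a+b|_A^2\le 2|a|_A^2+2|b|_A^2$.

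\emph{Step 2: identify the limiting problem.} By Theorem \ref{theorem1.5}(i) (via Corollary \ref{corollary1.12}) and Lemma \ref{lemma2.4}, for every $r>0$,
\[
\lim_{d_I\to0}\limsup_{T\to\infty}\fint_0^T H(s;d_I,r)\,\mathrm{d}s=\limsup_{T\to\infty}\min_{x\in\overline{\Omega}}\fint_0^T\Big(\gamma-\frac{\beta}{r}\Big)\mathrm{d}s=:\overline f(r),
\]
and analogously with $\liminf$, giving $\underline f(r)$. Lemma \ref{lemma9.13} shows that the unique roots of $\overline f=0$ and $\underline f=0$ are
\[
\underline R=\liminf_{T\to\infty}\max_{x\in\overline{\Omega}}\frac{\int_0^T\beta(x,s)\,\mathrm{d}s}{\int_0^T\gamma(x,s)\,\mathrm{d}s}
\quad\text{and}\quad
\overline R=\limsup_{T\to\infty}\max_{x\in\overline{\Omega}}\frac{\int_0^T\beta(x,s)\,\mathrm{d}s}{\int_0^T\gamma(x,s)\,\mathrm{d}s},
\]
respectively.

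\emph{Step 3: convergence of the root.} I argue by contradiction, as in Lemma \ref{lemma9.15}. Suppose $d_k\to0$ and $\underline{\mathcal R}_0(d_k)\to r_0\ne\underline R$. Compactness is needed here: $\underline{\mathcal R}_0$ is bounded above by $\sup\beta/\inf\gamma$ (shown in the discussion before Remark \ref{remark9.5}) and below by the bound of Lemma \ref{lemma9.6}, which is positive. Boundedness therefore gives a convergent subsequence with $r_0\in(0,\infty)$.

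Next I use the Lipschitz estimate in $1/r$ from item (c), which is uniform in $d_I$ because it comes from \eqref{eq3.4}. Together with the defining identity of $\underline{\mathcal R}_0(d_k)$, it yields
\[
\limsup_{T\to\infty}\fint_0^T H(s;d_k,r_0)\,\mathrm{d}s\to0 .
\]
Since Step 2 also holds at $r=r_0$, this forces $\overline f(r_0)=0$. This contradicts the strict monotonicity of $\overline f$ shown in Lemma \ref{lemma9.13}, because $r_0\ne\underline R$. Strict monotonicity gives a quantitative gap: $\overline f(r_0)-\overline f(\underline R)$ has the sign of $r_0-\underline R$ with magnitude at least $\inf\beta\,|1/\underline R-1/r_0|$. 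The same argument with $\liminf$ and $\underline f$ handles $\overline{\mathcal R}_0$.

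\emph{Main obstacle.} The delicate point is justifying Step 2, i.e.\ confirming that Corollary \ref{corollary1.12} really applies to $c_r$ for every $r$. In particular, $c_r$ must lie in $\mathcal C$ and satisfy the $C^{2,1}$ regularity of the $\tilde c$ part. The remaining steps are routine monotonicity bookkeeping.
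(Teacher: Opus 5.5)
Your proposal is correct and follows the paper's route. You identify the small-diffusion limit of $\limsup_{T\to\infty}\fint_0^T H(s;d_I,r)\,\mathrm{d}s$ (resp.\ $\liminf$) via Corollary~\ref{corollary1.12} and Lemma~\ref{lemma2.4}, locate its unique root with Lemma~\ref{lemma9.13}, and conclude with the contradiction argument of Lemma~\ref{lemma9.15}, using the $d_I$-uniform Lipschitz bound coming from \eqref{eq3.4}. Your explicit compactness step, and your use of two-sided strict monotonicity of $\overline f,\underline f$ in place of the one-sided comparison with Lemma~\ref{lemma9.6}, are the ``slight adaptations'' the paper leaves implicit; they are needed because Lemma~\ref{lemma9.6} no longer fixes the sign of $r_0-\underline R$ in this regime.
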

\begin{proof}
We define
 \[\underline{r}=\liminf_{T\to\infty}\max\limits_{x\in\overline{\Omega}} \frac{\int^{T}_{0}  \beta(x,s)\,\mathrm{d}s}{\int^{T}_{0}\gamma(x,s)\,\mathrm{d}s}\quad\text{and}\quad \overline{r}=\limsup_{T\to\infty}\max\limits_{x\in\overline{\Omega}} \frac{\int^{T}_{0}  \beta(x,s)\,\mathrm{d}s}{\int^{T}_{0}\gamma(x,s)\,\mathrm{d}s}.\]
It follows from Corollary \ref{corollary1.12}, Lemma \ref{lemma2.4} and Lemma \ref{lemma9.13} that
 \[\lim_{d_I\to 0} \limsup_{T\to\infty}\fint^{T}_{0}H(s;d_I,\underline{r})\,\mathrm{d}s=\limsup_{T\to\infty}\min_{x\in\overline{\Omega}} \fint^{T}_{0}\left(\gamma(x,s)-\frac{\beta(x,s)}{\underline{r}}\right)\,\mathrm{d}s=0\]
and
 \[\lim_{d_I\to 0} \liminf_{T\to\infty}\fint^{T}_{0}H(s;d_I,\overline{r})\,\mathrm{d}s=\liminf_{T\to\infty}\min_{x\in\overline{\Omega}} \fint^{T}_{0}\left(\gamma(x,s)-\frac{\beta(x,s)}{\overline{r}}\right)\,\mathrm{d}s=0.\]

The results follow by adapting the proof of Lemma \ref{lemma9.15} with slight adaptations. This completes the proof.
\end{proof}

\medskip
With the same technique as in Lemma \ref{lemma9.16}, we have the following lemma.
\begin{lemma}\label{lemma9.17}
Assume that $\beta,\gamma\in \mathscr{X}$, where $\mathscr{X}$ is defined in \eqref{eq1.5}.
\begin{enumerate}[{\rm (i)}]
  \item For the case $(\omega,d_I)\to(\infty,0)$, we have
   \[\lim\limits_{(\omega,d_I)\to(\infty,0)}\underline{\mathcal{R}}_0(\omega,d_I) =\liminf_{T\to\infty}\max\limits_{x\in\overline{\Omega}} \frac{\int^{T}_{0}  \beta(x,s)\,\mathrm{d}s}{\int^{T}_{0}\gamma(x,s)\,\mathrm{d}s} \]
and
 \[\lim\limits_{(\omega,d_I)\to(\infty,0)}\overline{\mathcal{R}}_0(\omega,d_I)= \limsup_{T\to\infty}\max\limits_{x\in\overline{\Omega}} \frac{\int^{T}_{0}  \beta(x,s)\,\mathrm{d}s}{\int^{T}_{0}\gamma(x,s)\,\mathrm{d}s}.\]
  \item  For the case $(\omega,\frac{\omega}{\sqrt{d_I}})\to(0,\infty)$, we have \[\lim\limits_{\left(\omega,\frac{\omega}{\sqrt{d_I}}\right)\to(0,\infty)}\underline{\mathcal{R}}_0(\omega,d_I) =\liminf_{T\to\infty}\max\limits_{x\in\overline{\Omega}} \frac{\int^{T}_{0}  \beta(x,s)\,\mathrm{d}s}{\int^{T}_{0}\gamma(x,s)\,\mathrm{d}s} \]
and
 \[\lim\limits_{\left(\omega,\frac{\omega}{\sqrt{d_I}}\right)\to(0,\infty)}\overline{\mathcal{R}}_0(\omega,d_I)= \limsup_{T\to\infty}\max\limits_{x\in\overline{\Omega}} \frac{\int^{T}_{0}  \beta(x,s)\,\mathrm{d}s}{\int^{T}_{0}\gamma(x,s)\,\mathrm{d}s}.\]
\end{enumerate}
\end{lemma}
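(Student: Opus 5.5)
The plan is to follow the template of Lemma \ref{lemma9.16}, with the limit $d_I\to0$ replaced by the two joint regimes. Set
\[\underline{r}:=\liminf_{T\to\infty}\max_{x\in\overline{\Omega}}\frac{\int^{T}_{0}\beta(x,s)\,\mathrm{d}s}{\int^{T}_{0}\gamma(x,s)\,\mathrm{d}s},\qquad \overline{r}:=\limsup_{T\to\infty}\max_{x\in\overline{\Omega}}\frac{\int^{T}_{0}\beta(x,s)\,\mathrm{d}s}{\int^{T}_{0}\gamma(x,s)\,\mathrm{d}s}.\]
For any fixed $r>0$, write $c_r:=\gamma-\beta/r$. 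Since $\beta,\gamma\in\mathscr{X}$ and $\mathscr{X}$ is closed under linear combinations (the $\mathscr{X}_1$ parts combine, and \eqref{H2} is preserved under finite sums as in Lemma \ref{lemma8.1}; the decaying parts stay in $\mathscr{X}_2$), we have $c_r\in\mathscr{X}$. Corollary \ref{corollary1.12}, applied to Theorem \ref{theorem1.9}(ii) in case (i) and to Theorem \ref{theorem1.10}(ii) in case (ii), then gives
\[\lim H\text{-averages:}\quad \lim\limsup_{T\to\infty}\fint^T_0H(s;\omega,d_I,r)\,\mathrm{d}s=\limsup_{T\to\infty}\min_{x\in\overline{\Omega}}\fint^T_0c_r(x,s)\,\mathrm{d}s,\]
together with the corresponding identity for $\liminf_{T\to\infty}$, which by Lemma \ref{lemma2.4} equals $\liminf_{T}\min_x$. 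Here the outer limit is taken along the regime under consideration. Lemma \ref{lemma9.13} then shows that these right-hand sides vanish exactly at $r=\underline{r}$ and at $r=\overline{r}$, respectively.

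Next comes the identification step, modeled on the proof of Lemma \ref{lemma9.15}. Suppose, along a sequence $(\omega_k,d_k)$ in the regime, that $\underline{\mathcal{R}}_0(\omega_k,d_k)\to\underline{r}_0\neq\underline{r}$. The quantities $\underline{\mathcal{R}}_0$ stay in a compact subset of $(0,\infty)$: the bounds $\inf\beta/\sup\gamma\le\cdot\le\sup\beta/\inf\gamma$ follow from Lemma \ref{lemma3.5}, so we may pass to such a limit. The Lipschitz estimate (c), derived from \eqref{eq3.4}, gives
\[\left|\overline{\mathcal{H}}(r_1)-\overline{\mathcal{H}}(r_2)\right|\le\|\beta\|_\infty\left|\frac1{r_1}-\frac1{r_2}\right|\]
uniformly in $(\omega,d_I)$. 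Hence $\limsup_T\fint_0^TH(s;\omega_k,d_k,\underline{r}_0)\,\mathrm{d}s\to0$. On the other hand, its limit equals $\overline{f}(\underline{r}_0)$, where $\overline{f}$ is the function from Lemma \ref{lemma9.13}. The strict monotonicity of $\overline{f}$ comes from the quantitative bound
\[\overline{f}(r_0)-\overline{f}(r)\ge\inf\beta\,\left(\frac1r-\frac1{r_0}\right)\quad\text{for }r<r_0.\]
This makes $\overline{f}(\underline{r}_0)\neq0$, a contradiction. The case of $\overline{\mathcal{R}}_0$ is identical, using $\underline{f}$ and the $\liminf$.

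The main obstacle is the bookkeeping for the membership $c_r\in\mathscr{X}$. This needs $\mathcal{C}$-regularity of $c_r$ and the validity of \eqref{H2} for $\tilde\gamma-\tilde\beta/r$, which is straightforward because $\mathcal{M}$ is linear in $c$. A second point is ensuring that the limit in Corollary \ref{corollary1.12} can be evaluated at the fixed value $r=\underline{r}_0$, rather than at the moving value $\underline{\mathcal{R}}_0(\omega_k,d_k)$. The uniform Lipschitz estimate (c) resolves exactly this issue.
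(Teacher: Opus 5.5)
Your proposal is correct and takes the same route as the paper. The paper proves this lemma exactly as it proves Lemma~\ref{lemma9.16}: Corollary~\ref{corollary1.12} (here via Theorems~\ref{theorem1.9}(ii) and \ref{theorem1.10}(ii)), together with Lemmas~\ref{lemma2.4} and \ref{lemma9.13}, identifies the zeros of the limiting functions, and the contradiction argument of Lemma~\ref{lemma9.15} then transfers this to $\underline{\mathcal{R}}_0$ and $\overline{\mathcal{R}}_0$. Your extra bookkeeping only makes explicit what the paper leaves implicit: closure of $\mathscr{X}$ under $\gamma-\beta/r$, the a priori bounds that give compactness, and two-sided strict monotonicity of $\overline{f}$ in place of Lemma~\ref{lemma9.6}.
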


\begin{lemma}\label{lemma9.18}
Assume that $\beta,\gamma\in \mathcal{C}\cap C^{2,1}(\overline{\Omega}\times\mathbb{R})$  and that both satisfy uniform H\"{o}lder condition \eqref{H3}. Then
  \[\lim\limits_{\left(d_I,\frac{\omega}{\sqrt{d_I}}\right)\to(0,0)}\underline{\mathcal{R}}_0(\omega,d_I)=\sup_{x(\cdot)\in C(\mathbb{R};\overline{\Omega})}\liminf_{T\to\infty} \frac{\fint^{T}_{0} \beta(x(s),s)\,\mathrm{d}s}{\fint^{T}_{0}\gamma(x(s),s)\,\mathrm{d}s}\]
and
  \[\lim\limits_{\left(d_I,\frac{\omega}{\sqrt{d_I}}\right)\to(0,0)}\overline{\mathcal{R}}_0(\omega,d_I)=\sup_{x(\cdot)\in C(\mathbb{R};\overline{\Omega})} \limsup_{T\to\infty}\frac{\fint^{T}_{0} \beta(x(s),s)\,\mathrm{d}s}{\fint^{T}_{0} \gamma(x(s),s)\,\mathrm{d}s}.\]
\end{lemma}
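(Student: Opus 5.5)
The plan mirrors the proofs of Lemmas \ref{lemma9.15} and \ref{lemma9.16}. Define
\[\underline{r}:=\sup_{x(\cdot)\in C(\mathbb{R};\overline{\Omega})}\liminf_{T\to\infty} \frac{\fint^{T}_{0} \beta(x(s),s)\,\mathrm{d}s}{\fint^{T}_{0}\gamma(x(s),s)\,\mathrm{d}s},\qquad \overline{r}:=\sup_{x(\cdot)\in C(\mathbb{R};\overline{\Omega})} \limsup_{T\to\infty}\frac{\fint^{T}_{0} \beta(x(s),s)\,\mathrm{d}s}{\fint^{T}_{0} \gamma(x(s),s)\,\mathrm{d}s}.\]

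\textbf{Step 1: identify the limiting threshold functions.} For each fixed $r>0$, the potential $c_r:=\gamma-\beta/r$ belongs to $\mathcal{C}\cap C^{2,1}(\overline{\Omega}\times\mathbb{R})$. It also satisfies \eqref{H3}, with constant at most $L_\gamma+L_\beta/r$. Theorem \ref{theorem1.10}(i) therefore gives
\[\lim_{(d_I,\omega/\sqrt{d_I})\to(0,0)}\limsup_{T\to\infty}\fint^T_0H(s;\omega,d_I,r)\,\mathrm{d}s=\inf_{x(\cdot)}\limsup_{T\to\infty}\fint^T_0\Big(\gamma-\frac{\beta}{r}\Big)(x(s),s)\,\mathrm{d}s=:\overline{g}(r),\]
and the analogous statement holds with $\liminf$, defining $\underline{g}(r)$.

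Both $\overline g$ and $\underline g$ are strictly increasing in $r$. Indeed, for $r_1<r_2$, we have $\overline g(r_2)\ge \overline g(r_1)+\beta_*(1/r_1-1/r_2)$, using the same argument as in Lemma \ref{lemma9.12}. Both tend to $-\infty$ as $r\to0^+$ and are positive for large $r$. Hence each has a unique zero. Lemma \ref{lemma9.14} identifies the zero of $\overline g$ as $\underline{r}$ and the zero of $\underline g$ as $\overline{r}$.

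\textbf{Step 2: transfer convergence of exponents to convergence of the roots.} I argue by contradiction, as in Lemma \ref{lemma9.15}. Suppose there is a sequence $(\omega_k,d_k)$ in the regime with $\underline{\mathcal R}_0(\omega_k,d_k)\to \underline r_0\ne\underline r$. The values $\underline{\mathcal R}_0$ lie in the compact range between the bounds of Theorem \ref{theorem9.2}, so no escape to $0$ or $\infty$ occurs.

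The Lipschitz estimate in $1/r$ coming from \eqref{eq3.4} gives
\[\Big|\limsup_{T\to\infty}\fint_0^TH(s;\omega_k,d_k,\underline r_0)\,\mathrm{d}s-\limsup_{T\to\infty}\fint_0^TH(s;\omega_k,d_k,\underline{\mathcal R}_0(\omega_k,d_k))\,\mathrm{d}s\Big|\le\|\beta\|_\infty\Big|\frac1{\underline r_0}-\frac1{\underline{\mathcal R}_0(\omega_k,d_k)}\Big|.\]
The second term is zero by definition of $\underline{\mathcal R}_0$, and the right-hand side tends to zero. So the limit of the exponent at the fixed value $\underline r_0$ is $0$. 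By Step 1 that limit equals $\overline g(\underline r_0)$, which forces $\overline g(\underline r_0)=0$. Uniqueness of the zero then gives $\underline r_0=\underline r$, a contradiction. The case of $\overline{\mathcal R}_0$ with $\underline g$ and $\overline r$ is identical.

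\textbf{Main obstacle.} The main obstacle is the strict monotonicity and uniqueness of the zero of $\overline g$ in Step 1, since the infimum over curves is taken outside the $\limsup$. I will handle this by the pointwise-in-curve shift
\[\fint\Big(\gamma-\frac{\beta}{r_2}\Big)\ge\fint\Big(\gamma-\frac{\beta}{r_1}\Big)+\beta_*\Big(\frac1{r_1}-\frac1{r_2}\Big),\]
which holds uniformly in $x(\cdot)$ and so survives both the $\limsup$ and the $\inf$. If needed, Lemma \ref{lemma2.10} lets me rewrite $\overline g(r)$ as $\limsup_{T\to\infty}\fint_0^T\min_{x}(\gamma-\beta/r)\,\mathrm{d}s$, where monotonicity is immediate.
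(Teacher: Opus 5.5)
Your proposal is correct and is essentially the paper's argument. Theorem~\ref{theorem1.10}(i) combined with Lemma~\ref{lemma9.14} identifies the threshold values, and the contradiction argument of Lemma~\ref{lemma9.15} (compactness of the range of $\underline{\mathcal R}_0$, Lipschitz dependence on $1/r$, and passage to the limit at the fixed value $\underline r_0$) transfers this to the critical numbers. The only variation is that you close the contradiction through uniqueness of the zero of $\overline g$, rather than through a one-sided a priori bound on $\underline{\mathcal R}_0$ as in Lemma~\ref{lemma9.15}. This is harmless, and you do not even need your existence claim for that zero, which as written would also require continuity via the two-sided shift bound: Step 2 itself shows that $\underline r_0$ is a zero, and Lemma~\ref{lemma9.14} then forces $\underline r_0=\underline r$.
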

\begin{proof}
Denote by
  \[\underline{r}=\sup_{x(\cdot)\in C(\mathbb{R};\overline{\Omega})}\liminf_{T\to\infty} \frac{\fint^{T}_{0} \beta(x(s),s)\,\mathrm{d}s}{\fint^{T}_{0}\gamma(x(s),s)\,\mathrm{d}s} \quad{and}\quad \overline{r}=\sup_{x(\cdot)\in C(\mathbb{R};\overline{\Omega})} \limsup_{T\to\infty}\frac{\fint^{T}_{0} \beta(x(s),s)\,\mathrm{d}s}{\fint^{T}_{0} \gamma(x(s),s)\,\mathrm{d}s}.\]
Using Theorem \ref{theorem1.10} and Lemma \ref{lemma9.14}, we obtain
 \[\lim_{\left(d_I,\frac{\omega}{\sqrt{d_I}}\right)\to(0,0)} \limsup_{T\to\infty}\fint^{T}_{0}H(s;d_I,\underline{r})\,\mathrm{d}s=\inf_{x(\cdot)\in C(\mathbb{R};\overline{\Omega})}\limsup_{T\to\infty}\fint^{T}_{0} \left(\gamma(x(s),s) -\frac{\beta(x(s),s)}{\underline{r}}\right)\,\mathrm{d}s=0\]
and
 \[\lim_{\left(d_I,\frac{\omega}{\sqrt{d_I}}\right)\to(0,0)} \liminf_{T\to\infty}\fint^{T}_{0}H(s;d_I,\overline{r})\,\mathrm{d}s=\inf_{x(\cdot)\in C(\mathbb{R};\overline{\Omega})}\liminf_{T\to\infty}\fint^{T}_{0} \left(\gamma(x(s),s) -\frac{\beta(x(s),s)}{\overline{r}}\right)\,\mathrm{d}s=0.\]
Following the proof of Lemma \ref{lemma9.15}, the result can be proven with minor modifications. This finishes the proof.
\end{proof}

\medskip
For given $d_I>0$ and $r>0$, let $\mu^0\left(t;d_I,\gamma-\beta/r\right)$ be the principal eigenvalue of elliptic eigenvalue problem \eqref{eq1.4} with $d=d_I$ and $c=\gamma-\beta/r$. We define
 \[\overline{\mathcal{U}}(r;d_I):=\limsup\limits_{T\to\infty}\fint^{T}_{0}\mu^0\left(s;d_I,\gamma-\beta/r\right)\,\mathrm{d}s \quad\text{and}\quad \underline{\mathcal{U}}(r;d_I):=\liminf\limits_{T\to\infty}\fint^{T}_{0}\mu^0\left(s;d_I,\gamma-\beta/r\right)\,\mathrm{d}s.\]
For a given $r>0$, define
\[\overline{\mathcal{V}}(r;d_I):=\sup_{\hat{c}_{r}\in\mathscr{C}_{r}}\mu^\infty(d_I;\hat{c}_{r})\quad \text{ and } \quad \underline{\mathcal{V}}(r;d_I):=\inf_{\hat{c}_{r}\in\mathscr{C}_{r}}\mu^\infty(d_I;\hat{c}_{r}),\]
where $\mu^\infty(d_I,\hat{c}_{r})$ is the principal eigenvalue of \eqref{eq1.3} with $d=d_I$ and $m=\hat{c}_{r}$, the set $\mathscr{C}_r$ of functions is defined as
   \[\mathscr{C}_r:=\left\{\hat{c}\in C (\overline{\Omega})~|~\text{there exists }T_n\to\infty\text{ such that }\|\hat{c}_{T_n,r}-\hat{c}_r\|_{C(\overline{\Omega})}\to 0\right\}.\]
and $\hat{c}_{T,r}(\cdot):=\fint^{T}_{0}\left[\gamma(\cdot,s)-\beta(\cdot,s)/r\right]\,\mathrm{d}s$.

\begin{lemma}\label{lemma9.19}
 For a given $d_I>0$, the quantities $\overline{\mathcal{U}}(r;d_I)$, $\underline{\mathcal{U}}(r;d_I)$, $\overline{\mathcal{V}}(r;d_I)$ and $\underline{\mathcal{V}}(r;d_I)$ are continuous and strictly increasing in $r\in(0,\infty)$. Furthermore, there is a pair $(r_{-},r_{+})$ of constants such that
  \[\overline{\mathcal{U}}(r;d_I), \underline{\mathcal{U}}(r;d_I), \overline{\mathcal{V}}(r;d_I), \underline{\mathcal{V}}(r;d_I)<0\quad\text{ if }r<r_{-}\]
and
  \[\overline{\mathcal{U}}(r;d_I), \underline{\mathcal{U}}(r;d_I), \overline{\mathcal{V}}(r;d_I), \underline{\mathcal{V}}(r;d_I)>0\quad\text{ if }r>r_{+}.\]
Consequently, each of the equations $\overline{\mathcal{U}}(r;d_I)=0$, $\underline{\mathcal{U}}(r;d_I)=0$, $\overline{\mathcal{V}}(r;d_I)=0$ and $\underline{\mathcal{V}}(r;d_I)=0$ has a unique positive solution.
\end{lemma}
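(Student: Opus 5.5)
The plan is to reduce everything to pointwise-in-$t$ (for $\mathcal{U}$) or pointwise-in-potential (for $\mathcal{V}$) monotonicity and Lipschitz estimates for elliptic principal eigenvalues. After that, the long-time averages, and the $\sup/\inf$ over the compact limit set, only need to preserve these estimates. Set $c_r:=\gamma-\beta/r$. For $0<r_1<r_2$ we have
\[c_{r_2}-c_{r_1}=\beta\Big(\frac1{r_1}-\frac1{r_2}\Big)\ge \beta_*\Big(\frac1{r_1}-\frac1{r_2}\Big)>0 .\]

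First, treat $\mathcal{U}$. The Rayleigh quotient (the same computation as in the proof of Lemma \ref{lemma8.2}) gives, for each $t$,
\[\beta_*\Big(\frac1{r_1}-\frac1{r_2}\Big)\le \mu^0(t;d_I,c_{r_2})-\mu^0(t;d_I,c_{r_1})\le \|\beta\|_\infty\Big(\frac1{r_1}-\frac1{r_2}\Big).\]
Averaging over $[0,T]$ and taking $\limsup$ or $\liminf$ preserves this two-sided bound up to the usual subadditivity: $\limsup(a+b)\ge\limsup a+\liminf b$, and $\limsup(a+b)\le \limsup a+\limsup b$. So $\overline{\mathcal{U}}(\cdot;d_I)$ and $\underline{\mathcal{U}}(\cdot;d_I)$ are strictly increasing and locally Lipschitz in $r$, hence continuous.

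Next, treat $\mathcal{V}$. The map $r\mapsto\hat c_{T,r}$ is affine, $\hat c_{T,r}=\hat\gamma_T-\hat\beta_T/r$. By compactness (Lemma \ref{lemma2.1}) applied to the pair $(\hat\gamma_T,\hat\beta_T)$, the set $\mathscr{C}_r$ is the image of one compact set $\mathscr{P}$ of limit pairs $(\hat\gamma,\hat\beta)$ under $(\hat\gamma,\hat\beta)\mapsto\hat\gamma-\hat\beta/r$. Here I must take care that $\mathscr{C}_r$, defined via sequences of the scalar averages, coincides with this image. Any convergent subsequence of $\hat c_{T_n,r}$ admits a further subsequence along which both $\hat\gamma_{T_n}$ and $\hat\beta_{T_n}$ converge, and conversely; so the two sets agree. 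Moreover $\hat\beta\ge\beta_*$ for every pair in $\mathscr{P}$. For each fixed pair, Lemma \ref{lemma2.17} and the Rayleigh quotient give
\[\mu^\infty(d_I;\hat\gamma-\hat\beta/r_2)-\mu^\infty(d_I;\hat\gamma-\hat\beta/r_1)\in\Big[\beta_*\Big(\tfrac1{r_1}-\tfrac1{r_2}\Big),\ \|\beta\|_\infty\Big(\tfrac1{r_1}-\tfrac1{r_2}\Big)\Big].\]
Taking $\sup$ and $\inf$ over $\mathscr{P}$ preserves these bounds, so $\overline{\mathcal{V}}$ and $\underline{\mathcal{V}}$ are strictly increasing and Lipschitz on compact $r$-intervals.

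Finally, sign and existence. Each principal eigenvalue lies between $\min c$ and $\max c$; this follows from Lemma \ref{lemma3.5}-type bounds, or directly from the Rayleigh quotient. Put $r_-:=\beta_*/\gamma^*$, where $\gamma^*=\sup\gamma$. For $r<r_-$ we have $c_r\le \gamma^*-\beta_*/r<0$ uniformly, and the same bound holds for every $\hat c_r\in\mathscr{C}_r$. Therefore all four quantities are at most $\gamma^*-\beta_*/r<0$. Similarly, put $r_+:=\beta^*/\gamma_*$. For $r>r_+$ all four quantities are at least $\gamma_*-\beta^*/r>0$. Continuity, strict monotonicity and the intermediate value theorem on $[r_-,r_+]$ then give exactly one zero for each equation.

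The main obstacle is the $\mathcal{V}$ part: making sure the strict increase survives the $\sup/\inf$ over the $r$-dependent set $\mathscr{C}_r$. The reparametrization through the fixed compact set $\mathscr{P}$ is what makes the uniform gap $\beta_*(1/r_1-1/r_2)$ transfer, and that set identification should be checked first.
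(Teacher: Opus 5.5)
Your proof is correct and follows essentially the same route as the paper. It uses the two-sided comparison $\beta_*\bigl(\tfrac{1}{r_1}-\tfrac{1}{r_2}\bigr)\le \mu(c_{r_2})-\mu(c_{r_1})\le \|\beta\|_\infty\bigl(\tfrac{1}{r_1}-\tfrac{1}{r_2}\bigr)$, which comes from translation invariance and monotonicity of the elliptic principal eigenvalue, and it uses the same $r_\pm$. You transfer this bound through the long-time averages for $\overline{\mathcal{U}},\underline{\mathcal{U}}$, and through a common-subsequence coupling of $\mathscr{C}_{r_1}$ and $\mathscr{C}_{r_2}$ for $\overline{\mathcal{V}},\underline{\mathcal{V}}$. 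Parametrizing every $\mathscr{C}_r$ by one compact set of limit pairs $(\hat\gamma,\hat\beta)$ is a tidier packaging of the paper's near-maximizer-plus-subsequence argument. It gives the Lipschitz bound for $\sup$ and $\inf$ in one stroke, and it gives strict signs outside $[r_-,r_+]$ directly, whereas the paper first obtains $\le 0$ and $\ge 0$ and then invokes strict monotonicity.
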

\begin{proof}
These results are consequences of the basic properties of the principal eigenvalue for elliptic problems. For completeness, we give the details here.

Recall two fundamental properties of the elliptic principal eigenvalue: (a) Translation invariance: For any constant $c$, $\mu(d,m+c)=\mu(d,m)+c$; (b) Strict monotonicity: If $m_1\leq m_2$ and $m_1\not\equiv m_2$, then $\mu(d,m_1)<\mu(d,m_2)$.

First of all, we choose
  \[r_{-}=\frac{\inf_{\Omega\times\mathbb{R}}\beta}{\sup_{\Omega\times\mathbb{R}}\gamma}\quad\text{ and }\quad r_{+}=\frac{\sup_{\Omega\times\mathbb{R}}\beta}{\inf_{\Omega\times\mathbb{R}}\gamma}.\]
Using Lemma \ref{lemma2.17}, one gets
  \[\mu^0\left(t;d_I,\gamma-\beta/r\right)<0\text{ if }r<r_{-}\quad\text{and}\quad \mu^0\left(t;d_I,\gamma-\beta/r\right)>0\text{ if }r>r_{+},\qquad\forall~t\in\mathbb{R}.\]
In fact, when $r<r_-$, for all $(x,t)\in\Omega\times\mathbb{R}$ we have $\gamma(x,t)-\frac{\beta(x,t)}{r}\leq\sup_{\Omega\times\mathbb{R}}\gamma-\frac{\inf_{\Omega\times\mathbb{R}}\beta}{r}<0$, hence $\hat{c}_{T,r}(x)=f_0^T\left(\gamma-\frac{\beta}{r}\right)ds<0$ for all $T>0$, so all functions in $\mathscr{C}_r$ are strictly negative. Similarly, when $r>r_+$, all functions in $\mathscr{C}_r$ are strictly positive. As a consequence, we see that
  \[\overline{\mathcal{U}}(r;d_I), \underline{\mathcal{U}}(r;d_I), \overline{\mathcal{V}}(r;d_I), \underline{\mathcal{V}}(r;d_I)\le0\quad\text{ if }r<r_{-}\]
and
  \[\overline{\mathcal{U}}(r;d_I), \underline{\mathcal{U}}(r;d_I), \overline{\mathcal{V}}(r;d_I), \underline{\mathcal{V}}(r;d_I)\ge0\quad\text{ if }r>r_{+}.\]

We now turn to show the continuity and monotonicity, which imply that the inequalities hold strictly in the above inequalities. Let $r_1,r_2$ be constants satisfying $r_1<r_2$. Then
\begin{align*}
  \overline{\mathcal{U}}(r_1;d_I)= & \limsup\limits_{T\to\infty}\fint^{T}_{0}\mu^0\left(s;d_I,\gamma-\frac{\beta}{r_1}\right)\,\mathrm{d}s  \\
  = & \limsup\limits_{T\to\infty}\fint^{T}_{0}\mu^0\left(s;d_I,\gamma-\frac{\beta}{r_2}+ \beta\left(\frac{1}{r_2}-\frac{1}{r_1}\right)\right)\,\mathrm{d}s \\
  \le & \limsup\limits_{T\to\infty}\fint^{T}_{0}\mu^0\left(s;d_I,\gamma-\frac{\beta}{r_2}+ \left(\frac{1}{r_2}-\frac{1}{r_1}\right)\inf_{\Omega\times\mathbb{R}}\beta\right)\,\mathrm{d}s \\
  \le & \limsup\limits_{T\to\infty}\fint^{T}_{0}\mu^0\left(s;d_I,\gamma-\frac{\beta}{r_2}\right)\,\mathrm{d}s+ \left(\frac{1}{r_2}-\frac{1}{r_1}\right)\inf_{\Omega\times\mathbb{R}}\beta \\
  = & \overline{\mathcal{U}}(r_2;d_I)+\left(\frac{1}{r_2}-\frac{1}{r_1}\right)\inf_{\Omega\times\mathbb{R}}\beta.
\end{align*}
Note that $\frac{1}{r_2}-\frac{1}{r_1}<0$ and $\beta(x,s)\geq\inf_{\Omega\times\mathbb{R}}\beta$, so the inequality
\[\beta(x,s)\cdot\left(\frac{1}{r_2}-\frac{1}{r_1}\right)\leq\inf_{\Omega\times\mathbb{R}}\beta\cdot\left(\frac{1}{r_2}-\frac{1}{r_1}\right)\]
holds, which justifies the above step by the translation invariance and strict monotonicity. Similarly, one can verify that
\[\overline{\mathcal{U}}(r_1;d_I)\ge\overline{\mathcal{U}}(r_2;d_I) +\left(\frac{1}{r_2}-\frac{1}{r_1}\right)\sup_{\Omega\times\mathbb{R}}\beta.\]
Combining the two inequalities above, for any $r_0>0$, as $r\to r_0$ we have $\frac{1}{r}-\frac{1}{r_0}\to0$. By the squeeze theorem, $\lim_{r\to r_0}\overline{\mathcal{U}}(r;d_I)=\overline{\mathcal{U}}(r_0;d_I)$, so $\overline{\mathcal{U}}(r;d_I)$ is continuous. Moreover, since $\frac{1}{r_2}-\frac{1}{r_1}<0$ and $\inf_{\Omega\times\mathbb{R}}\beta>0$, we have $\overline{\mathcal{U}}(r_1;d_I)<\overline{\mathcal{U}}(r_2;d_I)$, so $\overline{\mathcal{U}}(r;d_I)$ is strictly increasing. Similarly, one can show that $\underline{\mathcal{U}}(r;d_I)$ is continuous and strictly increasing in $r\in(0,\infty)$.

On the other hand, for any $0<\varepsilon_1<\left(\frac{1}{r_1}-\frac{1}{r_2}\right)\inf_{\Omega\times\mathbb{R}}\beta$, there is a sequence $\{T_n\}_{n\in\mathbb{N}^+}$ of $T\to\infty$ such that
\begin{equation}\label{eq9.9}
\lim_{n\to\infty}\hat{c}_{T_n,r_1}= \hat{c}_{r_1,\varepsilon_1}\in\mathscr{C}_{r_1}\quad {and}\quad\overline{\mathcal{V}}(r_1;d_I)=\sup_{\hat{c}_{r_1}\in\mathscr{C}_{r_1}}\mu^\infty(d_I;\hat{c}_{r_1}) \le \mu^\infty(d_I;\hat{c}_{r_1,\varepsilon_1})+\varepsilon_1.
\end{equation}
Then there is a subsequence $\{T_{n'}\}$ of $\{T_n\}_{n\in\mathbb{N}^+}$ such that
\[\lim_{n'\to\infty}\hat{c}_{T_{n'},r_2}= \hat{c}_{r_2,\varepsilon_1}\in\mathscr{C}_{r_2}.\]
Observing that $\hat{c}_{T_{n'},r_2}-\hat{c}_{T_{n'},r_1}\ge\left(\frac{1}{r_1}-\frac{1}{r_2}\right)\inf_{\Omega\times\mathbb{R}}\beta$, we have $\hat{c}_{r_2,\varepsilon_1}-\hat{c}_{r_1,\varepsilon_1}\ge\left(\frac{1}{r_1}-\frac{1}{r_2}\right)\inf_{\Omega\times\mathbb{R}}\beta$.
Consequently, we get
\[\mu^\infty(d_I;\hat{c}_{r_2,\varepsilon_1})-\mu^\infty(d_I;\hat{c}_{r_1,\varepsilon_1})\ge\left(\frac{1}{r_1}-\frac{1}{r_2}\right)\inf_{\Omega\times\mathbb{R}}\beta.\]
This, together with \eqref{eq9.9}, yields
\[\begin{aligned}
  \overline{\mathcal{V}}(r_1;d_I)\le & \mu^\infty(d_I;\hat{c}_{r_1,\varepsilon_1})+\varepsilon_1 \\
  \le & \mu^\infty(d_I;\hat{c}_{r_2,\varepsilon_1})+\varepsilon_1-\left(\frac{1}{r_1} -\frac{1}{r_2}\right) \inf_{\Omega\times\mathbb{R}}\beta \\
  \le & \overline{\mathcal{V}}(r_2;d_I)+\varepsilon_1-\left(\frac{1}{r_1} -\frac{1}{r_2}\right) \inf_{\Omega\times\mathbb{R}}\beta\\
  <&\overline{\mathcal{V}}(r_2;d_I).
\end{aligned}\]
This yields the monotonicity of $\overline{\mathcal{V}}(r;d_I)$. Similarly, select $\varepsilon_2=\left(\frac{1}{r_1}-\frac{1}{r_2}\right)\sup_{\Omega\times\mathbb{R}}\beta$. Then there is a sequence $\{\tilde{T}_n\}_{n\in\mathbb{N}^+}$ of $\tilde{T}\to\infty$ such that
\begin{equation}\label{eq9.10}
\lim_{n\to\infty}\hat{c}_{\tilde{T}_n,r_2}= \hat{c}_{r_2,\varepsilon_2}\in\mathscr{C}_{r_2}\quad {and}\quad\overline{\mathcal{V}}(r_2;d_I)=\sup_{\hat{c}_{r_2}\in\mathscr{C}_{r_2}}\mu^\infty(d_I;\hat{c}_{r_2}) \le \mu^\infty(d_I;\hat{c}_{r_2,\varepsilon_2})+\varepsilon_2.
\end{equation}
Then there is a subsequence $\{\tilde{T}_{n'}\}_{n'\in\mathbb{N}^+}$ of $\{\tilde{T}_n\}_{n\in\mathbb{N}^+}$ such that
\[\lim_{n'\to\infty}\hat{c}_{\tilde{T}_{n'},r_1}= \hat{c}_{r_1,\varepsilon_2}\in\mathscr{C}_{r_1}.\]
By $n'\to\infty$ in $\hat{c}_{\tilde{T}_{n'},r_2}-\hat{c}_{\tilde{T}_{n'},r_1}\le\left(\frac{1}{r_1}-\frac{1}{r_2}\right) \sup_{\Omega\times\mathbb{R}}\beta$, we have \[\hat{c}_{r_2,\varepsilon_2}-\hat{c}_{r_1,\varepsilon_2}\le \left(\frac{1}{r_1}-\frac{1}{r_2}\right)\sup_{\Omega\times\mathbb{R}}\beta.\]
Consequently, we get
\[\mu^\infty(d_I;\hat{c}_{r_2,\varepsilon_2})-\mu^\infty(d_I;\hat{c}_{r_1,\varepsilon_2}) \le\left(\frac{1}{r_1}-\frac{1}{r_2}\right)\sup_{\Omega\times\mathbb{R}}\beta.\]
This, together with \eqref{eq9.10}, yields
\[\begin{aligned}
  \overline{\mathcal{V}}(r_2;d_I)\le & \mu^\infty(d_I;\hat{c}_{r_2,\varepsilon_2})+\varepsilon_2 \\
  \le & \mu^\infty(d_I;\hat{c}_{r_1,\varepsilon_2})+\varepsilon_2+\left(\frac{1}{r_1}-\frac{1}{r_2}\right) \sup_{\Omega\times\mathbb{R}}\beta\\
  < & \mu^\infty(d_I;\hat{c}_{r_1,\varepsilon_2})+2\left(\frac{1}{r_1}-\frac{1}{r_2}\right) \sup_{\Omega\times\mathbb{R}}\beta\\
  \le & \overline{\mathcal{V}}(r_1;d_I)+2\left(\frac{1}{r_1}-\frac{1}{r_2}\right) \sup_{\Omega\times\mathbb{R}}\beta.
\end{aligned}\]
For any $r_0>0$, as $r\to r_0$ we have $\frac{1}{r}-\frac{1}{r_0}\to0$. As a result, the above inequality implies that $\limsup_{r\to r_0^+}\overline{\mathcal{V}}(r;d_I)\leq\overline{\mathcal{V}}(r_0;d_I)$, and the strict monotonicity gives $\liminf_{r\to r_0^+}\overline{\mathcal{V}}(r;d_I)\geq\overline{\mathcal{V}}(r_0;d_I)$. Hence $\overline{\mathcal{V}}(r;d_I)$ is right-continuous. The left-continuity can be proved analogously. This continuity holds for all $r\in(0,\infty)$, not just for $r>r_+$.

For $\underline{\mathcal{V}}(r;d_I)=\inf_{\hat{c}_r\in\mathscr{C}_r}\mu^\infty(d_I;\hat{c}_r)$, replacing the supremum with the infimum and reversing the inequalities in the above proof, we can similarly show that $\underline{\mathcal{V}}(r;d_I)$ is continuous and strictly increasing in $r\in(0,\infty)$.

Finally, as $r\to0^+$, $\gamma(x,t)-\frac{\beta(x,t)}{r}\to-\infty$ uniformly in $(x,t)\in\overline{\Omega}\times\mathbb{R}$, so all four functions tend to $-\infty$. As $r\to+\infty$, $\gamma(x,t)-\frac{\beta(x,t)}{r}\to\gamma(x,t)>0$ uniformly in $(x,t)\in\overline{\Omega}\times\mathbb{R}$, so all four functions tend to positive values. Combined with strict monotonicity, each of the four equations has exactly one positive solution.

This finishes the proof.
\end{proof}

\medskip
By Lemma \ref{lemma9.19}, we know that the numbers $r^{-}_{0}$, $r^{+}_{0}$, $r^{-}_{\infty}$ and $r^{+}_{\infty}$ in Theorem \ref{theorem9.4}(iii) and (iv) are well-defined.
\begin{lemma}\label{lemma9.20}
Let $d_I>0$ be given.
\begin{enumerate}[{\rm (i)}]
  \item If $\beta,\gamma\in C^{0,1}(\overline{\Omega}\times\mathbb{R})$ satisfies $\sup_{t\in\mathbb{R}}(\|\partial_t \beta(\cdot,t)\|_{\infty}+\|\partial_t \gamma(\cdot,t)\|_{\infty})<\infty$, then
      \[\lim\limits_{\omega\to0}\underline{\mathcal{R}}_0(\omega)=r^{-}_{0}\quad \text{and}\quad \lim\limits_{\omega\to0}\overline{\mathcal{R}}_0(\omega)=r^{+}_{0}.\]
  \item If $\beta,\gamma\in\mathscr{X}$, then
      \[ \lim\limits_{\omega\to\infty}\underline{\mathcal{R}}_0(\omega)=r^{-}_{\infty}\quad \text{and}\quad \lim\limits_{\omega\to\infty}\overline{\mathcal{R}}_0(\omega)=r^{+}_{\infty}.\]
\end{enumerate}
\end{lemma}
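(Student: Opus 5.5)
The plan is to follow the template of Lemma \ref{lemma9.15}, replacing the large-diffusion limit with the frequency limits of Theorem \ref{theorem1.6} (equivalently Lemmas \ref{lemma5.3} and \ref{lemma5.7}). For part (i), fix $d_I>0$ and put $c_r:=\gamma-\beta/r$. The hypotheses on $\beta,\gamma$ guarantee that $c_r\in\mathcal{C}\cap C^{0,1}$ and that $\sup_t\|\partial_t c_r\|_\infty<\infty$, uniformly for $r$ in compact subsets of $(0,\infty)$. Lemma \ref{lemma5.3} then gives, for each fixed $r$,
\[
\lim_{\omega\to0}\limsup_{T\to\infty}\fint_0^T H(s;\omega,d_I,r)\,\mathrm{d}s=\overline{\mathcal{U}}(r;d_I),
\qquad
\lim_{\omega\to0}\liminf_{T\to\infty}\fint_0^T H(s;\omega,d_I,r)\,\mathrm{d}s=\underline{\mathcal{U}}(r;d_I).
\]
By Lemma \ref{lemma9.19}, $r_0^-$ and $r_0^+$ are the unique zeros of $\overline{\mathcal{U}}$ and $\underline{\mathcal{U}}$. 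Consequently $\overline{\mathcal{H}}(r_0^-;\omega,d_I)\to0$ and $\underline{\mathcal{H}}(r_0^+;\omega,d_I)\to0$ as $\omega\to0$.

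Next I argue by contradiction, as in the proof of \eqref{eq9.7}. Suppose $\omega_k\to0$ and $\underline{\mathcal{R}}_0(\omega_k)\to\underline r_0\neq r_0^-$. First I check that $\underline r_0\in(0,\infty)$. The bounds $r_-\le\underline{\mathcal{R}}_0\le r_+$ (with $r_\pm$ from Lemma \ref{lemma9.19}) follow from Lemma \ref{lemma3.5} and the definition of $\underline{\mathcal{R}}_0$, so this holds. The Lipschitz estimate (c), coming from \eqref{eq3.4}, gives
\[
|\overline{\mathcal{H}}(r_1)-\overline{\mathcal{H}}(r_2)|\le\|\beta\|_\infty\,|1/r_1-1/r_2|
\]
uniformly in $\omega$. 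Since $\overline{\mathcal{H}}(\underline{\mathcal{R}}_0(\omega_k);\omega_k,d_I)=0$, this yields $\overline{\mathcal{H}}(\underline r_0;\omega_k,d_I)\to0$. The limit of the left side is $\overline{\mathcal{U}}(\underline r_0;d_I)$, which is therefore $0$. Uniqueness of the zero in Lemma \ref{lemma9.19} then forces $\underline r_0=r_0^-$, a contradiction. Because every subsequence has a further subsequence converging to $r_0^-$, the full limit holds. The argument for $\overline{\mathcal{R}}_0$ with $\underline{\mathcal{U}}$ and $r_0^+$ is identical.

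Part (ii) proceeds the same way. For $\beta,\gamma\in\mathscr{X}$, Corollary \ref{corollary1.12} extends Theorem \ref{theorem1.6}(ii) to $c_r=\gamma-\beta/r$, since $\mathscr{X}$ is closed under this linear combination: a sum of $\mathscr X_1$ parts stays in $\mathscr X_1$ because the $\mathcal M$-bounds in \eqref{H2} are additive, and decaying parts stay decaying. This gives
\[
\lim_{\omega\to\infty}\overline{\mathcal{H}}(r)=\overline{\mathcal{V}}(r;d_I),\qquad \lim_{\omega\to\infty}\underline{\mathcal{H}}(r)=\underline{\mathcal{V}}(r;d_I).
\]
Lemma \ref{lemma9.19} supplies the continuity, strict monotonicity and uniqueness of the zeros $r_\infty^\pm$, and the same subsequence and Lipschitz argument concludes.

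The main obstacle is justifying that the quadratic term $\nabla\mathcal M\cdot A\nabla\mathcal M$ in \eqref{H2} remains bounded for the sum $c_r$. Cross terms are controlled by Cauchy--Schwarz, and $r$ ranges only over a compact set, so I expect this to go through.
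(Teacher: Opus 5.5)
Your proposal is correct and follows the paper's proof. The paper likewise applies Theorem~\ref{theorem1.6} at the fixed values $r_0^\pm$, $r_\infty^\pm$ and then reruns the contradiction argument of Lemma~\ref{lemma9.15}, driven by the Lipschitz bound $\|\beta\|_\infty|1/r_1-1/r_2|$. Your deviations are exactly the ``slight changes'' the paper leaves implicit: you close the contradiction via the uniqueness of zeros in Lemma~\ref{lemma9.19} rather than a Lemma~\ref{lemma9.6}-type strict inequality, and you explicitly verify that $\underline{\mathcal{R}}_0(\omega)\in[r_-,r_+]$ and that $\gamma-\beta/r\in\mathscr{X}$.
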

\begin{proof}
Using Theorem \ref{theorem1.6}, we have
 \[\lim_{\omega\to 0}\limsup\limits_{T\to\infty}\fint^{T}_{0}H(s;\omega,\gamma-\beta/r^{-}_{0})\,\mathrm{d}s =\overline{\mathcal{U}}(r^{-}_{0})=0,\]
 \[\lim_{\omega\to 0}\liminf\limits_{T\to\infty}\fint^{T}_{0}H(s;\omega,\gamma-\beta/r^{+}_{0})\,\mathrm{d}s =\underline{\mathcal{U}}(r^{+}_{0})=0,\]
 \[\lim_{\omega\to \infty}\limsup\limits_{T\to\infty}\fint^{T}_{0}H(s;\omega,\gamma-\beta/r^{-}_{\infty}) \,\mathrm{d}s=\sup_{\hat{c}\in\mathscr{C}_{r^{-}_{\infty}}}\mu^\infty(d_I;\hat{c})=0\]
and
 \[\lim_{\omega\to \infty}\liminf\limits_{T\to\infty}\fint^{T}_{0}H(s;\omega,\gamma-\beta/r^{+}_{\infty}) \,\mathrm{d}s=\inf_{\hat{c}\in\mathscr{C}_{r^{+}_{\infty}}}\mu^\infty(d_I;\hat{c})=0.\]
Similarly to Lemma \ref{lemma9.15}, the result can be established with only slight changes to the proof. This completes the proof.
\end{proof}

\medskip
Theorem \ref{theorem9.4} follows from Lemmas \ref{lemma9.15}-\ref{lemma9.18} and \ref{lemma9.20}.


\begin{thebibliography}{99}\setlength{\itemsep}{-1mm}\linespread{1}\selectfont

\bibitem{Allen2007Asymptotic} L.~J.~S. Allen, B.~M. Bolker, Y.~Lou, and A.~L. Nevai, Asymptotic profiles of the steady states for an {$SIS$} epidemic patch model, {\it SIAM J. Appl. Math.}, {\bf 67} (2007), 1283--1309.

\bibitem{Allen2008Asymptotic} L.~J.~S. Allen, B.~M. Bolker, Y.~Lou, and A.~L. Nevai, Asymptotic profiles of the steady states for an {SIS} epidemic reaction-diffusion model, {\it Discrete Contin. Dyn. Syst.}, {\bf 21} (2008), 1--20.

\bibitem{Bai2020Asymptotic} X.~Bai and X.~He, Asymptotic behavior of the principal eigenvalue for cooperative periodic-parabolic systems and applications, J. Differential Equations, 269 (2020), 9868--9903.

\bibitem{Bai2023Dynamics} X.~Bai, X.~He, and W.-M. Ni, Dynamics of a periodic-parabolic {L}otka-{V}olterra competition-diffusion system in heterogeneous environments, {\it J. Eur. Math. Soc.}, {\bf 25} (2023), 4583--4637.

\bibitem{Berestycki2025Generalized} H.~Berestycki, G. Nadin, and L. Rossi, Generalized principal eigenvalues for parabolic operators in bounded domains, arxiv:2502.16522, (2025).

\bibitem{Berestycki1994The} H.~Berestycki, L.~Nirenberg, and S.~R.~S. Varadhan, The principal eigenvalue and maximum principle for second-order elliptic operators in general domains, {\it Comm. Pure Appl. Math.}, {\bf 47} (1994), 47--92.

\bibitem{Cantrell2021On} R.~S. Cantrell and K.-Y. Lam, On the evolution of slow dispersal in multispecies communities, {\it SIAM J. Math. Anal.}, {\bf 53} (2021), 4933--4964.

\bibitem{Cantrell2021Ideal} R.~S. Cantrell, C. Cosner, and K.-Y. Lam, Ideal free dispersal under general spatial heterogeneity and time periodicity, {\it SIAM J. Appl. Math.}, {\bf 81} (2021), 789--813.

\bibitem{Castellano2024Multiplicity} K.~Castellano and R.~B. Salako, Multiplicity of endemic equilibria for a diffusive {SIS} epidemic model with mass-action, {\it SIAM J. Appl. Math.}, {\bf 84} (2024), 732--755.

\bibitem{Chen2012Effects} X.~Chen and Y.~Lou, Effects of diffusion and advection on the  smallest eigenvalue of an elliptic operator and their applications, {\it Indiana Univ. Math. J.}, {\bf 61} (2012), 45--80.

\bibitem{Chow1993Floquet} S.-N. Chow, K. Lu, and J. Mallet-Paret,~Floquet theory for parabolic differential equations, {\it J. Differential Equations}, {\bf 109} (1993), 147-200.

\bibitem{Chow1995Floquet} S.-N. Chow, K.~Lu, and J.~Mallet-Paret, Floquet bundles for scalar parabolic equations, {\it Arch. Rational Mech. Anal.}, {\bf 129} (1995), 245--304.

\bibitem{Cui2017Dynamics} R.~Cui, K.-Y. Lam, and Y.~Lou, Dynamics and asymptotic profiles of steady states of an epidemic model in advective environments, {\it J. Differential Equations}, {\bf 263} (2017), 2343--2373.

\bibitem{Cui2021Concentration} R.~Cui, H.~Li, R.~Peng, and M.~Zhou, Concentration behavior of endemic equilibrium for a reaction-diffusion-advection {SIS} epidemic model with mass action infection mechanism, {\it Calc. Var. Partial Differential Equations}, {\bf 60} (2021), 184 (pp. 1--38).

\bibitem{Cui2016A} R.~Cui and Y.~Lou, A spatial {SIS} model in advective heterogeneous  environments, {\it J. Differential Equations}, {\bf 261} (2016), 3305--3343.

\bibitem{Devinatz1974The} A.~Devinatz, R.~Ellis, and A.~Friedman, The asymptotic behavior of the first real eigenvalue of second order elliptic operators with a small  parameter in the highest derivatives. {II}, {\it Indiana Univ. Math. J.}, {\bf 23} (1973/74), 991--1011.

\bibitem{Evans1989A} L.C. Evans and P.E. Souganidis, A PDE approach to geometric optics for certain semilinear parabolic equations, {\it Indiana Univ. Math. J.}, {\bf 38} (1989), 141--172.

\bibitem{Fabes1986A} E.~B.~Fabes, N.~Garofalo, and S.~Salsa, A backward Harnack inequality and Fatou theorem for nonnegative solutions of parabolic equations, {\it Illinois J. Math.}, {\bf 30} (1986), no.~4, 536--565.

\bibitem{Fabes1997Behavior} E.~B.~Fabes and M.~V.~Safonov, Behavior near the boundary of positive solutions of second order parabolic equations, {\it J. Fourier Anal. Appl.}, {\bf 3} (1997), no.~Special Issue, 871--882.

\bibitem{Fabes1999Behavior} E.~Fabes, M.~Safonov, and Y.~Yuan, Behavior near the boundary of positive solutions of second order parabolic equations. II, {\it Trans. Amer. Math. Soc.}, {\bf 351} (1999), no.~12, 4947--4961.

\bibitem{Fink2006Almost} A.~M. Fink, Almost Periodic Differential Equations. Lecture Notes in Mathematics 377. Springer, Berlin, 1974.

\bibitem{Fleming1986PDE} W. H. Fleming and P. E. Souganidis, PDE-viscosity solution approach to some problems of large deviations, {\it Ann. Scuola Norm. Sup. Pisa Cl. Sci.}, (4) {\bf 13} (1986), 171-192.

\bibitem{Gao2024A} D.~Gao, C.~Lei, R.~Peng, and B.~Zhang, A diffusive {SIS} epidemic model with saturated incidence function in a heterogeneous environment, {\it Nonlinearity}, {\bf 37} (2024), 025002 (pp. 1--40).

\bibitem{Gilbarg2015Elliptic} D.~Gilbarg and N. S. Trudinger, Elliptic Partial Differential Equations of Second Order, Springer, 2015.

\bibitem{Hamel2011Rearrangement} F.~Hamel, N.~Nadirashvili, and E.~Russ, Rearrangement inequalities and applications to isoperimetric problems for eigenvalues, {\it Ann. of Math.}  , {\bf 174}(2) (2011), 647--755.

\bibitem{Hess1991Periodic} P.~Hess, Periodic-parabolic boundary value problems and positivity, vol.~247 of Pitman Research Notes in Mathematics Series, Longman Scientific \& Technical, Harlow; copublished in the United States with John Wiley \& Sons, Inc., New York, 1991.


\bibitem{Huska2006Exponential} J.~H\'{u}ska, Exponential separation and principal {F}loquet bundles for linear parabolic equations on general bounded domains: the divergence case, {\it Indiana Univ. Math. J.}, {\bf 55} (2006), 1015--1043.

\bibitem{Huska2006Harnack} J.~H\'{u}ska, Harnack inequality and exponential separation for oblique derivative problems on {L}ipschitz domains, {\it J. Differential Equations}, {\bf 226} (2006), 541--557.

\bibitem{Huska2008Exponential} J.~H\'{u}ska, Exponential separation and principal {F}loquet bundles for linear parabolic equations on general bounded domains: nondivergence case, {\it Trans. Amer. Math. Soc.}, {\bf 360} (2008), 4639--4679.

\bibitem{Huska2004The} J.~H\'{u}ska and P.~Pol\'{a}\v{c}ik, The principal {F}loquet bundle and exponential separation for linear parabolic equations, {\it J. Dynam. Differential Equations}, {\bf 16} (2004), 347--375.

\bibitem{Huska2007Harnack} J.~H\'{u}ska, P.~Pol\'{a}\v{c}ik, and M.~V. Safonov, Harnack inequalities, exponential separation, and perturbations of principal {F}loquet bundles for linear parabolic equations, {\it Ann. Inst. H. Poincar\'{e} C Anal. Non Lin\'{e}aire}, {\bf 24} (2007), 711--739.

\bibitem{Hutson2001The} V. Hutson, K. Mischaikow, and P. Pol\'{a}\v{c}ik, The evolution of dispersal rates in a heterogeneous time-periodic environment, {\it J. Math. Biol.}, {\bf 43} (2001), no. 6, 501--533.

\bibitem{Jiang2018A} D.~Jiang, Z.-C. Wang, and L.~Zhang, A reaction-diffusion-advection {SIS} epidemic model in a spatially-temporally heterogeneous environment, {\it Discrete Contin. Dyn. Syst. Ser. B}, {\bf 23} (2018), 4557--4578.

\bibitem{Koike1987An} S. Koike, An asymptotic formula for solutions of Hamilton-Jacobi-Bellman equations, {\it Nonlinear Anal.}, {\bf 11} (1987), 429-436.

\bibitem{Kuto2017Concentration} K.~Kuto, H.~Matsuzawa, and R.~Peng, Concentration profile of endemic equilibrium of a reaction-diffusion-advection {SIS} epidemic model, {\it Calc. Var. Partial Differential Equations}, {\bf 56} (2017), 112 (pp. 1--28).

\bibitem{Lam2022Introduction} K.-Y. Lam and Y.~Lou, Introduction to reaction-diffusion equations:  Theory and applications to spatial ecology and evolutionary biology. Lecture Notes on Mathematical Modelling in the Life Sciences. Springer Nature, Cham, 2022.

\bibitem{Lam2024The} K.-Y. Lam{} and Y.~Lou, The principal {F}loquet bundle and the dynamics of fast diffusing communities, {\it Trans. Amer. Math. Soc.}, {\bf 377} (2024), 1--29.

\bibitem{Lam2023A} K.-Y. La{}m, Y.~Lou, and B.~Perthame, A Hamilton-Jacobi approach to evolution of dispersal, {\it Comm. Partial Differential Equations}, {\bf 48} (2023), 86--118.

\bibitem{Li2019Dynamics} H.~Li and R.~Peng, Dynamics and asymptotic profiles of endemic equilibrium for {SIS} epidemic patch models, {\it J. Math. Biol.}, {\bf 79} (2019), 1279--1317.

\bibitem{Li2025Asymptotic} Y.~Li, S.~Ruan, and Z.-C. Wang, Asymptotic behavior of endemic  equilibria for a {SIS} epidemic model in convective environments, {\it J. Differential Equations}, {\bf 426} (2025), 606--659.

\bibitem{Li2026Effects} Y.~Li, S.~Ruan and Z.-C.~Wang, Effects of Advection and Diffusion on the Principal Eigenvalue of a Periodic-Parabolic Operator, {\it Indiana Univ. Math. J.},  {\bf 75} (2026), no. 2, 429--486.

\bibitem{Li2026Optimization} Y.~Li, S.~Ruan and Z.-C.~Wang, Optimization of the principal eigenvalue of a cooperative elliptic system with drifts. {\it Sci. China Math.}, {\bf 69} (2026), online.

\bibitem{Lieberman1984The} G.~M.~Lieberman, The nonlinear oblique derivative problem for quasilinear elliptic equations, {\it Nonlinear Anal.}, {\bf 8} (1984), no.~5, 499--523.

\bibitem{Lieberman1987Local} G.~M.~Lieberman, Local estimates for subsolutions and supersolutions of oblique derivative problems for general second order elliptic equations, {\it Trans. Amer. Math. Soc.}, {\bf 302} (1987), no.~1, 41--67.

\bibitem{Liu2025Asymptotic} S.~Liu, Asymptotic and global analysis of principal eigenvalues for linear time-periodic parabolic systems, {\it J. Math. Pures Appl.}, {\bf 203(9)} (2025), 103781, (pp. 1--44).

\bibitem{Liu2022Classifying} S.~Liu and Y.~Lou, Classifying the level set of principal eigenvalue for time-periodic parabolic operators and applications, {\it J. Funct. Anal.}, {\bf 282} (2022), 109338, (pp. 1--43).

\bibitem{Liu2019Monotonicity} S. Liu, Y. Lou, R. Peng, and M. Zhou, Monotonicity of the principal eigenvalue for a linear time-periodic parabolic operator, {\it Proc. Amer. Math. Soc.}, {\bf 147} (2019), no. 12, 5291--5302.

\bibitem{Liu2021AsymptoticsI} S.~Liu, Y.~Lou, R.~Peng, and M.~Zhou, Asymptotics of the principal eigenvalue for a linear time-periodic parabolic operator {I}: large advection, {\it SIAM J. Math. Anal.}, {\bf 53} (2021), 5243--5277.

\bibitem{Liu2021AsymptoticsII} S.~Liu, Y.~Lou, R.~Peng, and M.~Zhou, Asymptotics of the principal eigenvalue for a linear time-periodic parabolic operator {II}: {S}mall diffusion, {\it Trans. Amer. Math. Soc.}, {\bf 374} (2021), 4895--4930.

\bibitem{Lou2006Evolution} Y.~Lou and T.~Nagylaki, Evolution of a semilinear parabolic system for migration and selection without dominance, {\it J. Differential Equations}, {\bf 225} (2006), 624--665.

\bibitem{Mierczynski1995Globally} J.~Mierczy\'{n}ski, Globally positive solutions of linear parabolic {PDE}s of second order with {R}obin boundary conditions, {\it J. Math. Anal. Appl.}, {\bf 209} (1997), 47--59.

\bibitem{Mierczynski2008Spectral} J.~Mierczy\'{n}ski and W.~Shen, Spectral theory for random and nonautonomous parabolic equations and applications, vol.~139 of Chapman \& Hall/CRC Monographs and Surveys in Pure and Applied Mathematics, CRC Press, Boca Raton, FL, 2008.

\bibitem{Mierczynski2008Time} J.~Mierczy\'{n}ski and W.~Shen, Time averaging for nonautonomous/random linear parabolic equations, {\it Discrete Contin. Dyn. Syst. Ser. B}, {\bf 9} (2008), 661--699.

\bibitem{Mierczynski2011Persistence} J.~Mierczy\'{n}ski and W.~Shen{}, Persistence in forward nonautonomous competitive systems of parabolic equations, {\it J. Dynam. Differential Equations}, {\bf 23} (2011), 551--571.

\bibitem{Mierczynski2013Spectral} J.~Mierczy\'{n}ski and W.~Shen, Spectral theory for forward nonautonomous parabolic equations and applications, in Infinite dimensional dynamical systems, vol.~64 of Fields Inst. Commun., Springer, New York, 2013, 57--99.

\bibitem{Moser1964A} J.~Moser, A Harnack inequality for parabolic differential equations, {\it Comm. Pure Appl. Math.}, {\bf 17} (1964), 101--134.

\bibitem{Moser1971On} J. Moser, On a pointwise estimate for parabolic differential equations, {\it Comm. Pure Appl. Math.}, {\bf 24} (1971), 727-740.

\bibitem{Peng2009Asymptotic} R.~Peng, Asymptotic profiles of the positive steady state for an  {SIS} epidemic reaction-diffusion model. {I}, {\it J. Differential Equations}, {\bf 247} (2009), 1096--1119.

\bibitem{Peng2025Spatial} R.~Peng, R.~Salako, and Y.~Wu, Spatial profiles of a reaction-diffusion epidemic model with nonlinear incidence mechanism and constant total population, {\it SIAM J. Math. Anal.}, {\bf 57} (2025), 1586--1620.

\bibitem{Peng2025Spatial2} R.~Peng{}, R.~B. Salako, and Y.~Wu, Spatial profiles of a reaction-diffusion epidemic model with nonlinear incidence mechanism and varying total population, {\it Nonlinearity}, {\bf 38} (2025), 045006 (pp. 1--35).

\bibitem{Peng2012A} R.~Peng and X.-Q. Zhao, A reaction-diffusion {SIS} epidemic model in a time-periodic environment, {\it Nonlinearity}, {\bf 25} (2012), 1451--1471.

\bibitem{Peng2015Effects} R.~Peng and X.-Q. Zhao, Effects of diffusion and advection on the principal eigenvalue of a periodic-parabolic problem with applications, {\it Calc. Var. Partial Differential Equations}, {\bf 54} (2015), 1611--1642.

\bibitem{Polacik2005On} P. Pol\'{a}\v{c}ik, On uniqueness of positive entire solutions and other properties of linear parabolic equations, {\it Discrete Contin. Dyn. Syst.}, {\bf 12}(1) (2005), 13-26.

\bibitem{Polacik1993Exponential} P.~Pol\'{a}\v{c}ik and I.~Tere\v{s}\v{c}\'{a}k, Exponential separation and invariant bundles for maps in ordered {B}anach spaces with applications to parabolic equations, {\it J. Dynam. Differential Equations}, {\bf 5} (1993), 279--303.

\bibitem{Shen2007Almost} W. Shen, Y. Yi, Almost automorphic and almost periodic dynamics in skew-product semiflows, {\it Mem. Am. Math. Soc.}, {\bf 136}(647) (1998), 1--93.

\bibitem{Shen2007Spectral} W.~Shen and G.~T. Vickers, Spectral theory for general nonautonomous/random dispersal evolution operators, {\it J. Differential Equations}, {\bf 235} (2007), 262--297.

\bibitem{Strauss2007Partial} W. A. Strauss, Partial differential equations: An introduction. John Wiley \& Sons, 2007.

\bibitem{Wang2015A} B.-G.~Wang, W.-T. Li, and Z.-C. Wang, A reaction-diffusion {SIS} epidemic model in an almost periodic environment, {\it Z. Angew. Math. Phys.}, {\bf 66} (2015), 3085--3108.

\bibitem{Wang2012Basic} W.~Wang and X.-Q. Zhao, Basic reproduction numbers for reaction-diffusion epidemic models, {\it SIAM J. Appl. Dyn. Syst.}, {\bf 11} (2012), 1652--1673.

\bibitem{Wu2016Asymptotic} Y.~Wu and X.~Zou, Asymptotic profiles of steady states for a diffusive {SIS} epidemic model with mass action infection mechanism, {\it J. Differential Equations}, {\bf 261} (2016), 4424--4447.

\bibitem{Zhang2021Asymptotic} L.~Zhang and X.-Q. Zhao, Asymptotic behavior of the basic reproduction ratio for periodic reaction-diffusion systems, {\it SIAM J. Math. Anal.}, {\bf 53} (2021), 6873--6909.

\end{thebibliography}
\end{document}